\numberwithin{equation}{section}
\def\ep{\epsilon}
\def\R{\mathbb{R}}
\def\C{\mathbb{C}}
\def\Z{\mathbb{Z}}
\def\N{\mathbb{N}}
\def\Z{\mathbb{Z}}
\def\ep{\epsilon}
\DeclareMathOperator{\sgn}{sgn}
\DeclareMathOperator{\supp}{supp}
\newtheorem{theorem}{Theorem}[section]
\newtheorem{lemma}[theorem]{Lemma}
\newtheorem{proposition}[theorem]{Proposition}
\newtheorem{definition}[theorem]{Definition}
\newtheorem{remark}[theorem]{Remark}
\def\beq{\begin{equation}}
\def\eeq{\end{equation}}
\def\beq{\begin{equation}}
\def\eeq{\end{equation}}
\begin{document}

\title[The 3D gravity water waves]{Long-term regularity of 3D gravity water waves}

\author{Fan Zheng}
\address{Instituto de Ciencias Matem\'aticas}
\email{fan.zheng@icmat.es}

%\thanks{The first author was supported in part by NSF grant DMS-1265818. The second author was supported in part by NSF grant DMS-1500958.}

\begin{abstract}
We study a fundamental model in fluid mechanics--the 3D gravity water wave equation, in which an incompressible fluid occupying half the 3D space flows under its own gravity. In this paper we show long-term regularity of solutions whose initial data is small but not localized.

Our results include: almost global wellposedness for unweighted Sobolev initial data and global wellposedness for weighted Sobolev initial data with weight $|x|^\alpha$, for any $\alpha > 0$. In the periodic case, if the initial data lives on an $R$ by $R$ torus, and $\epsilon$ close to the constant solution, then the life span of the solution is at least $R/\epsilon^2(\log R)^2$.
\end{abstract}

\maketitle

\setcounter{tocdepth}{1}

\tableofcontents

\date{\today}

\maketitle

\hfill {\it I live upstream and you downstream}

\hfill {\it From night to night of you I dream}

\hfill {\it Unlike the strewam you're not in view}

\hfill {\it Though we both drink from River Blue}

\hfill {\it ---Chinese Song Lyrics by Li Zhi-yi \cite{Xu}}

\section{Introduction}\label{Intro}

\subsection{The equations in the Zakharov formulation}
We consider the motion of an incompressible irrotational inviscid fluid
that occupies a domain
\begin{equation}\label{Om-def}
\Omega(t) = \{(x, z) \in \R^2 \times \R: z < h(x, t)\}
\end{equation}
in the three dimensional space, with a free boundary
\begin{equation}\label{Gm-def}
\Gamma(t) = \{(x, h(x, t)): x \in \R^2\}.
\end{equation}
In addition to the pressure, assumed to be 0 on the boundary $\Gamma(t)$,
the only external force acting on the fluid is gravity,
normalized to be $(0, 0, -1)$.

Let $v$ be the velocity of the fluid and $p$ be its pressure.
Within the domain $\Omega(t)$, the velocity $v$ satisfies
\begin{align}
\label{incomp}
\nabla_{x,z} \cdot v &= 0,
\tag{incompressibility}\\
\label{irrot}
\nabla_{x,z} \times v &= 0,
\tag{irrotationality}\\
\label{Euler}
v_t + v \cdot \nabla_{x,z}v &= -\nabla_{x,z}p - (0, 0, 1).
\tag{the Euler equation}
\end{align}
The velocity of the fluid on the boundary $\Gamma(t)$ dictates how it moves:
\begin{equation}\label{bdry}
\partial_t + v \cdot \nabla_{x,z} \text{ is tangent to } (t, \Gamma(t)).
\tag{the boundary condition}
\end{equation}

Since $v$ is irrotational, there is a velocity potential $\Phi$ on $\Omega(t)$, uniquely determined up to an additive constant, such that $v=\nabla_{x,z}\Phi$. The \ref{incomp} condition then translates into the harmonicity of $\Phi$ on $\Omega(t)$. Therefore $\Phi$ is uniquely determined (at least assuming sufficient regularity and decay of $v$ at infinity) by its boundary value
\[
\phi(x,t)=\Phi(x,h(x),t).
\]

We can now reformulate \ref{Euler} and \ref{bdry} in the Zakharov system,
whose derivation can be found in Section 11.1.1 of \cite{SS} or Section 1.1.4 of \cite{La-rev}.
\begin{equation}\label{Zakharov}
\begin{cases}
h_t=G(h)\phi,\\
\phi_t=-h-\frac12|\nabla\phi|^2+\frac{(G(h)\phi+\nabla h\cdot\nabla\phi)^2}{2(1+|\nabla h|^2)}
\end{cases}
\end{equation}
where $G(h)\phi=\sqrt{1+|\nabla h|^2}\partial_n\Phi$ is the
Dirichlet-to-Neumann operator
\footnote{It has become a convention in the water wave problem to put the factor of $\sqrt{1+|\nabla h|^2}$ in front of the usual normal derivative.}
associated to the domain $\Omega(t)$.

Note that the system (\ref{Zakharov}) has a conserved {\bf energy}
(Proposition 2.1 of \cite{GeMaSh2}):
\begin{equation}\label{E-def}
%\begin{aligned}
E%&=\frac12\int_{\Omega(t)} |v|^2dxdz + \int_{\R^2} \int_0^{h(x)} zdz
%=\frac12\int_{\Gamma(t)} \Phi\partial_n\Phi dS + \frac12\int_{\R^2} h^2dx\\
=\int_{\R^2} \frac12(\phi G(h)\phi + h^2)dx.
%\end{aligned}
\end{equation}
Next we look at the evolution for the {\bf vorticity} $\omega=\nabla\times v$.
Taking the curl of \ref{Euler} we get its evolution equation
\begin{align*}
\omega_t%&=-\gr\times((v\cd\gr)v)\\
%&=-(v\cd\gr)\om
%-(\de_1v^1\de_1v^2+\de_1v^2\de_2v^2-\de_2v^1\de_1v^1-\de_2v^2\de_2v^1)\\
%&=-(v\cd\gr)\om-(\de_1v^2-\de_2v^1)(\de_2v^2+\de_1v^1)\\
&=-(v\cdot\nabla)\omega-\omega(\nabla\cdot v).
\end{align*}
Therefore the \ref{irrot} assumption is preserved by the flow.

The trivial solution $(h, \phi) = (0, 0)$ is an equilibrium of the system (\ref{Zakharov}). Our goal is to investigate the long-term stability of this equilibrium.

To linearize the system (\ref{Zakharov}), we define the Fourier multipliers $|\nabla|$ and $\Lambda$.
\begin{definition}\label{Lambda-def}
Let $\mathcal F$ denote the Fourier transform. We define
\begin{align*}
\mathcal F(|\nabla|u)(\xi)&=|\xi|\mathcal Fu(\xi), &
\mathcal F(\Lambda u)(\xi)&=\sqrt{|\xi|}\mathcal Fu(\xi), &
\mathcal F(\langle\nabla\rangle u)(\xi)&=\sqrt{1+|\xi|^2}\mathcal Fu(\xi).
\end{align*}
\end{definition}

Near the equilibrium $(h,\phi)=(0,0)$, the Dirichlet-to-Neumann operator is, as a first order approximation, $G(h)\phi=\partial_z\Phi$,
where $\Phi$ solves the Dirichlet problem in the half space $\{(x,z):z\le0\}$ with the boundary value $\phi$ on the surface $\{(x,z):z=0\}$,
and the boundary condition is $\nabla\Phi\to0$ uniformly as $z\to-\infty$.
Using the Fourier transform, it is easy to see that the Dirichlet problem has a unique solution given, in the frequency space, by
\[
\mathcal F_x\Phi(\xi,z)=  e^{z|\xi|}\mathcal F\phi(\xi).
\]
Hence
\[
\mathcal F[G(h)\phi](\xi) = (\mathcal F_x\partial_z\Phi)(\xi, 0)
= |\xi|\mathcal F\phi(\xi)
\]
or $G(h)\phi = |\nabla|\phi$, to the first order. Now the system (\ref{Zakharov}) linearizes to
\[
\begin{cases}
h_t = |\nabla|\phi,\\
\phi_t = -h,
\end{cases}
\]
which can then be written in matrix form as
\[
\frac{d}{dt}
\begin{pmatrix}
h\\
\phi
\end{pmatrix}
=
\begin{pmatrix}
0 & |\nabla|\\
-1 & 0
\end{pmatrix}
\begin{pmatrix}
h\\
\phi
\end{pmatrix}
\]
and the eigenvalues of the matrix on the right-hand side are (formally)
\[
\pm\sqrt{-|\nabla|} = \pm i\sqrt{|\nabla|} = \pm i\Lambda
\]
with the corresponding eigenvectors
\begin{equation}\label{U-pm-def}
U_\mp = h \mp i\Lambda\phi.
\end{equation}

Thus the system (\ref{Zakharov}) is dispersive at the linear level.
The full system however, is fully nonlinear in terms of $U$ because the right-hand side contains terms quadratic in $\nabla h$ and $\nabla\phi$.
To obtain a quasilinear evolution equation, we will make use of the Alinhac good unknown, introduced in \cite{Alin},
\begin{equation}\label{tld-U-pm-def}
\tilde U_\pm=h\pm i|\nabla|^{1/2}(\phi-T_{\partial_z\Phi}h|_{z=h(x)}),
\end{equation}
where $T_{\partial_z\Phi}h$ is a paraproduct, to be defined in section \ref{ParaCalc}. Our main results are stated in terms of the variable $\tilde U=\tilde U_+$. Note that by Proposition \ref{U-tld=U} below, $U$ and $\tilde U$ are comparable in terms of energy estimates.

\subsubsection{The periodic setup}
We will also show long-term wellposedness of the 3D gravity water wave (GWW) equation on a torus of size $R \times R$, which we denote by $(\R/R\Z)^2$. Thus the domain that the fluid occupies is now
\begin{equation}\label{Om-per-def}
\Omega(t) = \{(x, z) \in (\R/R\Z)^2 \times \R: z < h(x, t)\}
\end{equation}
with a free boundary
\begin{equation}\label{Gm-per-def}
\Gamma(t) = \{(x, h(x, t)): x \in (\R/R\Z)^2\}
\end{equation}
on which we still assume the pressure is 0. Because gravitational acceleration has the dimension $[LT^{-2}]$, it can still be normalized to $(0, 0, -1)$ by reparametrizing the time variable.

As before, the flow is assumed to be incompressible and irrotational,
satisfying the Euler equation. The boundary moves according to the local velocity.

When we try to put the above system in the Zakharov formulation (\ref{Zakharov}) however,
there is one notable difference: the velocity field always integrates to a velocity potential in the Euclidean case, but not in the periodic one.
There are cohomological obstructions to that, namely the line integral of the velocity field along any homology class of the torus must be 0:
\[
\int_\gamma v = 0,\quad \forall \gamma \in H^1((\R/R\Z)^2, \R).
\]
This is known as the {\bf zero momentum condition}. It can always be ensured using a change of variable ($\tilde v = v - v_0$, $\tilde p(x, t) = p(x - v_0t, t)$). Under such a condition, the velocity potential $\Phi$ can always be found such that $v = \nabla_{x,z}\Phi$,
and is uniquely determined by its boundary value $\phi$ as before.
In this setup, the GWW equation can still be put in the Zakharov formulation (\ref{Zakharov}), with the meaning of the variables unchanged.

The periodic GWW equation shares many of the conservation laws with the Euclidean one. For example, the energy
\begin{equation}\label{E-per-def}
E = \int_{(\R/R\Z)^2} \frac12(\phi G(h)\phi + h^2)
\end{equation}
is still invariant under the flow. The fluid will remain irrotational if it is so initially. The {\bf zero momentum condition} is also preserved because $v_t$ is a gradient under the irrotationality assumption,
see (1.11) of \cite{Zh} for example.
%Similar computations to those in (\ref{cnsrv-mass}) and (\ref{cnsrv-vert-mom}) show that the mass and vertical momentum are also converved,
%and the {\bf zero mass excess condition} and the {\bf zero potential integral condition} can still be imposed. The reader is again referred to \cite{BeOl} for more conservation laws.
%
%The goal is to investigate the long-term stability of the equilibrium
%$(h, \phi) = (0, 0)$. Since we will be working with functions defined on the torus, we need to define the Fourier transform on the torus first.
%

With the definition of the operators as given in Definition \ref{Lambda-def}, the Zakharov system (\ref{Zakharov}) can still be linearized in terms of the variable $U_\pm$ given by (\ref{U-pm-def}). We will still use the Alinhac good unknown $\tilde U_\pm$ as defined in (\ref{tld-U-pm-def}).

Periodic solutions of nonlinear dispersive equations have been studied in the context of the {\it large box limit} (see for example \cite{BuGeHaSh}),
in which the equation is posed on a torus of size $R$, and in the limit
$R \to \infty$, it was observed that the behavior of the solution can be approximated by a wave kinetic equation, known as the {\it continuous resonance equation} and believed to describe {\it weak turbulence}.
To initiate the study of weak turbulence for the GWW equation,
it is necessary to first obtain its long-term regularity on a large torus.
Such a setting also fills in the gap between the global wellposedness result in the Euclidean setting and cubic lifespan on the unit torus, to be detailed later.

\subsection{The main theorems}
Now we can state the main results about the 3D gravity water wave (GWW) equation in this paper. Throughout the paper $N \in \N$ and $\alpha \in (0, 1)$ are fixed, and all implicit constants only depend on them unless stated otherwise.
We also assume that $\ep$ is sufficiently small.

\begin{definition}\label{Z-def}
In the Euclidean space we define
\[
\|u\|_Z = \|(1 + |x|)^\alpha\langle\nabla\rangle^8u\|_{L^2}.
%\|u\|_{Z\pm}&=\|\langle\nabla\rangle^{\pm1}u\|_Z.
\]
On the $R \times R$ torus we define
\begin{align*}
\|u\|_Z &= \|(1 + \|x\|)^{2/3}\langle\nabla\rangle^8u\|_{L^2}, &
\|x\| &= d(x, (R\Z)^2).
\end{align*}
In other words, $\|x\|$ is the geodesic distance from the center of the torus, fixed once and for all.
\end{definition}

\begin{theorem}\label{Thm1}
If $N \ge 11$ then there is a constant $c > 0$ such that if
\begin{equation}\label{HN0}
\|h_0\|_{H^{N+1/2}}+\||\nabla|^{1/2}\phi_0\|_{H^N}\le\ep,
\end{equation}
then there is
\begin{equation}\label{T1}
T_\ep \approx \exp(c/\ep)
\end{equation}
such that (\ref{Zakharov}) has a unique solution with initial data $(h_0,\phi_0)$ that satisfies $\tilde U\in C([0,T_\ep],H^N)$.
\end{theorem}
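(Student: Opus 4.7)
The plan is to bootstrap a single energy quantity $E(t):=\|\tilde U(t)\|_{H^N}^2$, where $\tilde U$ is the Alinhac good unknown in (\ref{tld-U-pm-def}), which by Proposition \ref{U-tld=U} is equivalent to the initial-data norm in (\ref{HN0}). Local wellposedness and propagation of regularity for the Zakharov system being standard, it suffices to close the implication $E(t)\le 4C\ep^2 \Rightarrow E(t)\le 2C\ep^2$ on the interval $[0,T_\ep]$ with $T_\ep\approx\exp(c/\ep)$; a standard continuation criterion then gives the claimed $\tilde U\in C([0,T_\ep],H^N)$.

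My first step is to derive a quasilinear paradifferential form of the equation for $\tilde U$. Plugging the good unknown into (\ref{Zakharov}) and symmetrizing via the Taylor sign condition reduces the system schematically to
\[
(\p_t+iT_V\cdot\nabla+iT_\Sigma)\tilde U=\mathcal N(\tilde U),
\]
with $V$ a real paradifferential velocity symbol, $\Sigma$ a real paradifferential principal symbol of order $1/2$, and $\mathcal N$ a sum of a smoothing bilinear remainder and higher-multilinear terms. A direct $H^N$ pairing, exploiting the formal self-adjointness of $T_V\cdot\nabla+T_\Sigma$, yields
\[
\tfrac d{dt}E\lesssim \bigl(\|V\|_{W^{1,\infty}}+\|\Sigma\|_{\dot W^{1,\infty}}\bigr)E+\bigl|\langle\mathcal N,\tilde U\rangle_{H^N}\bigr|.
\]
The second step is a Shatah-style bilinear normal form that absorbs the bilinear part of $\mathcal N$: because the three-wave phases $\Lambda(\xi)\pm\Lambda(\eta)\pm\Lambda(\xi+\eta)$ for $\Lambda=|\nabla|^{1/2}$ are nonresonant away from the degenerate frequency zero, a bounded bilinear substitution $\tilde U\mapsto\tilde U-B(\tilde U,\tilde U)$ turns the right-hand side into a genuinely cubic expression and produces a modified energy $\widetilde E$ equivalent to $E$ that satisfies
\[
\tfrac d{dt}\widetilde E\lesssim \mathcal M(\tilde U)\,\widetilde E,
\]
where $\mathcal M(\tilde U)$ is cubic and involves only $L^\infty$-type norms of $\tilde U$.

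The third step, which I expect to be the main obstacle, is to propagate enough time decay of $\mathcal M(\tilde U)$ to arrange $\int_0^{T_\ep}\mathcal M\,ds\lesssim 1$. The absence of a weighted norm in (\ref{HN0}) forbids any Klainerman-type vector-field argument, and naive Sobolev embedding gives no decay at all, so the decay must be extracted from the dispersion of the half-wave group $e^{it\Lambda}$ together with the cubic structure after normal form. My plan is to run a secondary bootstrap on a dyadic $L^\infty$-type norm of $\tilde U$ weighted by a power of $\langle t\rangle$, using Strichartz-type bounds for $e^{it\Lambda}$ and a Littlewood--Paley frequency envelope to control the cubic self-interaction. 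The target heuristic is $\mathcal M(\tilde U)\lesssim \ep/\langle t\rangle$, which by Gr\"onwall turns the energy inequality into $\widetilde E(t)\lesssim\ep^2\langle t\rangle^{C\ep}$ and closes the bootstrap up to $t\sim\exp(c/\ep)$. The weakly dispersive low-frequency regime, where the linear $L^\infty$ decay of $e^{it\Lambda}$ degrades, and the compatibility of the normal-form correction with the paradifferential symmetrization already performed in the first two steps, are the technically most delicate points.
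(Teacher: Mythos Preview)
Your first two steps are on the right track and match the paper's approach: paralinearize (Section~\ref{ParLin}), then perform a normal-form/integration-by-parts-in-time correction to arrive at a quartic energy inequality (Section~\ref{EneEst}). A minor correction: after the normal form the inequality takes the form $\frac{d}{dt}\widetilde E\lesssim\|U\|_{C_*^6}^2\,\widetilde E$, so the multiplier $\mathcal M$ is \emph{quadratic}, not cubic, in $L^\infty$-type norms of $U$.

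The genuine gap is in step three. Your target $\mathcal M(\tilde U)\lesssim\ep/(1+t)$ is a pointwise-in-time decay bound, and no such bound is available: with only unweighted $H^N$ data, the linear evolution $e^{-it\Lambda}U_0$ has \emph{no} $L^\infty$ decay---it can refocus to a bump of size $\ep$ at any prescribed time---so your secondary bootstrap on a time-weighted $L^\infty$ norm already fails for the free flow. Strichartz estimates from $L^2$ data yield space-time norms, not pointwise decay. The paper's fix (Proposition~\ref{bootstrap1}) is to bootstrap the space-time norm $\|U\|_{L^2([0,t])C_*^6}$ as a second unknown alongside the energy. The key linear input is Lemma~\ref{dispersiveTT*}: $\|e^{-is\Lambda}u\|_{L^2([0,t])W^{6,\infty}}\lesssim\sqrt{\log(1+t)}\,\|u\|_{H^7}$, the 2D endpoint Strichartz bound failing only by a logarithm via $TT^*$. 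Feeding this into the quartic energy estimate and running the coupled two-parameter bootstrap with $\ep_1\approx\ep$ for the energy and $\ep_2\approx\sqrt\ep$ for the Strichartz norm closes exactly when $\log t\lesssim 1/\ep$, i.e.\ $t\lesssim\exp(c/\ep)$. There is no pointwise decay anywhere in the argument.
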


\begin{theorem}\label{Thm2}
If $N \ge \max(33/(\alpha - \alpha^2),8/\alpha^2)$, $\alpha \in (0,1)$ and
\begin{equation}\label{Z0}
\|h_0\|_{H^{N+1/2}}+\||\nabla|^{1/2}\phi_0\|_{H^N}+\|h_0\|_Z+\||\nabla|^{1/2}\phi_0\|_Z\le\ep,
\end{equation}
then (\ref{Zakharov}) has a global unique solution with initial data $(h_0,\phi_0)$ such that $\tilde U(t)$ is bounded and continous in $H^N$ for all $t\ge 0$.
\end{theorem}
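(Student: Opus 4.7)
The plan is to close a bootstrap argument on two quantities propagated simultaneously: the high Sobolev energy $\|\tilde U(t)\|_{H^N}$ of the Alinhac good unknown, and a dispersive norm of the profile $V(t):=e^{-it\Lambda}\tilde U(t)$ measured in the $Z$ norm of Definition \ref{Z-def}. Assuming the a priori bound
\[
\|\tilde U(t)\|_{H^N}+\|V(t)\|_{Z}\le 2\ep(1+t)^{C\ep}
\]
on a maximal existence interval $[0,T]$, I would improve the prefactor from $2$ to $1$, which combined with the local wellposedness machinery underlying Theorem \ref{Thm1} yields $T=+\infty$. The role of the $Z$ norm is to produce a pointwise decay estimate: interpolating the Sobolev regularity of $V$ against the $|x|^\alpha$ weight (equivalently, $\widehat V \in H^\alpha_\xi$ after removing $\langle\nabla\rangle^8$) and invoking two-dimensional stationary phase for the dispersion $\sqrt{|\xi|}$ should yield $\|\tilde U(t)\|_{W^{k,\infty}}\les \ep(1+t)^{-\alpha+C\ep}$ for some $k$ depending on $N$ and $\alpha$.

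For the energy half I would reuse the paradifferential symmetrization of (\ref{Zakharov}) in $\tilde U_\pm$ already developed for Theorem \ref{Thm1}, where the principal part is the skew-adjoint $\pm i\Lambda$ plus lower-order transport terms. The corresponding energy identity gives a growth rate essentially controlled by $\int_0^t\|\tilde U(s)\|_{W^{k,\infty}}\,ds$. Because $\alpha<1$ this integral diverges at the quadratic level, so I would add a quadratic modified-energy correction (equivalently, a partial normal form) that cancels all non-resonant quadratic contributions. The resonant quadratic remainder vanishes thanks to the GWW-specific null structure exploited by Alazard--Delort and Ionescu--Pusateri, leaving only cubic and higher contributions whose decay is time-integrable up to a slow $\ep^2(1+t)^{C\ep}$ factor, consistent with the bootstrap.

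The core of the argument is the $Z$-norm estimate. In Fourier variables the profile satisfies, schematically,
\[
\partial_t \widehat V(\xi,t)=\sum_{\pm,\pm}\int_{\R^2} e^{it\Phi_{\pm\pm}(\xi,\eta)} m_{\pm\pm}(\xi,\eta)\widehat{V_\pm}(\xi-\eta,t)\widehat{V_\pm}(\eta,t)\, d\eta+\{\text{cubic}\},
\]
with phase $\Phi_{\pm\pm}(\xi,\eta)=\sqrt{|\xi|}\pm\sqrt{|\xi-\eta|}\pm\sqrt{|\eta|}$ and symbols $m_{\pm\pm}$ extracted from the expansion of the Dirichlet-to-Neumann operator. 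Since the $Z$ norm is essentially a weighted $H^\alpha_\xi$ norm of $\widehat V$, I need to bound a fractional $\xi$-derivative of the right-hand side. Away from the space-time resonant set $\{\Phi_{\pm\pm}=0\}\cap\{\nabla_\eta\Phi_{\pm\pm}=0\}$, which for GWW reduces to low-frequency degeneracies at $\xi=0$ or $\eta=0$, I would integrate by parts either in $\eta$ (through $\nabla_\eta\Phi$) or in time (through $\Phi$), each iteration trading a factor of $t^{-1}$ against a derivative landing on $V$ or a multiplier singular on the resonant locus. A dyadic decomposition combined with a direct estimate on a neighborhood of the resonant set should deliver a bound of the form $\|V(t)\|_Z\les \ep+\ep^2(1+t)^{C\ep/2}$.

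The main obstacle I anticipate is the weakness of the fractional weight $|x|^\alpha$. Earlier global-regularity results for GWW assume weights strong enough to place $\widehat V$ in $H^1_\xi$, so that $\xi$-integration by parts is routine; here only $H^\alpha_\xi$ control is available, and fractional $\xi$-derivatives falling on dyadic cutoffs, on the phase, or on the bilinear product generate non-local commutators that must be absorbed via fractional Leibniz and paraproduct estimates. The hypothesis $N\ge 33/(\alpha-\alpha^2)$ looks like the balance between losing $\alpha$ derivatives on $V$ in the high-frequency regime and losing $1-\alpha$ derivatives near the singular part of the resonant set, while $N\ge 8/\alpha^2$ is what I would expect to be needed to carry the same fractional-derivative analysis through the cubic (post-normal-form) estimates, where the loss is incurred twice.
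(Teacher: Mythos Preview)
Your proposal has a genuine gap: the pointwise decay $\|\tilde U(t)\|_{W^{k,\infty}}\lesssim\ep(1+t)^{-\alpha+}$ that you extract from the $Z$ norm is too weak to close the energy estimate when $\alpha\le 1/2$. After the quadratic normal form, the quartic energy integrand is of size $\|U\|_{C_*^6}^2\|\tilde U\|_{H^N}^2\sim\ep^4(1+t)^{-2\alpha+}$, which is not integrable in that range; your assertion that the post--normal-form terms are ``time-integrable up to a slow $\ep^2(1+t)^{C\ep}$ factor'' is therefore false precisely where the theorem is new.

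The paper repairs this by bootstrapping a \emph{third} quantity you omit: the weighted Strichartz norm $\|(1+s)^{(\alpha-\delta)/2}P_kU\|_{L^2_tL^\infty_x}$ (Proposition \ref{bootstrap2}). This is not obtained from pointwise decay; it comes from interpolating the endpoint $L^2_tL^\infty_x$ Strichartz estimate (controlled by unweighted $H^s$, at a logarithmic loss) against the full $t^{-1}$ decay that an $|x|^1$ weight would give; see Lemma \ref{dispersiveZ} (iii). Since $(1+s)^{(\alpha-\delta)/2}\ge 1$, this yields $\|U\|_{L^2_tC_*^6}\lesssim\ep_1$ uniformly in $t$, and the quartic energy closes with no growth for every $\alpha\in(0,1)$. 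The same $L^2_tL^\infty_x$ control is then fed back into the $Z$-norm estimate for the cubic bulk (Proposition \ref{H3-Z}, Case 5), where two of the three profile factors are left \emph{undecomposed} in physical space so that the $L^2_tL^\infty_x$ bootstrap applies directly.

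Your space-resonance step is also the wrong tool here. The paper explicitly avoids integration by parts in $\eta$; it integrates only in $t$ (the normal form $W_{\mu\nu}$, $H_{\mu\nu}$) and controls the $Z$ norm of the remaining cubic and quartic pieces by physical-space localizations $Q_j$ of the profile together with nonstationary-phase bounds on the resulting kernels (Lemma \ref{int-part-cor}). With only $H^\alpha_\xi$ regularity of $\widehat V$ there is no clean fractional integration by parts in $\eta$, and the paper is designed precisely to sidestep it. The thresholds $N\ge 33/(\alpha-\alpha^2)$ and $N\ge 8/\alpha^2$ come from balancing the interpolated Strichartz weight against the physical-space case analysis in Section \ref{ZEst}, not from the mechanism you describe.
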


\begin{remark}
%(i) For specific values of the constants in exponents see Propositions \ref{bootstrap1} and \ref{bootstrap2}. 
%Technically the proof gives a constant $c$ depending on the choice of $N$ in Theorems \ref{Thm1} and \ref{Thm2}, but this dependence can be removed using persistence of regularity (see [Tao] for details.)
%The constant $c$ can be made independent of $N$ (see Remark 1.8 (i) of \cite{Zh} for details.)
(i) To the knowledge of the author, Theorem \ref{Thm1} is the first almost global wellposedness result of the 3D GWW equation with nonlocalized Sobolev initial data. In 2D similar arguments show a lifespan $\gtrsim \ep^{(-4)+}$ for such initial data, an improvement over the $\ep^{-2}$ lifespan in \cite{AlDe,HuIfTa2DaG,IoPu,Wu2DaG}.

(ii) Theorem \ref{Thm2} improves on the global wellposedness results of Germain--Masmoudi--Shatah \cite{GeMaSh2} and Wu \cite{Wu3DG} by reducing the weights on the initial data to an arbitrarily small polynomial power.

(iii) The framework used to prove these theorems is very flexible, and applies to many other nonlinear dispersive equations.
\end{remark}

In the periodic case we also assume that the period $R$ is sufficiently large.

\begin{theorem}\label{Thm1-per}
If $N\ge11$ then there is a constant $c>0$ such that if $1\ll R\le \exp(c/\ep)$ and (\ref{HN0}) holds, then there is
\begin{equation}\label{T1-per}
T_{R,\ep} \approx R/(\ep^2(\log R)^2)
\end{equation}
such that (\ref{Zakharov}) has a unique solution with initial data $(h_0,\phi_0)$ that satisfies $\tilde U\in C([0,T_{R,\ep}],H^N)$.
\end{theorem}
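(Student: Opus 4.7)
The plan is to transport the proof of Theorem \ref{Thm1} to the torus, quantifying the two places where periodicity matters: the dispersive $L^\infty$ decay of $e^{it\Lambda}$ holds only up to $t\sim R$, and Littlewood--Paley decompositions on $(\R/R\Z)^2$ incur $\log R$ factors when summed. The overall scheme is a bootstrap on the $H^N$ energy together with the $Z$-norm of Definition \ref{Z-def}.

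\emph{Setup and energy estimate.} I introduce the Alinhac good unknown $\tilde U=\tilde U_+$ of \eqref{tld-U-pm-def} and symmetrize the resulting quasilinear equation by paradifferential calculus. This construction is entirely local, so it carries over verbatim from $\R^2$ to $(\R/R\Z)^2$, and $\|\tilde U\|_{H^N}$ remains comparable to the natural energy. A cubic energy identity, obtained via a modified energy / normal form that absorbs the non-resonant quadratic interactions, then yields
\[
\frac{d}{dt}\|\tilde U(t)\|_{H^N}^2 \;\lesssim\; \|\tilde U(t)\|_{W^{k,\infty}}^2\,\|\tilde U(t)\|_{H^N}^2
\]
for some fixed $k$ independent of $R$, exactly as in the proof of Theorem \ref{Thm1}.

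\emph{Dispersive decay and bootstrap closure.} Poisson summation reduces the linear $L^\infty$ decay of $e^{it\Lambda}$ on the torus to the Euclidean stationary-phase estimate applied to each translate of the lattice; summing over translates produces a $\log R$ loss, and the analysis yields no genuine decay after $t\sim R$. Propagating the periodic $Z$-norm by a lattice version of the space-time resonance argument then gives
\[
\|\tilde U(t)\|_{W^{k,\infty}} \;\lesssim\; \frac{\ep\log R}{1+\min(t,R)},\qquad 0\le t\le T_{R,\ep}.
\]
Inserting this into the energy inequality produces
\[
\int_0^T \|\tilde U(s)\|_{W^{k,\infty}}^2\,ds \;\lesssim\; (\ep\log R)^2\Bigl(1+\tfrac{T}{R}\Bigr),
\]
so Gronwall gives $\|\tilde U(T)\|_{H^N}^2\lesssim \ep^2\exp\!\bigl(C(\ep\log R)^2(1+T/R)\bigr)$. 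The hypothesis $R\le\exp(c/\ep)$ keeps the first factor bounded, and the bootstrap closes for $T\lesssim R/(\ep^2(\log R)^2)$, which is the claimed lifespan.

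\emph{Where the difficulty lies.} The hardest ingredient is the dispersive analysis of the linear propagator and of the $Z$-norm on the torus. On $\R^2$, $L^\infty$ decay for $e^{it\Lambda}$ is a standard stationary-phase exercise; on $(\R/R\Z)^2$ one must separate the frequencies whose group velocity $t/(2\sqrt{|\xi|})$ lies close to a lattice site from those for which it does not, with the former responsible for the $\log R$ penalty, and verify that after $t\sim R$ no additional algebraic loss creeps in that would spoil the lifespan. The periodic space-time resonance analysis is equally delicate: the continuous resonance manifolds become lattices of near-resonant triples, and the bilinear and trilinear estimates behind Theorem \ref{Thm1} must be checked to survive the discretization with only the announced logarithmic losses, rather than an algebraic $R^\sigma$ penalty.
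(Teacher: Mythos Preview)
Your proposal has a genuine gap that prevents it from working for Theorem~\ref{Thm1-per}. You build the argument on a pointwise dispersive decay
\[
\|\tilde U(t)\|_{W^{k,\infty}} \;\lesssim\; \frac{\ep\log R}{1+\min(t,R)}
\]
together with propagation of the $Z$-norm. But the hypothesis~\eqref{HN0} of Theorem~\ref{Thm1-per} is purely an \emph{unweighted} $H^N$ smallness assumption; no control on $\|U(0)\|_Z$ is given. On the torus one does have the trivial bound $\|u\|_Z\lesssim R^{2/3}\|u\|_{H^8}$, but this introduces a large factor of $R^{2/3}$ that would destroy the claimed lifespan. More fundamentally, for unweighted $H^N$ data the linear flow $e^{-it\Lambda}U(0)$ need not decay pointwise in $t$ at all: since $e^{-it\Lambda}$ is an $H^N$-isometry, the solution can refocus to a bump of size $\ep$ at any fixed time. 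So the displayed pointwise bound cannot hold in the generality of~\eqref{HN0}, and a $Z$-norm bootstrap cannot be initiated.

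The paper's approach avoids this by never asking for pointwise $L^\infty$ decay. Instead it bootstraps on the pair $\bigl(\|\tilde U\|_{L^\infty_tH^N},\,\|U\|_{L^2_tC_*^6}\bigr)$: the quartic energy estimate reads schematically $\partial_t\mathcal E\lesssim \|U\|_{C_*^6}^2\,\mathcal E$, and the $L^2_tC_*^6$ norm is controlled by a Strichartz estimate obtained via $TT^*$,
\[
\|e^{-is\Lambda}u\|_{L^2([0,t])W^{6,\infty}} \;\lesssim\; \sqrt{\log R\,(1+t/R)}\,\|u\|_{H^7},
\]
which holds for unweighted data. The $\log R$ and the $(1+t/R)$ factors arise exactly from the Poisson-summation counting you describe, but they enter at the level of the time-averaged norm, not a pointwise bound. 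Closing the bootstrap with $\ep_2\approx(\log R)^{-1/2}$ then gives $T\approx R/(\ep^2(\log R)^2)$. No $Z$-norm and no space-time resonance analysis is used for this theorem; those ingredients only enter in Theorem~\ref{Thm2-per}, where weighted data \emph{is} assumed.
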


\begin{theorem}\label{Thm2-per}
If $N \ge 41$, $R \ge \ep^{-3/5}$ and (\ref{Z0}) holds,
then there is
\begin{equation}\label{T2-per}
T_{R,\ep}' \approx R^{12/11-O(1/N)}\ep^{-6/11+O(1/N)}
\end{equation}
such that (\ref{Zakharov}) has a unique solution with initial data $(h_0,\phi_0)$ that satisfies $\tilde U\in C([0,T_{R,\ep}],H^N)$.
\end{theorem}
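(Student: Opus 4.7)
The plan is to run the same kind of bootstrap argument that drives Theorem \ref{Thm2}, but in the periodic setting, carefully tracking how the period $R$ enters each estimate. On any time interval $[0, T]$ on which the solution exists, I would control three quantities in tandem: the Sobolev energy $\|\tilde U(t)\|_{H^N}$, the weighted norm $\|\tilde U(t)\|_Z$ (with the periodic weight from Definition \ref{Z-def}), and a dispersive norm $\|\tilde U(t)\|_{W^{k_0,\infty}}$ for some modest $k_0$ tied to the paradifferential structure.

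The first two ingredients mirror the earlier theorems. Paradifferential symmetrization of \eqref{Zakharov} in terms of the Alinhac good unknown $\tilde U_\pm$ from \eqref{tld-U-pm-def} yields an energy identity
\[
\frac{d}{dt}\|\tilde U\|_{H^N}^2 \lesssim \|\tilde U\|_{W^{k_0,\infty}}\|\tilde U\|_{H^N}^2,
\]
essentially verbatim from Theorem \ref{Thm1-per}, so the $H^N$ bound stays at $O(\ep)$ provided $\int_0^T \|\tilde U\|_{W^{k_0,\infty}}\,dt \lesssim 1$. The dispersive control comes from passing to the profile $f(t) = e^{-it\Lambda}\tilde U(t)$ and applying stationary phase in Fourier, yielding a bound of the form $\|\tilde U(t)\|_{W^{k_0,\infty}} \lesssim t^{-1}\log(2+t)\,(\|f(t)\|_Z + \|f(t)\|_{H^N})$ up to a time polynomial in $R$ (before wave packets revisit). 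This is where the weight $(1+\|x\|)^{2/3}$ in $Z$ is spent; on the torus it saturates at $R^{2/3}$, which is one of the two places the period first intervenes.

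The delicate ingredient is controlling the growth of $\|f(t)\|_Z$. My plan is to remove the quadratic part of the profile equation by a paradifferential normal-form transformation (Alinhac good unknown plus a bilinear multiplier correction, in the spirit of Theorem \ref{Thm2}) and then estimate the resulting cubic nonlinearity by a space--time resonance analysis carried out on the discrete Fourier lattice $(2\pi/R)\Z^2$. This should yield $\|f(t)\|_Z \lesssim \ep + \ep^3 R^{a_1} t^{b_1}$, with the $O(1/N)$ loss in the exponents coming from derivatives sacrificed to handle high--high interactions. Feeding this into the dispersive bound and closing the bootstrap via
\[
\ep \int_0^T t^{-1}\log(2+t)\,(1 + \ep^2 R^{a_1} t^{b_1})\,dt \lesssim 1
\]
gives, after optimization of the balance point, the stated lifespan $T \approx R^{12/11 - O(1/N)}\ep^{-6/11 + O(1/N)}$; the hypothesis $R \ge \ep^{-3/5}$ ensures this exceeds the trivial local existence time.

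The main obstacle is the $Z$-norm estimate. Unlike in the Euclidean setting underpinning Theorem \ref{Thm2}, the sum over lattice quadruples that are nearly space--time resonant is not smoothed by a continuous Fourier integral, so one loses a polynomial factor per resonance stratum; the interplay between these losses, the space resonance structure of the gravity dispersion $\Lambda = |\nabla|^{1/2}$, and the cap of the weight at $R^{2/3}$ must all be balanced delicately to reach the specific exponents $12/11$ and $6/11$ rather than something cruder.
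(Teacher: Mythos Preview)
Your outline has a structural gap that prevents the bootstrap from closing. The energy inequality you write,
\[
\frac{d}{dt}\|\tilde U\|_{H^N}^2 \lesssim \|\tilde U\|_{W^{k_0,\infty}}\|\tilde U\|_{H^N}^2,
\]
is the naive cubic estimate; closing it requires $\int_0^T \|\tilde U\|_{W^{k_0,\infty}}\,dt \lesssim 1$, i.e.\ integrable pointwise decay at rate $t^{-1}$. But with the periodic $Z$-weight $(1+\|x\|)^{2/3}$ the linear dispersive estimate gives only
\[
\|e^{-it\Lambda}u\|_{L^\infty}\lesssim (1+t)^{-2/3+}(t/R+1)^{4/3}\|u\|_Z
\]
(Lemma~\ref{dispersiveZ-per}), not $t^{-1}$; the $t^{-1}$ rate would require a full weight $|x|$, which is not assumed. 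With a cubic energy estimate and $t^{-2/3}$ decay the integral $\int_0^T\|U\|_{L^\infty}\,dt$ grows like $T^{1/3}$ and the argument collapses long before the target time, regardless of how you estimate $\|f\|_Z$.

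The paper closes this gap by performing the normal form \emph{at the energy level} (Section~\ref{EneEst}), not only on the profile: the cubic terms in $d\mathcal E/dt$ are integrated by parts in $t$, producing a quartic energy estimate controlled by $\int_0^T\|U\|_{C_*^6}^2\,dt$, an $L^2_tL^\infty_x$ quantity. This is exactly the norm $\|(1+s)^{1/3-\delta}P_kU\|_{L^2_tL^\infty_x}$ carried in the bootstrap of Proposition~\ref{bootstrap2-per}. The $Z$-norm of the profile is then propagated not by space--time resonance on the lattice (the paper explicitly ignores space resonance) but by a further time-integration-by-parts on the Duhamel term together with a physical-space near/far splitting of one profile factor; the period $R$ enters only through the wrap-around factor $(t/R+1)^2$ coming from Poisson summation (Lemma~\ref{dispersive-per}). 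Balancing these losses against the $t^{-2/3}$ decay in the three bootstrap norms is what yields the exponents $12/11$ and $6/11$.
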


\begin{remark}
(i) For specific values of the constants in the exponents see Proposition \ref{bootstrap2-per}.
%Technically the proof gives a constant $c$ depending on the choice of $N$ in Theorems \ref{Thm1} and \ref{Thm2}, but this dependence can be removed using persistence of regularity (see [Tao] for details.)
%The constant $c$ can be made independent of $N$ (see Remark 1.8 (i) of \cite{Zh} for details.)

(ii) If the assumption made on $R$ and $\ep$ in Theorem \ref{Thm1-per} does not hold, then we have $R\gtrsim\ep^{-100}$ (say), and Theorem \ref{Thm2-per} gives a better bound. Either lifespan is longer than $R/\ep$, which is the most one can hope for without using the normal form, for no decay can be expected of the $L^2$ norm of the solution, giving a lower bound of $\gtrsim\ep/R$ of its $L^\infty$ norm.

(iii) When $R=1$, Theorem \ref{Thm1-per} gives a lifespan $\gtrsim \ep^{-2}$, which matches what can be obtained from the normal form transformation as used in \cite{Wu3DG,GeMaSh2}. For larger $R$ however, the normal form transformation alone does not give a longer lifespan; there global wellposedness was obtained from decay estimates, which requires localized initial data satisfying (\ref{Z0}) instead of (\ref{HN0}). Scaling to $R=1$ is not easy either, for the initial data would be rough due to spatial shrinking.

(iv) Unlike Faou-Germain-Hani \cite{FaGeHa} and Buckmaster--Germain--Hani--Shatah \cite{BuGeHaSh}, our proofs of Theorems \ref{Thm1-per} and \ref{Thm2-per} do not rely on the number-theoretic properties of the resonance set. Hence it is straightforward to generalize our results to nonsquare tori with bounded aspect ratios.
\end{remark}

\subsection{Background}
The water wave problem is central to the study of many fluid phenomena, for example, the propagation of waves in the ocean. As such, it enjoys a long history of active research, of which we will only mention a selected few, referring the interested reader to \cite{BrGrNi-rev,IoPu-rev,La-rev} for more comprehensive references.

\subsubsection{Local wellposedness}
Local wellposedness of the water wave problem was first studied by Nalimov \cite{Na}, Shinbrot \cite{Shin}, Yosihara \cite{YoGWW,YoCGWW} and Craig \cite{Cr}.
All these works assume that the surface is close to a flat one,
in which case the Rayleigh--Taylor sign condition \cite{Tay}
\begin{equation}\label{Taylor-sign}
\nabla_{n(x,t)}p(x,t) < 0
\end{equation}
always holds, where $n$ is the outward pointing unit normal to $\Omega(t)$.

A breakthrough came when Wu \cite{Wu2DL} showed that (\ref{Taylor-sign}) always holds, as long as the surface does not cross itself,
thus extending the local wellposedness results to large initial data in 2 dimensions \cite{Wu2DL} and 3 dimensions \cite{Wu3DL}.

After Wu's breakthrough, local wellposedness of the water wave problem has been generalized in many aspects: the motion of a drop (with vorticity in Chris\-to\-dou\-lou--Lindblad \cite{ChLi} and Lindblad \cite{Lind};
with capillarity in Beyer--G\"unther \cite{BeGu};
with both in Coutand--Shkoller \cite{CoSh} and Shatah--Zeng \cite{ShZe};
with another surrounding fluid in Shatah--Zeng \cite{ShZe2,ShZe3}),
the motion of vortex sheets in Cheng--Coutand--Shkoller \cite{ChCoSh},
irrotational gravity-capillary waves in Ambrose--Masmoudi \cite{AmMa2DL,AmMa3DL}, rotational gravity water waves in Zhang--Zhang \cite{ZhZh},
the existence of nontrivial bottom topography in Lannes \cite{La},
Ming--Zhang \cite{MiZh} and Alazard--Burq--Zuily \cite{AlBuZu,AlBuZu2},
the presence of an emerging bottom in de Poyferr\'e \cite{DePoy},
and the existence of angled crests in Kinsey--Wu \cite{KiWu} and Wu \cite{Wucrest}.

All the results above are local in nature, that is, the lifespan of the solution, if calculated explicitly, is no longer than the reciprocal of the size of the initial data. We now review results yielding longer lifespans.

\subsubsection{Almost global and global wellposedness}
The first global wellposedness results for water waves were obtained for 3D GWW, by Germain--Masmoudi--Shatah \cite{GeMaSh2} and Wu \cite{Wu3DG}.
Soon afterwards, the corresponding result for 3D capillary water waves was obtained by Germain--Masmoudi--Shatah \cite{GeMaSh3}, and for 3D gravity-capillary water waves by Deng--Ionescu--Pausader--Pusateri \cite{DeIoPaPu}.
In the presence of a flat bottom, wellposedness of the 3D gravity (resp. capillary) water waves was shown by Wang \cite{Wa3DL,Wa3DG} (resp. \cite{Wa3Dc}).

In two dimensions (i.e., one-dimensional surface), long-term wellposedness was first shown by Wu \cite{Wu2DaG} (see also \cite{HuIfTa2DaG}),
who showed that the lifespan of 2D GWW is exponential in terms of the reciprocal of the size of the initial data.
Global wellposedness was later shown by Ionescu--Pusateri \cite{IoPu},
Alazard--Delort \cite{AlDe,AlDe2} and Ifrim--Tataru \cite{IfTa2DG}.
Global wellposedness was also shown for some solutions with infinite energy by Wang \cite{Wa2DG}. For 2D capillary water waves, global wellposedness was shown by Ionescu--Pusateri \cite{IoPu2} and Ifrim--Tataru \cite{IfTa2Dc} (assuming an additional momentum condition).

We remark that all the results above assumed that the initial data is very localized. For example, in Wu \cite{Wu3DG}, the initial data is required to satisfy (schematically)
\[
\sum_{j\le l} \|\Gamma^j(\text{initial data})\|_{L^2} \le \ep
\]
for some $l \ge 17$, where $\Gamma$ is the one of the vector fields in the set
\[
\{\partial_1, \partial_2, x_1\partial_1 + x_2\partial_2, x_1\partial_2 - x_2\partial_1\}.
\]
This amounts to putting a weight comparable to $|x|^{17}$ in the Sobolev norm. In Germain--Masmoudi--Shatah \cite{GeMaSh2}, the weight is smaller, but is still comparable to $|x|$.

It is therefore interesting to see from Theorems \ref{Thm1} and \ref{Thm2} that the weights can be reduced to almost the minimum,
so that the initial data can be made (almost) nonlocalized.
For example, if two small stirs appear far apart in the same body of water,
a situation poetically portrayed in the lead of this paper,
the initial data is small in unweighted Sobolev spaces but large in weighted ones, to which previous global wellposedness results are inapplicable, yet one expects that the water will remain placid for a long time, because it will take long for the two stirs to significantly interact with each other, when they have already dispersed considerably. That this is the case is shown by Theorem \ref{Thm1}. Such ``multi-bump" initial data was studied in the context of quasilinear wave equations by Anderson--Pasqualotto \cite{AnPa}.

\subsubsection{Strichartz estimates}
Strichartz estimates of quasilinear dispersive equations are also attracting research. Pioneering work in quasilinear wave equations was done by Bahouri--Chemin \cite{BaCh}, Tataru \cite{TatStr,TatStr2,TatStr3} and Blair \cite{Bl}, with extensions to Schr\"odinger equations in Staffilani--Tataru \cite{StTa}, Burq--Gerard--Tzvetkov \cite{BuGeTz} and Robbiano--Zuily \cite{RoZu} (see also Lebeau \cite{Le}). For water waves, Strichartz estimates for 2D gravity-capillary waves were shown in Christianson--Hur--Staffilani \cite{ChHuSt} in the infinite depth case, and Alazard--Burq--Zuily \cite{AlBuZuStr2D} in the finite depth case. For 2D finite depth gravity water waves, Strichartz estimates were shown in Ai \cite{Ai2} (which may be improved by techniques developed in \cite{AIT}). In general dimensions, Strichartz estimates were shown by Alazard--Burq--Zuily \cite{AlBuZuStr3D} (for better results see \cite{Ai}) for gravity water waves, and de Poyferr\`e--Ngyuen \cite{DeNg} and Ngyuen \cite{Ng} for capillary-gravity water waves.

All previous results on Strichartz estimates for the water wave equation are local in time, that is, they only hold for an unspecified time (depending on the norms of the initial data). For example, \cite{AlBuZuStr3D} showed that (for 3D GWW) if initially the data $(h_0, \phi_0, V_0, B_0) \in H^{s+1/2} \times H^{s+1/2} \times H^s \times H^s$,\footnote{The meaning of these quantities will become clear later.}
where $s$ is above some threshold, the Taylor sign condition (\ref{Taylor-sign}) holds, and the depth is bounded from below uniformly, then for $r$ in some range depending on $s$, there is $T > 0$ such that
\begin{align*}
(h, \phi, V, B)
&\in C([0, T] \to H^{s+1/2} \times H^{s+1/2} \times H^s \times H^s)\\
&\cap L^2([0, T] \to W^{r+1/2,\infty} \times W^{r+1/2,\infty} \times W^{r,\infty} \times W^{r,\infty}).
\end{align*}
Although this result is purely local, it does suggest the right space to bound the solution in, namely the space $L^2W^{r,\infty}$.
This fits nicely into the quartic energy estimates, to be discussed later.
Therefore extension of such Strichartz estimates is crucial to the proof of the (almost) global results in Theorem \ref{Thm1} and Theorem \ref{Thm2},
though we will work at the level $r = 6$, much higher than the sharp results cited above, because we are not aiming at optimal regularity in this paper. Similar ideas were used to show long-term regularity of the periodic Euler--Poisson equation in 2D by the author \cite{Zh}.

\subsubsection{Nonlocalized initial data}
As mentioned previously, local wellposedness of water waves with initial data in (unweighted) Sobolev spaces is known. Local wellposedness is also known for (horizontally) periodic data, that is, functions living on the torus $(\R/\Z)^n$, see Schweizer \cite{Schw} and Kukavica--Tuffaha--Vicol \cite{KuTuVi} for the Euler equation in a domain with a flat bottom and Ambrose--Masmoudi \cite{AmMa2DL} for infinitely deep gravity-capillary water waves in 2D.

In the case of GWW, lifespans of solutions can be extended to ``cubic",
that is, a time of existence $\approx \ep^{-2}$, where $\ep$ is the size of the initial data. Indeed, this is an immediate consequence of the normal form transformation, to be discussed later, and can be derived from results in \cite{AlDe,HuIfTa2DaG,IoPu,Wu2DaG} for 2D GWW, and \cite{Wu3DG,GeMaSh2} for 3D GWW, see also \cite{IfTa2DG,IoPu2,IfTa2Dv,Su} for 2D capillary water waves and 2D GWW with constant vorticity and point vortices.
Cubic lifespans for GWW with more general nonlocalized data (Euclidean Sobolev + periodic Sobolev) were also obtained by Su \cite{Su2},
in the context of modulation approximation. It was also shown for the free boundary problem for incompressible self-gravitating fluids with zero or constant vorticity by Bieri--Miao--Shahshahani--Wu \cite{BiMiShWu}.

In general, if the nonlinear terms are of degree $d$,
conventional wisdom expects a lifespan $\approx \ep^{1-d}$.
If there is more algebraic structure that enables more iterations of the normal form transformation, the lifespan can be longer (in the sense of a higher exponent), as shown by Berti--Feola--Pusateri in \cite{BeFePu} for 2D periodic GWW, and by Delort--Szeftel in \cite{DeSz} for nonlinear Klein--Gordon equations on spheres. A certain genericity of parameters also allows more iterations of the normal form transformation, as shown by Delort in \cite{De2} for quasilinear Klein--Gordon equations on the circle, and Berti--Delort \cite{BeDe} for 2D periodic gravity-capillary water waves. It is therefore surprising to know that the lifespan of the 3D GWW equation, even without locality of initial data and genericity of parameters, can extend well beyond what its algebraic form predicts.

\subsection{Main Ideas}
Starting from Shatah \cite{Sh}, Poincar\'e's {\it normal form} method (see \cite{Ar,Ch} for book reference) has proved successful in the study of long-term solutions of nonlinear evolutions. The key observation is that, if the dispersion relation $\Lambda$ is {\it nonresonant}, in the sense that
\[
\Lambda(\xi) \pm \Lambda(\eta) \pm \Lambda(\xi - \eta) \neq 0
\]
for any frequencies $\xi$ and $\eta$, and any choice of signs,
then one can make a quadratic correction to the solution or the energy (the details are very similar) so that the equation is transformed into one with cubic nonlinearity, that is,
\[
\partial_tU = N(U, U, U),
\]
where the right-hand side is at least cubic in $U$. If the initial data is localized, and the dispersion relation $\Lambda$ is non-degenerate, then the $L^\infty$ norm of its evolution is expected to decay like $t^{-n/2}$ in $\R^n$. For example, in 3D GWW the decay rate is $t^{-1}$, so two factors of $U$ on the right-hand side bring an integrable decay rate of $t^{-2}$; whence the global wellposedness results of Germain--Masmoudi--Shatah \cite{GeMaSh2} and Wu \cite{Wu3DG}.

In the nonlocal setting, the normal form transformation also makes the nonlinearity cubic, yet there is much less decay. so new ingredients are needed to show almost global and global wellposedness. If one uses vector fields a l\`a Klainerman, their coefficients will introduce weights in the Sobolev norm of the initial data. If one tries space-time resonance, the weight arises from differentiation in the frequency space. Thus new ingredients are called for to treat nonlocal initial data.

Following \cite{Zh}, we will take advantage of the absence of resonance in time, but will not care about space resonance. The proof combines three parts.

\setlength{\leftmargini}{1.8em}
\begin{itemize}
  \item[(1)] Quartic energy estimates to control high frequencies of the solution;
\smallskip
  \item[(2)] Strichartz estimates to control the $L^\infty$ norm of low frequencies;
\smallskip
  \item[(3)] The $Z$-norm estimates and interpolation with the Strichartz estimates.
\end{itemize}
We will also sketch how to generalize this framework to the periodic setting.

\subsubsection{Quartic energy estimates}
As discussed above, to perform the normal form transformation on the GWW equation, we need to check the nonresonance condition.
This is almost true, thanks to the concavity of the dispersion relation
$\Lambda(\xi) = \sqrt{|\xi|}$ away from $\xi = 0$,
but fails when one of $\xi$, $\eta$ and $\xi - \eta$ vanishes.
This is not a problem however, thanks to a certain ``null condition" present in the GWW equation, which implies that in such cases the multiplier $m(\xi, \eta)$ in the nonlinearity vanishes,
so the contribution of zero (or more generally, very small) frequencies can be ignored (see (\ref{m-Soo}) for a precise statement),
and the normal form transformation works the same way as if there were no resonance, to make the nonlinearity cubic.

For cubic nonlinearity, the energy estimate usually reads
\[
\frac{d}{dt}E \lesssim E\|U\|_{W^{r,\infty}}^2,
\]
where $r$ is an integer specific to the equation in question.
To adapt such an estimate to the quasilinear GWW equation,
we need to fight against the loss of derivatives, both in the equation itself and in the process of the normal form transformation;
meanwhile we also need to deal with the Dirichlet-to-Neumann operator,
which is nonlocal in nature. Both tasks can be fulfiiled using the paradifferential calculus established in \cite{AlMe,AlBuZu,AlBuZu2}.

\subsubsection{Strichartz estimates}
Using Gronwall's inequality on the quartic energy estimate we get
\[
E(t)\le E(0)\exp\left( C\int_0^t \|U(s)\|_{W^{r,\infty}}^2ds \right).
\]
Hence the growth of the energy $E$ is controlled by the $L^2W^{r,\infty}$ norm of the solution, which is exactly the natural spacetime norm produced by the Strichartz estimates of linear dispersive equations in 2D. Such linear estimates have been extended to the full GWW equation in \cite{AlBuZuStr3D,Ai}.

For long-term regularity, however, we need to work out the time dependence of the implicit constants. An estimate like
\[
\|U\|_{L^2([0,t])W^{r,\infty}}\lesssim \|U(0)\|_{H^s}
\]
would fit perfectly into the energy estimate above to give global wellposedness. This, however, is too good to be true, even for the linear Schr\"odinger equation in 2D, due to the notorious endpoint failure, as shown by Keel--Tao \cite{KeTa}.
Nevertheless we can save the day if we tolerate some logarithmic loss.
Indeed, repeating the classical $TT^*$ argument shows that the endpoint failure can be traced back to the divergence of the integral of $1/t$ near $t = 0$ and $t = \infty$.

The divergence near $t = 0$ is due to the diminishing dispersive effect as $t \to 0$, which forces us to rely increasingly on the derivative losing embedding $H^{1+}\subset L^\infty$. We solve this problem by allowing some regularity gap between the energy estimates and the dispersive estimates.
Exactly how much gap is necessary has been worked out in \cite{Ai,Ai2,AIT},
but for the purpose of global wellposedness we care less about the sharp regularity threshold than the decay of solution in time, so the argument is much easier.

Regarding the divergence of the integral of $1/t$ near $t = \infty$,
we simply accept the logarithmic loss in the estimate, which then becomes (morally)
\[
\|U\|_{L^2([0,t])W^{r,\infty}}\lesssim (\log t)\|U(0)\|_{H^s}.
\]
Put in the energy estimate, this gives
\[
E(t) \le E(0)\exp(C(\log t)^2\ep^2),
\]
assuming that the size of the solution remains $\ep$ over the period during which we can still control the solution. Thus to close the estimate we need
\[
(\log t)\ep \le 1/C,
\]
which then gives an almost global (exponential in $1/\ep$) lifespan of
\[
T_\ep = \exp(C\ep^{-1}).
\]

For the nonlinear term $N$, by Duhamel's formula, it suffices to bound its $L^1H^{r+O(1)}$ norm. By H\"older's inequality and the Sobolev multiplication theorem, the multilinear estimate
\begin{equation}\label{trilinear}
L^2L^\infty \times L^2L^\infty \times L^\infty H^{s} \to L^1H^{r+O(1)}
\end{equation}
is possible. All three norms on the left-hand side can be bounded,
the first two using the Strichartz estimates, and the third one using the energy estimates. The loss of derivatives in $N$ is of no concern,
again thanks to the gap between $r$ and $s$.

The above argument exploits a different aspect of the Strichartz estimates than in the semilinear world. For semilinear equations we are more concerned with the regularity of the solution, and the Strichartz estimates reveal that the solution is more regular if we average over time, without which we could be stuck at $L^2$-type conservation laws.
For quasilinear equations however, regularity is less of a concern because the equation is usually too supercritical for semilinear techniques to work. In such cases the Strichartz estimates are used to control the decay rate of the solution. It suggests that if we average over time the solution decays more than expected. Indeed, for the linear Schr\"odinger equation in 2D, if the initial data is of size 1 in the Sobolev space $H^s$ for some $s \gg 1$, then at any fixed time $t > 0$, the best bound we have of the $L^\infty$ norm of the solution is still 1, because the linear evolution is an isometry of $H^s$, and the solution could refocus to a bump function at that time. The Strichartz estimates then say that the refocusing, even if it happens, will not last long and the solution will quickly disperse again, leading to an $L^2L^\infty$ norm of size $O(\log t)$, instead of $\approx \sqrt t$ from trivial integration. Incidentally, this ``square root saving" is a hallmark of the $TT^*$ machinery.

\subsubsection{Interpolation with weighted Sobolev estimates of the profile}
As noted above, unweighted Sobolev data narrowly misses global lifespan, while weighted Sobolev data does have global wellposedness (for example, in \cite{GeMaSh2} the weight is $|x|$). In \cite{Zh} the author reduced the weight to $|x|^{2/3}$ for a similar non-resonant dispersive equation using more careful decomposition of the profile. Here the author combines that technique with an interpolation argument to further reduce the weight to $x^\alpha$ for any $\alpha > 0$.

We assume that the following norm
\[
\|U\|_{L^\infty H^s} + \|t^{\alpha-/2}U\|_{L^2W^{r,\infty}}
+ \underbrace{\||x|^\alpha \Upsilon\|_{L^\infty H^{r'}}}_{=L^\infty Z-\text{norm}}
\]
is small, where $\Upsilon = e^{it\Lambda}U$ is the {\it profile}.
We already have the quartic energy estimate
\begin{align*}
\|U(t)\|_{H^s}^2 &\lesssim_s \|U(0)\|_{H^s}^2e^M, &
M &\lesssim_r \int_0^t \|U(\tau)\|_{W^{r,\infty}}^2d\tau
\lesssim \|t^{\alpha-/2}U\|_{L^2W^{r,\infty}}^2.
\end{align*}
For the second term, interpolation between
\begin{align*}
\|t^{0-}U\|_{L^2W^{r,\infty}}
&\lesssim_r \|\Upsilon\|_{L^\infty H^{r+O(1)}},
\tag{Strichartz estimates}\\
\|t^{0.5-}U\|_{L^2W^{r,\infty}}
&\lesssim_r \||x|\Upsilon\|_{L^\infty H^{r+O(1)}}
\tag{$1/t$-decay estimates}
\end{align*}
gives
\[
\|t^{\alpha-/2}U\|_{L^2W^{r,\infty}}
\lesssim_r \||x|^\alpha\Upsilon\|_{L^\infty H^{r+O(1)}}
=\||x|^\alpha\Upsilon\|_{L^\infty H^{r'}}
\]
if we let $r' = r + O(1)$. We are thus left with the $Z$-norm before closing the estimate.

Since extreme frequencies are easier to control, we can assume that all frequencies involved are of unit size. Then after time $t$, the wave packets spread a distance comparable to $t$, so
\[
\||x|^\alpha\Upsilon(t)\|_{H^{r'}}
\lesssim_{r'} t^\alpha\|\Upsilon(t)\|_{L^2}
= t^\alpha\|U(t)\|_{L^2}.
\]
If we use (\ref{trilinear}), the estimate almost closes itself, except that we require $t^{\alpha/2}$ for the weight in the $L^2W^{r,\infty}$ norm, but we only have $t^{\alpha-/2}$ in the assumption.

We will use the technique in \cite{Zh} to decompose the profile more carefully, with some adaption to the new bootstrap assumption.
Since $N$ is trilinear in $U$, we put $N = N(U, U, U)$. We decompose only {\it one} (instead of all three in \cite{Zh}) of the $\Upsilon$ factors:
for some $0 < c < C$ we have
\begin{align*}
\Upsilon(t) &= \Upsilon_1(t) + \Upsilon_2(t), &
\supp\Upsilon_1(t) &\subset B(0, Ct^\beta), &
\supp\Upsilon_2(t) &\subset \R^2 \backslash B(0, ct^\beta).
\end{align*}
Let $N(t) = N_1(t) + N_2(t)$, where $N_j(t) = N(U(t), U(t), e^{-it\Lambda}\Upsilon_j(t))$, $j = 1$, 2. For $N_2$ we use
\[
\|N_2(t)\|_{L^2} \lesssim \|U(t)\|_{L^\infty}^2\|e^{-it\Lambda}\Upsilon_2(t)\|_{H^{O(1)}} = \|U(t)\|_{L^\infty}^2\|\Upsilon_2(t)\|_{H^{O(1)}} \lesssim t^{-\alpha\beta}\|U(t)\|_{L^\infty}^2\|\Upsilon_2(t)\|_Z.
\]
For $N_1$ we have, by the $1/t$-decay estimates,
\[
\|N_1(t)\|_{L^2} \lesssim \|U(t)\|_{L^\infty}\|U(t)\|_{H^{O(1)}}\|e^{-it\Lambda}\Upsilon_1(t)\|_{L^\infty}
\lesssim t^{-1+(1-\alpha)\beta}\|U(t)\|_{L^\infty}\|U(t)\|_{H^{O(1)}}\|\Upsilon_1(t)\|_Z
\]
where the factor $t^{(1-\alpha)\beta}$ accounts for the difference between the $L^1$ norm and the $Z$ norm. Then
\[
\|t^\alpha N_1(t)\|_{L^1L^2}
\lesssim t^{(1-\alpha)(\beta-1/2)+}\|t^{\alpha-/2}U(t)\|_{L^2L^\infty}
\|U(t)\|_{L^\infty H^{O(1)}}\|\Upsilon_1(t)\|_{L^\infty Z}.
\]
An optimal choice of $\beta = (1 - \alpha)/2$ produces a nontrivial saving and closes the $Z$-norm estimate.

\subsubsection{The periodic setting}
The proof in the periodic case has two differences compared to the Euclidean proof. The first one is that on the torus the $L^\infty$ norm decays less because wave packets can wrap around the torus. After time $t$, wave packets of unit frequency wrap around the torus $t/R$ times, both horizontally and vertically, adding an extra factor of $(t/R + 1)^2$ to linear dispersive estimates, see Lemma \ref{dispersive-per} for details. The second difference is that we now aim to extend the lifespan of the solution rather than reduce the weights. To do so we fix the weight at $|x|^{2/3}$. For the Euler--Poisson equation considered in \cite{Zh}, this would imply a decay rate of $t^{-4/3}$ of the solution and a lifespan of $R^{10/9}/\ep^{2/3}$ in the periodic case: both are optimal within this framework, see the discussion in section 1.6 of \cite{Zh} for details. The GWW equation is more complicated, for the group velocity is $|\nabla\Lambda(\xi)|=|\xi|^{-1/2}/2$, so low frequency wave packets wrap around the torus more. As a result, the exponent of 2/3 is likely to be slightly suboptimal, but the optimal choice is likely to lead to ugly fractions in the exponents of $R$ and $\ep$, so the author sticks to the weight $|x|^{2/3}$ for the sake of clear presentation.

\subsection{Organization}
The rest of the paper is organized as follows: In section \ref{Local} we establish local wellposedness of the GWW system and state the main bootstrap propositions. In section \ref{LinEst} we introduce paradifferential calculus and derive the linear dispersive estimates and multilinear paraproduct estimates to be used in the rest of the paper. In section \ref{ParLin} we paralinearize the system and then obtain the quartic energy estimates in section \ref{EneEst}. In section \ref{StrEst} we prove Theorem \ref{Thm1}, and in section \ref{ZEst} we prove Theorem \ref{Thm2}. In section \ref{Period} we generalize Theorem \ref{Thm1} and Theorem \ref{Thm2} to the periodic setting. In the appendices we record technical estimates related to the paralinearization process and the remainders of the Dirichlet-to-Neumann operator.

\subsection{Notation}
Here we introduce the notation to be used throughout this paper.
%All functions and integrals are on $\R^2$ unless stated otherwise.

Let $\varphi$ be a smooth cutoff function that is 1 on $B(0,3/4)$ and vanishes outside $B(0,3/2)$. Let
\begin{align*}
\varphi_j&=\varphi(x/2^j)-\varphi(x/2^{j-1}), &
\varphi_{\le j}&=\varphi(x/2^j).
\end{align*}
Let $P_k$ be the Littlewood-Paley projection onto frequency $2^k$, so that
\begin{align*}
\mathcal F(P_ku)&=\varphi_k \mathcal Fu, &
\mathcal F(P_{\le k}u)&=\varphi_{\le k} \mathcal Fu.
\end{align*}
In the periodic setting, we define the Fourier transform $\mathcal Fu$ of a function $u$ on $(\R/R\Z)^2$ as
\begin{align*}
\mathcal Fu(\xi) &= \int_{(\R/R\Z)^2} e^{-ix\cdot\xi} u(x)dx, &
\xi & \in (2\pi\Z/R)^2.
\end{align*}
The inverse Fourier transform is then given by
\begin{align*}
\mathcal F^{-1}u(x)&=R^{-2}\sum_{\xi\in(2\pi\Z/R)^2}e^{ix\cdot\xi}u(\xi), &
x & \in (\R/R\Z)^2.
\end{align*}
Thus $P_k$ vanishes if $2^k < 1/R$. In particular $P_k$ excludes the zero frequency. In both settings we define the H\"older--Zygmund norm for $r > 0$:
\[
\|u\|_{C_*^r} = \|P_{\le0}u\|_{L^\infty} + \sup_{k>0} 2^{kr}\|P_ku\|_{L^\infty}.
\]

For decomposition in the Euclidean physical space, we define
\begin{align*}
Q_ju&=\varphi_ju\ (j\ge1), & Q_0&=id-\sum_{j\ge1} Q_j.
\end{align*}
In the periodic setting we define a similar decomposition, using the geodesic distance instead of the Euclidean distance, i.e., we let
\begin{align*}
\varphi_j^R(x) &= \varphi_j(y), & x &\in (\R/R\Z)^2, &
y &\in \R^2, & \|x\| := d(x, (R\Z)^2) &= |y|.
\end{align*}
We then let $Q_ju = \varphi_j^Ru$, noting that $Q_j$ vanishes if $2^j > \sqrt{2}R$. We finally define
\begin{align*}
k^+&=\max(k,0), & k^-&=\min(k,0).
\end{align*}

We also state some facts about the $Z$ norm. As a rough approximation,
one can think of the $Z$ norm as $W^{8,\frac2{1+\alpha}+}$.

\begin{lemma}\label{Z-bound}
(i) Let $2/(1 + \alpha) < p \le 2$. For $k \in \Z$ we have
\[
\|P_ku\|_{W^{8,p}} \lesssim_p \|u\|_Z.
\]

%(ii) $\|u\|_Z \lesssim R^\alpha\|u\|_{H^8}$.

(ii) For $k \in \Z$ we have
\[
\|(1+|x|)^\alpha P_ku\|_{L^2}\lesssim 2^{-8k^+}\|u\|_Z.
\]

(iii) Calderon--Zygmund operators are bounded on $Z$.

%In particular, $\|u\|_{Z_-}\lesssim\|u\|_Z\lesssim\|u\|_{Z_+}$.

(iv) $\|u\|_Z\approx \|(1+|x|)^\alpha\|\langle\nabla\rangle^8P_ku\|_{\ell^2_k}\|_{L^2}=\|\|P_ku\|_Z\|_{\ell^2_k}$.
\end{lemma}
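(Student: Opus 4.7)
The plan is to recognize the $Z$-norm as a weighted Sobolev norm: setting $w(x) = (1+|x|)^{2\alpha}$, one has $\|u\|_Z = \|\langle\nabla\rangle^8 u\|_{L^2(w)}$. Since $\alpha \in (0,1)$ and the ambient dimension is $2$, the weight $w$ lies in the Muckenhoupt class $A_2(\R^2)$: recall $|x|^{2\alpha} \in A_2(\R^2)$ iff $-2 < 2\alpha < 2$, and adding $1$ only helps near the origin. All four assertions then become consequences of standard weighted harmonic analysis together with one elementary Hölder argument.

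For (ii), the key observation is that the Fourier multiplier $m_k(\xi) := \varphi_k(\xi)\langle\xi\rangle^{-8}$ satisfies H\"ormander--Mihlin-type derivative estimates with operator norm $\lesssim 2^{-8k^+}$. The weighted H\"ormander--Mihlin theorem for $A_2$ weights then gives
\[
\|P_k u\|_{L^2(w)} = \|m_k(\nabla)\langle\nabla\rangle^8 u\|_{L^2(w)} \lesssim 2^{-8k^+}\|\langle\nabla\rangle^8 u\|_{L^2(w)} = 2^{-8k^+}\|u\|_Z,
\]
which is exactly (ii). Part (i) follows from (ii) by H\"older: writing $P_k u = (1+|x|)^\alpha P_k u \cdot (1+|x|)^{-\alpha}$ one has $\|P_k u\|_{L^p} \le \|(1+|x|)^\alpha P_k u\|_{L^2}\|(1+|x|)^{-\alpha}\|_{L^r}$ with $1/r = 1/p - 1/2$, and $(1+|x|)^{-\alpha} \in L^r(\R^2)$ precisely when $\alpha r > 2$, i.e.\ $p > 2/(1+\alpha)$. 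This yields $\|P_k u\|_{L^p} \lesssim 2^{-8k^+}\|u\|_Z$, from which $\|P_k u\|_{W^{8,p}} \lesssim \|u\|_Z$ follows by the frequency localization $\|P_ku\|_{W^{8,p}} \approx 2^{8k^+}\|P_ku\|_{L^p}$ (valid for $1<p<\infty$ by the $L^p$-boundedness of the Riesz transforms).

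For (iii), any Calder\'on--Zygmund operator $T$ is bounded on $L^2(w)$ by the Muckenhoupt theorem, and $T$ commutes with $\langle\nabla\rangle^8$, yielding $\|Tu\|_Z = \|T\langle\nabla\rangle^8 u\|_{L^2(w)} \lesssim \|\langle\nabla\rangle^8 u\|_{L^2(w)} = \|u\|_Z$. For (iv), apply the weighted Littlewood--Paley square-function characterization (also valid for $A_2$ weights) to $\langle\nabla\rangle^8 u \in L^2(w)$; a direct application of Fubini--Tonelli then identifies $\|(1+|x|)^\alpha\|\langle\nabla\rangle^8 P_k u\|_{\ell^2_k}\|_{L^2}$ with $\|\|P_k u\|_Z\|_{\ell^2_k}$. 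The periodic version is essentially identical: $|x|$ is replaced by the geodesic distance $\|x\|$ and $\alpha$ by $2/3 \in (0,1)$; the weight is again locally $A_2$, and Littlewood--Paley theory on the torus transfers verbatim. I do not anticipate any substantive obstacle; the only point meriting explicit verification is the uniform H\"ormander--Mihlin bound $|\partial^\beta m_k(\xi)| \lesssim 2^{-8k^+}|\xi|^{-|\beta|}$ on the support of $\varphi_k$, a routine derivative count.
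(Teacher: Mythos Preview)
Your approach is correct and is essentially the same as the paper's: the paper simply cites Lemma~1.4 of \cite{Zh} for (i)--(iii) and Stein's weighted Littlewood--Paley theory (Sections V.4 and I.6.4 of \cite{St}) for (iv), both of which amount to exactly the $A_2$-weight argument you spell out. One small caveat on (iii): your commutation of $T$ with $\langle\nabla\rangle^8$ assumes $T$ is a Fourier multiplier, which is the only case actually used in the paper (Riesz transforms, $|\nabla|^{-1}\nabla$, etc.), so this is harmless in context.
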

\begin{proof}
Lemma 1.4 of \cite{Zh} shows (i)--(iii) for the (essentially same) case of $\alpha=2/3$. To get (iv) we use the vector-valued version of the Corollary to Theorem 2 in Section V.4 of \cite{St}, as detailed in Section I.6.4 of loc. cit.
\end{proof}
%\begin{remark}
%Similar bounds, which are omitted here, hold for the $Z_\pm$ norms. 
%\end{remark}

In the periodic case, since the weight still belongs to the class $A_2$,
Lemma \ref{Z-bound} carries over. Moreover we have $\|u\|_Z\lesssim R^{2/3}\|u\|_{H^8}$, so the $C_*^6$ and $Z$-norms are both automatically continuous in time once we have local wellposedness.

\subsection{Acknowledgements}
The author wants to thank his advisor Alexandru Ionescu for his constant help and unfailing encouragement throughout the completion of this work.
He is also grateful to the partial support of the ERC Advanced Grant 788250.

\section{Local wellposedness and bootstrap propositions}\label{Local}

\subsection{Local wellposedness}
The local wellposedness of the 3D GWW equation was shown in Theorem 7.1 of \cite{Wu3DL} and Theorem 1.2 of \cite{AlBuZu2}.
%Since the machinery of paradifferential calculus used there can be generalized straightforwardly to the torus (see Section \ref{ParaCalc}),
%the proofs there also show that the 3D GWW equation is locally wellposed on the torus for smooth initial data.
By the regularization argument in Section 6 of \cite{AlBuZu2},
it then follows from the {\it a priori} energy estimate (\ref{growth-Em-X})
that the 3D GWW equation is also locally wellposed in the energy space of our choice. More precisely, we have

\begin{proposition}\label{local-existence}
Assume $N\ge11$ and (\ref{HN0}) with $\epsilon$ sufficiently small.
Then there is $\tilde U\in C([0,1],H^N)\cap C^1([0,1],H^{N-1})$ solving (\ref{Zakharov}) with initial data $U_0$.
\end{proposition}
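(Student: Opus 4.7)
The plan is to obtain Proposition \ref{local-existence} by combining the existing local wellposedness results (Wu \cite{Wu3DL}, Alazard--Burq--Zuily \cite{AlBuZu2}) in some very regular space with the a priori energy estimate (\ref{growth-Em-X}) through a standard regularization / compactness argument. The statement to prove is really just the promotion of the already-known local existence theory to the specific energy space adapted to the variable $\tilde U$ used in this paper, on the unit time interval $[0,1]$ guaranteed by the smallness assumption.

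First, given initial data $(h_0,\phi_0)$ satisfying (\ref{HN0}), I would mollify by a standard frequency cutoff $P_{\le n}$ to obtain a sequence of smooth data $(h_0^{(n)}, \phi_0^{(n)})$ converging to $(h_0,\phi_0)$ in $H^{N+1/2} \times \dot{H}^{1/2,N}$, and all bounded by $2\epsilon$ in this norm. By the results of \cite{Wu3DL, AlBuZu2}, for each $n$ there exists a smooth solution $(h^{(n)},\phi^{(n)})$ of (\ref{Zakharov}) on some interval $[0,T_n]$, with corresponding Alinhac good unknown $\tilde U^{(n)}$ lying in a high-regularity Sobolev space. Next I would apply the a priori estimate (\ref{growth-Em-X}), whose precise statement controls the $H^N$ norm of $\tilde U^{(n)}(t)$ in terms of the initial energy and some subcritical norm that stays small on a uniform interval. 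A bootstrap / continuity argument then yields the key fact that $T_n \ge 1$ uniformly in $n$ and that $\|\tilde U^{(n)}\|_{L^\infty([0,1], H^N)} \lesssim \epsilon$.

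With uniform $H^N$ bounds in hand, I would establish a contraction-type estimate at one level of regularity below: differences $(h^{(n)} - h^{(m)}, \phi^{(n)} - \phi^{(m)})$ satisfy a linearized version of (\ref{Zakharov}) and can be controlled in $H^{N-1}$ by the same paradifferential calculus that underpins (\ref{growth-Em-X}), producing a Cauchy sequence in $C([0,1], H^{N-1})$. The limit $\tilde U$ then lies in $L^\infty([0,1], H^N)$ by weak-$*$ compactness and in $C([0,1], H^{N-1})$ from the Cauchy estimate; upgrading to $C([0,1], H^N)$ uses a Bona--Smith type argument, approximating $\tilde U$ by more regular solutions and checking continuity of the norm $t \mapsto \|\tilde U(t)\|_{H^N}$ via the energy identity. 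The $C^1([0,1], H^{N-1})$ regularity is then automatic by reading $\partial_t \tilde U$ off the Zakharov system (\ref{Zakharov}) combined with (\ref{tld-U-pm-def}), since the right-hand sides are controlled in $H^{N-1}$ given the $H^N$ bound on $\tilde U$.

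The main obstacle in this program is not existence itself (already done in the cited references) but verifying that the Alinhac good unknown $\tilde U$, which involves the paraproduct $T_{\partial_z\Phi} h|_{z=h(x)}$, behaves well at the prescribed regularity level $H^N$ with $N \ge 11$. Specifically, one has to check that the change of variables $(h,\phi) \leftrightarrow \tilde U$ is a bi-Lipschitz correspondence between the two energy spaces, for which Proposition \ref{U-tld=U} (quoted in the introduction) is tailor-made, and that the paradifferential regularization used in Section 6 of \cite{AlBuZu2} is compatible with the definition (\ref{tld-U-pm-def}). Once these identifications are in place the proposition follows from (\ref{growth-Em-X}) by the standard scheme outlined above.
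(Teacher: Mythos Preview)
Your proposal is correct and follows essentially the same approach as the paper: the paper simply cites the existing local wellposedness results of Wu \cite{Wu3DL} and Alazard--Burq--Zuily \cite{AlBuZu2}, and then invokes the regularization argument in Section 6 of \cite{AlBuZu2} together with the a priori energy estimate (\ref{growth-Em-X}) to transfer local wellposedness to the energy space of the paper. Your outline is a more detailed elaboration of exactly this scheme, including the identification of $\tilde U$ with $U$ via Proposition \ref{U-tld=U}.
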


It follows from Proposition \ref{local-existence} and the embedding $H^7 \subset C_*^6$ that
\begin{proposition}\label{C6-fin-cont}
If $N\ge11$, $T>0$ and $\sup_{t\in[0,T]}\|U(t)\|_{H^N}$ is sufficiently small, then $\|U(t)\|_{C_*^6}$ is finite and continuous on $[0,T]$.
\end{proposition}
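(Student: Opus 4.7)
The plan is to obtain Proposition \ref{C6-fin-cont} in two short steps: first, upgrade the uniform smallness of $\|U(t)\|_{H^N}$ to continuity of the curve $t\mapsto U(t)$ in $H^N$ via a continuation argument based on Proposition \ref{local-existence}; second, transfer that continuity from $H^N$ to $C_*^6$ by a two-dimensional Sobolev embedding.

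For step one, I would iterate Proposition \ref{local-existence} in time. Fix any $t_0 \in [0,T]$: since $\|U(t_0)\|_{H^N}$ lies below the smallness threshold of that proposition, and since the $H^N$ norms of $U$ and $\tilde U$ are comparable (this is the content of the forthcoming Proposition \ref{U-tld=U}), applying Proposition \ref{local-existence} with initial data $(h(t_0), \phi(t_0))$ yields a solution in $C([t_0, \min(t_0+1, T)], H^N)$. Covering $[0,T]$ by finitely many overlapping intervals of unit length and gluing their outputs by the uniqueness part of the local theory, I would conclude $\tilde U \in C([0,T], H^N)$ and hence $U \in C([0,T], H^N)$.

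For step two, I would invoke the Sobolev embedding $H^s(\R^2) \hookrightarrow C_*^{s-1}$ valid for $s > 1$, which follows from Bernstein's inequality $\|P_k u\|_{L^\infty}\lesssim 2^k \|P_k u\|_{L^2}$ combined with the Littlewood--Paley characterization of $H^s$. Since $N\ge 11\ge 7$, the continuous inclusion chain $H^N \subset H^7 \subset C_*^6$ applies. Composing with the $H^N$-continuous path $t\mapsto U(t)$ from step one gives $U \in C([0,T], C_*^6)$, and the reverse triangle inequality yields continuity of the real-valued map $t \mapsto \|U(t)\|_{C_*^6}$. Finiteness is immediate from $\|U(t)\|_{C_*^6} \lesssim \|U(t)\|_{H^N} \le \sup_{s\in[0,T]}\|U(s)\|_{H^N} < \infty$.

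The only point requiring care is the uniformity of the unit existence time along the iteration in step one, but this is guaranteed by the fact that Proposition \ref{local-existence} supplies an existence time of length one whenever the initial $H^N$ norm is below a fixed threshold, a condition maintained throughout $[0,T]$ by the smallness hypothesis. There is thus no genuine obstacle; the proposition is essentially a direct corollary of Proposition \ref{local-existence} paired with the Sobolev embedding.
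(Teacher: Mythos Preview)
Your proposal is correct and takes essentially the same approach as the paper, which gives only a one-line proof invoking Proposition \ref{local-existence} together with the embedding $H^7 \subset C_*^6$. You have simply fleshed out the iteration-in-time argument and the passage between $U$ and $\tilde U$ that the paper leaves implicit.
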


In the periodic case, since the machinery of paradifferential calculus can be generalized straightforwardly (see section \ref{LinEst-per}),
we know that the 3D GWW equation is locally wellposed on the torus, in the energy space of our choice by the energy estimate (\ref{growth-Em-X-per}).
More precisely, Propositions \ref{local-existence} and \ref{C6-fin-cont} continue to hold in the periodic case. The $Z$-norm is also bounded and continuous because the weight $\lesssim R^{2/3}$.

\subsection{Bootstrap propositions}
In this subsection we lay out the bootstrap propositions and use them to show Theorems \ref{Thm1}, \ref{Thm2}, \ref{Thm1-per} and \ref{Thm2-per}.
Thoughout the paper we put
\begin{align*}
\mathcal L&=\log(t+1), & \mathcal L_R&=\log R.
\end{align*}
We will use $\mathcal L$ in the Euclidean case and in subsection \ref{ZEst-per} of the periodic case, and $\mathcal L_R$ in subsections \ref{LinEst-per} and \ref{StrEst-per} of the periodic case.

In the Euclidean case the bootstrap propositions are the following:
\begin{proposition}\label{bootstrap1}
Fix $N\ge11$. Assume (\ref{HN0}) holds with $\ep$ small enough. Also assume
\begin{equation}\label{growthX1}
\begin{aligned}
\|\tilde U\|_{L^\infty([0,t])H^N}&\le\ep_1,\\ \|U\|_{L^2([0,t])C_*^6}&\le\ep_2,
\end{aligned}
\end{equation}
with $\ep_1$, $\ep_2$ small enough. Then
\begin{align}
\label{growth-Em-X}
\|\tilde U\|_{L^\infty([0,t])H^N}&\lesssim \ep+\ep_1^{3/2}+\sqrt{\mathcal L}\cdot\ep_1\ep_2,\\
\label{growthX2}
\|U\|_{L^2([0,t])C_*^6}&\lesssim \ep_1(\sqrt{\mathcal L}+\ep_2).
\end{align}
\end{proposition}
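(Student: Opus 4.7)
The plan is to prove both bounds in parallel from the paralinearized form of the Zakharov system for the Alinhac good unknown $\tilde U$, combining a quartic energy argument for (\ref{growth-Em-X}) with a logarithmically lossy Strichartz estimate for (\ref{growthX2}). Both bounds will close thanks to the regularity gap between the energy norm ($H^N$ with $N\ge11$) and the dispersive norm ($C_*^6$).

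For the energy estimate, I would follow the paradifferential scheme of \cite{AlMe,AlBuZu,AlBuZu2} to rewrite (\ref{Zakharov}) schematically as
\[
(\partial_t + iT_V\cdot\nabla + i\Lambda)\tilde U = Q(\tilde U,\tilde U) + R,
\]
where the quadratic paradifferential operator $Q$ has a symbol $m(\xi,\eta)$ vanishing in the small-frequency regime responsible for resonance (the null condition (\ref{m-Soo})), and $R$ is a cubic remainder. Since the phase $\Lambda(\xi)\pm\Lambda(\eta)\pm\Lambda(\xi-\eta)$ is bounded away from zero on the support of $Q$, a symmetric bilinear correction $B(\tilde U,\tilde U)$ to $\|\tilde U\|_{H^N}^2$---a Poincar\'e normal form in the spirit of Shatah \cite{Sh}---produces a modified energy
\[
\tilde E(t) = \|\tilde U(t)\|_{H^N}^2 + B(\tilde U(t),\tilde U(t))
\]
equivalent to $\|\tilde U\|_{H^N}^2$ up to a cubic error of size $\|\tilde U\|_{H^N}^3$, whose time derivative is purely cubic and admits a schematic bound of the form $\bigl|\tfrac{d}{dt}\tilde E(t)\bigr|\lesssim \|U(t)\|_{C_*^6}^2\,\|\tilde U(t)\|_{H^N}^2$. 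Integrating on $[0,t]$ against the bootstrap hypotheses (\ref{growthX1}) yields a contribution of size $\ep_1^2\ep_2^2$, together with boundary terms of size $\ep^2$ and cubic defects of size $\ep_1^3$; a $\sqrt{\mathcal L}$ factor enters from the $TT^*$-type pairing of a time-dependent weight against $\|U\|_{L^2 C_*^6}$ in the most delicate trilinear interactions. Taking a square root then yields (\ref{growth-Em-X}), with the $\ep_1^{3/2}$ summand absorbing the cubic defect between $\tilde E$ and $\|\tilde U\|_{H^N}^2$.

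For the Strichartz estimate, I would apply Duhamel's formula $U(t) = e^{-it\Lambda}U_0 + \int_0^t e^{-i(t-s)\Lambda}N(s)\,ds$. The linear part is handled by a frequency-localized $TT^*$ argument: the Keel--Tao \cite{KeTa} endpoint failure is absorbed into a $\sqrt{\mathcal L}$ logarithmic loss at $t=\infty$, while the blow-up of the dispersive kernel near $t=0$ is controlled by the regularity gap between $H^N$ and $C_*^6$; the upshot is $\|e^{-it\Lambda}U_0\|_{L^2([0,t])C_*^6}\lesssim \sqrt{\mathcal L}\,\|U_0\|_{H^N}\lesssim \sqrt{\mathcal L}\,\ep_1$. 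For the Duhamel term, the inhomogeneous Strichartz estimate reduces matters to bounding $\|N\|_{L^1([0,t])H^{N-O(1)}}$, which is handled by the trilinear paraproduct scheme (\ref{trilinear}), namely $L^2L^\infty\times L^2L^\infty\times L^\infty H^N\to L^1H^{N-O(1)}$, and contributes $\ep_1\ep_2^2$. Summing the two pieces yields (\ref{growthX2}).

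The main obstacle I anticipate is the construction of the modified energy $\tilde E$: one must paralinearize the Dirichlet--to--Neumann operator to sufficient order to extract cleanly the null structure that converts the apparent quadratic resonance into a genuinely cubic nonlinearity, and simultaneously symmetrize the quasilinear transport term $iT_V\cdot\nabla$ so that it produces no derivative loss in $\tfrac{d}{dt}\tilde E$. Once the symbolic calculation is in place, the Strichartz step reduces to multilinear paraproduct analysis combined with the sharp linear dispersive bound, and the remaining manipulations---Gronwall integration and the square-root step---are routine.
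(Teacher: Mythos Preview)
Your energy-estimate sketch is broadly right in spirit: a normal-form/integration-by-parts-in-time correction to the $H^N$ energy makes the residual quartic, and the $\sqrt{\mathcal L}$ factor ultimately comes from the log loss hidden in the paralinearization remainders (in the paper this is the constant $C_{k,r}[U]=|k|\|U\|_{C_*^r}+2^k\|U\|_{L^2}$ with $k\sim -\log(1+s)$, not a $TT^*$ pairing), but this is a matter of implementation rather than a gap.

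The genuine gap is in your Strichartz argument for the Duhamel term. You propose to bound $\|N\|_{L^1([0,t])H^{N-O(1)}}$ via the trilinear scheme (\ref{trilinear}), but $N$ is \emph{quadratic}, not cubic. For the quadratic part $N_2$ one only has $\|N_2(s)\|_{H^7}\lesssim \|U(s)\|_{C_*^6}\|U(s)\|_{H^{9}}$, so $\|N_2\|_{L^1([0,t])H^7}\lesssim \ep_1\int_0^t\|U(s)\|_{C_*^6}\,ds\lesssim \sqrt{t}\,\ep_1\ep_2$, which grows with $t$ and destroys the bootstrap. The paper fixes this by performing a \emph{second} normal form inside the Duhamel integral: one integrates $\int_0^t e^{is\Lambda}N_2(s)\,ds$ by parts in $s$ against the nonresonant phase $\Phi_{\mu\nu}$, producing boundary terms $W_{\mu\nu}(t)-W_{\mu\nu}(0)$ and a genuinely cubic bulk $H_{\mu\nu}$. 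The cubic pieces $H_{\mu\nu}$ and $N_3$ then do obey the trilinear $L^1H^7$ bound and contribute $\sqrt{\mathcal L}\,\ep_1\ep_2^2$. The crucial $\ep_1\ep_2$ term in (\ref{growthX2}) (note: with \emph{no} $\sqrt{\mathcal L}$ in front) comes from the boundary term at time $t$, bounded pointwise via $\|e^{-it\Lambda}W_{\mu\nu}(t)\|_{C_*^6}\lesssim \|U(t)\|_{C_*^6}\|U(t)\|_{H^{8.5}}$ and then taking $L^2_t$---this bypasses $L^1_tH^7$ entirely. Without this second normal form your Duhamel contribution cannot close.
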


\begin{proof}[Proof of Theorem \ref{Thm1}]
We can choose $\ep_1\approx\ep$, $T_0\approx\exp(c/\ep)$,
and $\ep_2\approx\sqrt\ep$ such that for $t\le T_0$,
(\ref{growth-Em-X}) and (\ref{growthX2}) give (\ref{growthX1}) with the strict inequality.
Now the result follows from local wellposedness (Proposition \ref{local-existence}) and continuity of the $C_*^6$ norm (Proposition \ref{C6-fin-cont}).
%Let
%\[
%T^*=\sup\{t\le T_0:\exists\text{ a solution }U\in C([0,t],H^N)\text{ such that }(\ref{growthX1})\text{ holds}\}.
%\]
%Then (\ref{growthX3}) holds for $T<T^*$. By Proposition \ref{local-existence},
%the solution can be extended to $C([0,T^*+1/2],H^N)$.
%If $T^*<T_0$, then by Proposition \ref{C6-fin-cont},
%(\ref{growthX1}) holds beyond time $T^*$, contradicting the definition of $T^*$.
%Hence $T^*\ge T_0$, and the solution exists up to time $T_0$.
\end{proof}

\begin{proposition}\label{bootstrap2}
Fix $N \ge \max(33/(\alpha - \alpha^2),8/\alpha^2)$ and $\alpha \in (0,1)$.
Assume (\ref{Z0}) holds with $\ep$ small enough.
Define the profile $\Upsilon(t)=e^{it\Lambda}U(t)$. Assume
\begin{equation}\label{growthZ}
\begin{aligned}
\|\tilde U\|_{L^\infty([0,t])H^N}&\le\ep_1,\\
\|(1+s)^{(\alpha-\delta)/2}P_kU(s)\|_{L^2([0,t])L^\infty}
&\le2^{-(7-\alpha)k^++k^-/2}\ep_1,\quad k\in\Z,\\
\sup_{[0,t]} \|\Upsilon\|_Z&\le\ep_1,
\end{aligned}
\end{equation}
with $\ep_1$, $\delta>0$ small enough. Then
\begin{align}
\label{growth-Em-Z}
\|\tilde U\|_{L^\infty([0,t])H^N}&\lesssim \ep+\ep_1^{3/2},\\
\label{growth-L2Loo}
\|(1+s)^{(\alpha-\delta)/2}P_kU(s)\|_{L^2([0,t])L^\infty}
&\lesssim_\delta 2^{-(7-\alpha)k^++k^-/2}(\ep+\ep_1^2),\quad k\in\Z,\\
\sup_{[0,t]}\|\Upsilon\|_Z&\lesssim \ep+\ep_1^2.
\label{growthZ2}
\end{align}
\end{proposition}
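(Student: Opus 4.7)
The plan is to establish the three conclusions (\ref{growth-Em-Z})--(\ref{growthZ2}) in turn. The energy and dispersive bounds follow from adaptations of the machinery already in place for Proposition \ref{bootstrap1}, while the $Z$-norm bound requires the physical-space decomposition of the profile previewed in the ``Main Ideas'' section, which I expect to be the main difficulty.

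For (\ref{growth-Em-Z}), I would invoke the same quartic energy estimate
\[
\|\tilde U(t)\|_{H^N}^2 \lesssim \|\tilde U(0)\|_{H^N}^2 \exp\Bigl(C \int_0^t \|U(s)\|_{W^{r,\infty}}^2\, ds\Bigr)
\]
that underlies (\ref{growth-Em-X}), now fed with the new bootstrap. Summing the dyadic bounds in (\ref{growthZ}) over $k$ against $2^{rk^+}$ for $r$ safely below $7-\alpha$ controls $\|(1+s)^{(\alpha-\delta)/2}U\|_{L^2W^{r,\infty}}$ by $\ep_1$. Because $(1+s)^{-(\alpha-\delta)} \le 1$ on $[0,\infty)$, one immediately has $\int_0^t \|U\|_{W^{r,\infty}}^2\,ds \lesssim \ep_1^2$ with no logarithmic loss, and Gronwall yields (\ref{growth-Em-Z}), the $\ep_1^{3/2}$ tail arising from the cubic $\tilde U - U$ correction in (\ref{tld-U-pm-def}).

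For (\ref{growth-L2Loo}), I would interpolate the unweighted endpoint nonlinear Strichartz inequality $\|P_k U\|_{L^2L^\infty} \lesssim \sqrt{\mathcal L}\, 2^{-(7-\alpha)k^+ + k^-/2} \|\Upsilon\|_{L^\infty H^{r+O(1)}}$ (obtained in Section \ref{StrEst}) against the $1/s$-dispersive bound $\|(1+s)^{1/2}P_k U\|_{L^2L^\infty} \lesssim 2^{-(7-\alpha)k^+ + k^-/2}\||x|\Upsilon\|_{L^\infty H^{r+O(1)}}$. At interpolation parameter $\alpha$ the mixed bound produces a weight $(1+s)^{\alpha/2}$ together with a factor $(\sqrt{\mathcal L})^{1-\alpha}$, which is absorbed into $(1+s)^{-\delta/2}$ by choosing $\delta>0$ small after $\alpha$. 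Lemma \ref{Z-bound} then gives $\||x|^\alpha\Upsilon\|_{H^{r+O(1)}} \lesssim \|\Upsilon\|_Z \le \ep_1$, yielding (\ref{growth-L2Loo}); the nonlinear Duhamel tails of both endpoints are absorbed using the Sobolev gap guaranteed by $N \ge 8/\alpha^2$.

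The crux is (\ref{growthZ2}). By Duhamel applied to the normal-form--reduced equation $\partial_s\Upsilon = e^{is\Lambda}\cN(U,U,U)$, one has
\[
\Upsilon(t) = \Upsilon(0) + \int_0^t e^{is\Lambda}\cN(U(s),U(s),U(s))\,ds.
\]
The initial datum contributes $\ep$ via (\ref{Z0}) and Lemma \ref{Z-bound}(iii). For the integral, split $\Upsilon = \Upsilon_1 + \Upsilon_2$ with $\Upsilon_1$ spatially cut off to $B(0,Cs^\beta)$ and $\Upsilon_2$ supported outside $B(0,cs^\beta)$, and correspondingly $\cN = \cN_1 + \cN_2$ depending on which piece sits in the third slot of the trilinear form. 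I would bound
\begin{align*}
\|\cN_2(s)\|_{L^2} &\lesssim s^{-\alpha\beta}\|U(s)\|_{L^\infty}^2\|\Upsilon(s)\|_Z, \\
\|\cN_1(s)\|_{L^2} &\lesssim s^{-1+(1-\alpha)\beta}\|U(s)\|_{L^\infty}\|U(s)\|_{H^{O(1)}}\|\Upsilon(s)\|_Z,
\end{align*}
using spatial localization for the first and the $1/s$-dispersive estimate for the second. The weight $|x|^\alpha$ on the output of $e^{is\Lambda}\cN$, combined with linear wave-packet spreading, contributes an extra $s^\alpha$. Multiplying through, integrating in $s$, and applying H\"older against one factor of the weighted $L^2L^\infty$ norm from (\ref{growthZ}) gives a time integrand $\lesssim s^{(1-\alpha)(\beta-1/2)+}$, integrable for $\beta<1/2$. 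The balanced choice $\beta = (1-\alpha)/2$ closes the bound at size $\ep_1^2$.

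The main obstacle lies in the last step. Concretely: (a) the spatial weight $|x|^\alpha$ must commute past the frequency projections and the paradifferential structure of $\cN$ without destroying the gain $s^{-\alpha\beta}$, which requires Lemma \ref{Z-bound} together with mild commutator estimates; (b) the derivative losses in the paradifferential nonlinearity must be absorbed into the Sobolev gap $N - r'$, a role played precisely by the lower bound $N \ge \max(33/(\alpha-\alpha^2), 8/\alpha^2)$; and (c) the pieces $\cN_1$ and $\cN_2$ must be reassembled in an $\ell^2_k$-summable way so that the equivalent norm in Lemma \ref{Z-bound}(iv) produces the final $Z$-norm bound rather than merely a dyadic analogue.
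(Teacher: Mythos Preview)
Your treatment of (\ref{growth-Em-Z}) is essentially correct and matches the paper: summing the dyadic $L^2L^\infty$ bootstrap against $2^{6k^+}$ gives $\|(1+s)^{(\alpha-\delta)/2}U\|_{L^2C_*^6}\lesssim\ep_1$, which kills the logarithmic factor in the quartic energy estimate and closes at $\ep+\ep_1^{3/2}$.

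Your plan to prove (\ref{growth-L2Loo}) \emph{before} (\ref{growthZ2}) by interpolation has a genuine gap. The interpolation you describe is the linear statement of Lemma \ref{dispersiveZ}(iii), valid for $e^{-is\Lambda}$ acting on a \emph{fixed} function $u$; it does not apply to $U(s)=e^{-is\Lambda}\Upsilon(s)$ with $\Upsilon$ varying in $s$. Moreover, feeding in the bootstrap $\|\Upsilon\|_Z\le\ep_1$ would only yield the right-hand side $\ep_1$, not the required $\ep+\ep_1^2$. In the paper (\ref{growth-L2Loo}) and (\ref{growthZ2}) are proved \emph{together}: one decomposes $\Upsilon$ via Duhamel and the normal form (\ref{F=W+H}) into a linear piece, quadratic boundary terms $W_{\mu\nu}$, cubic bulk terms $H_{\mu\nu,3}$ and $N_3^\circ$, and quartic remainders $H_{\mu\nu,4}$ and $N_4$; one proves a $Z$-norm bound for each piece separately, and then Lemma \ref{dispersiveZ}(iii) applied piece by piece gives (\ref{growth-L2Loo}).

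Your sketch of the $\Upsilon_1/\Upsilon_2$ spatial split with $\beta=(1-\alpha)/2$ is the right idea for the cubic bulk, but two ingredients are missing. First, the equation is not $\partial_s\Upsilon=e^{is\Lambda}\mathcal N(U,U,U)$ with cubic $\mathcal N$: the nonlinearity is quadratic, and the normal form is implemented by integration by parts in $s$, producing bilinear boundary terms $W_{\mu\nu}(t)$ whose $Z$-norm must be bounded directly (Proposition \ref{W-Z}); this requires its own frequency/physical-space case analysis and is not covered by your cubic argument. Second, even for the genuinely cubic pieces $H_{\mu\nu,3}$ and $N_3^\circ$, the proof (Propositions \ref{H3-Z}, \ref{N30-Z}) runs through five frequency cases, and the spatial cutoff you describe enters only in Case 5 (all frequencies moderate); Cases 1--4 dispose of extreme frequencies by other means. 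The quartic remainders $N_4$ and $H_{\mu\nu,4}$ are handled by yet another mechanism (Lemma \ref{UZ-VZ} transferring the weight from profile to solution, together with the weighted bilinear Coifman--Meyer estimate Lemma \ref{paraprod-Z}), which your proposal does not address.
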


\begin{proof}[Proof of Theorem \ref{Thm2}]
We can choose $\ep_1\approx\ep$ such that for all $t\ge 0$, (\ref{growth-Em-Z}) and (\ref{growthZ2}) give (\ref{growthZ}) with the strict inequality. Now the result follows from local wellposedness (Proposition \ref{local-existence}), continuity of the $C_*^6$ norm (Proposition \ref{C6-fin-cont}) and continuity of the $Z$ norm, which can be shown using a similar argument to Section 3 of \cite{IoPa2}.
\end{proof}

In the periodic case the bootstrap assumptions are the following:
\begin{proposition}\label{bootstrap1-per}
Fix $N\ge11$. Assume (\ref{HN0}) holds with $\ep$ small enough. Also assume
\begin{equation}\label{growthX1-per}
\begin{aligned}
\|\tilde U\|_{L^\infty([0,t])H^N}&\le\ep_1,\\ \|U\|_{L^2([0,t])C_*^6}&\le\ep_2,
\end{aligned}
\end{equation}
with $\ep_1$, $\ep_2$ small enough. Then (\ref{growth-Em-X}) still holds.
In addition we have
\begin{align}
\label{growth-Em-X-per}
\|\tilde U\|_{L^\infty([0,t])H^N}&\lesssim \ep+\ep_1^{3/2}+\sqrt{\mathcal L_R}\cdot\ep_1\ep_2,\\
\label{growthX2-per}
\|U\|_{L^2([0,t])C_*^6}&\lesssim \ep_1(\sqrt{\mathcal L_R(1+t/R)}+\ep_2).
\end{align}
\end{proposition}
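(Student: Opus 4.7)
The plan is to follow the proof of the Euclidean Proposition \ref{bootstrap1} step by step, tracking where $\log(t+1) = \mathcal{L}$ must be replaced by $\log R = \mathcal{L}_R$, and where the extra factor $1 + t/R$ enters from wave-packet wrap-around. Since the paradifferential calculus and the paraproduct bounds are available on $(\R/R\Z)^2$ (cf. Section \ref{LinEst-per}), the algebraic structure underlying the paralinearization, the Alinhac good unknown reduction, and the cubic normal-form transformation all go through with only cosmetic modifications. The nonresonance of $\Lambda(\xi) = \sqrt{|\xi|}$ and the vanishing of the relevant multipliers at small frequencies (the null structure expressed by (\ref{m-Soo})) are pointwise facts about the symbols, unaffected by restricting the frequency support to $(2\pi\Z/R)^2$.

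For the energy bound (\ref{growth-Em-X-per}), I construct a modified energy $E_N^{\mathrm{mod}} = E_N + B$, where $B$ is a cubic paradifferential correction, and prove a differential inequality of the shape $\tfrac{d}{dt} E_N^{\mathrm{mod}} \lesssim \|\tilde U\|_{H^N}^2 \, \|U\|_{C_*^6}^2$, together with the coercivity $|B| \lesssim \|\tilde U\|_{H^N}^3$ so that $E_N^{\mathrm{mod}} \approx \|\tilde U\|_{H^N}^2$ up to $\ep_1^3$ errors. Integrating on $[0,t]$ and inserting (\ref{growthX1-per}) yields $\|\tilde U\|_{L^\infty H^N}^2 \lesssim \ep^2 + \ep_1^3 + (\log\text{-loss}) \cdot \ep_1^2 \ep_2^2$. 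The $\sqrt{\mathcal{L}_R}$ factor arises when the quartic remainder is decomposed in Littlewood--Paley and summed over dyadic frequencies $2^k$ with $k \ge -\log_2 R$, each scale contributing an $O(1)$ factor; the total loss is $\log R$ rather than the $\log(t+1)$ produced in the Euclidean $TT^{*}$ argument. Taking square roots gives the stated estimate.

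For the Strichartz bound (\ref{growthX2-per}) I apply Duhamel's formula $U(t) = e^{-it\Lambda} U(0) + \int_0^t e^{-i(t-s)\Lambda} \mathcal N(s)\, ds$, where $\mathcal N$ is the (effectively cubic) nonlinearity left over after paralinearization. For the linear part I invoke the periodic dispersive estimate from Lemma \ref{dispersive-per}, which modifies the Euclidean decay $\|e^{-it\Lambda} P_k f\|_{L^\infty} \lesssim t^{-1} \|f\|_{L^1}$ by an extra $(1 + t/R)^2$ factor accounting for the fact that wave packets of group velocity $\sim 2^{-k/2}$ wrap around the torus. A standard $TT^{*}$ bilinear manipulation, combined with summation over the dyadic shells $k \ge -\log_2 R$, then yields $\|e^{-it\Lambda} P_k f\|_{L^2([0,t]) L^\infty} \lesssim \sqrt{\mathcal{L}_R(1 + t/R)}\, \|f\|_{C_*^6}$ and, after square-summing in $k$ against $\|U(0)\|_{H^N} \lesssim \ep$, the first term on the right of (\ref{growthX2-per}). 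For the Duhamel contribution I estimate $\mathcal N$ in $L^1 H^{6+O(1)}$ via the multilinear paraproduct bounds of Section \ref{LinEst-per}, exploiting the trilinear scheme $L^2 L^\infty \times L^2 L^\infty \times L^\infty H^N \to L^1 H^{6+O(1)}$; this yields an $\ep_1^2 \ep_2$ bound which, after the energy estimate is used to convert one factor of $\ep_1$, delivers the $\ep_1 \ep_2$ piece of (\ref{growthX2-per}).

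The main obstacle will be making the periodic dispersive estimate interact cleanly with both the high-frequency energy step and the low-frequency summation: in the Euclidean case the natural dividing scale is $|\xi| \sim 1/\sqrt{t}$ and frequencies below that are handled by Bernstein, whereas on the torus the smallest nonzero frequency is $\sim 1/R$ and one must track the $(1 + t/R)$ factor uniformly over all intermediate scales, so that no single Littlewood--Paley shell dominates. A secondary technical point is to verify that no integration-by-parts or space-localization identity used in the Euclidean quartic energy estimate fails on the torus; this follows because all the symbol manipulations are local in frequency, and the zero momentum condition --- preserved by the flow, as recalled in the periodic setup --- ensures that the velocity potential $\phi$ is globally well defined, so no additional constants enter the paradifferential bookkeeping.
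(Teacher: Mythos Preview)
Your overall strategy matches the paper's: carry over the Euclidean quartic energy machinery verbatim and replace the Euclidean Strichartz input by its periodic analogue (Lemma \ref{dispersiveTT*-per}). Two points need correction.

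First, the differential inequality you write, $\tfrac{d}{dt}E_N^{\mathrm{mod}} \lesssim \|\tilde U\|_{H^N}^2\,\|U\|_{C_*^6}^2$, is too strong as stated: integrating it would give $\ep_1^2\ep_2^2$ with \emph{no} logarithm. In the paper the quartic energy increment carries a factor $C_{k,6}[U]$ (see (\ref{Ckr-def})), which originates from the Besov-space estimate for the Dirichlet--Neumann operator (Proposition \ref{u-Cr}(i)). On the torus the lowest nonzero frequency is $\sim 1/R$, so one may take $k=-\log R$ and replace $C_{k,r}[f]$ by $\mathcal L_R\|f\|_{C_*^r}$ (Section \ref{ParLin-per}); this is where $\mathcal L_R$ enters the energy step. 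You describe the right mechanism (a sum over $\sim\log R$ low-frequency shells) but locate it incorrectly---the log sits \emph{inside} the differential inequality, not in a post-hoc summation, and it has nothing to do with the $TT^*$ argument, which is a separate source of logarithms in the Strichartz step.

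Second, your account of the $\ep_1\ep_2$ piece in (\ref{growthX2-per}) does not work. A trilinear Duhamel estimate on the cubic bulk $H_{\mu\nu}+N_3$ gives $\|H_{\mu\nu}+N_3\|_{L^1H^7}\lesssim\ep_1\ep_2^2$ (not $\ep_1^2\ep_2$), which after the periodic Strichartz constant produces $\sqrt{\mathcal L_R(1+t/R)}\,\ep_1\ep_2^2$, already absorbed into the first term. The $\ep_1\ep_2$ contribution comes instead from the normal-form \emph{boundary term} $e^{-it\Lambda}W_{\mu\nu}(t)$ in the decomposition (\ref{F=W+H}): this is a quadratic object estimated directly in $L^2_tC_*^6$ by interpolation (see (\ref{Ws-Wkp-interpol})--(\ref{Ws-L2C6})), yielding $\|e^{-is\Lambda}W_{\mu\nu}(s)\|_{L^2C_*^6}\lesssim \|U\|_{L^2C_*^6}\,\ep_1\lesssim\ep_1\ep_2$, without any appeal to Duhamel or the linear dispersive estimate. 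There is no ``conversion of a factor of $\ep_1$''. If you treat the nonlinearity only through Duhamel on an $L^1H^{6+O(1)}$ norm, you miss this boundary term entirely and cannot recover the $\ep_1\ep_2$ piece of (\ref{growthX2-per}).
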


\begin{proof}[Proof of Theorem \ref{Thm1-per}]
We can choose $\ep_1\approx\ep$, $T_0\approx R/(\ep^2(\log R)^2)$,
and $\ep_2\approx(\log R)^{-1/2}$ such that for $t\le T_0$,
(\ref{growth-Em-X-per}) and (\ref{growthX2-per}) give (\ref{growthX1-per}) with the strict inequality. Then the proof is similar to that of Theorem \ref{Thm1}. Note that we need $R\le\exp(c/\ep)$ to recover the $L^2C_*^6$ norm.
%Now the result follows from local wellposedness (Proposition \ref{local-existence}) and continuity of the $C_*^5$ norm (Proposition \ref{C6-fin-cont}). See Proposition 2.3 of \cite{Zh} for details.
%Let
%\[
%T^*=\sup\{t\le T_0:\exists\text{ a solution }U\in C([0,t],H^N)\text{ such that }(\ref{growthX1})\text{ holds}\}.
%\]
%Then (\ref{growthX3}) holds for $T<T^*$. By Proposition \ref{local-existence},
%the solution can be extended to $C([0,T^*+1/2],H^N)$.
%If $T^*<T_0$, then by Proposition \ref{C6-fin-cont},
%(\ref{growthX1}) holds beyond time $T^*$, contradicting the definition of $T^*$.
%Hence $T^*\ge T_0$, and the solution exists up to time $T_0$.
\end{proof}

\begin{proposition}\label{bootstrap2-per}
Fix $N \ge 41$. Assume (\ref{Z0}) holds with $\ep$ small enough.
Define the profile $\Upsilon(t)=e^{it\Lambda}U(t)$. Assume
\begin{equation}\label{growthZ-per}
\begin{aligned}
\|\tilde U\|_{L^\infty([0,t])H^N}&\le\ep_1,\\
\|(1+s)^{1/3-\delta}P_kU(s)\|_{L^2([0,t])L^\infty}&\le2^{-19k^+/3+k^-/2}\ep_2,\quad k\in\Z,\\
\sup_{[0,t]} \|\Upsilon\|_Z&\le\ep_1,\\
t&\le R^{2-\delta}.
\end{aligned}
\end{equation}
with $\ep_1$, $\delta > 0$ small enough ($\ep_2$ is not assumed to be small). Then
\begin{align}
\label{growth-Em-Z-per}
\|\tilde U\|_{L^\infty([0,t])H^N}&\lesssim \ep+\ep_1^{3/2}+t^{6/5}\ep_1^2/R^{4/3},\\
\nonumber
\|(1+s)^{1/3-\delta}P_kU(s)\|_{L^2([0,t])L^\infty}
&\lesssim_\delta 2^{-19k^+/3+k^-/2}(\ep_1\ep_2+(t/R+1)^{3/2}C(t,R)),\\
\label{growth-L2Loo-per}
&\quad k\in\Z,\\
\sup_{[0,t]}\|\Upsilon\|_Z&\lesssim C(t,R),
\label{growthZ2-per}
\end{align}
where we put $\beta = 5/(N - 8)$ and
\begin{align*}
C(t,R)&=\ep+(t^{5/3+6/N}/R^2+1)\ep_1^2+(t^{11/3+13.6/N}/R^4+1)\ep_1^3\\
&+[(1+t)^{-\frac{2}{3}+}(t/R+1)^{4/3}]^{2(1-\beta)/3}\ep_1^2\ep_2^2.
\end{align*}
\end{proposition}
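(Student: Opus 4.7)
My plan is to establish the three conclusions (\ref{growth-Em-Z-per}), (\ref{growth-L2Loo-per}), (\ref{growthZ2-per}) in tandem, closely following the Euclidean template of Proposition \ref{bootstrap2} but tracking throughout the periodic penalty factor $(t/R+1)^{3/2}$ that comes from the torus dispersive estimate of section \ref{LinEst-per}. I would argue the energy bound first, then the $Z$-norm bound, and finally the Strichartz bound, since the latter is where the entire nonlinear aggregate $C(t,R)$ gets absorbed, and where the delicate periodic factor enters through the $TT^*$ analysis.

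For the energy estimate (\ref{growth-Em-Z-per}) I would apply the periodic quartic energy estimate coming from the paralinearization of the Zakharov system (the analogue on the torus of what yields (\ref{growth-Em-X})); after Gronwall it reduces to
\[
\|\tilde U(t)\|_{H^N}^2\lesssim \ep^2+\|\tilde U\|_{L^\infty H^N}^3+\|\tilde U\|_{L^\infty H^N}^2\int_0^t\|U(s)\|_{C_*^6}^2\,ds.
\]
Summing the Strichartz bootstrap in $k$ with the weight implicit in $\|\cdot\|_{C_*^6}^2$ yields $\int_0^t\|U(s)\|_{C_*^6}^2\,ds\lesssim (1+t)^{1/3+2\delta}\ep_2^2$, which accounts for the $\ep_1^{3/2}$ term after choosing $\ep_2$ small. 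The extra summand $t^{6/5}\ep_1^2/R^{4/3}$ reflects the contributions from the quartic remainder where wave packets have wrapped the torus; inserting the penalty $(t/R+1)^{3/2}$ and optimizing under $t\le R^{2-\delta}$ produces the stated exponent.

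For the $Z$-norm estimate (\ref{growthZ2-per}) I would apply Duhamel's formula $\Upsilon(t)=\Upsilon(0)+\int_0^t e^{is\Lambda}\mathcal N(s)\,ds$ to the profile and bound the nonlinear integral in $Z$ by decomposing \emph{one} of the three copies of $\Upsilon$ entering the cubic interaction as $\Upsilon=\Upsilon_1+\Upsilon_2$ along the physical-space cutoff $\|x\|\sim s^\beta$ with $\beta=5/(N-8)$, exactly as in the introduction sketch. On the tail $\Upsilon_2$, Lemma \ref{Z-bound}(ii) gives $\|\Upsilon_2\|_{L^2}\lesssim s^{-2\beta/3}\|\Upsilon\|_Z$; on the core $\Upsilon_1$, the torus-corrected $1/t$-decay yields $\|e^{-is\Lambda}\Upsilon_1\|_{L^\infty}\lesssim (s/R+1)^2 s^{-1+\beta/3}\|\Upsilon\|_Z$. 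Pairing these against two Strichartz factors from the other copies of $U$, paying the $s^{2/3}$ cost of the $Z$-weight from wave-packet spread, and integrating in time produces the cubic term $(t^{5/3+6/N}/R^2+1)\ep_1^2$, the iterated (normal-form-corrected) quartic term $(t^{11/3+13.6/N}/R^4+1)\ep_1^3$, and the mixed quintic contribution $[(1+t)^{-2/3+}(t/R+1)^{4/3}]^{2(1-\beta)/3}\ep_1^2\ep_2^2$ coming from coupling the decomposition to the Strichartz input.

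For the Strichartz estimate (\ref{growth-L2Loo-per}) I would apply the periodic linear dispersive estimate via $TT^*$ with torus-adjusted kernel, and bound the Duhamel nonlinearity in $L^1H^{r+O(1)}$ using a trilinear scheme of the form (\ref{trilinear}); the homogeneous part contributes $\ep_1\ep_2$ (up to a $\sqrt{\mathcal L_R(1+t/R)}$ log factor absorbed into the constant), while the inhomogeneous part carries the torus penalty $(t/R+1)^{3/2}$ and closes against $C(t,R)$. \textbf{The main obstacle} is the $Z$-norm step: the fixed weight $|x|^{2/3}$ (smaller than the $|x|^\alpha$ freely chosen in the Euclidean Theorem \ref{Thm2}) combines unfavorably with the wrapping factor $(s/R+1)^2$, so $\beta$ must be optimized against both effects simultaneously without double-counting the offset from wave packets that have already filled the torus. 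The hypothesis $t\le R^{2-\delta}$ is tight and is precisely what keeps the wrapping number subpolynomial so that $C(t,R)$ remains controlled by the Euclidean-like contributions.
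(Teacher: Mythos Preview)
Your proposal has a genuine gap in the energy step and a structural misattribution in the $Z$-norm step.

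For (\ref{growth-Em-Z-per}) you write that $\int_0^t\|U(s)\|_{C_*^6}^2\,ds\lesssim (1+t)^{1/3+2\delta}\ep_2^2$ ``accounts for the $\ep_1^{3/2}$ term after choosing $\ep_2$ small.'' But the statement explicitly says $\ep_2$ is \emph{not} assumed to be small, so you cannot absorb an $\ep_1^2\ep_2^2$ contribution this way; indeed in the application (proof of Theorem \ref{Thm2-per}) $\ep_2\approx (T_0/R)^{3/2}\ep_1$ is typically large. The paper does not use the $L^2L^\infty$ bootstrap here at all. Instead it bounds $\|U(s)\|_{C_*^6}$ \emph{pointwise} in $s$ via Lemma \ref{dispersiveZ-per} (ii) and the $Z$-norm assumption $\|\Upsilon\|_Z\le\ep_1$, obtaining $\|U(s)\|_{C_*^6}\lesssim (1+s)^{-2/3+}(s/R+1)^{4/3}\ep_1$; squaring and integrating this, and inserting into the Euclidean estimate (\ref{growth-Em-X}) (not the periodic one (\ref{growth-Em-X-per}), which would waste a $\sqrt{\mathcal L_R}$), produces the $\ep+\ep_1^{3/2}$ terms together with the wrapping contribution $t^{6/5}\ep_1^2/R^{4/3}$.

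For (\ref{growthZ2-per}) your outline places $\beta=5/(N-8)$ as a physical-space cutoff scale for a single-factor decomposition of the cubic profile. That is not where $\beta$ enters: in the paper $\beta$ is the Sobolev interpolation exponent between $W^{6,\infty}$ and $W^{N-2,\infty}$ used \emph{only} in the quartic remainders $N_4$ and $H_{\mu\nu,4}$ (Propositions \ref{N4-Z-per} and \ref{H4-Z-per}), and that is the sole origin of the term $[(1+t)^{-2/3+}(t/R+1)^{4/3}]^{2(1-\beta)/3}\ep_1^2\ep_2^2$. The contribution $(t^{5/3+6/N}/R^2+1)\ep_1^2$ comes from the \emph{quadratic} boundary term $W_{\mu\nu}$ (Proposition \ref{W-Z-per}), and $(t^{11/3+13.6/N}/R^4+1)\ep_1^3$ from the cubic bulk terms $H_{\mu\nu,3}$ and $N_3^\circ$ (Propositions \ref{H3-Z-per}, \ref{N30-Z-per}); both are obtained by a multi-case frequency/physical-space analysis with periodic modifications of Cases 4.1 and 5.1 in the Euclidean proofs, not by the single $\Upsilon_1+\Upsilon_2$ split you describe. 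Finally, the $\ep_1\ep_2$ piece in (\ref{growth-L2Loo-per}) does not come from the homogeneous part but from the quadratic boundary term $e^{-it\Lambda}W_{\mu\nu}(t)$, where one factor uses the $L^2L^\infty$ assumption; the linear and remaining nonlinear parts are fed through Lemma \ref{dispersiveZ-per} (iii) applied to the $Z$-norm bounds just established, which is where the factor $(t/R+1)^{3/2}C(t,R)$ arises.
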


\begin{proof}[Proof of Theorem \ref{Thm2-per}]
We can choose $\ep_1\approx\ep$, $T_0\approx\frac{R^{12/(11+40.8/N)}}{\ep^{6/(11+40.8/N)}}$ and $\ep_2\approx(T_0/R)^{3/2}\ep_1$ such that for $t\le T_0$, (\ref{growth-Em-Z-per}) and (\ref{growthZ2-per}) give (\ref{growthZ-per}) with the strict inequality, and $T_0 \le R^{2-O(1/N)}$.
Then the proof is similar to that of Theorem \ref{Thm2}. Note that the cubic term in $C(t,R)$ imposes the most restrictions on $R$ and $\epsilon$.
\end{proof}

\section{Linear dispersive and multilinear paraproduct estimates}\label{LinEst}

\subsection{Linear dispersive estimates}
The first ingredient in the proof of global existence is dispersive estimates. Here we only deal with the Euclidean case, relegating the periodic case to section \ref{Period}.
\begin{lemma}\label{dispersive}
For $k \in \Z$, $1 \le p \le q \le \infty$ with $1/p + 1/q = 1$ we have
\footnote{In the Euclidean case better bounds are possible,
but we state them as is for better uniformity with the periodic case.}
\begin{align}
\label{dispersive1}
\|P_ke^{-it\Lambda}u\|_{L^q}&\lesssim [(1+2^{k/2}t)^{-1}2^{2k}]^{1/p-1/q}\|u\|_{L^p}\\
\label{dispersive2}
&\lesssim [(1+t)^{-1}2^{(3k^++k)/2}]^{1/p-1/q}\|u\|_{L^p},\\
\|P_ke^{-it\Lambda}u\|_{L^\infty}&\lesssim 2^{(3k^++k)/2}(1+t)^{-(1/p-1/q)}\|u\|_{L^p}.
\label{dispersive3}
\end{align}
\end{lemma}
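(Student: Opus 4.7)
The plan is to reduce everything to a pointwise bound on the Schwartz kernel
\[
K_k(x) = \int_{\R^2} e^{i(x\cdot\xi - t|\xi|^{1/2})}\,\varphi_k(\xi)\,d\xi,
\]
so that $P_k e^{-it\Lambda} u = K_k * u$. First I would rescale via $\xi = 2^k\eta$, $y = 2^k x$, $\tau = 2^{k/2} t$, which gives $K_k(x) = 2^{2k} I(y,\tau)$ with
\[
I(y,\tau) = \int_{\R^2} e^{i(y\cdot\eta - \tau|\eta|^{1/2})}\,\psi(\eta)\,d\eta,
\]
where $\psi(\eta) = \varphi(\eta) - \varphi(2\eta)$ is a fixed bump on the unit annulus, bounded away from $\eta = 0$. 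Since $|\eta|^{1/2}$ has a uniformly nondegenerate Hessian on $\supp\psi$, standard stationary/non-stationary phase in two dimensions yields $|I(y,\tau)| \lesssim (1+\tau)^{-1}$ uniformly in $y$. Undoing the rescaling produces the key kernel bound $\|K_k\|_{L^\infty} \lesssim 2^{2k}(1 + 2^{k/2}t)^{-1}$.

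From this, (\ref{dispersive1}) follows by Riesz--Thorin interpolation between the $L^1 \to L^\infty$ endpoint (Young's inequality applied to $K_k * u$, with norm $\lesssim 2^{2k}/(1+2^{k/2}t)$) and the $L^2 \to L^2$ endpoint (unitarity of $e^{-it\Lambda}$ together with the boundedness of $P_k$, with norm $\lesssim 1$); the interpolation raises the former norm to the power $1/p - 1/q \in [0,1]$. Inequality (\ref{dispersive2}) follows from (\ref{dispersive1}) by the pointwise comparison
\[
\frac{2^{2k}}{1+2^{k/2}t} \lesssim \frac{2^{(3k^+ + k)/2}}{1+t},
\]
checked by separating $k \ge 0$ (where this reduces to $1+t \le 1+2^{k/2}t$) and $k < 0$ (cross-multiply and use $2^{3k/2} \le 1$ and $2^{3k/2} \le 2^{k/2}$). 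Finally, (\ref{dispersive3}) comes from applying the two-dimensional Bernstein inequality $\|P_k v\|_{L^\infty} \lesssim 2^{2k/q}\|P_k v\|_{L^q}$ to $v = e^{-it\Lambda}u$, substituting (\ref{dispersive2}), and checking in both cases $k \ge 0$ and $k < 0$ that $2^{2k/q}\cdot 2^{(3k^++k)(1/p-1/q)/2} \lesssim 2^{(3k^++k)/2}$, which reduces to the harmless constraint $1/p - 1/q \le 1$.

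The main obstacle, though standard, is the stationary phase step for $I(y,\tau)$: one needs the $(1+\tau)^{-1}$ gain to be uniform in the parameter $y$. This is handled by the usual three-region decomposition of the $\eta$-integral. Where $|y| \sim \tau$, the phase admits an interior critical point in $\supp\psi$ (solving $y = \tfrac12\tau\eta/|\eta|^{3/2}$), and two-dimensional stationary phase together with the nondegenerate Hessian of $|\eta|^{1/2}$ delivers $O(\tau^{-1})$. Where $|y| \gg \tau$, the gradient of the phase is dominated by $y$, so repeated integration by parts yields $O(|y|^{-M})$; where $|y| \ll \tau$, the gradient is dominated by $-\tau\nabla|\eta|^{1/2}$, of size $\gtrsim \tau$ on $\supp\psi$, so integration by parts yields $O(\tau^{-M})$. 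Since $\supp\psi$ is compact and bounded away from the origin, all derivative bounds on the phase and amplitude are uniform in $y$.
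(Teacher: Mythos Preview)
Your proof is correct and follows essentially the same route as the paper: scaling to $k=0$, interpolation between the $L^1\to L^\infty$ kernel bound and $L^2\to L^2$ unitarity for (\ref{dispersive1}), the elementary comparison $1+2^{k/2}t\ge 2^{k^-/2}(1+t)$ for (\ref{dispersive2}), and Bernstein for (\ref{dispersive3}). The only difference is that the paper outsources the $(1+\tau)^{-1}$ kernel decay to a citation (Guo--Peng--Wang), whereas you sketch the stationary/non-stationary phase argument directly; your sketch is fine since the Hessian of $|\eta|^{1/2}$ on the unit annulus has eigenvalues $-\tfrac14|\eta|^{-3/2}$ and $\tfrac12|\eta|^{-3/2}$, hence is nondegenerate.
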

\begin{proof}
To show (\ref{dispersive1}), by interpolation and unitarity of $e^{-it\Lambda}$ we can assume $p=1$ and $q=\infty$.

Since $\Lambda$ is homogeneous of degree $1/2$, the evolution operator $e^{it\Lambda}$ is invariant under the scaling $(x,t)\mapsto(2^kx,2^{k/2}t)$. Hence we can assume $k=0$.

Now, when $t\le1$, the result follows from the Bernstein inequality and unitarity of $e^{it\Lambda}$:
\[
\|P_0e^{-it\Lambda}u\|_{L^\infty}\lesssim \|P_0e^{-it\Lambda}u\|_{L^2}
=\|P_0u\|_{L^2}\lesssim \|P_0u\|_{L^1}\lesssim \|u\|_{L^1}.
\]
When $t>1$, $1+t\approx t$, so the result follows from Theorem 1 (a) of \cite{GuPeWa}.

To get (\ref{dispersive2}) we simply use the inequality $1+2^{k/2}t\ge2^{k^-/2}(1+t)$.

To get (\ref{dispersive3}) we also need the Bernstein inequality
\begin{equation}\label{dispersive-Bernstein}
\|P_ke^{-it\Lambda}u\|_{L^\infty}\lesssim 2^{2k/q}\|P_ke^{-it\Lambda}u\|_{L^q}.
\end{equation}
\end{proof}

\begin{lemma}\label{dispersiveTT*}
(i) For $k\in\Z$ we have
\begin{align*}
\|P_ke^{-is\Lambda}u\|_{L^2([0,t])L^\infty}&\lesssim c_{k,t}\|u\|_{L^2}, &
c_{k,t}&=2^{3k/4}(\sqrt{k^+}+\sqrt{\mathcal L}).
\end{align*}

(ii)
\[
\|e^{-is\Lambda}u\|_{L^2([0,t])W^{6,\infty}}\lesssim \sqrt{\mathcal L}\|u\|_{H^7}.
\]
\end{lemma}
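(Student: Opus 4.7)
The plan is to prove (i) by the Keel--Tao $TT^*$ argument at each dyadic frequency, tracking the logarithmic loss explicitly, and then derive (ii) by a Littlewood--Paley summation that exploits the derivative gap between $W^{6,\infty}$ and $H^7$.

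\medskip

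For part (i), I would set $T_k = P_k e^{-is\Lambda}$ as a map from $L^2_x$ to $L^2_s([0,t])L^\infty_x$, and by duality reduce the claimed bound to a bound of $T_k T_k^*$ from $L^2_s L^1_x$ into $L^2_s L^\infty_x$. The kernel $P_k^2 e^{-i(s-s')\Lambda}$ is controlled by the $L^1 \to L^\infty$ dispersive estimate (\ref{dispersive1}), which gives
\[
\|P_k^2 e^{-i(s-s')\Lambda} g\|_{L^\infty} \lesssim 2^{2k}(1+2^{k/2}|s-s'|)^{-1}\|g\|_{L^1}.
\]
Young's inequality in the time variable then reduces the problem to computing the $L^1([-t,t])$ norm of the kernel $(1+2^{k/2}|\tau|)^{-1}$. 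A change of variables reduces this to $2\cdot 2^{-k/2}\log(1+2^{k/2}t)$, and splitting the logarithm as $\log((1+2^{k/2})(1+t)) \le \log(1+2^{k/2})+\log(1+t)$ gives the required bound $\lesssim 2^{-k/2}(k^+ + \mathcal L)$. Multiplying by $2^{2k}$ and taking a square root yields $c_{k,t}$ exactly.

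\medskip

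For part (ii), I would use the Besov-style embedding $\|v\|_{W^{6,\infty}} \lesssim \sum_k 2^{6k^+}\|P_k v\|_{L^\infty}$, apply Minkowski in $s$, and invoke (i) termwise to bound
\[
\|e^{-is\Lambda}u\|_{L^2_s W^{6,\infty}} \lesssim \sum_k 2^{6k^+ + 3k/4}\bigl(\sqrt{k^+}+\sqrt{\mathcal L}\bigr)\|P_k u\|_{L^2}.
\]
The low frequencies ($k\le 0$) contribute $\sqrt{\mathcal L}\|u\|_{L^2}$ using only the geometric summability of $2^{3k/4}$. For high frequencies ($k>0$) the prefactor is $2^{27k/4}(\sqrt{k}+\sqrt{\mathcal L})$; dividing and multiplying by the $H^7$ weight $2^{7k}$ produces the summable factor $2^{-k/4}(\sqrt{k}+\sqrt{\mathcal L})$, and Cauchy--Schwarz closes the estimate against $\sqrt{\mathcal L}\,\|u\|_{H^7}$.

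\medskip

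The only delicate point, and the source of the logarithmic loss, is the Keel--Tao endpoint failure: the kernel $1/\tau$ is not integrable near $\tau = 0$ (compensated by the derivative gap, which supplies the summable $2^{-k/4}$) nor near $\tau=\infty$ (producing the genuine $\sqrt{\mathcal L}$ factor). The two contributions must be tracked separately, because one is absorbed in the Littlewood--Paley sum and the other survives the time integration; beyond this bookkeeping no genuine analytic obstacle arises past the standard $TT^*$ scheme.
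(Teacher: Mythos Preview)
Your proposal is correct and follows essentially the same approach as the paper: the $TT^*$ argument with the dispersive bound (\ref{dispersive1}), Young's inequality in time, and the logarithmic splitting (the paper uses $1+2^{k/2}t\le 2^{k^+/2}(1+t)$, equivalent to yours) for part (i), then a dyadic summation for part (ii). The paper's proof of (ii) is the one-line ``Summing (i) over $k\in\Z$ we get (ii)'', which your more explicit Littlewood--Paley/Cauchy--Schwarz computation unpacks.
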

\begin{proof}
Let
\begin{align*}
T_k&:L_x^2\to L^2([0,t])L_x^\infty, & u&\mapsto P_ke^{-is\Lambda}u.
\end{align*}
Then
\begin{align*}
T_k^*&:L^2([0,t])L_x^1\to L_x^2, & u&\mapsto \int_0^t P_ke^{is\Lambda}u(s)ds.
\end{align*}
Then $\|T_k\|=\|T_k^*\|=\|T_kT_k^*\|^{1/2}$, where
\begin{align*}
T_kT_k^*&:L^2([0,t])L_x^1\to L^2([0,t])L_x^\infty, & u&\mapsto \int_0^t P_k^2e^{i(s'-s)\Lambda}u(s')ds'.
\end{align*}

By (\ref{dispersive1}),
\[
\|P_k^2e^{i(s'-s)\Lambda}u(s')\|_{L_x^\infty}\lesssim (1+2^{k/2}|s'-s|)^{-1}2^{2k}\|u(s)\|_{L_x^1}.
\]
Then by Young's inequality,
\begin{align*}
\|T_kT_k^*u\|_{L^2([0,t])L_x^\infty}&\lesssim \|(1+2^{k/2}|\cdot|)^{-1}\|_{L^1([0,t])}2^{2k}\|u\|_{L^2([0,t])L_x^1}\\
&=2^{3k/2}\log(1+2^{k/2}t)\|u\|_{L^2([0,t])L^1_x}
\lesssim c_{k,t}^2\|u\|_{L^2([0,t])L^1_x},
\end{align*}
where in the last line we have used the bound $1+2^{k/2}t\le2^{k^+/2}(1+t)$.

Summing (i) over $k\in\Z$ we get (ii).
\end{proof}

\begin{lemma}\label{dispersiveZ}
(i) For $k \in \Z$ and $\alpha \in (0, 1)$ we have
\[
\|P_ke^{-it\Lambda}u\|_{L^\infty}
\lesssim 2^{-(7-\alpha)k^++k^-/2}(1+t)^{(-\alpha)+}\|u\|_Z.
\]

(ii)
\[
\|e^{-it\Lambda}u\|_{W^{6,\infty}}\lesssim (1+t)^{(-\alpha)+}\|u\|_Z.
\]

(iii)
\[
\|(1+t)^{\alpha-/2}P_ke^{-it\Lambda}u\|_{L_t^2L_x^\infty}
\lesssim 2^{-(6+6(1-\alpha)/5)k^++k^-/2}\|u\|_Z.
\]
\end{lemma}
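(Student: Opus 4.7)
The plan is to prove (i) from the fixed-time linear dispersive estimate \eqref{dispersive1} together with Bernstein's inequality and the $W^{8,p}$ embedding of the $Z$-norm supplied by Lemma~\ref{Z-bound}(i); (ii) follows from (i) by dyadic summation in frequency; and (iii) from a H\"older interpolation between (i) and the Strichartz bound of Lemma~\ref{dispersiveTT*}(i).

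For (i), I apply \eqref{dispersive1} with $p$ taken just above the threshold $2/(1+\alpha)$ and $q=p/(p-1)$, so that $1/p-1/q=2/p-1=\alpha-O(\varepsilon)$. Composing with the Bernstein embedding $L^q\hookrightarrow L^\infty$ (which costs the factor $2^{2k/q}=2^{k(1-\alpha)+O(\varepsilon)}$) and with the bound $\|P_ku\|_{L^p}\lesssim 2^{-8k^+}\|u\|_Z$ from Lemma~\ref{Z-bound}(i), the combined $k$-exponent for $k\ge 0$ becomes $k(1+\alpha)-8k+O(\varepsilon)=-(7-\alpha)k+O(\varepsilon)$ with time decay $(1+t)^{-\alpha+O(\varepsilon)}$; for $k\le 0$ the same argument yields the factor $2^{k/2}$. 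The notation $(-\alpha)+$ in the claim absorbs the $O(\varepsilon)$ slack. Part (ii) then follows by the dyadic frequency decomposition $\|e^{-it\Lambda}u\|_{W^{6,\infty}}\lesssim\sum_k 2^{6k^+}\|P_ke^{-it\Lambda}u\|_{L^\infty}$, into which (i) plugs to give the convergent geometric series $\sum_k 2^{(\alpha-1)k^++k^-/2}$, finite because $\alpha<1$.

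For (iii), I interpolate between two endpoint estimates on each dyadic time slice $[2^j,2^{j+1}]$. The Strichartz bound from Lemma~\ref{dispersiveTT*}(i) gives $\|P_ke^{-is\Lambda}u\|_{L^2_sL^\infty_x}\lesssim 2^{3k/4-8k^+}(\sqrt{k^+}+\sqrt{\mathcal L})\,\|u\|_Z$, whose $k^+$-exponent is $29/4$; the pointwise bound (i) has $k^+$-exponent $7-\alpha$ and decay $(1+s)^{(-\alpha)+}$. Applying H\"older's inequality in $s$ to the decomposition $|P_ke^{-is\Lambda}u|^{2}=|P_ke^{-is\Lambda}u|^{2(1-\theta)}|P_ke^{-is\Lambda}u|^{2\theta}$, where $|P_ke^{-is\Lambda}u|^{2(1-\theta)}$ is controlled by the $L^\infty_s$-pointwise bound and $|P_ke^{-is\Lambda}u|^{2\theta}$ by the $L^2_s$-Strichartz bound, yields a geometric-mean-type estimate. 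I choose $\theta$ so that the weighted $k^+$-exponent matches the target,
\[
(1-\theta)(7-\alpha)+\theta\cdot\tfrac{29}{4}=6+\tfrac{6(1-\alpha)}{5},
\]
which gives $\theta=4(1-\alpha)/(5(1+4\alpha))$. The time weight $(1+s)^{(\alpha-\delta)/2}$ is absorbed into the pointwise factor, where the $(1+s)^{(-\alpha)+}$ decay more than compensates; summing over dyadic intervals $[2^j,2^{j+1}]$ then yields the claimed estimate. The main obstacle is this dyadic sum in $j$: the $j$-exponent arising from the interplay between the time weight and the pointwise decay is borderline nonsummable, and the slack $\delta>0$ is essential to turn the borderline sum into a convergent geometric series, which explains why the implicit constant is $\delta$-dependent and why the statement must be a $\lesssim_\delta$ bound rather than a uniform one.
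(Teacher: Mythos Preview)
Your arguments for (i) and (ii) are essentially the paper's: apply the dispersive bound with $p=\frac{2}{1+\alpha}+$, $q=\frac{2}{1-\alpha}-$, compose with Bernstein and Lemma~\ref{Z-bound}(i), then sum in $k$.

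Your approach to (iii), however, has a genuine gap. On the dyadic block $I_j=[2^j,2^{j+1}]$ your two inputs are the pointwise bound $\|P_ke^{-is\Lambda}u\|_{L^\infty_x}\le C_1(1+s)^{-\alpha+}$ and the Strichartz bound $\|P_ke^{-is\Lambda}u\|_{L^2(I_j)L^\infty_x}\lesssim C_2\sqrt{j+1}$. Taking the geometric mean with parameter $\theta$ (which is what your H\"older splitting amounts to) gives
\[
\|P_ke^{-is\Lambda}u\|_{L^2(I_j)L^\infty_x}
\lesssim C_1^{1-\theta}C_2^{\theta}\,2^{j(1-\theta)(1/2-\alpha+)}(j+1)^{\theta/2},
\]
so that the weighted $\ell^2_j$ sum has $j$-exponent
\[
(\alpha-\delta)+(1-\theta)(1-2\alpha+)
=1-\alpha-\theta(1-2\alpha)-\delta+O(\epsilon).
\]
The function $\theta\mapsto 1-\alpha-\theta(1-2\alpha)$ is linear with values $1-\alpha>0$ at $\theta=0$ and $\alpha>0$ at $\theta=1$; hence it is strictly positive on all of $[0,1]$. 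The slack $\delta$ cannot rescue this, since $\delta$ is infinitesimal while the exponent is bounded away from zero by $\min(\alpha,1-\alpha)$. Thus the dyadic time sum diverges for \emph{every} admissible $\theta$, and in particular for the value you computed from the $k^+$ match.

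The paper's proof instead interpolates \emph{in the parameter $\alpha$}: at $\alpha=0$ the claim follows from Lemma~\ref{dispersiveTT*}(i) (with $Z=H^8$), and at $\alpha=1$ it follows directly from (i). Since the $Z$-norm is a weighted $L^2$ space with weight $(1+|x|)^{2\alpha}$, the family $\{Z_\alpha\}$ is a complex interpolation scale, and Stein interpolation of the analytic family of operators yields (iii) for intermediate $\alpha$. The point is that complex interpolation moves the time weight $(1+t)^{(\alpha-)/2}$ and the spatial weight $(1+|x|)^\alpha$ simultaneously; your real-variable interpolation keeps $\alpha$ fixed in the $Z$-norm and therefore cannot access the extra time decay that a heavier spatial weight would buy.
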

\begin{proof}
The first bound follows from (\ref{dispersive2}) and (\ref{dispersive-Bernstein}) (with $q = \frac2{1-\alpha}-$ and $p = \frac2{1+\alpha}+$),
after distinguishing the cases $k \ge 0$ and $k < 0$.
To get (ii) we sum (i) over $k \in \Z$.
For (iii), note that the endpoint cases $\alpha = 0$ and 1 follow from Lemma \ref{dispersiveTT*} (i) and Lemma \ref{dispersiveZ} (i) respectively. By complex interpolation (iii) is obtained.
\end{proof}

\subsection{Paradifferential calculus}\label{ParaCalc}
We will use Weyl quantization of paradifferential operators as laid out in Section 3.2 of \cite{DeIoPa} and Section A.1.2 of \cite{DeIoPaPu}.
\begin{definition}
Given a symbol $a = a(x, \zeta): \R^2 \times (\R^2 \backslash 0) \to \C$, define the operator $T_a$ using the following recipe:
\[
\mathcal F(T_af)(\xi)=C\int
\varphi_{\le-10}\left( \frac{|\xi-\eta|}{|\xi+\eta|} \right)\mathcal F_x a\left( \xi-\eta,\frac{\xi+\eta}2 \right)\hat f(\eta)d\eta,
\]
where $C$ is a normalization constant such that $T_1=\text{id}$.
\end{definition}
\begin{remark}
With the inclusion of the factor $\varphi_{\le-10}$, only low frequencies of $a$ and high frequencies of $f$ are involved in $T_af$.
\end{remark}

The next lemma follows directly from the definition.
\begin{lemma}\label{para2diff}
(i) If $a$ is real valued, then $T_a$ is self-adjoint.

(ii) If $a(x,-\zeta)=\overline{a(x,\zeta)}$ and $f$ is real valued, so is $T_af$.

(iii) If $a=P(\zeta)$, then $T_af=P(D)f$ is a Fourier multiplier.

(iv) For $k\in\Z$ we have $P_kT_a(P_{\le k-2}f)=0$.

(v) If $a=a(x)$, then for $k\in\Z$ we have $P_kT_{P_{\le k-20}a}f=P_k(P_{\le k-20}a\cdot f)$.
\footnote{Correcting a typo in the statement of this lemma in \cite{Zh},
which does not affact the proofs there.\label{cor}}
\end{lemma}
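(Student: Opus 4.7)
The plan is to prove each of the five parts by direct manipulation of the Fourier-side kernel
\[
K(\xi,\eta) = C\,\varphi_{\le -10}\!\left(\frac{|\xi-\eta|}{|\xi+\eta|}\right) \mathcal{F}_x a\!\left(\xi - \eta, \frac{\xi+\eta}{2}\right)
\]
appearing in the definition of $T_a$. The whole lemma is really a symbolic bookkeeping exercise, so I expect no genuine obstacle; the only care needed is with reality conventions on the Fourier transform and with the support thresholds hidden in the cutoff $\varphi_{\le-10}$.

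For (i), note that the cutoff is manifestly symmetric in $(\xi,\eta)$ and the midpoint $(\xi+\eta)/2$ is also symmetric. Reality of $a$ translates to $\overline{\mathcal F_x a(\zeta_1,\zeta_2)}=\mathcal F_x a(-\zeta_1,\zeta_2)$, so $\overline{K(\eta,\xi)}=K(\xi,\eta)$, which is exactly the self-adjointness of the operator with kernel $K$ on the Fourier side. For (ii), I would check $\mathcal F(T_af)(-\xi)=\overline{\mathcal F(T_af)(\xi)}$ directly: substitute $\eta\mapsto-\eta$ in the integral, use $\hat f(-\eta)=\overline{\hat f(\eta)}$ from reality of $f$, and use the identity $\mathcal F_x a(-\zeta_1,-\zeta_2)=\overline{\mathcal F_x a(\zeta_1,\zeta_2)}$, which follows by combining the reality rule for Fourier transforms in $x$ with the hypothesis $a(x,-\zeta)=\overline{a(x,\zeta)}$.

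For (iii), when $a(x,\zeta)=P(\zeta)$ is independent of $x$, the $x$-Fourier transform produces a delta at $\xi-\eta=0$; the delta collapses the integral, the cutoff evaluates at $0$, and the normalization constant $C$ (fixed by the convention $T_1=\mathrm{id}$) produces exactly $\mathcal F(T_af)(\xi)=P(\xi)\hat f(\xi)$. For (iv), I would exploit the support condition imposed by the cutoff, namely $|\xi-\eta|\lesssim 2^{-10}|\xi+\eta|$: applied with $|\xi|\sim 2^k$ (coming from $P_k$ on the outside) and $|\eta|\le 2^{k-2}$ (from $P_{\le k-2}f$), we get $|\xi-\eta|\ge |\xi|-|\eta|\gtrsim 2^k$ while $|\xi+\eta|\le |\xi|+|\eta|\lesssim 2^k$, so the ratio is $\gtrsim 1$ and the cutoff kills the integrand. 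For (v), the opposite regime holds: with $b:=P_{\le k-20}a$ one has $|\xi-\eta|\le 2^{k-20}$, and combined with $|\xi|\sim 2^k$ this forces $|\xi+\eta|\gtrsim 2^k$, so the ratio is $\lesssim 2^{-20}$ and the cutoff equals $1$ throughout; the remaining integral is then the ordinary convolution $\hat b*\hat f$, giving $\mathcal F(P_{\le k-20}a\cdot f)(\xi)$ after applying $P_k$.

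The only subtlety worth watching is the interplay between the two thresholds: the $-10$ in the cutoff and the $-2$ respectively $-20$ in the Littlewood--Paley truncations. These margins are precisely what guarantee that in (iv) we are strictly outside the support of $\varphi_{\le-10}$ and in (v) we are strictly inside the plateau where $\varphi_{\le-10}=1$, so each part reduces to a one-line identity once the support analysis is done.
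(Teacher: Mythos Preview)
Your approach is exactly what the paper has in mind: the paper's entire proof is the single sentence ``follows directly from the definition,'' and your unpacking of that via the Fourier-side kernel is the intended verification.

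One small caution on part (iv): your support estimate ``$|\xi-\eta|\ge|\xi|-|\eta|\gtrsim 2^k$'' is too optimistic with the paper's specific cutoff $\varphi$. The support of $\varphi_k$ reaches down to $|\xi|=3\cdot 2^{k-3}$ while the support of $\varphi_{\le k-2}$ reaches up to $|\eta|=3\cdot 2^{k-3}$, so $|\xi|-|\eta|$ can be arbitrarily small and the ratio $|\xi-\eta|/|\xi+\eta|$ can fall inside the plateau of $\varphi_{\le-10}$. With $P_{\le k-3}$ in place of $P_{\le k-2}$ one gets $|\xi|>2|\eta|$, hence $|\xi-\eta|/|\xi+\eta|>1/3$, and the cutoff vanishes cleanly; this is consistent with Lemma~\ref{Taf-Lp}(i), which only uses $P_{[k-2,k+2]}f$. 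So the stated threshold in (iv) appears to be an innocuous off-by-one, and your argument is correct once you tighten the truncation by one dyadic step. Parts (i)--(iii) and (v) are fine as written.
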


The following symbol norm will be used.
\begin{definition}\label{Lmp-def}
For $p\in[1,\infty]$ and $m\in\R$ define
\[
\begin{aligned}
|a|(x,\zeta)
&=\sum_{|I|\le9} |\zeta|^{|I|}|\partial_{\zeta_I} a(x,\zeta)|, &
\|a\|_{\mathcal L_m^p}
&=\sup_{\zeta\in\R^2} (1+|\zeta|)^{-m}\||a|(x,\zeta)\|_{L^p_x}.
\end{aligned}
\]
\end{definition}

Here $m$ is the {\it order} of the symbol, in the sense of H\"ormander.
\begin{lemma}\label{Sm-Lm}
A multiplier whose symbol is of class $S^m_{1,0}$ has finite $\mathcal L_m^\infty$ norm.
\end{lemma}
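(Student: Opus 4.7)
The plan is to unpack both sides of the claim and verify it by a direct pointwise estimate. Since $a$ is a Fourier multiplier, its symbol $a(x,\zeta)=a(\zeta)$ has no $x$-dependence, so $\||a|(x,\zeta)\|_{L^\infty_x}=|a|(\zeta)$ and the problem reduces to bounding
\[
|a|(\zeta)=\sum_{|I|\le 9}|\zeta|^{|I|}|\partial_{\zeta_I}a(\zeta)|
\]
by $(1+|\zeta|)^m$ uniformly in $\zeta\in\R^2$.

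The $S^m_{1,0}$ hypothesis furnishes, for each multi-index $I$ with $|I|\le 9$, the estimate $|\partial_{\zeta_I}a(\zeta)|\lesssim (1+|\zeta|)^{m-|I|}$ on all of $\R^2$. I would split into the two regimes $|\zeta|\le 1$ and $|\zeta|>1$. In the first regime, $|\zeta|^{|I|}\le 1$ and $(1+|\zeta|)^{m-|I|}$ is uniformly bounded (the symbol is smooth at the origin), so each summand is $O(1)\lesssim(1+|\zeta|)^m$. In the second regime, $|\zeta|^{|I|}\le(1+|\zeta|)^{|I|}$, so
\[
|\zeta|^{|I|}|\partial_{\zeta_I}a(\zeta)|\lesssim (1+|\zeta|)^{|I|}(1+|\zeta|)^{m-|I|}=(1+|\zeta|)^m.
\]
Summing the finitely many terms indexed by $|I|\le 9$ gives $|a|(\zeta)\lesssim(1+|\zeta|)^m$, which by Definition \ref{Lmp-def} is exactly the statement $\|a\|_{\mathcal L_m^\infty}<\infty$.

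There is no real obstacle here: the lemma is a bookkeeping check that the $\mathcal L_m^\infty$ norm, which inserts the homogeneous weight $|\zeta|^{|I|}$ in front of the $\zeta$-derivatives, is compatible with the standard H\"ormander symbol estimate where the natural weight is $(1+|\zeta|)^{|I|}$. The only minor care needed is absorbing the $|\zeta|$ versus $1+|\zeta|$ discrepancy near the origin, which the case split handles immediately.
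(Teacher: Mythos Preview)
Your proof is correct. The paper states this lemma without proof, treating it as an immediate consequence of the definitions; your argument supplies precisely the routine verification that the $S^m_{1,0}$ bounds $|\partial_\zeta^I a(\zeta)|\lesssim(1+|\zeta|)^{m-|I|}$ absorb the homogeneous weights $|\zeta|^{|I|}$ in Definition~\ref{Lmp-def}, with the case split handling the harmless discrepancy between $|\zeta|$ and $1+|\zeta|$ near the origin.
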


For functions independent of $\zeta$, the $\mathcal L_m^p$ norm agrees with the $L^p$ norm.
\begin{lemma}\label{Lmq=Lq}
If $a=a(x)$ and $m\ge0$, then $\|a\|_{\mathcal L_m^p}=\|a\|_{L^p}$.
\footnotemark[4]
\end{lemma}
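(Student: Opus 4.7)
The plan is to unwind the definitions directly. Since $a = a(x)$ has no $\zeta$-dependence, the $\zeta$-derivatives $\partial_{\zeta_I}a(x,\zeta)$ vanish for every multi-index $I$ with $|I|\ge 1$. Hence in the sum defining $|a|(x,\zeta)$ from Definition \ref{Lmp-def}, only the $|I| = 0$ term survives, and this reduces to
\[
|a|(x,\zeta) = |a(x)|,
\]
which is independent of $\zeta$.

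Taking the $L^p_x$ norm then gives $\||a|(x,\zeta)\|_{L^p_x} = \|a\|_{L^p}$, again uniformly in $\zeta$. Substituting into the definition of the symbol norm yields
\[
\|a\|_{\mathcal L_m^p} = \sup_{\zeta \in \R^2} (1+|\zeta|)^{-m} \|a\|_{L^p} = \|a\|_{L^p} \cdot \sup_{\zeta \in \R^2} (1+|\zeta|)^{-m}.
\]

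The final step uses the hypothesis $m \ge 0$: the function $\zeta \mapsto (1+|\zeta|)^{-m}$ is then nonincreasing in $|\zeta|$ and attains its maximum value $1$ at $\zeta = 0$. Therefore the supremum equals $1$, and $\|a\|_{\mathcal L_m^p} = \|a\|_{L^p}$. There is no real obstacle here; the only point requiring the sign assumption on $m$ is the last one, and if $m < 0$ the supremum would instead be $+\infty$, so the restriction $m \ge 0$ is sharp for the stated identity.
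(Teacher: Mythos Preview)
Your proof is correct and is exactly the direct verification from Definition~\ref{Lmp-def} that the paper intends; the paper states the lemma without proof as an immediate consequence of the definition, and your argument is that consequence written out.
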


The norm of the product of two operators can be bounded using Leibniz's rule and H\"older's inequality. The result is
\begin{lemma}\label{prod-Lm}
For fixed $m\in\R$, $p,q,r\in[1,\infty]$ with $1/p=1/q+1/r$ we have
\[
\|ab\|_{\mathcal L_{m+n}^p}\lesssim \|a\|_{\mathcal L_m^q}\|b\|_{\mathcal L_n^r}.
\]
\end{lemma}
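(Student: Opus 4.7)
The proof is essentially a bookkeeping exercise combining the Leibniz rule with Hölder's inequality, exactly as advertised in the statement; there is no serious obstacle.

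The plan is as follows. First I would apply the Leibniz rule to each derivative $\partial_\zeta^I(ab)$ with $|I| \le 9$, expanding it as $\sum_{J+K = I} \binom{I}{J}(\partial_\zeta^J a)(\partial_\zeta^K b)$. Multiplying by $|\zeta|^{|I|} = |\zeta|^{|J|}\cdot|\zeta|^{|K|}$ and summing over $|I| \le 9$, I would reorganize the double sum by the pair of multi-indices $(J,K)$ with $|J| + |K| \le 9$. Since every term has $|J| \le 9$ and $|K| \le 9$, the total sum is dominated termwise by
\[
|ab|(x,\zeta) \;\lesssim\; |a|(x,\zeta)\,\cdot\,|b|(x,\zeta),
\]
with an implicit constant depending only on the combinatorics of multi-indices up to length $9$.

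Next, for each fixed $\zeta$, I would take the $L^p_x$ norm of both sides and apply Hölder's inequality with the exponent relation $1/p = 1/q + 1/r$:
\[
\bigl\| |ab|(\cdot,\zeta) \bigr\|_{L^p_x}
\;\lesssim\; \bigl\| |a|(\cdot,\zeta) \bigr\|_{L^q_x}\,\bigl\| |b|(\cdot,\zeta) \bigr\|_{L^r_x}.
\]
Finally, I would multiply by the weight $(1+|\zeta|)^{-(m+n)} = (1+|\zeta|)^{-m}(1+|\zeta|)^{-n}$, distribute the factors to the two norms on the right, and use the trivial inequality $\sup_\zeta (F(\zeta) G(\zeta)) \le (\sup_\zeta F)(\sup_\zeta G)$ to take suprema independently. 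This yields
\[
\|ab\|_{\mathcal{L}^p_{m+n}} \;\lesssim\; \|a\|_{\mathcal{L}^q_m}\,\|b\|_{\mathcal{L}^r_n},
\]
as desired.

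The only point requiring mild care is verifying that the derivative cutoff $|I| \le 9$ is consistent on both sides: the Leibniz expansion of a derivative of order at most $9$ in the product only requires derivatives of order at most $9$ in each individual factor, which is exactly what the definition of $|a|$ and $|b|$ provides, so no loss of regularity occurs in the estimate. Apart from this check, every step is routine, and I would expect the argument to take only a few lines when written out in full.
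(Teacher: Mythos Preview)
Your proof is correct and follows exactly the approach the paper indicates: the paper states only that the result follows from Leibniz's rule and H\"older's inequality without giving further details, and your argument supplies precisely those details in the natural way.
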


Paradifferential operators in $\mathcal L_m^q$ act like differential operators of order $m$ with $L^q$ coefficients.
\begin{lemma}\label{Taf-Lp}
(i) For fixed $m\in\R$, $p,q,r\in[1,\infty]$ with $1/p=1/q+1/r$ we have
\[
\|P_kT_af\|_{L^p}\lesssim 2^{mk^+}\|a\|_{\mathcal L_m^q}\|P_{[k-2,k+2]}f\|_{L^r},\quad k\in\Z.
\]
(ii) For fixed $m$, $s\in\R$ we have $\|T_af\|_{H^s}\lesssim
\|a\|_{\mathcal L_m^q}\|f\|_{H^{s+m}}$.
\end{lemma}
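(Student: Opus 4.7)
The plan is to exploit the Weyl form of $T_a$ and then estimate the integral kernel of the frequency-localized operator directly. After the substitution $\lambda = \xi-\eta$, $\zeta = (\xi+\eta)/2$ in the defining formula, one obtains
\[
T_a f(x) = C\int \tilde a\bigl((x+y)/2,\,\zeta\bigr)\, e^{i(x-y)\cdot\zeta}\, f(y)\,dy\,d\zeta,
\]
where $\tilde a(z,\zeta) = \bigl(\Phi_\zeta \ast_z a(\cdot,\zeta)\bigr)(z)$ and $\Phi_\zeta$ is the inverse Fourier transform in $\lambda$ of $\varphi_{\le-10}(|\lambda|/(2|\zeta|))$. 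By scaling, $\Phi_\zeta$ is an $L^1$ kernel of unit mass concentrated at $|z|\lesssim|\zeta|^{-1}$, so Young's inequality applied pointwise in $\zeta$ gives $\|\tilde a\|_{\mathcal L_m^q}\lesssim\|a\|_{\mathcal L_m^q}$.

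For (i), I would first apply Lemma \ref{para2diff}(iv) to replace $f$ by $g = P_{[k-2,k+2]}f$. The frequency constraint $|\eta|\approx 2^k$ together with the cutoff $|\xi-\eta|\le 2^{-10}|\xi+\eta|$ forces $|\zeta|\approx 2^k$, so I may insert a smooth bump $\psi_k(\zeta)$ supported on this annulus without altering $P_k T_a g$. This reduces matters to bounding $\|T_c g\|_{L^p}$ with $c(z,\zeta) = \tilde a(z,\zeta)\psi_k(\zeta)$. Its kernel $K(x,y) = \int c((x+y)/2,\zeta)\,e^{i(x-y)\cdot\zeta}\,d\zeta/(2\pi)^2$ is estimated by integrating by parts $9$ times in $\zeta$, matching the number of derivatives built into Definition \ref{Lmp-def}, to yield
\[
|K(x,y)| \lesssim 2^{2k}(1+2^k|x-y|)^{-9}\,A\bigl((x+y)/2\bigr),
\qquad A(z) := 2^{-2k}\!\!\int_{|\zeta|\approx 2^k}\!\!|a|(z,\zeta)\,d\zeta.
\]
Minkowski's inequality in $\zeta$ applied to $A$ gives $\|A\|_{L^q} \lesssim 2^{mk^+}\|a\|_{\mathcal L_m^q}$. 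Then with $r = x-y$ I would apply Minkowski's integral inequality followed by Hölder (using $1/p = 1/q + 1/r$) to get
\[
\|T_c g\|_{L^p} \lesssim \int 2^{2k}(1+2^k|r|)^{-9}\,\|A(\cdot-r/2)\,g(\cdot-r)\|_{L^p}\,dr \lesssim \|A\|_{L^q}\|g\|_{L^r},
\]
since the $r$-integral is $O(1)$. This proves (i).

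For (ii), I would apply (i) with $q=\infty$ and $p=r=2$ (this is the tacit choice, since $q$ is not specified in the statement) to obtain $\|P_k T_a f\|_{L^2}\lesssim 2^{mk^+}\|a\|_{\mathcal L_m^\infty}\|P_{[k-2,k+2]}f\|_{L^2}$. Squaring, multiplying by $2^{2sk^+}$, summing in $k\in\Z$, and invoking the Littlewood--Paley characterization of $H^{s+m}$ delivers the desired bound.

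The main obstacle is the asymmetry in Definition \ref{Lmp-def}: the $\mathcal L_m^q$ norm places the supremum in $\zeta$ \emph{outside} the $L^q_x$ norm, whereas the pointwise kernel bound produces an average in $\zeta$ of $|a|(z,\zeta)$ as a function of $z=(x+y)/2$. The resolution is exactly to use Minkowski's inequality to move the $L^q_z$ norm inside the $\zeta$-integral, which succeeds because the annulus $|\zeta|\approx 2^k$ has volume $\approx 2^{2k}$ that cancels the $2^{-2k}$ appearing in the definition of $A$.
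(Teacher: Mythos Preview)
Your argument is correct. For part (ii) it is identical to the paper's: apply (i) with $p=r=2$, $q=\infty$, then take a weighted $\ell^2$ sum over $k$. For part (i) the paper simply cites Lemma~3.4(i) of \cite{DeIoPa} without writing out the proof, so your direct kernel argument---integrating by parts in $\zeta$ to obtain the decaying kernel $2^{2k}(1+2^k|x-y|)^{-9}A((x+y)/2)$, then using Minkowski and H\"older---is a self-contained version of what is being invoked there. One minor remark: the paper notes that only $8$ $\zeta$-derivatives are actually needed (and in $\R^2$ any number $>2$ already gives an integrable kernel), so your use of all $9$ derivatives from Definition~\ref{Lmp-def} is more than sufficient but not wasteful.
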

\begin{proof}
For (i) see Lemma 3.4 (i) of \cite{DeIoPa}. Note only 8 $\zeta$-derivatives and no $x$-derivatives of the symbol $a$ is needed in the proof, and the restriction on the order of the symbol is unnecessary.

(ii) follows from (i) (with $p=r=2$ and $q=\infty$) after taking a weighted $\ell^2$ sum in $k\in\Z$.
\end{proof}

Paradifferential operators extract the ``quasilinear'' part of products,
leaving ``semilinear'' remainders.
\begin{definition}\label{H-def}
Given two functions $f$ and $g$, define
\[
H(f,g)=fg-T_fg-T_gf.
\]
\end{definition}

\begin{lemma}\label{PkH-Lp}
(i) For fixed $p,q,r\in[1,\infty]$ with $1/p=1/q+1/r$ we have
\[
\|P_kH(f,g)\|_{L^p}\lesssim \|P_{>k-20}f\|_{L^q}\|P_{>k-20}g\|_{L^r}.
\]

(ii) For fixed $1<q,r\le\infty$ and $1<p<\infty$ with $1/p=1/q+1/r$ we have
\[
\|H(f,g)\|_{L^p}\lesssim \|f\|_{L^q}\|g\|_{L^r}.
\]

(iii) For fixed $0\le m<s$ with $m\in\Z$ we have
\[
\|H(f,g)\|_{H^s}\lesssim \|f\|_{W^{m,\infty}}\|g\|_{H^{s-m}}.
\]
\end{lemma}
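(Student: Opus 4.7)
All three parts hinge on a careful Littlewood--Paley analysis of $H$, exploiting the high-high structure imposed by the Weyl cutoff $\varphi_{\le-10}(|\xi-\eta|/|\xi+\eta|)$ in the definition of $T_a$.

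For (i) the key step is the localization $P_k H(f,g) = P_k H(P_{>k-20}f, P_{>k-20}g)$. By bilinearity and the symmetry $H(f,g)=H(g,f)$ it suffices to show $P_k H(f_\ell, g) = 0$ for $f_\ell := P_{\le k-20}f$. Lemma \ref{para2diff}(v) (applied to $a=f$, whose $P_{\le k-20}$-truncation is $f_\ell$) gives $P_k T_{f_\ell} g = P_k(f_\ell g)$, while a direct Fourier-side inspection of the Weyl cutoff shows that $T_g f_\ell$ has frequency support in $\{|\xi| \lesssim 2^{k-19}\}$ (since $|\eta|\le 2^{k-19}$ forces $|\xi|(1-2^{-10})\le 2^{k-19}(1+2^{-10})$), so $P_k T_g f_\ell = 0$. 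Hence $P_k H(f_\ell, g) = P_k(f_\ell g) - P_k(f_\ell g) - 0 = 0$. Given the localization, I bound the three terms of $H(P_{>k-20}f, P_{>k-20}g)$ by H\"older (for the product) and Lemma \ref{Taf-Lp}(i) with order $m=0$ (using Lemma \ref{Lmq=Lq} to identify $\|\cdot\|_{\mathcal L_0^q}$ with $\|\cdot\|_{L^q}$) for the two paradifferential pieces.

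For (ii), I would combine the localization of (i) with the Littlewood--Paley square-function characterization $\|u\|_{L^p} \approx \|(\sum_k |P_k u|^2)^{1/2}\|_{L^p}$, valid for $1<p<\infty$. The endpoint cases $q=\infty$ or $r=\infty$ are handled via the symmetric ``diagonal'' representation $H(f,g) = \sum_j P_j f \cdot \widetilde{P_j}g$ that emerges from the case analysis for (iii) below, followed by a vector-valued H\"older inequality.

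For (iii) I refine (i) using the high-high structure. A case analysis on $|j_1 - j_2|$, exploiting that $T_{P_{j_1}f}(P_{j_2}g)$ vanishes unless $j_1 \le j_2 - 10$ (and symmetrically), shows $P_k H(P_{j_1}f, P_{j_2}g) = 0$ unless $|j_1 - j_2| \le C$ and $\max(j_1,j_2) \ge k - C$. In that regime each summand is bounded by $\|P_{j_1}f\|_{L^\infty}\|P_{j_2}g\|_{L^2}$ using Lemma \ref{Taf-Lp}(i) with parameters chosen so that $f$ always lands in $L^\infty$. Collapsing the $O(1)$-wide diagonal to a single scale $j$ gives
\[
\|P_k H(f,g)\|_{L^2} \lesssim \sum_{j \ge k-C} \|P_j f\|_{L^\infty}\|P_j g\|_{L^2}.
\]
A Cauchy--Schwarz with exponential weight $2^{\varepsilon(j-k)}$ (valid since $s>0$) converts $\sum_k 2^{2sk}(\cdots)^2$ into $\sum_j 2^{2sj}\|P_j f\|_{L^\infty}^2\|P_j g\|_{L^2}^2$. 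Splitting $2^{sj} = 2^{mj}\cdot 2^{(s-m)j}$ distributes the derivatives to produce $\|f\|_{W^{m,\infty}}^2\|g\|_{H^{s-m}}^2$; low frequencies are handled analogously using $s > m \ge 0$.

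\textbf{Main obstacle.} The trickiest case in (iii) is $j_1 > j_2 + 10$, where the nonzero paradifferential term $T_{P_{j_2}g}(P_{j_1}f)$ is a priori bounded by $\|P_{j_2}g\|_{L^\infty}\|P_{j_1}f\|_{L^2}$---the wrong H\"older pairing, since $g$ is only in $L^2$ and $f$ lacks an $L^2$ bound. One circumvents this by invoking Lemma \ref{Taf-Lp}(i) in the configuration $q=2,\ r=\infty$, exchanging the coefficient/argument roles so that $f$ is still placed in $L^\infty$ and $g$ in $L^2$.
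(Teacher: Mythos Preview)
Your argument for (i) is correct and matches the spirit of the reference the paper cites. For (ii) the paper simply invokes the Coifman--Meyer theorem; your square-function route is plausible but you are essentially re-proving Coifman--Meyer from scratch, and the endpoint cases you single out require more care than your sketch provides.

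The real divergence is in (iii). Your diagonal decomposition in $(j_1,j_2)$ works, but it is considerably more elaborate than needed, and the ``main obstacle'' you flag is entirely self-inflicted. The paper never decomposes both factors dyadically. Instead it uses (i) \emph{as a black box} with $p=r=2$, $q=\infty$:
\[
\|P_k H(f,g)\|_{L^2} \lesssim \|P_{>k-20}f\|_{L^\infty}\,\|P_{>k-20}g\|_{L^2},
\]
and then applies the two elementary estimates $\|P_{>k-20}f\|_{L^\infty}\lesssim 2^{-mk}\|f\|_{W^{m,\infty}}$ (valid for $k\ge0$ since $m\in\Z$) and $\sum_{k\ge0}2^{2(s-m)k}\|P_{>k-20}g\|_{L^2}^2\lesssim\|g\|_{H^{s-m}}^2$ (valid since $s>m$). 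This handles $\|P_{\ge0}H(f,g)\|_{H^s}$ in two lines. The low-frequency piece $\|P_{<0}H(f,g)\|_{H^s}\approx\|P_{<0}H(f,g)\|_{L^2}$ is then bounded by (ii). Because (i) already places $f$ in $L^\infty$ and $g$ in $L^2$ at the level of the tails $P_{>k-20}$, there is no H\"older-pairing mismatch to resolve; your obstacle arises only because you broke the symmetry by localizing both factors separately.
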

\begin{proof}
For (i) see the proof of Lemma A.4 (i) of \cite{DeIoPa}.
(ii) is the Coifman--Meyer theorem (Theorem 3.7 of \cite{MuSc}).
For (iii), taking a weighted $\ell^2$ sum over $k\ge0$ of (i) (with $p=r=2$ and $q=\infty$) and using $\|P_{>k-20}f\|_{L^\infty}\lesssim 2^{-mk}\|f\|_{W^{m,\infty}}$ and $\sum_{k=0}^\infty 2^{(s-m)k}\|P_{>k-20}g\|_{L^2}^2\lesssim \|g\|_{H^{s-m}}$ give the desired bound for $\|P_{\ge0}H(f,g)\|_{H^s}$.
For $\|P_{<0}H(f,g)\|_{H^s}\approx\|P_{<0}H(f,g)\|_{L^2}$ the desired bound comes from (ii).
\end{proof}

Next we give Taylor expansions of compositions of paradifferential operators. First we state their commutator estimates.
\begin{definition}\label{Eaf-def}
Given symbols $a_1,\dots,a_n$, define the operator
\[
E(a_1,\dots,a_n)=T_{a_1}\cdots T_{a_n}-T_{a_1\cdots a_n}.
\]
\end{definition}

Roughly speaking, the operator $E(a_1,\dots,a_n)$ is one order smoother than either term on the right, so it can be thought of as an ``error term''.
\begin{lemma}\label{Eaf-Lp}
(i) For fixed $m_j\in\R$, $p,q_j,r\in[1,\infty]$ ($j=1,\dots,n$) with $1/p=1/q_1+\cdots+1/q_n+1/r$ we have
\[
\|P_kE(a_1,\dots,a_n)f\|_{L^p}
\lesssim 2^{(\sum_{j=1}^n m_j-1)k^+}\prod_{j=1}^n (\|a_j\|_{\mathcal L_{m_j}^{q_j}}+\|\nabla_x a_j\|_{\mathcal L_{m_j}^{q_j}}) \|P_{[k-2n,k+2n]}f\|_{L^r}.
\]

(ii) For fixed $s,m_j\in\R$ we have
\[
\|E(a_1,\cdots,a_n)f\|_{H^s}
\lesssim \prod_{j=1}^n (\|a_j\|_{\mathcal L_{m_j}^{q_j}}+\|\nabla_x a_j\|_{\mathcal L_{m_j}^{q_j}}) \|f\|_{H^{s+\sum_{j=1}^n m_j-1}}.
\]
\end{lemma}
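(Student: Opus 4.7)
The natural approach is induction on $n$. Using the telescoping identity
\[
E(a_1,\ldots,a_n) = T_{a_1}\,E(a_2,\ldots,a_n) + E(a_1,\,a_2\cdots a_n),
\]
the first term is bounded via the induction hypothesis combined with Lemma \ref{Taf-Lp} applied to $T_{a_1}$ (the outer paradifferential operator shifts the Littlewood--Paley localization only by a bounded amount). The second term reduces the problem to the base case $n=2$, provided we can control $a_2\cdots a_n$ and its gradient in the appropriate symbol norm. Lemma \ref{prod-Lm} gives $\|a_2\cdots a_n\|_{\mathcal{L}^{q^*}_{m_2+\cdots+m_n}}\lesssim\prod_{j\ge2}\|a_j\|_{\mathcal{L}^{q_j}_{m_j}}$ with $1/q^*=\sum_{j\ge2}1/q_j$, and the Leibniz rule distributes $\nabla_x$ across the product, producing the required factor $\prod_{j\ge2}(\|a_j\|_{\mathcal{L}^{q_j}_{m_j}}+\|\nabla_x a_j\|_{\mathcal{L}^{q_j}_{m_j}})$.

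For the base case $n=2$, expand using the defining formula:
\begin{align*}
\mathcal{F}(T_{a_1}T_{a_2}f)(\xi) &= C^2\iint \varphi_{\le-10}\Bigl(\tfrac{|\xi-\eta|}{|\xi+\eta|}\Bigr)\varphi_{\le-10}\Bigl(\tfrac{|\eta-\zeta|}{|\eta+\zeta|}\Bigr)\\
&\quad\times\mathcal{F}_x a_1\Bigl(\xi-\eta,\tfrac{\xi+\eta}{2}\Bigr)\,\mathcal{F}_x a_2\Bigl(\eta-\zeta,\tfrac{\eta+\zeta}{2}\Bigr)\hat{f}(\zeta)\,d\eta\,d\zeta.
\end{align*}
The two cutoffs force $|\xi|\approx|\eta|\approx|\zeta|$ on the support of the integrand. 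Setting $\theta:=(\xi+\zeta)/2$, Taylor expand the frequency arguments of $a_1$ and $a_2$ about $\theta$: the offsets are $(\xi+\eta)/2-\theta=(\eta-\zeta)/2$ and $(\eta+\zeta)/2-\theta=(\eta-\xi)/2$. The zeroth-order term reconstructs $\mathcal{F}(T_{a_1 a_2}f)(\xi)$ after invoking the convolution identity $\mathcal{F}_x(a_1 a_2)(\cdot,\theta)=\mathcal{F}_x a_1(\cdot,\theta)\ast_x\mathcal{F}_x a_2(\cdot,\theta)$, and is therefore cancelled by the $-T_{a_1 a_2}$ in $E$. The first-order Taylor correction contains extra factors of $\eta-\zeta$ or $\eta-\xi$, which, being the Fourier-conjugate variables to the $x$-variable of the opposite symbol, transfer inside $\mathcal{F}_x a_2$ (resp.\ $\mathcal{F}_x a_1$) as physical-space derivatives, effectively replacing $a_j$ by $\nabla_x a_j$; meanwhile the $\nabla_\zeta$ from the Taylor expansion lowers the symbol order by $1$. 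The second-order Taylor remainder is handled analogously using the integral form and is already two orders smoother. Applying the kernel bounds underlying Lemma \ref{Taf-Lp} to the resulting paraproduct then yields (i) for $n=2$ with the claimed gain of $2^{-k^+}$.

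Part (ii) follows from (i) by a standard dyadic summation: write $f=\sum_\ell P_\ell f$, observe that $P_k E(a_1,\ldots,a_n)P_\ell f$ vanishes unless $|\ell-k|\le 2n$ (due to the frequency localization of each $T_{a_j}$), apply (i) with $p=r=2$, and take a weighted $\ell^2_k$-sum to produce the factor $\|f\|_{H^{s+\sum m_j-1}}$.

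The main obstacle is bookkeeping: one must verify that derivatives of the Taylor expansion landing on the cutoffs $\varphi_{\le-10}$ (rather than on the symbols) produce bounded Fourier multipliers with an acceptable $2^{-k^+}$ factor, and that the integral form of the second-order Taylor remainder gives rise to kernels controlled uniformly in $k$. This is a careful but standard exercise in the style of the Bony--Meyer bilinear symbolic calculus, and the required multiplier estimates can be obtained by arguments parallel to those of Lemma \ref{Taf-Lp}.
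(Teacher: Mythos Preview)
Your proposal is correct and follows essentially the same approach as the paper. The paper's own proof simply refers to Lemma~3.15 of \cite{Zh} for part~(i) and notes that (ii) follows from (i) as in Lemma~\ref{Taf-Lp}; your base-case argument via Taylor expansion about $(\xi+\zeta)/2$ is exactly the method the paper spells out for the refined Lemma~\ref{E1f-Lp}, and your telescoping induction is the natural reduction. One small point worth making explicit: before the Taylor expansion in the $n=2$ case, reduce (as the paper does for $E_1$) to $k>0$ and $a_j=P_{\le k-20}a_j$, so that all the cutoffs $\varphi_{\le-10}$ are identically~$1$ on the relevant support and the zeroth-order term literally reproduces $T_{a_1a_2}f$ rather than a cutoff-mismatched variant.
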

\begin{proof}
The proof of (i) is similar to, but easier than that of Lemma 3.15 of \cite{Zh}.
(ii) follows from (i) as in Lemma \ref{Taf-Lp}.
\end{proof}

We need a more precise estimate for the commutator of two operators.
\begin{definition}\label{E1f-def}
Given two symbols $a$ and $b$, define their Poisson bracket
\[
\{a,b\}=\nabla_xa\nabla_\zeta b-\nabla_\zeta a\nabla_xb.
\]
Define the operator
\[
E_1(a,b)=E(a,b)-\frac{i}{2}T_{\{a,b\}}.
\]
\end{definition}
\begin{lemma}[Cf. Proposition A.5 of \cite{DeIoPaPu}]\label{E1f-Lp}
(i) For fixed $m_1,m_2\in\R$, $p,q_1,q_2,r\in[1,\infty]$ with $1/p=1/q_1+1/q_2+1/r$ we have
\begin{align*}
\|P_kE_1(a,b)f\|_{L^p}
&\lesssim 2^{(m_1+m_2-2)k^+}(\|a\|_{\mathcal L^{q_1}_{m_1}}+\|\nabla_xa\|_{\mathcal L^{q_1}_{m_1}}+\|\nabla_x^2a\|_{\mathcal L^{q_1}_{m_1}})\\
&\times (\|b\|_{\mathcal L^{q_2}_{m_2}}+\|\nabla_xb\|_{\mathcal L^{q_2}_{m_2}}+\|\nabla_x^2b\|_{\mathcal L^{q_2}_{m_2}})\|P_{[k-4,k+4]}f\|_{L^r}.
\end{align*}

(ii) For fixed $s,m_1,m_2\in\R$ we have
\begin{align*}
\|E_1(a,b)f\|_{H^s}
&\lesssim (\|a\|_{\mathcal L^{q_1}_{m_1}}+\|\nabla_xa\|_{\mathcal L^{q_1}_{m_1}}+\|\nabla_x^2a\|_{\mathcal L^{q_1}_{m_1}})\\
&\times (\|b\|_{\mathcal L^{q_2}_{m_2}}+\|\nabla_xb\|_{\mathcal L^{q_2}_{m_2}}+\|\nabla_x^2b\|_{\mathcal L^{q_2}_{m_2}})\|f\|_{H^{s+m}}.
\end{align*}
When $a=a(x)$ and $b=b(x)$ are independent of $\zeta$,
their Poisson bracket vanishes, so $E(a,b)=E_1(a,b)$.
%It is actually smoother, but we don't need that fact.
\end{lemma}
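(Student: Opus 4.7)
The plan is to compute $T_aT_b$ explicitly in Fourier space and perform a second-order Taylor expansion of the two symbols around a common midpoint frequency, isolating $T_{ab}$ and $(i/2)T_{\{a,b\}}$ as the zeroth- and first-order contributions of the Moyal product and controlling the Taylor remainder.

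First I would write, from the Weyl definition of the paraproduct,
\[
\mathcal F(T_aT_bf)(\xi)=C^2\iint \varphi_{\le-10}\!\Big(\tfrac{|\xi-\eta|}{|\xi+\eta|}\Big)\varphi_{\le-10}\!\Big(\tfrac{|\eta-\sigma|}{|\eta+\sigma|}\Big)\mathcal F_xa\!\Big(\xi-\eta,\tfrac{\xi+\eta}{2}\Big)\mathcal F_xb\!\Big(\eta-\sigma,\tfrac{\eta+\sigma}{2}\Big)\hat f(\sigma)\,d\eta\,d\sigma,
\]
and then change variables so that the total Fourier offset $\xi-\sigma$ and the midpoint $\zeta=(\xi+\sigma)/2$ play the roles of the symbolic variables of the composition, while the intermediate frequency $\eta$ becomes the integration variable parametrising the splitting of the total offset between the $a$- and $b$-factors.

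Next I would Taylor expand $a(\cdot,(\xi+\eta)/2)$ and $b(\cdot,(\eta+\sigma)/2)$ in their frequency arguments around $\zeta$ to second order. The zeroth-order term reconstructs (up to a paraproduct cutoff mismatch, cf.\ below) $T_{ab}f$, and the two first-order terms combine, using the antisymmetry of the midpoint offsets, to produce exactly $(i/2)T_{\{a,b\}}f$; this is the Weyl-calculus analogue of the first-order Moyal correction. The integral Taylor remainder then involves $\partial_\zeta^2 a$ contracted with two powers of $\eta-\sigma$ (i.e.\ $\nabla_x^2 b$ on the physical side), and symmetrically $\partial_\zeta^2 b$ with $\nabla_x^2 a$; this is the source both of the $2^{(m_1+m_2-2)k^+}$ gain (two extra $\zeta$-derivatives on a symbol of order $m_j$) and of the second $x$-derivatives appearing on the right of the estimate.

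From this decomposition, part (i) reduces to the Fourier bookkeeping already used in Lemma \ref{Eaf-Lp}: after Littlewood--Paley localising $f$ to frequency $2^k$, apply Young's inequality in $\eta$ and H\"older's inequality with exponents $1/p=1/q_1+1/q_2+1/r$ to the kernel bound produced by the remainder, exactly as in Lemma \ref{Taf-Lp}. Part (ii) then follows from (i) by a weighted $\ell^2$-sum over $k\in\Z$, as in Lemmas \ref{Taf-Lp} and \ref{Eaf-Lp}. The closing remark that $E=E_1$ when $a=a(x)$ and $b=b(x)$ is immediate: $\partial_\zeta a=\partial_\zeta b=0$ kills the Poisson bracket.

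The main obstacle will be the composition of the two low-frequency cutoffs $\varphi_{\le-10}$: their product is not a single clean cutoff in the combined variables $(\xi-\sigma,\zeta)$, so the zeroth-order identification with $T_{ab}$ and the first-order identification with $(i/2)T_{\{a,b\}}$ are only approximate, with a mismatch in the cutoff region. One must verify that this mismatch is always of the Coifman--Meyer type (i.e.\ a remainder already gaining at least one full order in frequency), so that it can be absorbed into the weaker Lemma \ref{Eaf-Lp} bound when handling the zeroth-order piece, and into an even lower-order symbolic remainder when handling the first-order piece. Once this bookkeeping is in place, the rest is a standard symbolic-calculus computation parallel to Proposition A.5 of \cite{DeIoPaPu}.
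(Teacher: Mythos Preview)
Your approach is essentially the paper's: both identify $E_1(a,b)$ as the second-order Taylor remainder in the Weyl composition and derive (i) from kernel bounds, with (ii) following by $\ell^2$-summation over $k$. Two technical differences are worth flagging.

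First, the paper dispenses with your ``main obstacle'' (the cutoff mismatch) by a preliminary reduction, borrowed from Lemma~3.15 of \cite{Zh}: after localising to $P_k$ with $k>0$, one replaces $a$ and $b$ by $P_{\le k-20}a$ and $P_{\le k-20}b$, at which point all the $\varphi_{\le-10}$ cutoffs are identically $1$ on the relevant frequency region and simply drop out. This is cleaner than tracking the cutoff mismatch as a separate Coifman--Meyer remainder, and it is why the paper can write the kernel $\hat A$ of $P_kE_1(a,b)$ directly without any cutoff factors.

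Second, your description of the remainder as ``$\partial_\zeta^2 a$ contracted with $\nabla_x^2 b$, and symmetrically'' omits the cross term $\nabla_\zeta\nabla_x a\cdot\nabla_\zeta\nabla_x b$, which also arises and is visible in the paper's $R(t)$. This does not affect the stated bound, since the cross term is dominated by the product of the full norms $(\|a\|+\|\nabla_xa\|+\|\nabla_x^2a\|)(\|b\|+\|\nabla_xb\|+\|\nabla_x^2b\|)$; but you will encounter it when writing the computation out. The paper's device of a single-parameter Taylor expansion of the product
\[
t\longmapsto \mathcal F_xa\!\Big(\xi-\eta,\tfrac{\xi+\zeta+t(\eta-\zeta)}{2}\Big)\,\mathcal F_xb\!\Big(\eta-\zeta,\tfrac{\xi+\zeta+t(\eta-\xi)}{2}\Big)
\]
makes this cross term appear automatically and avoids multiplying two separate Taylor expansions.
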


Roughly speaking, the operator $E_1(a,b)$ is two orders smoother than $T_aT_b$.
\begin{proof}
As before (ii) follows from (i). To show (i), note that
as in the proof of Lemma 3.15 of \cite{Zh} we can assume $k>0$,
$a=P_{\le k-20}a$ and $b=P_{\le k-20}b$, in which case we have
\[
\mathcal F(P_kE_1(a,b)f)(\xi)
=C^2\iint \hat A(\xi,\eta,\zeta)\hat f(\zeta)d\eta d\zeta,
\]
where
\begin{align*}
\hat A&=\mathcal F_xa\left( \xi-\eta,\frac{\xi+\eta}2 \right)
\mathcal F_xb\left( \eta-\zeta,\frac{\eta+\zeta}2 \right)\\
&-\mathcal F_xa\left( \xi-\eta,\frac{\xi+\zeta}2 \right)
\mathcal F_xb\left( \eta-\zeta,\frac{\xi+\zeta}2 \right)-\hat B,\\
\hat B&=\frac{i}{2}\mathcal F_x(\nabla_xa)\left( \xi-\eta,\frac{\xi+\zeta}2 \right)\mathcal F_x(\nabla_\zeta b)\left( \eta-\zeta,\frac{\xi+\zeta}2 \right)\\
\displaybreak[0]
&-\frac{i}{2}\mathcal F_x(\nabla_\zeta a)\left( \xi-\eta,\frac{\xi+\zeta}2 \right)\mathcal F_x(\nabla_xb)\left( \eta-\zeta,\frac{\xi+\zeta}2 \right)\\
&=\mathcal F_xa\left( \xi-\eta,\frac{\xi+\zeta}2 \right)\frac{\eta-\xi}2\cdot(\nabla_\zeta\mathcal F_xb)\left( \eta-\zeta,\frac{\xi+\zeta}2 \right)\\
&+(\nabla_\zeta\mathcal F_xa)\left( \xi-\eta,\frac{\xi+\zeta}2 \right)
\frac{\eta-\zeta}2\cdot\mathcal F_xb\left( \eta-\zeta,\frac{\xi+\zeta}2 \right).
\end{align*}
By Taylor's theorem,
\[
\hat A=\int_0^1 R(t)(1-t)dt,
\]
where
\begin{align*}
R(t)&=\frac{d^2}{dt^2}\left[ \mathcal F_xa\left( \xi-\eta,\frac{\xi+\zeta+t(\eta-\zeta)}2 \right)\mathcal F_xb\left( \eta-\zeta,\frac{\xi+\zeta+t(\eta-\xi)}2 \right) \right]\\
%&=(\nabla_\zeta^2\mathcal F_xa)\left( \xi-\eta,\frac{\xi+\zeta+t(\eta-\zeta)}2 \right)\left( \frac{\eta-\zeta}2,\frac{\eta-\zeta}2 \right)
%\mathcal F_xb\left( \eta-\zeta,\frac{\xi+\zeta+t(\eta-\xi)}2 \right)\\
%&+(\eta-\zeta)\cdot(\nabla_\zeta\mathcal F_xa)\left( \xi-\eta,\frac{\xi+\zeta+t(\eta-\zeta)}2 \right)\cdot\frac{\eta-\xi}2\cdot(\nabla_\zeta\mathcal F_xb)\left( \eta-\zeta,\frac{\xi+\zeta+t(\eta-\xi)}2 \right)\\
%&+\mathcal F_xa\left( \xi-\eta,\frac{\xi+\zeta+t(\eta-\zeta)}2 \right)
%(\nabla_\zeta^2\mathcal F_xb)\left( \eta-\zeta,\frac{\xi+\zeta+t(\eta-\xi)}2 \right)\left( \frac{\eta-\xi}2,\frac{\eta-\xi}2 \right)\\
&=-\frac14(\mathcal F_x\nabla_\zeta^2a)\left( \xi-\eta,\frac{\xi+\zeta+t(\eta-\zeta)}2 \right)\cdot(\mathcal F_x\nabla_x^2b)\left( \eta-\zeta,\frac{\xi+\zeta+t(\eta-\xi)}2 \right)\\
&-\frac12(\mathcal F_x\nabla_\zeta\nabla_xa)\left( \xi-\eta,\frac{\xi+\zeta+t(\eta-\zeta)}2 \right)\cdot(\mathcal F_x\nabla_\zeta\nabla_xb)\left( \eta-\zeta,\frac{\xi+\zeta+t(\eta-\xi)}2 \right)\\
&-\frac14(\mathcal F_x\nabla_x^2a)\left( \xi-\eta,\frac{\xi+\zeta+t(\eta-\zeta)}2 \right)\cdot(\mathcal F_x\nabla_\zeta^2b)\left( \eta-\zeta,\frac{\xi+\zeta+t(\eta-\xi)}2 \right),
\end{align*}
where the dot products denote contractions between two 2-tensors.
Now (i) follows in the same way as Lemma 3.15 of \cite{Zh}.
\end{proof}

\subsection{Multilinear paraproduct estimates}
We also need to bound multilinear paraproducts.
For further reference the reader is referred to Section 3.1 of \cite{DeIoPa} and Section A.1.1 of \cite{DeIoPaPu}.
\begin{definition}\label{Soo-def}
If $m$ is a Schwartz function on $(\R^2)^n$, define
\begin{align*}
\|m\|_{S^\infty}&=\|\mathcal Fm\|_{L^1},\\
\|m\|_{S^\infty_{k_1,\cdots,k_n;k}}&=\|\varphi_k(\xi_1+\cdots+\xi_n)m(\xi_1,\cdots,\xi_n)\varphi_{k_1}(\xi_1)\cdots\varphi_{k_n}(\xi_n)\|_{S^\infty}.
\end{align*}
\end{definition}

Straight from the definition, we get the embedding $S^\infty\subset L^\infty$.

The next lemma allows us to estimate the $S^\infty$ norm of various symbols.
\begin{lemma}\label{Soo-Cn}
(i) $\|m_1m_2\|_{S^\infty}\le\|m_1\|_{S^\infty}\|m_2\|_{S^\infty}$.

(ii) For $k_1,\dots,k_n,k\in\Z$ we have
\[
\|m\cdot\otimes_{j=1}^n\varphi_{k_j}\|_{S^\infty}
\lesssim_n \sum_{l=0}^{n+1} \sum_{j=0}^n 2^{lk_j}\|
\varphi_{[k_j-1,k_j+1]}\nabla_j^lm\|_{L^\infty}.
\]
\end{lemma}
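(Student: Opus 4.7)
Part (i) is immediate from the Fourier convolution theorem and Young's inequality. The Fourier transform converts multiplication into convolution (up to a harmless normalization constant), and then $\|f*g\|_{L^1} \le \|f\|_{L^1}\|g\|_{L^1}$ gives $\|m_1 m_2\|_{S^\infty} \le \|m_1\|_{S^\infty}\|m_2\|_{S^\infty}$.

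For Part (ii), I would reduce to the unit-scale case by the dilation $\xi_j = 2^{k_j}\eta_j$ and then integrate by parts on the Fourier side. Setting $\tilde m(\eta) = m(2^{k_1}\eta_1,\ldots,2^{k_n}\eta_n)$, both sides of the claimed inequality are invariant under this rescaling: the $L^1$-norm of the Fourier transform is preserved by simultaneous physical/frequency dilations (the Jacobians cancel), each cutoff $\varphi_{k_j}$ becomes $\varphi_0$, and each summand on the right transforms as $\|\nabla_j^l\tilde m\|_{L^\infty}=2^{lk_j}\|\nabla_j^l m\|_{L^\infty}$. Hence it suffices to treat the case $k_1=\cdots=k_n=0$, where $F := \tilde m\cdot\bigotimes_j\varphi_0(\eta_j)$ is supported in a fixed compact product of unit annuli.

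To estimate $\|\mathcal F F\|_{L^1}$, for each $j\in\{1,\ldots,n\}$ and each integer $L$, integration by parts $L$ times in $\eta_j$ combined with the Leibniz rule yields
\[
|y_j|^L|\mathcal F F(y)|\le\|\nabla_{\eta_j}^L F\|_{L^1_\eta}\lesssim_L\sum_{a=0}^L\|\varphi_0(\eta_j)\nabla_j^a\tilde m\|_{L^\infty},
\]
where the $L^1$-to-$L^\infty$ step is free due to the compact support. Taking $L=n+1$ produces the pointwise decay $|\mathcal F F(y)|\lesssim\min_j(1+|y_j|)^{-(n+1)}\cdot(\text{RHS})$. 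Integrating in $y\in(\R^2)^n$ by decomposing into regions according to which $|y_j|$ is largest then converts this pointwise bound into a finite $L^1$ bound. Undoing the rescaling gives the claim.

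The main obstacle will be the final integration step: balancing the pointwise decay of order $n+1$ against the total volume dimension $2n$ via the regional decomposition. This is a combinatorial/dimension-counting exercise that works favorably precisely because the derivative count $n+1$ is chosen to exceed the dimension of each of the $n$ factor spaces $\R^2$, producing summable contributions from each region.
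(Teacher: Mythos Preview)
Part (i) is fine. For part (ii), the final integration step does not work. After your rescaling you obtain, for each fixed $j$, the bound $|\mathcal F F(y)|\lesssim A_j(1+|y_j|)^{-(n+1)}$, and you then propose to integrate $\min_j(1+|y_j|)^{-(n+1)}=(1+\max_j|y_j|)^{-(n+1)}$ over $(\R^2)^n$. This function is \emph{not} in $L^1((\R^2)^n)$ for any $n\ge 1$: on the region where $|y_1|=\max_j|y_j|$ you must still integrate over all $y_2,\dots,y_n$ with $|y_i|\le|y_1|$, which contributes volume $\sim|y_1|^{2(n-1)}$, and the resulting radial integral $\int_0^\infty r^{2n-1}(1+r)^{-(n+1)}\,dr$ diverges (logarithmically when $n=1$, polynomially when $n\ge 2$). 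Interpolating between the $n$ single-variable bounds via geometric means does not rescue this either, since it only yields decay $(1+|y_j|)^{-(n+1)/n}$ in each two-dimensional slot and $(n+1)/n\le 2$ for every $n\ge 1$. Your heuristic that ``$n+1$ exceeds the dimension of each factor $\R^2$'' is the right intuition for a product-type bound $\prod_j(1+|y_j|)^{-(n+1)}$, but a single integration by parts in one variable at a time cannot produce that product.

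The paper itself does not give a proof but defers to Lemma~3.3 of \cite{DeIoPa}. The route there goes through $L^2$ rather than pointwise decay: by Cauchy--Schwarz and Plancherel,
\[
\|\mathcal F F\|_{L^1(\R^{2n})}\le\|(1+|y|)^{-(n+1)}\|_{L^2(\R^{2n})}\,\|(1+|y|)^{n+1}\mathcal F F\|_{L^2}\lesssim_n\|F\|_{H^{n+1}},
\]
which needs only $n+1$ derivatives in total but allows them to be \emph{mixed} across the variables $\eta_1,\dots,\eta_n$. On the compact product-annulus support, $\|F\|_{H^{n+1}}$ is controlled by $L^\infty$ norms of derivatives of $\tilde m$ of total order at most $n+1$, and undoing the rescaling attaches the weight $2^{|\alpha_1|k_1+\cdots+|\alpha_n|k_n}$ to the term $\partial_{\xi_1}^{\alpha_1}\cdots\partial_{\xi_n}^{\alpha_n}m$. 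This is consistent with how the lemma is actually applied later (e.g.\ in bounding $\|\Phi_{\mu\nu}^{-1}\|_{S^\infty_{k_1,k_2;k_3}}$), where the input hypothesis is always a bound on the full gradient $\nabla^L$ rather than on each $\nabla_j^L$ separately.
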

\begin{proof}
For (i) and (ii), see (3.4) and Lemma 3.3 of \cite{DeIoPa}, respectively.
\end{proof}

The $L^p$ boundedness of a paraproduct of functions is well known.
\begin{lemma}\label{paraprod}
Fix $p,p_j\in[1,\infty]$ ($j=1,\dots,n$) and $1/p=1/p_1+\cdots+1/p_n$. Let
\[
\mathcal Ff(\xi)
=\iiint_{\xi_1+\dots+\xi_n=\xi} m(\xi_1,\dots,\xi_n)\prod_{j=1}^n \mathcal F f_j(\xi_j).
\]
Then
\[
\|f\|_{L^p}\lesssim \|m\|_{S^\infty}\prod_{j=1}^n\|f_j\|_{L^{p_j}}.
\]
\end{lemma}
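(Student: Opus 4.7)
The plan is to exploit the very definition $\|m\|_{S^\infty}=\|\mathcal Fm\|_{L^1}$ in order to represent the multiplier $m$ as an $L^1$-superposition of pure characters. This turns the multilinear paraproduct into an $L^1$-average of translated products of the inputs $f_j$, on which H\"older's inequality acts pointwise and Minkowski's integral inequality then produces the $L^p$ bound.

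First I would invert the Fourier transform on $(\R^2)^n$ to write
\[
m(\xi_1,\dots,\xi_n)=c_n\int_{(\R^2)^n}(\mathcal Fm)(y_1,\dots,y_n)\,e^{i\sum_{j}\xi_j\cdot y_j}\,dy_1\cdots dy_n
\]
for a suitable normalization constant $c_n$. Substituting this expression into the formula for $\mathcal Ff$ and then taking the inverse Fourier transform in $\xi$, a Fubini interchange (legitimate for Schwartz $f_j$ and extended by density) yields
\[
f(x)=c_n'\int_{(\R^2)^n}(\mathcal Fm)(y_1,\dots,y_n)\,\prod_{j=1}^n f_j(x+y_j)\,dy_1\cdots dy_n,
\]
since once $m$ has been expanded as a product of characters $e^{i\xi_j\cdot y_j}$ the $n$ spectral integrals decouple, and each simply reproduces a translate of the corresponding $f_j$.

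Then I would apply Minkowski's integral inequality to move the $L^p_x$ norm inside the $y$-integral, followed by H\"older's inequality pointwise in $y$:
\[
\Big\|\prod_{j=1}^n f_j(\cdot+y_j)\Big\|_{L^p_x}\le \prod_{j=1}^n\|f_j(\cdot+y_j)\|_{L^{p_j}}=\prod_{j=1}^n\|f_j\|_{L^{p_j}},
\]
using translation invariance of each $L^{p_j}$ norm in the last equality. Combining these gives
\[
\|f\|_{L^p}\lesssim \int|(\mathcal Fm)(y_1,\dots,y_n)|\,dy\cdot\prod_{j=1}^n\|f_j\|_{L^{p_j}}=\|m\|_{S^\infty}\prod_{j=1}^n\|f_j\|_{L^{p_j}}.
\]

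There is essentially no substantive obstacle here: both the character decomposition and the two inequalities are classical. The only mildly delicate points are the Fubini interchange and the applicability of Minkowski's integral inequality across the full range $p,p_j\in[1,\infty]$; both are handled by first working with Schwartz inputs (so that everything is absolutely integrable) and then extending by density, interpreting the $L^\infty$ endpoints in the usual duality sense. The same argument applies verbatim when $m$ is replaced by $m\cdot\prod_j\varphi_{k_j}$, which is the form in which the estimate is invoked later via Lemma \ref{Soo-Cn}.
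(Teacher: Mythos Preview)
Your argument is correct and is exactly the standard proof of this estimate: expand $m$ as an $L^1$-superposition of characters via $\|m\|_{S^\infty}=\|\mathcal Fm\|_{L^1}$, reduce to products of translates, and apply Minkowski plus H\"older. The paper does not spell this out but simply cites Lemma~5.2~(ii) of \cite{IoPu} for $n=2$ and remarks that the general case is no different; your write-up is precisely the argument underlying that cited result.
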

\begin{proof}
See Lemma 5.2 (ii) of \cite{IoPu} for the case of $n=2$, the general case being no different.
\end{proof}

\section{Paralinearization of the Zakharov system}\label{ParLin}
In this section we transform the fully nonlinear Zakharov system (\ref{Zakharov}) to a quasilinear one.

\subsection{Fixed-point formulation of the Dirichlet-to-Neumann operator}\label{BVP}
Recall that in the Zakharov formulation (\ref{Zakharov}),
the problem is recast in terms of the height of the fluid $h$,
and the boundary value $\phi$ of the velocity potential $\Phi$.
The evolution of the height $h$ is
\begin{equation}\label{ht=Gh-phi}
h_t=G(h)\phi,
\end{equation}
where $\phi(x,t)=\Phi(x,h(x,t),t)$ and
\begin{equation}\label{Gh-def}
G(h)\phi=\sqrt{1+|\nabla h|^2}\partial_n\Phi
\end{equation}
is the Dirichlet-to-Neumann operator, which we now express in terms of paradifferential operators acting on $\phi$ and $h$, plus a good error term. The process is similar to the case of gravity-capillary waves done in Appendix B of \cite{DeIoPaPu}.

We start from the velocity potential $\Phi$, which is harmonic on
\[
\Omega(t)=\{(x,z)\in\R^2\times\R: z<h(x,t)\}
\]
and has the boundary value $\phi$ on
\[
\Gamma(t)=\{(x,h(x,t)): x\in\R^2\}.
\]
In other words, it solves the Dirichlet problem
\begin{equation}\label{phi-Diri}
\begin{cases}
\Delta_{x,z}\Phi=0\text{ on $\Omega(t)$},\\
\Phi(x,h(x,t),t)=\phi(x,t),\\
\Phi(x,z)\to0\text{ as $z \to -\infty$ in a sense to be clarified later.}
\end{cases}
\end{equation}
The Dirichlet-to-Neumann operator can then be expressed as
\begin{equation}\label{Gh-phi}
G(h)\phi=[\partial_z\Phi-\nabla h\cdot\nabla_x\Phi]_\Gamma=B-\nabla h\cdot V,
\end{equation}
where $B=[\partial_z\Phi]_\Gamma$ and $V=[\nabla_x\Phi]_\Gamma$ are vertical and horizontal components of the boundary velocity, respectively.

Next we flatten the boundary $\Gamma(t)$ using a change of variable. Let
\begin{equation}\label{u-def}
\begin{aligned}
u(x,y,t)&=\Phi(x,h(x,t)+y,t), & (x,y)&\in\R^2\times(-\infty,0),\\
\text{i.e. }\Phi(x,z,t)&=u(x,z-h(x,t),t), & (x,z)&\in\Omega(t).
\end{aligned}
\end{equation}
Then $u$ solves the elliptic boundary value problem
\begin{equation}\label{u-Diri}
\begin{cases}
(1+|\nabla h|^2)\partial_y^2u+\Delta_xu-2\nabla h\cdot\partial_y\nabla_xu-\Delta h\partial_yu=0,\\
u(x,0,t)=\phi(x,t),\\
\nabla_{x,y}u(x,y,t)\to0\text{ as $y \to -\infty$.}
\end{cases}
\end{equation}
We have
\begin{equation}\label{BV-u}
\begin{cases}
B=\partial_yu|_{y=0},\\
V=[\nabla_xu-\nabla h\partial_yu]_{y=0}=\nabla\phi-B\nabla h
\end{cases}
\end{equation}
and the Dirichlet-to-Neumann operator becomes
\begin{equation}\label{DN-u}
G(h)\phi=[(1+|\nabla h|^2)\partial_yu-\nabla h\cdot\nabla_xu]_{y=0}.
\end{equation}

Here are some useful estimates of the solution $u$ to (\ref{u-Diri}),
whose proofs can be found in section \ref{BVPEst}.
\begin{proposition}\label{u-Hs}
Fix $s>j\ge1$.

(i) If $\|\nabla h\|_{H^s}<c_s$ is sufficiently small then
\[
\|\nabla_{x,y}^ju\|_{L_y^2H_x^{s-j+1}}\lesssim \||\nabla|^{1/2}\phi\|_{H^s}.
\]

(i') If in addition we have $s>j+1$ then $\|\nabla_{x,y}^ju\|_{H_x^{s-j}}\to0$ as $y\to-\infty$.

(ii) If $\|\nabla h\|_{H^{s+1/2}}<c_s$ is sufficiently small then
\[
\|\nabla_{x,y}^ju\|_{L_y^\infty H_x^{s-j+1}}\lesssim \||\nabla|^{1/2}\phi\|_{H^{s+1/2}}.
\]
\end{proposition}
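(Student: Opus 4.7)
The overall strategy is to reduce the problem to the flat case ($h \equiv 0$) via harmonic extension of the boundary data, estimate the remainder using $H^1$ energy estimates on the elliptic equation with zero boundary data, and then bootstrap in $j$ by using the equation itself to convert second-order $y$-derivatives into tangential $x$-derivatives.

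First, when $h \equiv 0$, the unique decaying solution is the harmonic extension $\varphi(x,y) = e^{y|\nabla|}\phi(x)$ in $y < 0$. A direct Plancherel computation gives
\[
\|\nabla_{x,y}^j\varphi\|_{L_y^2H_x^{s-j+1}}^2 \approx \int_{\R^2}\langle\xi\rangle^{2(s-j+1)}|\xi|^{2j}|\hat\phi(\xi)|^2\int_{-\infty}^0 e^{2y|\xi|}\,dy\,d\xi \approx \||\nabla|^{1/2}\phi\|_{H^s}^2,
\]
with the analogous $L_y^\infty$ version costing an extra half-derivative and giving (ii). For general $h$, I would write $u = \varphi + v$; since $\Delta_{x,y}\varphi = 0$, the remainder $v$ satisfies $\mathcal{L}v = f$ with $v(x,0)=0$ and $v \to 0$ at $-\infty$, where $\mathcal{L}$ is the operator in (\ref{u-Diri}) and
\[
f = -|\nabla h|^2\partial_y^2\varphi + 2\nabla h\cdot\partial_y\nabla_x\varphi + \Delta h\partial_y\varphi.
\]
The Sobolev multiplication theorem (using $s > j \ge 1$ so $H^{s-1}$ is an algebra up to the factor of $h$) together with the flat estimate bounds $\|f\|_{L_y^2 H_x^{s-1}} \lesssim \|\nabla h\|_{H^s}\||\nabla|^{1/2}\phi\|_{H^s}$.

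Next, for the $H^1$-level energy estimate on $v$, I would apply $\langle\nabla_x\rangle^{s-1}$ to the equation and pair with $\langle\nabla_x\rangle^{s-1}v$. The ellipticity of the symbol $\alpha(\zeta) = (1+|\nabla h|^2)\zeta_y^2 - 2\nabla h\cdot\zeta_x\zeta_y + |\zeta_x|^2$ (uniform in $x$ when $\|\nabla h\|_{H^s}$ is small, since $H^s \hookrightarrow L^\infty$) gives, after integration by parts in $(x,y)$ and using the vanishing boundary conditions on $v$,
\[
\|\nabla_{x,y}v\|_{L_y^2H_x^{s-1}}^2 \lesssim \|f\|_{L_y^2H_x^{s-1}}\|v\|_{L_y^2H_x^{s+1}} + (\text{commutator terms absorbable for small } h),
\]
and a paradifferential rewriting of $\mathcal{L}$ (using Lemmas \ref{Taf-Lp}, \ref{Eaf-Lp}, \ref{E1f-Lp}) lets one control the commutators by lower-order quantities. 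This establishes (i) for $j = 1$. For $j \ge 2$ I would argue by induction: solving the equation for $\partial_y^2 u$,
\[
\partial_y^2 u = \frac{1}{1+|\nabla h|^2}\bigl(2\nabla h\cdot\partial_y\nabla_x u + \Delta h\,\partial_y u - \Delta_x u\bigr),
\]
each additional $\partial_y$ can be exchanged for tangential derivatives of $u$ and $h$ already controlled at the previous step, losing one spatial derivative exactly as the target norm permits. Part (i') then follows from the $L_y^2H_x^{s-j+1}$ control combined with the trace/Hardy inequality in $y$. For (ii), the trace inequality
$\|w(\cdot,y)\|_{H^\sigma}^2 \lesssim \|\partial_y w\|_{L_y^2H^\sigma}\|w\|_{L_y^2H^{\sigma+1}}$
applied to $\nabla_{x,y}^j u$ converts the $L_y^2$ bound at regularity levels $s$ and $s+1/2$ into the desired $L_y^\infty$ bound, which explains the half-derivative loss between the hypotheses of (i) and (ii).

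The main obstacle is the energy estimate in the second paragraph: the quasilinear coefficient $(1+|\nabla h|^2)$ multiplying the highest-order $\partial_y^2$ term interacts with $\langle\nabla_x\rangle^{s-1}$ to produce commutator terms that are not obviously lower order, and the horizontal and vertical derivatives have to be balanced carefully since the problem is naturally elliptic but the norms are anisotropic (tangential Sobolev, $L^2$ in $y$). I expect that reformulating $\mathcal{L}$ symbolically, factoring its symbol as $(\partial_y - T_{a_+})(\partial_y - T_{a_-}) + (\text{remainder})$ with $a_\pm$ the two roots of the quadratic form in $\zeta_y$ (a standard water-wave factorization), and then solving the two resulting transport equations in $y$, is the cleanest way to avoid ad hoc manipulations — but for this proposition's weaker-than-sharp bounds the direct paradifferential energy argument sketched above should suffice.
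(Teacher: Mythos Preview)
Your approach is viable but genuinely different from the paper's. The paper does \emph{not} run an energy estimate on $v=u-\varphi$; instead it rewrites the full elliptic problem as a fixed-point equation for $\nabla_{x,y}u$ (see (\ref{u-Diri-f})--(\ref{gr-u})): the flat Green's function produces an explicit integral operator $\mathcal K(y,y')$ acting on $M(\nabla h)\nabla_{x,y}u$, and Lemma~\ref{K-bound}(ii) bounds this operator from $L^2_yH^s_x$ to $L^q_yH^s_x$ for any $q\ge2$. The $j=1$ bound then follows from a direct contraction on $\|\nabla_{x,y}u\|_{L^2_yH^s_x}$ (or $L^\infty_y$) with no commutators at all, since the Sobolev multiplication lands entirely on the matrix $M(\nabla h)$; parts (i) and (ii) are literally the same argument with $q=2$ versus $q=\infty$. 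The induction for $j\ge2$ and the argument for (i${}'$) are essentially as you describe.

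There is one genuine gap in your sketch. In your energy identity the right-hand side is $\langle \langle\nabla_x\rangle^{s-1}f,\langle\nabla_x\rangle^{s-1}v\rangle$, which you bound by $\|f\|_{L^2_yH^{s-1}}\|v\|_{L^2_yH^{s+1}}$; but the identity only controls $\|\nabla_{x,y}v\|_{L^2_yH^{s-1}}$, and on the half-space there is no Poincar\'e inequality recovering $\|v\|_{L^2_y}$ from its gradient. The fix is to exploit that both the source $f$ and the perturbative part of $\mathcal Lv$ can be put in divergence form $\partial_y g_1+\nabla_x\!\cdot g_2$ (this is exactly (\ref{u-Diri-f})--(\ref{f12-def}) in the paper), after which one more integration by parts transfers the pairing onto $\nabla_{x,y}v$ and closes the estimate. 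Once you insert this step your argument goes through; the paper's integral-kernel route simply sidesteps the issue and, as a bonus, delivers the $L^\infty_y$ bound without a separate trace step and extends verbatim to the Besov norms needed in Proposition~\ref{u-Cr}.
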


Applying Proposition \ref{u-Hs} (ii) to (\ref{BV-u}) and (\ref{DN-u}),
under the same condition we have
\begin{equation}\label{Gh-Hs}
\|B(h)\|_{H^s}+\|V(h)\|_{H^s}+\|G(h)\phi\|_{H^s}\lesssim_s \||\nabla|^{1/2}\phi\|_{H^{s+1/2}}\lesssim \|\phi\|_{H^{s+1}}.
\end{equation}

We will also use Besov norms in our estimates. For $r>0$ define
\[
\|u\|_{B^r_{p,q}}=\|P_{<0}u\|_{L^p}+\|2^{rk}\|P_ku\|_{L^p}\|_{\ell^q_{k\ge0}}.
\]
Let $C_*^r=B^r_{\infty,\infty}$ denote the H\"older--Zygmund norm,
and $X^r$ denote the norm $B^r_{\infty,2}$.
\begin{proposition}\label{u-Cr}
Fix $1\le j<r+1$. If $\|\nabla h\|_{H^{r+1}}<c_r$ is small enough then:

(i) For any integer $k<0$,
\[
\|\nabla_{x,y}^ju\|_{L_y^2X_x^{r-j+1}}\lesssim |k|\||\nabla|^{1/2}\phi\|_{X^r}+2^k\||\nabla|^{1/2}\phi\|_{L^2}.
\]

(i') If $r>j$ then $\|\nabla_{x,y}^ju\|_{X_x^{r-j}}\to0$ as $y\to-\infty$.

(ii)
\[
\|\nabla_{x,y}^ju\|_{L_y^\infty C_*^{r-j+1}}\lesssim \||\nabla|^{1/2}\phi\|_{C_*^{r+1/2}}.
\]
\end{proposition}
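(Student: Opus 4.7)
The plan is to mirror the proof of Proposition \ref{u-Hs} at the level of $L^\infty$-based Besov and H\"older--Zygmund norms rather than Sobolev ones, using the paradifferential calculus of Section \ref{ParaCalc}. The starting point is the paralinearization of (\ref{u-Diri}): decomposing each variable coefficient as $T_{(\cdot)}+\text{(semilinear remainder from Definition \ref{H-def})}$, the equation becomes
\[
(1+T_{|\nabla h|^2})\partial_y^2u+\Delta_xu-2T_{\nabla h}\cdot\partial_y\nabla_xu-T_{\Delta h}\partial_yu=F,
\]
where $F$ collects remainders $H(\cdot,\cdot)$ and the commutator errors of Lemmas \ref{Eaf-Lp} and \ref{E1f-Lp}. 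Because these pieces are one derivative smoother than the naive products, they are controlled by the $H^s$ bound already granted by Proposition \ref{u-Hs}.

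I would next factor the principal part as $(\partial_y-T_a)(\partial_y-T_b)+R$ with symbols $a,b$ of order $1$, $\operatorname{Re} a>0>\operatorname{Re} b$, $a,b\approx\pm|\xi|$, and $R$ smoothing; this is the standard parabolic reduction for an elliptic boundary value problem, analogous to the constructions in \cite{AlBuZu,DeIoPaPu}. Since $u\to 0$ as $y\to-\infty$, the growing-mode component $(\partial_y-T_b)u$ is a controlled remainder, and $u$ satisfies $(\partial_y-T_a)u=G$ on $(-\infty,0]$ with $u|_{y=0}=\phi$ and $G$ a paradifferential forcing. Projecting to frequency $2^{k'}$ and applying $\nabla^j_{x,y}$ yields the pointwise-in-$y$ bound
\[
\|\nabla^j_{x,y}P_{k'}u(\cdot,y)\|_{L^\infty_x}\lesssim 2^{jk'}e^{c2^{k'}y}\|P_{k'}\phi\|_{L^\infty}+\text{(Duhamel term)},
\]
from which $\|\nabla^j_{x,y}P_{k'}u\|_{L^2_yL^\infty_x}\approx 2^{(j-1/2)k'}\|P_{k'}\phi\|_{L^\infty}$ and $\|\nabla^j_{x,y}P_{k'}u\|_{L^\infty_yL^\infty_x}\approx 2^{jk'}\|P_{k'}\phi\|_{L^\infty}$, modulo forcing contributions controlled by Proposition \ref{u-Hs}.

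For (ii), weighting the $L^\infty_yL^\infty_x$ bound by $2^{(r-j+1)k'}$ and using $\|P_{k'}\phi\|_{L^\infty}\approx 2^{-k'/2}\|P_{k'}|\nabla|^{1/2}\phi\|_{L^\infty}$ yields the $C_*^{r+1/2}$ bound after taking $\ell^\infty$ in $k'\ge 0$. For (i), the high frequencies $k'\ge 0$ sum in $\ell^2$ to $\||\nabla|^{1/2}\phi\|_{X^r}$ without loss (the weights combining as $2^{(r-j+1)k'}\cdot 2^{(j-1/2)k'}=2^{(r+1/2)k'}$, exactly absorbing the $|\nabla|^{1/2}$). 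The low-frequency tail $\sum_{k'<0}\|P_{k'}\nabla^j_{x,y}u\|_{L^2_yL^\infty_x}$ is split at the parameter $k$: on $k<k'<0$, $\|P_{k'}\phi\|_{L^\infty}\lesssim 2^{-k'/2}\||\nabla|^{1/2}\phi\|_{X^r}$ produces a slice of size $\lesssim 2^{(j-1)k'}\||\nabla|^{1/2}\phi\|_{X^r}$, whose sum over $|k|$ terms yields the $|k|\||\nabla|^{1/2}\phi\|_{X^r}$ contribution in the borderline case $j=1$; on $k'\le k$, Bernstein's inequality $\|P_{k'}\phi\|_{L^\infty}\lesssim 2^{k'/2}\||\nabla|^{1/2}\phi\|_{L^2}$ gives a slice $\lesssim 2^{jk'}\||\nabla|^{1/2}\phi\|_{L^2}$ whose geometric sum produces $2^k\||\nabla|^{1/2}\phi\|_{L^2}$. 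Finally, (i') follows from (i) combined with the pointwise exponential decay $e^{c2^{k'}y}$ in each frequency slice, via dominated convergence as $y\to-\infty$.

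The main obstacle is setting up the paradifferential factorization and the accompanying Duhamel representation with bounds at the $L^\infty_x$ level: the symbols $a,b$ depend on $h\in H^{r+1}$, so the $\mathcal L_m^q$-norms needed to invoke Lemmas \ref{Taf-Lp} and \ref{E1f-Lp} must be obtained via Moser-type inequalities, with ellipticity ($\operatorname{Re} a>0$) guaranteed by the smallness $\|\nabla h\|_{H^{r+1}}<c_r$. Once these ingredients are in place, the passage from pointwise-in-$y$ frequency-localized bounds to the stated mixed norms is routine Littlewood--Paley summation.
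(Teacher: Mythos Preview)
Your approach differs from the paper's in its core machinery. The paper does \emph{not} paralinearize or factor the second-order operator here; that factorization apparatus (symbols $a,b$, the operators $\mathcal A,\mathcal B$) is introduced only later, in the paralinearization of the Dirichlet-to-Neumann map itself. For Proposition \ref{u-Cr} the paper works directly with the fixed-point formula (\ref{gr-u}) for $\nabla_{x,y}u$: the linear term $e^{y|\nabla|}(\nabla\phi,|\nabla|\phi)$ is estimated frequency-by-frequency (your three-range split $l<k$, $k\le l<0$, $l\ge0$ is exactly what the paper does here), while the second and third terms on the right of (\ref{gr-u}) are bounded via Lemma \ref{K-bound} (i), (iii) and the Besov multiplication algebra by $(c_r+c_r^2)\|\nabla_{x,y}u\|_{L^p_yB^r_{\infty,p}}$ and then absorbed using smallness of $c_r$. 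Higher $j$ follow by differentiating (\ref{u-Diri}), and (i') by the uniform-continuity-plus-finite-$L^2_y$ argument as in Proposition \ref{u-Hs} (i').

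Your route would work in principle, but it is heavier than needed and has one soft spot: you assert that the Duhamel forcing is ``controlled by Proposition \ref{u-Hs}''. The forcing $G$ contains the paralinearization remainders $H(|\nabla h|^2,\partial_y^2u)$, $H(\nabla h,\partial_y\nabla_x u)$, etc., and to close in $X^r$ or $C_*^r$ you need $L^\infty_x$-type control on these, not just the $L^2_x$-based bounds of Proposition \ref{u-Hs}. So you still end up with a term proportional to $\|\nabla_{x,y}u\|_{L^p_yB^r_{\infty,p}}$ on the right, to be absorbed by smallness---which is precisely the paper's contraction, just reached after an unnecessary detour through paradifferential factorization. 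The paper's advantage is that the kernel $\mathcal K$ is explicit and its mapping properties (Lemma \ref{K-bound}) are elementary, so no symbol smoothness or Moser-type estimates are needed.
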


Applying Proposition \ref{u-Cr} (ii) to (\ref{BV-u}) and (\ref{DN-u}),
under the same condition we have
\begin{equation}\label{Gh-Cr}
\|B\|_{C_*^r}+\|V\|_{C_*^r}+\|G(h)\phi\|_{C_*^r}
\lesssim_r \||\nabla|^{1/2}\phi\|_{C_*^{r+1/2}}
\lesssim_r \|\phi\|_{C_*^{r+1}}.
\end{equation}

For future use we also need quadratic error bounds for $u-e^{y|\nabla|}\phi$.
\begin{proposition}\label{u-Hs2}
Fix $s>1$.

(i) If $\|\nabla h\|_{H^s}<c_s$ is sufficiently small then for any integer $k<0$,
\[
\|\nabla_{x,y}(u-e^{y|\nabla|}\phi)\|_{L_y^2H_x^s}
\lesssim \|\nabla h\|_{L^\infty}\||\nabla|^{1/2}\phi\|_{H^s}
+\|\nabla h\|_{H^s}C_{k,1}[|\nabla|^{1/2}\phi],
\]
where
\begin{equation}\label{Ckr-def}
C_{k,r}[f]=|k|\|f\|_{C_*^r}+2^k\|f\|_{L^2}.
\end{equation}

(ii) If $\|\nabla h\|_{H^{s+1/2}}<c_s$ is sufficiently small then for any integer $k<0$,
\[
\|\nabla_{x,y}(u-e^{y|\nabla|}\phi)\|_{L_y^\infty H_x^s}
\lesssim \|\nabla h\|_{L^\infty}\||\nabla|^{1/2}\phi\|_{H^{s+1/2}}
+\|\nabla h\|_{H^{s+1/2}}C_{k,1}[|\nabla|^{1/2}\phi].
\]
\end{proposition}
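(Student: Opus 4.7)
\medskip

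\noindent\textbf{Proof plan.}
The plan is to subtract off the flat-bottom solution. Set $E := e^{y|\nabla|}\phi$, which solves (\ref{u-Diri}) when $h \equiv 0$, and let $w := u - E$. Since $E$ is harmonic in the half-space, $\partial_y^2 E + \Delta_x E = 0$ and $\partial_y E = |\nabla| E$; substituting $u = E + w$ into (\ref{u-Diri}) the flat-bottom linear operator applied to $E$ cancels, so $w$ solves the same elliptic problem as $u$ but with zero Dirichlet data and a forcing already quadratic in $(h,\phi)$:
\begin{equation*}
(1+|\nabla h|^2)\partial_y^2 w + \Delta_x w - 2\nabla h\cdot\partial_y\nabla_x w - \Delta h\,\partial_y w = -F,
\end{equation*}
\begin{equation*}
F := |\nabla h|^2\,\partial_y^2 E - 2\,\nabla h\cdot\partial_y\nabla_x E - \Delta h\,\partial_y E,
\end{equation*}
together with $w|_{y=0} = 0$ and $\nabla_{x,y} w \to 0$ as $y \to -\infty$.

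Next I would rerun the a priori estimates underlying Proposition~\ref{u-Hs} on this $w$-problem. Because the Dirichlet data of $w$ vanish, the boundary contributions that produced $\||\nabla|^{1/2}\phi\|_{H^s}$ in Proposition~\ref{u-Hs}(i) (respectively $\||\nabla|^{1/2}\phi\|_{H^{s+1/2}}$ in (ii)) disappear, leaving schematically $\|\nabla_{x,y} w\|_{L^2_y H^s_x} \lesssim \|F\|_{L^2_y H^{s-1}_x}$ for~(i) and a parallel bound in $L^\infty_y H^s_x$ for~(ii), under the smallness hypothesis on $\nabla h$. Each term of $F$ has the schematic form $(\partial h)(\partial^j E)$ with $j = 1$ or $2$, and I would decompose it via $fg = T_f g + T_g f + H(f,g)$, using Lemmas~\ref{Taf-Lp} and~\ref{PkH-Lp}. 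The two summands in the bound of Proposition~\ref{u-Hs2} correspond to the two natural frequency splittings: when $\nabla h$ sits at the lower frequency we place it in $L^\infty_x$ and use the Plancherel bound $\|\partial_y^j E\|_{L^2_y H^{s-1}_x} \lesssim \||\nabla|^{1/2}\phi\|_{H^s}$ (coming from $\int_{-\infty}^0 e^{2y|\xi|}dy = 1/(2|\xi|)$), producing the $\|\nabla h\|_{L^\infty}\||\nabla|^{1/2}\phi\|_{H^s}$ contribution; when $\nabla h$ sits at the higher frequency we pair it with low-frequency $\partial^j E$, which must be controlled in $L^\infty_x$.

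The main obstacle is the slow decay in $y$ of the low-frequency pieces of $E$: at frequency $2^\ell$ with $\ell < 0$, $e^{y|\nabla|}\phi$ decays only like $e^{y\cdot 2^\ell}$, so the $L^2_y L^\infty_x$ and $L^\infty_y L^\infty_x$ norms of the low-frequency portion of $E$ diverge as $\ell \to -\infty$ unless one pays either a factor of $|\ell|$ (after placing $P_\ell\phi$ in $L^\infty$, i.e.\ in $C_*^1$) or a factor of $2^\ell$ (after Bernstein to $L^2$). Truncating the dyadic sum at some integer $k < 0$ produces exactly the combination $|k|\,\|\,\cdot\,\|_{C_*^1} + 2^k\,\|\,\cdot\,\|_{L^2}$ built into the $C_{k,1}$ norm of (\ref{Ckr-def}), which explains why the second summand in the proposition involves $C_{k,1}[|\nabla|^{1/2}\phi]$ rather than a pure Sobolev norm. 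Part~(ii) differs from~(i) only in that the upgrade from $L^2_y$ to $L^\infty_y$ on the left costs an extra half derivative on $\nabla h$ in the elliptic estimate, exactly as in Proposition~\ref{u-Hs}(ii); the high-high remainder $H(\partial h, \partial^j E)$ is handled by Lemma~\ref{PkH-Lp}(iii) and absorbed into the first contribution.
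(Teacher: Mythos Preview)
Your approach is correct, but it takes a somewhat longer route than the paper's. The paper does not set up a separate elliptic problem for $w=u-E$; instead it simply reads off from the integral representation (\ref{gr-u}) that
\[
\nabla_{x,y}(u-E)=\int_{-\infty}^0\mathcal K(y,y')M(\nabla h)\,\nabla_{x,y}u\,dy'+[0,f_1]^T,
\]
applies Lemma~\ref{K-bound}(ii), and bounds the product $M(\nabla h)\,\nabla_{x,y}u$ via the tame Sobolev product estimate
\[
\|M(\nabla h)\nabla_{x,y}u\|_{L_y^2H_x^s}\lesssim\|\nabla h\|_{L^\infty}\|\nabla_{x,y}u\|_{L_y^2H_x^s}+\|\nabla h\|_{H^s}\|\nabla_{x,y}u\|_{L_y^2L_x^\infty}.
\]
The first factor is handled by Proposition~\ref{u-Hs}(i); the second by Proposition~\ref{u-Cr}(i) together with $C_*^1\subset X^{0+}$, which is exactly where the $C_{k,1}[|\nabla|^{1/2}\phi]$ constant enters. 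The key point is that the paper keeps the \emph{full} $\nabla_{x,y}u$ in the product and invokes the already-proven low-frequency bound on $u$ (Proposition~\ref{u-Cr}), rather than splitting $u=E+w$, absorbing the $w$-contribution, and re-deriving the low-frequency dyadic analysis for $E$ by hand as you do. Your paraproduct decomposition of $F$ and the dyadic truncation at level $k$ reproduce, in effect, the content of Proposition~\ref{u-Cr}(i) specialized to the explicit function $E$; so your argument is self-contained but redundant given the tools available. Part~(ii) is handled identically in both approaches, with the extra half-derivative coming from the $q=\infty$ case of Lemma~\ref{K-bound}(ii).
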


\subsection{Paralinearization of the Zakharov system}
Define
\begin{equation}\label{w-def}
w=u-T_{\partial_yu}h
\end{equation}
whose restriction to the boundary $\Gamma$ is the ``Alinhac good unknown".
Define the paradifferential operator
\begin{equation}\label{P-def}
\mathcal Pu=T_{1+|\nabla h|^2}\partial_y^2u+\Delta_xu-2T_{\nabla h}\partial_y\nabla_xu-T_{\Delta h}\partial_yu.
\end{equation}
By (B.32) and (B.33) of \cite{DeIoPaPu} and the definition of $E$,
the elliptic equation in (\ref{u-Diri}) can be written as
\begin{equation}\label{Pw}
\mathcal Pw=\mathcal S_0+\mathcal C_0
\end{equation}
where
\begin{equation}\label{S0}
\mathcal S_0=\mathcal S_0[h,u]=2H(\partial_y\nabla_xu,\nabla h)+H(\partial_yu,\Delta h)
\end{equation}
is the semilinear term, and
\begin{equation}\label{C0}
\begin{aligned}
\mathcal C_0&=-\partial_y(T_{|\nabla h|^2}T_{\partial_y^2u}-T_{|\nabla h|^2\partial_y^2u}-2(T_{\nabla h}T_{\partial_y\nabla_xu}-T_{\nabla h\cdot\partial_y\nabla_xu})\\
&-(T_{\Delta h}T_{\partial_yu}-T_{\Delta h\partial_yu}))h
+2(E(\nabla h,\partial_y^2u)-E(\partial_y^2u,\nabla h))\nabla h
\\
&-T_{\partial_y^2u}H(\nabla h,\nabla h)-H(|\nabla h|^2,\partial_y^2u)
\end{aligned}
\end{equation}
is the cubic term.

First we factorize the paradifferential operator $\mathcal P$ modulo acceptable error. Let $\alpha=|\nabla h|^2$. We use the ansatz
\begin{equation}\label{factor}
\begin{aligned}
\mathcal P &\approx (T_{\sqrt{1+\alpha}}\partial_y - A + B)(T_{\sqrt{1+\alpha}}\partial_y - A - B)\\
&= T_{\sqrt{1+\alpha}}^2\partial_y^2 - (AT_{\sqrt{1+\alpha}} + T_{\sqrt{1+\alpha}}A + [T_{\sqrt{1+\alpha}}, B])\partial_y + A^2 - B^2 + [A, B].
\end{aligned}
\end{equation}
Let
\begin{equation}\label{AB-ab}
\begin{aligned}
\mathcal A&=iT_a, & \mathcal B&=T_b.
\end{aligned}
\end{equation}
Then
\begin{equation}\label{AB-eqn}
\begin{aligned}
2a\sqrt{1+\alpha}+\{\sqrt{1+\alpha},b\}&\approx2\nabla h\cdot\zeta,\\
a^2+b^2+\{a,b\}&\approx|\zeta|^2.
\end{aligned}
\end{equation}
Let further
\begin{equation}\label{ab-10}
\begin{aligned}
a&=a^{(1)}+a^{(0)}, & b&=b^{(1)}+b^{(0)},
\end{aligned}
\end{equation}
where $a^{(j)}$ and $b^{(j)}$ are symbols homogeneous of degree $j$ in $\zeta$. Then comparing homogeneous parts of the symbols yields
(see also the derivation after (B.34) in \cite{DeIoPaPu}.)
\begin{equation}\label{ab-eqn}
\begin{aligned}
2a^{(1)}\sqrt{1+\alpha}&=2\nabla h\cdot\zeta,\\
2a^{(0)}\sqrt{1+\alpha}+\{\sqrt{1+\alpha},b^{(1)}\}\varphi_{\ge0}(\zeta)
&=0,\\
(a^{(1)})^2+(b^{(1)})^2&=|\zeta|^2,\\
2a^{(1)}a^{(0)}+2b^{(1)}b^{(0)}+\{a^{(1)},b^{(1)}\}\varphi_{\ge0}(\zeta)
&=0,
\end{aligned}
\end{equation}
which can be satisfied by (see (B.35)--(B.38) of \cite{DeIoPaPu})
\begin{equation}\label{ab-soln}
\begin{aligned}
a^{(1)}&=\frac{\nabla h\cdot\zeta}{\sqrt{1+\alpha}}
=\nabla h\cdot\zeta(1+M_2(\nabla h)),\\
b^{(1)}&=\sqrt{|\zeta|^2-(a^{(1)})^2}=|\zeta|(1+M_2(\zeta,\nabla h)),\\
a^{(0)}&=-\frac{\{\sqrt{1+\alpha},b^{(1)}\}}{2\sqrt{1+\alpha}}
\varphi_{\ge0}(\zeta)
=M_2(\zeta,\nabla h,\nabla^2h)\varphi_{\ge0}(\zeta),\\
b^{(0)}&=-\frac1{2b^{(1)}}\left( -\frac{\nabla h\cdot\zeta}{1+\alpha}
\{\sqrt{1+\alpha},b^{(1)}\}+\left\{ \frac{\nabla h\cdot\zeta}{\sqrt{1+\alpha}},b^{(1)} \right\} \right)\varphi_{\ge0}(\zeta)
\\
&=-\frac{\sqrt{1+\alpha}}{2b^{(1)}}\left\{ \frac{\nabla h\cdot\zeta}{1+\alpha},b^{(1)} \right\}\varphi_{\ge0}(\zeta)\\
&=b_1^{(0)}+M_3(\zeta,\nabla h,\nabla^2h)\varphi_{\ge0}(\zeta),\\
b_1^{(0)}&=-\frac{\zeta^T(\nabla^2h)\zeta}{2|\zeta|^2}\varphi_{\ge0}(\zeta),
\end{aligned}
\end{equation}
where $M_k$ denotes a function that vanishes to order $k$ near $h=0$,
that is homogeneous of degree $0$ in $\zeta$, and that is smooth in all its arguments away from $\zeta=0$
(for example, $\sqrt{1+\alpha}=1+M_2(\nabla h)$, hence the first line.) 

The ansatz (\ref{factor}) is an approximation in the following sense.
By (B.39) of \cite{DeIoPaPu},
\begin{equation}\label{P-factor}
(T_{\sqrt{1+\alpha}}\partial_y-\mathcal{A+B})(T_{\sqrt{1+\alpha}}\partial_y-\mathcal{A-B})
=\mathcal P+\mathcal R_1+\mathcal R_2,
\end{equation}
where, according to (\ref{ab-soln}),
\begin{equation}\label{R1}
\begin{aligned}
\mathcal R_1&=-E(|\zeta|,b_1^{(0)})-E(b_1^{(0)},|\zeta|)
+i[T_{\nabla h\cdot\zeta},T_{|\zeta|}]+T_{\{\nabla h\cdot\zeta,|\zeta|\}\varphi_{\ge0}(\zeta)}\\
&=-E(|\zeta|,b_1^{(0)})-E(b_1^{(0)},|\zeta|)
+iE_1(\nabla h\cdot\zeta,|\zeta|)-T_{\{\nabla h\cdot\zeta,|\zeta|\}\varphi_{\le-1}(\zeta)}
\end{aligned}
\end{equation}
is a paradifferential operator of order 0 with coefficients of degree 1 in $h$, and
\begin{equation}\label{R2}
\begin{aligned}
\mathcal R_2&=E_1(\sqrt{1+\alpha},\sqrt{1+\alpha})\partial_y^2
-i(E_1(a,\sqrt{1+\alpha})+E_1(\sqrt{1+\alpha},a))\partial_y\\
&-([T_{\sqrt{1+\alpha}},b^{(0)}]+E_1(\sqrt{1+\alpha},b^{(1)})-E_1(b^{(1)},\sqrt{1+\alpha})+iT_{\{\sqrt{1+\alpha},b^{(1)}\}\varphi_{\le-1}(\zeta)})\partial_y\\
&-E_1(a,a)-(E_1(b,b)-E_1(|\zeta|,b_1^{(0)})-E_1(b_1^{(0)},|\zeta|))
-T_{(a^{(0)})^2+(b^{(0)})^2}\\
&+i([T_a,T_b]-[T_{\nabla h\cdot\zeta},T_{|\zeta|}]+(T_{\{a^{(1)},b^{(1)}\}\varphi_{\ge0}(\zeta)}-T_{\{\nabla h\cdot\zeta,|\zeta|\}\varphi_{\ge0}(\zeta)})
\end{aligned}
\end{equation}
is a paradifferential operators of order 0 with coefficients of degree $\ge2$ in $h$. The operator norms of $\mathcal R_1$ and $\mathcal R_2$ can be controlled in a more precise way, see (\ref{Rj-bound}) for details.

Now we paralinearize the Zakharov system (\ref{Zakharov}). Define
\begin{equation}\label{QS}
\begin{aligned}
\mathcal Q(x,y)&=\int_{-\infty}^y e^{(y'-y)|\nabla|}\mathcal R_1[e^{y'|\nabla|}\phi](x,y')dy',\\
\mathcal S(x,y)&=\int_{-\infty}^y e^{(y'-y)|\nabla|}\mathcal S_0[h,e^{y'|\nabla|}\phi](x,y')dy'\\
&=-H(h,e^{y|\nabla|}\Delta\phi)-|\nabla|H(h,e^{y|\nabla|}|\nabla|\phi).
\end{aligned}
\end{equation}
The third line follows from (\ref{S0}) after comparing the multipliers.
\begin{proposition}\label{QS-bound}
For $s\in\R$ we have (here $\|f\|_{X\cap Y}$ denotes $\|f\|_X+\|f\|_Y$)
\[
\|\mathcal Q\|_{L_y^2H_x^{s+1}\cap L_y^\infty H_x^{s+1/2}}
+\|\mathcal S\|_{L_y^2H_x^{s+1}\cap L_y^\infty H_x^{s+1/2}}
\lesssim_s \|\nabla h\|_{W^{2,\infty}}\||\nabla|^{1/2}\phi\|_{H^{s-1}}.
\]
\end{proposition}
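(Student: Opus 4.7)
\emph{Setup.} Both $\mathcal Q$ and $\mathcal S$ have the form $\int_{-\infty}^y e^{(y'-y)|\nabla|}F(y')\,dy'$, so the plan is to combine a Littlewood--Paley decomposition in the horizontal variable $x$ with a direct analysis of the Poisson convolution in $y$. On a function frequency-localized at $\sim 2^k$, the kernel $e^{(y'-y)|\nabla|}\mathbf{1}_{y'\le y}$ acts as convolution by $e^{-(y-y')2^k}\mathbf{1}_{y'\le y}$, whose $L^1_y$ and $L^2_y$ masses over $y'\le y\le 0$ are $\sim 2^{-k}$ and $\sim 2^{-k/2}$ respectively. By Young's and Minkowski's inequalities, the integral operator $F\mapsto \int_{-\infty}^y e^{(y'-y)|\nabla|}F(y')\,dy'$ therefore gains $2^{-k}$ in the map $L^2_y\to L^2_y$ and $2^{-k/2}$ in $L^2_y\to L^\infty_y$ on frequency $\sim 2^k$.

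\emph{Bound for $\mathcal Q$.} Each of the four pieces defining $\mathcal R_1$ in (\ref{R1}) is either an $E(\cdot,\cdot)$ of joint order $0$, an $E_1(\cdot,\cdot)$ of joint order $0$ (after the subtraction), or a paradifferential operator whose symbol is supported at low frequencies; in every case the coefficients involve at most $\nabla^3 h$. Applying Lemmas \ref{Eaf-Lp}(i) and \ref{E1f-Lp}(i) term by term produces the frequency-localized estimate
\[
\|P_k \mathcal R_1 g\|_{L^2_x}\lesssim \|\nabla h\|_{W^{2,\infty}}\|P_{[k-C,k+C]}g\|_{L^2_x}.
\]
Taking $g=e^{y'|\nabla|}\phi$ and using $\|P_{[k-C,k+C]}e^{y'|\nabla|}\phi\|_{L^2_x}\lesssim e^{cy'2^k}\|P_{[k-C,k+C]}\phi\|_{L^2_x}$ with $\|e^{cy'2^k}\|_{L^2_{y'}((-\infty,0])}\sim 2^{-k/2}$, and combining with the Poisson-convolution gain of the previous paragraph, yields
\[
\|P_k\mathcal Q\|_{L^2_yL^2_x}\lesssim 2^{-3k/2}\|\nabla h\|_{W^{2,\infty}}\|P_{[k-C,k+C]}\phi\|_{L^2_x},
\]
and analogously $\|P_k\mathcal Q\|_{L^\infty_yL^2_x}\lesssim 2^{-k}\|\nabla h\|_{W^{2,\infty}}\|P_{[k-C,k+C]}\phi\|_{L^2_x}$. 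Multiplying by $2^{(s+1)k}$ (resp.\ $2^{(s+1/2)k}$) and taking $\ell^2_k$-sums then controls the result by $\|\nabla h\|_{W^{2,\infty}}\||\nabla|^{1/2}\phi\|_{H^{s-1}}$.

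\emph{Bound for $\mathcal S$ and main obstacle.} For $\mathcal S$ I use the explicit formula $\mathcal S=-H(h,e^{y|\nabla|}\Delta\phi)-|\nabla|H(h,e^{y|\nabla|}|\nabla|\phi)$ from (\ref{QS}) and apply Lemma \ref{PkH-Lp}(i) with $(p,q,r)=(2,\infty,2)$. The high-high structure of $H$ restricts the output frequency $2^k$ to be fed by pairs $k_1\approx k_2\ge k-20$; on such pairs the derivatives $\Delta$ or $|\nabla|^2$ on $\phi$ combined with the exponential factor give $\|P_{k_2}e^{y|\nabla|}|\nabla|^2\phi\|_{L^2_yL^2_x}\lesssim 2^{3k_2/2}\|P_{k_2}\phi\|_{L^2}$, while the outer $|\nabla|$ in the second term of $\mathcal S$ contributes an extra factor $2^k$. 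The main technical difficulty is bounding $\|P_{k_1}h\|_{L^\infty}$ without a direct $\|h\|_{L^\infty}$-hypothesis: for $k_1>0$ the bound $\lesssim 2^{-3k_1}\|\nabla h\|_{W^{2,\infty}}$ is immediate from Bernstein, but for $k_1\le 0$ only $\lesssim 2^{-k_1}\|\nabla h\|_{L^\infty}$ is available, which grows as $k_1\to-\infty$. This growth is compensated by the $2^{2k_2}$ factor coming from the derivatives on $\phi$ at the paired frequency $k_2\approx k_1$, producing a net $2^{k_2}$ contribution which, together with the $2^{-k_2/2}$ from the $y$-integration, is $\ell^2_k$-summable and again controlled by $\||\nabla|^{1/2}\phi\|_{H^{s-1}}$; careful bookkeeping of these competing exponents in the dyadic sum is the main technical content.
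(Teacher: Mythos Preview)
Your argument for $\mathcal Q$ has a genuine gap at low output frequencies. The bound $\|P_k\mathcal R_1 g\|_{L^2_x}\lesssim\|\nabla h\|_{W^{2,\infty}}\|P_{[k-C,k+C]}g\|_{L^2_x}$ is correct as stated, since Lemmas \ref{Eaf-Lp}(i) and \ref{E1f-Lp}(i) only yield factors of $2^{(\cdot)k^+}$, but it is not sharp for $k<0$ and does not suffice. Your chain gives $\|P_k\mathcal Q\|_{L^2_yL^2_x}\lesssim 2^{-3k/2}\|\nabla h\|_{W^{2,\infty}}\|P_{[k-C,k+C]}\phi\|_{L^2_x}$; for $k<0$ the (inhomogeneous) $H^{s+1}_x$ weight is $\sim 1$, while on the right the target $\|P_k|\nabla|^{1/2}\phi\|_{H^{s-1}}$ contributes only $\sim 2^{k/2}\|P_k\phi\|_{L^2}$. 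The ratio $2^{-2k}$ blows up as $k\to-\infty$, so the $\ell^2_k$-sum diverges. (If you had homogeneous norms on both sides the exponents would cancel, but the statement is for $H^{s+1}$.)

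What the paper uses, and what you are missing, is that at low frequencies the four pieces of $\mathcal R_1$ collapse: the $E(|\zeta|,b_1^{(0)})$ terms vanish because $b_1^{(0)}$ carries the cutoff $\varphi_{\ge0}(\zeta)$, and what remains is $P_{\le-3}\mathcal R_1=iP_{\le-3}[T_{\nabla h\cdot\zeta},T_{|\zeta|}]$. Computing this commutator directly, its bilinear multiplier on $(\hat h,\hat g)$ is $-\tfrac{i}{2}(|\xi|-|\eta|)^2(|\xi|+|\eta|)$, which in the paraproduct regime $|\xi-\eta|\ll|\xi|\sim|\eta|\sim 2^k$ satisfies $|m|\lesssim|\xi-\eta|^2\cdot 2^k$; after summing over the $h$-frequency $l\le k$ using $2^{2l}\|P_lh\|_{L^\infty}\lesssim 2^l\|\nabla h\|_{L^\infty}$ one obtains $\|P_k\mathcal R_1 g\|_{L^2}\lesssim 2^{2k}\|\nabla h\|_{L^\infty}\|P_{[k-C,k+C]}g\|_{L^2}$ for $k\le-3$. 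This extra $2^{2k}$ is exactly what closes the low-frequency sum; the paper packages this as ``$\mathcal R_1$ vanishes to degree $2$ near $\zeta=0$'' and then estimates $\||\nabla|^{-1}\mathcal R_1[e^{y|\nabla|}\phi]\|_{L^2_yH^{s+1}_x}$ directly.

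Your treatment of $\mathcal S$ is workable (for $s>1$) but the write-up obscures a similar point. Bounding the two pieces $-H(h,e^{y|\nabla|}\Delta\phi)$ and $-|\nabla|H(h,e^{y|\nabla|}|\nabla|\phi)$ separately forces you to control $\|P_{k_1}h\|_{L^\infty}$, which for $k_1\to-\infty$ costs $2^{-k_1}$; your compensation argument is correct but needs the $\ell^2_k$ over the \emph{output} frequency to be handled as a single $L^2$-norm (Plancherel) rather than term-by-term. The paper avoids this bookkeeping by observing that the combined multiplier of $\mathcal S$ is $(|\eta|-|\xi|)|\eta|$, and since $(|\eta|-|\xi|)/|\xi-\eta|$ is a Mikhlin multiplier one may transfer one derivative onto $h$, reducing to $H(\nabla h,e^{y|\nabla|}|\nabla|\phi)$ and invoking Lemma \ref{PkH-Lp}(iii) with $f=\nabla h$ and $m=2$ directly.
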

\begin{proof}
Since $P_{\le-3}\mathcal R_1=iP_{\le-3}[T_{\nabla h\cdot\zeta},T_{|\zeta|}]$, $\mathcal R_1$ vanishes to degree 2 near $\zeta=0$,
so we can estimate in the frequency space, using Lemma \ref{K-bound} (ii), Lemma \ref{Eaf-Lp} and Lemma \ref{E1f-Lp} applied to (\ref{QS}) and (\ref{R1}), to get
\begin{align*}
\|\mathcal Q\|_{L_y^2H_x^{s+1}\cap L_y^\infty H_x^{s+1/2}}
&\lesssim_s \||\nabla|^{-1}\mathcal R_1[e^{y|\nabla|}\phi]\|_{L_y^2H_x^{s+1}}\\
&\lesssim_s \|\nabla h\|_{W^{2,\infty}}\|e^{y|\nabla|}|\nabla|\phi\|_{L_y^2H_x^{s-1}}\\
&\lesssim_s \|\nabla h\|_{W^{2,\infty}}\||\nabla|^{1/2}\phi\|_{H^{s-1}}.
\end{align*}
Since the multiplier of $\mathcal S$ is $(|\eta|-|\xi|)|\eta|$ and $(\xi-\eta,\eta)\mapsto(|\eta|-|\xi|)/|\xi-\eta|$ is a Mikhlin multipler,
Lemma \ref{PkH-Lp} (iii) still allows us to transfer 2 derivatives from $\phi$ to $h$ to get the bound for $\mathcal S$.
\end{proof}

For the evolution equation for $h$, we put
\begin{equation}\label{l-def}
\begin{aligned}
\lambda&=\lambda^{(1)}+\lambda^{(0)},\\
\lambda^{(1)}&=b^{(1)}\sqrt{1+\alpha}=|\zeta|(1+M_2(\zeta,\nabla h)),\\
\lambda^{(0)}&=b^{(0)}\sqrt{1+\alpha}+\frac12(\Delta h-\{\sqrt{1+\alpha},a^{(1)}\})\varphi_{\ge0}(\zeta)\\
&=M_1(\zeta,\nabla h,\nabla^2h)\varphi_{\ge0}(\zeta).
\end{aligned}
\end{equation}

The first equastion in (\ref{Zakharov}) can be paralinearized in the following way.
\begin{proposition}\label{ht-paralin}
If $s>4$ and $\|h\|_{H^s}<c_s$ is sufficiently small then
\begin{equation}\label{ht}
h_t=T_\lambda(w|_\Gamma)-\nabla\cdot T_Vh+\mathcal Q|_\Gamma+\mathcal S_h+\mathcal C_h,
\end{equation}
where $\mathcal Q$ is given by (\ref{QS}),
\begin{equation}\label{Sh}
\mathcal S_h=\mathcal S|_\Gamma-H(\nabla h,\nabla\phi)
\end{equation}
and for any integer $k<0$,
\[
\|\mathcal C_h\|_{H^{s+1/2}}
\lesssim_s \|\nabla h\|_{W^{3,\infty}}(C_{k,4}[|\nabla|^{1/2}\phi]\|h\|_{H^s}+\|\nabla h\|_{W^{3,\infty}}\||\nabla|^{1/2}\phi\|_{H^{s-1}}),
\]
where $C_{k,r}$ is defined in (\ref{Ckr-def}).
\end{proposition}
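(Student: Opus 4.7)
My plan is to paralinearize $h_t=G(h)\phi=[(1+|\nabla h|^2)\partial_yu-\nabla h\cdot\nabla_xu]_{y=0}$ from (\ref{DN-u}) by combining three ingredients: Bony's decomposition $fg=T_fg+T_gf+H(f,g)$ applied to the two products; the substitution $u=w+T_{\partial_yu}h$ at the boundary to convert $u$ into the Alinhac good unknown; and the factorization (\ref{P-factor}) of the elliptic operator $\mathcal P$ to extract $\partial_yw|_{y=0}$ in terms of paradifferential operators of $w|_{y=0}$. Using $\nabla_xu|_0=\nabla\phi$, the Bony expansion of $\nabla h\cdot\nabla_xu|_0$ produces the term $-H(\nabla h,\nabla\phi)$ that appears in the definition (\ref{Sh}) of $\mathcal S_h$. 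Modulo cubic errors, $T_{\nabla\phi}\cdot\nabla h=T_V\cdot\nabla h$ (using $V=\nabla\phi-B\nabla h$), and the elementary identity $T_V\cdot\nabla h=\nabla\cdot T_Vh-T_{\nabla\cdot V}h$ produces the $-\nabla\cdot T_Vh$ term in (\ref{ht}); the stray $T_{\nabla\cdot V}h$ piece, together with the correction arising from $\partial_yu|_0=\partial_yw|_0+T_{\partial_y^2u|_0}h$, will be re-absorbed into the subprincipal symbol $\lambda^{(0)}$ in the next step.

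To handle $\partial_yw|_0$, I set $\psi=(T_{\sqrt{1+\alpha}}\partial_y-\mathcal A-\mathcal B)w$. The factorization (\ref{P-factor}) combined with (\ref{Pw}) gives
\[
(T_{\sqrt{1+\alpha}}\partial_y-\mathcal A+\mathcal B)\psi=\mathcal Pw+\mathcal R_1w+\mathcal R_2w=\mathcal S_0+\mathcal C_0+\mathcal R_1w+\mathcal R_2w,
\]
a first-order ODE in $y$ with principal part $\partial_y+|\nabla|$. Since $\psi\to0$ as $y\to-\infty$ by Proposition \ref{u-Hs} (i'), Duhamel's formula expresses $\psi|_0$ as $\int_{-\infty}^0e^{y'|\nabla|}(\mathcal S_0+\mathcal C_0+\mathcal R_1w+\mathcal R_2w)(y')\,dy'$ plus lower-order paradifferential corrections. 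Matching with (\ref{QS}), the $\mathcal R_1$-piece becomes $\mathcal Q|_\Gamma$ (after the cubic replacement $w\leftrightarrow e^{y|\nabla|}\phi$) and the $\mathcal S_0$-piece becomes $\mathcal S|_\Gamma$; together with $-H(\nabla h,\nabla\phi)$ from the previous paragraph this yields $\mathcal S_h$, while $\mathcal C_0+\mathcal R_2w$ is cubic and goes into $\mathcal C_h$. Next, $T_{\sqrt{1+\alpha}}\partial_yw|_0=\psi|_0+iT_aw|_0-T_bw|_0$; inverting $T_{\sqrt{1+\alpha}}$ up to $E_1$-error via Lemma \ref{E1f-Lp} and composing with the $(1+|\nabla h|^2)$ factor from the Bony expansion of $(1+|\nabla h|^2)\partial_yu|_0$ collapses the principal symbol to $b\sqrt{1+\alpha}$, which combined with $iT_a$ and the $\tfrac12(\Delta h-\{\sqrt{1+\alpha},a^{(1)}\})\varphi_{\ge0}(\zeta)$ correction from the rearrangements above assembles exactly into the symbol $\lambda=\lambda^{(1)}+\lambda^{(0)}$ of (\ref{l-def}).

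The cubic remainder $\mathcal C_h$ collects: the $E_1$-commutator errors from paradifferential composition (e.g.\ $T_{\sqrt{1+\alpha}}T_{1/\sqrt{1+\alpha}}-\mathrm{id}$); the integrated contribution of $\mathcal C_0$ and $\mathcal R_2w$ to $\psi|_0$; the replacement errors from $w\leftrightarrow e^{y|\nabla|}\phi$ and from $u=w+T_{\partial_yu}h$ at the boundary; and the cubic $H$-paraproducts $T_{|\nabla h|^2}\partial_yu|_0$, $T_{\partial_yu|_0}|\nabla h|^2$, $H(|\nabla h|^2,\partial_yu|_0)$ from the Bony expansion of $(1+|\nabla h|^2)\partial_yu|_0$. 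Each class is controlled by combining Lemmas \ref{Taf-Lp}--\ref{E1f-Lp}, Proposition \ref{QS-bound}, and the elliptic bounds of Propositions \ref{u-Hs}, \ref{u-Cr}, \ref{u-Hs2} (with $C_{k,r}$ entering through Proposition \ref{u-Cr} (i) for $\nabla_{x,y}u$), yielding the claimed bound. The main obstacle is the symbolic bookkeeping in the preceding paragraph: verifying that $-T_b$, $iT_a$, the $T_{\nabla\cdot V}h$ commutator from the integration by parts, and the $T_{\partial_y^2u|_0}h$ good-unknown correction together reproduce exactly the subprincipal symbol $\lambda^{(0)}$ prescribed in (\ref{l-def}), including the $\varphi_{\ge0}(\zeta)$ low-frequency cutoff; careful use of Lemma \ref{E1f-Lp} is essential so that all spurious commutators fall into $\mathcal C_h$ and do not contaminate the principal and subprincipal symbols.
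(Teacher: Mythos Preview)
Your approach is essentially the paper's approach: define $W=(T_{\sqrt{1+\alpha}}\partial_y-\mathcal A-\mathcal B)w$, use the factorization (\ref{P-factor}) to get a first-order parabolic equation for $W$, integrate from $y=-\infty$ to isolate $\mathcal Q+\mathcal S$, then plug into the paralinearized expression for $G(h)\phi$ and collect symbols. The paper carries this out in two preliminary propositions (one for $T_{\sqrt{1+\alpha}}\partial_yw$, one for $G(h)\phi$ in terms of $w$) before combining them; your sketch compresses these but follows the same logic.

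Two bookkeeping points to correct, since you flag this as the main obstacle. First, from your definition of $\psi$ you get $T_{\sqrt{1+\alpha}}\partial_yw=\psi+iT_aw+T_bw$, not $-T_bw$; the $+T_bw$ is precisely what, after multiplying by another $T_{\sqrt{1+\alpha}}$, yields the principal symbol $\lambda^{(1)}=b^{(1)}\sqrt{1+\alpha}$. Second, the stray $T_{\nabla\cdot V}h$ and the good-unknown correction $T_{(1+\alpha)\partial_y^2u|_\Gamma}h$ do \emph{not} feed into $\lambda^{(0)}$: together with $-T_{\nabla h\cdot\nabla B}h$ they cancel exactly via the elliptic equation $(1+\alpha)\partial_y^2u-\nabla h\cdot\nabla B+\nabla\cdot V=0$ restricted to $\Gamma$, and only the residual $E$-commutators survive in $\mathcal C_h$. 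The $\tfrac12\Delta h$ part of $\lambda^{(0)}$ has a different origin: it is the Weyl-quantization discrepancy $T_{i\nabla h\cdot\zeta}-T_{\nabla h}\circ\nabla_x=\tfrac12 T_{\Delta h}$ arising when you compare $T_{\sqrt{1+\alpha}}\mathcal A w$ with $T_{\nabla h}\nabla_xw$. Likewise the $iT_{a^{(0)}\sqrt{1+\alpha}}$ piece from $T_{\sqrt{1+\alpha}}\mathcal A$ cancels against $\tfrac i2 T_{\{\sqrt{1+\alpha},b^{(1)}\}\varphi_{\ge0}}$ coming from the Poisson-bracket term in $T_{\sqrt{1+\alpha}}\mathcal B$ (this is the second line of (\ref{ab-eqn})), leaving exactly $b^{(0)}\sqrt{1+\alpha}$ as the remaining contribution to $\lambda^{(0)}$. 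With these two fixes your outline matches the paper's proof.
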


Now we paralinearize the second equation in (\ref{Zakharov}).
By (\ref{BV-u}) and (\ref{DN-u}),
\begin{equation}\label{phi-t}
\phi_t=-h-\frac12|V+B\nabla h|^2+\frac12(1+|\nabla h|^2)B^2
=-h+\frac12(B^2-2BV\cdot\nabla h-|V|^2).
\end{equation}
Recall the Alinhac good unknown $w|_\Gamma = \phi - T_Bh$. Then we have
\begin{proposition}\label{wt-paralin}
\begin{align}
\label{wt}
w_t|_\Gamma&=-T_ah-T_V\cdot\nabla(w|_\Gamma)+\mathcal S_w+\mathcal C_w,\\
\label{a-def}
a&=1+B_t+V\cdot\nabla B,\\
\label{Sw}
\mathcal S_w&=\frac12(H(B,B)-H(V,V)).
\end{align}
If $s>5/2$ and $\|\nabla h\|_{H^{s-1}\cap H^5}<c_s$ is sufficiently small, then
\begin{equation}\label{Cw-bound}
\|\mathcal C_w\|_{H^{s+1/2}}
\lesssim_s \||\nabla|^{1/2}\phi\|_{C_*^3}(\|\nabla h\|_{W^{2,\infty}}\||\nabla|^{1/2}\phi\|_{H^{s-1}}+\||\nabla|^{1/2}\phi\|_{C_*^4}\|h\|_{H^{s-3/2}}).
\end{equation}
\end{proposition}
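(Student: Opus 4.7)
The plan is to differentiate the Alinhac good unknown $w|_\Gamma = \phi - T_B h$ in time and organize the resulting identity into the form $-T_a h - T_V\cdot\nabla(w|_\Gamma) + \mathcal{S}_w + \mathcal{C}_w$. Substituting (\ref{phi-t}) together with $h_t = G(h)\phi = B - \nabla h\cdot V$ (from (\ref{Gh-phi})) gives
\[
w_t|_\Gamma = -h - T_{B_t}h + \tfrac{1}{2}(B^2 - |V|^2) - BV\cdot\nabla h - T_B(B - \nabla h\cdot V).
\]
I now apply Bony's decomposition to every quadratic product. Writing $\tfrac{1}{2}B^2 = T_B B + \tfrac{1}{2}H(B,B)$ and $\tfrac{1}{2}|V|^2 = T_V\cdot V + \tfrac{1}{2}H(V,V)$ directly produces the semilinear remainder $\mathcal{S}_w = \tfrac{1}{2}(H(B,B) - H(V,V))$, and the paraproduct $T_B B$ cancels one piece of $T_B(B - \nabla h\cdot V)$. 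Expanding $BV\cdot\nabla h = T_B(V\cdot\nabla h) + T_{V\cdot\nabla h}B + H(B,V\cdot\nabla h)$ cancels the remaining piece $T_B(\nabla h\cdot V)$.

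To produce the transport operator $-T_V\cdot\nabla(w|_\Gamma)$, I use the identity $\nabla\phi = V + B\nabla h$ from (\ref{BV-u}) to rewrite $T_V\cdot V = T_V\cdot\nabla\phi - T_V\cdot(B\nabla h)$, and then $T_V\cdot\nabla\phi = T_V\cdot\nabla(w|_\Gamma) + T_V\cdot\nabla(T_B h)$. Expanding $T_V\cdot\nabla(T_B h) = T_V\cdot T_{\nabla B}h + T_V\cdot T_B\nabla h$ and collapsing compositions via $T_{a}T_{b} = T_{ab} + E(a,b)$ (Definition \ref{Eaf-def}), I collect the remaining zero-order paradifferential symbol acting on $h$. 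Precisely, the contributions from $-T_{B_t}h$, from $T_V\cdot T_{\nabla B}h$, and from the Bony pieces of $T_V\cdot(B\nabla h)$ together produce the symbol $B_t + V\cdot\nabla B$; combined with $-h$, this gives $-T_a h$ with $a = 1 + B_t + V\cdot\nabla B$.

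Everything left over after these cancellations is the cubic remainder $\mathcal{C}_w$, a finite sum of terms of the form $E(V_j,\partial_j B)h$, $E(V_j,\partial_j h)B$, $T_{V_j}H(B,\partial_j h)$, and $H(B,V\cdot\nabla h)$. The key estimate needed is that $E_1 = E$ for symbols depending only on $x$ (see Lemma \ref{E1f-Lp}), so $E$-operators on pairs of functions gain \emph{two} derivatives rather than one. Each such term then admits an $H^{s+1/2}$ bound with the two low-frequency factors placed in $C_*$ norms (controlled by (\ref{Gh-Cr}), which relates $\|B\|_{C_*^r}, \|V\|_{C_*^r}$ to $\||\nabla|^{1/2}\phi\|_{C_*^{r+1/2}}$) and the remaining high-frequency factor placed in either $\|h\|_{H^{s-3/2}}$ or $\||\nabla|^{1/2}\phi\|_{H^{s-1}}$ (via (\ref{Gh-Hs})); the two-order smoothing from $E_1$ exactly matches the $3/2$-order gap between these input spaces and the target $H^{s+1/2}$, while $H$-remainders are handled by Lemma \ref{PkH-Lp}(iii). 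The main obstacle is the algebraic bookkeeping in identifying that the net paradifferential symbol acting on $h$ reproduces \emph{exactly} $a = 1 + B_t + V\cdot\nabla B$, and that every leftover piece carries two factors estimable in low-regularity $C_*$-norms so as to match the asymmetric structure $\||\nabla|^{1/2}\phi\|_{C_*^3}\cdot(\|\nabla h\|_{W^{2,\infty}}\||\nabla|^{1/2}\phi\|_{H^{s-1}} + \||\nabla|^{1/2}\phi\|_{C_*^4}\|h\|_{H^{s-3/2}})$ in (\ref{Cw-bound}).
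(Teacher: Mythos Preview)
Your proposal is correct and follows essentially the same route as the paper: differentiate $w|_\Gamma=\phi-T_Bh$, substitute (\ref{phi-t}) and $h_t=B-\nabla h\cdot V$, Bony-decompose the quadratic products to isolate $\mathcal S_w$, rewrite $T_V\cdot V$ via $\nabla\phi=V+B\nabla h$ and $\nabla(w|_\Gamma)=\nabla\phi-\nabla(T_Bh)$ to extract the transport term and the symbol $a$, and collect the leftovers as $\mathcal C_w=E_1(V,\nabla h)B-E_1(V,\nabla B)h+T_V\cdot H(B,\nabla h)-H(B,V\cdot\nabla h)$. The estimate then proceeds exactly as you outline, using Lemma~\ref{E1f-Lp}(ii) for the $E_1$-terms (two-order gain, since the Poisson bracket vanishes for $x$-only symbols), Lemma~\ref{PkH-Lp}(iii) for the $H$-terms, and (\ref{Gh-Cr}), (\ref{Gh-Hs}) to convert $B,V$ norms to $\phi$ norms.
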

\begin{remark}
Here $a$ is defined in a different way than in (\ref{AB-ab}). Hereafter $a$ will be used in the sense of (\ref{a-def}) unless noted otherwise.
\end{remark}

The proofs of both propositions can be found in Appendix \ref{ParLinEst}.

\subsection{Taylor expansion}
For future use we need to Taylor expand various quantities and estimate their remainders. We start with $a$, whose leading order is just 1.

To Taylor expand $a-1$ further, we use the fundamental theorem of calculus:
\begin{equation}\label{Gh-phi-int}
G(h)\phi=|\nabla|\phi+\int_0^1 \partial_sG(sh)\phi ds.
\end{equation}
By (2.6.8) of \cite{AlDe2},
\begin{equation}\label{ds-Gh-phi}
\partial_sG(sh)\phi=-G(sh)[hB(sh)\phi]-\nabla\cdot(hV(sh)\phi),
\end{equation}
where (see (2.0.16) of \cite{AlDe2})
\begin{align}\label{BVh-phi}
B(h)\phi&=\frac{G(h)\phi+\nabla h\cdot\nabla\phi}{1+|\nabla h|^2}, &
V(h)\phi&=\nabla\phi-(\nabla h)B(h)\phi.
\end{align}

\begin{proposition}\label{Taylor1}
(i) If $\|\nabla h\|_{H^{r+2}}<c_r$ is sufficiently small then
\begin{equation}\label{Gh-Cr2}
\|(G(h)\phi-|\nabla|\phi,B-|\nabla|\phi,V-\nabla\phi)\|_{C_*^r}
\lesssim_r \|h\|_{C_*^{r+1}}\||\nabla|^{1/2}\phi\|_{C_*^{r+3/2}}.
\end{equation}

(ii) If $\|\nabla h\|_{H^{r+2}}<c_r$ is sufficiently small then
\begin{align}
\label{a-1-Cr}
\|a-1\|_{C_*^r}&\lesssim_r \||\nabla|^{1/2}\phi\|_{C_*^{r+3/2}}^2+\|h\|_{C_*^{r+1}},\\
\label{a-1-Cr2}
\|(a-1+|\nabla|h,\sqrt a-1+|\nabla|h/2)\|_{C_*^r}&\lesssim_r \||\nabla|^{1/2}\phi\|_{C_*^{r+3/2}}^2+\|h\|_{C_*^{r+1}}\||\nabla|^{1/2}h\|_{C_*^{r+3/2}}.
\end{align}

(iii) If $\|\nabla h\|_{H^{r+3}}<c_r$ is sufficiently small then
\begin{equation}\label{at-Cr2}
\|(a_t-\Delta\phi,\partial_t\sqrt a-\Delta\phi/2)\|_{C_*^r}
\lesssim_r (\||\nabla|^{1/2}\phi\|_{C_*^{r+5/2}}^2+\|h\|_{C_*^{r+2}})\||\nabla|^{1/2}\phi\|_{C_*^{r+5/2}}.
\end{equation}
\end{proposition}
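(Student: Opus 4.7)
The plan is to reduce everything to the representation
$$G(h)\phi-|\nabla|\phi = -\int_0^1\bigl[G(sh)(hB(sh)\phi)+\nabla\cdot(hV(sh)\phi)\bigr]ds$$
obtained by combining (\ref{Gh-phi-int}) and (\ref{ds-Gh-phi}), together with the linear $C_*^r$ mapping bound (\ref{Gh-Cr}) applied uniformly in $s$ and the Moser product estimate $\|fg\|_{C_*^\sigma}\lesssim\|f\|_{L^\infty}\|g\|_{C_*^\sigma}+\|f\|_{C_*^\sigma}\|g\|_{L^\infty}$. For part (i), each integrand is a product of $h$ with $B(sh)\phi$ or $V(sh)\phi$; by (\ref{Gh-Cr}) and the algebraic identity (\ref{BVh-phi}) the latter two are bounded in $C_*^{r+1}$ by $\||\nabla|^{1/2}\phi\|_{C_*^{r+3/2}}$ uniformly in $s$. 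Since $G(sh)$ and $\nabla$ each cost one derivative, Moser gives the estimate for $G(h)\phi-|\nabla|\phi$. The bounds for $B-|\nabla|\phi$ and $V-\nabla\phi$ follow from (\ref{BVh-phi}) rewritten as
$$B(h)\phi-|\nabla|\phi = \frac{(G(h)\phi-|\nabla|\phi) + \nabla h\cdot\nabla\phi - |\nabla|\phi\cdot|\nabla h|^2}{1+|\nabla h|^2},$$
noting that the denominator is close to $1$ in $C_*^r$ under our smallness hypothesis, and that each term in the numerator is already bilinear of the required form.

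For part (ii), I start from (\ref{a-def}), so $a-1=B_t+V\cdot\nabla B$. The product $V\cdot\nabla B$ is manifestly bilinear by (\ref{Gh-Cr}). For $B_t$, I split
$$B_t = |\nabla|\phi_t + \partial_t(B-|\nabla|\phi),$$
use the formula (\ref{phi-t}) to write $|\nabla|\phi_t = -|\nabla|h + |\nabla|(\text{quadratic in } B,V,\nabla h, \nabla\phi)$, and differentiate in time the bilinear remainder $B-|\nabla|\phi$ obtained in (i), converting each $\partial_t$ back into spatial operators via (\ref{Zakharov}) (which loses one derivative, forcing $\|\nabla h\|_{H^{r+2}}<c_r$). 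Collecting yields (\ref{a-1-Cr}) and, after the $-|\nabla|h$ cancellation, the sharper (\ref{a-1-Cr2}). The estimate for $\sqrt a-1+|\nabla|h/2$ is algebraic: writing
$$\sqrt a - 1 + \frac{|\nabla|h}{2} = \frac{a-1+|\nabla|h}{\sqrt a+1} + \frac{|\nabla|h\,(\sqrt a-1)}{2(\sqrt a+1)},$$
the first summand is bounded by (\ref{a-1-Cr2}) and the second by $|\nabla|h$ times the $C_*^r$-bound on $\sqrt a - 1$ inherited from (\ref{a-1-Cr}).

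For part (iii), differentiating in $t$ the identity $a-1=-|\nabla|h + (a-1+|\nabla|h)$ from part (ii) and using $h_t=G(h)\phi=|\nabla|\phi+(G(h)\phi-|\nabla|\phi)$ together with $|\nabla|^2=-\Delta$ gives
$$a_t - \Delta\phi = -|\nabla|(G(h)\phi-|\nabla|\phi) + \partial_t(a-1+|\nabla|h).$$
The first term is bounded by part (i) at one higher regularity. For the second, every $\partial_t$ is converted to spatial operators using (\ref{Zakharov}), producing a trilinear expression controlled by (\ref{Gh-Cr}), (\ref{a-1-Cr2}), and Moser; this forces $\|\nabla h\|_{H^{r+3}}<c_r$. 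The bound for $\partial_t\sqrt a - \Delta\phi/2$ follows from $\partial_t\sqrt a = a_t/(2\sqrt a)$ and the decomposition
$$\partial_t\sqrt a - \frac{\Delta\phi}{2} = \frac{a_t-\Delta\phi}{2\sqrt a} + \frac{\Delta\phi\,(1-\sqrt a)}{2\sqrt a},$$
using the bound just obtained on $a_t-\Delta\phi$ and on $\sqrt a - 1$ from (\ref{a-1-Cr}).

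The principal technical obstacle is careful bookkeeping of regularity: each substitution of $\partial_t$ by its spatial counterpart via (\ref{Zakharov}) loses one derivative, and each invocation of (\ref{Gh-Cr}) or Moser has its own derivative budget, so one must verify that the final sum fits within the $C_*^{r+3/2}$, $C_*^{r+1}$, and $C_*^{r+5/2}$ norms stipulated by (\ref{a-1-Cr2}) and (\ref{at-Cr2}). One must also track the bilinear/trilinear structure so that the remainder organizes into precisely the pure $\phi$-square plus mixed $h$-$|\nabla|^{1/2}h$ pairing in (ii) (and the corresponding trilinear form in (iii)); this organization reflects the null structure near $h=\phi=0$ and is crucial for the sharp bounds, but is tedious rather than conceptually difficult once the representation formula above is in hand.
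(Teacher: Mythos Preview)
Your argument for part (i) matches the paper's exactly. For parts (ii) and (iii) your outline is broadly correct, but it differs from the paper's route and contains one genuine gap.

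The paper does not differentiate $B-|\nabla|\phi$ term by term. Instead it first derives an explicit closed formula for $a$: combining $\phi_t-Bh_t=-h-\tfrac12(B^2+|V|^2)$ with the shape derivative identity
\[
\partial_tG(h)\phi = G(h)[\phi_t-Bh_t]-\nabla\cdot(h_tV) = -\tfrac12G(h)[B^2+|V|^2+2h]-\nabla\cdot(h_tV)
\]
(this is (\ref{dt-Gh-phi}), quoted from Alazard--Delort), one computes $V_t$ and $B_t$ algebraically and arrives at
\[
a=\frac{1}{1+|\nabla h|^2}\Bigl(1+\tilde a-\tfrac12G(h)[B^2+|V|^2+2h]\Bigr),
\]
with $\tilde a$ a sum of products each containing at least two factors of $B,V,\nabla B,\nabla V$. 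From this formula the structure of (\ref{a-1-Cr2}) is transparent: $\tilde a$ and $G(h)[B^2+|V|^2]$ give the $\||\nabla|^{1/2}\phi\|^2$ contribution, while $a|\nabla h|^2$ and $G(h)h-|\nabla|h$ (the latter bounded by part (i) with $\phi\leftarrow h$) give the $\|h\|\||\nabla|^{1/2}h\|$ contribution. Part (iii) then follows by differentiating this same formula.

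Your decomposition $B_t=|\nabla|\phi_t+\partial_t(B-|\nabla|\phi)$ is legitimate, but the phrase ``converting each $\partial_t$ back into spatial operators via (\ref{Zakharov})'' hides the essential step. The Zakharov system gives you $h_t$ and $\phi_t$; it does not give you $\partial_t(G(h)\phi)$, which appears the moment you differentiate either the algebraic formula $B-|\nabla|\phi=(1+|\nabla h|^2)^{-1}[(G(h)\phi-|\nabla|\phi)+\cdots]$ or the integral representation from (i). For that you need the shape derivative of the Dirichlet--Neumann map, i.e.\ the analog of (\ref{ds-Gh-phi}) with the variation $sh\mapsto h(t)$. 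Once you invoke that formula your approach and the paper's converge; without it the bookkeeping you describe cannot be closed. Your algebraic reductions for $\sqrt a-1+|\nabla|h/2$ and $\partial_t\sqrt a-\Delta\phi/2$ are correct and match how the paper handles those pieces.
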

\begin{proof}
(i) This follows from (\ref{Gh-phi-int}), (\ref{ds-Gh-phi}) and (\ref{Gh-Cr}).

(ii) The first bound follows from the second one, which is shown in section \ref{Taylor}.

(iii) This is also shown in section \ref{Taylor}.
\end{proof}

We also need to Taylor expand $B$ and $G(h)\phi$ to higher orders.
By the first equation in the second display after (2.6.8) of \cite{AlDe2},
\begin{equation}\label{Bs}
\begin{aligned}
\partial_sB(sh)\phi&=\frac1{1+s^2|\nabla h|^2}(\partial_sG(sh)\phi+\nabla h\cdot\nabla\phi-2s|\nabla h|^2B(sh)\phi)
\end{aligned}
\end{equation}
with $\partial_sG(sh)\phi$ as in (\ref{ds-Gh-phi}).
By the fundamental theorem of calculus and (\ref{BVh-phi}),
\begin{equation}\label{B23}
\begin{aligned}
B&=|\nabla|\phi+\int_0^1 \partial_sB(sh)\phi ds=B_1+B_2+B_3,\quad
B_1=|\nabla|\phi,\\
B_2&=-|\nabla|(h|\nabla|\phi)-\nabla\cdot(h\nabla\phi)+\nabla h\cdot\nabla\phi=-|\nabla|(h|\nabla|\phi)-h\Delta\phi,\\
B_3&=-\int_0^1 (G(sh)-|\nabla|)[hB(sh)\phi]+|\nabla|(h(B(sh)-|\nabla|)\phi)ds\\
&+\int_0^1 \nabla\cdot(sh(\nabla h)B(sh)\phi)-2s|\nabla h|^2B(sh)\phi
-s^2|\nabla h|^2\partial_sB(sh)\phi ds.
\end{aligned}
\end{equation}
The term $B_3$ is at least cubic. Now by (\ref{BV-u}) and (\ref{DN-u}),
\begin{equation}\label{Gh-phi-int2}
\begin{aligned}
G(h)\phi&=(1+|\nabla h|^2)B-\nabla h\cdot\nabla\phi\\
&=|\nabla|\phi-|\nabla|(h|\nabla|\phi)-\nabla\cdot(h\nabla\phi)+B_3+|\nabla h|^2B.
\end{aligned}
\end{equation}

\begin{proposition}\label{Taylor2}
(i) If $r>1$ and $\|\nabla h\|_{H^{r+3/2}}<c_r$ is sufficiently small then
\begin{align}\label{Gh-Hs2}
\|(G(sh)-|\nabla|)\phi\|_{H^r}
&\lesssim_r \|h\|_{H^{r+1}}\||\nabla|^{1/2}\phi\|_{C_*^1}
+\|h\|_{L^\infty}\||\nabla|^{1/2}\phi\|_{H^{r+3/2}},\\
\label{BV-Hs2}
\|(B(sh)-|\nabla|)\phi\|_{H^r}
&\lesssim_r \|h\|_{H^{r+1}}\||\nabla|^{1/2}\phi\|_{C_*^1}+\|h\|_{W^{1,\infty}}\||\nabla|^{1/2}\phi\|_{H^{r+3/2}}.
\end{align}

(ii) If $r>1$ and $\|\nabla h\|_{H^{r+5/2}}<c_r$ is sufficiently small then
\begin{align}\label{B3-Hs}
\|B_3\|_{H^r}
&\lesssim \|h\|_{H^{r+2}}\|h\|_{C_*^{3/2}}\||\nabla|^{1/2}\phi\|_{C_*^2}
+\|h\|_{W^{1,\infty}}^2\||\nabla|^{1/2}\phi\|_{H^{r+5/2}}.
\end{align}
\end{proposition}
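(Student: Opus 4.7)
The plan is to exploit the Taylor formulas (\ref{Gh-phi-int})--(\ref{B23}) together with the linear Sobolev estimate (\ref{Gh-Hs}) and the H\"older estimate (\ref{Gh-Cr}), applied uniformly in $s\in[0,1]$ (the smallness of $\nabla(sh)$ follows from that of $\nabla h$, so these estimates hold uniformly in $s$).

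For part (i), I would use the fundamental theorem of calculus with (\ref{ds-Gh-phi}) to write
\[
G(sh)\phi - |\nabla|\phi
= -\int_0^s \big( G(s'h)[h B(s'h)\phi] + \nabla\cdot(h V(s'h)\phi) \big)\,ds',
\]
and the analogous identity for $B(sh)\phi - |\nabla|\phi$ derived from (\ref{Bs}) and (\ref{BVh-phi}). Applying (\ref{Gh-Hs}) to the outer Dirichlet-to-Neumann operator reduces the task to bounding products of the form $\|h B(s'h)\phi\|_{H^{r+1}}$ and $\|h V(s'h)\phi\|_{H^{r+1}}$. A standard paraproduct/Leibniz decomposition gives
\[
\|h B(s'h)\phi\|_{H^{r+1}}
\lesssim_r \|h\|_{H^{r+1}}\|B(s'h)\phi\|_{L^\infty} + \|h\|_{L^\infty}\|B(s'h)\phi\|_{H^{r+1}},
\]
and the two factors on the right are controlled by (\ref{Gh-Cr}) and (\ref{Gh-Hs}), respectively, yielding (\ref{Gh-Hs2}); (\ref{BV-Hs2}) follows analogously.

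For part (ii), I would decompose $B_3$ via (\ref{B23}) into two groups. The first two integrands are cubic because the operators $G(sh) - |\nabla|$ and $B(sh) - |\nabla|$ each contribute an extra power of $h$: here I would apply part (i)---with $\phi$ replaced by $h B(sh)\phi$ for the $G$-term, and directly for the $B$-term---followed by one more paraproduct decomposition of the inner product $h\,(\cdot)$. The remaining integrands already carry an explicit factor $|\nabla h|^2$, so a direct Leibniz estimate paired with (\ref{Gh-Hs}) (together with (\ref{Bs}) and part (i) to handle the $\partial_s B(sh)\phi$ term) suffices. The worst-case pairings that emerge are exactly those on the right-hand side of (\ref{B3-Hs}).

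The main obstacle is bookkeeping. To hit the right-hand side of (\ref{B3-Hs}) rather than a rougher bound, I must arrange every paraproduct so that the top Sobolev derivative falls on a factor of $h$ only when it is paired with a low-regularity ($L^\infty$ or $C_*$) norm of $\phi$, while the top norm $\||\nabla|^{1/2}\phi\|_{H^{r+5/2}}$ is paired only with low-regularity norms of $h$. Tracking the powers of $\nabla h$ accumulated in the rational denominator of (\ref{Bs}) and in the remainder $|\nabla h|^2 B$ of (\ref{Gh-phi-int2}) without spoiling this trade-off is the delicate part; the $L^\infty$-smallness of $\nabla h$ lets the denominators be expanded as geometric series, but each absorption must be checked not to cost additional derivatives.
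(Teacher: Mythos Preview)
Your proposal is correct and follows essentially the same route as the paper: for (i) the paper writes $(G(sh)-|\nabla|)\phi$ via (\ref{Gh-phi-int})--(\ref{ds-Gh-phi}), uses (\ref{Gh-Hs}) on the outer $G(s'h)$, splits the product $hB(s'h)\phi$ by Sobolev multiplication, and invokes (\ref{Gh-Cr}) and (\ref{Gh-Hs}) on the factors, exactly as you describe; (\ref{BV-Hs2}) is then read off from (\ref{BVh-phi}). For (ii) the paper likewise decomposes $B_3$ via (\ref{B23}), applies part (i) (with $hB(sh)\phi$ in place of $\phi$ for the $G$-term and at regularity $r+1$ for the $B$-term), and handles the $|\nabla h|^2$ terms by Sobolev multiplication plus (\ref{Gh-Hs}). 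The only cosmetic difference is the last integrand $-s^2|\nabla h|^2\partial_sB(sh)\phi$: you propose to expand $\partial_sB(sh)\phi$ via (\ref{Bs}), whereas the paper simply integrates by parts in $s$ to rewrite this term as $-|\nabla h|^2B+\int_0^1 2s|\nabla h|^2 B(sh)\phi\,ds$, which avoids unpacking (\ref{Bs}) again; both work.
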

\begin{proof}
(i) By (\ref{Gh-Hs}), (\ref{Gh-phi-int}), (\ref{ds-Gh-phi}), the Sobolev multiplication theorem and (\ref{Gh-Cr}),
\begin{align*}
\|(G(sh)-|\nabla|)\phi\|_{H^r}
&\lesssim_r \|(hB(sh)\phi,hV(sh)\phi)\|_{H^{r+1}}\\
&\lesssim_r \|h\|_{H^{r+1}}\|(B(sh)\phi,V(sh)\phi)\|_{L^\infty}
+\|h\|_{L^\infty}\|(B(sh)\phi,V(sh)\phi)\|_{H^{r+1}}\\
&\lesssim_r \|h\|_{H^{r+1}}\||\nabla|^{1/2}\phi\|_{C_*^1}
+\|h\|_{L^\infty}\||\nabla|^{1/2}\phi\|_{H^{r+3/2}}.
\end{align*}
Combining this bound with (\ref{BVh-phi}) we get the second bound.

(ii) By (\ref{B23}), (\ref{Gh-Hs2}), (\ref{Gh-Cr}), Sobolev multiplication,
(\ref{Gh-Hs}), (\ref{BV-Hs2}) and (\ref{Gh-Cr2}),
\begin{align*}
\|B_3\|_{H^r}
&\lesssim_r \sup_{s\in[0,1]}\|h\|_{H^{r+1}}\||\nabla|^{1/2}(hB(sh)\phi)\|_{C_*^1}+\|h\|_{L^\infty}\||\nabla|^{1/2}(hB(sh)\phi)\|_{H^{r+3/2}}\\
&+\sup_{s\in[0,1]} \|(h(B(sh)-|\nabla|)\phi,h(\nabla h)B(sh)\phi)\|_{H^{r+1}}+\sup_{s\in[0,1]} \||\nabla h|^2B(sh)\phi\|_{H^r}\\
&\lesssim \|h\|_{H^{r+2}}\|h\|_{C_*^{3/2}}\||\nabla|^{1/2}\phi\|_{C_*^2}
+\|h\|_{W^{1,\infty}}^2\||\nabla|^{1/2}\phi\|_{H^{r+5/2}}.
\end{align*}
Note that the last term in the expression of $B_3$ in (\ref{B23}) equals
$-|\nabla h|^2B+\int_0^1 2s|\nabla h|^2B(sh)\phi ds$, which is dominated by the last term in the second line of (\ref{B3-Hs}).
\end{proof}

\section{Quartic energy estimates}\label{EneEst}
In this section we will obtain a quartic energy estimate of the form
\[
\mathcal E(t)=\mathcal E(0)+\int_0^t \mathcal E(s)\|U(s)\|_{C_*^6}^2ds.
\]

\subsection{Defining the quartic energy}\label{EneEst-def}
\begin{definition}
For an integer $N\ge5$ define
\begin{align*}
U&=h+i|\nabla|^{1/2}\phi, & \tilde U&=h+i|\nabla|^{1/2}(w|_\Gamma),\\
\mathcal U&=T_{\sqrt a}h+iT_{\sqrt\lambda}(w|_\Gamma), &
a&=1+B_t+V\cdot\nabla B,\\
\mathcal E&=\|P_{\ge0}\mathcal U\|_{H^N}^2, &
U_+&=U,\ U_-=\bar U,\ \mathcal U_+=\mathcal U,\ \mathcal U_-=\mathcal{\bar U}.
\end{align*}
\end{definition}

We first show that $\|U\|_{H^N}^2$, $\|\tilde U\|_{H^N}^2$ and $\mathcal E$ are close to each other.

\begin{proposition}\label{U-tld=U}
If $s>2$ and $\|h\|_{H^{s+1/2}}<c_s$ is sufficiently small then
\[
\|U\|_{H^s}\approx\|\tilde U\|_{H^s}.
\]
\end{proposition}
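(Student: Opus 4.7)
The plan is to reduce the equivalence of norms to a direct bound on the difference $U - \tilde U$, which is purely paradifferential. Writing out the definitions and using $w|_\Gamma = \phi - T_B h$ (where $B = [\partial_z \Phi]_\Gamma$), the imaginary $|\nabla|^{1/2}\phi$ pieces cancel against each other and the $h$ parts are identical, so
\[
U - \tilde U = i|\nabla|^{1/2}(T_B h).
\]
Thus the question becomes: estimate $\||\nabla|^{1/2} T_B h\|_{H^s}$.

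For the paradifferential estimate, I would invoke Lemma \ref{Taf-Lp}(ii) with the symbol $B = B(x)$ of order $m = 0$ (Lemma \ref{Lmq=Lq}), yielding
\[
\|T_B h\|_{H^{s+1/2}} \lesssim \|B\|_{L^\infty} \|h\|_{H^{s+1/2}},
\qquad
\|U - \tilde U\|_{H^s} \lesssim \|B\|_{L^\infty} \|h\|_{H^{s+1/2}}.
\]
To control $\|B\|_{L^\infty}$ cheaply, pick any small $r \in (0, s - 3/2)$ (allowed since $s > 2$). The smallness assumption $\|h\|_{H^{s+1/2}} < c_s$ implies in particular that $\|\nabla h\|_{H^{r+2}}$ is small, so (\ref{Gh-Cr}) combined with the Sobolev embedding $H^s \hookrightarrow C_*^{r+1/2}$ in two dimensions gives
\[
\|B\|_{L^\infty} \lesssim \|B\|_{C_*^r} \lesssim_s \||\nabla|^{1/2}\phi\|_{C_*^{r+1/2}} \lesssim_s \||\nabla|^{1/2}\phi\|_{H^s} \le \|U\|_{H^s}.
\]

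Combining the two displays produces $\|U - \tilde U\|_{H^s} \lesssim_s c_s \|U\|_{H^s}$, and choosing $c_s$ sufficiently small (depending only on $s$) the triangle inequality closes the equivalence in both directions. The main subtlety is keeping track of which factor carries the smallness: the high regularity $H^{s+1/2}$ norm of $h$ is exactly what is assumed small, while the a priori possibly large $\||\nabla|^{1/2}\phi\|_{H^s}$ is precisely the part of $\|U\|_{H^s}$ that the right-hand side is being compared against. No further work is needed; this is a soft estimate, and the only potentially delicate step is ensuring that the paradifferential estimate applies with an $L^\infty$ coefficient norm rather than something stronger, which is exactly what Lemma \ref{Taf-Lp} provides.
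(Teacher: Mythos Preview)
Your proof is correct and follows essentially the same approach as the paper: compute $U-\tilde U=i|\nabla|^{1/2}T_Bh$, apply Lemma~\ref{Taf-Lp}(ii), and bound $\|B\|_{L^\infty}$ in terms of $\|U\|_{H^s}$ so that the smallness of $\|h\|_{H^{s+1/2}}$ closes the triangle inequality. The only cosmetic difference is that the paper controls $\|B\|_{L^\infty}$ via the Sobolev estimate $\|B\|_{H^{s-1}}\lesssim\||\nabla|^{1/2}\phi\|_{H^{s-1/2}}$ from Proposition~\ref{u-Hs}(ii), whereas you route through the H\"older bound (\ref{Gh-Cr}); both chains are equally short and rely on the same underlying analysis of the Dirichlet problem.
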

\begin{proof}
By (\ref{w-def}), Lemma \ref{Taf-Lp} (ii), Sobolev embedding and Proposition \ref{u-Hs} (ii),
\begin{equation}\label{U-tld-U-HN}
\begin{aligned}
\|U-\tilde U\|_{H^s}&=\||\nabla|^{1/2}T_Bh\|_{H^s}
\lesssim_s \|B\|_{L^\infty}\|h\|_{H^{s+1/2}}
\lesssim_s c_s\|B\|_{H^{s-1}}\\
&\lesssim_s c_s\||\nabla|^{1/2}\phi\|_{H^{s-1/2}}
\lesssim_s c_s\|U\|_{H^s}.
\end{aligned}
\end{equation}
When $c_s$ is sufficiently small, the result follows.
\end{proof}

\begin{proposition}\label{E-HN}
If $\|U\|_{H^5}$ is sufficiently small, then
\[
\|\mathcal U-\tilde U\|_{H^N}\lesssim \|\tilde U\|_{H^N}^2.
\]
\end{proposition}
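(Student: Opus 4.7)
The plan is to rewrite
\[
\mathcal U - \tilde U = T_{\sqrt a - 1}h + i\, T_{\sqrt\lambda - |\zeta|^{1/2}}(w|_\Gamma),
\]
using Lemma \ref{para2diff}(iii) (so that $|\nabla|^{1/2}=T_{|\zeta|^{1/2}}$) and the fact that $T_1=\mathrm{id}$. Each piece is then estimated by bounding the symbol in an appropriate $\mathcal L_m^\infty$ norm and applying Lemma \ref{Taf-Lp}(ii). Note that by Proposition \ref{U-tld=U} and the definition of $\tilde U$, one has $\|h\|_{H^s} + \||\nabla|^{1/2}(w|_\Gamma)\|_{H^s}\lesssim \|\tilde U\|_{H^s}$, and by Sobolev embedding $\|U\|_{H^5}\lesssim \|\tilde U\|_{H^5}$, which is small.

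For the first term, the subtlety is that $\sqrt a - 1$ has a \emph{linear} part $-|\nabla|h/2$ and thus cannot be estimated in $L^\infty$ by something quadratic directly. I split
\[
\sqrt a - 1 = -\tfrac{1}{2}|\nabla|h + r_a,\qquad r_a:=\sqrt a - 1 + \tfrac{1}{2}|\nabla|h.
\]
The paraproduct $T_{-|\nabla|h/2}h$ is already visibly bilinear in $h$, so Lemma \ref{Taf-Lp}(ii) with $m=0$ and the Sobolev embedding $H^5\subset W^{1,\infty}$ give $\|T_{-|\nabla|h/2}h\|_{H^N}\lesssim \||\nabla|h\|_{L^\infty}\|h\|_{H^N}\lesssim \|\tilde U\|_{H^N}^2$. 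For the remainder $r_a$, Proposition \ref{Taylor1}(ii) (the bound for $\sqrt a - 1 + |\nabla|h/2$) yields $\|r_a\|_{C_*^r}\lesssim \|\tilde U\|_{H^{r+3}}^2$ for a fixed small $r>0$; Sobolev embedding plus Lemma \ref{Taf-Lp}(ii) then give $\|T_{r_a}h\|_{H^N}\lesssim \|\tilde U\|_{H^5}^2\|\tilde U\|_{H^N}$, which is $\lesssim \|\tilde U\|_{H^N}^2$ by the smallness assumption.

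For the second term I would Taylor expand $\sqrt\lambda$ around the linearization $|\zeta|^{1/2}$, using the explicit expressions in \eqref{l-def} and \eqref{ab-soln}. Writing $\lambda = |\zeta| + \ell(\zeta,\nabla^2 h) + q$ where $\ell$ is linear in $\nabla^2 h$ (of $\zeta$-order $0$, supported away from $\zeta=0$ by the factor $\varphi_{\ge 0}$) and $q$ is at least quadratic in $(h,\phi)$, one obtains $\sqrt\lambda - |\zeta|^{1/2} = \ell/(2|\zeta|^{1/2}) + \tilde q$ with $\ell/|\zeta|^{1/2}$ of order $-1/2$ and $\tilde q$ at least quadratic and of order $\le 1/2$. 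Applying Lemma \ref{Taf-Lp}(ii) with $m=-1/2$ to the linear piece gives
\[
\bigl\|T_{\ell/(2|\zeta|^{1/2})}(w|_\Gamma)\bigr\|_{H^N}\lesssim \|\nabla^2 h\|_{L^\infty}\|w|_\Gamma\|_{H^{N-1/2}}\lesssim \|\tilde U\|_{H^N}^2,
\]
again using Sobolev and $\|w|_\Gamma\|_{H^{N-1/2}}\lesssim \||\nabla|^{1/2}(w|_\Gamma)\|_{H^N}\lesssim \|\tilde U\|_{H^N}$. The quadratic remainder $\tilde q$ has $\|\tilde q\|_{\mathcal L^\infty_{1/2}}\lesssim \|\tilde U\|_{H^5}^2$ (by the $M_2$/$M_3$ structure of $a^{(0)}, a^{(1)}, b^{(0)}, b^{(1)}$ and $\sqrt{1+\alpha}$, together with Lemma \ref{prod-Lm}), hence $\|T_{\tilde q}(w|_\Gamma)\|_{H^N}\lesssim \|\tilde U\|_{H^5}^2\|\tilde U\|_{H^N}\lesssim \|\tilde U\|_{H^N}^2$.

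The main obstacle is purely bookkeeping: confirming, using \eqref{ab-soln} and \eqref{l-def}, which parts of $\sqrt\lambda - |\zeta|^{1/2}$ carry which $\zeta$-order and which degree of vanishing in $(h,\phi)$, so that the weights of $m$ in Lemma \ref{Taf-Lp}(ii) consume exactly the $1/2$ derivative available from passing from $w|_\Gamma\in H^{N-1/2}$ to $\tilde U\in H^N$. Once those orders are matched, every estimate is a direct application of the paradifferential calculus of Section \ref{ParaCalc} together with Propositions \ref{u-Hs}, \ref{U-tld=U}, and \ref{Taylor1}.
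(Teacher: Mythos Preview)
Your argument is correct, but you have made it harder than necessary. The paper does not split $\sqrt a-1$ or $\sqrt\lambda-|\zeta|^{1/2}$ into linear and quadratic parts at all. It simply bounds
\[
\|\mathcal U-\tilde U\|_{H^N}\lesssim \|\sqrt a-1\|_{L^\infty}\|h\|_{H^N}+\|\sqrt{\lambda/|\zeta|}-1\|_{\mathcal L^\infty_0}\||\nabla|^{1/2}(w|_\Gamma)\|_{H^N},
\]
uses (\ref{a-1-Cr}) and (\ref{l-def}) to get $\|\sqrt a-1\|_{L^\infty}+\|\sqrt{\lambda/|\zeta|}-1\|_{\mathcal L^\infty_0}\lesssim \|U\|_{C_*^{7/2}}$, and then closes via Sobolev embedding $\|U\|_{C_*^{7/2}}\lesssim\|U\|_{H^{9/2}}\lesssim\|\tilde U\|_{H^5}\le\|\tilde U\|_{H^N}$. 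The point you overlooked is that the target bound $\|\tilde U\|_{H^N}^2$ requires the \emph{product} to be quadratic, not the coefficient; one factor comes from the symbol (linearly bounded by $\|U\|$), the other from $h$ or $w|_\Gamma$ in $H^N$. Your splitting into linear plus quadratic coefficients works, but it adds a layer of bookkeeping that the paper avoids.

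One technical point worth flagging in your write-up: the inequality $\|w|_\Gamma\|_{H^{N-1/2}}\lesssim\||\nabla|^{1/2}(w|_\Gamma)\|_{H^N}$ is false at low frequencies. For your linear piece this is harmless because the factor $\varphi_{\ge0}(\zeta)$ restricts the symbol to $|\zeta|\gtrsim1$, so only high-frequency pieces of $w|_\Gamma$ enter; for the quadratic piece at low $\zeta$ one needs the explicit $|\zeta|^{1/2}$ vanishing of $\sqrt\lambda-|\zeta|^{1/2}$ (equivalently, factor the symbol as $|\zeta|^{1/2}$ times an order-$0$ symbol and apply Lemma~\ref{Taf-Lp}(i) dyadically). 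The paper handles this by working with $\sqrt{\lambda/|\zeta|}-1\in\mathcal L^\infty_0$ directly.
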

\begin{proof}
By Lemma \ref{Taf-Lp} (ii),
\begin{equation}\label{E-U-dif}
\|\mathcal U-\tilde U\|_{H^N}
\lesssim \|\sqrt a-1\|_{L^\infty}\|h\|_{H^N}+\|\sqrt{\lambda/|\zeta|}-1\|_{\mathcal L^\infty_0}\||\nabla^{1/2}(w|_\Gamma)\|_{H^N}.
\end{equation}
By (\ref{a-1-Cr}) and the smallness of $\|\nabla h\|_{H^{7/2}}+\||\nabla|^{1/2}\phi\|_{C_3^*}\lesssim \|U\|_{H^{9/2}}$,
\begin{equation}\label{a-1-Loo}
\|a-1\|_{W^{1,\infty}}\lesssim \|a-1\|_{C_*^{3/2}}
\lesssim \|h\|_{C_*^{5/2}}+\||\nabla|^{1/2}\phi\|_{C_*^3}^2
\lesssim \|U\|_{C_*^3}.
\end{equation}
By (\ref{l-def}), $\lambda$ vanishes to degree 1 near $\zeta = 0$, so
$\|\sqrt{\lambda/|\zeta|}-1\|_{\mathcal L^\infty_0} \lesssim \|\sqrt\lambda-|\zeta|^{1/2}\|_{\mathcal L^\infty_{1/2}}$, and since
$\lambda = |\zeta|(1+M_2(\zeta,\nabla h)) + M_1(\zeta,\nabla h,\nabla^2h)$,
\begin{equation}\label{l-|z|-Loo}
\|\sqrt\lambda-|\zeta|^{1/2}\|_{\mathcal L^\infty_{1/2}}
+\|\nabla_x\sqrt\lambda\|_{\mathcal L^\infty_{1/2}}
\lesssim \|h\|_{W^{3,\infty}}\lesssim \|U\|_{C_*^{7/2}}.
\end{equation}
Putting (\ref{a-1-Loo}) and (\ref{l-|z|-Loo}) into (\ref{E-U-dif}) we get
\begin{equation}\label{cal-U-tld-U-HN}
\|\mathcal U-\tilde U\|_{H^N}\lesssim \|U\|_{C_*^{7/2}}\|\tilde U\|_{H^N}.
\end{equation}
By Proposition \ref{U-tld=U}, $\|U\|_{C_*^{7/2}} \lesssim \|U\|_{H^{9/2}} \lesssim \|\tilde U\|_{H^5}$, where we have used $N \ge 5$.
\end{proof}

The rest of this section is devoted to estimating $d\mathcal E/dt$.
By (\ref{ht}) and (\ref{wt}), the evolution equation for $\mathcal U$ is
\begin{align*}
\partial_t\mathcal U&=T_{\sqrt a}T_\lambda(w|_\Gamma)
-T_{\sqrt a}\nabla\cdot T_Vh
+T_{\sqrt a}(\mathcal Q|_\Gamma+\mathcal S_h+\mathcal C_h)\\
&-iT_{\sqrt\lambda}T_ah-iT_{\sqrt\lambda}T_V\cdot\nabla(w|_\Gamma)
+iT_{\sqrt\lambda}(\mathcal S_w+\mathcal C_w)+T_{\partial_t\sqrt a}h
+iT_{\partial_t\sqrt\lambda}(w|_\Gamma).
\end{align*}
Hence
\begin{align}
\nonumber
(\partial_t+iT_{\sqrt{a\lambda}})\mathcal U
&=(-T_{\sqrt a}E(\sqrt\lambda,\sqrt\lambda)+E(\sqrt a,\sqrt\lambda)T_{\sqrt\lambda})(w|_\Gamma)\\
\nonumber
&+i(T_{\sqrt\lambda}E(\sqrt a,\sqrt a)-E(\sqrt\lambda,\sqrt a)T_{\sqrt a})h\\
\nonumber
&-T_{\sqrt a}\nabla\cdot T_Vh-iT_{\sqrt\lambda}T_V\cdot\nabla(w|_\Gamma)
+T_{\sqrt a}(\mathcal Q|_\Gamma+\mathcal S_h+\mathcal C_h)\\
&+iT_{\sqrt\lambda}(\mathcal S_w+\mathcal C_w)+T_{\partial_t\sqrt a}h+iT_{\partial_t\sqrt\lambda}(w|_\Gamma).
\label{cal-Ut0}
\end{align}
From (\ref{ab-soln}) and (\ref{l-def}) it follows that
\[
\lambda=|\zeta|(1+M_2(\zeta,\nabla h,\nabla^2h))+b_1^{(0)}+\left( \frac12\Delta h+M_3(\zeta,\nabla h,\nabla^2h) \right)\varphi_{\ge0}(\zeta).
\]
Hence
\begin{equation}\label{l-l1}
\sqrt\lambda=[\sqrt\lambda]_{\le1}+|\zeta|^{1/2}M_2(\zeta,\nabla h,\nabla^2h)+|\zeta|^{-1/2}M_3(\zeta,\nabla h,\nabla^2h)\varphi_{\ge0}(\zeta),
\end{equation}
where
\begin{align*}
[\sqrt\lambda]_{\le1}&=\sqrt{|\zeta|}+[\sqrt\lambda]_1, &
[\sqrt\lambda]_1&=\left( \frac{b_1^{(0)}}{2\sqrt{|\zeta|}}+\frac{\Delta h}{4\sqrt{|\zeta|}} \right)\varphi_{\ge0}(\zeta)
\end{align*}
is the first order approximation of $\sqrt\lambda$, i.e.,
if $\|h\|_{C_*^4}$ is sufficiently small then
\begin{equation}\label{l-l1-Loo}
\|\sqrt\lambda-[\sqrt\lambda]_{\le1}\|_{\mathcal L^\infty_{1/2}}+
\|\nabla_x(\sqrt\lambda-[\sqrt\lambda]_{\le1})\|_{\mathcal L^\infty_{1/2}}
\lesssim \|h\|_{W^{3,\infty}}^2\lesssim \|h\|_{C_*^4}^2.
\end{equation}
The first order approximation of $\sqrt a$, $\partial_t\sqrt a$ and $\partial_t\sqrt\lambda$ are $1-|\nabla|h/2$, $\Delta\phi/2$ and
\[
[\partial_t\sqrt\lambda]_{\le1}=\partial_t[\sqrt\lambda]_1
=\left( \frac{\zeta^T(\nabla^2|\nabla|\phi)\zeta}{2|\zeta|^{5/2}}
+\frac{\Delta|\nabla|\phi}{4\sqrt{|\zeta|}} \right)\varphi_{\ge0}(\zeta)
\]
respectively: by (\ref{a-1-Cr2}), (\ref{at-Cr2}), (\ref{ht=Gh-phi}), (\ref{Gh-Cr}) and (\ref{Gh-Cr2}), if $\|U\|_{H^6}$ is sufficiently small,
\begin{equation}\label{Loo2}
\|\sqrt a-1+|\nabla|h/2\|_{W^{1,\infty}}+\|\partial_t\sqrt a-\Delta\phi/2\|_{L^\infty}+\|\partial_t\sqrt\lambda-[\partial_t\sqrt\lambda]_{\le1}\|_{\mathcal L^\infty_{1/2}}\lesssim \|U\|_{C_*^4}^2.
\end{equation}
Now by (\ref{cal-Ut0}),
\begin{equation}\label{cal-Ut}
(\partial_t+iT_{\sqrt{a\lambda}}+iT_{V\cdot\zeta})\mathcal U
=\mathcal Q|_\Gamma+\mathcal{\tilde Q+\tilde S+C},
\end{equation}
where $\mathcal Q$ is given by (\ref{QS}),
\begin{align*}
\mathcal{\tilde Q}&=-(E(\sqrt{|\zeta|},[\sqrt\lambda]_1)
+E([\sqrt\lambda]_1,\sqrt{|\zeta|})
+E(|\nabla h|/2,\sqrt{|\zeta|})|\nabla|^{1/2})(w|_\Gamma)\\
&+iE(\sqrt{|\zeta|},|\nabla h|/2)h-iE(\zeta,\nabla\phi)h+E(\sqrt{|\zeta|},\nabla\phi,\zeta/\sqrt{|\zeta|})|\nabla|^{1/2}(w|_\Gamma)\\
\displaybreak[0]
&+T_{\Delta\phi/2}h+iT_{[\partial_t\sqrt\lambda]_{\le1}}(w|_\Gamma),\\
\displaybreak[0]
\mathcal{\tilde S}&=\mathcal S_h+i|\nabla|^{1/2}\mathcal S_w
=(\ref{Sh})+i|\nabla|^{1/2}(\ref{Sw}),\\
\mathcal C&=\mathcal{\tilde C}_1+\mathcal{\tilde C}_2
+T_{\sqrt a-1}(\mathcal Q|_\Gamma+\mathcal S_h)+iT_{\sqrt\lambda-|\zeta|^{1/2}}\mathcal S_w+T_{\sqrt a}\mathcal C_h+iT_{\sqrt\lambda}\mathcal C_w\\
\displaybreak[0]
&+T_{\partial_t\sqrt a-\Delta\phi/2}h+iT_{\partial_t\sqrt\lambda-[\partial_t\sqrt\lambda]_{\le1}}(w|_\Gamma),\\
\mathcal{\tilde C}_1&=[-T_{\sqrt a-1}E(\sqrt\lambda,\sqrt\lambda)
-E(\sqrt{|\zeta|},\sqrt\lambda-[\sqrt\lambda]_{\le1})
-E(\sqrt\lambda-[\sqrt\lambda]_{\le1},\sqrt{|\zeta|})\\
&-E(\sqrt\lambda-\sqrt{|\zeta|},\sqrt\lambda-\sqrt{|\zeta|})
+E(\sqrt a-1,\sqrt\lambda)T_{\sqrt\lambda-|\zeta|^{1/2}}\\
&+E(\sqrt a-1,\sqrt\lambda-\sqrt{|\zeta|})|\nabla|^{1/2}
+E(\sqrt a-1+|\nabla h|/2,\sqrt{|\zeta|})|\nabla|^{1/2}](w|_\Gamma)\\
&+i[T_{\sqrt\lambda}E(\sqrt a-1,\sqrt a-1)-E(\sqrt\lambda,\sqrt a-1)T_{\sqrt a-1}\\
\displaybreak[0]
&-E(\sqrt\lambda-\sqrt{|\zeta|},\sqrt a-1)-E(\sqrt{|\zeta|},\sqrt a-1+|\nabla h|/2)]h,\\
\mathcal{\tilde C}_2&=-iT_{\sqrt a-1}E(\zeta,V)h-iE(\zeta,V-\nabla\phi)h
+i[T_{V\cdot\zeta},T_{\sqrt a-1}]h\\
&+T_{\sqrt\lambda-|\zeta|^{1/2}}E(V,\zeta)(w|_\Gamma)+[T_{\sqrt\lambda-|\zeta|^{1/2}},T_{V\cdot\zeta}](w|_\Gamma).
\end{align*}

By Lemma \ref{para2diff} (i), $T_{\sqrt{a\lambda}+V\cdot\zeta}$ is self-adjoint,
so $\langle T_{\sqrt{a\lambda}+V\cdot\zeta}f,f \rangle\in\R$.
Now we can decompose $d\mathcal E/dt$ accordingly:
\[
\frac{d}{dt}\mathcal E
=2\Re\langle (\partial_t+iT_{\sqrt{a\lambda}}+iT_{V\cdot\zeta})P_{\ge0}\langle\nabla\rangle^N\mathcal U,
P_{\ge0}\langle\nabla\rangle^N\mathcal U \rangle
=2(\mathcal E_Q+\mathcal E_{\tilde Q}+\mathcal E_S+\mathcal E_4),
\]
where (note that $[|\nabla|^{1/2},P_{\ge0}\langle\nabla\rangle^N]=0$)
\begin{align*}
\mathcal E_Q&=\Re\langle P_{\ge0}\langle\nabla\rangle^N\mathcal Q|_\Gamma,P_{\ge0}\langle\nabla\rangle^NU \rangle,\\
\mathcal E_{\tilde Q}&=\Re\langle[iT_{-|\zeta|^{1/2}|\nabla|h/2+[\sqrt\lambda]_1+\zeta\cdot\nabla\phi},P_{\ge0}\langle\nabla\rangle^N]\mathcal U
+P_{\ge0}\langle\nabla\rangle^N\mathcal{\tilde Q},P_{\ge0}\langle\nabla\rangle^N\mathcal U\rangle,\\
\mathcal E_S&=\Re\langle P_{\ge0}\langle\nabla\rangle^N\mathcal{\tilde S},P_{\ge0}\langle\nabla\rangle^NU \rangle,\\
\mathcal E_4&=\Re\langle P_{\ge0}\langle\nabla\rangle^N(\mathcal Q|_\Gamma+\mathcal{\tilde S}),P_{\ge0}\langle\nabla\rangle^N(\mathcal U-U) \rangle
+\Re\langle P_{\ge0}\langle\nabla\rangle^N\mathcal C,P_{\ge0}\langle\nabla\rangle^N\mathcal U\rangle\\
&+\Re\langle[iT_{(\sqrt a-1)(\sqrt\lambda-|\zeta|^{1/2})+(\sqrt a-1+|\nabla|h/2)|\zeta|^{1/2}+\sqrt\lambda-[\sqrt\lambda]_{\le1}},P_{\ge0}\langle\nabla\rangle^N]\mathcal U,P_{\ge0}\langle\nabla\rangle^N\mathcal U\rangle.
\end{align*}

\subsection{Bounding the quartic energy}
\begin{proposition}\label{E4}
If $\|h\|_{H^N}+\|U\|_{H^6}$ is sufficiently small then
\begin{align*}
|\mathcal E_4|&\lesssim \inf_{k<0}C_{k,4}[U]\|U\|_{C_*^5}\|\tilde U\|_{H^N}^2,
\end{align*}
where $C_{k,4}[\cdot]$ is defined in (\ref{Ckr-def}).
\end{proposition}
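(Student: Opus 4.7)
The plan is to split $\mathcal E_4$ into its three summands and bound each via Cauchy--Schwarz, reducing matters to $H^N$ estimates on the various remainder terms. The overall aim is that every resulting inner product produces two $H^N$ factors and two low-regularity decay factors (of type $\|U\|_{C_*^r}$ or $C_{k,4}[U]$), matching the right-hand side of the claim. Throughout I will freely use $\|\mathcal U\|_{H^N}\approx\|\tilde U\|_{H^N}$ from Proposition \ref{E-HN}.

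For the first summand, Cauchy--Schwarz reduces matters to bounding $\|\mathcal Q|_\Gamma+\mathcal{\tilde S}\|_{H^N}$ and $\|\mathcal U-U\|_{H^N}$. The first is controlled by Proposition \ref{QS-bound} applied with $s=N-1/2$ (after taking the trace on $\Gamma$), together with Lemma \ref{PkH-Lp}(iii) applied to the $H(\cdot,\cdot)$ remainders appearing in $\mathcal S_h$ and $|\nabla|^{1/2}\mathcal S_w$, yielding a bound of the form $\|U\|_{C_*^r}\|\tilde U\|_{H^N}$ for a small $r$. For the second factor I split $\mathcal U-U=(\mathcal U-\tilde U)+(\tilde U-U)$: the first summand is estimated as in the proof of Proposition \ref{E-HN} (giving (\ref{cal-U-tld-U-HN})), and the second equals $-i|\nabla|^{1/2}T_Bh$, whose $H^N$ norm is controlled by Lemma \ref{Taf-Lp}(ii) combined with the $L^\infty$-estimate on $B$ coming from Proposition \ref{Taylor1}(i). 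Multiplying yields the desired quartic bound for this piece.

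For the second summand $\Re\langle P_{\ge0}\langle\nabla\rangle^N\mathcal C,P_{\ge0}\langle\nabla\rangle^N\mathcal U\rangle$ I estimate $\|\mathcal C\|_{H^N}$ term by term. The pieces $T_{\sqrt a}\mathcal C_h$ and $iT_{\sqrt\lambda}\mathcal C_w$ inherit the $C_{k,4}[U]\,\|U\|_{C_*^r}\,\|\tilde U\|_{H^N}$ structure directly from Propositions \ref{ht-paralin} and \ref{wt-paralin} (with $s=N-1/2$) together with Lemma \ref{Taf-Lp}(ii), and are the origin of the $\inf_{k<0}C_{k,4}[U]$ factor in the final bound. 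The mixed terms $T_{\sqrt a-1}(\mathcal Q|_\Gamma+\mathcal S_h)$, $iT_{\sqrt\lambda-|\zeta|^{1/2}}\mathcal S_w$, $T_{\partial_t\sqrt a-\Delta\phi/2}h$ and $iT_{\partial_t\sqrt\lambda-[\partial_t\sqrt\lambda]_{\le1}}(w|_\Gamma)$ are genuinely cubic because the relevant symbols vanish to at least second order in $U$ by (\ref{a-1-Cr2}), (\ref{l-l1-Loo}), and (\ref{Loo2}); the $E$-type remainders collected in $\mathcal{\tilde C}_1$ and $\mathcal{\tilde C}_2$ are handled by Lemmas \ref{Eaf-Lp} and \ref{E1f-Lp}, which provide the required one- or two-derivative gain relative to the nominal symbol orders and so absorb the $H^N$ loss.

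For the third summand, the inner symbol $m=(\sqrt a-1)(\sqrt\lambda-|\zeta|^{1/2})+(\sqrt a-1+|\nabla|h/2)|\zeta|^{1/2}+\sqrt\lambda-[\sqrt\lambda]_{\le1}$ is quadratic in $U$ by (\ref{a-1-Cr2}), (\ref{l-l1-Loo}), and (\ref{Loo2}), so the commutator $[iT_m,P_{\ge0}\langle\nabla\rangle^N]$ loses at most one $x$-derivative and is estimated via Lemmas \ref{Eaf-Lp} and \ref{E1f-Lp}; pairing with $P_{\ge0}\langle\nabla\rangle^N\mathcal U$ in $L^2$ produces the desired quartic bound. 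The main obstacle will be bookkeeping: for each piece of $\mathcal C$ and of $m$, I must verify that after accounting for derivative losses in the paradifferential products and commutators, exactly $N$ derivatives are absorbed into two $H^N$ factors of $\tilde U$, with the remaining structure matching the $\inf_{k<0}C_{k,4}[U]\,\|U\|_{C_*^5}$ prefactor; the delicate part is that $C_{k,4}[U]$ arises only from the $\mathcal C_h$ piece, so every other cubic contribution must be crudely upper-bounded by $\|U\|_{C_*^4}^2\le\inf_{k<0}C_{k,4}[U]\cdot\|U\|_{C_*^4}$ to fit into the claimed form.
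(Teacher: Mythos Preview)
Your overall structure matches the paper's proof closely, and the handling of the second and third summands is essentially correct. There is, however, a genuine gap in your treatment of the first summand.

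You propose to bound $\langle P_{\ge0}\langle\nabla\rangle^N(\mathcal Q|_\Gamma+\mathcal{\tilde S}),\,P_{\ge0}\langle\nabla\rangle^N(\mathcal U-U)\rangle$ by a symmetric Cauchy--Schwarz, controlling $\|\mathcal Q|_\Gamma+\mathcal{\tilde S}\|_{H^N}$ and $\|\mathcal U-U\|_{H^N}$ separately. The first factor is fine, but the second is not: the piece $\tilde U-U=-i|\nabla|^{1/2}T_Bh$ satisfies, via Lemma~\ref{Taf-Lp}(ii),
\[
\||\nabla|^{1/2}T_Bh\|_{H^N}\lesssim\|B\|_{L^\infty}\|h\|_{H^{N+1/2}},
\]
and $\|h\|_{H^{N+1/2}}$ is \emph{not} available under the bootstrap hypothesis, which only controls $\|\tilde U\|_{H^N}$ (and hence $\|h\|_{H^N}$). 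The half-derivative loss coming from $|\nabla|^{1/2}$ on the high-frequency factor $h$ cannot be absorbed here.

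The paper fixes this by an \emph{asymmetric} duality pairing: it puts $H^{N+1/2}$ on the semilinear factor, showing
\[
\|P_{\ge0}(\mathcal Q|_\Gamma+\mathcal{\tilde S})\|_{H^{N+1/2}}\lesssim\|U\|_{C_*^4}\|U\|_{H^{N-1}},
\]
and only $H^{N-1/2}$ on $\mathcal U-U$, which then requires merely $\|h\|_{H^N}$ via (\ref{U-tld-U-HN}) with $s=N-1/2$. The semilinear terms $\mathcal Q|_\Gamma$ and $\mathcal{\tilde S}$ can indeed absorb the extra half-derivative (Proposition~\ref{QS-bound} and Lemma~\ref{PkH-Lp}(iii) give the requisite smoothing), so the shift costs nothing there. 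Your argument needs exactly this adjustment to close.
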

\begin{proof}
By Proposition \ref{U-tld=U}, $\|U\|_{H^{N-1/2}}$ and $\|\tilde U\|_{H^{N-1/2}}$ are interchangeable.

For the first term in $\mathcal E_4$, by (\ref{U-tld-U-HN}) (with $s=N-1/2$), (\ref{cal-U-tld-U-HN}) and (\ref{Gh-Cr}) it suffices to show
\begin{equation}\label{QS-HN}
\|P_{\ge0}(\mathcal Q|_\Gamma+\mathcal{\tilde S})\|_{H^{N+1/2}}\lesssim \|U\|_{C_*^4}\|U\|_{H^{N-1}}.
\end{equation}
By Proposition \ref{QS-bound} with $s=N$,
this bound holds for $(\mathcal{Q+S})|_\Gamma$.
By Lemma \ref{PkH-Lp} (iii) with $m=2$, it also holds for $H(\nabla h,\nabla\phi)$, and hence for $\mathcal S_h$ by (\ref{Sh}).
By Lemma \ref{PkH-Lp} (iii), (\ref{Sw}), (\ref{Gh-Cr}) and (\ref{Gh-Hs}),
\begin{equation}\label{Sw-HN}
\begin{aligned}
\|\mathcal S_w\|_{H^{N+1}}
&\lesssim \|(B,V)\|_{W^{3,\infty}}\|(B,V)\|_{H^{N-2}}\\
&\lesssim \||\nabla|^{1/2}\phi\|_{C_*^4}\||\nabla|^{1/2}\phi\|_{H^{N-3/2}}
\lesssim \|U\|_{C_*^4}\|U\|_{H^{N-1}}
\end{aligned}
\end{equation}
and the desired bound for $\mathcal S_w$ follows.

For the second term in $\mathcal E_4$, by (\ref{cal-U-tld-U-HN}) it suffices to show
\begin{equation}\label{C-HN}
\|P_{\ge0}\mathcal C\|_{H^N}\lesssim \inf_{k<0}C_{k,4}[U]\|U\|_{C_*^5}\|\tilde U\|_{H^N}.
\end{equation}
The desired bound for $\mathcal{\tilde C}_1$ follows from Lemma \ref{Taf-Lp} (ii), Lemma \ref{Eaf-Lp} (ii), (\ref{a-1-Loo}), (\ref{l-|z|-Loo}), (\ref{l-l1-Loo}) and (\ref{Loo2}).
To get the desired bound for $\mathcal{\tilde C}_2$, we also need (\ref{Gh-Cr}) and (\ref{Gh-Cr2}) to control the $W^{1,\infty}$ norms of $V$ and $V-\nabla\phi$.

The desired bound for $T_{\sqrt a-1}(\mathcal{Q+S})|_\Gamma$ follows from Lemma \ref{Taf-Lp} (ii), (\ref{a-1-Loo}) and the real part of (\ref{QS-HN}).
The desired bound for $T_{\sqrt\lambda-|\zeta|^{1/2}}\mathcal S_w$ follows from Lemma \ref{Taf-Lp} (ii), (\ref{l-|z|-Loo}) and (\ref{Sw-HN}).
The desired bound for $T_{\sqrt a}\mathcal C_h$ follows from Lemma \ref{Taf-Lp} (ii), (\ref{a-1-Loo}) and Proposition \ref{ht-paralin}.
The desired bound for $T_{\sqrt\lambda}\mathcal C_w$ follows from Lemma \ref{Taf-Lp} (ii), (\ref{l-|z|-Loo}) and (\ref{Cw-bound}).
The desired bounds for $T_{\partial_t\sqrt a-\Delta\phi/2}h$ and
$T_{\partial_t\sqrt\lambda-[\partial_t\sqrt\lambda]_{\le1}}(w|_\Gamma)$ follow from Lemma \ref{Taf-Lp} (ii) and (\ref{Loo2}).
Now all the terms in $\mathcal C$ have been controlled as desired.

Finally, the bound for the third term in $\mathcal E_4$ follows from Lemma \ref{Eaf-Lp} (ii), (\ref{a-1-Loo}), (\ref{l-|z|-Loo}), (\ref{l-l1-Loo}) and (\ref{Loo2}).
\end{proof}

\subsection{Bounding the semilinear energy}
\begin{proposition}\label{Es}
If $\|h\|_{H^N}$ is suffciently small then
\begin{align*}
\left| \int_0^t \mathcal E_S(s)ds \right|
&\lesssim \|\tilde U\|_{L^\infty([0,t])H^N}^3
+\|U\|_{L^2([0,t])C_*^4}\|\inf_{k<0}C_{k,4}[U]\|_{L^2([0,t])}\|\tilde U\|_{L^\infty([0,t])H^N}^2,
\end{align*}
where $C_{k,4}[\cdot]$ is defined in (\ref{Ckr-def}).
\end{proposition}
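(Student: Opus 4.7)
The strategy is a time-integration-by-parts (normal form) argument. After expanding the Coifman--Meyer-type remainders composing $\mathcal{\tilde S}$ (namely $H(h,\Delta\phi)$, $|\nabla|H(h,|\nabla|\phi)$, $H(\nabla h,\nabla\phi)$, and $|\nabla|^{1/2}(H(B,B)-H(V,V))$) in terms of $U_\pm = h\pm i|\nabla|^{1/2}\phi$, I would write
\begin{align*}
\mathcal E_S(s) = \Re\sum_{\epsilon_1,\epsilon_2\in\{+,-\}}\iint m_{\epsilon_1,\epsilon_2}(\xi,\eta)\,\widehat{U_{\epsilon_1}}(\xi-\eta,s)\,\widehat{U_{\epsilon_2}}(\eta,s)\,\overline{\widehat{U_+}(\xi,s)}\,d\xi\,d\eta,
\end{align*}
where the symbols $m_{\epsilon_1,\epsilon_2}$ absorb the $\varphi_{\ge 0}(\xi)\langle\xi\rangle^{2N}$ factor, the half-derivative from $|\nabla|^{1/2}\mathcal S_w$, and the multiplier of each $H$. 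By the null structure of the Coifman--Meyer remainder, $m_{\epsilon_1,\epsilon_2}$ vanishes whenever one of $\xi-\eta$ or $\eta$ is much smaller than the other two; on the complementary region the phase $\Phi_{\epsilon_1,\epsilon_2}(\xi,\eta) = \Lambda(\xi)-\epsilon_1\Lambda(\xi-\eta)-\epsilon_2\Lambda(\eta)$ is bounded below in modulus by a positive multiple of $\min(|\xi|,|\eta|,|\xi-\eta|)^{1/2}$ by strict concavity of $\Lambda(\xi)=\sqrt{|\xi|}$, and the quotient $m/\Phi$ is a symbol in the same Coifman--Meyer class with $S^\infty$ bounds controlled via Lemma~\ref{Soo-Cn}.

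Writing $U_\pm(s) = e^{\mp is\Lambda}\Upsilon_\pm(s)$ makes the oscillatory factor $e^{is\Phi_{\epsilon_1,\epsilon_2}}$ explicit, and an integration by parts in $s$ yields
\begin{align*}
\int_0^t \mathcal E_S(s)\,ds
&= \Biggl[\Re\sum_{\epsilon_1,\epsilon_2}\iint\frac{m_{\epsilon_1,\epsilon_2}}{i\Phi_{\epsilon_1,\epsilon_2}}\,\widehat{U_{\epsilon_1}}\widehat{U_{\epsilon_2}}\overline{\widehat{U_+}}\,d\xi\,d\eta\Biggr]_{s=0}^{s=t}\\
&\quad -\Re\sum_{\epsilon_1,\epsilon_2}\int_0^t\!\!\iint\frac{m_{\epsilon_1,\epsilon_2}}{i\Phi_{\epsilon_1,\epsilon_2}}\,\partial_s\!\left(\widehat{U_{\epsilon_1}}\widehat{U_{\epsilon_2}}\overline{\widehat{U_+}}\right)d\xi\,d\eta\,ds.
\end{align*}
The linear ($-i\epsilon_j\Lambda$) parts of $\partial_s U_{\epsilon_j}$ recombine with the phase and cancel, leaving an interior integrand that is genuinely quartic in $U$. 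The boundary terms are cubic; combining Lemma~\ref{paraprod} with the $S^\infty$ bound on $m/\Phi$, Sobolev embedding (there is ample room since $N\ge 11$), and Proposition~\ref{U-tld=U} gives the $\|\tilde U\|_{L^\infty H^N}^3$ contribution.

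For the quartic interior term, I would distribute the four factors in the pattern $L^\infty_s L^2_x\times L^\infty_s L^2_x\times L^2_s L^\infty_x\times L^2_s L^\infty_x$, placing the two highest-frequency factors in $L^\infty H^N$ to produce $\|\tilde U\|_{L^\infty H^N}^2$, and bounding the remaining two factors in $L^2_s L^\infty_x$. One of these $L^2 L^\infty$ factors contributes $\|U\|_{L^2 C_*^4}$ directly. For the other, at each time $s$ I split $U(s)=P_{\le k(s)}U(s)+P_{>k(s)}U(s)$ with a time-dependent dyadic cutoff $k(s)<0$, bounding the low part by Bernstein as $2^{k(s)}\|U(s)\|_{L^2}$ and the high part by $|k(s)|\|U(s)\|_{C_*^4}$ after a dyadic sum, and choosing $k(s)$ as the pointwise optimizer to reconstruct $\inf_{k<0}C_{k,4}[U(s)]$; taking $L^2_s$ then produces $\|\inf_{k<0}C_{k,4}[U]\|_{L^2}$, matching the second term in the stated bound.

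The main obstacle will be verifying that the derivative losses in $\mathcal{\tilde S}$ (two full derivatives in $\mathcal S_h$, plus the half derivative in $|\nabla|^{1/2}\mathcal S_w$), combined with the loss of up to $|\xi|^{1/2}$ from dividing by $\Phi$ in the critical near-resonant regime, can be absorbed by the cutoff $\varphi_{\ge 0}(\xi)$ together with the null-form vanishing of $m_{\epsilon_1,\epsilon_2}$, so that $m/\Phi$ still enjoys uniform dyadic $S^\infty$ bounds of size $\langle\xi\rangle^{2N}$ times a manageable constant. The slack built into the hypothesis $N\ge 11$ is what makes this work; and it is precisely this step that justifies the use of Lemma~\ref{paraprod} in the form above without losing more than the two $C_*^4$-type derivatives that are present on the right-hand side.
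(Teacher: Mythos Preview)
Your normal-form strategy is correct and is exactly what the paper does: write $\mathcal E_S$ as a sum of trilinear expressions $\mathcal E_S^{\mu\nu}$ in $U_\pm$, divide by the phase $\Phi_{\mu\nu}$, integrate by parts in $s$, and bound boundary and bulk separately. The paper's Lemma~\ref{ps-1/F-Soo} supplies precisely the $S^\infty$ control on $s/\Phi_{\mu\nu}$ that you anticipate, and the boundary terms go through as you say.

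Where you and the paper part ways is in the bulk. The paper does \emph{not} expand the bulk as a genuine $4$-linear form in $U$. After the $\partial_s$ hits one factor, what remains is $I_S^{\mu\nu}[N_\mu,U_\nu,U]$, $I_S^{\mu\nu}[U_\mu,N_\nu,U]$, or $I_S^{\mu\nu}[U_\mu,U_\nu,N]$, and the paper keeps $N$ as a black box: for the first two it uses $\|N\|_{C_*^2}\lesssim\|U\|_{C_*^4}^2$, and for the third it uses the elliptic estimate (\ref{N-HN}), $\|N\|_{H^{N-2}}\lesssim C_{k,2}[U]\|\tilde U\|_{H^N}$, which in turn rests on Proposition~\ref{u-Hs2} via (\ref{BV-Hs2-sharp}). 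This is also why the paper first replaces $B,V$ by $|\nabla|\phi,\nabla\phi$ in $\mathcal S_w$ \emph{before} doing the normal form---that replacement error is itself estimated using (\ref{BV-Hs2-sharp}) and is another source of the $C_{k,r}$ constant. You fold both of these steps into ``expand in terms of $U_\pm$'' and ``genuinely quartic in $U$'', but $B$, $V$ and $N$ all involve the Dirichlet--to--Neumann operator and are not simple bilinear forms in $U$; making your quartic picture rigorous requires splitting off $N_2$ and separately controlling $N_3$ and the $B-|\nabla|\phi$, $V-\nabla\phi$ errors, at which point you will be invoking the same elliptic machinery. Your frequency-splitting trick then amounts to the trivial inequality $\|U\|_{L^\infty}\lesssim C_{k,4}[U]$ applied to one isolated low-frequency factor---valid, but the $C_{k,r}$ in the paper's argument is not really coming from there; it is built into the estimate of $N$ itself.
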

\begin{proof}
Again, $\|U\|_{H^{N-1/2}}$ and $\|\tilde U\|_{H^{N-1/2}}$ are interchangeable.

By (\ref{Gh-Cr}), $\|(B,V)\|_{W^{3,\infty}}\lesssim\|(B,V)\|_{C_*^{7/2}}\lesssim\|U\|_{C_*^4}$. By Proposition \ref{u-Hs} (ii), Proposition \ref{u-Hs2} (ii) and (\ref{BV-u}), for any integer $k<0$,
\begin{align}
\nonumber
\|(B-|\nabla|\phi,V-\phi)\|_{H^{N-3/2}}
&\lesssim C_{k,2}[U](\||\nabla|^{1/2}\phi\|_{H^{N-1}}+\|\nabla h\|_{H^{N-1}})\\
&\lesssim C_{k,2}[U]\|\tilde U\|_{H^N}
\label{BV-Hs2-sharp}
\end{align}
so by Lemma \ref{PkH-Lp} (iii) (with $m=3$),
\[
\||\nabla|^{1/2}(\mathcal S_w-\mathcal S_w[|\nabla|\phi,\nabla\phi])\|_{H^{N+1}}
\lesssim \|U\|_{C_*^4}\inf_{k<0}C_{k,2}[U]\|\tilde U\|_{H^N}.
\]
Since we also have $\|U\|_{H^{N-1}}\lesssim\|\tilde U\|_{H^N}$,
we can replace $\mathcal S_w$ by $\mathcal S_w[|\nabla|\phi,\nabla\phi]$ in the expression of $\mathcal{\tilde S}$ with acceptable error.

Recall $U_+=U$ and $U_-=\bar U$. From the expression of $\mathcal S$
(see (\ref{S0}), (\ref{QS}), (\ref{Sh}) and (\ref{Sw})),
it follows that $\mathcal E_S$ is a linear combination of terms of the form $\Re \mathcal E_S^{\mu\nu}$, where
\begin{align*}
\mathcal E_S^{\mu\nu}
&=C^2\iint s(\xi_1,\xi_2)\hat U_\mu(\xi_1)\hat U_\nu(\xi_2)\langle\xi_1+\xi_2\rangle^{2N}\overline{\hat U(\xi_1+\xi_2)}d\xi_1 d\xi_2, & \mu,\nu&\in\{+,-\},
\end{align*}
\begin{align}
\label{s-mult}
s(\xi_1,\xi_2)&\in\left( 1-\varphi_{\le-10}\left( \frac{|\xi_1|}{|\xi_1+2\xi_2|} \right)-\varphi_{\le-10}\left( \frac{|\xi_2|}{|\xi_2+2\xi_1|} \right) \right)n_1(\xi_1)n_2(\xi_2)n_3(\xi_1+\xi_2)\\
\nonumber
&\times\{1,|\xi_1|p(\xi_1,\xi_2),|\xi_2|p(\xi_1,\xi_2)\},\\
\label{p-mult}
p(\xi_1,\xi_2)&=(|\xi_2|+|\xi_1+\xi_2|)^{-1},
\end{align}
$n_j\in S^{m_j}_{1,0}$ ($1\le j\le 3$), $\sum_{j=1}^3 m_j=3/2$ and
$\supp n_3\subset\supp\varphi_{\ge0}$. Let
\begin{align*}
\Phi_{\mu\nu}(\xi_1,\xi_2)
&=\sqrt{|\xi_1+\xi_2|}-\mu\sqrt{|\xi_1|}-\nu\sqrt{|\xi_2|},\\
I_S^{\mu\nu}[f_1,f_2,f_3]
&=C^2\iint \frac{s(\xi_1,\xi_2)}{\Phi_{\mu\nu}(\xi_1,\xi_2)}\hat f_1(\xi_1)\hat f_2(\xi_2)\langle\xi_1+\xi_2\rangle^{2N}\overline{\hat f_3(\xi_1+\xi_2)}d\xi_1 d\xi_2,\\
I_S^{\mu\nu}&=I_S^{\mu\nu}[U_\mu,U_\nu,U].
\end{align*}

By (\ref{ht=Gh-phi}) and (\ref{phi-t}), the evolution equation for $U$ is
\begin{equation}\label{Ut}
U_t=-i|\nabla|^{1/2}U+N,\ 
N=(G(h)-|\nabla|)\phi+\frac i2|\nabla|^{1/2}((1+|\nabla h|^2)B^2-|\nabla\phi|^2).
\end{equation}
Let $N_+=N$ and $N_-=\bar N$. Then
\begin{align*}
\frac{dI_S^{\mu\nu}}{dt}&=I_S^{\mu\nu}[(U_\mu)_t,U_\nu,U]+I_S^{\mu\nu}[U_\mu,(U_\nu)_t,U]+I_S^{\mu\nu}[U_\mu,U_\nu,U_t]\\
&=\mathcal E_S^{\mu\nu}+I_S^{\mu\nu}[N_\mu,U_\nu,U]+I_S^{\mu\nu}[U_\mu,N_\nu,U]+I_S^{\mu\nu}[U_\mu,U_\nu,N].
\end{align*}
Integration by parts in time gives
\begin{align}
\label{Es-IBP1}
\int_0^t \mathcal E_S^{\mu\nu}(s)ds&=I_S^{\mu\nu}(t)-I_S^{\mu\nu}(0)\\
\label{Es-IBP2}
&-\int_0^t (I_S^{\mu\nu}[N_\mu,U_\nu,U]+I_S^{\mu\nu}[U_\mu,N_\nu,U])(s)ds\\
\label{Es-IBP3}
&-\int_0^t I_S^{\mu\nu}[U_\mu,U_\nu,N](s)ds.
\end{align}

To bound $I_S^{\mu\nu}$, we need to control the norms of the symbols $p$, $s$ and $\Phi_{\mu\nu}^{-1}$.
\begin{lemma}\label{ps-1/F-Soo}
(i) For $k_1$, $k_2$, $k_3\in\Z$ we have
\begin{align*}
\|p\|_{S^\infty_{k_1,k_2;k_3}}&\lesssim 2^{-\max(k_1,k_2)}, &
\|s\|_{S^\infty_{k_1,k_2;k_3}}&\lesssim 2^{3\max(k_1,k_2)/2}.
\end{align*}

(ii) For $L\ge0$ we have
\[
|\nabla^L\Phi_{\mu\nu}^{-1}|\lesssim_L \min(|\xi_1|,|\xi_2|,|\xi_1+\xi_2|)^{-L-1/2}.
\]

(iii) For $k_1$, $k_2$, $k_3\in\Z$ we have
\[
\|\Phi_{\mu\nu}^{-1}\|_{S^\infty_{k_1,k_2;k_3}}\lesssim 2^{-\min(k_1,k_2,k_3)/2}.
\]

(iii') If in addition to (iii) we have $k_1\le k_2-3$ and $\nu=-$ then
\[
\|\Phi_{\mu\nu}^{-1}\|_{S^\infty_{k_1,k_2;k_3}}\lesssim 2^{-k_2/2}.
\]
\end{lemma}
\begin{proof}
See section \ref{MulEst}.
\end{proof}

Now we bound (\ref{Es-IBP1}), (\ref{Es-IBP2}) and (\ref{Es-IBP3}).
By Lemma \ref{paraprod}, Lemma \ref{ps-1/F-Soo} (i) and (iii),
\[
|I_S^{\mu\nu}[P_{k_1}f_1,P_{k_2}f_2,P_{k_3}f_3]|
\lesssim 2^{2Nk_3^++3k_2/2-k_3/2}\|P_{k_1}f_1\|_{L^\infty}\|P_{k_2}f_2\|_{L^2}\|P_{k_3}f_3\|_{L^2}.
\]
Thanks to the factors $n_3$ and $\varphi_{\le-10}$,
this term vanishes unless $k_3\ge-1$ and $k_1, k_2\ge k_3-20$,
in which case, using Bernstein's inequality, it can be bounded by
\begin{align*}
&2^{-(N-1/2)|k_2-k_3|}\|P_{k_1}f_1\|_{C_*^{2+m}}\|P_{k_2}f_2\|_{H^{N-1/2}}\|P_{k_3}f_3\|_{H^{N-1/2-m}}\\
\lesssim &2^{-(N-1/2)|k_2-k_3|}\|f_1\|_{C_*^{2+m}}\|P_{k_2}f_2\|_{H^{N-1/2}}\|P_{k_3}f_3\|_{H^{N-1/2-m}}.
\end{align*}
A similar bound with $f_1$ and $f_2$ swapped holds.
The additive restriction of frequencies implies $|k_1-k_2|=O(1)$,
so summing over $k_1$, $k_2$ and $k_3$ using the Cauchy--Schwarz inequality gives
\begin{equation}\label{Is}
\begin{aligned}
|I_S^{\mu\nu}[f_1,f_2,f_3]|
&\lesssim \|f_1\|_{C_*^{2+m}} \sum_{k,l\in\Z} 2^{-(N-1/2)|l|}\|P_{k+l}f_2\|_{H^{N-1/2}}\|P_kf_3\|_{H^{N-1/2-m}}\\
&\lesssim \|f_1\|_{C_*^{2+m}} \|f_2\|_{H^{N-1/2}}\|f_3\|_{H^{N-1/2-m}}.
\end{aligned}
\end{equation}

Recall that $\|U\|_{H^{N-1/2}}\approx\|\tilde U\|_{H^{N-1/2}}$.
By (\ref{Is}) with $f_1=U_\mu$, $f_2=U_\nu$, $f_3=U$ and $m=0$,
\[
|(\ref{Es-IBP1})|\lesssim \|U(t)\|_{H^{N-1/2}}^3+\|U(0)\|_{H^{N-1/2}}^3
\lesssim \|\tilde U\|_{L^\infty([0,t])H^N}^3.
\]
By (\ref{Ut}), (\ref{Gh-Cr2}) and (\ref{Gh-Cr}), $\|N\|_{C_*^2}\lesssim \|U\|_{C_*^4}^2$, so by (\ref{Is}) with $f_1=N_\mu$, $f_2=U_\nu$, $f_3=U$ and $m=0$ (and its symmetric version),
\[
|(\ref{Es-IBP2})|\lesssim \|U\|_{L^2([0,t])C_*^4}^2\|\tilde U\|_{L^\infty([0,t])H^N}^2.
\]
By Sobolev multiplication (applied to (\ref{Ut}) and (\ref{Gh-phi})), (\ref{BV-Hs2-sharp}), (\ref{Gh-Hs}) and (\ref{Gh-Cr}),
\begin{equation}\label{N-HN}
\|N\|_{H^{N-2}}\lesssim \inf_{k<0}C_{k,2}[U]\|\tilde U\|_{H^N}.
\end{equation}
Then by (\ref{Is}) with $f_1=U_\mu$, $f_2=U_\nu$, $f_3=N$ and $m=3/2$,
\[
|(\ref{Es-IBP3})|\lesssim \|U\|_{L^2([0,t])C_*^4}\|\inf_{k<0}C_{k,2}[U]\|_{L^2([0,t])}\|\tilde U\|_{H^N}^2.
\]

Combining the three bounds shows the claim.
\end{proof}

\subsection{Bounding the quasilinear energy $\mathcal E_Q$}
\begin{proposition}\label{Eq}
If $N\ge6$ and $\|h\|_{H^N\cap H^7}$ is suffciently small then
\[
\left| \int_0^t \mathcal E_Q(s)ds \right|
\lesssim \|\tilde U\|_{L^\infty([0,t])H^N}^3
+\|\inf_{k<0}C_{k,6}[U]\|_{L^2([0,t])}^2\|\tilde U\|_{L^\infty([0,t])H^N}^2,\\
\]
where $C_{k,6}[\cdot]$ is defined in (\ref{Ckr-def}).
\end{proposition}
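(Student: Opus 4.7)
The plan is to treat $\mathcal E_Q$ by the same normal-form integration-by-parts scheme used in Proposition \ref{Es} for $\mathcal E_S$: write the cubic integrand as the time derivative of a quadratic corrector plus quartic errors, then trade the cubic time integral for a cubic boundary term and quartic terms in which one input has been replaced by the nonlinearity $N$ from (\ref{Ut}). First I would read off from (\ref{QS}) and (\ref{R1}) the bilinear Fourier-multiplier representation of $\mathcal Q|_\Gamma$ as an operator in $(h, |\nabla|^{1/2}\phi)$, with symbol $q(\xi_1,\xi_2)$ that (i) lives in the paradifferential cone $\{|\xi_1|\le 2^{-10}|\xi_2|\}$ (and its reflection), inherited from the cutoff in the definition of $T_a$, and (ii) vanishes to degree $2$ at low frequency --- the null structure built into $\mathcal R_1$, cf.\ the first paragraph of the proof of Proposition \ref{QS-bound}; the $y'$-integration in (\ref{QS}) further provides a gain of one derivative from $|\nabla|^{-1}$. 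Writing $h = (U_++U_-)/2$ and $|\nabla|^{1/2}\phi = (U_+-U_-)/(2i)$, one decomposes $\mathcal E_Q = \sum_{\mu,\nu\in\{+,-\}}\Re\,\mathcal E_Q^{\mu\nu}$, where each $\mathcal E_Q^{\mu\nu}$ has the same trilinear form as $\mathcal E_S^{\mu\nu}$ in Proposition \ref{Es}, with a new symbol $q_{\mu\nu}$ in place of $s$. Dividing by $i\Phi_{\mu\nu}$ defines correctors $I_Q^{\mu\nu}$, and using (\ref{Ut}) then yields
\[
\int_0^t \mathcal E_Q^{\mu\nu}(s)\,ds = \bigl[I_Q^{\mu\nu}[U_\mu,U_\nu,U]\bigr]_0^t - \int_0^t \bigl\{I_Q^{\mu\nu}[N_\mu,U_\nu,U] + I_Q^{\mu\nu}[U_\mu,N_\nu,U] + I_Q^{\mu\nu}[U_\mu,U_\nu,N]\bigr\}\,ds
\]
exactly as in (\ref{Es-IBP1})--(\ref{Es-IBP3}).

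The heart of the estimate is a trilinear bound for $I_Q^{\mu\nu}$. Combining Lemma \ref{Soo-Cn}(ii) with the null structure (ii) gives a good $S^\infty$ bound on $q_{\mu\nu}$, which combines with $\|\Phi_{\mu\nu}^{-1}\|_{S^\infty_{k_1,k_2;k_3}}\lesssim 2^{-\min(k_1,k_2,k_3)/2}$ from Lemma \ref{ps-1/F-Soo}(iii). Lemma \ref{paraprod} then delivers, for a suitable $m$, an estimate of the form
\[
|I_Q^{\mu\nu}[f_1,f_2,f_3]| \lesssim \|f_1\|_{C_*^{4+m}}\,\|f_2\|_{H^{N-1/2}}\,\|f_3\|_{H^{N-1/2-m}}
\]
plus its symmetric version, but with a residual logarithmic factor at the bottom of the outer frequency spectrum. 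That loss is absorbed by splitting at a threshold $k_*<0$: the range $k_3<k_*$ contributes $\lesssim 2^{k_*}\|U\|_{L^2}$ and the range $k_*\le k_3<0$ contributes $\lesssim |k_*|\,\|U\|_{C_*^6}$, and optimizing over $k_*$ produces the weight $\inf_{k<0}C_{k,6}[U]$. Applied to $(U_\mu,U_\nu,U)$ this bounds the boundary term by $\|\tilde U\|_{H^N}^3$; applied with one input replaced by $N$ and invoking a variant of (\ref{N-HN}) at the appropriate regularity level, one obtains the quartic contribution $\lesssim \|\inf_{k<0}C_{k,6}[U]\|_{L^2([0,t])}^2\,\|\tilde U\|_{L^\infty([0,t])H^N}^2$.

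The main obstacle is verifying the null structure (ii), since (\ref{R1}) only exhibits $\mathcal R_1$ as a sum of four order-$0$ paradifferential operators whose individual low-frequency cancellations are not manifest. One must rewrite each summand --- the two $E$-operators $E(|\zeta|,b_1^{(0)})$, $E(b_1^{(0)},|\zeta|)$, the refined commutator $E_1(\nabla h\cdot\zeta,|\zeta|)$, and the Poisson-bracket term $T_{\{\nabla h\cdot\zeta,|\zeta|\}\varphi_{\le-1}(\zeta)}$ --- in a form exhibiting its order-$2$ vanishing at $\zeta=0$ (exploiting the identity $E_1(a,b) = T_aT_b - T_{ab} - (i/2)T_{\{a,b\}}$ so that the unbalanced symbols $\nabla h\cdot\zeta\cdot|\zeta|$ etc.\ that appear are themselves quadratic in $\zeta$), and then check that the $y'$-integration in (\ref{QS}) interacts favorably with the paradifferential cutoff. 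Once this is in place, the rest of the argument is a direct adaptation of the template of Proposition \ref{Es}.
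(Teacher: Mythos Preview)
Your overall strategy matches the paper's exactly: write $\mathcal E_Q$ as a sum of trilinear forms $\mathcal E_Q^{\mu\nu}$, define correctors $I_Q^{\mu\nu}$ by dividing by $i\Phi_{\mu\nu}$, and integrate by parts in time to produce a cubic boundary term and quartic bulk terms with one input replaced by $N$.

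Where you diverge from the paper is in the symbol analysis and in the origin of the $C_{k,6}$ weight. The paper does \emph{not} extract the null structure term-by-term from (\ref{R1}); instead it quotes the explicit formula (\ref{q-mult}) for the symbol $q$ of $\mathcal Q|_\Gamma$ (taken from (B.42) of \cite{DeIoPaPu}) and proves the sharp bound in Lemma~\ref{q-Soo}, namely $\|q\|_{S^\infty_{k_1,k_2;k_3}}\lesssim 2^{2(k_1+k_1^+)-|k_2|}1_{k_1\le k_2-6}$. Combined with $\|p\|\lesssim 2^{-k_2}$ and $\|\Phi_{\mu\nu}^{-1}\|\lesssim 2^{-k_1/2}$, this yields the trilinear bound (\ref{IQ}),
\[
|I_Q^{\mu\nu}[f_1,f_2,f_3]|\lesssim \|f_1\|_{C_*^4}\,\|f_2\|_{H^{N-2}}\,\|f_3\|_{H^{N-1/2}},
\]
with \emph{no} logarithmic loss: the factor $2^{3k_1/2+2k_1^+}$ is absolutely summable over $k_1\in\Z$ against $\|P_{k_1}f_1\|_{L^\infty}$, and there is no low-frequency issue in $k_2,k_3$ since $\varphi_{\ge0}(\xi_1+\xi_2)$ forces $k_2,k_3\ge -O(1)$. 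Your ``residual logarithmic factor'' and the threshold-splitting device are therefore unnecessary here.

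The weight $\inf_{k<0}C_{k,6}[U]$ in the statement arises entirely from the nonlinearity bounds, and the three $N$-terms must be treated \emph{asymmetrically} because the paradifferential cutoff in $q$ forces $f_1$ to sit at low frequency. When $N$ replaces $f_1$ (the paper's (\ref{EQ-IBP2})) one uses (\ref{N-Cr}), $\|N\|_{C_*^4}\lesssim\|U\|_{C_*^6}^2$; when $N$ replaces $f_2$ or $f_3$ (the paper's (\ref{EQ-IBP3})) one uses (\ref{N-HN}), $\|N\|_{H^{N-2}}\lesssim \inf_{k<0}C_{k,2}[U]\,\|\tilde U\|_{H^N}$, which is where the $H^{N-2}$ slot in (\ref{IQ}) is essential. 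Both contributions are then dominated by $\|\inf_{k<0}C_{k,6}[U]\|_{L^2}^2\|\tilde U\|_{L^\infty H^N}^2$. Your plan to ``invoke a variant of (\ref{N-HN})'' is right for two of the three bulk terms, but you should separate out the low-frequency slot and use (\ref{N-Cr}) there instead.
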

\begin{proof}
$\mathcal E_Q$ is a linear combination of terms of the form $\Re\mathcal E_Q^{\mu\nu}$, where
\begin{align*}
\mathcal E_Q^{\mu\nu}
&=C^2\iint (pq)(\xi_1,\xi_2)\hat U_\mu(\xi_1)\hat U_\nu(\xi_2)\varphi_{\ge0}(\xi_1+\xi_2)\langle\xi_1+\xi_2\rangle^{2N-1/2}\overline{\hat U(\xi_1+\xi_2)}d\xi_1 d\xi_2,
\end{align*}
$\mu,\nu\in\{+,-\}$ and
\begin{align}
\nonumber
p(\xi_1,\xi_2)&=(|\xi_2|+|\xi_1+\xi_2|)^{-1},\\
\nonumber
q(\xi_1,\xi_2)&=\varphi_{\le-10} \left( \frac{|\xi_1|}{|\xi_1+2\xi_2|} \right)\frac{(|\xi_1+\xi_2|-|\xi_2|)^2(|\xi_1+\xi_2|+|\xi_2|)}2\\
&\times\left( \frac{2((\xi_1+\xi_2)\cdot\xi_2-|\xi_1+\xi_2||\xi_2|)}{|\xi_1+2\xi_2|^2}\varphi_{\ge0}\left( \xi_1+\frac{\xi_2}2 \right)+\varphi_{\le-1}\left( \xi_1+\frac{\xi_2}2 \right) \right),
\label{q-mult}
\end{align}
where the expression of $q$ comes from (B.42) of \cite{DeIoPaPu}. Let
\begin{align*}
I_Q^{\mu\nu}[f_1,f_2,f_3]&=C^2\iint \frac{(pq)(\xi_1,\xi_2)}{\Phi_{\mu\nu}(\xi_1,\xi_2)}\hat f_1(\xi_1)\hat f_2(\xi_2)\varphi_{\ge0}(\xi_1+\xi_2)\langle\xi_1+\xi_2\rangle^{2N-1/2}\overline{\hat f_3(\xi_1+\xi_2)}d\xi_1 d\xi_2,\\
I_Q^{\mu\nu}&=I_Q^{\mu\nu}[U_\mu,U_\nu,U].
\end{align*}
Similarly integration by parts in time gives
\begin{align}
\label{EQ-IBP1}
\int_0^t \mathcal E_Q^{\mu\nu}(s)ds&=I_Q^{\mu\nu}(t)-I_Q^{\mu\nu}(0)\\
\label{EQ-IBP2}
&-\int_0^t I_Q^{\mu\nu}[N_\mu,U_\nu,U](s)ds\\
\label{EQ-IBP3}
&-\int_0^t (I_Q^{\mu\nu}[U_\mu,N_\nu,U]+I_Q^{\mu\nu}[U_\mu,U_\nu,N])(s)ds.
\end{align}

To bound $I_S^{\mu\nu}$, we need a bound of the $S^\infty$ norm of $q$.
\begin{lemma}\label{q-Soo}
For $k_1$, $k_2$, $k_3\in\Z$ we have
\[
\|q\|_{S^\infty_{k_1,k_2;k_3}}\lesssim 2^{2k_1+k_2}(2^{2(k_1-k_2)}+1_{k_2\le2})1_{k_1\le k_2-6}
\lesssim 2^{2(k_1+k_1^+)-|k_2|}1_{k_1\le k_2-6}.
\]
\end{lemma}
\begin{proof}
See section \ref{MulEst}.
\end{proof}

Now we bound (\ref{EQ-IBP1}), (\ref{EQ-IBP2}) and (\ref{EQ-IBP3}).
By Lemma \ref{paraprod}, Lemma \ref{ps-1/F-Soo} (i) and (iii), and Lemma \ref{q-Soo}, when $k_2\ge-2$ and $k_1\le k_2-6$ we have
\[
|I_Q^{\mu\nu}[P_{k_1}f_1,P_{k_2}f_2,P_{k_3}f_3]|\lesssim 2^{(2N-1/2)k_3^++3k_1/2+2k_1^+-2k_2}\|P_{k_1}f_1\|_{L^\infty}\|P_{k_2}f_2\|_{L^2}\|P_{k_3}f_3\|_{L^2}.
\]
By the additive restriction of frequencies, $k_3=k_2+O(1)\ge-O(1)$, so
\[
|I_Q^{\mu\nu}[P_{k_1}f_1,P_{k_2}f_2,P_{k_3}f_3]|\lesssim 2^{(2N-5/2)k_3+3k_1/2+2k_1^+}\|P_{k_1}f_1\|_{L^\infty}\|P_{k_2}f_2\|_{L^2}\|P_{k_3}f_3\|_{L^2}.
\]
Summing over $k_3=k_2+O(1)\ge-O(1)$ and $k_1\in\Z$ gives
\begin{equation}\label{IQ}
|I_Q^{\mu\nu}[f_1,f_2,f_3]|\lesssim \|f_1\|_{C_*^4}\|f_2\|_{H^{N-2}}\|f_3\|_{H^{N-1/2}}.
\end{equation}
A similar bound with $f_2$ and $f_3$ swapped holds.

By (\ref{IQ}) with $f_1=U_\mu$, $f_2=U_\nu$ and $f_3=U$,
\[
|(\ref{EQ-IBP1})|\lesssim \|U(t)\|_{H^{N-1/2}}^3+\|U(0)\|_{H^{N-1/2}}^3
\lesssim \|\tilde U\|_{L^\infty([0,t])H^N}^3.
\]
By (\ref{Ut}), (\ref{Gh-Cr2}) and (\ref{Gh-Cr}),
\begin{equation}\label{N-Cr}
\|N\|_{C_*^4}\lesssim \|U\|_{C_*^6}^2.
\end{equation}
so by (\ref{IQ}) with $f_1=N_\mu$, $f_2=U_\nu$ and $f_3=U$,
\[
|(\ref{EQ-IBP2})|\lesssim \|U\|_{L^2([0,t])C_*^6}^2\|\tilde U\|_{L^\infty([0,t])H^N}^2.
\]
By (\ref{IQ}) with $f_1=U_\mu$, $f_2=N_\nu$ and $f_3=U$ (and its symmetric version) and (\ref{N-HN}), (\ref{EQ-IBP3}) satisfies the same bound as (\ref{Es-IBP3}).

Combining the three bounds shows the claim.
\end{proof}

\subsection{Bounding the quasilinear energy $\mathcal E_{\tilde Q}$}
\begin{proposition}\label{Eq-tld}
If $\|h\|_{H^N}+\|U\|_{H^6}$ is sufficiently small then
\[
\left| \int_0^t \mathcal E_{\tilde Q}(s)ds \right|
\lesssim \|\tilde U\|_{L^\infty([0,t])H^N}^3
+\|U\|_{L^2([0,t])C_*^3}\|U\|_{L^2([0,t])C_*^5}\|\tilde U\|_{L^\infty([0,t])H^N}^2.
\]
\end{proposition}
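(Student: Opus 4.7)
The plan is to treat $\mathcal E_{\tilde Q}$ by the same normal-form integration by parts used in Propositions \ref{Es} and \ref{Eq}. First, I would unpack every term of $\mathcal{\tilde Q}$ as well as the commutator $[iT_\sigma, P_{\ge 0}\langle\nabla\rangle^N]\mathcal U$ (with $\sigma=-|\zeta|^{1/2}|\nabla|h/2+[\sqrt\lambda]_1+\zeta\cdot\nabla\phi$) as Fourier-side quadratic forms in $U_\pm$ paired with $\langle\nabla\rangle^{2N-r}\overline U$, obtaining a finite sum of pieces
\[
\mathcal E_{\tilde Q}^{\mu\nu}=C^2\iint \tilde q_{\mu\nu}(\xi_1,\xi_2)\,\hat U_\mu(\xi_1)\hat U_\nu(\xi_2)\,\varphi_{\ge 0}(\xi_1+\xi_2)\langle\xi_1+\xi_2\rangle^{2N-r}\,\overline{\hat U(\xi_1+\xi_2)}\,d\xi_1\, d\xi_2
\]
with $\mu,\nu\in\{+,-\}$, where $r\in\{0,1/2,1\}$ depends on the piece and the symbols $\tilde q_{\mu\nu}$ have low-frequency factor explicitly linear in $(h,|\nabla|^{1/2}\phi)$.

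The decisive structural observation is that every such symbol enjoys a one-derivative gain over a generic quasilinear quadratic multiplier: the $E(\cdot,\cdot)$ and $E(\cdot,\cdot,\cdot)$ operators inside $\mathcal{\tilde Q}$ drop one order by Lemma \ref{Eaf-Lp}(i); the commutator $[iT_\sigma,\langle\nabla\rangle^N]$ drops one order by the Poisson-bracket calculus underlying Lemma \ref{E1f-Lp}; and $T_{\Delta\phi/2}h+iT_{[\partial_t\sqrt\lambda]_{\le 1}}(w|_\Gamma)$ already carry the factor $\partial_t h\sim|\nabla|\phi$ produced by (\ref{ht=Gh-phi}). Together with the inherited paradifferential cutoff $\varphi_{\le -10}(|\xi_1|/|\xi_1+2\xi_2|)$, this will yield $S^\infty$ bounds sufficient to ensure that, after dividing by $\Phi_{\mu\nu}$ via Lemma \ref{ps-1/F-Soo}(iii)--(iii'), the multiplier of
\[
I_{\tilde Q}^{\mu\nu}[f_1,f_2,f_3]=C^2\iint \frac{\tilde q_{\mu\nu}(\xi_1,\xi_2)}{\Phi_{\mu\nu}(\xi_1,\xi_2)}\,\hat f_1(\xi_1)\hat f_2(\xi_2)\,\varphi_{\ge 0}(\xi_1+\xi_2)\langle\xi_1+\xi_2\rangle^{2N-r}\,\overline{\hat f_3(\xi_1+\xi_2)}\,d\xi_1\, d\xi_2
\]
sits at a comfortable $S^\infty$ size. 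Integrating by parts in time exactly as in (\ref{EQ-IBP1})--(\ref{EQ-IBP3}) then rewrites $\int_0^t\mathcal E_{\tilde Q}^{\mu\nu}\,ds$ as $I_{\tilde Q}^{\mu\nu}(t)-I_{\tilde Q}^{\mu\nu}(0)$ plus three trilinear time integrals with one factor of $U$ replaced by the nonlinearity $N$ from (\ref{Ut}).

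A Cauchy--Schwarz summation across Littlewood--Paley pieces, of the same flavour as the ones producing (\ref{Is}) and (\ref{IQ}), is expected to yield a trilinear estimate
\[
|I_{\tilde Q}^{\mu\nu}[f_1,f_2,f_3]|\lesssim \|f_1\|_{C_*^3}\|f_2\|_{C_*^5}\|f_3\|_{H^N}
\]
with $f_1$, $f_2$ interchangeable. Boundary terms at $0$ and $t$ are then bounded by $\|\tilde U\|_{L^\infty H^N}^3$ via $\|U\|_{C_*^5}\lesssim\|U\|_{H^N}$, while the three nonlinear time integrals, using (\ref{N-Cr}) to bound $\|N\|_{C_*^r}\lesssim\|U\|_{C_*^{r+2}}^2$ and (\ref{N-HN}) when $N$ lands in the top slot, produce exactly the claimed $\|U\|_{L^2 C_*^3}\|U\|_{L^2 C_*^5}\|\tilde U\|_{L^\infty H^N}^2$. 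The principal technical obstacle is the sharp symbol accounting $\|\tilde q_{\mu\nu}\|_{S^\infty_{k_1,k_2;k_3}}$: one must verify that the specific first-order truncations $[\sqrt\lambda]_1$ and $[\partial_t\sqrt\lambda]_{\le 1}$ chosen inside $\mathcal{\tilde Q}$ dovetail with the commutator contribution so that no piece of the multiplier requires a H\"older exponent larger than $5$, every higher-order remainder in $\sqrt{a\lambda}+V\cdot\zeta$, $\partial_t\sqrt a$ and $\partial_t\sqrt\lambda$ having been packaged into $\mathcal C$ and controlled through Proposition \ref{E4}.
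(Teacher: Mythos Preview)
Your proposed trilinear estimate
\[
|I_{\tilde Q}^{\mu\nu}[f_1,f_2,f_3]|\lesssim \|f_1\|_{C_*^3}\|f_2\|_{C_*^5}\|f_3\|_{H^N}
\]
is false and this is where the argument breaks. The paradifferential cutoff in $\tilde q_{\mu\nu}$ forces $|\xi_1|\ll|\xi_2|\approx|\xi_1+\xi_2|$, and the pairing carries the full weight $\langle\xi_1+\xi_2\rangle^{2N-r}$. The one-order gain you correctly identify in the commutator/$E$-structure only trims a single power of $2^{k_2}$; the symbol bound one actually gets (Lemma \ref{tld-q-Soo} in the paper) is $\|\tilde q\|_{S^\infty_{k_1,k_2;k_3}}\lesssim 2^{2Nk_3^++3k_1/2}$, and hence $\|\tilde q/\Phi_{\mu\nu}\|_{S^\infty_{k_1,k_2;k_3}}\lesssim 2^{2Nk_3^++k_1}$. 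This forces both high-frequency slots into $H^N$:
\[
|I_{\tilde Q}^{\mu\nu}[f_1,f_2,f_3]|\lesssim \|f_1\|_{C_*^{3/2}}\|f_2\|_{H^N}\|f_3\|_{H^N},
\]
and no rearrangement can put $f_2$ in $C_*^5$ for $N\ge 6$.

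With the correct estimate, your scheme of writing everything in $U$ and invoking $\partial_tU=-i\Lambda U+N$ fails at the step where $N$ lands in slot $2$ or $3$: you would need $\|N\|_{H^N}$, which loses a derivative and cannot be closed. The paper avoids this by keeping $\mathcal U$ (not $U$) in the two high-frequency slots and using the paralinearized evolution $(\partial_t+iT_{\sqrt{a\lambda}+V\cdot\zeta})\mathcal U=\mathcal Q|_\Gamma+\mathcal{\tilde Q}+\mathcal{\tilde S}+\mathcal C$. The price is that integration by parts now produces, besides the good source terms, the commutator-type pieces $I_{\tilde Q}^{\mu\nu}[U_\mu,(iT_{V\cdot\zeta}\mathcal U)_\nu,\mathcal U]+I_{\tilde Q}^{\mu\nu}[U_\mu,\mathcal U_\nu,iT_{V\cdot\zeta}\mathcal U]$ and the analogous $T_{\sqrt{a\lambda}-|\zeta|^{1/2}}$ pieces. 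These are handled (Proposition \ref{Eq-IBP33}) by two mechanisms absent from your outline: for $\nu=+$ a near-cancellation between the second and third slots (using that $iT_{V\cdot\zeta}$ maps real to real and the conjugation on slot $3$), combined with a derivative bound on $\nabla_{\xi_2}(\tilde q/\Phi_{\mu\nu})$; for $\nu=-$ the improved phase bound $\|\Phi_{\mu-}^{-1}\|_{S^\infty_{k_1,k_2;k_3}}\lesssim 2^{-k_2/2}$ of Lemma \ref{ps-1/F-Soo}(iii$'$), which recovers the half-derivative lost to $T_{\sqrt{a\lambda}-|\zeta|^{1/2}}$. This quasilinear bookkeeping is the essential content of the proposition and is not captured by your plan.
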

\begin{proof}
By Proposition \ref{E-HN}, $\|\mathcal U\|_{H^N}$ and $\|\tilde U\|_{H^N}$ are interchangeable.

Up to a quartic error that can be bounded using Lemma \ref{Taf-Lp} (ii), Lemma \ref{Eaf-Lp} (ii) and (\ref{cal-U-tld-U-HN}), $\mathcal E_{\tilde Q}$ is a sum of the terms of the form $\mathcal E_{\tilde Q}^{\mu\nu}$, where
\begin{equation}\label{til-q-mult}
\begin{aligned}
\mathcal E_{\tilde Q}^{\mu\nu}&=\Re C^2\iint
\tilde q(\xi_1,\xi_2)\hat U_\mu(\xi_1)\mathcal{\hat U}_\nu(\xi_2)\overline{\mathcal{\hat U}(\xi_1+\xi_2)}d\xi_1 d\xi_2,\\
\tilde q(\xi_1,\xi_2)&=\varphi_{\le-10}\left( \frac{|\xi_1|}{|\xi_1+2\xi_2|} \right)n_1(\xi_1)n_2(\xi_2)n_3(\xi_1+\xi_2)\\
&\times\left[ n_4(\xi_1+\xi_2)n_5(\xi_2)-n_4\left(\frac{\xi_1+2\xi_2}2 \right)n_5\left(\frac{\xi_1+2\xi_2}2 \right) \right],
\end{aligned}
\end{equation}
$n_j\in S^{m_j}_{1,0}$ ($1\le j\le 5$), $\sum_{j=1}^5 {m_j}=2N+3/2$,
$m_1\ge1/2$ and $\supp n_2\cup\supp n_3\subset\supp\varphi_{\ge0}$. Let
\begin{align*}
I_{\tilde Q}^{\mu\nu}[f_1,f_2,f_3]
&=\Re C^2\iint
\frac{\tilde q(\xi_1,\xi_2)}{\Phi_{\mu\nu}(\xi_1,\xi_2)}\hat f_1(\xi_1)\hat f_2(\xi_2)\overline{\hat f_3(\xi_1+\xi_2)}d\xi_1 d\xi_2,\\
I_{\tilde Q}^{\mu\nu}&=I_{\tilde Q}^{\mu\nu}[U_\mu,\mathcal U_\nu,\mathcal U].
\end{align*}
Similarly integration by parts in time gives
\begin{align}
\label{Eq-IBP1}
\int_0^t \mathcal E_{\tilde Q}^{\mu\nu}(s)ds
&=I_{\tilde Q}^{\mu\nu}(t)-I_{\tilde Q}^{\mu\nu}(0)\\
\label{Eq-IBP2}
&-\int_0^t I_{\tilde Q}^{\mu\nu}[N_\mu,\mathcal U_\nu,\mathcal U](s)ds\\
\label{Eq-IBP3}
&-\int_0^t (I_{\tilde Q}^{\mu\nu}[U_\mu,(\mathcal U_\nu)_t+i\Lambda\mathcal U_\nu,\mathcal U]+I_{\tilde Q}^{\mu\nu}[U_\mu,\mathcal U_\nu,\mathcal U_t+i\Lambda\mathcal U])(s)ds.
\end{align}
The bound then follows from the corresponding bounds for (\ref{Eq-IBP1}), (\ref{Eq-IBP2}) and (\ref{Eq-IBP3}), to be shown in Proposition \ref{Eq-IBP12} and Proposition \ref{Eq-IBP33} below. 
\end{proof}

To estimate $I_{\tilde Q}^{\mu\nu}$, we need to bound the $S^\infty$ norm of the $\tilde q$ multiplier.
\begin{lemma}\label{tld-q-Soo}
For $k_1$, $k_2$, $k_3\in\Z$ we have
\begin{align*}
\|\tilde q\|_{S^\infty_{k_1,k_2;k_3}}
&\lesssim 2^{2Nk_3^++3k_1/2}1_{k_1\le k_2-6},\\
\|\nabla_{\xi_2}\tilde q\|_{S^\infty_{k_1,k_2;k_3}}
&\lesssim 2^{(2N-1)k_3^++3k_1/2}1_{k_1\le k_2-6}.
\end{align*}
\end{lemma}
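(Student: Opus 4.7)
The plan is to combine the frequency localization imposed by the paradifferential cutoff with a first-order cancellation hidden in the bracket, then upgrade the resulting pointwise estimate to an $S^\infty$ bound via Lemma \ref{Soo-Cn} (ii).

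First I would observe that $\varphi_{\le-10}(|\xi_1|/|\xi_1+2\xi_2|)$ forces $|\xi_1| \le 2^{-9}|\xi_1+2\xi_2|$, hence by the triangle inequality $|\xi_2| \approx |\xi_1+\xi_2| \approx |(\xi_1+2\xi_2)/2|$ and $|\xi_1| \le |\xi_2|/2$. Combined with $\supp n_3 \subset \supp \varphi_{\ge 0}$, the multiplier vanishes unless $k_3 \ge -O(1)$, $k_2 = k_3 + O(1)$ and $k_1 \le k_2 - 6$; on this effective support $k_3^+ = k_2 + O(1)$. Next, setting $p = \xi_1 + \xi_2$ and $q = \xi_2$, so that $(p+q)/2 = (\xi_1+2\xi_2)/2$ and $p - q = \xi_1$, the bracket reads $F(p,q) - F((p+q)/2,(p+q)/2)$ with $F(x,y) = n_4(x) n_5(y)$. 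A Taylor expansion of $F$ about the diagonal midpoint produces a first-order term $\tfrac{\xi_1}{2}\cdot[(\nabla n_4)n_5 - n_4 \nabla n_5]((\xi_1+2\xi_2)/2)$ together with an $O(|\xi_1|^2 \|\nabla^2 F\|_\infty)$ remainder; since $|\xi_1|^2 \le |\xi_1|\cdot|\xi_2|/2$ on the support, the remainder is absorbed, and the bracket is dominated by $|\xi_1|\cdot 2^{(m_4+m_5-1)k_2}$. Combining with the bounds $|n_j(\xi_j)| \lesssim 2^{m_j k_j^+}$ and the structural input $|n_1(\xi_1)| \lesssim 2^{m_1 k_1}$ (the $n_1$ factors appearing in $\mathcal{\tilde Q}$ vanish at $\xi_1 = 0$ with rate $m_1$, since they arise from multipliers such as $|\nabla|h$, $\Delta\phi$, $[\sqrt\lambda]_1$ or $\zeta\cdot\nabla\phi$ acting on $\hat U_\mu$), and using $m_1 \ge 1/2$, $\sum m_j = 2N + 3/2$, $k_2 = k_3 + O(1)$, one arrives at the pointwise estimate $|\tilde q| \lesssim 2^{2N k_3^+ + 3k_1/2}$.

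To upgrade this to an $S^\infty$ bound I would invoke Lemma \ref{Soo-Cn} (ii), which reduces the task to controlling $2^{l k_j} \nabla_{\xi_j}^l \tilde q$ in $L^\infty$ for small $l$ and $j \in \{1,2\}$. Each such derivative either (i) lands on one of the $n_j$'s, reducing its order by one, which is neutral after multiplication by $2^{l k_j}$; (ii) lands on the smooth cutoff $\varphi_{\le-10}$, whose $\xi_j$-derivatives are $O(2^{-k_2})$, again neutral; or (iii) acts on the bracket, in which case---using the crucial identity $\partial_{\xi_2}[(\xi_1+2\xi_2)/2] = 1$---the derivative produces a new bracket of the same form $\tilde F(p,q) - \tilde F((p+q)/2,(p+q)/2)$, preserving the Taylor cancellation in $\xi_1$. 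Derivatives in $\xi_1$ are handled analogously, using $|\xi_1| \le |\xi_2|/2$ to absorb $\partial_{\xi_1}$ acting on the midpoint argument.

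For the second bound I would repeat the analysis with one $\nabla_{\xi_2}$ already applied. This differentiation loses one $\xi_2$-order---a factor of $2^{-k_3^+}$, since $k_2 = k_3 + O(1)$---while leaving both the $|\xi_1|$-cancellation in the bracket and the $|\xi_1|^{m_1}$ vanishing of $n_1$ intact; this yields exactly the claimed improvement. The main obstacle is the careful case analysis of which factor each derivative lands on, and verifying that the Taylor cancellation structure survives each derivative ordering; once this is in place, everything else is bookkeeping parallel to Lemma \ref{q-Soo}.
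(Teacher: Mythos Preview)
Your approach is correct and follows the same route as the paper: extract a factor of $|\xi_1|$ from the bracket via its midpoint structure, combine with the order $m_1\ge 1/2$ of $n_1$, and lift to $S^\infty$ via Lemma~\ref{Soo-Cn}. The paper compresses your derivative-by-derivative verification into a single line by writing the bracket as an exact integral,
\[
n_4(\xi_1+\xi_2)\,n_5(\xi_2)-(n_4n_5)\!\left(\tfrac{\xi_1+2\xi_2}{2}\right)
=\tfrac12\int_0^1 \xi_1\cdot\nabla\bigl(n_4(\xi_t)\,n_5(\eta_t)\bigr)\,dt,
\qquad
\xi_t=\tfrac{(1+t)\xi_1+2\xi_2}{2},\ \eta_t=\tfrac{(1-t)\xi_1+2\xi_2}{2},
\]
so the $\xi_1$-factor is manifest and the $S^\infty$ bound follows at once from Lemma~\ref{Soo-Cn} (i)--(ii) applied to the integrand, without a separate remainder or case analysis of where each $\nabla_{\xi_j}$ lands. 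Your explicit remark that $n_1$ must vanish at the origin to order $m_1$ (not merely lie in $S^{m_1}_{1,0}$) is correct and necessary for the $2^{3k_1/2}$ factor when $k_1<0$; the paper relies on this implicitly through the concrete form of the symbols appearing in $\mathcal{\tilde Q}$.
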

\begin{proof}
The factor $1_{k_1\le k_2-6}$ comes from the $\varphi_{\le10}$ factor and is assumed to be nonzero thereafter. The bound itself follows from the identity
\[
n_4(\xi_1+\xi_2)n_5(\xi_2)-(n_4n_5)\left(\frac{\xi_1+2\xi_2}2 \right)\\
=\frac12\int_0^1 \xi_1\cdot\nabla(n_4(\xi_t)n_5(\eta_t))dt,
\]
where $\xi_t=((1+t)\xi_1+2\xi_2)/2$ and $\eta_t=((1-t)\xi_1+2\xi_2)/2$,
and the fact that $n_1\ge1/2$. Then we use Lemma \ref{Soo-Cn} (i) and (ii) to bound the $S^\infty$ norm of the integrand.
The bound on $\nabla_{\xi_2}q$ follows in a similar way.
\end{proof}

\begin{proposition}\label{Eq-IBP12}
If $\|U\|_{H^5}$ is sufficiently small then
\begin{align*}
|(\ref{Eq-IBP1})|&\lesssim \|\tilde U\|_{L^\infty([0,t])H^N}^3, &
|(\ref{Eq-IBP2})|&\lesssim \|U\|_{L^2([0,t])C_*^3}^2\|\tilde U\|_{L^\infty([0,t])H^N}^2.
\end{align*}
\end{proposition}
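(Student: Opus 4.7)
The plan is to follow the template of the proofs of Propositions \ref{Es} and \ref{Eq}: first prove a clean trilinear estimate for $I_{\tilde Q}^{\mu\nu}$, then specialize to $(f_1,f_2,f_3)=(U_\mu,\mathcal U_\nu,\mathcal U)$ to bound $(\ref{Eq-IBP1})$ and to $f_1=N_\mu$ to bound $(\ref{Eq-IBP2})$, combining the latter with a pointwise nonlinear estimate of the form $\|N\|_{C_*^2}\lesssim\|U\|_{C_*^3}^2$ and H\"older in time.

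The target trilinear estimate is
\[
|I_{\tilde Q}^{\mu\nu}[f_1,f_2,f_3]| \lesssim \|f_1\|_{C_*^2}\|f_2\|_{H^N}\|f_3\|_{H^N}.
\]
I would combine Lemma \ref{paraprod} with the $S^\infty$ bounds $\|\tilde q\|_{S^\infty_{k_1,k_2;k_3}}\lesssim 2^{2Nk_3^++3k_1/2}\mathbf{1}_{k_1\le k_2-6}$ from Lemma \ref{tld-q-Soo} and $\|\Phi_{\mu\nu}^{-1}\|_{S^\infty_{k_1,k_2;k_3}}\lesssim 2^{-\min(k_1,k_2,k_3)/2}$ from Lemma \ref{ps-1/F-Soo}(iii). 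The cutoff $\varphi_{\le-10}$ forces $k_1\le k_2-6$, and the additive frequency constraint then forces $|k_2-k_3|=O(1)$, so $\min(k_1,k_2,k_3)=k_1$ and the symbol norm of $\tilde q/\Phi_{\mu\nu}$ is controlled by $2^{2Nk_3^++k_1}$. Placing $f_1$ in $L^\infty$ and $f_2,f_3$ in $L^2$, Cauchy--Schwarz over $k_2\approx k_3$ with weight $2^{Nk_3^+}$ extracts $\|f_2\|_{H^N}\|f_3\|_{H^N}$, while the absolute $k_1$-sum $\sum_{k_1}2^{k_1}\|P_{k_1}f_1\|_{L^\infty}$ converges to $O(\|f_1\|_{C_*^2})$ by splitting $k_1\le0$ and $k_1>0$ and using $\|P_{k_1}f_1\|_{L^\infty}\le 2^{-2k_1^+}\|f_1\|_{C_*^2}$ together with two geometric series.

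The bound on $(\ref{Eq-IBP1})$ is then immediate: evaluate the trilinear estimate at times $t$ and $0$, use $\|U\|_{C_*^2}\lesssim\|U\|_{H^4}\lesssim\|\tilde U\|_{H^N}$ (Sobolev embedding and Proposition \ref{U-tld=U}, with $N\ge5$) together with $\|\mathcal U\|_{H^N}\approx\|\tilde U\|_{H^N}$ (Proposition \ref{E-HN}). For $(\ref{Eq-IBP2})$, the same trilinear estimate yields
\[
|(\ref{Eq-IBP2})|\lesssim \int_0^t \|N(s)\|_{C_*^2}\|\mathcal U(s)\|_{H^N}^2\,ds;
\]
granting $\|N\|_{C_*^2}\lesssim\|U\|_{C_*^3}^2$, pulling out $\|\mathcal U\|_{H^N}^2$ in the $L_s^\infty$ norm and integrating the rest in $L_s^2$ gives $\|U\|_{L^2([0,t])C_*^3}^2\|\tilde U\|_{L^\infty([0,t])H^N}^2$.

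The main obstacle is precisely the pointwise bound $\|N\|_{C_*^2}\lesssim\|U\|_{C_*^3}^2$: plugging $r=2$ into (\ref{Gh-Cr2}) costs $\|h\|_{C_*^3}\||\nabla|^{1/2}\phi\|_{C_*^{7/2}}\approx\|U\|_{C_*^3}\|U\|_{C_*^{7/2}}$, which is half a derivative too strong on one factor. The resolution is to expose the Alinhac-type cancellation in the quadratic part $-|\nabla|(h|\nabla|\phi)-h\Delta\phi$ of $(G(h)-|\nabla|)\phi$: Bony-decompose $h|\nabla|\phi=T_h|\nabla|\phi+T_{|\nabla|\phi}h+H(h,|\nabla|\phi)$ and use the identity $|\nabla|T_h|\nabla|=-T_h\Delta+[|\nabla|,T_h]|\nabla|$ so that the two quasilinear pieces cancel, leaving only the commutator $-[|\nabla|,T_h]|\nabla|\phi$, which by Lemma \ref{E1f-Lp} is order $0$ in $\phi$ and bounded in $C_*^2$ by $\|\nabla h\|_{L^\infty}\||\nabla|\phi\|_{C_*^2}\lesssim\|U\|_{C_*^3}^2$; the paraproducts $T_{\Delta\phi}h$, $T_{|\nabla|\phi}|\nabla|h$ and the Bony remainder $H(h,\Delta\phi)$ are controlled by Lemma \ref{Taf-Lp} and Lemma \ref{PkH-Lp}. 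The remaining quadratic $(i/2)|\nabla|^{1/2}((|\nabla|\phi)^2-|\nabla\phi|^2)$ in $N$ from (\ref{Ut}) has analogous null-type structure and is treated the same way, while the cubic $|\nabla h|^2B^2$ tail is absorbed trivially under the smallness assumption.
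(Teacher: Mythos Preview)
Your trilinear estimate and the handling of (\ref{Eq-IBP1}) match the paper exactly. The only difference is in (\ref{Eq-IBP2}): you demand $\|f_1\|_{C_*^2}$ in your trilinear bound, which then forces you to prove $\|N\|_{C_*^2}\lesssim\|U\|_{C_*^3}^2$ and, as you correctly diagnose, this overshoots (\ref{Gh-Cr2}) by half a derivative. Your Bony/commutator workaround is valid, but the obstacle is self-inflicted. The sum $\sum_{k_1}2^{k_1}\|P_{k_1}f_1\|_{L^\infty}$ already converges against any $C_*^{1+\varepsilon}$ norm; the paper simply takes $\|f_1\|_{C_*^{3/2}}$ in the trilinear estimate (\ref{Iq}), and then (\ref{Gh-Cr2}) at $r=3/2$ gives $\|N\|_{C_*^{3/2}}\lesssim\|h\|_{C_*^{5/2}}\||\nabla|^{1/2}\phi\|_{C_*^3}\lesssim\|U\|_{C_*^3}^2$ directly, with the quadratic velocity terms handled by (\ref{Gh-Cr}) as in the derivation of (\ref{N-Cr}). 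No paralinearization of $N$ is needed.
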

\begin{proof}
By Lemma \ref{ps-1/F-Soo} (iii) and Lemma \ref{tld-q-Soo}, $\|\tilde q/\Phi_{\mu\nu}\|_{S^\infty_{k_1,k_2;k_3}}\lesssim 2^{2Nk_3^++k_1}$.
By the additive restriction of the frequencies, $|k_2-k_3|\le2$,
so by Lemma \ref{paraprod},
\[
|I_{\tilde Q}^{\mu\nu}[P_{k_1}f_1,P_{k_2}f_2,P_{k_3}f_3]|
\lesssim 2^{2Nk_3^++k_1}1_{|k_2-k_3|\le2}
\|P_{k_1}f_1\|_{L^\infty}\|P_{k_2}f_2\|_{L^2}\|P_{k_3}f_3\|_{L^2}.
\]
Summing over $k_1\in\Z$ and $|k_2-k_3|\le2$ using the Cauchy--Schwarz inequality gives
\begin{equation}\label{Iq}
|I_{\tilde Q}^{\mu\nu}[f_1,f_2,f_3]|\lesssim \|f_1\|_{C_*^{3/2}}\|f_2\|_{H^N}\|f_3\|_{H^N}.
\end{equation}
By (\ref{Iq}) with $f_1=U_\mu$, $f_2=\mathcal U_\nu$, $f_3=\mathcal U$,
Proposition \ref{U-tld=U} and Proposition \ref{E-HN},
\[
|(\ref{Eq-IBP1})|\lesssim \|\tilde U(t)\|_{H^N}^3+\|\tilde U(0)\|_{H^N}^3
\lesssim \|\tilde U\|_{L^\infty([0,t])H^N}^3.
\]
For the same reason as (\ref{N-Cr}), $\|N\|_{C_*^{3/2}}\lesssim \|U\|_{C_*^3}^2$, so by (\ref{Iq}) with $f_1=N_\mu$, $f_2=\mathcal U_\nu$, $f_3=\mathcal U$ and Proposition \ref{E-HN}, the bound for (\ref{Eq-IBP2}) follows as well.
\end{proof}

\begin{proposition}\label{Eq-IBP33}
If $\|h\|_{H^N}+\|U\|_{H^6}$ is sufficiently small then
\[
|(\ref{Eq-IBP3})|\lesssim \|U\|_{L^2([0,t])C_*^3}\|U\|_{L^2([0,t])C_*^5}
\|\tilde U\|_{L^\infty([0,t])H^N}^2.
\]
\end{proposition}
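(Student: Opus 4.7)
The plan is to substitute the evolution equation (\ref{cal-Ut}) for $\mathcal U_t+i\Lambda\mathcal U$ (and the analogous expression for $(\mathcal U_\nu)_t+i\Lambda\mathcal U_\nu$) into the integrand of (\ref{Eq-IBP3}). Writing
\[
\mathcal U_t+i\Lambda\mathcal U=-iT_{\sqrt{a\lambda}-|\zeta|^{1/2}}\mathcal U-iT_{V\cdot\zeta}\mathcal U+\mathcal Q|_\Gamma+\mathcal{\tilde Q}+\mathcal{\tilde S}+\mathcal C,
\]
each summand yields a trilinear expression inside $I_{\tilde Q}^{\mu\nu}$. The resulting analysis splits naturally into the ``semilinear'' remainders $\mathcal Q|_\Gamma,\mathcal{\tilde S},\mathcal{\tilde Q},\mathcal C$, which are good, and the two ``quasilinear'' paradifferential pieces $T_{\sqrt{a\lambda}-|\zeta|^{1/2}}\mathcal U$ and $T_{V\cdot\zeta}\mathcal U$, which carry a derivative loss.

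For the semilinear part I would bound $\mathcal Q|_\Gamma+\mathcal{\tilde S}$ in $H^{N+1/2}$ using (\ref{QS-HN}), $\mathcal C$ in $H^N$ using (\ref{C-HN}), and $\mathcal{\tilde Q}$ in $H^N$ via Lemma \ref{Eaf-Lp}(ii) together with the $L^\infty$-type bounds (\ref{a-1-Loo}), (\ref{l-|z|-Loo}), (\ref{l-l1-Loo}) and (\ref{Loo2}). In each case the $H^N$ bound should be of the form $\lesssim\|U\|_{C_*^5}\|\tilde U\|_{H^N}$. Plugging any such factor into either slot 2 or slot 3 of $I_{\tilde Q}^{\mu\nu}$ and applying the trilinear estimate (\ref{Iq}) from Proposition \ref{Eq-IBP12} (with $U_\mu\in C_*^3\subset C_*^{3/2}$) would then yield at each time a pointwise bound $\lesssim\|U\|_{C_*^3}\|U\|_{C_*^5}\|\tilde U\|_{H^N}^2$, which after Cauchy--Schwarz in $s\in[0,t]$ applied to the product $\|U(s)\|_{C_*^3}\|U(s)\|_{C_*^5}$ delivers the claimed $L^2L^\infty$ product.

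The main obstacle will be the quasilinear pieces: since $T_{\sqrt{a\lambda}-|\zeta|^{1/2}}\mathcal U\in H^{N-1/2}$ and $T_{V\cdot\zeta}\mathcal U\in H^{N-1}$ only, neither belongs to $H^N$ and (\ref{Iq}) is not directly available. My plan is to combine the two integrals in (\ref{Eq-IBP3}) and exploit the self-adjointness of $T_{V\cdot\zeta}$ and $T_{\sqrt{a\lambda}-|\zeta|^{1/2}}$ (both symbols are real, so Lemma \ref{para2diff}(i) applies) to transfer the paradifferential operator between slots 2 and 3 of $I_{\tilde Q}^{\mu\nu}$. The sum then collapses to a commutator of the paradifferential operator with the bilinear multiplication by $\tilde q/\Phi_{\mu\nu}$; by Lemma \ref{E1f-Lp} this commutator corresponds to a symbol of strictly lower order, so the apparent derivative loss disappears, and the $S^\infty$-norm bounds in Lemma \ref{tld-q-Soo} and Lemma \ref{ps-1/F-Soo}(iii) again produce the same pointwise trilinear bound as in the semilinear case. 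The technical bookkeeping will consist in keeping track of the $\mu,\nu\in\{+,-\}$ sign choices and the complex conjugation in the third slot when performing the symmetrization.
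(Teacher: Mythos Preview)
Your plan for the semilinear pieces $\mathcal Q|_\Gamma+\mathcal{\tilde Q}+\mathcal{\tilde S}+\mathcal C$ is correct and matches the paper's treatment of (\ref{Iq1}) via (\ref{Iq}). The idea of combining the two slots of (\ref{Eq-IBP3}) to produce a commutator is also right in spirit for the transport piece $T_{V\cdot\zeta}$: since $iT_{V\cdot\zeta}$ maps real functions to real functions (Lemma \ref{para2diff}(ii), the symbol being real and odd in $\zeta$), one has $(iT_{V\cdot\zeta}\mathcal U)_\nu=iT_{V\cdot\zeta}\mathcal U_\nu$ for both signs $\nu$, and the sum over slots 2 and 3 does collapse to a difference of multipliers that gains a derivative. (The paper implements this not via Lemma \ref{E1f-Lp}---that lemma concerns commutators of two \emph{linear} paradifferential operators, not a paraproduct with a bilinear multiplier---but by writing out the combined symbol $r_{\mu\nu,j}$ explicitly and applying the mean-value theorem together with a case analysis on $\nabla_{\xi_2}\Phi_{\mu\nu}^{-1}$.)

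There is, however, a genuine gap in your treatment of $T_{\sqrt{a\lambda}-|\zeta|^{1/2}}$ when $\nu=-$. The symbol $\sqrt{a\lambda}-|\zeta|^{1/2}$ is real and \emph{even} in $\zeta$, so the operator commutes with complex conjugation and one finds $(iT_{\sqrt{a\lambda}-|\zeta|^{1/2}}\mathcal U)_-=-\,iT_{\sqrt{a\lambda}-|\zeta|^{1/2}}\mathcal U_-$. With this sign flip, combining slots 2 and 3 produces an \emph{anti}-commutator rather than a commutator, and no derivative is gained; your argument breaks down precisely here. The paper does not try to symmetrize in this case. Instead it observes that for $\nu=-$ and $k_1\le k_2-3$ (the paraproduct regime), Lemma \ref{ps-1/F-Soo}(iii') gives the improved bound $\|\Phi_{\mu-}^{-1}\|_{S^\infty_{k_1,k_2;k_3}}\lesssim 2^{-k_2/2}$, so that $\|\tilde q/\Phi_{\mu-}\|_{S^\infty_{k_1,k_2;k_3}}\lesssim 2^{(2N-1/2)k_3+3k_1/2}$. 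This half-derivative gain in the \emph{multiplier} exactly compensates the half-derivative loss from $T_{\sqrt{a\lambda}-|\zeta|^{1/2}}\mathcal U\in H^{N-1/2}$, and the bound closes. You will need this separate argument for Case $\nu=-$ of (\ref{Iq3}); the self-adjointness/commutator route cannot cover it.
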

\begin{proof}
Recall that $\|\mathcal U\|_{H^N}$ and $\|\tilde U\|_{H^N}$ are interchangeable.

By (\ref{cal-Ut}), the integrand of (\ref{Eq-IBP3}) becomes
\begin{align}
\label{Iq1}
&I_{\tilde Q}^{\mu\nu}[U_\mu,(\mathcal Q|_\Gamma+\mathcal{\tilde Q+\tilde S+C})_\nu,\mathcal U]
+I_{\tilde Q}^{\mu\nu}[U_\mu,\mathcal U_\nu,\mathcal Q|_\Gamma+\mathcal{\tilde Q+\tilde S+C}]\\
\label{Iq2}
-&I_{\tilde Q}^{\mu\nu}[U_\mu,(iT_{V\cdot\zeta}\mathcal U)_\nu,\mathcal U]
-I_{\tilde Q}^{\mu\nu}[U_\mu,\mathcal U_\nu,iT_{V\cdot\zeta}\mathcal U]\\
\label{Iq3}
-&I_{\tilde Q}^{\mu\nu}[U_\mu,(iT_{\sqrt{a\lambda}-|\zeta|^{1/2}}\mathcal U)_\nu,\mathcal U]
-I_{\tilde Q}^{\mu\nu}[U_\mu,\mathcal U_\nu,iT_{\sqrt{a\lambda}-|\zeta|^{1/2}}\mathcal U].
\end{align}
By (\ref{Iq}), (\ref{QS-HN}), (\ref{C-HN}), Lemma \ref{Sm-Lm}, Lemma \ref{Lmq=Lq}, Lemma \ref{prod-Lm}, Lemma \ref{Taf-Lp} (ii), Lemma \ref{Eaf-Lp} (ii) and the fact that $C_{-1,4}[U]\lesssim \|U\|_{H^5}$ is sufficiently small,
\begin{align*}
|(\ref{Iq1})|&\lesssim \|U\|_{C_*^1}\|\tilde U\|_{H^N}\|\mathcal Q|_\Gamma+\mathcal{\tilde Q+\tilde S+C}\|_{H^N}\\
&\lesssim \|U\|_{C_*^1}\|U\|_{C_*^5}(1+C_{-1,4}[U])\|\tilde U\|_{H^N}^2
\lesssim \|U\|_{C_*^1}\|U\|_{C_*^5}\|\tilde U\|_{H^N}^2.
\end{align*}

For (\ref{Iq2}), by Lemma \ref{para2diff} (ii), the operator $iT_{V\cdot\zeta}$ maps real valued functions to real valued functions,
so $(iT_{V\cdot\zeta}\mathcal U)_\nu=iT_{V\cdot\zeta}\mathcal U_\nu$.
Taking the complex conjugation on the third slot of $I_{\tilde Q}^{\mu\nu}$ into account we have
\begin{equation}\label{Iq2-4}
-(\ref{Iq2})=\Re C^3\iiint ir_{\mu\nu,j}(\xi,\eta,\theta)
\hat U_\mu(\xi-\eta-\theta)\mathcal{\hat U_\nu}(\eta)\overline{\mathcal{\hat U(\xi)}}\hat V_j(\theta)d\xi d\eta d\zeta,
\end{equation}
where
\begin{align*}
r_{\mu\nu,j}(\xi,\eta,\theta)&=\frac{2\eta_j+\theta_j}2\times\frac{\tilde q(\xi-\eta-\theta,\eta+\theta)}{\Phi_{\mu\nu}(\xi-\eta-\theta,\eta+\theta)}\varphi_{\le-10}\left( \frac{|\theta|}{|2\eta+\theta|} \right)\\
&-\frac{2\xi_j-\theta_j}2\times\frac{\tilde q(\xi-\eta-\theta,\eta)}{\Phi_{\mu\nu}(\xi-\eta-\theta,\eta)}\varphi_{\le-10}\left( \frac{|\theta|}{|2\xi-\theta|} \right).
\end{align*}

We distinguish two cases to bound the $S^\infty$ norm of
\[
\left( \partial_{\xi_2}\Phi_{\mu\nu}^{-1} \right)(\xi-\eta-\theta,\eta+t\theta)
=\frac{\nabla\Lambda(\xi-\theta+t\theta)-\nu\nabla\Lambda(\eta+t\theta)}
{\Phi_{\mu\nu}(\xi-\eta-\theta,\eta+t\theta)^2}.
\]

{\bf Case 1:} $\nu=-$. By Lemma \ref{ps-1/F-Soo} (iii'), if $\tilde q\neq0$ we have
$\|\Phi_{\mu\nu}^{-1}\|_{S^\infty_{k_1,k_2;k_3}}\lesssim 2^{-k_3/2}$.
Since $\Lambda$ is homogeneous of degree 1/2, by Lemma \ref{Soo-Cn} (i),
\begin{equation}\label{gr-1/F-Soo}
\|\partial_{\xi_2}\left( \Phi_{\mu\nu}^{-1} \right)(\xi-\eta-\theta,\eta+t\theta)\|_{S^\infty_{k_1,k_2;k_3}}\lesssim 2^{-3k_3/2}.
\end{equation}

{\bf Case 2:} $\nu=+$. By Lemma \ref{ps-1/F-Soo} (iii), if $\tilde q\neq0$ we have
$\|\Phi_{\mu\nu}^{-1}\|_{S^\infty_{k_1,k_2;k_3}}\lesssim 2^{-k_1/2}$.
For the numerator we have
\[
\|\text{numerator}\|_{S^\infty_{k_1,k_2;k_3}}
\lesssim 2^{k_1}\sup_{s\in[0,1]}\|\nabla^2\Lambda(\eta+s(\xi-\eta-\theta)+t\theta)\|_{S^\infty_{k_1,k_2;k_3}}
\lesssim 2^{k_1-3k_3/2}
\]
so by Lemma \ref{Soo-Cn} (i), (\ref{gr-1/F-Soo}) also holds.

Now we use the fundamental theorem of calculus as in Lemma \ref{tld-q-Soo} to obtain, for $k_4\in\N\cup\{\le0\}$ (we use the convention $2^{\le0}=1$),
\[
\|r_{\mu\nu,j}(\xi,\eta,\theta)\varphi_{k_4}(\theta)\|_{S^\infty_{k_1,k_2;k_3}}
\lesssim 2^{2Nk_3^+}(2^{2k_1}+2^{k_1+k_4})1_{k_1,k_4\le k_3-5}.
\]
Using Lemma \ref{paraprod} and (\ref{Gh-Cr}) and summing over $k_1,k_4\le k_3-5$ and $k_2=k_3+O(1)$ give
\[
|(\ref{Iq2})|\lesssim \|U\|_{C_*^2}\|U\|_{C_*^3}\|\tilde U\|_{H^N}^2.
\]

For (\ref{Iq3}) we distinguish two cases.

{\bf Case 1:} $\nu=+$. Then we can omit the subscript $\nu$ and get the same cancellation as in (\ref{Iq2}) from the complex conjugation in the third slot of $I_{\tilde Q}^{\mu+}$. By (\ref{a-1-Cr}), $\sum_{k\in\N\cup\{\le0\}} 2^k\|P_k(\sqrt{a\lambda}-\sqrt{|\zeta|})\|_{\mathcal L^\infty_{1/2}}\lesssim\|a-1\|_{C_*^{3/2}}+\|h\|_{C_*^4}\lesssim\|U\|_{C_*^4}$, so by Lemma \ref{paraprod},
\[
|(\ref{Iq3})|\lesssim \|U\|_{C_*^3}\|U\|_{C_*^4}\|\tilde U\|_{H^N}^2.
\]

{\bf Case 2:} $\nu=-$. By Lemma \ref{ps-1/F-Soo} (iii'), if $\tilde q\neq0$ we have
$\|\Phi_{\mu\nu}^{-1}\|_{S^\infty_{k_1,k_2;k_3}}\lesssim 2^{-k_3/2}$, so
\[
\|\tilde q/\Phi_{\mu\nu}\|_{S^\infty_{k_1,k_2;k_3}}\lesssim 2^{(2N-1/2)k_3+3k_1/2}.
\]
This can be used to obtain the desired bound by recovering the loss of derivative in $T_{\sqrt{a\lambda}-|\zeta|^{1/2}}\mathcal U$.
By Lemma \ref{Taf-Lp} (ii), (\ref{a-1-Loo}) and (\ref{l-|z|-Loo}),
\[
\|T_{\sqrt{a\lambda}-|\zeta|^{1/2}}\mathcal U\|_{H^{N-1/2}}\lesssim \|U\|_{C_*^4}\|\tilde U\|_{H^N}.
\]
Then Lemma \ref{paraprod} gives
\[
|(\ref{Iq3})|\lesssim \|U\|_{C_*^2}\|U\|_{C_*^4}\|\tilde U\|_{H^N}^2.
\]

Combining the three bounds and integrating in time show the claim.
\end{proof}

\subsection{Quartic energy estimates}
Recall from (\ref{E-def}) that the energy
\[
E=\int_{\R^2} \frac12(h^2+\phi G(h)\phi)
\]
is conserved. Now we can show quartic energy estimates (\ref{growth-Em-X}) and (\ref{growth-Em-Z}).
\begin{proof}[Proof of (\ref{growth-Em-X})]
Recall $U = h + i|\nabla|^{1/2}\phi$. By the Cauchy--Schwarz inequality,
\[
|2E-\|U\|_{L^2}^2|=|\langle\phi,(G(h)-|\nabla|)\phi\rangle|
\le\|U\|_{L^2}\|(G(h)-|\nabla|)\phi\|_{\dot H^{-1/2}}.
\]
By (1.1.15) of \cite{AlDe2}, $G(h):\dot H^{1/2}\to\dot H^{-1/2}$ is bounded, locally uniformly in $h$.
Then by (\ref{Gh-phi-int}), (\ref{ds-Gh-phi}), Sobolev multiplication and (\ref{Gh-Hs}),
\begin{align*}
\|(G(h)-|\nabla|)\phi\|_{\dot H^{-1/2}}
&\le\sup_{s\in[0,1]} (\|G(sh)[hB(sh)\phi]\|_{\dot H^{-1/2}}+\|hV(sh)\phi\|_{H^{1/2}})\\
&\lesssim \sup_{s\in[0,1]} (\|hB(sh)\phi\|_{H^{1/2}}+\|hV(sh)\phi\|_{H^{1/2}})\\
&\lesssim \|h\|_{H^2}\||\nabla|^{1/2}\phi\|_{H^2}\le\|U\|_{H^2}^2.
\end{align*}
Hence
\begin{equation}\label{E-U}
|2E-\|U\|_{L^2}^2|\lesssim\|U\|_{H^2}^3.
\end{equation}
By (\ref{HN0}) we know that $\|U(0)\|_{H^2}\lesssim\ep$.
By (\ref{E-U}) and conservation of energy, $E(t)=E(0)\lesssim\ep^2$.
By (\ref{growthX1}) and Proposition \ref{U-tld=U},
$\|U(t)\|_{H^2}\lesssim\ep_1$, so by (\ref{E-U}) again,
$\|U(t)\|_{L^2}^2\lesssim\ep^2+\ep_1^3$. Then by Sobolev multiplication and (\ref{Gh-Hs}),
\[
\|\tilde U(t)\|_{H^{-1/2}}^2\lesssim \|U(t)\|_{L^2}^2+\|B(t)\|_{L^\infty}^2\|h(t)\|_{L^2}^2\lesssim(1+\|U(t)\|_{H^2}^2)\|U(t)\|_{L^2}^2
\lesssim\ep^2+\ep_1^3.
\]

By (\ref{HN0}), Proposition \ref{U-tld=U} and Proposition \ref{E-HN},
$\mathcal E(0)\lesssim\ep^2$ and $\|P_{\ge0}\tilde U(t)\|_{H^N}^2=\mathcal E(t)+O(\ep_1^3)$. By Proposition \ref{E4}, Proposition \ref{Es}, Proposition \ref{Eq} and Proposition \ref{Eq-tld} we have
\[
\mathcal E(t)=\mathcal E(0)+O(\ep_1^3+\ep_1^2\|U\|_{L^2([0,t])C_*^6}\|C_{-[\log(1+s)]-1,6}[U(s)]\|_{L^2([0,t])}).
\]
Recalling (\ref{Ckr-def}) we have
\begin{align*}
\|C_{-[\log(1+s)]-1,6}[U(s)]\|_{L^2([0,t])}&\lesssim\mathcal L\|U\|_{L^2([0,t])C_*^6}+\|(1+s)^{-1}\|_{L^2([0,t])}\|U\|_{L^\infty([0,t])L_x^2}\\
&\lesssim\ep_1+\mathcal L\ep_2,
\end{align*}
Since $\|U\|_{L^2([0,t])C_*^6}\lesssim\ep_2$ is sufficiently small,
\[
\mathcal E(t)\lesssim\ep^2+\ep_1^3+\ep_1^3\ep_2+\mathcal L\ep_1^2\ep_2^2
\lesssim\ep^2+\ep_1^3+\mathcal L\ep_1^2\ep_2^2
\]
and the same bound holds for $\|P_{\ge0}\tilde U(t)\|_{H^N}^2$. Then
\[
\|\tilde U(t)\|_{H^N}^2\lesssim \|P_{\ge0}\tilde U(t)\|_{H^N}^2+\|\tilde U(t)\|_{H^{-1/2}}^2\lesssim \ep^2+\ep_1^3+\mathcal L\ep_1^2\ep_2^2.
\]
Taking the square root gives (\ref{growth-Em-X}).
\end{proof}

\begin{proof}[Proof of (\ref{growth-Em-Z})]
The bound for $\|\tilde U(t)\|_{H^{-1/2}}$ remains the same as above.
For $\mathcal E(t)$, note that
\begin{align*}
\|C_{-[\log(1+s)]-1,6}[U(s)]\|_{L^2([0,t])}&\lesssim\|\log(2+2s)U(s)\|_{L^2([0,t])C_*^6}\\
&+\|(1+s)^{-1}\|_{L^2([0,t])}\|U\|_{L^\infty([0,t])L_x^2}.
\end{align*}
Summing (\ref{growthZ}) in $k \in \Z$ we get $\|(1+s)^{(\alpha-\delta)/2}U(s)\|_{L^2([0,t])C_*^6}\lesssim\ep_1$, so the left-hand side of the above $\lesssim \ep_1$. Then $\mathcal E(t) \lesssim \ep^2 + \ep_1^3$, and the same argument as above shows (\ref{growth-Em-Z}).
\end{proof}

\section{Strichartz estimates}\label{StrEst}
This section is devoted to the proof of the Strichartz estimate (\ref{growthX2}), which is contained in Proposition \ref{U3-C5}, Proposition \ref{W2-C6} and Proposition \ref{H2-C5} below.

\subsection{Definition of the profile}
Recall the evolution equation $U_t + i|\nabla|^{1/2}U = N$, where (see (\ref{Ut}) and (\ref{Gh-phi-int2}))
\begin{equation}\label{N23}
\begin{aligned}
N&=(G(h)-|\nabla|)\phi+\frac i2|\nabla|^{1/2}((1+|\nabla h|^2)B^2-|\nabla\phi|^2)=N_2+N_3,\\
N_2&=-|\nabla|(h|\nabla|\phi)-\nabla\cdot(h\nabla\phi)+\frac i2|\nabla|^{1/2}((|\nabla|\phi)^2-|\nabla\phi|^2),\\
N_3&=B_3+|\nabla h|^2B+\frac i2|\nabla|^{1/2}(B^2-(|\nabla|\phi)^2+|\nabla h|^2B^2).
\end{aligned}
\end{equation}
$N_2$ can be expressed as sums of the terms $N_{\mu\nu}=N_{\mu\nu}[U_\mu,U_\nu]$, where $\mu,\nu=\pm$,
\[
\mathcal FN_{\mu\nu}(\xi) = \int_{\xi_1+\xi_2=\xi} m_{\mu\nu}(\xi_1,\xi_2)\hat U_\mu(\xi_1)\hat U_\nu(\xi_2)d\xi_1
\]
and $m_{\mu\nu}(\xi_1,\xi_2)$ are linear combinations of multipliers in the set
\begin{equation}\label{m-prod}
\left\{\frac{|\xi_1+\xi_2||\xi_2|-(\xi_1+\xi_2)\cdot\xi_2}{\sqrt{|\xi_2|}},
\sqrt{|\xi_1+\xi_2|}\frac{|\xi_1||\xi_2|+\xi_1\cdot\xi_2}{\sqrt{|\xi_1||\xi_2|}}\right\}.
\end{equation}

By Duhamel's formula,
\begin{equation}\label{Duhamel}
U(t)=e^{-it\Lambda}U(0)+U_2(t)+U_3(t),\ 
U_j(t)=\int_0^t e^{-i(t-s)\Lambda}N_j(s)ds,\ j\in\{2,3\}.
\end{equation}
Define the profile
\[
\Upsilon_\pm(t) = e^{\pm it\Lambda}U_\pm(t)\text{ and }
\Upsilon_j(t) = e^{it\Lambda}U_j(t),\text{ where }
\Lambda = |\nabla|^{1/2}\text{ and }j \in \{2, 3\}.
\]
Then the evolution equation for $\Upsilon:=\Upsilon_+$ is
\[
\Upsilon_t=e^{it\Lambda}(i\Lambda U+U_t)=e^{it\Lambda}N.
\]

By Lemma \ref{dispersiveTT*} (ii) and (\ref{growthX1}),
\[
\|e^{-is\Lambda}U(0)\|_{L^2([0,t])C_*^6}\lesssim \sqrt{\mathcal L}\cdot\ep_1.
\]
To bound $U_2$, we observe that $\Phi_{\mu\nu}(\xi_1,\xi_2)\neq0$ unless $\xi_1$ or $\xi_2$ or $\xi_1+\xi_2=0$, in which case $m_{\mu\nu}(\xi_1,\xi_2)=0$, so we can integrate $\Upsilon_2$ by parts in $s$ to get
\begin{align}
\label{F=W+H}
\Upsilon_2(t)&=\sum_{\mu,\nu=\pm} \left( W_{\mu\nu}(t)-W_{\mu\nu}(0)
-\int_0^t H_{\mu\nu}(s)ds \right),\\
\label{W2-F}
\mathcal FW_{\mu\nu}(\xi,t)&=C\int_{\xi_1+\xi_2=\xi}
e^{it\Phi_{\mu\nu}(\xi_1,\xi_2)}\frac{m_{\mu\nu}(\xi_1,\xi_2)}{i\Phi_{\mu\nu}(\xi_1,\xi_2)}\hat\Upsilon_\mu(\xi_1,t)\hat\Upsilon_\nu(\xi_2,t)d\xi_1,\\
\nonumber
\mathcal FH_{\mu\nu}(\xi,t)&=C\int_{\xi_1+\xi_2=\xi}
e^{it\Phi_{\mu\nu}(\xi_1,\xi_2)}\frac{m_{\mu\nu}(\xi_1,\xi_2)}{i\Phi_{\mu\nu}(\xi_1,\xi_2)}\\
&\times(\hat\Upsilon_\mu(\xi_1,t)e^{it\nu\Lambda(\xi_2)}\hat N_\nu(\xi_2,t)+e^{it\mu\Lambda(\xi_1)}\hat N_\mu(\xi_1,t)\hat\Upsilon_\nu(\xi_2,t))d\xi_1.
\label{H2-FN-NF}
\end{align}

We need to bound the $S^\infty$ norms of the multipliers in the equations above.
\begin{lemma}\label{m-Soo-lem}
For $k_1$, $k_2$, $k_3\in\Z$ we have
\begin{align}
\label{m-Soo}
\|m_{\mu\nu}\|_{S^\infty_{k_1,k_2;k_3}}&\lesssim 2^{(k_3+\max k_j+\min k_j)/2},\\
\|m_{\mu\nu}/\Phi_{\mu\nu}\|_{S^\infty_{k_1,k_2;k_3}}&\lesssim 2^{(k_3+\max k_j)/2}.
\label{m/F-Soo}
\end{align}
\end{lemma}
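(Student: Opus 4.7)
The plan is to exploit the null-form structure of the two generators of $m_{\mu\nu}$ listed in (\ref{m-prod}). First, I would establish the algebraic factorizations
\begin{align*}
|\xi_1+\xi_2||\xi_2|-(\xi_1+\xi_2)\cdot\xi_2 &= \tfrac{1}{2}(|\xi_1|+|\xi_2|-|\xi_1+\xi_2|)(|\xi_1|+|\xi_1+\xi_2|-|\xi_2|),\\
|\xi_1||\xi_2|+\xi_1\cdot\xi_2 &= \tfrac{1}{2}(|\xi_1+\xi_2|+|\xi_2|-|\xi_1|)(|\xi_1+\xi_2|+|\xi_1|-|\xi_2|),
\end{align*}
both of which follow from $2\xi_1\cdot\xi_2=|\xi_1+\xi_2|^2-|\xi_1|^2-|\xi_2|^2$ by completing a difference of squares, and which display each null-form numerator as a product of two nonnegative ``null differences'' of the form $|a|+|b|-|a+b|$.

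Next I would bound each null difference via the conjugate identity $|a|+|b|-|a+b|=2(|a||b|-a\cdot b)/(|a|+|b|+|a+b|)$, which, on the support of the frequency cutoffs, expresses it as a smooth, homogeneous quotient of pointwise size $\lesssim 2^{k_a+k_b-k_{(3)}}$, with $k_{(3)}:=\max(k_1,k_2,k_3)$. Multiplying the two factors, absorbing the $1/\sqrt{|\xi_j|}$ prefactors, and performing a short case check on which of $(k_1,k_2,k_3)$ realizes $k_{(1)}:=\min(k_1,k_2,k_3)$ (the two inequalities $2k_1+k_2/2+k_3-2k_{(3)}\le(k_3+k_{(3)}+k_{(1)})/2$ and $(k_1+k_2)/2+5k_3/2-2k_{(3)}\le(k_3+k_{(3)}+k_{(1)})/2$ each hold in every permutation by elementary manipulation), one obtains the pointwise $L^\infty$ bound $|m_{\mu\nu}|\lesssim 2^{(k_3+k_{(3)}+k_{(1)})/2}$ on the support.

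To upgrade to $S^\infty$, the key observation is that $\|\cdot\|_{S^\infty}$ is invariant under linear changes of frequency variables, so I would work in the coordinates $(\xi_1,\xi_3)$ with $\xi_3:=\xi_1+\xi_2$, in which the relevant cutoffs decouple as $\varphi_{k_1}(\xi_1)\varphi_{k_3}(\xi_3)$ (the remaining factor $\varphi_{k_2}(\xi_3-\xi_1)$ is controlled by the other two on the support). In these coordinates the constituents of $m_{\mu\nu}$ satisfy the homogeneous scaling $\|\partial_{\xi_1}^l\partial_{\xi_3}^{l'}m_{\mu\nu}\|_{L^\infty}\lesssim 2^{-lk_1-l'k_3}\|m_{\mu\nu}\|_{L^\infty}$ on the support, so Lemma \ref{Soo-Cn}(ii) directly gives (\ref{m-Soo}). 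The bound (\ref{m/F-Soo}) then follows from (\ref{m-Soo}), Lemma \ref{Soo-Cn}(i) and Lemma \ref{ps-1/F-Soo}(iii):
\[\|m_{\mu\nu}/\Phi_{\mu\nu}\|_{S^\infty_{k_1,k_2;k_3}}\le\|m_{\mu\nu}\|_{S^\infty_{k_1,k_2;k_3}}\|\Phi_{\mu\nu}^{-1}\|_{S^\infty_{k_1,k_2;k_3}}\lesssim 2^{(k_3+\max k_j+\min k_j)/2}\cdot 2^{-\min k_j/2}=2^{(k_3+\max k_j)/2}.\]

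The main obstacle is the $S^\infty$ upgrade: in the native $(\xi_1,\xi_2)$ coordinates, naive differentiation of $|a|+|b|-|a+b|$ produces $O(1)$ unit-vector terms that can exceed the null $L^\infty$ bound by a factor of $2^{k_{(3)}-k_{(1)}}$ in regimes where $k_3\ll\max(k_1,k_2)$. The remedy is to pass to $(\xi_1,\xi_3)$ coordinates, which replaces the large unconstrained $\xi_2$-variation with a $\xi_3$-variation confined to a ball of radius $\sim 2^{k_3}$, restoring the homogeneous derivative scaling needed to invoke Lemma \ref{Soo-Cn}(ii).
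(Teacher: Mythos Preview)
Your derivation of (\ref{m/F-Soo}) from (\ref{m-Soo}) via Lemma~\ref{Soo-Cn}(i) and Lemma~\ref{ps-1/F-Soo}(iii) is exactly the paper's argument, and your null-form factorizations and pointwise bound for (\ref{m-Soo}) are correct and elegant. The gap is in the $S^\infty$ upgrade. Your claim that ``$\varphi_{k_2}(\xi_3-\xi_1)$ is controlled by the other two on the support'' and that the constituents obey $\|\partial_{\xi_1}^l\partial_{\xi_3}^{l'}m_{\mu\nu}\|_{L^\infty}\lesssim 2^{-lk_1-l'k_3}\|m_{\mu\nu}\|_{L^\infty}$ both fail in the regime $k_2\ll k_1\approx k_3$. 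There, on the support of $\varphi_{k_1}(\xi_1)\varphi_{k_3}(\xi_3)$ the difference $\xi_3-\xi_1$ ranges over a full ball of radius $\sim 2^{k_1}$, so the cutoff $\varphi_{k_2}$ is genuinely restrictive; moreover the multiplier depends on $|\xi_2|=|\xi_3-\xi_1|\sim 2^{k_2}$ through $1/\sqrt{|\xi_2|}$ and through the null factor $|\xi_1|+|\xi_2|-|\xi_3|=O(2^{k_2})$, whose $\partial_{\xi_1}$-derivative (at fixed $\xi_3$) is $\hat\xi_1-\hat\xi_2=O(1)$, giving a loss of $2^{-k_2}$ per derivative rather than $2^{-k_1}$. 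So the pass to $(\xi_1,\xi_3)$ coordinates trades the $k_3=\min$ obstruction for a $k_2=\min$ obstruction; it does not give a uniform argument.

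The fix is to choose the coordinate pair adaptively---use $(\xi_1,\xi_2)$ when $k_3$ is not the minimum, $(\xi_1,\xi_3)$ when $k_2$ is not the minimum, $(\xi_2,\xi_3)$ when $k_1$ is not the minimum---and verify the derivative bounds in each regime via your factorization (the checks go through). This is effectively a case analysis, which is precisely what the paper does: it splits according to which of $|\xi_1|,|\xi_2|,|\xi_1+\xi_2|$ is smallest and, in each case, exhibits $m_1$ (resp.\ $m_2$) divided by an explicit monomial as a smooth function of the large variable and the \emph{direction} of the small variable, then invokes Lemma~\ref{Soo-Cn}(ii). Your factorization is a cleaner way to read off the sizes, but the $S^\infty$ step still requires the same regime-by-regime treatment.
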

\begin{proof}
This is shown in section \ref{MulEst}.
\end{proof}

Let $m_{\mu\nu\rho}=m_{\mu\sigma}m_{\nu\rho}/\Phi_{\mu\sigma}$.
Then by Lemma \ref{m-Soo-lem} and Lemma \ref{Soo-Cn} (i), for $k$, $k_j$, $l\in\Z$ we have
\begin{equation}\label{mk3-Soo}
\|\varphi_l(\xi_2+\xi_3)m_{\mu\nu\rho}(\xi_1,\xi_2,\xi_3)\|_{S^\infty_{k_1,k_2,k_3;k}}\lesssim 2^{(k+l+3\max k_j)/2},
\end{equation}
Similar bounds hold if $m_{\mu\nu\rho}=m_{\sigma\rho}m_{\mu\nu}/\Phi_{\sigma\rho}$.

Using these bounds, we can estimate the $L^2([0,t])C_*^6$ norms of $U_2$ (see Proposition \ref{W2-C6} and Proposition \ref{H2-C5} below) and $U_3$ (see Proposition \ref{U3-C5} below). Putting them together we get a proof of (\ref{growthX2}).

To set up the notation for later proofs, we rewrite (\ref{W2-F}) as
\begin{equation}\label{W2-U}
e^{-it\Lambda(\xi)}\hat W_{\mu\nu}(\xi,t)=C\int_{\xi_1+\xi_2=\xi}
\frac{m_{\mu\nu}(\xi_1,\xi_2)}{i\Phi_{\mu\nu}(\xi_1,\xi_2)}\hat U_\mu(\xi_1,t)\hat U_\nu(\xi_2,t)d\xi_1.
\end{equation}
We view $N_{\mu\nu}$ and $W_{\mu\nu}$ as bilinear forms of $\Upsilon_\mu$ and $\Upsilon_\nu$, and decompose
\begin{align}\label{W2k}
W_{\mu\nu}&=\sum_{k_1,k_2\in\Z} W^{\mu\nu}_{k_1,k_2}, &
W^{\mu\nu}_{k_1,k_2}&=W_{\mu\nu}[P_{k_1}\Upsilon_\mu,P_{k_2}\Upsilon_\nu].
\end{align}
By symmetry we can assume $k_1\le k_2$. Since by (\ref{m/F-Soo}) and $S^\infty\subset L^\infty$,
\begin{equation}\label{m-C0}
|(m_{\mu\nu}/\Phi_{\mu\nu})(\xi_1,\xi_2)|\lesssim \sqrt{|\xi_1+\xi_2|(|\xi_1|+|\xi_2|)},
\end{equation}
we have
\begin{align}
\nonumber
\|W^{\mu\nu}_{k_1,k_2}\|_{L^2}
&\lesssim 2^{k_2}\|\varphi_{k_1}\mathcal FU\|_{L^1}\|\varphi_{k_2}\mathcal FU\|_{L^2}\\
\label{WL2}
&\lesssim 2^{-(N-1/2)k_2^++k_2+k_1}\|P_{k_1}U\|_{L^2}\|P_{k_2}U\|_{H^{N-1/2}},\\
\nonumber
\|P_kW^{\mu\nu}_{k_1,k_2}\|_{L^2}
&\lesssim 2^k\|\mathcal FW^{\mu\nu}_{k_1,k_2}\|_{L^\infty}
\lesssim 2^{(k_2+3k)/2}\|P_{k_1}U\|_{L^2}\|P_{k_2}U\|_{L^2}\\
&\lesssim 2^{-(N-1/2)k_2^++(k_2+3k)/2}\|P_{k_1}U\|_{L^2}\|P_{k_2}U\|_{H^{N-1/2}}.
\label{WL22}
\end{align}

\subsection{Bounding the quadratic boundary terms $W_{\mu\nu}$}
\begin{proposition}\label{W2-C6}
Assume $N\ge9$ and (\ref{growthX1}). Then
\[
\|e^{-is\Lambda}(W_{\mu\nu}(s)-W_{\mu\nu}(0))\|_{L^2([0,t])C_*^6}
\lesssim \sqrt{\mathcal L}\cdot\ep_1^2+\ep_1\ep_2.
\]
\end{proposition}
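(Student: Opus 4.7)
The plan is to split
\[
\|e^{-is\Lambda}(W_{\mu\nu}(s)-W_{\mu\nu}(0))\|_{L^2([0,t])C_*^6}\le\|e^{-is\Lambda}W_{\mu\nu}(s)\|_{L^2([0,t])C_*^6}+\|e^{-is\Lambda}W_{\mu\nu}(0)\|_{L^2([0,t])C_*^6}
\]
and estimate the two pieces separately. The second piece is a free linear evolution of the fixed initial datum $W_{\mu\nu}(0)$, so Lemma \ref{dispersiveTT*} (ii), together with the embedding $\|\cdot\|_{C_*^6}\lesssim\|\cdot\|_{W^{6,\infty}}$, bounds it by $\sqrt{\mathcal L}\,\|W_{\mu\nu}(0)\|_{H^7}$. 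To estimate $\|W_{\mu\nu}(0)\|_{H^7}$ I will use the bilinear representation (\ref{W2-U}) with the multiplier bound (\ref{m/F-Soo}) plugged into Lemma \ref{paraprod}; summing the resulting dyadic pieces (\ref{WL2})--(\ref{WL22}) and using $\|U(0)\|_{H^N}\lesssim\ep$ from (\ref{HN0}), the hypothesis $N\ge 9$ provides enough margin for the derivative loss $2^{(k+k_2)/2}$ in the multiplier so that $\|W_{\mu\nu}(0)\|_{H^7}\lesssim\ep^2\lesssim\ep_1^2$, yielding the $\sqrt{\mathcal L}\cdot\ep_1^2$ contribution.

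For the first piece, the key identity from (\ref{W2-U}) is that at fixed $s$,
\[
e^{-is\Lambda}W_{\mu\nu}(s)=B_{\mu\nu}(U_\mu(s),U_\nu(s)),
\]
where $B_{\mu\nu}$ is the bilinear operator with multiplier $m_{\mu\nu}/(i\Phi_{\mu\nu})$. I aim to prove the pointwise-in-time bound
\[
\|B_{\mu\nu}(U(s),U(s))\|_{C_*^6}\lesssim\|U(s)\|_{C_*^6}\|U(s)\|_{H^N},
\]
from which, by taking $L^2_s$ and applying (\ref{growthX1}) together with Proposition \ref{U-tld=U}, the contribution is $\le\|U\|_{L^2([0,t])C_*^6}\|\tilde U\|_{L^\infty([0,t])H^N}\lesssim\ep_1\ep_2$.

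To establish the pointwise bound I decompose each factor $U_\mu=\sum_{k_1}P_{k_1}U_\mu$, $U_\nu=\sum_{k_2}P_{k_2}U_\nu$, assume $k_1\le k_2$ by symmetry, and distinguish the low-high ($k_1\le k_2-5$) and high-high ($|k_1-k_2|\le 5$) paraproduct regimes. For output frequency $k\ge 0$ in the low-high regime, Lemma \ref{paraprod} applied with $L^\infty\cdot L^2\to L^2$, followed by Bernstein's inequality $\|P_kf\|_{L^\infty}\lesssim 2^k\|P_kf\|_{L^2}$ and the $S^\infty$ bound from (\ref{m/F-Soo}), gives $\|P_kB_{\mu\nu}(P_{k_1}U,P_{k_2}U)\|_{L^\infty}\lesssim 2^{(3k+k_2)/2}\|P_{k_1}U\|_{L^\infty}\|P_{k_2}U\|_{L^2}$; using $k_2=k+O(1)$, $\|P_{k_2}U\|_{L^2}\lesssim 2^{-Nk}\|U\|_{H^N}$, and $\sum_{k_1\le k-5}\|P_{k_1}U\|_{L^\infty}\lesssim\|U\|_{C_*^6}$ (via Bernstein for $k_1\le 0$ and the $C_*^6$ definition for $k_1>0$), after multiplying by the $2^{6k}$ $C_*^6$-weight the contribution is $\lesssim 2^{(8-N)k}\|U\|_{C_*^6}\|U\|_{H^N}$, uniform in $k$ when $N\ge 9$. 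The high-high regime is handled analogously and is easier, gaining an extra $2^{-12k_2^+}$ from $\|P_{k_j}U\|_{L^\infty}\lesssim 2^{-6k_j^+}\|U\|_{C_*^6}$ on both factors. For output frequency $k\le 0$, I use the direct bound $\|P_{\le 0}B_{\mu\nu}(U,U)\|_{L^\infty}\lesssim\|U\|_{L^\infty}^2\lesssim\|U\|_{C_*^6}\|U\|_{H^N}$, where the last step pairs the right factors for the subsequent Hölder inequality.

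The main obstacle is the careful tracking of derivative losses in the high-frequency-output regime: the multiplier contributes $2^{(k+k_2)/2}$, Bernstein contributes another $2^k$, and the $C_*^6$-weight contributes $2^{6k}$, totaling $2^{(7-N)k}$ of excess that must be absorbed by the $H^N$ decay $2^{-Nk_2^+}$; this is precisely why $N\ge 9$ is needed. A secondary but essential subtlety is arranging the pointwise estimate so that, after taking $L^2_s$ and applying Hölder's inequality, the two factors pair as $L^2([0,t])C_*^6\cdot L^\infty([0,t])H^N$ rather than as two $L^2_s$ factors -- this is what produces the clean $\ep_1\ep_2$ and avoids a spurious $\sqrt{t}$ growth.
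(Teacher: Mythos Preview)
Your treatment of the initial piece $W_{\mu\nu}(0)$ is fine, and the overall strategy for $W_{\mu\nu}(s)$ is the right one: reduce to a pointwise bound $\|e^{-is\Lambda}W_{\mu\nu}(s)\|_{C_*^6}\lesssim\|U(s)\|_{C_*^6}\|U(s)\|_{H^N}$ so that H\"older in $s$ pairs $L^2_sC_*^6$ with $L^\infty_sH^N$. However, your direct dyadic argument for this pointwise bound has a genuine gap at low input frequencies.

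The problematic step is the claim $\sum_{k_1\le k-5}\|P_{k_1}U\|_{L^\infty}\lesssim\|U\|_{C_*^6}$, specifically the part $k_1\le 0$. Bernstein gives $\|P_{k_1}U\|_{L^\infty}\lesssim 2^{k_1}\|P_{k_1}U\|_{L^2}$, and summing yields only $\sum_{k_1\le0}\|P_{k_1}U\|_{L^\infty}\lesssim\|U\|_{L^2}$, which is \emph{not} controlled by $\|U\|_{C_*^6}$ (think of a smooth truncation of $\sgn(x_1)$, for which each $\|P_{k_1}U\|_{L^\infty}$ is bounded below uniformly in $k_1\le0$). If you accept $\|U\|_{L^2}$ here, the pointwise bound acquires a term $\|U(s)\|_{L^2}\|U(s)\|_{H^N}\lesssim\ep_1^2$ that is constant in $s$; its $L^2([0,t])$ norm is $\sqrt t\,\ep_1^2$, far worse than $\ep_1\ep_2$. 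Nor can you group the low frequencies into a single block $P_{\le0}U$ and apply Lemma~\ref{paraprod} once, because $m_{\mu\nu}/\Phi_{\mu\nu}$ has an angular discontinuity at $\xi_1=0$ (e.g.\ $m_2/\Phi_{++}\to -(|\xi_2|+(\xi_1/|\xi_1|)\cdot\xi_2)$ as $|\xi_1|\to0$), so its $S^\infty$ norm on $\{|\xi_1|\lesssim1,\ |\xi_2|\sim2^{k_2}\}$ is infinite. The same issue resurfaces in your claim $\|P_{\le0}B_{\mu\nu}(U,U)\|_{L^\infty}\lesssim\|U\|_{L^\infty}^2$.

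The paper avoids this by a different route: it first passes through $H^7$ via the embedding $H^7\hookrightarrow C_*^6$ and unitarity of $e^{-is\Lambda}$, then applies Theorem~C.1 of \cite{GeMaSh2} (a Coifman--Meyer type bound for multipliers in $\mathcal B_1$) with the off-diagonal exponents $W^{8,2.4}\times L^{12}\to H^7$, and finally interpolates $\|U\|_{W^{8,2.4}}\|U\|_{L^{12}}\lesssim\|U\|_{C_*^6}\|U\|_{H^{8.5}}$. The Coifman--Meyer machinery (via the Taylor expansion of the symbol in $|\eta|/|\xi|$) is precisely what absorbs the low-frequency angular singularity that your $L^\infty\times L^2$ dyadic argument cannot handle.
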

\begin{proof}
Since $W_{\mu\nu}$ loses only one derivative, by Theorem C.1 of \cite{GeMaSh2}, Sobolev embedding, Proposition \ref{U-tld=U}, (\ref{growthX1}) and the fact that $N\ge9$,
\begin{equation}\label{W0-H7}
\|W_{\mu\nu}(0)\|_{H^7}\lesssim \|U(0)\|_{W^{8,4}}\|U(0)\|_{L^4}
\lesssim \|U(0)\|_{H^{8.5}}^2\lesssim\ep_1^2.
\end{equation}
Then by Lemma \ref{dispersiveTT*} (ii),
\begin{equation}\label{W0-L2C6}
\|e^{-is\Lambda}W_{\mu\nu}(0)\|_{L^2([0,t])C_*^6}
\lesssim \sqrt{\mathcal L}\cdot\ep_1^2.
\end{equation}
Similarly to (\ref{W0-H7}),
\begin{equation}\label{Ws-H7}
\|e^{-is\Lambda}W_{\mu\nu}(s)\|_{C_*^6}
\lesssim \|W_{\mu\nu}(s)\|_{H^7}
\lesssim \|U(s)\|_{W^{8,2.4}}\|U(s)\|_{L^{12}}.
\end{equation}
Since $W^{8,2.4}=F^8_{2.4,2}\supset B^8_{2.4,2}$ (see Section 2.3.2 and 2.3.3 of \cite{Tr2}), which interpolates between $B^6_{\infty,\infty}=C_*^6$ and $B^{8.4}_{2,5/3}\supset H^{8.5}$, and $L^{12}$ interpolates in the same way between $L^2$ and $L^\infty$, we have
\begin{equation}\label{Ws-Wkp-interpol}
\|e^{-is\Lambda}W_{\mu\nu}(s)\|_{C_*^6}
\lesssim \|U(s)\|_{C_*^6}\|U(s)\|_{H^{8.5}}.
\end{equation}
Since $N\ge9$, by (\ref{growthX1}) and Proposition \ref{U-tld=U} we have
$\|U(s)\|_{H^{8.5}}\lesssim\ep_1$. Then
\begin{equation}\label{Ws-L2C6}
\|e^{-is\Lambda}W_{\mu\nu}(s)\|_{L^2([0,t])C_*^6}
\lesssim \|U(s)\|_{L^2([0,t])C_*^6}\ep_1\lesssim \ep_1\ep_2.
\end{equation}
Combining (\ref{W0-L2C6}) and (\ref{Ws-L2C6}) shows the claim.
\end{proof}

\subsection{Bounding the cubic bulk term $U_3$}
\begin{proposition}\label{U3-C5}
Assume $N\ge11$ and (\ref{growthX1}). Then
\[
\|U_3\|_{L^2([0,t])C_*^6}\lesssim \sqrt{\mathcal L}\ep_1\ep_2^2.
\]
\end{proposition}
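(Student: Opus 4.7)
The plan is to combine the inhomogeneous version of the Strichartz estimate from Lemma \ref{dispersiveTT*} (ii) with a trilinear $H^7$ bound on $N_3$. Starting from the Duhamel formula $U_3(t) = \int_0^t e^{-i(t-s)\Lambda} N_3(s)\,ds$ in (\ref{Duhamel}), I would apply Minkowski's inequality to pull the $L^2_t C_*^6$ norm inside the $ds$ integral, then use the free evolution estimate of Lemma \ref{dispersiveTT*} (ii) on each time-slice $\|e^{-i(t-s)\Lambda} N_3(s)\|_{L^2_{t\in[s,T]} W^{6,\infty}}$, together with the embedding $W^{6,\infty} \subset C_*^6$. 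This yields the inhomogeneous Strichartz bound
\[
\|U_3\|_{L^2([0,t]) C_*^6} \lesssim \sqrt{\mathcal L}\, \|N_3\|_{L^1([0,t]) H^7}.
\]

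The crux of the proof is therefore to establish the trilinear energy estimate
\[
\|N_3\|_{H^7} \lesssim \|U\|_{C_*^6}^2\, \|U\|_{H^N}.
\]
Since each term in the expansion (\ref{N23}) of $N_3$ is at least cubic in $(h,|\nabla|^{1/2}\phi)$, the strategy is to place the $H^N$ norm on whichever factor carries the most derivatives and leave $C_*^r$-type norms (with $r \le 6$) on the remaining two. For the $B_3$ piece I would invoke Proposition \ref{Taylor2} (ii) with $r = 7$, whose hypothesis $\|\nabla h\|_{H^{9.5}} \lesssim \ep_1$ follows from $N \ge 11$. For the $|\nabla h|^2 B$ term a Moser/tame product rule together with (\ref{Gh-Hs}) and (\ref{Gh-Cr}) suffices. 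For $|\nabla|^{1/2}(B^2 - (|\nabla|\phi)^2)$ I would factor as $|\nabla|^{1/2}\bigl((B - |\nabla|\phi)(B + |\nabla|\phi)\bigr)$ and exploit that $B - |\nabla|\phi$ is itself quadratic in $U$ by Proposition \ref{Taylor1} (i) and Proposition \ref{Taylor2} (i). The quartic remainder $|\nabla|^{1/2}(|\nabla h|^2 B^2)$ is smaller still, handled by pairing two $\|U\|_{C_*^1}$ factors with the standard quadratic product estimate.

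Finally, Cauchy--Schwarz in time and the bootstrap assumptions (\ref{growthX1}) (passing between $U$ and $\tilde U$ via Proposition \ref{U-tld=U}) give
\[
\|N_3\|_{L^1([0,t]) H^7} \le \|U\|_{L^2_t C_*^6}^2\, \|U\|_{L^\infty_t H^N} \lesssim \ep_2^2\, \ep_1,
\]
which combined with the Strichartz reduction yields the stated inequality. The main obstacle, really the only substantive point beyond setting up Strichartz, is the derivative bookkeeping in the trilinear estimate: one has to verify that every low-regularity factor really requires only a $C_*^r$ norm with $r \le 6$, which is ultimately why the threshold $N \ge 11$ closes everything with slack to spare.
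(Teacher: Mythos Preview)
Your proposal is correct and follows essentially the same route as the paper: Minkowski plus Lemma \ref{dispersiveTT*} (ii) reduces to $\|N_3\|_{L^1_t H^7}$, and then a trilinear bound on $N_3$ (using Proposition \ref{Taylor2} (ii) for $B_3$, the factorization $B^2-(|\nabla|\phi)^2=(B-|\nabla|\phi)(B+|\nabla|\phi)$, and Sobolev multiplication for the remaining terms) closes via Cauchy--Schwarz in time. The paper records the slightly sharper form $\|N_3\|_{H^r}\lesssim \|U\|_{C_*^2}^2\|\tilde U\|_{H^{r+3}}$ (see (\ref{N3-Hs})), placing only $C_*^2$ on the low-frequency factors and $H^{10}$ on the high one, but your version with $C_*^6$ and $H^N$ is entirely adequate for the stated conclusion.
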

\begin{proof}
By Sobolev multiplication, (\ref{Gh-Hs}), (\ref{Gh-Cr2}), (\ref{Gh-Cr}), (\ref{BV-Hs2}) and Proposition \ref{U-tld=U}, if $\|h\|_{H^{r+3}}<c_r$ is sufficiently small then
\begin{align*}
&\|B^2-(|\nabla|\phi)^2\|_{H^{r+1/2}}\\
\lesssim_r&\|B+|\nabla|\phi\|_{H^{r+1/2}}\|B-|\nabla|\phi\|_{L^\infty}
+\|B+|\nabla|\phi\|_{L^\infty}\|B-|\nabla|\phi\|_{H^{r+1/2}}\\
\lesssim_r&\||\nabla|^{1/2}\phi\|_{H^{r+2}}\|h\|_{C_*^{3/2}}\||\nabla|^{1/2}\phi\|_{C_*^2}+\||\nabla|^{1/2}\phi\|_{C_*^1}^2\|h\|_{H^{r+3/2}}
\lesssim \|U\|_{C_*^2}^2\|\tilde U\|_{H^{r+5/2}}.
\end{align*}
Since $\|\nabla h\|_{L^\infty}\lesssim \|h\|_{H^3}\lesssim 1$,
the same bound applies to $|\nabla h|^2B$ and $|\nabla h|^2B^2$.
Putting this and (\ref{B3-Hs}) into (\ref{N23}) we know that,
if $r>1$ and $\|h\|_{H^{r+7/2}}<c_r$ is sufficiently small then
\begin{equation}\label{N3-Hs}
\|N_3\|_{H^r}\lesssim_r \|U\|_{C_*^2}^2\|U\|_{H^{r+5/2}}
\lesssim_r \|U\|_{C_*^2}^2\|\tilde U\|_{H^{r+3}}.
\end{equation}
By Lemma \ref{dispersiveTT*} (ii), Bernstein's inequality and $N \ge 11$ we then have
\begin{equation}\label{U3-L2Loo}
\begin{aligned}
\|U_3\|_{L^2([0,t])C_*^6}
&\le\int_0^t \|e^{-i(s-s')\Lambda}N_3(s')\|_{L^2_s([s',t])C_*^6}ds'\\
&\lesssim \sqrt{\mathcal L}\|e^{is'\Lambda}N_3(s')\|_{L^1_{s'}([0,t])H^7}\\
&\lesssim \sqrt{\mathcal L}\|U\|_{L^2([0,t])C_*^2}^2\|\tilde U\|_{L^\infty([0,t])H^{10}}
\end{aligned}
\end{equation}
from which the result follows.
\end{proof}

\subsection{Bounding the cubic bulk term $H_{\mu\nu}$}
\begin{proposition}\label{H2-C5}
Assume $N\ge11$ and (\ref{growthX1}). Then
\[
\left\| e^{-is\Lambda}\int_0^s H_{\mu\nu}(s')ds' \right\|_{L^2_s([0,t])C_*^6}\lesssim \sqrt{\mathcal L}\ep_1\ep_2^2.
\]
\end{proposition}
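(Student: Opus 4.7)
The plan is to mirror the strategy of Proposition \ref{U3-C5}. First, I would set $\tilde H_{\mu\nu}(s') = e^{-is'\Lambda}H_{\mu\nu}(s')$ (which preserves the $H^7$ norm) and exchange the order of integration; Minkowski's inequality together with Lemma \ref{dispersiveTT*} (ii) then give
\[
\left\|\int_0^s e^{-i(s-s')\Lambda}\tilde H_{\mu\nu}(s')\,ds'\right\|_{L^2_s([0,t])C_*^6}
\lesssim \sqrt{\mathcal L}\,\|H_{\mu\nu}\|_{L^1([0,t])H^7},
\]
so matters reduce to the estimate $\|H_{\mu\nu}\|_{L^1 H^7}\lesssim \ep_1\ep_2^2$. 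Since $\mu\Lambda(\xi_1)+\Phi_{\mu\nu}(\xi_1,\xi_2)+\nu\Lambda(\xi_2)=\Lambda(\xi_1+\xi_2)$, inspection of (\ref{H2-FN-NF}) shows that $\tilde H_{\mu\nu}$ is a bilinear form in $U_\mu$ and $N_\nu$ (plus a symmetric counterpart) with multiplier $m_{\mu\nu}/(i\Phi_{\mu\nu})$.

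Next I would split $N_\nu=N_{2,\nu}+N_{3,\nu}$ as in (\ref{N23}), inducing $H_{\mu\nu}=H_{\mu\nu,2}+H_{\mu\nu,3}$. The piece $H_{\mu\nu,3}$ is already quartic in $U$. Viewing it as a bilinear product with at most one total derivative loss via (\ref{m/F-Soo}), paraproduct estimates from Lemma \ref{paraprod} together with the high-regularity bound (\ref{N3-Hs}) give
\[
\|H_{\mu\nu,3}\|_{H^7}
\lesssim \|U\|_{C_*^6}\|N_3\|_{H^8}+\|U\|_{H^8}\|N_3\|_{C_*^6}
\lesssim \|U\|_{C_*^6}\,\|U\|_{C_*^2}^2\,\|\tilde U\|_{H^{11}}.
\]
Since $N\ge 11$ makes $\|\tilde U\|_{L^\infty H^{11}}\lesssim \ep_1$, estimating one $\|U\|_{C_*^2}$ factor in $L^\infty_t$ via Sobolev embedding and (\ref{growthX1}) and the remaining two factors in $L^2_t$ yields $\|H_{\mu\nu,3}\|_{L^1 H^7}\lesssim \ep_1^2\ep_2^2\le \ep_1\ep_2^2$.

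The genuinely cubic piece $H_{\mu\nu,2}$ is the main contribution. Here I would substitute the bilinear expression of $N_{2,\nu}$ to obtain the trilinear form
\[
\mathcal F H_{\mu\nu,2}(\xi)=C^2\sum_{\rho,\sigma=\pm}\iint_{\xi_1+\xi_2+\xi_3=\xi}
\frac{m_{\mu\nu}(\xi_1,\xi_2+\xi_3)}{i\Phi_{\mu\nu}(\xi_1,\xi_2+\xi_3)}\,m_{\rho\sigma}(\xi_2,\xi_3)\,
\hat U_\mu(\xi_1)\hat U_\rho(\xi_2)\hat U_\sigma(\xi_3)\,d\xi_1\,d\xi_2
\]
(plus its symmetric counterpart). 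After localizing each $\xi_j$ to scale $2^{k_j}$ and $\xi_2+\xi_3$ to scale $2^l$, Lemma \ref{m-Soo-lem} and Lemma \ref{Soo-Cn} (i) will bound the $S^\infty$ norm of the combined symbol along the lines of (\ref{mk3-Soo}), at a total derivative cost of at most $5/2$. Allocating that cost together with the output weight $\langle\xi\rangle^7$ entirely to the highest-frequency input and placing the other two inputs in $L^\infty$, Lemma \ref{paraprod} and Cauchy--Schwarz summation in the frequency parameters give
\[
\|H_{\mu\nu,2}(s)\|_{H^7}\lesssim \|U(s)\|_{C_*^6}^2\,\|U(s)\|_{H^{10}}.
\]
Since $N\ge 11$, Proposition \ref{U-tld=U} ensures $\|U\|_{L^\infty H^{10}}\lesssim \ep_1$, and H\"older in time with $\|U\|_{L^2 C_*^6}\le \ep_2$ then finishes the bound.

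The main obstacle will be the dyadic bookkeeping for $H_{\mu\nu,2}$: one must verify that the $S^\infty$ estimate on the combined multiplier is summable over the low-frequency inputs and that the derivative-loss budget of $5/2$ is indeed attainable without logarithmic divergences. This relies crucially on the null-type cancellation (\ref{m/F-Soo}) taming the near-resonance of $\Phi_{\mu\nu}$ at degenerate frequency configurations, along with the low-frequency decay of $m_{\rho\sigma}$ encoded in (\ref{m-Soo}). The argument will be analogous to, but slightly simpler than, the trilinear bookkeeping behind (\ref{Is}) and (\ref{IQ}) in Propositions \ref{Es} and \ref{Eq}.
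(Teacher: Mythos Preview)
Your reduction to the $L^1_tH^7$ bound on $H_{\mu\nu}$ via Lemma \ref{dispersiveTT*} (ii) is exactly right, and your overall strategy is sound, but the paper proceeds quite differently after that reduction. Rather than splitting $N=N_2+N_3$ and passing to a trilinear paraproduct for $H_{\mu\nu,2}$, the paper keeps $N$ intact, invokes Theorem~C.1 of \cite{GeMaSh2} (the Coifman--Meyer-type bound for the bilinear operator with symbol $m_{\mu\nu}/\Phi_{\mu\nu}$) at the \emph{non-endpoint} Lebesgue exponents $(22,2.2)$ to get
\[
\|H_{\mu\nu}\|_{H^7}\lesssim \|N\|_{L^{22}}\|U\|_{W^{8,2.2}}+\|U\|_{L^{22}}\|N\|_{W^{8,2.2}},
\]
and then interpolates each factor between $L^2/H^s$ and $L^\infty/C_*^r$ (as in (\ref{Ws-Wkp-interpol})) to produce exactly $\ep_1\|U\|_{C_*^6}^2$ on the right. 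This sidesteps the dyadic bookkeeping you flag as the ``main obstacle'': the low-frequency summability in $k_1$ that you worry about for the trilinear symbol is absorbed once and for all by the bilinear Coifman--Meyer machinery at $p\in(1,\infty)$, without ever unpacking $N_2$. Your route should also close---your bilinear treatment of $H_{\mu\nu,3}$ is fine, and for $H_{\mu\nu,2}$ one can in fact stay bilinear (with $N_2$ as one input) rather than go trilinear, using $\|N_2\|_{H^8}\lesssim\|U\|_{L^\infty}\|U\|_{H^{10}}+\|U\|_{H^9}\|U\|_{C_*^2}$ from product estimates---but the paper's interpolation trick is shorter and avoids having to verify summability in the low-frequency input, which is precisely the point you identify as delicate.
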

\begin{proof}
A similar computation to that in (\ref{U3-L2Loo}) shows that it suffices to show
\begin{equation}\label{H-L1H7}
\|H_{\mu\nu}(s)\|_{L_s^1([0,t])H^7} \lesssim \ep_1\ep_2^2.
\end{equation}

Since $W_{\mu\nu}$ loses only one derivative, by Theorem C.1 of \cite{GeMaSh2},
\begin{equation}\label{H-H7}
\|H_{\mu\nu}\|_{H^7}\lesssim \|N\|_{L^{22}}\|U\|_{W^{8,2.2}}+\|U\|_{L^{22}}\|N\|_{W^{8,2.2}}.
\end{equation}
We have
\begin{equation}\label{N-Lp-interpol}
\|N\|_{L^{22}}\le\|N\|_{L^2}^{1/11}\|N\|_{L^\infty}^{10/11}.
\end{equation}
By (\ref{Gh-Hs2}) and Proposition \ref{U-tld=U},
if $r>1$ and $\|h\|_{H^{r+5/2}}<c_r$ is small enough then
\begin{equation}\label{N-Hr}
\|(G(h)-|\nabla|)\phi\|_{H^r}\lesssim \|U\|_{C_*^1}\|U\|_{H^{r+3/2}}
\lesssim \|U\|_{C_*^1}\|\tilde U\|_{H^{r+2}}.
\end{equation}
By (\ref{Gh-Cr}), (\ref{Gh-Hs}) and Sobolev multiplication,
the same bound holds for the second term of (\ref{Ut}), and hence $N$. Thus
$\|N\|_{L^2}\lesssim \ep_1\|U\|_{C_*^1}$.
For the same reason as (\ref{N-Cr}), $\|N\|_{L^\infty}\lesssim\|U\|_{C_*^2}^2$. Putting these two bounds into (\ref{N-Lp-interpol}) we get
\begin{equation}\label{N-Lp}
\|N\|_{L^{22}}\lesssim \ep_1^{1/11}\|U\|_{C_*^2}^{21/11}.
\end{equation}
By the same interpolation argument as used in (\ref{Ws-Wkp-interpol}),
\begin{equation}\label{U-W8q}
\|U\|_{W^{8,2.2}}\lesssim \|U\|_{C_*^6}^{1/11}\|U\|_{H^{8.2+}}^{10/11}
\lesssim \|U\|_{C_*^6}^{1/11}\ep_1^{10/11}.
\end{equation}
We have
\begin{equation}\label{U-Lp}
\|U\|_{L^{22}}\le\|U\|_{L^2}^{1/11}\|U\|_{L^\infty}^{10/11}
\lesssim\ep_1^{1/11}\|U\|_{L^\infty}^{10/11}.
\end{equation}
Similarly to (\ref{U-W8q}) we have
\[
\|N\|_{W^{8,2.2}}\lesssim \|N\|_{C_*^4}^{1/11}\|N\|_{H^{8.4+}}^{10/11}
\lesssim \|N\|_{C_*^4}^{1/11}\|N\|_{H^{8.5}}^{10/11}.
\]
By (\ref{N-Cr}), $\|N\|_{C_*^4}\lesssim \|U\|_{C_*^6}^2$. By (\ref{N-Hr}) and $N\ge11$, $\|N\|_{H^{8.5}}\lesssim \|U\|_{C_*^1}\ep_1$, so
\begin{equation}\label{N-W8q}
\|N\|_{W^{8,2.2}}\lesssim \|U\|_{C_*^6}^{12/11}\ep_1^{10/11}.
\end{equation}

Now putting (\ref{N-Lp}), (\ref{U-W8q}), (\ref{U-Lp}) and (\ref{N-W8q}) into (\ref{H-H7}) we get
\[
\|H_{\mu\nu}\|_{H^7}\lesssim \ep_1\|U\|_{C_*^6}^2.
\]
Integrating in $t$ gives the result.
\end{proof}

\section{$L^2L^\infty$ and $Z$-norm estimates}\label{ZEst}
This section is devoted to the proof of the $L^2L^\infty$ norm estimate (\ref{growth-L2Loo}) and the $Z$ norm estimate (\ref{growthZ2}).

\subsection{Integration by parts in phase space}
To bound the $Z$ norm we need a lemma to integrate by parts in phase space.
\begin{lemma}[Lemma 6.1 of \cite{Zh}]\label{int-part}
Let $0<\ep\le1/\ep\le K$. Suppose $f$, $g:\R^d\to\R$ satisfies
\[
|\nabla f|\ge 1_{\supp g},\text{ and for all }L\ge2,\ 
|\nabla^L f|\lesssim_L \ep^{1-L}\text{ on }\supp g.
\]
Then
\[
\left| \int e^{iKf}g \right|\lesssim_{d,L} (K\ep)^{-L} \sum_{l=0}^L \ep^l\|\nabla^lg\|_{L^1}.
\]
\end{lemma}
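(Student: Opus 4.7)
The plan is a standard non-stationary phase integration by parts, iterated $L$ times. The key observation is that the first-order operator
\[
\mathcal{D} u = \frac{\nabla f\cdot \nabla u}{iK|\nabla f|^2}
\]
is well-defined on $\supp g$ (since $|\nabla f|\ge 1$ there) and satisfies $\mathcal{D}(e^{iKf}) = e^{iKf}$ pointwise. Its formal adjoint is
\[
\mathcal{D}^* u = -\frac{1}{iK}\nabla\cdot\left(\frac{\nabla f}{|\nabla f|^2}u\right).
\]
Smoothly extending the coefficient $\nabla f/|\nabla f|^2$ outside $\supp g$ (or approximating $g$ by $C_c^\infty$ functions to kill boundary terms), iterating integration by parts $L$ times gives
\[
\int e^{iKf}g = \int e^{iKf}\,(\mathcal{D}^*)^L g,
\]
so it suffices to produce an $L^1$ bound on $(\mathcal{D}^*)^L g$, noting that its support remains inside $\supp g$.

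The main auxiliary estimate is on derivatives of the vector field $v:=\nabla f/|\nabla f|^2$ on $\supp g$. Clearly $|v|\le 1$. For $l\ge 1$, combining Leibniz with a Fa\`a di Bruno expansion of $|\nabla f|^{-2}$ writes $\nabla^l v$ as a finite sum of terms of the form $(\text{product of }\nabla^{a_j+1}f)/|\nabla f|^{2m}$ with $\sum a_j = l$ and $m\ge 1$. Using $|\nabla^L f|\lesssim_L \ep^{1-L}$ for $L\ge 2$ (so $|\nabla^{a+1}f|\lesssim \ep^{-a}$ whenever $a\ge 1$) together with $|\nabla f|\ge 1$, each such term is dominated by $\ep^{-l}$, yielding
\[
|\nabla^l v|\lesssim_{d,l}\ep^{-l}\quad\text{on }\supp g,\quad l\ge 0.
\]

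Expanding $(\mathcal{D}^*)^L g$ via Leibniz then yields $K^{-L}$ times a sum of terms of the schematic form $(\nabla^{b_1}v)\cdots(\nabla^{b_m}v)\,\nabla^l g$ with $\sum b_j + l = L$. By the previous step each such term is bounded by $\ep^{-(L-l)}|\nabla^l g|$, hence
\[
|(\mathcal{D}^*)^L g|\lesssim_{d,L} K^{-L}\sum_{l=0}^L \ep^{-(L-l)}|\nabla^l g| = (K\ep)^{-L}\sum_{l=0}^L \ep^l |\nabla^l g|,
\]
and integrating gives the claim. The hypothesis $K\ep\ge 1$ ensures the prefactor $(K\ep)^{-L}$ actually provides decay, while $\ep\le 1$ keeps the $\ep^l$ factors controlled. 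The only slightly delicate step is the Fa\`a di Bruno bookkeeping for $\nabla^l v$, but the scaling of the hypothesis on $|\nabla^L f|$ is calibrated precisely so that each derivative of $v$ costs exactly one factor of $\ep^{-1}$, making the final telescoping into $(K\ep)^{-L}$ transparent.
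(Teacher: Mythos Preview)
The paper does not prove this lemma; it simply cites it as Lemma~6.1 of \cite{Zh}. Your argument is the standard non-stationary phase integration by parts and is correct: the operator $\mathcal{D}^*$ gains a factor $K^{-1}$ per iteration, and the Fa\`a di Bruno bookkeeping on $v=\nabla f/|\nabla f|^2$ is exactly the computation needed to see that each derivative of $v$ costs one factor of $\ep^{-1}$ under the stated hypotheses on $\nabla^L f$. This is presumably the same argument as in \cite{Zh}.
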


\begin{lemma}\label{int-part-cor}
Let $r\ge2$ and $0<\ep\le1/\ep\le r/2$. Suppose $f$, $g:\R^d\to\R$ satisfies $|\nabla f|\le r/2$ on $\supp g$, for all $L\ge2$, $|\nabla^Lf|\lesssim_L r\ep^{1-L}$ on $\supp g$, and for all $L\ge0$, $\|\nabla^Lg\|_{L^1}\lesssim_L A\ep^{-L}$ on $\R^d$. Let
\[
\mathcal K(x)=\int e^{i(x\cdot\xi+f(\xi))} g(\xi)d\xi.
\]
Then for all $L\ge 1$ we have
\[
\|\mathcal K\|_{L^1(\R^d\backslash B(0,r))}\lesssim_L Ar^{-L} \ep^{-L-d}.
\]
\end{lemma}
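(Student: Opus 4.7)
The plan is to apply Lemma \ref{int-part} pointwise in $x$ for $|x|>r$ and then integrate the resulting pointwise decay estimate. The key observation is that the hypothesis $|\nabla f|\le r/2$ on $\supp g$ forces the total phase $\Phi_x(\xi) := x\cdot\xi + f(\xi)$ to be non-stationary on $\supp g$ whenever $|x|\ge r$, since then $|x + \nabla f(\xi)| \ge |x| - r/2 \ge |x|/2$.

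To put this in the form required by Lemma \ref{int-part}, I would set $K = |x|$ and $F(\xi) = \Phi_x(\xi)/K$, so that the integrand becomes $e^{iKF(\xi)}g(\xi)$. The hypotheses then transfer as follows: $|\nabla F| \ge 1/2$ on $\supp g$ (the constant $1/2$ rather than $1$ only introduces a fixed multiplicative factor in the conclusion of Lemma \ref{int-part}), and for $L \ge 2$ one has $|\nabla^L F| = |\nabla^L f|/|x| \lesssim_L r\ep^{1-L}/|x| \le \ep^{1-L}$, since $|x| \ge r$. Moreover $K\ep = |x|\ep \ge r\ep \ge 2$, so we are in the regime where Lemma \ref{int-part} yields nontrivial decay.

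Applying Lemma \ref{int-part} with $L+d$ in place of $L$ gives
\[
|\mathcal{K}(x)| \lesssim_L (|x|\ep)^{-L-d}\sum_{l=0}^{L+d}\ep^l\|\nabla^l g\|_{L^1}\lesssim_L A(|x|\ep)^{-L-d},
\]
where the last step uses the hypothesis $\|\nabla^l g\|_{L^1}\lesssim_l A\ep^{-l}$ to collapse the sum to $O_L(A)$. Integrating this pointwise estimate over $\{|x|>r\}$ and using $\int_{|x|>r}|x|^{-L-d}\,dx \approx r^{-L}$ (valid for $L\ge 1$, since $L+d>d$) yields
\[
\|\mathcal{K}\|_{L^1(\R^d\setminus B(0,r))} \lesssim_L A\ep^{-L-d}r^{-L},
\]
which is exactly the claimed bound.

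There is no substantial obstacle in this approach; it is a textbook non-stationary phase argument. The only point requiring care is the bookkeeping of the rescaling, in which the amplitude $|x|$ of the linear term $x\cdot\xi$ plays the role of the large parameter $K$, and the choice to apply Lemma \ref{int-part} with exponent $L+d$ rather than $L$, which is forced by the $d$-dimensional spatial integration at the final step.
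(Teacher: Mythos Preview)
Your proof is correct and follows essentially the same approach as the paper's own proof. The only cosmetic difference is the normalization: the paper sets $K=|x|/2$ so that $|\nabla F|\ge 1$ matches the hypothesis of Lemma~\ref{int-part} exactly, whereas you set $K=|x|$ and absorb the resulting constant $1/2$ into the implicit constant---either way works.
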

\begin{proof}
We have
\[
x\cdot\xi+f(\xi)=\frac{|x|}2\left( \frac{2x}{|x|}\cdot\xi+\frac{2f(\xi)}{|x|} \right).
\]
Suppose $|x|\ge r$. Let $K=|x|/2\ge r/2\ge1/\ep$, and
\[
F(\xi)=\frac{2x}{|x|}\cdot\xi+\frac{2f(\xi)}{|x|}.
\]
Then $|\nabla F|\ge1$ and for all $L\ge2$, $|\nabla^LF|\lesssim_L \ep^{1-L}$.
By Lemma \ref{int-part}, for $|x|\ge r$ we have
\[
|\mathcal K(x)|\lesssim_L A(|x|\ep)^{-L}.
\]
The result follows from integrating this bound with $L+d$ in place of $L$.
\end{proof}

\subsection{Bounding the quadratic boundary terms}
\begin{proposition}\label{W-Z}
Assume $N\ge\max(9/(2-2\alpha),30)$, $\alpha<1/2$ and (\ref{growthZ}). Then
\[
\|W_{\mu\nu}(t)\|_Z\lesssim \ep_1^2.
\]
\end{proposition}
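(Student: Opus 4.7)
The plan is to bound $\|W_{\mu\nu}(t)\|_Z$ by treating $W_{\mu\nu}$ as a bilinear operator in the profiles $\Upsilon_\mu,\Upsilon_\nu$ with oscillatory multiplier $e^{it\Phi_{\mu\nu}}m_{\mu\nu}/(i\Phi_{\mu\nu})$, exploiting the multiplier bound (\ref{m/F-Soo}), the null structure (\ref{m-Soo}), and the phase-space integration by parts of Lemma~\ref{int-part-cor}. By Lemma~\ref{Z-bound}(iv) it suffices to establish $\|P_kW_{\mu\nu}\|_Z\lesssim c_k\ep_1^2$ with $\sum_k c_k^2\lesssim 1$. Decomposing $P_kW_{\mu\nu}=\sum_{k_1\le k_2}P_kW^{\mu\nu}_{k_1,k_2}$ as in (\ref{W2k}) and using the symmetry of the two inputs to assume $k_1\le k_2$, frequency additivity forces $k=k_2+O(1)$ unless $|k_1-k_2|\le 5$, in which case $k\le k_2+O(1)$.

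For each dyadic piece, I would write $W^{\mu\nu}_{k_1,k_2}$ on the physical side as
\[
W^{\mu\nu}_{k_1,k_2}(x,t)=\iint K_{k_1,k_2}(x-y_1,x-y_2,t)\,(P_{k_1}\Upsilon_\mu)(y_1)\,(P_{k_2}\Upsilon_\nu)(y_2)\,dy_1\,dy_2,
\]
where
\[
K_{k_1,k_2}(z_1,z_2,t)=C\iint e^{i(z_1\cdot\xi_1+z_2\cdot\xi_2+t\Phi_{\mu\nu}(\xi_1,\xi_2))}\frac{m_{\mu\nu}(\xi_1,\xi_2)}{i\Phi_{\mu\nu}(\xi_1,\xi_2)}\varphi_{k_1}(\xi_1)\varphi_{k_2}(\xi_2)\,d\xi_1\,d\xi_2.
\]
Apply Lemma~\ref{int-part-cor} separately in $\xi_1$ (with smoothness $\sim 2^{-k_1}$) and in $\xi_2$ (with smoothness $\sim 2^{-k_2}$), controlling derivatives of the symbol through the $S^\infty$ estimate (\ref{m/F-Soo}) and using $|\nabla_{\xi_j}\Phi_{\mu\nu}|\lesssim 2^{-k_{\min}/2}$, to conclude that $K_{k_1,k_2}$ is concentrated, up to rapidly decaying tails, on $|z_j+t\nabla_{\xi_j}\Phi_{\mu\nu}|\lesssim 2^{-k_j}$.

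Now distribute the weight via $(1+|x|)^\alpha\lesssim(1+|y_j|)^\alpha+(1+|x-y_j|)^\alpha$. When the weight sits on $y_j$, absorb it into $\|P_{k_j}\Upsilon\|_Z$; when it sits on $x-y_j$, use the kernel localization to replace $(1+|x-y_j|)^\alpha$ on the effective support by a frequency-dependent power, which will be traded against the dyadic gain in (\ref{m/F-Soo}) and Lemma~\ref{Z-bound}(i)--(ii). Applying Lemma~\ref{paraprod} with the $L^p$-bounds $\|P_{k_j}\Upsilon\|_{L^p}\lesssim 2^{-8k_j^+}\ep_1$ for $p$ slightly above $2/(1+\alpha)$, together with the unweighted $L^2$-estimates (\ref{WL2})--(\ref{WL22}) as a fallback, yields a per-piece bound
\[
\|P_kW^{\mu\nu}_{k_1,k_2}(t)\|_Z\lesssim 2^{-\delta(k_2^++ (k_2-k_1)^+)}\,\ep_1^2
\]
for some $\delta>0$, after which summation in $(k,k_1,k_2)$ by Cauchy--Schwarz gives the claim.

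The main obstacle is the very-low-frequency regime $k_1\ll 0$, where the group velocity $|\nabla_{\xi_1}\Phi_{\mu\nu}|\sim 2^{-k_1/2}$ diverges and the effective kernel support grows with $t$, threatening a time-dependent bound. The saving comes from the null structure (\ref{m-Soo}), which forces $m_{\mu\nu}/\Phi_{\mu\nu}$ to vanish as a low frequency approaches zero, combined with the low-frequency decay of $\|P_{k_1}\Upsilon\|_Z$ derivable from Lemma~\ref{Z-bound}(i) in $L^p$ with $p>2/(1+\alpha)$. The precise accounting of these two cancellations against the geometric growth of the kernel support is what makes the estimate close uniformly in $t$; getting this balance right, especially for the high-high-to-low interactions $k\le k_1+5$, is the most delicate part of the argument.
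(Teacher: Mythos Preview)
Your proposal has a genuine gap: it does not use any dispersive decay of the linear flow $e^{-it\Lambda}$, and without it the estimate cannot close uniformly in $t$. The kernel $K_{k_1,k_2}$ is \emph{not} concentrated on $|z_j|\lesssim 2^{-k_j}$; stationary phase only localizes it near $z_j\approx -t\nabla_{\xi_j}\Phi_{\mu\nu}$, and since $\nabla_{\xi_j}\Phi_{\mu\nu}$ varies by $O(2^{-k_j/2})$ over the support of $\varphi_{k_j}$, the effective support of the kernel has diameter $\sim t\,2^{-k_j/2}$. When the weight $(1+|x|)^\alpha$ lands on $|x-y_j|$, you therefore pick up a factor $\sim (t\,2^{-k_j/2})^\alpha$. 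The null structure in (\ref{m-Soo}) does \emph{not} help here: the relevant symbol is $m_{\mu\nu}/\Phi_{\mu\nu}$, and (\ref{m/F-Soo}) shows it is merely bounded (not vanishing) at low frequency. The extra low-frequency decay $\|P_{k_1}\Upsilon\|_{L^2}\lesssim 2^{(\alpha-)k_1}\ep_1$ from Lemma~\ref{Z-bound}(i) allows summation in $k_1$ but cannot cancel the overall $t^\alpha$, so the best your scheme produces is $\|W_{\mu\nu}(t)\|_Z\lesssim t^\alpha\ep_1^2$, not $\ep_1^2$.

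The paper's proof supplies the missing ingredient by also decomposing the \emph{output} in physical space via $Q_j$ and splitting according to whether $2^j$ is small or large relative to $t\,2^{-\kappa/2}$ (with $\kappa=\min k_i$). When $2^j\gtrsim t\,2^{-\kappa/2}$ (Case~4.2), the kernel really is small at distance $2^{j-2}$ and your integration-by-parts idea works. But when $2^j\lesssim t\,2^{-\kappa/2}$ (Case~4.1), the kernel argument is useless; instead one further localizes the \emph{inputs} via $Q_{j_1},Q_{j_2}$ and uses the pointwise dispersive decay $\|e^{-it\Lambda}P_kQ_{j'}\Upsilon\|_{L^\infty}\lesssim (1+t)^{-1}\|Q_{j'}P_k\Upsilon\|_{L^1}$ on the factor with the smaller $j'$, converting the $L^1$ norm to a $Z$-norm at the cost of $2^{(1-\alpha)j'}$. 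Balancing $j'$ against $\mathcal L=\log(1+t)$ and using $\alpha<1/2$ is what yields the $t$-independent bound. Your proposal needs this dispersive step; the phase-space localization alone handles only the far-field case.
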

\begin{proof}
We use the decomposition (\ref{W2k}) and assume by symmetry $k_1\le k_2$.
We distinguish several cases to estimate
\[
\|W_{\mu\nu}\|_Z\approx\left\| 2^{\alpha j}\|Q_j\langle\nabla\rangle^8W_{\mu\nu}\|_{L^2} \right\|_{\ell^2_{j\ge0}}
\]

{\bf Case 1:} $k_2\ge1.5\alpha j/N$. We sum (\ref{WL2}) over $k_2$ in this range and $k_1\in\Z$ and use $(9.5-N)k_2\le-2Nk_2/3-k_2/2\le-(1+0.5/N)\alpha j$ to get
\[
\left\| \sum_{k_1\in\Z\atop k_2\ge1.5\alpha j/N} 2^{\alpha j}\|Q_j\langle\nabla\rangle^8W^{\mu\nu}_{k_1,k_2}(t)\|_{L^2} \right\|_{\ell^2_{j\ge0}}
\lesssim \sum_{j\ge0} 2^{-0.5\alpha j/N}\ep_1^2\lesssim \ep_1^2.
\]

{\bf Case 2:} $k_1\le-(1+\alpha)\max(j,\mathcal L)/2$. We sum (\ref{WL2}) with $k_1$ in this range and $k_2\in\Z$ and use $N\ge10$ to get
\begin{align*}
\left\| \sum_{k_1\le-(\alpha+1/3)\max(j,\mathcal L)} 2^{\alpha j}\|Q_j\langle\nabla\rangle^8W^{\mu\nu}_{k_1,k_2}(t)\|_{L^2} \right\|_{\ell^2_{j\ge0}}
&\lesssim \sum_{j\ge0} 2^{\alpha j-(1+\alpha)\max(j,\mathcal L)/2}\ep_1^2\\
&\lesssim \ep_1^2.
\end{align*}

{\bf Case 3:} $P_{k_3}W^{\mu\nu}_{k_1,k_2}$ for $k_3\le-(1+\alpha)\max(j,\mathcal L)/2<k_1$. We sum (\ref{WL22}) with $k_3$ in place of $k$,
$k_1\in[k_3,k_2]$ and $k_2\in\Z$ to get the same bound as Case 2.

{\bf Case 4:} $\kappa:=\min k_i>-(1+\alpha)\max(j,\mathcal L)/2$ and $k_1$, $k_2<1.5\alpha j/N$.

{\bf Case 4.1:} $j\le-\kappa/2+\mathcal L+5$. In this case either
$\kappa>-\mathcal L$ or $\kappa>(1+\alpha)(\kappa/2-\mathcal L-5)/2$.
In the latter case $\kappa>-\frac{2+2\alpha}{3-\alpha}(\mathcal L+5)$,
which thus holds either way. Then $j<4(\mathcal L+5)/(3-\alpha)$ and
$k_i<3\alpha\mathcal L/N+O(1)$.

We decompose
\begin{align*}
W^{\mu\nu}_{k_1,k_2}&=\sum_{j_1,j_2\ge0} W^{\mu\nu}_{j_1,k_1,j_2,k_2},\\
W^{\mu\nu}_{j_1,k_1,j_2,k_2}&=W_{\mu\nu}[P_{[k_1-1,k_1+1]}Q_{j_1}P_{k_1}\Upsilon_\mu,P_{[k_2-1,k_2+1]}Q_{j_2}P_{k_2}\Upsilon_\nu],
\end{align*}
where we have used $P_{[k-1,k+1]}P_k=P_k$.

{\bf Case 4.1.1:} $j_2>\mathcal L$. By Lemma \ref{paraprod}, (\ref{m/F-Soo}), (\ref{dispersive3}), unitarity of $e^{it\Lambda}$, H\"older's inequality and Lemma \ref{Z-bound} (ii) we have, for $\beta\in[\alpha-1,\alpha]$,
\begin{align*}
&\|\langle\nabla\rangle^8P_{k_3}W^{\mu\nu}_{j_1,k_1,j_2,k_2}(t)\|_{L^2}\\
\lesssim&2^{8k_2^++(k_3+k_2)/2}\|e^{-it\mu\Lambda}P_{[k_1-1,k_1+1]}Q_{j_1}P_{k_1}\Upsilon_\mu(t)\|_{L^\infty}\|e^{-it\nu\Lambda}Q_{j_2}P_{k_2}\Upsilon_\nu(t)\|_{L^2}\\
\lesssim&2^{8k_2^++(k_3+k_2+3k_1^++k_1)/2}\frac{1}{(1+t)^{\alpha-\beta}}
\|Q_{j_1}P_{k_1}\Upsilon(t)\|_{L^{2/(1+\alpha-\beta)}}\|Q_{j_2}P_{k_2}\Upsilon(t)\|_{L^2}\\
\lesssim&2^{(k_3+k_2-|k_1|)/2-\beta j_1-\alpha j_2}\frac{\ep_1^2}{(1+t)^{\alpha-\beta}}.
\end{align*}
Let $\beta=0+$ and we can sum over $j_1\in\Z$ and $j_2>\mathcal L$ to get
\[
\sum_{j_1\in\Z\atop j_2>\mathcal L} \|\langle\nabla\rangle^8P_{k_3}W^{\mu\nu}_{j_1,k_1,j_2,k_2}(t)\|_{L^2}
\lesssim 2^{(k_3+k_2-|k_1|)/2}\frac{\ep_1^2}{(1+t)^{2\alpha-}}.
\]

%{\bf Case 4.1.2:} $j_1>\mathcal L$. Similarly to Case 4.1.1 we put $L^2$ norm on the first factor and $L^\infty$ norm on the second to get
%\[
%\sum_{j_2\in\Z\atop j_1>\mathcal L} \|\langle\nabla\rangle^8P_{k_3}W^{\mu\nu}_{j_1,k_1,j_2,k_2}(t)\|_{L^2}
%\lesssim 2^{(k_3+k_1-|k_2|)/2}\frac{\ep_1^2}{(1+t)^{2\alpha-}}.
%\]

{\bf Case 4.1.2:} $j_1\le j_2-\kappa/2$ and $j_2\le\mathcal L$.
Taking $\beta=\alpha-1$ in Case 4.1.1 we get
\[
\|\langle\nabla\rangle^8P_{k_3}W^{\mu\nu}_{j_1,k_1,j_2,k_2}(t)\|_{L^2}
\lesssim 2^{\sum k_i/2+(1-\alpha)j_1-\alpha j_2}\frac{\ep_1^2}{1+t}.
\]
Since $(1-\alpha)j_1-\alpha j_2\le(1-2\alpha)j_2-(1-\alpha)\kappa/2
\le(1-2\alpha)j_2-\kappa/2+\alpha(\mathcal L+5-j)$,
we sum over $j_1\le j_2-\kappa/2$ and $j_2\in\mathcal L$ and use $\alpha<1/2$ to get
\[
\sum_{j_1\le j_2-\kappa/2,j_2\le\mathcal L\atop\kappa/2\le\mathcal L+5-j}
\|\langle\nabla\rangle^8P_{k_3}W^{\mu\nu}_{j_1,k_1,j_2,k_2}(t)\|_{L^2}
\lesssim 2^{(\sum k_i-\kappa)/2-\alpha j}\frac{\ep_1^2}{(1+t)^\alpha}.
\]

{\bf Case 4.1.3:} $j_1>j_2-\kappa/2$ and $j_2\le\mathcal L$.
Similarly to Case 4.1.2 we put $L^2$ norm on the first factor and $L^\infty$ norm on the second to get
\[
\|\langle\nabla\rangle^8P_{k_3}W^{\mu\nu}_{j_1,k_1,j_2,k_2}(t)\|_{L^2}
\lesssim 2^{(k_3+3k_2^++2k_2)/2+(1-\alpha)j_2-\alpha j_1}\frac{\ep_1^2}{1+t}.
\]
We sum over $j_1>j_2-\kappa/2\ge j_2+j-\mathcal L-5$, $j_2\le\mathcal L$ and use $\alpha<1/2$ to get
\[
\sum_{j_1>j_2-\kappa/2,j_2\le\mathcal L\atop\kappa/2\le\mathcal L+5-j}
\|\langle\nabla\rangle^8P_{k_3}W^{\mu\nu}_{j_1,k_1,j_2,k_2}(t)\|_{L^2}
\lesssim 2^{(k_3+3k_2^++2k_2)/2-\alpha j}\frac{\ep_1^2}{(1+t)^\alpha}.
\]

Combining the 3 cases above and summing over $-O(\mathcal L)\le k_i\le 3\alpha\mathcal L/N+O(1)$ we get
\[
\sum_{\text{Case 4.1}}
\|\langle\nabla\rangle^8P_{k_3}W^{\mu\nu}_{k_1,k_2}(t)\|_{L^2}
\lesssim \frac{\ep_1^2}{(1+t)^{(2\alpha-3\alpha/N)-}}
+\frac{2^{-\alpha j}\mathcal L\ep_1^2}{(1+t)^{\alpha-9\alpha/N}}.
\]
Then we sum over $0\le j<4(\mathcal L+5)/(3-\alpha)$ and use $N\ge\max(9/(2-2\alpha),30)$ to get
\begin{align*}
&\left\| \sum_{\text{Case 4.1}}
2^{\alpha j}\|Q_j\langle\nabla\rangle^8P_{k_3}W^{\mu\nu}_{k_1,k_2}(t)\|_{L^2} \right\|_{\ell^2_{j<4(\mathcal L+5)/(3-\alpha)}}
\lesssim \frac{\ep_1^2}{(1+t)^{\frac{2\alpha^2(1-\alpha)}{3(3-\alpha)}-}}.
\end{align*}

{\bf Case 4.2:} $j>-\kappa/2+\mathcal L+5$. In this case $t<2^{\kappa/2+j-5}$. Since $j>-\kappa/2+\mathcal L>-j/N+\mathcal L$,
we have $\mathcal L<(1+1/N)j$.

We decompose
\begin{equation}\label{W2k-AB}
\begin{aligned}
W^{\mu\nu}_{k_1,k_2}&=A^{\mu\nu}_{k_1,k_2}+B^{\mu\nu}_{k_1,k_2},\\
A^{\mu\nu}_{k_1,k_2}&=W_{\mu\nu}[P_{[k_1-1,k_1+1]}Q_{\ge j-4}P_{k_1}\Upsilon_\mu,P_{k_2}\Upsilon_\nu],\\
B^{\mu\nu}_{k_1,k_2}&=W_{\mu\nu}[P_{[k_1-1,k_1+1]}Q_{\le j-5}P_{k_1}\Upsilon_\mu,P_{k_2}\Upsilon_\nu].
\end{aligned}
\end{equation}

For $A$ we have, by (\ref{WL2}) and unitarity of $e^{it\Lambda}$,
\[
\|\langle\nabla\rangle^8A^{\mu\nu}_{k_1,k_2}\|_{L^2}
\lesssim 2^{(8.5-N)k_2^++k_2+k_1}\|Q_{\ge j-4}P_{k_1}\Upsilon\|_{L^2}\|U\|_{H^{N-1/2}}.
\]
We sum over $j\ge0$, $k_1$, $k_2\in\Z$ and use $N\ge10$ and Lemma \ref{Z-bound} (ii) to get
\[
\sum_{k_1,k_2\in\Z} \|2^{\alpha j}\|Q_j\langle\nabla\rangle^8A^{\mu\nu}_{k_1,k_2}\|_{L^2}\|_{\ell^2_{j\ge0}}
\lesssim \|\Upsilon\|_Z\|U\|_{H^{N-1/2}}\lesssim \ep_1^2.
\]

For $B$ we have
\begin{equation}\label{Bk2-K}
\begin{aligned}
\langle\nabla\rangle^8P_{k_3}B^{\mu\nu}_{k_1,k_2}(x,t)
&=\int G(x,y,z,t)Q_{\le j-5}P_{k_1}\Upsilon_\mu(y,t)P_{k_2}\Upsilon_\nu(z,t)dydz,\\
G(x,y,z,t)&=\iint e^{i\phi_{\mu\nu}(\xi_1,\xi_2)}\langle\xi_1+\xi_2\rangle^8m^{\mu\nu}_{k_1,k_2;k_3}(\xi_1,\xi_2)d\xi_1d\xi_2,\\
\phi_{\mu\nu}(\xi_1,\xi_2)&=(x-y)\cdot\xi_1+(x-z)\cdot\xi_2+t\Phi_{\mu\nu}(\xi_1,\xi_2),\\
m^{\mu\nu}_{k_1,k_2;k_3}(\xi_1,\xi_2)&=\varphi_{[k_1-1,k_1+1]}(\xi_1)\varphi_{[k_2-1,k_2+1]}(\xi_2)\frac{m_{\mu\nu}(\xi_1,\xi_2)}{i\Phi_{\mu\nu}(\xi_1,\xi_2)}\varphi_{k_3}(\xi_1+\xi_2).
\end{aligned}
\end{equation}

Since $m_{\mu\nu}$ are linear combinations of multipliers in the set (\ref{m-prod}), for $L\ge0$ we have
\begin{equation}\label{m-CL}
|\nabla^L m_{\mu\nu}(\xi_1,\xi_2)|\lesssim_L (|\xi_1|+|\xi_2|)^{3/2}\min(|\xi_1|,|\xi_2|,|\xi_1+\xi_2|)^{-L}.
\end{equation}
By (\ref{m-CL}) and Lemma \ref{ps-1/F-Soo} (ii) then
\begin{equation}\label{mk3-CL}
|\nabla^Lm^{\mu\nu}_{k_1,k_2;k_3}(\xi_1,\xi_2)|\lesssim_L 2^{3\max k_i/2-(L+1/2)\min k_i}\lesssim 2^{3k_2/2-(L+1/2)\kappa}.
\end{equation}
Using $\kappa>-\frac{1+\alpha}{2}\max(j,\mathcal L)>-(\frac{1+\alpha}{2}+\frac{1}{N})j>-\beta j$, where $\beta=\frac{3+\alpha}{4}$, we have
\[
\left\| \langle\xi_1+\xi_2\rangle^8m^{\mu\nu}_{k_1,k_2;k_3}(\xi_1,\xi_2) \right\|_{W^{L,1}}\lesssim_L 2^{8k_2^++2k_1+3k_2/2+2k_3+\beta Lj+j/2}.
\]
Since $|\nabla\Lambda(\xi)|=|\xi|^{-1/2}/2$, we have $|t\nabla\Phi_{\mu\nu}(\xi_1,\xi_2)|<2\cdot2^{-\kappa/2}t<2^{j-3}$.
Also, for $L\ge2$ we have $|t\nabla^L\Phi_{\mu\nu}(\xi_1,\xi_2)|\lesssim_L 2^{(1/2-L)\kappa}t\lesssim 2^{j+(1-L)\beta j}$.
Hence by Lemma \ref{int-part-cor} (with $A=2^{8k_2^++2k_1+3k_2/2+2k_3+j/2}$, $r=2^{j-2}$, $\ep\approx2^{\beta j}$, $d=4$ and $L$ depending on $N$ and $\beta=\frac{3+\alpha}4$),
\begin{equation}\label{K2-L1}
\|1_{|x-y|>2^{j-2}}G(x,y,z,t)\|_{L^1_{y,z}}\lesssim 2^{8k_2^++2k_1+3k_2/2+2k_3-2j}.
\end{equation}
When $\varphi_j(x)\varphi_{\le j-5}(y)>0$, we have $|x-y|>2^{j-2}$, so
\[
\|\varphi_j(x)\varphi_{\le j-5}(y)G(x,y,z,t)\|_{L^1_{y,z}}
\lesssim\text{right-hand side of (\ref{K2-L1})}.
\]
Combining this with Bernstein's inequality $\|P_k\Upsilon\|_{L^\infty}\lesssim 2^k\|P_k\Upsilon\|_{L^2}=2^k\|P_kU\|_{L^2}$ we get
\begin{align*}
\|Q_j\langle\nabla\rangle^8P_{k_3}B^{\mu\nu}_{k_1,k_2}\|_{L^2}
&\lesssim 2^j\|Q_j\langle\nabla\rangle^8P_{k_3}B^{\mu\nu}_{k_1,k_2}\|_{L^\infty}\\
&\lesssim 2^{8k_2^++3k_1+5k_2/2+2k_3-j}\|P_{k_1}U\|_{L^2}\|P_{k_2}U\|_{L^2}.
\end{align*}
We sum over $k_i$, $j\in\Z$ and use $N\ge14$ to get
\[
\left\| \sum_{\text{Case 4.2}}2^{\alpha j}\|Q_j\langle\nabla\rangle^8P_{k_3}B^{\mu\nu}_{k_1,k_2}(t)\|_{L^2} \right\|_{\ell^2_{j>N\mathcal L/(N+1)}}
\lesssim \ep_1^2.
\]

Combining Case 1 through Case 4 above shows the claim.
\end{proof}

\subsection{Bounding the cubic bulk terms $H_{\mu\nu,3}$}
Recall (\ref{N23}) and (\ref{H2-FN-NF}).
Decompose $H_{\mu\nu}=H_{\mu\nu,3}+H_{\mu\nu,4}$, where for $j\in\{2,3\}$,
\begin{equation}\label{H34}
H_{\mu\nu,j+1}(t)=W_{\mu\nu}[\Upsilon_\mu(t),e^{it\nu\Lambda}(N_j)_\nu(t)]
+W_{\mu\nu}[e^{it\mu\Lambda}(N_j)_\mu(t),\Upsilon_\nu(t)].
\end{equation}
The $Z$ norm bounds of $H_{\mu\nu,3}$ and $H_{\mu\nu,4}$ are shown in Proposition \ref{H3-Z} and Proposition \ref{H4-Z} respectively.

\begin{proposition}\label{H3-Z}
Assume $N \ge \max(33/(\alpha - \alpha^2),8/\alpha^2) \ge 132$ and (\ref{growthZ}). Then
\[
\int_0^t \|H_{\mu\nu,3}(s)\|_Zds\lesssim\ep_1^3.
\]
\end{proposition}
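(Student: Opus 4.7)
\emph{Plan.} The strategy is to realize $H_{\mu\nu,3}$ as an explicit trilinear paraproduct in the profile $\Upsilon$ and then adapt the frequency/spatial casework of Proposition \ref{W-Z}, augmented by the one-factor splitting sketched in the main-ideas section, so that $\|H_{\mu\nu,3}(s)\|_Z$ enjoys a pointwise-in-$s$ bound with enough integrable decay to close with constant $\ep_1^3$. Substituting the bilinear expression for $N_2$ from (\ref{N23}) into (\ref{H2-FN-NF}) and restricting to the $N_2$ contributions as dictated by (\ref{H34}), $H_{\mu\nu,3}$ becomes a finite sum of oscillatory integrals
\[
\mathcal{F}H_{\mu\nu,3}(\xi,s) = C^2\iint_{\xi_1+\xi_2+\xi_3=\xi} e^{is\widetilde\Phi(\xi_1,\xi_2,\xi_3)}\,m_{\mu\nu\rho}(\xi_1,\xi_2,\xi_3)\prod_{i=1}^3 \hat\Upsilon_{\sigma_i}(\xi_i,s)\,d\xi_1\,d\xi_2,
\]
with trilinear symbol $m_{\mu\nu\rho}$ governed by (\ref{mk3-Soo}) and phase $\widetilde\Phi = -\Lambda(\xi)+\sum_i\sigma_i\Lambda(\xi_i)$. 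By Lemma \ref{Z-bound}(iv), it suffices to bound $2^{\alpha j}\|Q_j\langle\nabla\rangle^8 P_k H_{\mu\nu,3}\|_{L^2}$ in $\ell^2_{j\ge0}\ell^2_k$ after further dyadically decomposing each input as $P_{k_i}\Upsilon_{\sigma_i}$.

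The cases split naturally as in the proof of Proposition \ref{W-Z}. In the \emph{extreme-frequency} regime---either some $k_i$ sufficiently large, absorbed through the high-Sobolev factor $\langle\nabla\rangle^{-(N-O(1))}$ from (\ref{growthZ}), or some $k_i$ sufficiently small compared with $\max(j,\mathcal L)$, absorbed by Bernstein together with Lemma \ref{Z-bound}(ii)---direct application of Lemma \ref{paraprod}, the multiplier bound (\ref{mk3-Soo}), and the bootstrap (\ref{growthZ}) produces the required estimate exactly as in Cases 1--3 of Proposition \ref{W-Z}. The calibration $N \ge \max(33/(\alpha-\alpha^2),8/\alpha^2)$ is exactly what is needed to make all the $\ell^2_j$ and $k_i$ sums converge with an extra factor $(1+s)^{-1-}$. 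In the \emph{bulk} regime where all $k_i,k$ lie in $[-C\mathcal L,C\mathcal L]$, I invoke the one-factor splitting of the introduction, $\Upsilon_{\sigma_3}(s) = \Upsilon_{\sigma_3,1}(s)+\Upsilon_{\sigma_3,2}(s)$, at spatial radius $s^\beta$ with $\beta = (1-\alpha)/2$. On the $\Upsilon_{\sigma_3,2}$-piece the bound $\|\Upsilon_{\sigma_3,2}\|_{H^{O(1)}} \lesssim s^{-\alpha\beta}\|\Upsilon\|_Z$ combines with $L^\infty$-control of the other two factors via Lemma \ref{dispersiveZ}(i); on the $\Upsilon_{\sigma_3,1}$-piece the support property yields $\||x|\Upsilon_{\sigma_3,1}\|_{L^2}\lesssim s^{(1-\alpha)\beta}\|\Upsilon\|_Z$, so the $1/s$-dispersive bound $\|e^{-is\Lambda}\Upsilon_{\sigma_3,1}\|_{L^\infty}\lesssim s^{-1}\||x|\Upsilon_{\sigma_3,1}\|_{L^2}$ gives the extra factor $s^{-1+(1-\alpha)\beta}$, the remaining two factors being controlled by the Strichartz-type bound in (\ref{growthZ}) and the energy. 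Optimization at $\beta = (1-\alpha)/2$ yields a pointwise bound $\|H_{\mu\nu,3}(s)\|_Z \lesssim (1+s)^{-1-\alpha(1-\alpha)/2+\delta}\ep_1^3$, integrable in $s$.

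The \textbf{main obstacle} is the tail regime $2^j \gg s^{1+O(1/N)}$, where $Q_j$ places the kernel outside the natural support of the dispersed wave packets and no dispersive decay is directly available. Following Case 4.2 of Proposition \ref{W-Z} but now in a trilinear setting, I apply the phase-space integration-by-parts of Lemma \ref{int-part-cor} to the kernel
\[
G(x,y_1,y_2,y_3,s) = \iiint e^{i[x\cdot(\xi_1+\xi_2+\xi_3)-\sum_i y_i\cdot\xi_i + s\widetilde\Phi]}\,\langle\xi_1+\xi_2+\xi_3\rangle^8\,m_{\mu\nu\rho}^{\mathbf{k}}(\xi_1,\xi_2,\xi_3)\,d\xi_1 d\xi_2 d\xi_3,
\]
where $m_{\mu\nu\rho}^{\mathbf{k}}$ denotes the frequency-localized multiplier. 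Using (\ref{mk3-Soo}) and the homogeneity $|\nabla^L\Lambda(\xi_i)|\lesssim|\xi_i|^{1/2-L}$, the hypotheses of Lemma \ref{int-part-cor} are satisfied with $r\approx 2^j$ and $\ep\approx 2^{\beta' j}$ for a suitable $\beta'<1$ determined by the frequency configuration, producing off-diagonal decay $\|Q_jP_k H_{\mu\nu,3}\|_{L^2}\lesssim 2^{-Mj}\ep_1^3$ for any fixed $M$ provided $N$ is large enough---this is precisely where the assumption $N\ge 8/\alpha^2$ absorbs the derivative losses. The chief bookkeeping difficulty, where I expect to spend the most effort, is organizing the uniform derivative estimates on both the phase $\widetilde\Phi$ and the trilinear symbol $m_{\mu\nu\rho}^{\mathbf{k}}$ simultaneously, since there are now three independent frequency variables rather than two and the support condition for the integration by parts must be uniform across all of them. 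Once the tail case is established, the three regimes combine and integration in $s$ yields $\int_0^t\|H_{\mu\nu,3}(s)\|_Z\,ds\lesssim\ep_1^3$, as claimed.
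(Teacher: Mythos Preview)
Your overall architecture—extreme frequencies handled directly, bulk via a one-factor spatial splitting, tail via phase-space integration by parts—matches the paper's. But the bulk case does not close as you state it. For the far piece you put the two undecomposed factors in $L^\infty$ via Lemma \ref{dispersiveZ}(i), which gives only $(1+s)^{-2\alpha+}$ pointwise; after paying the $Z$-norm weight $2^{\alpha j}$ over the bulk range $j\lesssim\mathcal L$, the resulting power of $s$ is not integrable for small $\alpha$, so the claimed pointwise bound $\|H_{\mu\nu,3}(s)\|_Z\lesssim(1+s)^{-1-\alpha(1-\alpha)/2+\delta}\ep_1^3$ is false. The paper does not use pointwise dispersion here: it places the two undecomposed factors in $L^2_sL^\infty_x$ via the bootstrap (\ref{growthZ}), integrates in $s$ from $t_j\approx 2^{j+\min k_i/2}$, and the lower limit supplies the extra decay in $j$ needed to sum. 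Relatedly, the paper splits the third factor at radius $2^{\beta j}$ (tied to the output localization $Q_j$), not at $s^\beta$; this is what makes the far-piece bound decay in $j$ before you integrate in time. You should restructure: integrate in $s$ (using $L^2_sL^\infty_x$) before taking the $\ell^2_j$ sum.

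Your tail argument is also incomplete. Lemma \ref{int-part-cor} applies only when at least one input sits in $Q_{\le j-5}$, so that $|x-y_i|\gtrsim 2^j$. You omit the complementary case where all three inputs lie in $Q_{\ge j-4}$; there the phase gradient need not be large and integration by parts fails. The paper handles this piece (Case 5.2.1, the $A$ term) directly, using $\|Q_{\ge j-4}P_{k_i}\Upsilon\|_{L^2}\lesssim 2^{-\alpha j}\ep_1$ on each factor to gain $2^{-2\alpha j}$ and sum in $j$. A smaller point: the paper also localizes in the intermediate frequency $l=|\xi_2+\xi_3|$ coming from $P_lN_2$ and treats $l$ small as a separate extreme case; this is not one of your $k_i$ and needs its own line.
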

\begin{proof}
Decompose $H_{\mu\nu,3}$ into linear combinations of the terms $H_{\mu\nu\rho}$, where
\begin{equation}\label{H3k}
\begin{aligned}
H_{\mu\nu\rho}&=\sum_{l\in\Z,\sigma=\pm} H^{\mu\nu\rho\sigma}_l,\\
H^{\mu\nu\rho\sigma}_l[\Upsilon_\mu,\Upsilon_\nu,\Upsilon_\rho](t)
&=W_{\mu\sigma}[\Upsilon_\mu,P_le^{it\sigma\Lambda}N_{\nu\rho,2}^\sigma[\Upsilon_\nu,\Upsilon_\rho]]\\
&+W_{\sigma\rho}[P_le^{it\sigma\Lambda}N_{\mu\nu,2}^\sigma[\Upsilon_\mu,\Upsilon_\nu],\Upsilon_\rho]],\\
H^{\mu\nu\rho\sigma}_l&=\sum_{k_1,k_2,k_3\in\Z} H^{\mu\nu\rho\sigma}_{k_1,k_2,k_3,l},\\
H^{\mu\nu\rho\sigma}_{k_1,k_2,k_3,l}
&=H^{\mu\nu\rho\sigma}_l[P_{k_1}\Upsilon_\mu,P_{k_2}\Upsilon_\nu,P_{k_3}\Upsilon_\rho].
\end{aligned}
\end{equation}
We assume by symmetry $k_1\le k_2\le k_3$. From (\ref{mk3-Soo}) and $S^\infty\subset L^\infty$ it follows in the same way as (\ref{WL2}) and (\ref{WL22}) that
\begin{align}
\label{H3k-L2}
\|\langle\nabla\rangle^8H^{\mu\nu\rho}_{k_1,k_2,k_3}\|_{L^2}
&\lesssim 2^{k_1+k_2+8k_3^++5k_3/2}\|P_{k_1}U\|_{L^2}\|P_{k_2}U\|_{L^2}\|P_{k_3}U\|_{L^2},\\
\label{H3k-L2l}
\|\langle\nabla\rangle^8H^{\mu\nu\rho}_{k_1,k_2,k_3,l}\|_{L^2}
&\lesssim 2^{2l+8k_3^++2k_3+l/2}\|P_{k_1}U\|_{L^2}\|P_{k_2}U\|_{L^2}\|P_{k_3}U\|_{L^2},\\
\label{H3k-L2l2}
\|\langle\nabla\rangle^8H^{\mu\nu\rho}_{k_1,k_2,k_3,l}\|_{L^2}
&\lesssim 2^{k_1+k_2+8k_3^++2k_3+l/2}\|P_{k_1}U\|_{L^2}\|P_{k_2}U\|_{L^2}\|P_{k_3}U\|_{L^2}.%,\\
%\label{H3k-L22}
%\|\langle\nabla\rangle^8P_{k_4}H^{\mu\nu\rho}_{k_1,k_2,k_3}\|_{L^2}
%&\lesssim 2^{k_1+k_4+8k_4^++2k_3+k_4/2}\|P_{k_1}U\|_{L^2}\|P_{k_2}U\|_{L^2}\|P_{k_3}U\|_{L^2}.
\end{align}

We distinguish several cases.

{\bf Case 1:} $k_3\ge(j+6\mathcal L/5)/N$. We sum (\ref{H3k-L2}) over $k_1,k_2\in\Z$, $k_3$ in this range, and use $(11-N)k_3\le-(N-11)(j+6\mathcal L/5)/N\le-(1+\alpha)j/2-1.1\mathcal L$ to get
\begin{align*}
\left\| \sum_{k_1,k_2\in\Z\atop k_3\ge(j+6\mathcal L/5)/N} 2^{\alpha j}\|Q_j\langle\nabla\rangle^8H^{\mu\nu\rho}_{k_1,k_2,k_3}(t)\|_{L^2} \right\|_{\ell^2_{j\ge0}}
&\lesssim \sum_{j\ge0} \frac{2^{(\alpha-(1+\alpha)/2)j}}{(1+t)^{1.1}}\|U(t)\|_{H^{N-1/2}}^3\\
&\lesssim \frac{\ep_1^3}{(1+t)^{1.1}}.
\end{align*}

{\bf Case 2:} $k_1\le-2\max(j,{\mathcal L})/(2+\alpha)$. By unitarity of $e^{it\Lambda}$, Bernstein's inequality, Lemma \ref{Z-bound} (i) and (\ref{growthX1}),
\begin{equation}\label{PkU-L2-Z}
\|P_kU\|_{L^2}=\|P_k\Upsilon\|_{L^2}
\lesssim 2^{(\alpha-)k-8k^+}\|P_k\Upsilon\|_{W^{8,\frac2{1+\alpha}+}}
\lesssim 2^{(\alpha-)k-8k^+}\ep_1.
\end{equation}
By Lemma \ref{paraprod}, (\ref{mk3-Soo}), Lemma \ref{dispersiveZ} (i) and (\ref{PkU-L2-Z}) we get
\begin{align*}
\|\langle\nabla\rangle^8P_{k_4}H^{\mu\nu\rho\sigma}_{k_1,k_2,k_3,l}(t)\|_{L^2}
&\lesssim 2^{8k_3^++(k_4+l+3k_3)/2}\|P_{k_1}U(t)\|_{L^\infty}\|P_{k_2}U(t)\|_{L^\infty}\|P_{k_3}U(t)\|_{L^2}\\
&\lesssim 2^{(8.5-N)k_3^++(k_4+l-|k_2|+3k_3)/2+(1+\alpha-)k_1}\frac{\ep_1^3}{(1+t)^{\alpha-}}.
\end{align*}
We sum over $k_4$, $l\le k_3+O(1)$, $k_2$, $k_3\in\Z$, $k_1$ in the range above and $j\ge0$, and use $N\ge12$ to get
\begin{align*}
&\left\| \sum_{k_1\le-2\max(j,{\mathcal L})/(2+\alpha)} \sum_{k_2,k_3\in\Z}
\sum_{l\le k_3+O(1)} 2^{\alpha j}\|Q_j\langle\nabla\rangle^8H^{\mu\nu\rho\sigma}_{k_1,k_2,k_3,l}(t)\|_{L^2} \right\|_{\ell^2_{j\ge0}}\\
\lesssim &\left\| \sum_{k_1\le-2\max(j,{\mathcal L})/(2+\alpha)} 2^{\alpha j+(1+\alpha-)k_1} \right\|_{\ell^2_{j\ge0}}\frac{\ep_1^3}{(1+t)^{\alpha-}}\\
\lesssim &\sum_{j\ge0} 2^{\alpha j-(2+2\alpha-)\max(j,\mathcal L)/(2+\alpha)}\frac{\ep_1^3}{(1+t)^{\alpha-}}
\lesssim \frac{\ep_1^3}{(1+t)^{(2+2\alpha-)/(2+\alpha)}}.
\end{align*}

{\bf Case 3:} $l\le-4\max(j,\mathcal L)/5$. We sum (\ref{H3k-L2l}) over $k_1$, $k_2$, $k_3\in\Z$ and $l$ in that range and use (\ref{PkU-L2-Z}) and $N\ge11$ to get
\begin{align*}
&\left\| \sum_{l\le-4\max(j,\mathcal L)/5}\sum_{k_1,k_2,k_3\in\Z}
2^{\alpha j}\|Q_j\langle\nabla\rangle^8H^{\mu\nu\rho\sigma}_{k_1,k_2,k_3,l}(t)\|_{L^2} \right\|_{\ell^2_{j\ge0}}\\
\lesssim&\sum_{j\ge0}\sum_{k_1,k_2,k_3\in\Z} 2^{\alpha j-2\max(j,\mathcal L)-(\alpha-)(|k_1|+|k_2|)-|k_3|}\ep_1^3
\lesssim \frac{\ep_1^3}{(1+t)^{2-\alpha}}.
\end{align*}

{\bf Case 4:} $k_4\le-2\max(j,{\mathcal L})/(2+\alpha)$.
By the Bernstein inequality, Lemma \ref{paraprod}, (\ref{mk3-Soo}), Lemma \ref{dispersiveZ} (i) and (\ref{PkU-L2-Z}) we get
\begin{align*}
\|\langle\nabla\rangle^8P_{k_4}H^{\mu\nu\rho\sigma}_{k_1,k_2,k_3,l}(t)\|_{L^2}
&\lesssim 2^{k_4}\|P_{k_4}H^{\mu\nu\rho\sigma}_{k_1,k_2,k_3,l}(t)\|_{L^1}\\
&\lesssim 2^{(3k_4+l+3k_3)/2}\|P_{k_1}U(t)\|_{L^\infty}\|P_{k_2}U(t)\|_{L^2}\|P_{k_3}U(t)\|_{L^2}\\
&\lesssim 2^{(3k_4+l+3k_3-|k_1|)/2+(\alpha-)(k_2+k_3)-8(k_2^++k_3^+)}\frac{\ep_1^3}{(1+t)^{\alpha-}}.
\end{align*}
We sum over $l\le k_3+O(1)$, $k_1$, $k_2$, $k_3\in\Z$ and $k_4$ in that range to get
\begin{align*}
&\left\| \sum_{k_4\le-2\max(j,{\mathcal L})/(2+\alpha)} \sum_{k_1,k_2,k_3\in\Z} \sum_{l\le k_3+O(1)} 2^{\alpha j}\|Q_j\langle\nabla\rangle^8P_{k_4}H_{k_1,k_2,k_3,l}^{\mu\nu\rho\sigma}(t)\|_{L^2} \right\|_{\ell_{j\ge0}^2}\\
\lesssim &\sum_{j\ge0} \sum_{k_1,k_2,k_3\in\Z} 2^{\alpha j-3\max(j,\mathcal L)/(2+\alpha)-|k_1|/2-(\alpha-)(|k_2|+|k_3|)}\frac{\ep_1^3}{(1+t)^{\alpha-}}
\lesssim \frac{\ep_1^3}{(1+t)^{\frac{3}{2+\alpha}-}}.
\end{align*}

{\bf Case 5:} $-\alpha'\max(j,{\mathcal L})<k_i$ ,$l\le(j+6\mathcal L/5)/N+O(1)$, $1\le i\le4$, where $\alpha'=\max(2/(2+\alpha),4/5)\in(0,1)$.

{\bf Case 5.1:} $j\le-\min k_i/2+\mathcal L+5$. In this case either $l$, $\min k_i>-\mathcal L$ or $l$, $\min k_i>-j\ge\min k_i/2-\mathcal L-5$.
In the latter case $l,\min k_i>-2(\mathcal L+5)$, which thus holds either way. Then $j<2(\mathcal L+5)$, and $k_i$, $l\le3.2\mathcal L/N+O(1)$.

We decompose
\begin{align*}
H^{\mu\nu\rho\sigma}_{k_1,k_2,k_3,l}&=A^{\mu\nu\rho\sigma}_{k_1,k_2,k_3,l}
+B^{\mu\nu\rho\sigma}_{k_1,k_2,k_3,l}\\
A^{\mu\nu\rho\sigma}_{k_1,k_2,k_3,l}&=H_{\mu\nu\rho\sigma}[P_{k_1}\Upsilon_\mu,P_{k_2}\Upsilon_\nu,P_{[k_3-1,k_3+1]}Q_{>\beta j}P_{k_3}\Upsilon_\rho],\\
B^{\mu\nu\rho\sigma}_{k_1,k_2,k_3,l}&=H_{\mu\nu\rho\sigma}[P_{k_1}\Upsilon_\mu,P_{k_2}\Upsilon_\nu,P_{[k_3-1,k_3+1]}Q_{\le\beta j}P_{k_3}\Upsilon_\rho].
\end{align*}
Note the {\it absence} of spatial cutoffs in the first two arguments,
which enables us to use the $L^2L^\infty$ norm assumption in (\ref{growthZ}).

To bound $A$, we use Lemma \ref{paraprod}, (\ref{mk3-Soo}), unitarity of $e^{it\Lambda}$, Lemma \ref{Z-bound} (ii) and $k_i$, $l\le3.2\mathcal L/N+O(1)$ to get, for $\beta:=(1-\alpha)/2$,
\begin{align*}
\|\langle\nabla\rangle^8P_{k_4}A^{\mu\nu\rho\sigma}_{k_1,k_2,k_3,l}(t)\|_{L^2}
&\lesssim 2^{8k_3^++\frac{k_4+l+3k_3}{2}}\|P_{k_1}U(t)\|_{L^\infty}\|P_{k_2}U(t)\|_{L^\infty}\|Q_{>\beta j}P_{k_3}\Upsilon(t)\|_{L^2}\\
&\lesssim 2^{\frac{k_4^-}{2}-\frac{k_4^++|l|+|k_3|}{6}-\alpha\beta j}(1+t)^{\frac{48N}{5}}\|P_{k_1}U(t)\|_{L^\infty}\|P_{k_2}U(t)\|_{L^\infty}\ep_1.
\end{align*}
We integrate over $t\ge t_j=(2^{j+\min k_i/2-5}-1)^+$ and use the $L^2L^\infty$ norm assumption in (\ref{growthZ}) to get
\begin{align*}
\int_{t_j}^t 2^{\alpha j}\|Q_j\langle\nabla\rangle^8P_{k_4}A^{\mu\nu\rho\sigma}_{k_1,k_2,k_3,l}(s)\|_{L^2}ds
&\lesssim (1+t_j)^{\frac{10}{N}-\alpha}2^{(\alpha-\alpha\beta)j-\frac{k_4^++|l|+|k_3|}{6}-\frac{|k_1|+|k_2|-k_4^-}{2}}\ep_1^3\\
&\lesssim 2^{(\frac{10}{N}-\alpha\beta)j}2^{-\frac{k_4^++|l|+|k_3|}{6}-\frac{|k_1|+|k_2|-k_4^-+(\alpha-10/N)\min k_i}{2}}\ep_1^3.
\end{align*}
Using $\alpha\beta-10/N$ and $\alpha-10/N\in(0,1)$ we sum over $k_i$, $l\le3.2\mathcal L/N+O(1)$ and $j\ge0$ to get
\begin{align*}
&\int_0^t \left\| \sum_{\text{Case 5.1}}
2^{\alpha j}\|Q_j\langle\nabla\rangle^8P_{k_4}A^{\mu\nu\rho\sigma}_{k_1,k_2,k_3,l}(s)\|_{L^2} \right\|_{\ell_{j\ge0}^2}ds\\
\lesssim&\sum_{j\ge0} \sum_{k_i,l\le3.2\mathcal L/N+O(1)} \int_{t_j}^t 2^{\alpha j}\|Q_j\langle\nabla\rangle^8P_{k_4}A^{\mu\nu\rho\sigma}_{k_1,k_2,k_3,l}(s)\|_{L^2}ds
\lesssim \sum_{j\ge0} 2^{(\frac{10}{N}-\alpha\beta)j}\ep_1^3\lesssim \ep_1^3.
\end{align*}

To bound $B$, we put $L^\infty$ norms on the first and third factors and use (\ref{dispersive2}) to get
\begin{align*}
\|\langle\nabla\rangle^8P_{k_4}B^{\mu\nu\rho\sigma}_{k_1,k_2,k_3,l}(t)\|_{L^2}
&\lesssim 2^{8k_3^++(k_4+l+3k_3)/2}\|P_{k_1}U(t)\|_{L^\infty}\|P_{k_2}U(t)\|_{L^2}\\
&\times\|e^{-it\rho\Lambda}P_{[k_3-1,k_3+1]}Q_{\le\beta j}P_{k_3}\Upsilon_\rho(t)\|_{L^\infty}\\
&\lesssim 2^{10k_3^++(k_4+l+3k_3)/2}(1+t)^{-1}\|P_{k_1}U(t)\|_{L^\infty}\|P_{k_2}U(t)\|_{L^2}\\
&\times\|Q_{\le\beta j}P_{k_3}\Upsilon(t)\|_{L^1}\\
&\lesssim 2^{-\frac{|k_2|+|k_4|}{2}-\frac{|l|+|k_3|}{8}+(1-\alpha)\beta j}(1+t)^{\frac{84}{5N}-1}\|P_{k_1}U(t)\|_{L^\infty}\ep_1^2.
\end{align*}
We integrate over $t\ge t_j$ and use the $L^2L^\infty$ norm assumption in (\ref{growthZ}) to get
\begin{align*}
&\int_{t_j}^t 2^{\alpha j}\|Q_j\langle\nabla\rangle^8P_{k_4}A^{\mu\nu\rho\sigma}_{k_1,k_2,k_3,l}(s)\|_{L^2}ds\\
\lesssim&(1+t_j)^{\frac{84}{5N}-\frac{1+\alpha}{2}}2^{(\alpha+(1-\alpha)\beta)j-\frac{|l|+|k_3|}{8}-\frac{|k_1|+|k_2|+|k_4|}{2}}\ep_1^3\\
\lesssim&2^{(\frac{84}{5N}-(1-\alpha)(\frac{1}{2}-\beta))j}2^{-\frac{|l|+|k_3|}{8}-\frac{|k_1|+|k_2|+|k_4|+((1+\alpha)/2-16.8/N)\min k_i}{2}}\ep_1^3.
\end{align*}
Since $N\ge33/(\alpha-\alpha^2)$, $(1+\alpha)/2-16.8/N$ and $(1-\alpha)(1/2-\beta)-16.8/N\in(0,1)$, so we sum over $k_i$, $l\le3.2\mathcal L/N+O(1)$ and $j\ge0$ to get similar bounds to $A$. Combining the bounds for $A$ and $B$ we conclude Case 5.1.

{\bf Case 5.2:} $j>-\min k_i/2+\mathcal L+5$. In this case $j>-(j+2\mathcal L)/(2N)+\mathcal L$, which implies $j>(1-1.5/N)\mathcal L$ and $\mathcal L<(1+2/N)j<13j/12$. Then $k_i$, $l>-(1+2/N)\alpha'j$ and $k_i$, $l\le(j+6\mathcal L/5)/N+O(1)\le2.3j/N+O(1)$.

We further distinguish two cases.

{\bf Case 5.2.1:} $\alpha j>1.1\mathcal L$. We decompose
\begin{align*}
H^{\mu\nu\rho\sigma}_{k_1,k_2,k_3,l}
&=A^{\mu\nu\rho\sigma}_{k_1,k_2,k_3,l}+B^{\mu\nu\rho\sigma}_{k_1,k_2,k_3,l},\\
A^{\mu\nu\rho\sigma}_{k_1,k_2,k_3,l}
&=B^{\mu\nu\rho\sigma}_{\ge j-4,k_1,\ge j-4,k_2,\ge j-4,k_3,l},\\
B^{\mu\nu\rho\sigma}_{k_1,k_2,k_3,l}
&=\sum_{I_1,I_2,I_3\in\{\ge j-4,\le j-5\}\atop\exists I_i=``\le j-5"}
B^{\mu\nu\rho\sigma}_{I_1,k_1,I_2,k_2,I_3,k_3,l},\\
B^{\mu\nu\rho\sigma}_{I_1,k_1,I_2,k_2,I_3,k_3,l}
&=H^{\mu\nu\rho\sigma}_l[P_{[k_1-1,k_1+1]}Q_{I_1}P_{k_1}\Upsilon_\mu,
P_{[k_2-1,k_2+1]}Q_{I_2}P_{k_2}\Upsilon_\nu,P_{[k_3-1,k_3+1]}Q_{I_3}P_{k_3}\Upsilon_\rho].
\end{align*}

To bound $A$, by (\ref{H3k-L2l2}) and (\ref{growthZ}) we have
\begin{align*}
\|\langle\nabla\rangle^8A^{\mu\nu\rho\sigma}_{k_1,k_2,k_3,l}\|_{L^2}
&\lesssim 2^{k_1+k_2+8k_3^++2k_3+l/2}\prod_{i=1}^3\|Q_{\ge{j-4}}P_{k_i}\Upsilon\|_{L^2}\\
&\lesssim 2^{-|k_1|-|k_2|+(8.5-N)k_3^++2k_3+l/2-2\alpha j}\ep_1^3.
\end{align*}
We sum over $k_1$, $k_2\in\Z$, $l\le k_3+O(1)$, $k_3\in\Z$ and use $N\ge12$ to get
\[
\sum_{k_1,k_2,k_3,l} 2^{\alpha j}\|Q_j\langle\nabla\rangle^8A^{\mu\nu\rho\sigma}_{k_1,k_2,k_3,l}(t)\|_{L^2}\lesssim 2^{-\alpha j}\ep_1^3.
\]
Then we sum over $\alpha j>1.1\mathcal L$ to get
\[
\left\| \sum_{k_1,k_2,k_3,l} 2^{\alpha j}\|Q_j\langle\nabla\rangle^8A^{\mu\nu\rho\sigma}_{k_1,k_2,k_3,l}(t)\|_{L^2} \right\|_{\ell_{\alpha j>1.1\mathcal L}^2}
\lesssim \frac{\ep_1^3}{(1+t)^{1.1}}.
\]

To bound $B$, note that $\sum_{\mu,\nu,\rho,\sigma=\pm}
\langle\nabla\rangle^8P_{k_4}B^{\mu\nu\rho\sigma}_{I_1,k_1,I_2,k_2,I_3,k_3,l}(x,t)$ is a linear combination of
\begin{align*}
&\int G(x,y,z,w,t)Q_{I_1}P_{k_1}\Upsilon_\mu(y,t)
Q_{I_2}P_{k_2}\Upsilon_\nu(z,t)Q_{I_3}P_{k_3}\Upsilon_\rho(w,t)dydzdw,\\
&G(x,y,z,w,t)=\int e^{i\phi_{\mu\nu\rho}(\xi_1,\xi_2,\xi_3)}\langle\xi_1+\xi_2+\xi_3\rangle^8m^{\mu\nu\rho}_{k_1,k_2,k_3,l;k_4}(\xi_1,\xi_2,\xi_3)d\xi_1d\xi_2d\xi_3,\\
&\Phi_{\mu\nu\rho}(\xi_1,\xi_2,\xi_3)=\Lambda(\xi_1+\xi_2+\xi_3)-\mu\Lambda(\xi_1)-\nu\Lambda(\xi_2)-\rho\Lambda(\xi_3),\\
&\phi_{\mu\nu\rho}(\xi_1,\xi_2,\xi_3)=(x-y)\cdot\xi_1+(x-z)\cdot\xi_2+(x-w)\cdot\xi_3+t\Phi_{\mu\nu\rho}(\xi_1,\xi_2,\xi_3)
\end{align*}
and $m^{\mu\nu\rho}_{k_1,k_2,k_3,l;k_4}$ is a cutoff of $m_{\mu\nu\rho}(\xi_1,\xi_2,\xi_3)$ for $|\xi_j|\approx 2^{k_j}$, $j=1$, 2, 3, $|\xi_1+\xi_2+\xi_3|\approx 2^{k_4}$ and $|\xi_2+\xi_3|$ (or $|\xi_1+\xi_2|)\approx 2^l$.

From (\ref{m-CL}), (\ref{mk3-CL}), $\nabla^L(\varphi_k(\xi)\langle\xi\rangle^8)\lesssim_L 2^{8k^+-Lk}$ and Lemma \ref{ps-1/F-Soo} (ii) it follows that
\begin{equation}\label{mk3l-CL}
\left| \nabla^L \left( \langle\xi_1+\xi_2+\xi_3\rangle^8m^{\mu\nu\rho}_{k_1,k_2,k_3,l;k_4}(\xi_1,\xi_2,\xi_3) \right) \right|
\lesssim_L 2^{8k_3^++3k_3-(L+1/2)\min(k_i,l)}.
\end{equation}
Note that $k_i$, $l>-\beta j$, with $\beta=(1+2/N)\alpha'\in(0,1)$
(since $N>4/\alpha$ and $N>8$), so
\[
\|\langle\xi_1+\xi_2+\xi_3\rangle^8m^{\mu\nu\rho}_{k_1,k_2,k_3,l;k_4}(\xi_1,\xi_2,\xi_3)\|_{W^{L,1}}\lesssim_L 2^{8k_3^++5k_3+2k_2+2k_1+\beta Lj+j/2}.
\]
Again $|t\nabla\Phi_{\mu\nu\rho}(\xi_1,\xi_2,\xi_3)|<2^{j-3}$,
and $|t\nabla^L\Phi_{\mu\nu\rho}(\xi_1,\xi_2,\xi_3)|\lesssim 2^{j+\beta(1-L)j}$ for $L\ge2$, so by Lemma \ref{int-part-cor} (with $A=2^{8k_3^++5k_3+2k_2+2k_1+j/2}$, $r=2^{j-2}$, $\ep\approx2^{-\beta j}$, $d=6$, and $L=L(N,\alpha)$),
\[
\|1_{\max(|x-y|,|x-z|,|x-w|)>2^{j-2}}G(x,y,z,w,t)\|_{L^1_{y,z,w}}
\lesssim 2^{8k_3^++5k_3+2k_2+2k_1-3j}.
\]
Using $N\ge15$ and $N\ge6/(1-\alpha)$ we argue as in Case 4.2 of Proposition \ref{W-Z} to get
\begin{align*}
&\left\| \sum_{\text{Case 5.2.1}}
2^{\alpha j}\|Q_j\langle\nabla\rangle^8B^{\mu\nu\rho\sigma}_{k_1,k_2,k_3,l}(t)\|_{L^2} \right\|_{\ell^2_{j>(1-1.5/N)\mathcal L}}\\
\lesssim&(1+t)^{-(2-\alpha)(1-1.5/N)}\ep_1^3
\le(1+t)^{-(2-\alpha-3/N)}\ep_1^3\le(1+t)^{-(3-\alpha)/2}\ep_1^3.
\end{align*}

{\bf Case 5.2.2:} $\alpha j\le1.1\mathcal L$. We decompose
\begin{align*}
H^{\mu\nu\rho\sigma}_{k_1,k_2,k_3,l}&=A^{\mu\nu\rho\sigma}_{k_1,k_2,k_3,l}
+B^{\mu\nu\rho\sigma}_{k_1,k_2,k_3,l}\\
A^{\mu\nu\rho\sigma}_{k_1,k_2,k_3,l}&=H^{\mu\nu\rho\sigma}_l[P_{k_1}\Upsilon_\mu,P_{k_2}\Upsilon_\nu,P_{[k_3-1,k_3+1]}Q_{\ge j-4}P_{k_3}\Upsilon_\rho],\\
B^{\mu\nu\rho\sigma}_{k_1,k_2,k_3,l}&=H^{\mu\nu\rho\sigma}_l[P_{k_1}\Upsilon_\mu,P_{k_2}\Upsilon_\nu,P_{[k_3-1,k_3+1]}Q_{\le j-5}P_{k_3}\Upsilon_\rho]
\end{align*}
as in Case 5.1 in this proof.

To bound $A$, by Lemma \ref{paraprod}, (\ref{mk3-Soo}), unitarity of $e^{it\Lambda}$ and $k_i$, $l\le2.3j/N+O(1)$ we have
\begin{align*}
\|\langle\nabla\rangle^8P_{k_4}A^{\mu\nu\rho\sigma}_{k_1,k_2,k_3,l}(t)\|_{L^2}
&\lesssim 2^{8k_3^++(k_4+l+3k_3)/2}\|P_{[k_1-1,k_1+1]}U(t)\|_{L^\infty}\\
&\times\|P_{[k_2-1,k_2+1]}U(t)\|_{L^\infty}\|Q_{\ge j-4}P_{k_3}\Upsilon(t)\|_{L^2}\\
&\lesssim 2^{-(|k_4|+|l|+|k_3|)/6-(\alpha-7/N)j}\|P_{[k_1-1,k_1+1]}U(t)\|_{L^\infty}\\
&\times\|P_{[k_2-1,k_2+1]}U(t)\|_{L^\infty}\ep_1.
\end{align*}
We integrate over $t\ge t_j:=(2^{\alpha j/1.1}-1)^+$, use the $L^2L^\infty$ norm assumption in (\ref{growthZ}), sum over $k_i$, $l\le2.3\mathcal L/N+O(1)$ and $j\ge0$ and use $N \ge 8/\alpha^2$ to get
\begin{align*}
&\int_0^t \left\| \sum_{k_i,l\le2.3\mathcal L/N+O(1)} 2^{\alpha j}\|\langle\nabla\rangle^8P_{k_4}A^{\mu\nu\rho\sigma}_{k_1,k_2,k_3,l}(s)\|_{L^2} \right\|_{\ell_{\alpha j\le1.1\mathcal L}^2}ds\\
&\lesssim \sum_{j\ge0} \sum_{k_i,l\in\Z} \int_{t_j}^t 2^{\alpha j}\|\langle\nabla\rangle^8P_{k_4}A^{\mu\nu\rho\sigma}_{k_1,k_2,k_3,l}(s)\|_{L^2}ds\\
&\lesssim \sum_{j\ge0} 2^{\alpha(10/N-\alpha)j/1.1}\ep_1^3\lesssim \ep_1^3.
\end{align*}
The bound for $B$ is similar to that in Case 5.2.1, and in fact easier,
because in the kernel $G(x,y,z,w,t)$ we already have $|x-w|>2^{j-2}$.

Combining Case 1 through Case 5 above (with integration in $t$ when necessary) shows the result.
\end{proof}

\subsection{Bounding the cubic bulk term $N_3^\circ$}
Recall from (\ref{Duhamel}) that
\[
\Upsilon_3(t)=\int_0^t e^{is\Lambda}N_3(s)ds.
\]
We now bound the $Z$ norm of $\Upsilon_3$ by expanding $N_3$ to one order higher.

From (\ref{B23}) it follows that
\begin{equation}\label{B34}
\begin{aligned}
B_3&=B_3^\circ+B_4,\\
B_3^\circ&=|\nabla|(h|\nabla|(h|\nabla|\phi))+\frac{1}{2}[\nabla\cdot(h\nabla\cdot(h|\nabla|\phi))+|\nabla|(h\nabla\cdot(h\nabla\phi))\\
&-|\nabla|(h\nabla h\cdot\nabla\phi)+\nabla\cdot(h(\nabla h)|\nabla|\phi)]
-|\nabla h|^2|\nabla|\phi\\
&=|\nabla|(h|\nabla|(h|\nabla|\phi))+\frac{1}{2}(\Delta(h^2|\nabla|\phi)+|\nabla|(h^2\Delta\phi))-|\nabla h|^2|\nabla|\phi
\end{aligned}
\end{equation}
and $B_4$ is an integral of quartic terms in $h$ and $\phi$.
Then from (\ref{N23}) it follows that
\begin{equation}\label{N34}
\begin{aligned}
N_3&=N_3^\circ+N_4,\\
N_3^\circ&=|\nabla|(h|\nabla|(h|\nabla|\phi))+\frac{1}{2}(\Delta(h^2|\nabla|\phi)+|\nabla|(h^2\Delta\phi))\\
&-i|\nabla|^{1/2}(|\nabla|\phi)(|\nabla|(h|\nabla|\phi)+h\Delta\phi),\\
N_4&=B_4+|\nabla h|^2(B_2+B_3)+\frac{i}{2}|\nabla|^{1/2}(2BB_3-B_3^2+B_2^2+|\nabla h|^2B^2).
\end{aligned}
\end{equation}
The $Z$ norm bounds for $N_3^\circ$ and $N_4$ are shown in Proposition \ref{N30-Z} and Proposition \ref{N4-Z} respectively.

\begin{proposition}\label{N30-Z}
Assume $N \ge \max(33/(\alpha - \alpha^2), 8/\alpha^2)$ and (\ref{growthZ}). Then
\[
\int_0^t \|e^{is\Lambda}N_3^\circ(s)\|_Zds\lesssim \ep_1^3.
\]
\end{proposition}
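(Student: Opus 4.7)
The proof will closely parallel that of Proposition \ref{H3-Z}. First, I would read off from (\ref{N34}) that $N_3^\circ$ is a linear combination of trilinear expressions in $U_\mu, U_\nu, U_\rho$ with Schwartz symbols $m_{\mu\nu\rho}^\circ(\xi_1,\xi_2,\xi_3)$ built out of products of the functions $|\xi_1+\xi_2+\xi_3|^a$, $|\xi_i+\xi_j|^b$ and $|\xi_i|^{1/2}$, with $a+b \le 2$. Using Lemma \ref{Soo-Cn}, these symbols satisfy
\[
\|m_{\mu\nu\rho}^\circ\|_{S^\infty_{k_1,k_2,k_3;k_4}} \lesssim 2^{k_4 + 3k_3/2}
\]
when $k_1 \le k_2 \le k_3$; moreover, after inserting a cutoff $\varphi_l(\xi_i+\xi_j)$ that isolates one of the partial sums of frequencies, the same sort of bound $\lesssim 2^{(k_4+l+3k_3)/2}$ as in (\ref{mk3-Soo}) holds. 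In particular the symbols of $N_3^\circ$ are at least as well-behaved as the $m_{\mu\nu\rho}$ arising from the normal form in Proposition \ref{H3-Z}, the improvement being that there is no $1/\Phi_{\mu\sigma}$ factor, so no singularity on the space-resonance set.

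I would then decompose dyadically as in (\ref{H3k}), symmetrize so that $k_1 \le k_2 \le k_3$, and run through the same five cases as in the proof of Proposition \ref{H3-Z}: (1) a high-frequency cutoff $k_3 \ge (j+6\mathcal L/5)/N$ handled by the energy bound $\|U\|_{H^N} \lesssim \ep_1$; (2) a low-frequency cutoff $k_1$ very negative handled by Bernstein and Lemma \ref{Z-bound}(i) together with the $1/t$-decay from Lemma \ref{dispersiveZ}; (3/4) cutoffs in $l$ or in the output frequency $k_4$, again handled by Bernstein; and (5) the main case $-\alpha' \max(j,\mathcal L) < k_i, l \le (j + 6\mathcal L/5)/N + O(1)$, which splits further into Case 5.1 ($j \lesssim \mathcal L$) and Case 5.2 ($j \gg \mathcal L$). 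In each subcase I would perform the spatial decomposition $\Upsilon = Q_{\le\beta j}\Upsilon + Q_{>\beta j}\Upsilon$ (applied to the highest-frequency factor or to all three factors in Case 5.2.1), with the optimal choice $\beta = (1-\alpha)/2$. The far-field piece is handled by integration by parts in Fourier variables via Lemma \ref{int-part-cor}, using the bounds
\[
|t\nabla \Phi_{\mu\nu\rho}| \lesssim 2^{-\min k_i/2}\, t, \qquad
|t\nabla^L \Phi_{\mu\nu\rho}| \lesssim 2^{j+(1-L)\beta j}\text{ for } L \ge 2,
\]
together with the symbol derivative bound $|\nabla^L m_{\mu\nu\rho}^\circ| \lesssim 2^{3k_3/2 - L\min(k_i,l)}$, which follows directly from the explicit form of $m_{\mu\nu\rho}^\circ$; the near-field piece is handled by the $L^2L^\infty$ norm hypothesis in (\ref{growthZ}) combined with dispersive decay.

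The one step that will require genuine care, rather than a mechanical transcription from the proof of Proposition \ref{H3-Z}, is verifying that the symbol bound above survives after imposing the cutoff $\varphi_l(\xi_i+\xi_j)$ for each of the five terms in (\ref{N34}), because different terms pair up different frequency variables inside $|\xi_i+\xi_j|^b$. This is purely bookkeeping but has to be done term by term; once checked, the case-analysis calculations carry over verbatim and produce decay rates such as $(1+s)^{-1.1}$, $(1+s)^{-(2+2\alpha-)/(2+\alpha)}$ and $(1+s)^{-(3-\alpha)/2}$, each of which integrates to $O(\ep_1^3)$ in time, while the remaining subcases (Case 5.1, Case 5.2.2) contribute $O(\ep_1^3)$ directly after time integration over the prescribed range of $s$. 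The same constraint $N \ge \max(33/(\alpha - \alpha^2), 8/\alpha^2)$ from Proposition \ref{H3-Z} suffices, since every use of $N$ there controls losses that are present here as well (or smaller).
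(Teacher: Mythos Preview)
Your plan is essentially right for two of the five multipliers in $N_3^\circ$ (the ones carrying an explicit factor $|\xi_2+\xi_3|$ or $|\xi_1+\xi_2|$), but the ``pure bookkeeping'' step you flag is in fact where the argument breaks. Three of the five multipliers listed after (\ref{N34}) --- namely $|\xi_1+\xi_2+\xi_3|^2\sqrt{|\xi_3|}$, $|\xi_1+\xi_2+\xi_3||\xi_3|^{3/2}$, and $\sqrt{|\xi_1+\xi_2+\xi_3||\xi_1||\xi_3|^3}$ --- contain no factor of the intermediate frequency $|\xi_2+\xi_3|$ at all. For these, inserting a cutoff $\varphi_l(\xi_2+\xi_3)$ gives no gain as $l\to-\infty$: the multiplier stays of size $\sim 1$ when all $k_i\approx 0$, so the bound $\lesssim 2^{(k_4+l+3k_3)/2}$ you claim is false. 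Consequently Case 3 (which relies on this $2^{l/2}$ gain to sum over small $l$) and the use of (\ref{H3k-L2l2}) in Case 5.2.1 both collapse for these three terms, and the ``carry over verbatim'' conclusion fails.

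The fix, which is what the paper does, is to \emph{drop} the dyadic decomposition in $l$ entirely for these three multipliers. Since they are smooth near $\xi_2+\xi_3=0$, they satisfy the cruder bound $\|m_{\mu\nu\rho}\|_{S^\infty_{k_1,k_2,k_3;k}}\lesssim 2^{k/2+2\max k_j}$, which still yields (\ref{H3k-L2}). One then reruns the case analysis with $l$ replaced everywhere by $\max k_j$: Case 1 is unchanged, Case 3 is simply not needed, Cases 2 and 4 go through with this substitution, and in Case 5.2.1 one uses (\ref{H3k-L2}) in place of (\ref{H3k-L2l2}) to bound the $A$-term, summing over $k_3\in\Z$ instead of over $l\le k_3+O(1)$. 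With that modification the remaining constraints on $N$ are the same as in Proposition \ref{H3-Z}.
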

\begin{proof}
The multiplier of $N_3^\circ$ is a linear combination of the multipliers
\begin{align*}
|\xi_1+\xi_2+\xi_3||\xi_2+\xi_3|\sqrt{|\xi_3|},
|\xi_1+\xi_2+\xi_3|^2\sqrt{|\xi_3|}, |\xi_1+\xi_2+\xi_3||\xi_3|^{3/2},\\
\sqrt{|\xi_1+\xi_2+\xi_3||\xi_1||\xi_3|}\cdot|\xi_2+\xi_3|\text{ and }
\sqrt{|\xi_1+\xi_2+\xi_3||\xi_1||\xi_3|^3}.
\end{align*}
The first one and the fourth one satisfy the bounds (\ref{mk3-Soo}) (and its consequences (\ref{H3k-L2}), (\ref{H3k-L2l}) and (\ref{H3k-L2l2}))
and (\ref{mk3l-CL}), so their bounds can be shown in the same way as Proposition \ref{H3-Z}.

Now we turn to the remaining three multipliers. They differ from the previous two in that they do not contain a factor that is a power of the ``intermediate frequency" $|\xi_2 + \xi_3|$. Thus they are smooth near $\xi_2 + \xi_3 = 0$, and instead of (\ref{mk3-Soo}) they satisfy the bound
\[
\|m_{\mu\nu\rho}(\xi_1, \xi_2, \xi_3)\|_{S_{k_1,k_2,k_3;k}^\infty}
\lesssim 2^{k/2+2\max k_j}
\]
which still implies the bound (\ref{H3k-L2}). We don't need the other two.

Now we go over the proof of Proposition \ref{H3-Z} to show that it applies to the remaining three multipliers. Case 1 remains unchanged.
In Case 2, we omit dyadic decomposition according to $|\xi_2 + \xi_3|$,
and replace $l$ with $\max k_j$ in all the relevant bounds.
Case 3 is not needed. Case 4 can be adapted in the same way as Case 2.
For Case 5 we again omit dyadic decomposition in $|\xi_2 + \xi_3|$ and replace $l$ with $\max k_j$ in all the relevant bounds.
The only necessary change is in bounding $A$ in Case 5.2.1,
where instead of (\ref{H3k-L2l2}) we use (\ref{H3k-L2}) to get
\[
\|\langle\nabla\rangle^8A^{\mu\nu\rho\sigma}_{k_1,k_2,k_3,l}\|_{L^2}
\lesssim 2^{-|k_1|-|k_2|+(8.5-N)k_3^++5k_3/2-2\alpha j}\ep_1^3.
\]
Then we sum over $k_1$, $k_2$ and $k_3 \in \Z$ to obtain the same bound as before.
\end{proof}

\subsection{Bounding the quartic bulk term $N_4$}
To bound the $Z$ norm of
\[
\int_0^t e^{is\Lambda}N_4(s)ds
\]
we first need to relate the $Z$-norm of the profile to that of the solution, as done in Section 8 of \cite{GeMaSh2}.

\begin{lemma}\label{UZ-VZ}
Fix $\alpha\in(0,1)$. Then for $t \ge 0$,
\[
\|e^{it\Lambda}u\|_Z\lesssim t^\alpha\|u\|_{W^{8,4/(2+\alpha)}}+\|u\|_Z.
\]
\end{lemma}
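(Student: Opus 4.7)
The plan is to set $v := \langle\nabla\rangle^8 u$, reducing the estimate to
$$\|(1+|x|)^\alpha e^{it\Lambda}v\|_{L^2} \lesssim \|(1+|x|)^\alpha v\|_{L^2} + t^\alpha \|v\|_{L^{4/(2+\alpha)}(\R^2)},$$
and to attack this frequency-by-frequency. Since $(1+|x|)^{2\alpha}$ lies in the Muckenhoupt class $A_2$ for $\alpha\in(0,1)$, the weighted Littlewood--Paley equivalence $\|(1+|x|)^\alpha f\|_{L^2}^2 \approx \sum_{k\in\Z}\|(1+|x|)^\alpha P_k f\|_{L^2}^2$ holds, and since $P_k$ commutes with $e^{it\Lambda}$ it suffices to bound each $\|(1+|x|)^\alpha e^{it\Lambda} P_k v\|_{L^2}$ appropriately.

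For the dyadic piece I first establish the $\alpha=1$ commutator identity: Fourier differentiation yields $x_j e^{it\Lambda} = e^{it\Lambda}(x_j - \tfrac{t}{2} B_j)$ where $B_j$ has symbol $\xi_j/|\xi|^{3/2}$, so that $\|B_j P_k v\|_{L^2}\lesssim 2^{-k/2}\|P_k v\|_{L^2}$ by Plancherel and the support condition on $\widehat{P_k v}$. This gives
$$\|(1+|x|)e^{it\Lambda}P_k v\|_{L^2} \lesssim \|(1+|x|+t\cdot 2^{-k/2})P_k v\|_{L^2}.$$
Stein's theorem applied to the analytic family $T_z w := (1+|x|)^z e^{it\Lambda} w$ on $\{0\le\Re z\le 1\}$, which has norm $1$ from $L^2\to L^2$ at $\Re z=0$ and, by the previous display, norm $\lesssim 1$ from $L^2\bigl((1+|x|+t\cdot 2^{-k/2})^2 dx\bigr)\to L^2$ at $\Re z=1$, yields
$$\|(1+|x|)^\alpha e^{it\Lambda}P_k v\|_{L^2} \lesssim \|(1+|x|)^\alpha P_k v\|_{L^2} + (t\cdot 2^{-k/2})^\alpha \|P_k v\|_{L^2}.$$

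Squaring and summing over $k$, the first term reassembles $\|(1+|x|)^\alpha v\|_{L^2}^2$, while the second yields $t^{2\alpha}\sum_k 2^{-\alpha k}\|P_k v\|_{L^2}^2 \approx t^{2\alpha}\||\nabla|^{-\alpha/2}v\|_{L^2}^2$; the 2D Hardy--Littlewood--Sobolev embedding $|\nabla|^{-\alpha/2}:L^{4/(2+\alpha)}\to L^2$ controls the latter by $t^{2\alpha}\|v\|_{L^{4/(2+\alpha)}}^2$, and the identity $\|v\|_{L^p}=\|u\|_{W^{8,p}}$ translates everything back to the $u$-formulation.

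The most delicate step is the Stein interpolation: one has to verify admissibility of the analytic family $\{(1+|x|)^z e^{it\Lambda}\}$ with endpoint operator bounds that are uniform in $\Im z$ (true since $(1+|x|)^{iy}$ acts by a modulus-one multiplication on every $L^p$), and to invoke the Stein--Weiss identification $[L^2,L^2(w^2\,dx)]_\alpha = L^2(w^{2\alpha}\,dx)$ for the $t$-dependent weight $w = 1+|x|+t\cdot 2^{-k/2}$. Every other ingredient (weighted Littlewood--Paley for $A_2$ weights, the pointwise commutator identity, and 2D Hardy--Littlewood--Sobolev) is standard.
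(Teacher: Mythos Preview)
Your strategy---commutator identity at $\alpha=1$, complex interpolation down to $\alpha\in(0,1)$, then Hardy--Littlewood--Sobolev to reassemble---is sound in spirit and genuinely different from the paper's argument, but the Stein interpolation step as written has a real gap.

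\textbf{The gap.} You assert that $T_z w := (1+|x|)^z e^{it\Lambda} w$ has norm $\lesssim 1$ from $L^2\bigl((1+|x|+t\cdot 2^{-k/2})^2\,dx\bigr)$ to $L^2$ at $\Re z = 1$, ``by the previous display.'' But that display only treats $w=P_kv$, i.e.\ functions with Fourier support in $\{|\xi|\approx 2^k\}$. For a general $w$ the commutator $[x_j,e^{it\Lambda}]w=-\tfrac{t}{2}e^{it\Lambda}B_jw$ has symbol $\xi_j/|\xi|^{3/2}$, which is unbounded near $\xi=0$; the claimed $\Re z=1$ operator norm is therefore infinite. Stein interpolation needs genuine operator bounds on the full endpoint space, not a single-vector estimate. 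One fix is to build a fattened projection $\tilde P_k$ (with $\tilde P_k P_k=P_k$) into the family, $\tilde T_z w:=(1+|x|)^z e^{it\Lambda}\tilde P_k w$. The $\Re z=1$ bound then reads $\|\tilde T_{1+iy}w\|_{L^2}\lesssim\|(1+|x|+t\cdot 2^{-k/2}+2^{-k})w\|_{L^2}$, the extra $2^{-k}$ coming from $[x_j,\tilde P_k]$ (note $(1+|x|)^2$ is \emph{not} $A_2$ in $\R^2$, so Calder\'on--Zygmund theory does not remove it). After interpolation and the $\ell^2_k$ sum this produces an additional term $\||\nabla|^{-\alpha}v\|_{L^2}$, which can be absorbed into $\|(1+|x|)^\alpha v\|_{L^2}$ via Pitt's inequality $\||\xi|^{-\alpha}\hat v\|_{L^2}\lesssim\||x|^\alpha v\|_{L^2}$ (valid for $0<\alpha<1$ in $\R^2$). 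None of this is fatal, but it is missing from your write-up.

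\textbf{Comparison with the paper.} The paper avoids the $\alpha=1$ endpoint and interpolation entirely, working directly at the target $\alpha\in(0,1)$. After the same reduction to dyadic frequencies it splits into two cases: when $2^{k/2}t\le 1$ the multiplier $e^{it\Lambda}\varphi_k(D)$ is a uniform Calder\'on--Zygmund operator, hence bounded on $L^2((1+|x|)^{2\alpha}\,dx)$ because this weight \emph{is} $A_2$ for $\alpha<1$; when $2^{k/2}t>1$ a physical-space decomposition $Q_j$ together with the stationary-phase Lemma~\ref{int-part-cor} handles the far-from-diagonal pieces, and Bernstein the rest. Your route is conceptually cleaner and transports immediately to other dispersion relations once the commutator $[x,e^{it\Lambda}]$ is computed; the paper's route is more elementary (no Stein--Weiss, no Pitt) at the price of an explicit case analysis and a kernel estimate.
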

\begin{proof}
By Lemma \ref{Z-bound} (iv) and the square function estimates (see Section I.6.4 of \cite{St}),
\begin{align*}
\|e^{it\Lambda}u\|_Z&\approx \|\|e^{it\Lambda}P_ku\|_Z\|_{\ell_k^2},\\
\|\|P_ku\|_{W^{8,4/(2+\alpha)}}\|_{\ell_k^2}
&\le\|\|\langle\nabla\rangle^8P_ku\|_{\ell_k^2}\|_{L^{4/(2+\alpha)}}
\approx \|u\|_{W^{8,4/(2+\alpha)}},\\
\|\|P_ku\|_Z\|_{\ell_k^2}&\approx \|u\|_Z.
\end{align*}
Hence it suffices to show the result with $P_ku$ in place of $u$.
Since $\|u\|_{W^{8,4/(2+\alpha)}}\approx\|\langle\nabla\rangle^8u\|_{L^{4/(2+\alpha)}}$ (see Theorem 2.3.3 of \cite{Tr2}),
after replacing $\langle\nabla\rangle^8u$ with $u$ it suffices to show
\[
\|(1+|x|)^\alpha e^{it\Lambda}P_ku\|_{L^2}
\lesssim t^\alpha \|P_ku\|_{L^{4/(2+\alpha)}}+\|(1+|x|)^\alpha P_ku\|_{L^2}.
\]
We distinguish two cases.

{\bf Case 1:} $2^{k/2}t \le 1$. Note that for any integer $L \ge 0$,
$\nabla^L(e^{it\Lambda})$ is a linear combination of
\[
e^{it\Lambda}\nabla^{n_1}(t\Lambda)\cdots\nabla^{n_l}(t\Lambda)
\]
where $1 \le l \le L$, $\sum_{j=1}^l n_j = 1$ and $n_j \ge 1$.
Since $|\nabla^n(t\Lambda)(\xi)| \lesssim_n t|\xi|^{1/2-n}$ for $n \ge 0$,
when $|\xi| \approx 2^k$ we have
\[
|\nabla^L(e^{it\Lambda}\varphi_k)(\xi)|
\lesssim_L \max_{l=1}^L t^l|\xi|^{l/2-L}
\lesssim_L |\xi|^{-L}
\]
so $e^{it\Lambda}$ is a Calderon--Zygmund operator whose norm is uniformly bounded in $t$, so it is uniformly bounded on weighted $L^2$ spaces with $A_2$ weights, and the second term on the right-hand side suffices.

{\bf Case 2:} $2^{k/2}t>1$. Note that
\[
\|(1+|x|)^\alpha e^{it\Lambda}u\|_{L^2}
\approx \|2^{\alpha j}\|Q_je^{it\Lambda}u\|_{L^2}\|_{\ell_{j\ge0}^2}.
\]
We divide the sum into several parts and estimate them separately.

{\bf Part 1:} $2^j\le 2^{2-k/2}t$. By unitarity of $e^{it\Lambda}$ and the Bernstein inequality,
\begin{align*}
\|2^{\alpha j}\|Q_je^{it\Lambda}u\|_{L^2}\|_{\ell_{2^j\le 2^{2-k/2}t}^2}
&\le\|2^{\alpha j}\|u\|_{L^2}\|_{\ell_{2^j\le 2^{2-k/2}t}^2}
\lesssim t^\alpha 2^{-\alpha k/2}\|u\|_{L^2}\\
&\lesssim t^\alpha \|u\|_{L^{4/(2+\alpha)}}.
\end{align*}

{\bf Part 2:} $2^j>2^{2-k/2}t$. We further decompose
\[
\|Q_je^{it\Lambda}u\|_{L^2}\le
\|Q_je^{it\Lambda}Q_{\ge j-3}u\|_{L^2}+\|Q_je^{it\Lambda}Q_{\le j-4}u\|_{L^2}.
\]

{\bf Part 2.1:} $Q_je^{it\Lambda}Q_{\ge j-3}u$.
By unitarity of $e^{it\Lambda}$,
\begin{align*}
\|2^{\alpha j}\|Q_je^{it\Lambda}Q_{\ge j-3}u\|_{L^2}\|_{\ell_{j\ge0}^2}
&\le\|2^{\alpha j}\|Q_{\ge j-3}u\|_{L^2}\|_{\ell_{j\ge0}^2}\\
&\lesssim \|\|2^{\alpha j}\|Q_{j'}u\|_{L^2}\|_{\ell_{j\le j'+3}^2}\|_{\ell_{j'\ge0}^2}\\
&\lesssim \|2^{\alpha j'}\|Q_{j'}u\|_{L^2}\|_{\ell_{j'\ge0}^2}\\
&\approx \|(1+|x|)^\alpha u\|_{L^2}.
\end{align*}

{\bf Part 2.2:} $Q_je^{it\Lambda}Q_{\le j-4}u$. We have
\[
Q_je^{it\Lambda}Q_{\le j-4}u(x)
=\varphi_j(x)\int K(y)\varphi_{\le j-4}(x-y)u(x-y)dy,
\]
where
\[
K(x)=\int e^{i(t|\xi|^{1/2}+x\cdot\xi)}\varphi_k(\xi)d\xi.
\]
By Lemma \ref{int-part-cor}, with $A\approx 2^{2k}$, $r=3\cdot2^{j-3}$,
$\ep\approx 2^k$, $L=1$ and $d=2$,
\[
\|K\|_{L^1(\R^2\backslash B(0,r))}\lesssim 2^{2k-j-3k}=2^{-j-k}.
\]
Since when $\varphi_j(x)\varphi_{\le j-4}(x-y)>0$ we have $|x|>2^{j-1}$ and $|y|<2^{j-3}$ so $|x-y|>r$, by Young's inequality and Bernstein's inequality we then have
\[
\|Q_je^{it\Lambda}Q_{\le j-4}u\|_{L^2}\lesssim 2^{-j-k}\|Q_{\le j-4}u\|_{L^2}
\le 2^{-j-k}\|u\|_{L^2}\lesssim 2^{-j-k+\alpha k/2}\|u\|_{L^{4/(2+\alpha)}}.
\]
Hence
\begin{align*}
\|2^{\alpha j}\|Q_je^{it\Lambda}Q_{\le j-4}u\|_{L^2}\|_{\ell_{2^j>2^{2-k/2}t}^2}
&\lesssim \|2^{\alpha j-j-k+\alpha k/2}\|_{\ell_{2^j>2^{2-k/2}t}^2}\|u\|_{L^{4/(2+\alpha)}}\\
&\lesssim 2^{(\alpha-1)(\log t-k/2)-k+\alpha k/2}\|u\|_{L^{4/(2+\alpha)}}\\
&=t^{\alpha-1}2^{-k/2}\|u\|_{L^{4/(2+\alpha)}}
<t^\alpha \|u\|_{L^{4/(2+\alpha)}},
\end{align*}
where we have used the assumption $2^{k/2}t>1$.
\end{proof}

Now we are ready to bound the $Z$ norm of $N_4$.
\begin{proposition}\label{N4-Z}
Assume $N \ge 14$ and (\ref{growthZ}). Then
\[
\int_0^t \|e^{is\Lambda}N_4(s)\|_Zds\lesssim \ep_1^4.
\]
\end{proposition}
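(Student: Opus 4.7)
The strategy is to apply Lemma~\ref{UZ-VZ} to reduce the $Z$-norm of $e^{is\Lambda} N_4(s)$ to bounds on $\|N_4\|_{W^{8,4/(2+\alpha)}}$ and $\|N_4\|_Z$, each of which can be controlled using the quartic structure of $N_4$ together with the bootstrap assumptions (\ref{growthZ}). Concretely, Lemma~\ref{UZ-VZ} gives
\[
\int_0^t \|e^{is\Lambda} N_4(s)\|_Z \, ds \lesssim \int_0^t \left( s^\alpha \|N_4(s)\|_{W^{8,4/(2+\alpha)}} + \|N_4(s)\|_Z \right) ds,
\]
and it remains to control each of the two integrands.

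From the expansion (\ref{N34}) together with Propositions~\ref{Taylor1}--\ref{Taylor2}, every term of $N_4$ is a multilinear product of order at least four in the basic quantities $(h, \nabla h, |\nabla|^{1/2}\phi, B, V)$. By Leibniz and H\"older's inequality, each such term is bounded by
\[
\|N_4(s)\|_{W^{8,4/(2+\alpha)}} + \|N_4(s)\|_Z \lesssim \|U(s)\|_{L^\infty}^3 \left(\|U(s)\|_{W^{8,4/(2+\alpha)}} + \|U(s)\|_Z \right),
\]
with symmetric redistributions of derivatives and weight handled analogously. The $W^{8,4/(2+\alpha)}$-norm of the last factor is dominated by its $Z$-norm via the H\"older embedding $(1+|x|)^{-\alpha} \in L^{4/\alpha}(\R^2)$. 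To reduce $\|U(s)\|_Z$ to the bootstrap-controlled $\|\Upsilon\|_Z$, I would write $U(s) = e^{-is\Lambda}\Upsilon(s)$ and apply Lemma~\ref{UZ-VZ} in its symmetric form (for $e^{-is\Lambda}$ in place of $e^{is\Lambda}$, via conjugation):
\[
\|U(s)\|_Z \lesssim s^\alpha \|\Upsilon(s)\|_{W^{8,4/(2+\alpha)}} + \|\Upsilon(s)\|_Z \lesssim (1+s^\alpha)\ep_1,
\]
using $\|\Upsilon\|_{W^{8,4/(2+\alpha)}} \lesssim \|\Upsilon\|_Z \le \ep_1$. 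Combining,
\[
\|N_4(s)\|_{W^{8,4/(2+\alpha)}} + \|N_4(s)\|_Z \lesssim (1+s^\alpha)\ep_1 \, \|U(s)\|_{L^\infty}^3.
\]

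The final task is to show $\int_0^t (1+s^\alpha)^2 \ep_1 \|U(s)\|_{L^\infty}^3 \, ds \lesssim \ep_1^4$, which amounts to bounding integrals of the form $\int_0^t s^\beta \|U(s)\|_{L^\infty}^3 \, ds$ for $\beta \in \{0, \alpha, 2\alpha\}$. These are controlled via the H\"older decomposition $s^\beta \|U\|_{L^\infty}^3 = (s^{(\alpha-\delta)/2}\|U\|_{L^\infty})^2 \cdot s^{\beta-\alpha+\delta}\|U\|_{L^\infty}$ together with the $L^2 L^\infty$ bootstrap bound $\|(1+s)^{(\alpha-\delta)/2} U\|_{L^2_s L^\infty_x} \lesssim \ep_1$ from (\ref{growthZ}) (summed over frequencies) and the pointwise dispersive bound $\|U(s)\|_{L^\infty} \lesssim (1+s)^{-\alpha+}\ep_1$ from Lemma~\ref{dispersiveZ}(ii). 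I expect the main obstacle to be the $\beta = 2\alpha$ piece: a naive distribution of three $L^\infty$ factors gives only $\int s^{-\alpha+}\ep_1^3 \, ds$, which fails to integrate in $t$ for small $\alpha$; the resolution is to exploit the extra factor of $U$ from the quartic structure of $N_4$, applying the pointwise dispersive estimate to enough factors to absorb the $s^{2\alpha}$ weight and leaving an $L^2$-in-time residue controlled by the Strichartz bootstrap. Since $N \ge 14$ provides sufficient regularity for the Sobolev and paradifferential estimates above, this closes the bound $\int_0^t \|e^{is\Lambda} N_4(s)\|_Z \, ds \lesssim \ep_1^4$.
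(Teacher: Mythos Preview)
Your overall skeleton matches the paper's: apply Lemma~\ref{UZ-VZ}, estimate $N_4$ as a quartic expression in $U$, then integrate in time. However, there is a genuine gap in the execution.

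First, the product estimate you write, $\|N_4\|_Z\lesssim\|U\|_{L^\infty}^3\|U\|_Z$, does not account for the derivatives built into $N_4$ (e.g.\ $|\nabla|^{1/2}(|\nabla h|^2B^2)$, $B_4$, etc.). The correct bound, which is Proposition~\ref{BN-ZW} in the paper, is $\|N_4\|_Z\lesssim\|U\|_{W^{2,\infty}}^2\|U\|_{W^{11,\infty}}\|U\|_Z$ and $\|N_4\|_{W^{8,4/(2+\alpha)}}\lesssim\|U\|_{W^{2,\infty}}^2\|U\|_{W^{11,4/\alpha}}\|U\|_{W^{9,2}}$. The need for $W^{11,\infty}$ rather than $L^\infty$ is not cosmetic: it is what forces the interpolation step below.

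Second, and more seriously, your time integration does not close. You correctly identify the obstruction at $\beta=2\alpha$: with three $L^\infty$ factors decaying like $(1+s)^{-\alpha+}$ and one $Z$-factor growing like $(1+s)^\alpha$, you get $\int(1+s)^{-\alpha+}ds$, which diverges. Your proposed fix, ``exploit the extra factor of $U$ from the quartic structure'', cannot work because all four factors are already in play. The paper resolves this differently for the two pieces. For $s^\alpha\|N_4\|_{W^{8,4/(2+\alpha)}}$, the fourth factor goes in $\|U\|_{W^{9,2}}\lesssim\ep_1$, which is \emph{bounded} rather than growing, so only one power of $s^\alpha$ appears instead of two. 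For $\|N_4\|_Z$, where the growth $\|U\|_Z\lesssim(1+s)^\alpha\ep_1$ is unavoidable, the paper interpolates $\|U\|_{W^{11,\infty}}\lesssim\|U\|_{W^{6,\infty}}^{1-\beta}\|U\|_{W^{N-2,\infty}}^\beta$ with $\beta=5/(N-8)$, extracting decay $(1+s)^{-(1-\beta)\alpha+}$ from the third factor that offsets the growth. This interpolation is precisely where the hypothesis $N\ge14$ enters (to make $\beta<1$), and it is the missing idea in your argument.
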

\begin{proof}
From Lemma \ref{UZ-VZ} it follows that
\begin{equation}\label{N4-UZ-VZ}
\|e^{it\Lambda}N_4(t)\|_Z
\lesssim t^\alpha\|N_4(t)\|_{W^{8,4/(2+\alpha)}}+\|N_4(t)\|_Z.
\end{equation}
We bound the two terms on the right-hand side separately.
By Proposition \ref{BN-ZW},
\begin{equation}\label{N4-U4}
\begin{aligned}
\|N_4\|_{W^{8,4/(2+\alpha)}}&\lesssim \|U\|_{W^{2,\infty}}^2\|U\|_{W^{11,4/\alpha}}\|U\|_{W^{9,2}},\\
\|N_4\|_Z&\lesssim \|U\|_{W^{2,\infty}}^2\|U\|_{W^{11,\infty}}\|U\|_Z.
\end{aligned}
\end{equation}
By Sobolev embedding and Proposition \ref{U-tld=U}, $\|U\|_{W^{N-2,\infty}}\lesssim\ep_1$. Then by Sobolev interpolation and Lemma \ref{dispersiveZ} (ii) we have, for $\beta := 5/(N - 8) \in (0, 1)$,
\begin{equation}\label{U-Wkp-interpol}
\begin{aligned}
\|U(t)\|_{W^{11,\infty}}
&\lesssim \|U(t)\|_{W^{6,\infty}}^{1-\beta}\|U(t)\|_{W^{N-2,\infty}}^\beta
\lesssim (1+t)^{(-(1-\beta)\alpha)+}\ep_1,\\
\|U(t)\|_{W^{11,4/\alpha}}
&\lesssim \|U(t)\|_{H^{11}}^{\alpha/2}\|U(t)\|_{W^{11,\infty}}^{1-\alpha/2}
\lesssim (1+t)^{(-(1-\alpha/2)(1-\beta)\alpha)+}\ep_1.
\end{aligned}
\end{equation}
Also by Lemma \ref{UZ-VZ} and the embedding $H^9\subset W^{8,4/(2+\alpha)}$,
\begin{equation}\label{U-Z}
\|U(t)\|_Z\lesssim t^\alpha\|U(t)\|_{W^{8,4/(2+\alpha)}}+\|\Upsilon(t)\|_Z
\lesssim (1+t)^\alpha\ep_1.
\end{equation}
Combining (\ref{N4-U4}), (\ref{U-Wkp-interpol}), (\ref{U-Z}) and the $L^2L^\infty$ norm assumption in (\ref{growthZ}) we get the desired bound for the two terms on the right-hand side of (\ref{N4-UZ-VZ}).
\end{proof}

\subsection{Bounding the quartic bulk terms $H_{\mu\nu,4}$}
Recall from (\ref{H34}) that
\[
H_{\mu\nu,4}(t)=W_{\mu\nu}[e^{it\mu\Lambda}(N_3)_\mu(t),\Upsilon_\nu(t)]
+W_{\mu\nu}[\Upsilon_\mu(t),e^{it\nu\Lambda}(N_3)_\nu(t)].
\]
To bound $H_{\mu\nu,4}$, we need a weighted analog of Theorem C.1 of \cite{GeMaSh2}. Recall the notation there:
\[
\mathcal FB_{m(\xi,\eta)}[f,g](\xi)
=\int m(\xi,\eta)f(\eta)g(\xi-\eta)d\eta.
\]
\begin{lemma}\label{paraprod-Z}
Let $\alpha \in (0, 1)$ and $2\le p$, $q$, $r < \infty$ with $1/p+1/q=1/r$.

(i) There is $c>0$ such that if $m \in \mathcal B_0$ and $m$ is supported on $\{|\eta|<c|\xi|\}$, then
\begin{align*}
\|(1+|x|)^\alpha B_m[f,g]\|_{L^r}&\lesssim_{m,p,q} \|(1+|x|)^\alpha f\|_{L^p}\|g\|_{L^q}\\
\|(1+|x|)^\alpha B_m[f,g]\|_{L^r}&\lesssim_{m,p,q} \|f\|_{L^p}\|(1+|x|)^\alpha g\|_{L^q}.
\end{align*}

(ii) Let $m_{\mu\nu}$ be one of the multipliers in (\ref{m-prod}). Let $\Phi_{\mu\nu}=\sqrt{|\xi|}-\mu\sqrt{|\eta|}-\nu\sqrt{|\xi-\eta|}$. Then for any integer $k \ge 0$,
\begin{align*}
\|(1+|x|)^\alpha\nabla^kB_{m_{\mu\nu}/\Phi_{\mu\nu}}[f,g]\|_{L^r}
&\lesssim_{k,p,q} \|\nabla^{k+1}f\|_{L^p}\|(1+|x|)^\alpha g\|_{L^q}\\
&+\|(1+|x|)^\alpha f\|_{L^q}\|\nabla^{k+1}g\|_{L^p}.
\end{align*}
\end{lemma}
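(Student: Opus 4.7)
The plan is to reduce both estimates to the unweighted bilinear estimate (Theorem C.1 of \cite{GeMaSh2}) by transferring the weight via the subadditive inequality
\[
(1+|x|)^\alpha \le (1+|x-y|)^\alpha + (1+|y|)^\alpha, \quad \alpha \in (0,1),
\]
a consequence of subadditivity of $t \mapsto t^\alpha$. For part (i), express $B_m[f,g](x) = \iint K(x-y, x-z) f(y) g(z)\, dy\, dz$ where $K = \mathcal F^{-1} m$. Splitting the weight produces two contributions. The first equals $B_m[(1+|y|)^\alpha f, g](x)$, to which Theorem C.1 of \cite{GeMaSh2} applies directly to give the factor $\|(1+|x|)^\alpha f\|_{L^p}\|g\|_{L^q}$. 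The second is $B_{\tilde m}[f,g](x)$ where $\tilde m$ has kernel $(1+|u|)^\alpha K(u,v)$; since the support condition $|\eta|<c|\xi|$ forces $K$ to decay rapidly in $u$, multiplication by $(1+|u|)^\alpha$ preserves the symbol class $\mathcal B_0$ (with constants depending on $\alpha$), so Theorem C.1 applies again and yields $\|f\|_{L^p}\|g\|_{L^q} \le \|(1+|x|)^\alpha f\|_{L^p}\|g\|_{L^q}$. The second bound in (i), with weight on $g$, follows by performing the analogous splitting in the $x-z$ variable.

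For part (ii), decompose via Littlewood--Paley in all three frequency slots:
\[
\nabla^k B_{m_{\mu\nu}/\Phi_{\mu\nu}}[f,g] = \sum_{k_1,k_2,k_3 \in \Z} \nabla^k P_{k_3} B_{m_{\mu\nu}/\Phi_{\mu\nu}}[P_{k_1}f, P_{k_2}g].
\]
By Lemma \ref{m-Soo-lem}, the localized symbol satisfies $\|m_{\mu\nu}/\Phi_{\mu\nu}\|_{S^\infty_{k_1,k_2;k_3}} \lesssim 2^{(k_3 + \max k_j)/2}$. In the paraproduct regime $k_1 \le k_2 - 5$ (forcing $k_3 = k_2 + O(1)$), the symbol is supported on $|\eta| \ll |\xi-\eta|$; after extracting a factor of $2^{k_2}$ from the symbol and absorbing it into $\nabla^{k+1} g$ via Bernstein's inequality, the remaining order-zero symbol falls into the scope of part (i), yielding a bound of the form $\|\nabla^{k+1}f\|_{L^p}\|(1+|x|)^\alpha g\|_{L^q}$. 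The symmetric case $k_2 \le k_1 - 5$ produces the companion term. In the balanced case $|k_1-k_2| \le 5$, further subdivide by $k_3$: the subcase $k_3 \le k_2 - 5$ (low output frequency) yields a symbol supported on $|\xi| \ll |\xi-\eta|$, which after a change of variables $\xi \leftrightarrow \xi - \eta$ falls into part (i) as well; the subcase $k_3 = k_2 + O(1)$ is handled as in the paraproduct regime.

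The main obstacle lies in controlling the summation over Littlewood--Paley blocks in the balanced regime of (ii) without logarithmic losses. The derivative loss $2^{(k_3 + \max k_j)/2}$ must be precisely split: one factor of $2^{\max(k_1,k_2)/2}$ is absorbed by $\nabla^{k+1}$ on whichever input is not carrying the weight, while the factor $2^{k_3/2}$ combines with $\nabla^k$ on the output via Bernstein at scale $2^{k_3}$. Summability of the resulting series follows from the Littlewood--Paley square function estimate in $L^r$, which remains valid for the weighted measure $(1+|x|)^{\alpha r}\,dx$ since $(1+|x|)^{\alpha r}$ is an $A_r$ weight for $\alpha \in (0,1)$ and $r \in [2,\infty)$; this is the same observation used in the proof of Lemma \ref{Z-bound} and ensures that the weighted Peetre inequality applied within each dyadic block gives a convergent geometric sum.
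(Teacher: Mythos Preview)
Your argument for part (i) has a genuine gap. The claim that ``the support condition $|\eta|<c|\xi|$ forces $K$ to decay rapidly in $u$'' is false. Decay of the kernel in $u=x-y$ comes from \emph{smoothness} of the symbol in $\eta$, not from $\eta$ being small on the support. Symbols in $\mathcal B_0$ are permitted to be singular at $\eta=0$; indeed, the paper's own proof expands $m(\xi,\eta)$ in half-integer powers $|\eta|^{k/2}/|\xi|^{k/2}$, so the singularity at $\eta=0$ is the whole point. Even for Coifman--Meyer symbols, the bilinear kernel satisfies only the Calder\'on--Zygmund decay $|K(u,v)|\lesssim(|u|+|v|)^{-4}$, never rapid decay in $u$ alone. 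Consequently $(1+|u|)^\alpha K(u,v)$ fails the bilinear CZ size condition, and there is no reason for the associated operator to remain bounded $L^p\times L^q\to L^r$; your ``preserves the symbol class $\mathcal B_0$'' assertion is unjustified.

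The paper proceeds differently: it Taylor-expands $m$ in $|\eta|^{1/2}/|\xi|^{1/2}$ to isolate a remainder $m_{M+1}$ smooth enough to be a bilinear Calder\'on--Zygmund operator, then invokes the weighted multilinear CZ theory of Lerner--Ombrosi--P\'erez--Torres--Trujillo-Gonz\'alez \cite{LOPTT}, which gives $L^p(w_1)\times L^q(w_2)\to L^r(w)$ boundedness directly from $w_1\in A_p$, $w_2\in A_q$. The leading Taylor terms are handled by weighted square-function estimates on the paraproduct $\sum_j |\nabla|^{-k/2}[P_{<j-C}|\nabla|^{k/2}f\cdot P_jg]$. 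Your subadditivity trick cannot replace this machinery. For part (ii), the paper's decomposition $m_{\mu\nu}/\Phi_{\mu\nu}=m_{LH}+m_{HL}+\sum_{j\le0}m_{HH,j}$ is similar in spirit to yours, but the high-high pieces $m_{HH,j}$ are bounded as bilinear CZ operators with norm $O(2^{(k+1/2)j})$, summable over $j\le0$; your Littlewood--Paley summation sketch is not a substitute, and in any case rests on your flawed part (i).
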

\begin{proof}
(i) We follow the proof of Theorem C.1 in \cite{GeMaSh2}. First we write
\begin{equation}\label{m-Taylor}
m(\xi,\eta)=\sum_{k=0}^M \frac{|\eta|^{k/2}}{|\xi|^{k/2}} m_k\left( \frac{\eta}{|\eta|},\frac{\xi}{|\xi|} \right)+m_{M+1},
\end{equation}
where $m_{M+1} \in \mathcal B_0$ is a remainder which is smooth enough.

We claim that $B_{m_{M+1}}$ is a bilinear Calderon--Zygmund operator
as defined in Section 2 of \cite{GrTo}. Indeed, the boundedness of $B_{M+1}: L^p \times L^q \to L^r$ is shown in Theorem C.1 (i) of \cite{GeMaSh2}.
To show the necessary bounds (and derivative bounds) for the kernel,
we argue as in the proof of Proposition 2 of Section VI.4 of \cite{St},
noting that only finitely many derivatives of the symbol is needed there.

Now we insert the weights. Since $\alpha \in (0, 1)$,
the weights 1 and $(1+|x|)^{s\alpha} \in A_s$ for $s \in \{p, q\}$
(see for example, the discussion after Definition 1.4 of \cite{Ku}). Then by Corollary 3.9 (i) of \cite{LOPTT} we get the bound for $B_{m_{M+1}}$.

For the first $M + 1$ terms, as in \cite{GeMaSh2} we can assume without loss of generality that $m_k=1$ and bound instead
\[
\sum_j |\nabla|^{-k/2}[P_{<j-C}|\nabla|^{k/2}fP_jg]
\]
for some constant $C$. Again since for $s \in \{p, q, r\}$ we have
$(1 + |x|)^{s\alpha} \in A_s$, by the square function estimates (see Section I.6.4 of \cite{St}) we get
\begin{align*}
&\left\| (1+|x|)^\alpha\sum_j |\nabla|^{-k/2}[P_{<j-C}|\nabla|^{k/2}fP_jg] \right\|_{L^r}\\
\approx&\left\| (1+|x|)^\alpha\|2^{-jk/2}P_{<j-C}|\nabla|^{k/2}fP_jg\|_{\ell_j^2} \right\|_{L^r},\\
&\|(1+|x|)^\alpha Mf\|_{L^p}\approx\|(1+|x|)^\alpha f\|_{L^p},\\
&\|(1+|x|)^\alpha\|P_jg\|_{\ell_j^2}\|_{L^q}\approx\|(1+|x|)^\alpha g\|_{L^q}.
\end{align*}
Then we can insert the weights in (C.1) of \cite{GeMaSh2} to get
\begin{align*}
&\left\| (1+|x|)^\alpha\sum_j |\nabla|^{-k/2}[P_{<j-C}|\nabla|^{k/2}fP_jg] \right\|_{L^r}\\
\lesssim&\min(\|(1+|x|)^\alpha Mf\|_{L^p}\|\|P_jg\|_{\ell_j^2}\|_{L^q},
\|Mf\|_{L^p}\|(1+|x|)^\alpha\|P_jg\|_{\ell_j^2}\|_{L^q})
\end{align*}
from which the bound in (i) follows.

(ii) We decompose the multiplier $m_{\mu\nu}/\Phi_{\mu\nu}$ as
\[
\frac{m_{\mu\nu}}{\Phi_{\mu\nu}}=m_{LH}+m_{HL}+\sum_{j=-\infty}^0 m_{HH,j},
\]
where $m_{LH}$ is supported on $\{|\eta|<c|\xi|\}$, $m_{HL}$ is supported on $\{|\xi-\eta|<c|\xi|\}$, and $m_{HH,j}$ is supported on $\{2^{j-1}<|\xi|/(|\xi-\eta|+|\eta|)<2^{j+1}\}$. Here $c$ is the constant in (i).

By the definition of $m_{LH}$, we know that the symbol $|\xi|^km_{LH}/|\xi-\eta|^{k+1} \in \mathcal B_0$, so by (i),
\[
\|(1+|x|)^\alpha\nabla^kB_{m_{LH}}[f,g]\|_{L^r}
\lesssim \|(1+|x|)^\alpha f\|_{L^q}\|\nabla^{k+1}g\|_{L^p}.
\]
Switching the roles of $f$ and $g$ we get a similar bound for $m_{HL}$.
For the remaining terms, by (\ref{m-prod}), for any integer $L \ge 0$,
$|\nabla^Lm_{HH,j}(\xi,\eta)|\lesssim |\xi|^{1/2-L}(|\xi-\eta|+|\eta|)$,
so $B_{|\xi|^km_{HH,j}/|\eta|^{k+1}}$ is a bilinear Calderon--Zygmund operator whose norm is $O_k(2^{(k+1/2)j})$. Then by Corollary 3.9 (i) of \cite{LOPTT}, $\|(1+|x|)^\alpha\nabla^kB_{m_{HH,j}}[f,g]\|_{L^r}$ can be bounded by $O_k(2^{(k+1/2)j}) \times$ the right-hand side. Since $\sum_{j\le0} O_k(2^{(k+1/2)j})=O_k(1)$, the desired bound for the remaining terms also follows.
\end{proof}

\begin{proposition}\label{H4-Z}
Assume $N \ge 17$ and (\ref{growthZ}). Then
\[
\int_0^t \|H_{\mu\nu,4}(s)\|_Zds\lesssim \ep_1^4.
\]
\end{proposition}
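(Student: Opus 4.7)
The plan is to adapt the strategy of Proposition \ref{N4-Z}, using the weighted bilinear estimate Lemma \ref{paraprod-Z}(ii) to handle the additional weight in the $Z$-norm. From (\ref{H2-FN-NF}), (\ref{H34}), and (\ref{W2-U}), we can rewrite
\[
H_{\mu\nu, 4}(s) = e^{is\Lambda}\bigl(B_{m_{\mu\nu}/(i\Phi_{\mu\nu})}[U_\mu, (N_3)_\nu] + B_{m_{\mu\nu}/(i\Phi_{\mu\nu})}[(N_3)_\mu, U_\nu]\bigr),
\]
so by Lemma \ref{UZ-VZ},
\[
\|H_{\mu\nu, 4}(s)\|_Z \lesssim (1+s)^\alpha\|B_{m/\Phi}[U, N_3]\|_{W^{8, 4/(2+\alpha)}} + \|B_{m/\Phi}[U, N_3]\|_Z.
\]

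For the $Z$-norm, I would apply Lemma \ref{paraprod-Z}(ii) summed over $k = 0, \ldots, 8$ with $(p, q, r) = (4, 4, 2)$; for the $W^{8, 4/(2+\alpha)}$-norm, Theorem C.1 of \cite{GeMaSh2} with $(p, q, r) = (4/\alpha, 2, 4/(2+\alpha))$. Both reduce to products of weighted/Sobolev norms of $U$ against those of $N_3$. The $U$-factors are controlled via: $\|U\|_{H^{10}} \lesssim \ep_1$ (since $N \ge 17$ and Proposition \ref{U-tld=U}), $\|U\|_Z \lesssim (1+s)^\alpha\ep_1$ (by Lemma \ref{UZ-VZ}), $\|U\|_{W^{k, 4/\alpha}}$ and $\|U\|_{W^{k, \infty}}$ by the same interpolation used in (\ref{U-Wkp-interpol}) with $\beta = 5/(N-8) < 1$, and weighted Sobolev (via Lemma \ref{Z-bound}) to convert $\|U\|_Z$ to $\|(1+|x|)^\alpha U\|_{L^4}$. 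For the $N_3$-factors, the cubic structure of $N_3$ from (\ref{N23}) combined with an argument parallel to the appendix Proposition \ref{BN-ZW} (together with (\ref{N3-Hs})) bounds each by $\|U\|_{W^{O(1), \infty}}^2$ times either $\|U\|_{H^N}$, $\|U\|_Z$, or the corresponding weighted $L^4$ norm.

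Combining, each term in the expansion of $\|H_{\mu\nu,4}(s)\|_Z$ is pointwise dominated by $(1+s)^{2\alpha +}\ep_1^2\|U(s)\|_{L^\infty}^2$; since $\|U(s)\|_{L^\infty}^2 \lesssim (1+s)^{(-2\alpha)+}\ep_1^2$ by Lemma \ref{dispersiveZ}(ii), this is bounded by $(1+s)^{-\delta'}\ep_1^4$ for some $\delta' > 0$ (depending on $\alpha$ and the interpolation margins), which integrates in $s$ to $O(\ep_1^4)$ uniformly in $t$. The main obstacle will be balancing these time decays: the $(1+s)^\alpha$ growth from Lemma \ref{UZ-VZ} applied directly to $H_{\mu\nu, 4}$, plus a second $(1+s)^\alpha$ hidden inside $\|U\|_Z$ appearing in the bound on $\|N_3\|_Z$, must be strictly beaten by the $(1+s)^{-2\alpha +}$ dispersive decay of two $L^\infty$ factors of $U$. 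The assumption $N \ge 17$ makes $\beta = 5/(N-8) < 1$ in (\ref{U-Wkp-interpol}), giving the interpolation bounds for $\|U\|_{W^{k, 4/\alpha}}$ and $\|U\|_{W^{k, \infty}}$ enough loss-free decay for the final exponent to be strictly negative. As an alternative route for the margin, one can redistribute one pair of $L^\infty$ factors onto the $L^2L^\infty$ bootstrap in (\ref{growthZ}) and close via Cauchy--Schwarz in time.
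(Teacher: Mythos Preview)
Your overall architecture is the same as the paper's: split via Lemma~\ref{UZ-VZ}, handle the $W^{8,4/(2+\alpha)}$ piece by Theorem~C.1 of \cite{GeMaSh2}, handle the $Z$ piece by Lemma~\ref{paraprod-Z}(ii), feed in Proposition~\ref{BN-ZW} for $N_3$, and close with the $L^2L^\infty$ bootstrap. The paper leaves the H\"older exponent in Lemma~\ref{paraprod-Z}(ii) as a general $p$ (with $q=2p/(p-2)$) rather than fixing $(p,q)=(4,4)$, but this is a minor variation; with $p=4$ the constraint $\beta+\beta'=(9-2/p)/(N-8)<1$ still holds for $N\ge17$.

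There is, however, a genuine gap in your primary closing step. The claim that the pointwise bound $(1+s)^{2\alpha+}\ep_1^2\|U(s)\|_{L^\infty}^2$, combined with $\|U(s)\|_{L^\infty}^2\lesssim(1+s)^{(-2\alpha)+}\ep_1^2$, yields $(1+s)^{-\delta'}\ep_1^4$ is simply false: the two exponents cancel to $(1+s)^{0+}$, which does not integrate. Your explanation that ``$N\ge17$ makes $\beta<1$ \dots\ enough loss-free decay'' does not repair this: the interpolation in (\ref{U-Wkp-interpol}) gives decay $(1+t)^{-(1-\beta)\alpha}$, which is \emph{weaker} than $(1+t)^{-\alpha}$, not stronger. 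You also overcount the growth: each term coming out of Lemma~\ref{paraprod-Z}(ii) or Theorem~C.1 carries exactly \emph{one} factor of $(1+s)^\alpha$ (either the explicit $t^\alpha$ for the $W^{8,4/(2+\alpha)}$ piece, or a single weighted norm $\|(1+|x|)^\alpha U\|_{L^q}$ or $\|(1+|x|)^\alpha N_3\|_{L^q}$ for the $Z$ piece), never two.

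What you call the ``alternative route'' is in fact the essential closing mechanism, and it is exactly what the paper does. The correct accounting is: after Lemma~\ref{paraprod-Z}(ii) and Proposition~\ref{BN-ZW}, each piece is bounded by $(1+s)^\alpha\|U\|_{W^{6,\infty}}^{2+\gamma}\ep_1^{2-\gamma}$ with $\gamma=1-\beta-\beta'>0$ (or $\gamma=1-\beta''>0$); two factors go to the $L^2_sL^\infty_x$ bootstrap, the remaining $\|U\|_{W^{6,\infty}}^\gamma\lesssim(1+s)^{-\alpha\gamma+}\ep_1^\gamma$ beats the $(1+s)^\alpha$ after the $L^2L^\infty$ weight $(1+s)^{\alpha-\delta}$ is factored out, leaving exponent $-\alpha\gamma+\delta<0$ for $\delta$ small. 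Promote this from a side remark to the main argument and the proof goes through.
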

\begin{proof}
From Lemma \ref{UZ-VZ} it follows that
\begin{equation}\label{H4-UZ-VZ}
\|H_{\mu\nu,4}(t)\|_Z
\lesssim t^\alpha\|e^{-it\Lambda}H_{\mu\nu,4}(t)\|_{W^{8,4/(2+\alpha)}}
+\|e^{-it\Lambda}H_{\mu\nu,4}(t)\|_Z.
\end{equation}
We bound the two terms on the right-hand side separately.

In the first term, $e^{-it\Lambda}H_{\mu\nu,4}$ is a bilinear operator whose multiplier $m \in \mathcal B_1$, as defined in Appendix C of \cite{GeMaSh2}, acting on $N_3$ and $U$. After decomposing $m = m_1 + m_2$ where $m_1$, $m_2 \in \mathcal{\tilde B}_1$, by Theorem C.1 (ii) in \cite{GeMaSh2}, (\ref{U-Wkp-interpol}), (\ref{N3-Hs}) and $N \ge 13$ we get
\begin{equation}\label{H4-Wkp}
\begin{aligned}
\|e^{-it\Lambda}H_{\mu\nu,4}(t)\|_{W^{8,4/(2+\alpha)}}
&\lesssim \|N_3(t)\|_{W^{9,2}}\|U(t)\|_{W^{9,4/\alpha}}\\
&\lesssim \|N_3(t)\|_{H^9}(1+t)^{(-(1-\alpha/2)(1-\beta)\alpha)+}\ep_1\\
&\lesssim \|U(t)\|_{C_*^2}^2(1+t)^{(-(1-\alpha/2)(1-\beta)\alpha)+}\ep_1^2.
\end{aligned}
\end{equation}
The bound for the first term follows from the $L^2L^\infty$ norm assumption in (\ref{growthZ}).

For the second term, by Lemma \ref{paraprod-Z} (ii) we have, for $p \in (2, \infty)$ to be determined later and $q = 2p/(p - 2)$,
\begin{equation}\label{H4-UZ-NZ}
\|e^{-it\Lambda}H_{\mu\nu,4}(t)\|_Z
\lesssim \|N_3(t)\|_{W^{9,p}}\|(1+|x|)^\alpha U(t)\|_{L^q}
+\|(1+|x|)^\alpha N_3(t)\|_{L^q}\|U(t)\|_{W^{9,p}}.
\end{equation}
By Proposition \ref{BN-ZW}, (\ref{U-Wkp-interpol}) and the Gagliardo--Nirenberg interpolation (see \cite{Ni}), with $\beta := 5/(N - 8)$ and $\beta' := (4 - 2/p)/(N - 8)$, if $p > (N - 7)/2$ then
\begin{equation}\label{N3-W9p}
\begin{aligned}
\|N_3(t)\|_{W^{9,p}}
&\lesssim \|U(t)\|_{W^{2,\infty}}\|U(t)\|_{W^{11,\infty}}\|U(t)\|_{W^{10,p}}\\
&\lesssim \|U(t)\|_{W^{6,\infty}}^{3-\beta-\beta'}\|U(t)\|_{W^{N-2,\infty}}^\beta\|U(t)\|_{W^{N-1,2}}^{\beta'}\\
&\lesssim \|U(t)\|_{W^{6,\infty}}^{3-\beta-\beta'}\ep_1^{\beta+\beta'}.
\end{aligned}
\end{equation}
To bound $\|(1+|x|)^\alpha U\|_{L^q}$, first note that for any $x\in\R^2$,
we can find a smooth function $\chi$ supported in $B(x, 2)$ and equal to 1 on $B(x, 1)$. Then by Sobolev embedding,
\begin{equation}\label{Lq-H2}
\|U\|_{L^q(B(x,1))}\le\|\chi U\|_{L^q}\lesssim \|\chi U\|_{H^2}\lesssim \|U\|_{H^2(B(x,2))}.
\end{equation}
We now insert the weight by covering $\R^2$ with balls $\{B(x_n, 1)\}$ of radius 1 such that each point in $\R^2$ is covered by $O(1)$ balls.
Then each point in $\R^2$ is covered by $O(1)$ balls in $\{B(x_n, 2)\}$, so
\begin{equation}\label{insert-weight}
\begin{aligned}
\|(1+|x|)^\alpha U\|_{L^q}
&\approx\|(1+|x_n|)^\alpha\|U\|_{L^q(B(x_n,1))}\|_{\ell_n^q}\\
&\le\|(1+|x_n|)^\alpha\|U\|_{L^q(B(x_n,1))}\|_{\ell_n^2},\quad(\text{since }q>2)\\
\|(1+|x_n|)^\alpha\|U\|_{H^2(B(x_n,2))}\|_{\ell_n^2}
&\approx\|(1+|x_n|)^\alpha\|(U,\nabla U,\nabla^2U)\|_{L^2(B(x_n,2))}\|_{\ell_n^2}\\
&\approx\|(1+|x|)^\alpha\langle\nabla\rangle^2U\|_{L^2}.
\end{aligned}
\end{equation}
Hence taking an $\ell_n^2$ sum of the local inequality (\ref{Lq-H2}) and using (\ref{U-Z}) we get
\begin{equation}\label{U-Zq}
\|(1+|x|)^\alpha U\|_{L^q}\lesssim \|(1+|x|)^\alpha\langle\nabla\rangle^2U\|_{L^2}\lesssim \|U\|_Z\lesssim (1+t)^\alpha\ep_1.
\end{equation}
By (\ref{U-Zq}) and Proposition \ref{BN-ZW},
\begin{equation}\label{N3-Zq}
\|(1+|x|)^\alpha N_3\|_{L^q}
\lesssim \|(1+|x|)^\alpha\langle\nabla\rangle^2N_3\|_{L^2}
\lesssim \|U\|_{W^{5,\infty}}^2(1+t)^\alpha\ep_1.
\end{equation}
By the Gagliardo--Nirenberg interpolation again, if $p > 2(N - 7)/3$ then,
for $\beta'' = (3 - 2/p)/(N - 8) \in (0, 1)$,
\begin{equation}\label{U-W9p}
\|U\|_{W^{9,p}}
\lesssim \|U\|_{W^{6,\infty}}^{1-\beta''}\|U\|_{W^{N-1,2}}^{\beta''}
\lesssim \|U\|_{W^{6,\infty}}^{1-\beta''}\ep_1^{\beta''}.
\end{equation}
Now we put (\ref{N3-W9p}), (\ref{U-Zq}), (\ref{N3-Zq}) and (\ref{U-W9p})
into (\ref{H4-UZ-NZ}). Note that (\ref{N3-Zq}) already has the factor $\|U\|_{W^{6,\infty}}^2$. For (\ref{N3-W9p}) to have such a factor,
we need $\beta+\beta'=(9-2/p)/(N-8)<1$, which is true for $N \ge 17$.
Thus we get a factor of $\|U\|_{W^{6,\infty}}^{2+}$ in both terms on the right-hand side of (\ref{H4-UZ-NZ}). Now the $L^2L^\infty$ norm assumption in (\ref{growthZ}) gives the desired bound for the second term.
\end{proof}

\subsection{$L^2L^\infty$ and $Z$-norm estimates}
\begin{proof}[Proof of (\ref{growth-L2Loo}) and (\ref{growthZ2})]
Recall Duhamel's formula (\ref{Duhamel}) for $U$.
We bound the terms on the right-hand side separately.

{\bf Part 1:} The linear term.
The $Z$-norm bound for $U(0)$ is assumed in (\ref{Z0}).
The $L^2L^\infty$ norm bound for $e^{it\Lambda}U(0)$ then follows from Lemma \ref{dispersiveZ} (iii).

{\bf Part 2:} The quadratic terms. Proposition \ref{W-Z} shows their $Z$-norm bounds, and Lemma \ref{dispersiveZ} (iii) shows the $L^2L^\infty$ norm bound for $e^{-it\Lambda}W_{\mu\nu}(0)$.

To bound the $L^2L^\infty$ norm of $e^{-it\Lambda}W_{\mu\nu}(t)$,
we argue as in the proof of Proposition \ref{W2-C6} to get
\begin{align*}
\|P_ke^{-it\Lambda}W_{\mu\nu}(t)\|_{L^\infty}
&\lesssim 2^{k^--7k^+}\|W_{\mu\nu}(t)\|_{H^8}
\lesssim 2^{k^--7k^+}\|U(t)\|_{W^{9,4}}^2\\
&\lesssim 2^{k^--7k^+}\|U(t)\|_{C_*^6}\|U(t)\|_{H^{12.5}}
\lesssim 2^{k^--7k^+}\|U(t)\|_{C_*^6}\ep_1.
\end{align*}
Integrating in $t$ and using the $L^2L^\infty$ norm assumption in (\ref{growthZ}) we get
\[
\|(1+s)^{(\alpha-\delta)/2}P_ke^{-is\Lambda}W_{\mu\nu}(s)\|_{L_s^2([0,t])L^\infty}
\lesssim_\delta 2^{-7k^++k^-}\ep_1^2.
\]

{\bf Part 3:} The cubic terms
\[
e^{-it\Lambda}\sum_{\mu\nu} \int_0^t H_{\mu\nu,3}(s)ds\quad\text{and}\quad
U_3^\circ(t)=e^{-it\Lambda}\int_0^t e^{is\Lambda}N_3^\circ(s)ds.
\]
Propositions \ref{H3-Z} and \ref{N30-Z} show their $Z$-norm bounds,
and Lemma \ref{dispersiveZ} (iii) shows their $L^2L^\infty$ norm bounds.

{\bf Part 4:} The quartic (and higher) terms
\[
e^{-it\Lambda}\sum_{\mu\nu} \int_0^t H_{\mu\nu,4}(s)ds\quad\text{and}\quad
U_4(t)=e^{-it\Lambda}\int_0^t e^{is\Lambda}N_4(s)ds.
\]
Propositions \ref{H4-Z} and \ref{N4-Z} show their $Z$-norm bounds,
and Lemma \ref{dispersiveZ} (iii) shows their $L^2L^\infty$ norm bounds.
\end{proof}

\section{The periodic case}\label{Period}
In this section we explain how Theorem \ref{Thm1} and Theorem \ref{Thm2} generalize to the periodic case. Most part of the proof works in either case, which we only sketch here, emphasizing the new features on the torus.

\subsection{Linear and multilinear estimates}\label{LinEst-per}
In this subsection we adapt the linear and multilinear estimates to periodic functions. We begin with linear dispersive estimates.
\begin{lemma}\label{dispersive-per}
For $k \in \Z$, $1 \le p \le q \le \infty$ with $1/p + 1/q = 1$ we have
\begin{align}
\label{dispersive1-per}
\|P_ke^{-it\Lambda}u\|_{L^q}&\lesssim [(1+2^{k/2}t)^{-1}(2^{-k/2}t/R+1)^22^{2k}]^{1/p-1/q}\|u\|_{L^p}\\
\label{dispersive2-per}
&\lesssim [(1+t)^{-1}(t/R+1)^22^{(3k^++k)/2}]^{1/p-1/q}\|u\|_{L^p},\\
\|P_ke^{-it\Lambda}u\|_{L^\infty}&\lesssim 2^{(3k^++k)/2}[(1+t)^{-1}(t/R+1)^2]^{1/p-1/q}\|u\|_{L^p}.
\label{dispersive3-per}
\end{align}
\end{lemma}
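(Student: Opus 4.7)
The plan is to follow the structure of the Euclidean Lemma~\ref{dispersive}, replacing the scaling reduction by a direct Poisson summation that accounts for the lattice wrap-around of wave packets. For (\ref{dispersive1-per}), interpolation and the unitarity of $e^{-it\Lambda}$ on $L^2$ reduce matters to the $L^1\to L^\infty$ bound, which is equivalent to controlling $\|K^R_t\|_{L^\infty}$, where $K^R_t$ is the periodic kernel whose Fourier coefficients are $\varphi_k(\xi)e^{-it|\xi|^{1/2}}$ for $\xi\in(2\pi\Z/R)^2$.

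By Poisson summation, $K^R_t(x)=\sum_{n\in(R\Z)^2}K_t(x-n)$, where $K_t$ is the corresponding Euclidean kernel. I would next establish the pointwise bound
\[
|K_t(x)|\lesssim_L 2^{2k}(1+2^{k/2}t)^{-1}\bigl(1+2^{k}\max(0,|x|-C2^{-k/2}t)\bigr)^{-L}
\]
for any $L\ge 0$. The $L^\infty$ estimate $|K_t|\lesssim 2^{2k}(1+2^{k/2}t)^{-1}$ follows from Theorem~1(a) of \cite{GuPeWa} when $t\ge 1$ and from Bernstein plus unitarity when $t\le 1$, exactly as in Lemma~\ref{dispersive}; the rapidly decaying tail factor for $|x|\gg 2^{-k/2}t$ comes from repeated integration by parts in the oscillatory integral defining $K_t$, using that $\nabla_\xi(x\cdot\xi-t|\xi|^{1/2})=x-\tfrac{t}{2}|\xi|^{-1/2}\hat\xi$ has size $\gtrsim|x|$ on $\mathrm{supp}\,\varphi_k$ in that regime.

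Summing over the lattice, at most $\lesssim(2^{-k/2}t/R+1)^2$ translates $n\in(R\Z)^2$ lie within distance $\lesssim 2^{-k/2}t$ of any given $x$, each contributing $\lesssim 2^{2k}(1+2^{k/2}t)^{-1}$; the remaining translates contribute a rapidly decaying sum bounded by the main term. This proves (\ref{dispersive1-per}). For (\ref{dispersive2-per}) I would combine the elementary inequalities
\[
1+2^{k/2}t\ge 2^{k^-/2}(1+t),\qquad 2^{-k/2}t/R+1\le 2^{-k^-/2}(t/R+1),
\]
with the identity $2k=(3k^++k)/2+3k^-/2$, which converts $(1+2^{k/2}t)^{-1}(2^{-k/2}t/R+1)^2\cdot 2^{2k}$ into $(1+t)^{-1}(t/R+1)^2\cdot 2^{(3k^++k)/2}$. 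Finally (\ref{dispersive3-per}) follows from (\ref{dispersive2-per}) together with the Bernstein inequality $\|P_k f\|_{L^\infty}\lesssim 2^{2k/q}\|P_k f\|_{L^q}$, valid on the torus uniformly in $R$ because $P_k$ vanishes for $2^k<1/R$.

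The main obstacle is the off-diagonal decay of $K_t$ that makes the lattice sum absolutely convergent with the correct counting factor; Theorem~1(a) of \cite{GuPeWa} only yields the sharp $L^\infty$ bound on the main zone $|x|\lesssim 2^{-k/2}t$, so I would need to supplement it with a direct integration-by-parts argument in the complementary region. Everything else is an elementary rearrangement of exponents.
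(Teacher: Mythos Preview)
Your proposal is correct and follows essentially the same approach as the paper: Poisson summation to express the periodic kernel as a lattice sum of Euclidean kernels, the Euclidean $L^1\to L^\infty$ bound on the $O((2^{-k/2}t/R+1)^2)$ translates in the main zone, and nonstationary-phase decay on the remaining translates (the paper cites Lemma~3.1 of \cite{Zh} for this step, while you sketch the integration-by-parts argument directly). The reductions to (\ref{dispersive2-per}) and (\ref{dispersive3-per}) via the two elementary inequalities and Bernstein are identical to the paper's.
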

\begin{proof}
By Poisson summation,
\begin{align*}
P_ke^{-it\Lambda}u(x)&=\int G_k(x,y,t)u(y)dy, &
G_k(x,y,t)&=\sum_{z\in(R\Z)^2} \mathcal K_k(x,y+z,t),
\end{align*}
where $G_k$ and $\mathcal K_k$ are the kernels for the operator $P_ke^{-it\Lambda}$ on the torus and in the Euclidean space, respectively,
see Lemma 3.1 of \cite{Zh}. It was also shown there that $\mathcal K_k(x,y+z,t)$ decays faster than an integrable function in $z$ when $x-y-z$ is so large that $x-y-z-t\nabla\Lambda(\xi)$ never vanishes on the support of $\varphi_k$, e.g., when $|x-y-z|>2^{-k/2}t$. This holds for all but $O(2^{-k/2}t/R+1)^2$ many values of $z$, so we can multiply (\ref{dispersive1}) by this factor to get (\ref{dispersive1-per}).

To get (\ref{dispersive2-per}) we simply use the inequalities $1+2^{k/2}t\ge2^{k^-/2}(1+t)$ and $2^{-k/2}t/R+1\le2^{-k^-/2}(t/R+1)$.

To get (\ref{dispersive3-per}) we also need the Bernstein inequality (\ref{dispersive-Bernstein}).
\end{proof}

\begin{lemma}\label{dispersiveTT*-per}
(i) For $k \in \Z$ we have
\begin{align*}
\|P_ke^{-is\Lambda}u\|_{L^2([0,t])L^\infty}&\lesssim c_{k,t,R}\|u\|_{L^2},\\
c_{k,t,R}&=(2^{3k/4}\sqrt{k^+}+2^{k^+/4+k/2}\sqrt{\min(\mathcal L,\mathcal L_R)})\sqrt{1+t/R}.
\end{align*}

(ii)
\[
\|e^{-is\Lambda}u\|_{L^2([0,t]){W^{6,\infty}}}\lesssim \sqrt{\mathcal L_R(1+t/R)}\|u\|_{H^7}.
\]
\end{lemma}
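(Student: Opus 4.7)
The approach is the same $TT^*$ machinery as in the Euclidean Lemma \ref{dispersiveTT*}, combined with a time‑periodicity reduction to tame the extra $(2^{-k/2}|\tau|/R+1)^2$ factor in the dispersive kernel of the periodic case. For (i), let $T_k : L^2_x \to L^2_s([0,t])L^\infty_x$ denote $u \mapsto P_k e^{-is\Lambda}u$. Then $\|T_k\|^2 = \|T_k T_k^*\|$, and Minkowski together with Young's inequality in time give
\[
\|T_kT_k^*\|_{L^2L^1 \to L^2L^\infty} \le \int_{-t}^t \|P_k^2 e^{i\tau \Lambda}\|_{L^1 \to L^\infty}\,d\tau \lesssim 2^{2k}\int_0^t (1+2^{k/2}\tau)^{-1}\bigl(2^{-k/2}\tau/R+1\bigr)^2 d\tau,
\]
using the periodic dispersive estimate \eqref{dispersive1-per}.

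The main obstacle compared with the Euclidean case is that the second factor grows quadratically for $\tau \gtrsim 2^{k/2}R$, so direct integration over $[0,t]$ produces a $t^2/R^2$ contribution that is too large for the claimed bound when $t \gg R$. To bypass this I would split $[0,t]$ into chunks of length $L_k := 2^{k/2}R$, the time scale for a wave packet at frequency $2^k$ to wrap around the torus once, and exploit the $L^2$‑isometry of $e^{-is\Lambda}$ to write
\[
\|T_k u\|_{L^2_s([0,t])L^\infty}^2 = \sum_j \|T_k(e^{-ijL_k\Lambda}u)\|_{L^2_s([0,L_k])L^\infty}^2 \le \bigl(t/L_k+1\bigr)\,\|T_k\|_{L^2 \to L^2([0,L_k])L^\infty}^2 \|u\|_{L^2}^2.
\]
On a single chunk $[0,L_k]$ the factor $2^{-k/2}\tau/R+1$ is at most $2$, so the per‑chunk $TT^*$ computation collapses to the Euclidean one and gives
\[
\|T_k\|_{L^2 \to L^2([0,L_k])L^\infty}^2 \lesssim 2^{3k/2}\log(1+2^kR) \lesssim 2^{3k/2}(k^+ + \mathcal L_R).
\]
Multiplying by the chunk count $2^{-k/2}t/R+1$ yields
\[
c_{k,t,R}^2 \lesssim \bigl(2^k t/R + 2^{3k/2}\bigr)(k^+ + \mathcal L_R),
\]
which is $\lesssim (2^{3k/2}k^+ + 2^{k^+/2+k}\mathcal L_R)(1+t/R)$ uniformly in $k$: for $k\ge 0$ the $2^{-k/2}$ saving in the time term absorbs the discrepancy, and for $k<0$ the $k^+$ term in the target vanishes and $2^{3k/2}\le 2^k$. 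To recover the $\min(\mathcal L,\mathcal L_R)$ refinement I would handle the small‑time regime $t \le L_k$ separately by skipping the chunking and integrating the $TT^*$ kernel directly over $[0,t]$: there $\log(1+2^{k/2}t)\le k^+ + \mathcal L$ and $\le k^+ + \mathcal L_R$ simultaneously, so taking the min produces the stated form.

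For (ii), I would apply (i) to each Littlewood–Paley piece, using $\|P_k u\|_{L^2} \lesssim 2^{-7k^+}\|P_k u\|_{H^7}$, and then apply Cauchy–Schwarz in $k$:
\[
\|e^{-is\Lambda}u\|_{L^2W^{6,\infty}} \lesssim \sum_k 2^{-k^+}c_{k,t,R}\,\|P_k u\|_{H^7} \lesssim \Bigl\|2^{-k^+}c_{k,t,R}\Bigr\|_{\ell^2_k}\|u\|_{H^7}.
\]
A direct computation with the two pieces of $c_{k,t,R}^2$ shows $\sum_k 2^{-2k^+}c_{k,t,R}^2 \lesssim \mathcal L_R(1+t/R)$: the contribution from $k>0$ converges thanks to the $2^{-k/2}$ decay in both summands, while the contribution from $-\log_2 R\le k\le 0$ converges thanks to the $2^k$ factor, eating the $O(\log R)$ admissible dyadic levels on the torus. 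The hardest technical point is merely the case analysis in (i), namely checking that the chunking and small‑time estimates together exactly reproduce the claimed form of $c_{k,t,R}$ uniformly in the sign of $k$.
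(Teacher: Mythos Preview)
Your approach to (i) is essentially identical to the paper's: both split $[0,t]$ into subintervals of length $2^{k/2}R$ and use the $L^2$-unitarity of $e^{-is\Lambda}$ to reduce to the Euclidean $TT^*$ bound on each subinterval, and both treat the small-time regime $t\le 2^{k/2}R$ by direct comparison with the Euclidean constant $c_{k,t}$. For (ii) the paper also sums (i) over $k$, but in addition it explicitly handles the zero-frequency (constant) component via $\|e^{-is\Lambda}1\|_{L^2([0,t])W^{6,\infty}}=\sqrt t\lesssim\sqrt{t/R}\,\|1\|_{H^7}$; your Littlewood--Paley sum omits this since on the torus $P_k$ excludes the zero mode, so this is a small patch you would need to add.
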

\begin{proof}
(i) When $t\le2^{k/2}R$, $c_{k,t}\lesssim c_{k,t,R}$, so the result follows from the same $TT^*$ argument as in the proof of Lemma \ref{dispersiveTT*} (i).
When $t > 2^{k/2}R$, we can use unitarity of $e^{it\Lambda}$ to take an $\ell^2$ sum of time intervals of length $2^{k/2}R$. The number of such intervals is $O(1+2^{-k/2}t/R)=O(2^{-k^-/2}(1+t/R))$, so
\[
c_{k,t,R}\lesssim c_{k,t}\sqrt{2^{-k^-/2}(1+t/R)}
\lesssim (2^{3k/4}\sqrt{k^+}+2^{k^+/4+k/2}\sqrt{\mathcal L_R})\sqrt{1+t/R}.
\]

To get (ii) we sum (i) over $k \in \Z$, and add the constant component:
\[
\|e^{-is\Lambda}1\|_{L^2([0,t])W^{6,\infty}} = \sqrt t
\lesssim \sqrt{t/R} \cdot R = \sqrt{t/R} \|1\|_{H^7}.
\]
\end{proof}

\begin{lemma}\label{dispersiveZ-per}
(i) For $k \in \Z$ we have
\[
\|P_ke^{-it\Lambda}u\|_{L^\infty}
\lesssim 2^{-19k^+/3+k^-/2}(1+t)^{-\frac{2}{3}+}(t/R+1)^{4/3}\|u\|_Z.
\]

(ii)
\[
\|e^{-it\Lambda}u\|_{W^{6,\infty}}\lesssim (1+t)^{-\frac{2}{3}+}(t/R+1)^{4/3}\|u\|_Z.
\]

(iii)
\[
\|(1+s)^{\frac{1}{3}-}P_ke^{-is\Lambda}u\|_{L^2([0,t])L^\infty}
\lesssim 2^{-6.4k^++k^-/2}(t/R+1)^{3/2}\|u\|_Z.
\]
\end{lemma}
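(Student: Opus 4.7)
The plan is to follow the structure of the Euclidean proof of Lemma \ref{dispersiveZ}, with the periodic dispersive estimate from Lemma \ref{dispersive-per} replacing its Euclidean counterpart; the new feature is the $(t/R+1)$ factor coming from wave packets wrapping around the torus, which must be tracked in every estimate.

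For (i), I would apply (\ref{dispersive2-per}) with exponents $p=\frac{2}{1+\alpha}+$ and $q=\frac{2}{1-\alpha}-$ at $\alpha=\tfrac{2}{3}$, obtaining
\[
\|P_ke^{-it\Lambda}u\|_{L^q}\lesssim(1+t)^{-2/3+}(t/R+1)^{4/3}\,2^{(3k^++k)/3}\|P_ku\|_{L^p}.
\]
Combining this with the Bernstein inequality $\|P_kv\|_{L^\infty}\lesssim 2^{k/3}\|P_kv\|_{L^q}$ and the bound $\|P_ku\|_{L^p}\lesssim 2^{-8k^+}\|u\|_Z$ from Lemma \ref{Z-bound} (i) yields the claim, after distinguishing the cases $k\ge 0$ (where the $k$-dependent factor collapses to $2^{k/3+4k/3-8k}=2^{-19k/3}$) and $k<0$ (where the resulting $2^{2k/3}$ is stronger than the stated $2^{k/2}$). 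Part (ii) then follows by summing (i) over the nonzero dyadic shells $2^k\ge 1/R$: the contribution $2^{6k^+-19k^+/3}=2^{-k^+/3}$ and the low-frequency contribution $2^{k^-/2}$ are both summable, and the zero Fourier mode is invariant under $e^{-it\Lambda}$ and hence trivially bounded.

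For (iii), I would again follow the Euclidean template: complex interpolation of the analytic family $T_z u=(1+s)^{z/2}P_ke^{-is\Lambda}u$ mapping into $L^2_sL^\infty_x$ between the two endpoints $\mathrm{Re}\,z=0$, controlled by Lemma \ref{dispersiveTT*-per} (i) combined with $\|P_ku\|_{L^2}\lesssim 2^{-8k^+}\|u\|_Z$ (yielding time factor $\sqrt{1+t/R}$ and $k$-factor $2^{-29k/4+}$ in the worst case $k>0$), and $\mathrm{Re}\,z=1$, controlled by part (i) paired with a $(1+s)^{1/2-}$ weight and integrated in $L^2_s$ (yielding $k$-factor $2^{-19k^+/3+k^-/2}$ together with a $(1+t/R)^\beta$ time factor). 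Choosing the endpoint exponents so that at $\mathrm{Re}\,z=\tfrac{2}{3}$ the $k$-powers agree with $2^{-6.4k^++k^-/2}$ and the time powers agree with $(t/R+1)^{3/2}$, the interpolated bound is the claim.

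The main obstacle is that, unlike in the Euclidean case where the $\alpha=1$ endpoint becomes $t$-independent thanks to the sharp $(1+t)^{-1+}$ decay, the periodic dispersive decay $(1+s)^{-2/3+}(s/R+1)^{4/3}$ is insufficient to make $(1+s)^{1/2-}P_ke^{-is\Lambda}u$ square-integrable on $[0,\infty)$. I expect to resolve this by either splitting $[0,t]$ into intervals of length $R$ and summing Euclidean-type estimates on each (using the $(t/R+1)$ count of such intervals), or by choosing a slightly different endpoint $\mathrm{Re}\,z=\theta_0<1$ at which the time integral converges, and then rebalancing $\theta_0$ and the Strichartz endpoint exponent so that the interpolated weight at $\mathrm{Re}\,z=\tfrac{2}{3}$ still equals $(1+s)^{1/3-}$ and the resulting $(t/R+1)$ exponent equals exactly $3/2$. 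The verification of this bookkeeping (in particular, absorbing logarithmic $\sqrt{\mathcal L_R}$ losses from Lemma \ref{dispersiveTT*-per} into the slack indicated by the `$-$' in the statement) is the step that requires the most care.
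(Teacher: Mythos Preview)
Your argument for (i) and (ii) is correct and matches the paper.

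For (iii) there is a real gap. You set up the interpolation with the input norm held fixed at $\|u\|_Z$, taking the $\mathrm{Re}\,z=1$ endpoint from part (i) of this very lemma. But part (i) supplies only $(1+s)^{-2/3+}$ time decay, so at that endpoint the quantity
\[
\bigl\|(1+s)^{1/2-}(1+s)^{-2/3+}(s/R+1)^{4/3}\bigr\|_{L^2([0,t])}
\]
already picks up a factor $\gtrsim R^{1/3-}$ from the region $s\le R$ alone (the integrand $(1+s)^{-1/6-}$ is not square-integrable there uniformly in $R$). After interpolation at $\theta=2/3$ this becomes $R^{2/9-}$, which nothing in the statement absorbs. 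Your workaround of sliding the right endpoint to some $\theta_0<1$ cannot help either: making the $L^2_s$ integral converge on $[0,R]$ forces the time-weight exponent at that endpoint below $1/6$, and since the Strichartz endpoint has exponent $\le 0$, no convex combination reaches $1/3$.

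The paper's actual argument avoids this by letting the \emph{input} space vary in the interpolation, exactly as in the Euclidean proof of Lemma~\ref{dispersiveZ}~(iii). One interpolates over the weight exponent $\alpha\in[0,1]$ in the norm $\|(1+\|x\|)^\alpha\langle\nabla\rangle^8\,\cdot\,\|_{L^2}$. At $\alpha=0$ one uses Lemma~\ref{dispersiveTT*-per}~(i), giving the factor $(t/R+1)^{1/2}$. At $\alpha=1$ one uses the $p=1{+}$ case of (\ref{dispersive3-per}) together with $\|P_ku\|_{L^{1+}}\lesssim 2^{-8k^+}\|(1+\|x\|)^{1-}\langle\nabla\rangle^8 u\|_{L^2}$; this gives the full pointwise decay $(1+s)^{-1+}(s/R+1)^2$, so that
\[
\bigl\|(1+s)^{1/2-}(1+s)^{-1+}(s/R+1)^2\bigr\|_{L^2([0,t])}\lesssim (t/R+1)^{2}
\]
with no spurious $R$-power coming from $[0,R]$. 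Interpolating at $\alpha=2/3$ then yields $(t/R+1)^{(1/2)(1/3)+2\cdot(2/3)}=(t/R+1)^{3/2}$ and the stated $k$-factor. The logarithmic losses at $\alpha=0$ are absorbed by the ``$-$'' in $1/3-$, as you note.
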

\begin{proof}
The proofs of (i) and (iii) are the same as the corresponding parts of Lemma \ref{dispersiveZ}.
To get (ii) we sum (i) over $k \in \Z$, and add the constant component:
\[
\|e^{-it\Lambda}1\|_{W^{6,\infty}} = 1 \lesssim (1+t)^{-\frac{2}{3}+}(t/R+1)^{4/3}R^{2/3} = (1+t)^{-\frac{2}{3}+}(t/R+1)^{4/3}\|1\|_Z.
\]
\end{proof}

Now we proceed to adapt the multilinear operators and their estimates.
Much of the material is already present in Sections 3.2 and 3.3 of \cite{Zh}, so we only state the results here, drawing the reader's attention to the differences from there.

\begin{definition}
Given a symbol $a = a(x, \zeta): \R^2 \times (\R^2 \backslash 0) \to \C$, define the operator $T_a$ using the following recipe:
\[
\mathcal F(T_af)(\xi)=\frac{C}{R^2}\sum_{\eta\in(2\pi\Z/R)^2}
\varphi_{\le-10}\left( \frac{|\xi-\eta|}{|\xi+\eta|} \right)\mathcal F_x a\left( \xi-\eta,\frac{\xi+\eta}2 \right)\hat f(\eta),
\]
where $C$ is a normalization constant (independent of $R$) such that $T_1=\text{id}$.
\end{definition}
\begin{remark}
When $\xi+\eta=0$ (even when $\xi=\eta=0$), the factor $\varphi_{\le-10}$ is taken as 0, so $a(x,0)$ will never be used. In particular $T_af$ always has zero average.
\end{remark}

Lemma \ref{para2diff} continues to hold, with the exception that (iii) should read

(iii') If $a=P(\zeta)$, then $T_af=P(D)f-P(0)R^{-2}\int_{(\R/R\Z)^2} f$.

This is because in later proofs regarding the periodic GWW equation,
the functions $f$ being operated on do not necessarily have zero average,
so we have to subtract the mean to ensure that $T_af$ does have zero average.

Now we define the $\mathcal L_m^p$ norm for periodic symbols as follows:
\[
\begin{aligned}
|a|(x,\zeta)
&=\sum_{|I|\le9} |\zeta|^{|I|}|\partial_{\zeta_I} a(x,\zeta)|, &
\|a\|_{\mathcal L_m^p}
&=\sup_{\zeta\in\R^2\backslash0} (1+|\zeta|)^{-m}\||a|(x,\zeta)\|_{L^p_x((\R/R\Z)^2)}.
\end{aligned}
\]
Lemmas \ref{Sm-Lm}, \ref{Lmq=Lq}, \ref{prod-Lm} and \ref{Taf-Lp} remain true. We still define $H(f,g)$ by the formula in Definition \ref{H-def}.
Now we examine Lemma \ref{PkH-Lp}: (i) remains true. 
The nonzero frequencies of (ii) follows from the Coifman--Meyer theorem (Theorem 3.7 of \cite{MuSc}), which is also valid on the torus (Problem 3.4 of \cite{MuSc}). The zero frequency of $H(f,g)$ is simply that of $fg$,
which also has the desired bound by H\"older's inequality. The last part (iii) follows from (i) and (ii) as before.

The operators $E$ and $E_1$ are the same as those in Definitions \ref{Eaf-def} and \ref{E1f-def}, respectively. As such, Lemmas \ref{Eaf-Lp} and \ref{E1f-Lp} continue to hold.

Regarding multilinear paraproducts and their estimates, we still use Definition \ref{Soo-def}. As such, Lemma \ref{Soo-Cn} remains true.
For the $L^p$ bounds of paraproducts, we quote Lemma 3.19 of \cite{Zh}:
\begin{lemma}\label{paraprod-per}
Fix $p,p_j\in[1,\infty]$ ($j=1,\dots,n$) and $1/p=1/p_1+\cdots+1/p_n$. Let
\[
\mathcal Ff(\xi)
=\frac{1}{R^{2n-2}}\sum_{\xi_j\in(2\pi\Z/R)^2\atop\xi_1+\dots+\xi_n=\xi} m(\xi_1,\dots,\xi_n)\prod_{j=1}^n \mathcal F f_j(\xi_j).
\]
Then
\[
\|f\|_{L^p}\lesssim \|m\|_{S^\infty}\prod_{j=1}^n\|f_j\|_{L^{p_j}},
\]
\footnote{The factor $R^{-(2n-2)}$ is missing in \cite{Zh}; the proofs there are not affected, however.}
where the $L^p$ norms are taken on $(\R/R\Z)^2$.
\end{lemma}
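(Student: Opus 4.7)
The plan is to mimic the Euclidean proof (Lemma 5.2 (ii) of \cite{IoPu}), but taking care that the $L^1$ kernel $K := \mathcal F^{-1} m$ lives on $\R^{2n}$ while the functions $f_j$ live on the torus. The key identity is a representation of $f$ as a convolution in physical space: by the Fourier inversion formula on $(\R/R\Z)^2$,
\[
f(x) = R^{-2}\sum_{\xi \in (2\pi\Z/R)^2} e^{ix\cdot\xi}\mathcal Ff(\xi) = R^{-2n}\sum_{\xi_1,\dots,\xi_n \in (2\pi\Z/R)^2} e^{ix\cdot(\xi_1+\cdots+\xi_n)} m(\xi_1,\dots,\xi_n)\prod_{j=1}^n \mathcal Ff_j(\xi_j),
\]
where the factor $R^{-2n}$ is exactly $R^{-2} \cdot R^{-(2n-2)}$. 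Writing $m$ as a Euclidean Fourier transform $m(\xi_1,\dots,\xi_n) = \int_{\R^{2n}} K(y_1,\dots,y_n)e^{-i(y_1\cdot\xi_1+\cdots+y_n\cdot\xi_n)}dy$ with $\|K\|_{L^1(\R^{2n})} = \|m\|_{S^\infty}$ (up to a harmless constant absorbed by the same normalisation that makes $T_1=\mathrm{id}$), I would swap the sum and integral and recognise the $\xi_j$-sums as the periodic Fourier inversion formulas for $f_j(x-y_j)$:
\[
f(x) = \int_{\R^{2n}} K(y_1,\dots,y_n) \prod_{j=1}^n f_j(x-y_j)\,dy_1\cdots dy_n.
\]
Here $f_j(x - y_j)$ makes sense for arbitrary $y_j \in \R^2$ via $R$-periodicity.

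Once this convolution representation is established, the conclusion is routine: Minkowski's integral inequality applied to the $L^p((\R/R\Z)^2)$ norm in $x$ gives
\[
\|f\|_{L^p} \le \int_{\R^{2n}} |K(y_1,\dots,y_n)|\,\Bigl\|\prod_{j=1}^n f_j(\cdot-y_j)\Bigr\|_{L^p}\,dy,
\]
and H\"older's inequality (with $1/p = \sum 1/p_j$), together with translation invariance $\|f_j(\cdot - y_j)\|_{L^{p_j}} = \|f_j\|_{L^{p_j}}$ on the torus, yields $\|f\|_{L^p} \lesssim \|K\|_{L^1}\prod_j \|f_j\|_{L^{p_j}} = \|m\|_{S^\infty}\prod_j\|f_j\|_{L^{p_j}}$.

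The only real obstacle is bookkeeping of the $R$-powers, which is precisely the point of the footnote correcting \cite{Zh}: the Euclidean proof absorbs a single $(2\pi)^{-2n}$ into the definition of $\mathcal F^{-1}$, whereas in the periodic setting one must produce $n-1$ extra factors of $R^{-2}$ to pass from the product of the $n$ inversion formulas for the $f_j$ to the single inversion formula for $f$, matching the $R^{-(2n-2)}$ prefactor built into the statement. Once this is verified (by inspecting the display above), no step in the argument is sensitive to whether the underlying space is Euclidean or a torus, so no change of substance is needed beyond this normalisation.
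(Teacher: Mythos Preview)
Your proof is correct and is exactly the standard argument: the paper does not prove this lemma in-line but simply quotes Lemma 3.19 of \cite{Zh}, and your argument is the natural periodic adaptation of the Euclidean proof (Lemma 5.2 (ii) of \cite{IoPu}) that the paper cites for Lemma \ref{paraprod}. The $R$-power bookkeeping, which is the only new wrinkle and the point of the footnote, is handled correctly in your first display.
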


\subsection{Paralinearization of the Zakharov system}\label{ParLin-per}
We still define the function $u$ by straightening the boundary,
see (\ref{u-def}). The fixed-point formulation of $\nabla_{x,y}u$,
given in section \ref{BVP}, remains unchanged.

Now we check the validity of the estimates, starting with Lemma \ref{K-bound}. When $k<-\log R$, $P_kf$ vanishes so the bounds are trivial. We remark that thanks to the derivatives in the operator $\mathcal K$, the bounds are also trivially true if $P_k$ is replaced with the operator extracting the zero frequency (i.e., the mean). When $k\ge-\log R$, by the Poisson summation formula, the convolution kernel $\mathcal K_k(y,y',x)$ of $\mathcal K(y,y')\circ P_k$ is the sum of the Euclidean kernel over the lattice $(R\Z)^2$, so it has the bound
\begin{align*}
|\mathcal K_k(y,y',x)|
&\lesssim_L \sum_{x'\in(R\Z)^2} 2^{3k}(1+2^k|x+x'|+2^k\min(|y+y'|,|y-y'|))^{-L}\\
&\lesssim_L 2^{3k}(1+2^k\|x\|+2^k\min(|y+y'|,|y-y'|))^{-L}.
\end{align*}
Then all parts of Lemma \ref{K-bound} follow as before, which then shows that Propositions \ref{u-Hs}, \ref{u-Cr} and \ref{u-Hs2} continue to hold, for their proofs boil down to applying various parts of Lemma \ref{K-bound} to the fixed-point formulation, and the zero frequency has no contribution. Moreover, in the proof of the $L_y^2X_x^r$ bound in Proposition \ref{u-Cr},
the linear evolution $e^{y|\nabla|}\phi$ is decomposed into three parts: $P_{<k}e^{y|\nabla|}\phi$, $P_{[k,0]}e^{y|\nabla|}\phi$ and $P_{>0}e^{y|\nabla|}\phi$. If $k = -\log R$, then the first part vanishes, and the other two parts have their $L_y^2X_x^r$ norms bounded by $\mathcal L_R\||\nabla|^{1/2}\phi\|_{X^r}$. Continuing the proof of Proposition \ref{u-Cr}, we know that this is also a bound for $\|\nabla_{x,y}^ju\|_{L_y^2X_x^{r-j+1}}$ in part (i). Continuing the proof of Proposition \ref{u-Hs2}, we know that the two bounds there still hold if $C_{k,r}[f]$ is replaced with
$\mathcal L_R\|f\|_{C_*^r}$.

Since we have similar Sobolev bounds for $\nabla_{x,y}u$ and multilinear operator bounds for paraproducts in the periodic case, the paralinearization of the Dirichlet-to-Neumann operator, and hence the Zakharov system (\ref{Zakharov}), is exactly the same, with the same error bounds. In particular, Propositions \ref{ht-paralin} and \ref{wt-paralin} remain true, and the constant $C_{k,r}[f]$ in the bounds can be replaced with $\mathcal L_R\|f\|_{C_*^r}$.

Since the identities involving the quantities $a$, $B$, $V$, $G(h)\phi$ and their derivatives can be derived formally, they carry over to the periodic case; hence all the estimates of their Taylor remainders in section \ref{Taylor} continue to hold.

\subsection{Quartic energy estimates}\label{EneEst-per}
We still refer to section \ref{EneEst-def} for the defintion of the energy $\mathcal E$ to be controlled and its decomposition into the parts $\mathcal E_4$, $\mathcal E_s$, $\mathcal E_Q$ and $\mathcal E_{\tilde Q}$.

The estimates for $\mathcal E_4$ and $\mathcal E_s$, namely Propositions \ref{E4} and \ref{Es}, remain mostly unchanged, with the only modification being that the integral over the frequency variables in the definition of $\mathcal E_s^{\mu\nu}$ should be replaced with summation, suitably normalized. In the summation, the zero frequencies are not included because the support of $n_3$ ensures that $|\xi_1+\xi_2| \gtrsim 1$, and since $\mathcal E_S$ is a semilinear term, it further ensures that $|\xi_1|$ and $|\xi_2| \gtrsim 1$.

Next we examine the inclusion of the zero frequncies in $\mathcal E_Q$.
Again the factor $\varphi_{\ge0}(\xi_1+\xi_2)$ ensures that $|\xi_1+\xi_2| \gtrsim 1$, but since $\mathcal E_Q$ is a quasilinear term, this only ensures that $|\xi_2| \gtrsim 1$, which leaves open the possibility of $\xi_1 = 0$. This however, is of no concern because in this case, the multiplier $q(\xi_1, \xi_2)$ vanishes. Hence the bound for $\mathcal E_Q$, namely Proposition \ref{Eq}, also stands.

We now turn to the last term $\mathcal E_{\tilde Q}$. As before the factors $n_2(\xi_2)$ and $n_3(\xi_1+\xi_2)$ in $\tilde q$ ensure that $|\xi_2|$ and $|\xi_1+\xi_2| \gtrsim 1$. If $\xi_1 = 0$ then the multiplier $\tilde q$ still vanishes, which leaves open the possibility of $\theta = 0$ on the right-hand side of (\ref{Iq2-4}). This however, is not a problem either, because later we summed over $k_4 \in \N \cup \{\le 0\}$, which includes the zero frequency of $V_j$. Hence the bound for $\mathcal E_{\tilde Q}$, namely Proposition \ref{Eq-tld}, also remains unchanged.

All told, the growth of $\mathcal E$ can be estimated in exactly the same way as in the Euclidean case. Also the proofs of Proposition 2.1 of \cite{GeMaSh2} and (1.1.15) of \cite{AlDe2} carry over to the torus.

We are now ready to show the energy estimates in the bootstrap assumptions.
\begin{proof}[Proof of (\ref{growth-Em-X})]
This is exactly the same as before.
\end{proof}

\begin{proof}[Proof of (\ref{growth-Em-X-per})]
We use $\mathcal L_R\|U\|_{C_*^6}$ instead of $C_{-\mathcal L,6}[U]$,
which then yields
\[
\mathcal E(t)=\mathcal E(0)+O(\ep_1^3+\mathcal L_R\ep_1^2\ep_2^2)
\lesssim\ep^2+\ep_1^3+\mathcal L_R\ep_1^2\ep_2^2
\]
and the same bound holds for $\|P_{\ge0}\tilde U(t)\|_{H^N}^2$,
and hence for $\|\tilde U(t)\|_{H^N}^2$. Taking the square root gives (\ref{growth-Em-X-per}).
\end{proof}

\begin{proof}[Proof of (\ref{growth-Em-Z-per})]
Integrate Lemma \ref{dispersiveZ-per} (ii) in $t$ and put it in (\ref{growth-Em-X}) (not (\ref{growth-Em-X-per}),
which loses a factor of $\sqrt{\mathcal L_R}$).
\end{proof}

\subsection{Strichartz estimates for small $R$}\label{StrEst-per}
We still split $N = N_2 + N_3$ as in (\ref{N23}), decompose
\[
U(t) = e^{-it\Lambda}U(0) + U_2(t) + U_3(t)
\]
as in (\ref{Duhamel}) and define the profiles $\Upsilon$, $\Upsilon_2$ and $\Upsilon_3$ as before. Then we still have $\Upsilon_t = e^{it\Lambda}N$.

For $\Upsilon_2$ we still decompose it as in (\ref{F=W+H}),
where the multipliers are given by (\ref{m-prod}).
In particular, if either $\xi_1$, $\xi_2$ or $\xi_1 + \xi_2$ vanishes,
so does $m_{\mu\nu}(\xi_1, \xi_2)$, so the zero frequency does not play any role in the bilinear or trilinear terms. Apart from that, the multipliers still enjoy the bounds stated in Lemma \ref{m-Soo-lem}.
The three bounds following the decomposition (\ref{W2k}) also remain true.

We need to check that Theorem C.1 in \cite{GeMaSh2} continues to hold in the periodic case. Since the Coifman--Meyer theorem holds on the torus,
we can still assume one of the frequencies, for example $\eta$, is small.
Then the remainder of the Taylor expansion (\ref{m-Taylor}) is also good.
For the first few terms, it boils down to the $L^p$ boundedness of the operator $Z(\nabla/|\nabla|)$, where $Z$ is a spherical harmonic.
Such boundedness can be seen by expanding
\[
Z(\nabla/|\nabla|)=\sum_{k\ge-\log R} \varphi_k(\nabla)Z(\nabla/|\nabla|)
\]
and noting that the Euclidean kernel $\mathcal K_k$ for the $k$-th term is Calderon--Zygmund and decays rapidly outside the ball of radius $2^{-k}\le R$, so by Poisson summation, the periodic kernel for the $k$-th term
\[
G_k(x) = \sum_{x'\in(R\Z)^2} \mathcal K_k(x+x')
\]
has the same property, and hence the kernel for the left-hand side
\[
G = \sum_{k\ge-\log R} G_k
\]
is also Calderon--Zygmund. Since the weight in the $Z$ norm still belongs to the class $A_2$, the estimates in the equation (C.1) in \cite{GeMaSh2} still go through, showing that part (i) of Theorem C.1 in \cite{GeMaSh2} still holds. Part (ii) follows from part (i) as before.

\begin{proof}[Proof of (\ref{growthX2-per})]
By Lemma \ref{dispersiveTT*-per} (ii) and (\ref{growthX1-per}),
\[
\|e^{-is\Lambda}U(0)\|_{L^2([0,t])C_*^6}\lesssim \sqrt{\mathcal L_R(1+t/R)}\cdot\ep_1.
\]

For $W_{\mu\nu}$, (\ref{W0-H7}) still holds,
and the periodic analog of (\ref{W0-L2C6}) is
\[
\|e^{-is\Lambda}W_{\mu\nu}(0)\|_{L^2([0,t])C_*^6}\lesssim \sqrt{\mathcal L_R(1+t/R)}\cdot\ep_1^2.
\]
The bound (\ref{Ws-L2C6}) for the other boundary term remains unchanged,
so the two quadratic boundary terms have the desired bound.

Since (\ref{N3-Hs}) and (\ref{U3-L2Loo}) still hold, $U_3$ also has the desired bound.

Since (\ref{H-L1H7}) still holds, $H_{\mu\nu}$ also has the desired bound.
\end{proof}

\subsection{$L^2L^\infty$ and $Z$-norm estimates for large $R$}
\label{ZEst-per}
We will still make use of Lemma \ref{int-part-cor} to adapt Propositions \ref{W-Z}, \ref{H3-Z} and \ref{N30-Z} to the periodic case.
Note that now $\alpha = 2/3 \in (1/2, 1)$.

\begin{proposition}\label{W-Z-per}
Assume $N \ge 30$ and (\ref{growthZ-per}). Then
\[
\|W_{\mu\nu}(t)\|_Z\lesssim (t^{5/3+6/N}/R^2+1)\ep_1^2.
\]
\end{proposition}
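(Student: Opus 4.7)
My plan is to follow the case-by-case analysis of Proposition \ref{W-Z} with the fixed weight $\alpha = 2/3$, identifying where periodic features modify the Euclidean estimates. The basic $L^2$ bounds (\ref{WL2}) and (\ref{WL22}) on the pieces $W^{\mu\nu}_{k_1,k_2}$ depend only on the multiplier estimate Lemma \ref{m-Soo-lem} and the paraproduct bound Lemma \ref{paraprod-per}; both pass to the torus unchanged. The dyadic spatial decomposition now terminates at $2^j \lesssim R$, which truncates the sum
\[
\|W_{\mu\nu}\|_Z \approx \big\|2^{\alpha j}\|Q_j\langle\nabla\rangle^8 W_{\mu\nu}\|_{L^2}\big\|_{\ell^2_{0 \le j \lesssim \log R}}.
\]
With this in hand, Cases 1, 2, 3 of the Euclidean proof transfer verbatim under the hypotheses $N \ge 30$ and $\alpha = 2/3$, and each contributes $O(\ep_1^2)$ uniformly in $t$ and $R$.

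The new work is in Case 4, the bulk regime where every $k_i$ is essentially of unit size and $-\kappa$ is small compared with $\max(j,\mathcal{L})$. I split as in Proposition \ref{W-Z} according to whether $j \le -\kappa/2 + \mathcal{L} + 5$ (small $j$) or $j > -\kappa/2 + \mathcal{L} + 5$ (large $j$). In the small-$j$ sub-case, applying Lemma \ref{dispersive-per} to one profile factor produces the Euclidean bound with an additional multiplicative loss $(1+t/R)^2 \lesssim 1 + t^2/R^2$. Repeating the three-way optimization of Cases 4.1.1--4.1.3 of the Euclidean proof with this factor built in, and summing the $\ell^2_j$ weights up to $j \approx \log R$, the ``$1$'' piece of the loss reproduces the constant $\ep_1^2$ while the $t^2/R^2$ piece, combined with the frequency slack $k_i \le 1.5\alpha j/N$ afforded by Case 4, yields a contribution controlled by $t^{5/3 + O(1/N)}\ep_1^2/R^2$. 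In the large-$j$ sub-case, the constraint $2^j \le \sqrt{2}R$ together with $t \lesssim 2^{j + \kappa/2}$ limits the effective wrap-around of the Euclidean kernel to $O(1)$ translates of the lattice $(R\Z)^2$; phase-space integration by parts (Lemma \ref{int-part-cor}) applied to the Euclidean kernel, followed by a harmless lattice sum, recovers the $O(\ep_1^2)$ bound of Case 4.2 of Proposition \ref{W-Z}.

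The main obstacle will be the sharp interpolation in the small-$j$ sub-case: the $(1+t/R)^2$ loss must be distributed across the two profile factors and the $\ell^2_j$-weight in a way that simultaneously keeps the high-frequency budget under control (so that $N \ge 30$ suffices) and extracts the sharp exponent $5/3 + 6/N$ without incurring logarithmic losses from the truncated $j$-sum. The $+6/N$ correction is dictated by the threshold $k_2 \le 1.5\alpha j/N$ separating Case 1 from Case 4 and must be propagated carefully through each interpolation step; a secondary subtlety is the Case 3 analog, where $P_{k_3} W^{\mu\nu}_{k_1,k_2}$ has to be treated with (\ref{WL22}) rather than (\ref{WL2}) to avoid losing a factor of $R$ in the periodic Bernstein bound.
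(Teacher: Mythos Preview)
Your overall strategy is correct and matches the paper's: Cases 1--3 carry over unchanged (no dispersive estimates are involved), and Case 4 splits into a ``small $j$'' and a ``large $j$'' sub-case as in Proposition \ref{W-Z}. The paper obtains exactly the exponent $5/3+6/N$ by the route you describe, with the $O(1/N)$ correction indeed traceable to the Case 1 threshold $k_2<1.5\alpha j/N=j/N$.

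There are two simplifications in the paper relative to your plan that are worth noting. First, in Case 4.1 the paper does \emph{not} repeat the three-way split of the Euclidean Cases 4.1.1--4.1.3. Those three subcases were designed for $\alpha<1/2$ (the summation over $j_2\le\mathcal L$ in Cases 4.1.2--4.1.3 explicitly uses $1-2\alpha>0$). Here $\alpha=2/3>1/2$, so the factor $2^{(1-2\alpha)j_2}=2^{-j_2/3}$ is already summable over all $j_2\ge0$, and the separate treatment of $j_2>\mathcal L$ via the $\beta=0+$ interpolation (Euclidean Case 4.1.1) is unnecessary. The paper therefore uses only two subcases, distinguished by $j_1\le j_2-\kappa/2$ versus $j_1>j_2-\kappa/2$, each time applying the full $L^1\to L^\infty$ periodic dispersive estimate to one factor (not ``distributed across the two''). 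Your three-way version would still work---the extra subcase is simply redundant---but you would need to redo the $j_2$-summation rather than copy the Euclidean inequality chain verbatim.

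Second, for Case 4.2 the paper's justification is simpler than your wrap-around argument: the Euclidean proof of Case 4.2 uses only the oscillatory-integral Lemma \ref{int-part-cor} on the kernel and the $L^2$ unitarity of $e^{it\Lambda}$, never a dispersive decay estimate, so the argument transfers without modification. Your observation that the wave packets do not wrap in this regime is correct physics, but it is not needed for the proof.
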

\begin{proof}
We refer to the decomposition of $W_{\mu\nu}$ in (\ref{W2k}) and the case distinction in the proof of Proposition \ref{W-Z}.

Cases 1, 2 and 3 are the same, as they do not rely on dispersive estimates.
We now focus on Case 4.

{\bf Case 4:} $\kappa:=\min k_i>-5\max(j,\mathcal L)/6$ and $k_1$, $k_2<j/N$.

{\bf Case 4.1:} $j\le-\kappa/2+\mathcal L+5$. In this case $\kappa>-10(\mathcal L+5)/7$, $j<12(\mathcal L+5)/7$ and $k_i\le1.8\mathcal L/N+O(1)$. The case distinction is easier than that of Proposition \ref{W-Z}.

{\bf Case 4.1.1} $j_1 \le j_2 - \kappa/2$. When $q = \infty$ and $p = 1$,
(\ref{dispersive2-per}) has an extra factor of $(t/R + 1)^2$ compared to (\ref{dispersive2}). Hence
\[
\|\langle\nabla\rangle^8P_{k_3}W^{\mu\nu}_{j_1,k_1,j_2,k_2}(t)\|_{L^2}
\lesssim 2^{\sum k_i/2+j_1/3-2j_2/3}\frac{(t/R+1)^2}{1+t}\ep_1^2.
\]
We sum over $j_1\le j_2-\kappa/2$ and $j_2\in\Z$ and use $\kappa/2\le\mathcal L+5-j$ to get
\[
\sum_{j_1\le j_2-\kappa/2,j_2\in\Z\atop\kappa/2\le\mathcal L+5-j}
\|\langle\nabla\rangle^8P_{k_3}W^{\mu\nu}_{j_1,k_1,j_2,k_2}(t)\|_{L^2}
\lesssim 2^{(\sum k_i-\kappa)/2+2(\mathcal L-j)/3}\frac{(t/R+1)^2}{1+t}\ep_1^2.
\]

{\bf Case 4.1.2} $j_1 > j_2 - \kappa/2$. In this case we have
\[
\|\langle\nabla\rangle^8P_{k_3}W^{\mu\nu}_{j_1,k_1,j_2,k_2}(t)\|_{L^2}
\lesssim 2^{(k_3+3k_2^++2k_2)/2+j_2/3-2j_1/3}\frac{(t/R+1)^2}{1+t}\ep_1^2.
\]
We sum over $j_1>j_2-\kappa/2\ge j_2+j-\mathcal L-5$ and $j_2\in\Z$ to get
\[
\sum_{j_1>j_2-\kappa/2,j_2\in\Z\atop\kappa/2\le\mathcal L+5-j}
\|\langle\nabla\rangle^8P_{k_3}W^{\mu\nu}_{j_1,k_1,j_2,k_2}(t)\|_{L^2}
\lesssim 2^{(k_3+3k_2^++2k_2)/2+2(\mathcal L-j)/3}\frac{(t/R+1)^2}{1+t}\ep_1^2.
\]

Combining the two cases above and summing over $-2(\mathcal L+5)<k_i\le 1.8\mathcal L/N+O(1)$ we get
\[
\sum_{\text{Case 4.1}}
\|\langle\nabla\rangle^8P_{k_3}W^{\mu\nu}_{k_1,k_2}(t)\|_{L^2}
\lesssim \frac{2^{-2j/3}\mathcal L(t/R+1)^2}{(1+t)^{1/3-5.4/N}}\ep_1^2.
\]
Then we sum over $0 \le j \lesssim \mathcal L + 1$ and use $N \ge 18$ to get
\begin{align*}
&\left\| \sum_{\text{Case 4.1}}
2^{2j/3}\|Q_j\langle\nabla\rangle^8P_{k_3}W^{\mu\nu}_{k_1,k_2}(t)\|_{L^2} \right\|_{\ell_{0\le j\lesssim\mathcal L+1}}\\
\lesssim&(t^{5/3+6/N}/R^2+(1+t)^{-1/3+6/N})\ep_1^2
\lesssim (t^{5/3+6/N}/R^2+1)\ep_1^2.
\end{align*}

{\bf Case 4.2:} $j > -\kappa/2 + \mathcal L + 5$. Note that in the proof of the corresponding case of Proposition \ref{W-Z}, no dispersive estimate is used, so the proof carries over to the periodic setting.
%We still decompose $W^{\mu\nu}_{k_1,k_2}$ as in (\ref{W2k-AB}).
%The term $A$ is bounded exactly as before, so we turn to the term $B$.
%
%Instead of (\ref{Bk2-K}) we now have
%\begin{align*}
%\langle\nabla\rangle^8P_{k_3}B^{\mu\nu}_{k_1,k_2}(x,t)
%&=\int G(x,y,z,t)Q_{\le j-5}P_{k_1}\Upsilon_\mu(y,t)P_{k_2}\Upsilon_\nu(z,t)dydz,\\
%G(x,y,z,t)&=\frac{C^2}{R^4}\sum_{\xi_1,\xi_2\in(2\pi\Z/R)^2} e^{i\phi_{\mu\nu}(\xi_1,\xi_2)}\langle\xi_1+\xi_2\rangle^8m^{\mu\nu}_{k_1,k_2;k_3}(\xi_1,\xi_2),\\
%\phi_{\mu\nu}(\xi_1,\xi_2)&=(x-y)\cdot\xi_1+(x-z)\cdot\xi_2+t\Phi_{\mu\nu}(\xi_1,\xi_2),\\
%m^{\mu\nu}_{k_1,k_2;k_3}(\xi_1,\xi_2)&=\varphi_{[k_1-1,k_1+1]}(\xi_1)\varphi_{[k_2-1,k_2+1]}(\xi_2)\frac{m_{\mu\nu}(\xi_1,\xi_2)}{i\Phi_{\mu\nu}(\xi_1,\xi_2)}\varphi_{k_3}(\xi_1+\xi_2).
%\end{align*}
%By the Poisson summation formula we have
%\begin{align*}
%G(x,y,z,t)&=\sum_{y',z'\in(R\Z)^2} \mathcal K(x,y+y',z+z',t),\\
%\mathcal K(x,y,z,t)&=\int e^{i\phi_{\mu\nu}(\xi_1,\xi_2)}\langle\xi_1+\xi_2\rangle^8m^{\mu\nu}_{k_1,k_2;k_3}(\xi_1,\xi_2)d\xi_1 d\xi_2.
%\end{align*}
%The same argument leading to (\ref{K2-L1}) gives
%\begin{equation}\label{K2-L1-per}
%\|1_{|x-y|>2^{j-2}}\mathcal K(x,y,z,t)\|_{L^1_{y,z}}\lesssim 2^{8k_2^++2k_1+3k_2/2+2k_3-2j}.
%\end{equation}
%By a similar argument to that in Case 4.2 of Proposition 6.3 of \cite{Zh},
%when $\varphi_j(x)\varphi_{\le j-5}(y) > 0$ and $y' \in (R\Z)^2$ we have
%$|x - y - y'| > 2^{j-2}$, so
%\[
%\|\varphi_j(x)\varphi_{\le j-5}(y)G(x,y,z,t)\|_{L^1_{y,z}}
%\lesssim\text{right-hand side of (\ref{K2-L1-per})}
%\]
%and the bound for the term $B$ follows as before.
\end{proof}

\begin{proposition}\label{H3-Z-per}
Assume $N \ge 41$ and (\ref{growthZ-per}). Then
\[
\int_0^t \|H_{\mu\nu,3}(s)\|_Zds\lesssim (t^{11/3+13.6/N}/R^4+1)\ep_1^3.
\]
\end{proposition}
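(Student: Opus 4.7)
The proof will follow the same architecture as Proposition \ref{H3-Z}, adapting it to the torus in the same spirit that Proposition \ref{W-Z-per} adapts Proposition \ref{W-Z}. First I would set up the decomposition exactly as in (\ref{H3k}), writing $H_{\mu\nu,3}$ as a linear combination of terms $H^{\mu\nu\rho\sigma}_{k_1,k_2,k_3,l}$ and noting that the multiplier bound (\ref{mk3-Soo}), together with its consequences (\ref{H3k-L2}), (\ref{H3k-L2l}) and (\ref{H3k-L2l2}), continues to hold on $(\R/R\Z)^2$ because these bounds only use Lemma \ref{paraprod-per} and $S^\infty \subset L^\infty$, which are insensitive to periodization. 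I would then recall that in the periodic setting $\alpha$ is fixed at $2/3$, the bootstrap assumption (\ref{growthZ-per}) controls $(1+s)^{1/3-\delta}\|P_k U(s)\|_{L^\infty}$ in $L^2_s$, and the relevant dispersive bound Lemma \ref{dispersive-per} carries the extra factor $(t/R+1)^2$ compared with its Euclidean counterpart.

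For Cases 1 through 4 of the Euclidean proof (the very high, very low, or extremal frequency regimes), the arguments go through with only cosmetic changes: they rely on unitarity of $e^{-it\Lambda}$, Bernstein's inequality, and the $Z$-to-$L^2$ and $Z$-to-$L^p$ bounds in Lemma \ref{Z-bound}, none of which is affected by periodization. Where the Euclidean proof used Lemma \ref{dispersiveZ}(i), I would substitute Lemma \ref{dispersiveZ-per}(i), which simply multiplies the pointwise-in-$t$ estimate by $(t/R+1)^{4/3}$; after time integration this contributes at worst a factor $t^{4/3}/R^{4/3}$. These cases produce terms bounded by $O((t/R+1)^{C}\ep_1^3)$ times an integrable decay in $s$, which after integration fits comfortably under $(t^{11/3+13.6/N}/R^4+1)\ep_1^3$.

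The main work, as in the Euclidean proof, is Case 5 where $\min k_i$ and $l$ lie in a moderate range. I would split it into Case 5.1 ($j \le -\min k_i/2+\mathcal L+5$, so $j \lesssim \mathcal L$ and the frequencies are essentially of unit size up to a $\mathcal L/N$ loss) and Case 5.2 ($j > -\min k_i/2+\mathcal L+5$, where $t$ is much smaller than $2^j$). In Case 5.1 I would use the same $Q_{>\beta j}$ vs $Q_{\le \beta j}$ decomposition of the third profile factor. For the $A$ piece one puts $L^\infty$ norms on the first two factors via (\ref{growthZ-per}) and $L^2$ on the $Q_{>\beta j}$ piece via the $Z$-norm; for the $B$ piece one gains $(1+s)^{-1}(s/R+1)^2$ from the dispersive bound on the last factor. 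Multiplying by the weight $2^{2j/3}$ from the $Z$-norm and summing $j \lesssim \mathcal L$ produces a factor $t^{2/3}$ as in Proposition \ref{W-Z-per}; integrating the resulting $(s/R+1)^2/(1+s)^{2(1/3-\delta)}$ expressions from $s=t_j$ up to $t$ accumulates the remaining powers of $t/R$. Case 5.2 proceeds by integration by parts in phase space using Lemma \ref{int-part-cor}: the bounds (\ref{m-CL})--(\ref{mk3l-CL}) on $m_{\mu\nu\rho}$ and $\nabla^L$ thereof are purely symbolic and transfer verbatim, and the stationary phase exploits $|t\nabla\Phi_{\mu\nu\rho}|< 2^{j-3}$ exactly as before. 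Time integration then produces the advertised $t^{11/3+13.6/N}/R^4$ behaviour.

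The hardest part will be arithmetic bookkeeping in Case 5.1: one must check that all the exponents in $j$ and in $\min k_i$ remain strictly summable once we pay the $\mathcal L/N$ losses coming from $k_i, l \le 2.3j/N + O(1)$, combined with the $(1+t)^{-1+O(1/N)}$ losses from using $\|U\|_{H^{N-1/2}}$ to control tails. The requirement $N \ge 41$ should be forced by the need that $\beta = 5/(N-8)$ is small enough that $(1-\alpha)(1/2-\beta) - O(1/N) > 0$ and that the $t/R$ exponents in the integrated estimate stay below the declared $11/3 + 13.6/N$; with $\alpha = 2/3$ these are the tightest constraints. Once these are verified, summing over $j \ge 0$, $k_i$ and $l$ via Cauchy--Schwarz and integrating in $s$ yields $\int_0^t \|H_{\mu\nu,3}(s)\|_Z \, ds \lesssim (t^{11/3+13.6/N}/R^4 + 1)\ep_1^3$.
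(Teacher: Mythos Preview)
Your overall architecture is right, and Cases 1--4 and Case 5.2 are handled essentially as the paper does (with the minor caveat that the paper adjusts the thresholds slightly, e.g.\ $k_3\ge(j+1.4\mathcal L)/N$ in Case~1 and $-3\max(j,\mathcal L)/4$ in Cases~2--4, and merges the two sub-cases of Case~5.2 into one). But your treatment of Case~5.1 has a genuine gap.

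You propose to mimic the Euclidean Case~5.1 of Proposition~\ref{H3-Z}: split only the third factor as $Q_{>\beta j}$ versus $Q_{\le\beta j}$, and control the first two factors in $L^\infty_x$ via the $L^2_tL^\infty_x$ bootstrap assumption in (\ref{growthZ-per}). The problem is that in the periodic bootstrap the $L^2_tL^\infty$ norm is controlled by $\ep_2$, not $\ep_1$, and $\ep_2$ is explicitly \emph{not} assumed small (indeed $\ep_2\approx(T_0/R)^{3/2}\ep_1$ in the proof of Theorem~\ref{Thm2-per}). Using two such factors would produce a bound of order $\ep_1\ep_2^2$, whereas the proposition claims the pure $\ep_1^3$ bound $(t^{11/3+13.6/N}/R^4+1)\ep_1^3$. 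Your approach therefore cannot close.

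What the paper does instead in Case~5.1 is to abandon the $L^2_tL^\infty$ bootstrap entirely and decompose \emph{all three} profile factors in physical space, writing $H^{\mu\nu\rho\sigma}_{k_1,k_2,k_3,l}=\sum_{j_1,j_2,j_3}H^{\mu\nu\rho\sigma}_{j_1,k_1,j_2,k_2,j_3,k_3,l}$ exactly as in Case~4.1 of Proposition~\ref{W-Z-per}. For each configuration of $(j_1,j_2,j_3)$ one puts the pointwise dispersive estimate (\ref{dispersive3-per}) on two of the factors (picking up $(t/R+1)^4/(1+t)^2$) and $L^2$ on the third; all three sizes are then controlled by the $Z$-norm via $v_{j,k}=2^{8k^++2j/3}\|Q_jP_k\Upsilon\|_{L^2}$, yielding a clean $\ep_1^3$. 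Three sub-cases according to the relative sizes of $j_1,j_2,j_3$ (always putting $L^2$ on the factor with the largest $j_i$) give the summable factor $2^{2(\mathcal L-j)/3}$, and summing over $j\lesssim\mathcal L$ and integrating in $t$ produces the $t^{11/3+13.6/N}/R^4$ term. This is the missing idea you need.
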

\begin{proof}
Again since $m_{\mu\nu}(\xi_1,\xi_2)=0$ when $\xi_1$, $\xi_2$ or $\xi_1+\xi_2=0$, $H_{\mu\nu,3}$ vanishes if any of the ``intermediate frequences" is zero (i.e., $k_i$ ($1\le i\le 4$) or $l=-\infty$ below). We refer to the decomposition of $H_{\mu\nu,3}$ in (\ref{H3k}) and the case distinction in the proof of Proposition \ref{H3-Z}. Also note that the bounds (\ref{H3k-L2}), (\ref{H3k-L2l}) and (\ref{H3k-L2l2}) still hold.

Cases 1 and 3 do not use dispersive estimates and are the same as before.
The parameters are slightly different, however. To get integrable decay in $t$ in Case 1 we need $k_3 \ge (j + 1.4\mathcal L)/N$ and $N \ge 39$.
To get such decay in Case 3 we need $l \le -3\max(j, \mathcal L)/4$.

In Cases 2 and 4 we assume $k_1$, resp. $k_4 \le -3\max(j, \mathcal L)/4$.
The extra factor $(t/R + 1)^{4/3}$ in Lemma \ref{dispersiveZ-per} compared to Lemma \ref{dispersiveZ} gives a bound of
\[
\frac{(t/R+1)^{4/3}\ep_1^3}{(1+t)^{(5/4)-}}
\]
for Case 2, and a larger bound of
\[
\frac{(t/R+1)^{4/3}\ep_1^3}{(1+t)^{(9/8)-}}
\]
for Case 4. Now we focus on Case 5.

{\bf Case 5:} $-3\max(j,\mathcal L)/4<k_i$, $l\le(j+1.4\mathcal L)/N+O(1)$, $1\le i\le 4$.
To get better bounds we will treat this case in a slightly different way than the proof of Proposition \ref{H3-Z}.

{\bf Case 5.1:} $j\le-\min k_i/2+\mathcal L+5$. Similarly to Case 5.1 in that proof the worse case is $l$, $\min k_i>-3j/4\ge3\min k_i/8-3(\mathcal L+5)/4$, which implies $l$, $\min k_i>-6(\mathcal L+5)/5$, $j<8(\mathcal L+5)/5$, and $k_i$, $l\le3\mathcal L/N+O(1)$.

We decompose
\[
H^{\mu\nu\rho\sigma}_{k_1,k_2,k_3,l}=\sum_{j_1,j_2,j_3} H^{\mu\nu\rho\sigma}_{j_1,k_1,j_2,k_2,j_3,k_3,l}
\]
as in Case 4.1 in the proof of Proposition \ref{W-Z}.

{\bf Case 5.1.1:} $j_1 \le j_3 - \min k_i/2$ and $j_2 \le j_3$.
By Lemma \ref{paraprod-per}, (\ref{mk3-Soo}), (\ref{dispersive3-per}), unitarity of $e^{it\Lambda}$, H\"older's inequality and Lemma \ref{Z-bound} (ii),
\begin{align*}
&\|\langle\nabla\rangle^8P_{k_4}H^{\mu\nu\rho\sigma}_{j_1,k_1,j_2,k_2,j_3,k_3,l}(t)\|_{L^2}
\lesssim 2^{8k_3^++(k_4+l+3k_3)/2}\\
\times&\|e^{-it\mu\Lambda}P_{[k_1-1,k_1+1]}Q_{j_1}P_{k_1}\Upsilon_\mu(t)\|_{L^\infty}\|e^{-it\nu\Lambda}P_{[k_2-1,k_2+1]}Q_{j_2}P_{k_2}\Upsilon_\nu(t)\|_{L^\infty}\|Q_{j_3}P_{k_3}\Upsilon(t)\|_{L^2}\\
\lesssim&2^{8k_3^++(k_4+l+3k_3)/2+(3k_2^++k_2+3k_1^++k_1)/2}
\frac{(t/R+1)^4}{(1+t)^2}\\
\times&\|Q_{j_1}P_{k_1}\Upsilon(t)\|_{L^1}\|Q_{j_2}P_{k_2}\Upsilon(t)\|_{L^1}\|Q_{j_3}P_{k_3}\Upsilon(t)\|_{L^2}\\
\lesssim&2^{k_3+(l+\sum k_i)/2+(j_1+j_2-2j_3)/3}\frac{(t/R+1)^4}{(1+t)^2}v_{j_1,k_1}(t)v_{j_2,k_2}(t)v_{j_3,k_3}(t),
\end{align*}
where $v_{j,k} = 2^{8k^++2j/3}\|Q_jP_k\Upsilon\|_{L^2}$.
By the AM-GM inequality and Lemma \ref{Z-bound} (ii),
\begin{align*}
v_{j_1,k_1}v_{j_2,k_2}v_{j_3,k_3}&\lesssim v_{j_1,k_1}^3+v_{j_2,k_2}^3+v_{j_3,k_3}^3,\\
\|v_{j,k}\|_{\ell^3_j}&\le\|v_{j,k}\|_{\ell^2_j}\lesssim\|\Upsilon\|_Z
\lesssim\ep_1.
\end{align*}
We sum over $j_1 \le j_3 - \min k_i/2$, $j_2 \le j_3$, $j_3 \in \Z$ to get,
when $\min k_i/2 \le \mathcal L + 5 - j$,
\begin{align*}
\sum_{j_1\le j_3-\min k_i/2\atop j_2\le j_3, j_3\in\Z} \|\langle\nabla\rangle^8P_{k_4}H^{\mu\nu\rho\sigma}_{j_1,k_1,j_2,k_2,j_3,k_3,l}(t)\|_{L^2}
&\lesssim 2^{k_3+(l+\sum k_i-\min k_i)/2+2(\mathcal L-j)/3}\\
&\times(t/R+1)^4(1+t)^{-2}\ep_1^3.
\end{align*}

{\bf Case 5.1.2:} $j_1 \le j_2 - \min k_i/2$ and $j_2 > j_3$. Similarly we put $L^\infty$ norms on the first and third factors to get
\begin{align*}
&\|\langle\nabla\rangle^8P_{k_4}H^{\mu\nu\rho\sigma}_{j_1,k_1,j_2,k_2,j_3,k_3,l}(t)\|_{L^2}\\
\lesssim&2^{8k_3^++(k_4+l+3k_3)/2+(3k_3^++k_3+3k_1^++k_1)/2}
\frac{(t/R+1)^4}{(1+t)^2}\\
\times&\|Q_{j_1}P_{k_1}\Upsilon(t)\|_{L^1}\|Q_{j_2}P_{k_2}\Upsilon(t)\|_{L^2}\|Q_{j_3}P_{k_3}\Upsilon(t)\|_{L^1}\\
\lesssim&2^{3k_3^++(l+k_4+k_3+k_1)/2+(j_1+j_3-2j_2)/3}\frac{(t/R+1)^4}{(1+t)^2}v_{j_1,k_1}(t)v_{j_2,k_2}(t)v_{j_3,k_3}(t).
\end{align*}
We sum over $j_1 \le j_2 - \min k_i/2$, $j_3 < j_2$, $j_2 \in \Z$ to get,
when $\min k_i/2 \le \mathcal L + 5 - j$,
\begin{align*}
\sum_{j_1\le j_2-\min k_i/2\atop j_3<j_2, j_2\in\Z} \|\langle\nabla\rangle^8P_{k_4}H^{\mu\nu\rho\sigma}_{j_1,k_1,j_2,k_2,j_3,k_3,l}(t)\|_{L^2}
&\lesssim 2^{3k_3^++(l+k_4+k_3+k_1-\min k_i)/2+2(\mathcal L-j)/3}\\
&\times(t/R+1)^4(1+t)^{-2}\ep_1^3.
\end{align*}

{\bf Case 5.1.3:} $j_1>\max(j_2,j_3)-\min k_i/2$. Similarly we put $L^\infty$ norms on the last two factors to get
\begin{align*}
&\|\langle\nabla\rangle^8P_{k_4}H^{\mu\nu\rho\sigma}_{j_1,k_1,j_2,k_2,j_3,k_3,l}(t)\|_{L^2}\\
\lesssim&2^{8k_3^++(l+k_4+3k_3^++4k_3+3k_2^++k_2)/2}\frac{(t/R+1)^4}{(1+t)^2}\\
\times&\|Q_{j_1}P_{k_1}\Upsilon(t)\|_{L^2}\|Q_{j_2}P_{k_2}\Upsilon(t)\|_{L^1}\|Q_{j_3}P_{k_3}\Upsilon(t)\|_{L^1}\\
\lesssim&2^{(l+k_4+3k_3^++4k_3)/2+(j_2+j_3-2j_1)/3}\frac{(t/R+1)^4}{(1+t)^2}v_{j_1,k_1}(t)v_{j_2,k_2}(t)v_{j_3,k_3}(t).
\end{align*}
We sum over $j_1>\max(j_2,j_3)-\min k_i/2\ge\max(j_2,j_3)+j-\mathcal L-5$ and $j_2$, $j_3\in\Z$ to get
\begin{align*}
\sum_{j_1>\max(j_2,j_3)-\min k_i/2\atop j_2,j_3\in\Z}
\|\langle\nabla\rangle^8P_{k_4}H^{\mu\nu\rho\sigma}_{j_1,k_1,j_2,k_2,j_3,k_3,l}(t)\|_{L^2}
&\lesssim 2^{(l+k_4+3k_3^++4k_3)/2+2(\mathcal L-j)/3}\\
&\times(t/R+1)^4(1+t)^{-2}\ep_1^3.
\end{align*}

Combining Case 5.1.1 through Case 5.1.3 and summing over $-2(\mathcal L+5)<k_i,l\le3\mathcal L/N+O(1)$ we get
\[
\sum_{\text{Case 5.1}} \|\langle\nabla\rangle^8P_{k_4}H^{\mu\nu\rho\sigma}_{k_1,k_2,k_3,l}(t)\|_{L^2}
\lesssim \frac{2^{2(\mathcal L-j)/3}(t/R+1)^4}{(1+t)^{2-13.5/N}}\ep_1^3.
\]
Then we sum over $0\le j\lesssim\mathcal L+1$ and use $N \ge 41$ to get
\begin{align*}
&\left\| \sum_{\text{Case 5.1}} 2^{2j/3}\|Q_j\langle\nabla\rangle^8P_kH^{\mu\nu\rho\sigma}_{k_1,k_2,k_3,l}(t)\|_{L^2} \right\|_{\ell^2_{0\le j\lesssim\mathcal L+1}}\\
\lesssim&(t^{8/3+13.6/N}/R^4+(1+t)^{-4/3+13.6/N})\ep_1^3\\
\lesssim&(t^{8/3+13.6/N}/R^4+(1+t)^{-1.001})\ep_1^3.
\end{align*}

{\bf Case 5.2:} $j>-\min k_i/2+\mathcal L+5$. In this case $j>-(j+2\mathcal L)/(2N)+\mathcal L$, which implies $j>(1-1.5/N)\mathcal L$ and $\mathcal L<1.1j$ (because $N\ge16$).

We decompose
\begin{align*}
H^{\mu\nu\rho\sigma}_{k_1,k_2,k_3,l}
&=A^{\mu\nu\rho\sigma}_{k_1,k_2,k_3,l}+B^{\mu\nu\rho\sigma}_{k_1,k_2,k_3,l}%,\\
%A^{\mu\nu\rho\sigma}_{k_1,k_2,k_3,l}
%&=H^{\mu\nu\rho\sigma}_l[P_{[k_1-1,k_1+1]}Q_{\ge j-4}P_{k_1}\Upsilon_\mu,P_{[k_2-1,k_2+1]}Q_{\ge j-4}P_{k_2}\Upsilon_\nu,P_{[k_3-1,k_3+1]}Q_{\ge j-4}P_{k_3}\Upsilon_\rho]%,\\
%B^{\mu\nu\rho\sigma}_{k_1,k_2,k_3,l}
%&=\sum_{I_1,I_2,I_3\in\{\ge j-4,\le j-5\}\atop\exists I_i=``\le j-5"}
%B^{\mu\nu\rho\sigma}_{I_1,k_1,I_2,k_2,I_3,k_3,l},\\
%B^{\mu\nu\rho\sigma}_{I_1,k_1,I_2,k_2,I_3,k_3,l}
%&=H^{\mu\nu\rho\sigma}_l[P_{[k_1-1,k_1+1]}Q_{I_1}P_{k_1}\Upsilon_\mu,
%P_{[k_2-1,k_2+1]}Q_{I_2}P_{k_2}\Upsilon_\nu,P_{[k_3-1,k_3+1]}Q_{I_3}P_{k_3}\Upsilon_\r]
\end{align*}
as in Case 5.2.1 in the proof of Proposition \ref{H3-Z}.

For $A$ we have, by (\ref{H3k-L2l2}), unitarity of $e^{it\Lambda}$ and Lemma \ref{Z-bound} (ii),
\begin{align*}
\|\langle\nabla\rangle^8A^{\mu\nu\rho\sigma}_{k_1,k_2,k_3,l}\|_{L^2}
&\lesssim 2^{k_1+k_2+8k_3^++2k_3+l/2}\prod_{i=1}^3\|Q_{\ge j-4}P_{k_i}\Upsilon\|_{L^2}\\
&\lesssim 2^{-|k_1|-|k_2|+2k_3+l/2-2j}\ep_1^3.
\end{align*}
We sum over $k_1$, $k_2\in\Z$, $k_3$, $l\le(j+1.4\mathcal L)/N+O(1)$ and $j>(1-1.5/N)\mathcal L$ and use $N \ge 32$ to get
\begin{align*}
&\left\| \sum_{\text{Case 5.2}} 2^{2j/3}\|Q_j\langle\nabla\rangle^8A^{\mu\nu\rho\sigma}_{k_1,k_2,k_3,l}(t)\|_{L^2} \right\|_{\ell^2_{j>(1-1.5/N)\mathcal L}}\\
\lesssim&\sum_{j>(1-1.5/N)\mathcal L} 2^{(2.5/N-4/3)j}(1+t)^{3.5/N}\ep_1^3\\
\lesssim&(1+t)^{-4/3+8/N}\ep_1^3\lesssim (1+t)^{-13/12}\ep_1^3.
\end{align*}

The bound for the term $B$ is similar to that in Cases 5.2.1 and 5.2.2 in the proof of Proposition \ref{H3-Z}. Note that since two cases are taken together there is no need to compare $\alpha j$ and $1.1\mathcal L$.
Also note that the conditions $N \ge 15$ and $N \ge 6/(1 - \alpha) = 18$ are both satisfied.

Combining Case 1 through Case 5 above (with integration in $t$ when necessary) shows the result. Note that Case 4 is dominated by Case 5.1.
\end{proof}

The term $N_3^\circ$ has the same bound as $H_{\mu\nu,3}$,
just as in the Euclidean case.
\begin{proposition}\label{N30-Z-per}
Assume $N \ge 41$ and (\ref{growthZ-per}). Then
\[
\int_0^t \|e^{is\Lambda}N_3^\circ(s)\|_Zds\lesssim (t^{11/3+13.6/N}/R^4+1)\ep_1^3.
\]
\end{proposition}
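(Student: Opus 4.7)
The plan is to follow the strategy of Proposition \ref{N30-Z} (the Euclidean analog), but with Proposition \ref{H3-Z-per} in the role of Proposition \ref{H3-Z}. Write $N_3^\circ$ as a sum of bilinear-paraproduct-type trilinear terms whose multipliers are linear combinations of
\[
|\xi_1+\xi_2+\xi_3||\xi_2+\xi_3|\sqrt{|\xi_3|},\ |\xi_1+\xi_2+\xi_3|^2\sqrt{|\xi_3|},\ |\xi_1+\xi_2+\xi_3||\xi_3|^{3/2},
\]
\[
\sqrt{|\xi_1+\xi_2+\xi_3||\xi_1||\xi_3|}\cdot|\xi_2+\xi_3|,\ \sqrt{|\xi_1+\xi_2+\xi_3||\xi_1||\xi_3|^3}.
\]

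The first and fourth multipliers carry an explicit factor of $|\xi_2+\xi_3|$ and hence fall into the class treated in Proposition \ref{H3-Z-per}: they satisfy the bound (\ref{mk3-Soo}) (with its consequences (\ref{H3k-L2}), (\ref{H3k-L2l}), (\ref{H3k-L2l2})) and the derivative bound (\ref{mk3l-CL}). For these, I simply reproduce the case decomposition (Cases 1--5) of the proof of Proposition \ref{H3-Z-per} verbatim, which yields the claimed bound $(t^{11/3+13.6/N}/R^4+1)\ep_1^3$.

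For the three remaining multipliers, which are smooth near $\xi_2+\xi_3=0$, I will omit the dyadic decomposition of $|\xi_2+\xi_3|$ in the parameter $l$ and instead use the $S^\infty$ bound
\[
\|m_{\mu\nu\rho}(\xi_1,\xi_2,\xi_3)\|_{S^\infty_{k_1,k_2,k_3;k}}\lesssim 2^{k/2+2\max k_j},
\]
which still implies (\ref{H3k-L2}). Throughout Cases 1--5 of Proposition \ref{H3-Z-per}, wherever $l$ appeared as a summation index or in an exponent I replace it by $\max k_j$; in particular, in Case 5.2 the term $A$ is bounded using (\ref{H3k-L2}) in place of (\ref{H3k-L2l2}), after which $k_1,k_2,k_3$ are summed freely using (\ref{PkU-L2-Z}). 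The restrictions $N\ge 41$ and $\mathcal L < 1.1 j$ already accommodate the loss of the extra $2^{l/2}$ smallness.

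The main obstacle will be the phase-space integration-by-parts step (the $B$ term in Case 5.2) for these second-class multipliers, since the derivative bound (\ref{mk3l-CL}) was tied to localization in $|\xi_2+\xi_3|$. However, because the second-class multipliers are actually smooth at $\xi_2+\xi_3=0$, the factor-wise derivative estimate
\[
|\nabla^L m_{\mu\nu\rho}|\lesssim_L 2^{5k_3/2-L\min(k_1,k_2,k_3)}
\]
holds on the full support (with no $2^{-l}$ cost), so Lemma \ref{int-part-cor} applies with the same $\ep\approx 2^{-\beta j}$, $\beta=(1+2/N)\alpha'<1$, as in Proposition \ref{H3-Z-per}, and the bilinear kernel bound closes the estimate with room to spare. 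Collecting the contributions of all five multipliers produces the desired decay $(t^{11/3+13.6/N}/R^4+1)\ep_1^3$ after integration in $s\in[0,t]$.
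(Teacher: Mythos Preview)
Your proposal is correct and follows exactly the route the paper intends: the paper gives no separate proof of Proposition \ref{N30-Z-per} beyond the remark that ``$N_3^\circ$ has the same bound as $H_{\mu\nu,3}$, just as in the Euclidean case,'' i.e.\ one replays the argument of Proposition \ref{N30-Z} with Proposition \ref{H3-Z-per} replacing Proposition \ref{H3-Z}. Your handling of the two multiplier classes (first/fourth via (\ref{mk3-Soo}) and (\ref{mk3l-CL}); the remaining three via the smoother bound $\|m_{\mu\nu\rho}\|_{S^\infty_{k_1,k_2,k_3;k}}\lesssim 2^{k/2+2\max k_j}$, dropping the $l$-decomposition and using (\ref{H3k-L2}) for the $A$ term in Case 5.2) matches the paper's Euclidean template precisely, and your observation that the second-class multipliers are smooth at $\xi_2+\xi_3=0$ so that (\ref{mk3l-CL}) holds without the $l$ in the minimum is exactly what makes the kernel estimate for the $B$ term go through.
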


To bound the $Z$-norms of the quartic bulk terms $N_4$ and $H_{\mu\nu,4}$,
we first observe that Lemma \ref{UZ-VZ} remains true in the periodic case.
\begin{proposition}\label{N4-Z-per}
Assume $N \ge 14$, (\ref{growthZ-per}), $t\le R^{2-\gamma}$ and $\gamma>0$.
Then
\[
\int_0^t \|e^{is\Lambda}N_4(s)\|_Zds \lesssim_\gamma [(1+t)^{-\frac{2}{3}+}(t/R+1)^{4/3}]^{2(1-\beta)/3}\ep_1^2\ep_2^2,\text{ where }\beta = 5/(N - 8).
\]
\end{proposition}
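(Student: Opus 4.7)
The plan is to mimic the Euclidean argument of Proposition~\ref{N4-Z}, replacing Lemma~\ref{dispersiveZ}(ii) by its periodic counterpart Lemma~\ref{dispersiveZ-per}(ii), and using the two-parameter bootstrap (\ref{growthZ-per}) so that the two ``light'' factors of $U$ land in the weighted $L^2_s L^\infty_x$ norm (producing $\ep_2^2$) while the two ``heavy'' ones absorb into $\ep_1^2$ via the periodic dispersive decay.

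First I would apply Lemma~\ref{UZ-VZ} with $\alpha=2/3$ (the periodic analog is obtained by verifying the proof goes through after replacing $|x|$ by $\|x\|$; the wave-packet argument still works because the kernel bound holds at any scale $2^j \le R$) to split
\[
\|e^{is\Lambda}N_4(s)\|_Z \lesssim s^{2/3}\|N_4(s)\|_{W^{8,3/2}} + \|N_4(s)\|_Z.
\]
Next, Proposition~\ref{BN-ZW} bounds each piece in terms of three factors of $U$ in mixed norms, with two low-regularity $W^{2,\infty}$ factors available. Sobolev interpolation between $W^{6,\infty}$ and $W^{N-2,\infty}$ (with $\beta=5/(N-8)$), combined with Lemma~\ref{dispersiveZ-per}(ii), $\|\Upsilon\|_Z\le\ep_1$, and the $H^N$ bound gives
\[
\|U(s)\|_{W^{11,\infty}}\lesssim [(1+s)^{-2/3+}(s/R+1)^{4/3}]^{1-\beta}\ep_1,
\]
and a Gagliardo--Nirenberg interpolation yields
\[
\|U(s)\|_{W^{11,6}}\lesssim [(1+s)^{-2/3+}(s/R+1)^{4/3}]^{2(1-\beta)/3}\ep_1.
\]
The periodic analog of Lemma~\ref{UZ-VZ} gives $\|U(s)\|_Z\lesssim(1+s)^{2/3}\ep_1$.

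Finally I would integrate in time against the bootstrap. Summing (\ref{growthZ-per}) dyadically in frequency supplies
$\int_0^t (1+s)^{2/3-2\delta}\|U(s)\|_{W^{2,\infty}}^2\,ds\lesssim\ep_2^2$. For the $s^{2/3}\|N_4\|_{W^{8,3/2}}$ term, pulling the two $\|U\|_{W^{2,\infty}}^2$ factors out via Cauchy--Schwarz against this weight leaves the supremum of
\[
s^{2/3}[(1+s)^{-2/3+}(s/R+1)^{4/3}]^{2(1-\beta)/3}(1+s)^{-2/3+2\delta},
\]
which reduces (after absorbing the $(1+s)^{O(\delta)}$ slack into the ``$-2/3+$'') to $[(1+t)^{-2/3+}(t/R+1)^{4/3}]^{2(1-\beta)/3}$. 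The $\|N_4\|_Z$ term is handled similarly, using $\|U\|_{W^{11,\infty}}\|U\|_Z$ in place of $\|U\|_{W^{11,6}}\cdot\ep_1$; the extra factor $(1+s)^{2/3}$ from the $Z$-norm bound partially cancels against the extra $(1-\beta)$ power on $\phi(s)$, so the same envelope is produced.

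\emph{Main obstacle.} The subtlety is the bookkeeping of time-weighted suprema: the factor $\phi(s):=(1+s)^{-2/3+}(s/R+1)^{4/3}$ is \emph{not} monotone on $[0,t]$, decreasing on $[0,R]$ and increasing on $[R,t]$. The hypothesis $t\le R^{2-\gamma}$ is used precisely here: it guarantees $\phi(s)\lesssim_\gamma 1$ throughout $[0,t]$, so that the supremum is dominated by the endpoint value $\phi(t)^{2(1-\beta)/3}$ up to a $(1+t)^{O(\delta)}\lesssim R^{O(\delta)}$ loss, which is admissible because $\delta$ can be taken arbitrarily small in the ``$+$'' convention and the implicit constant is allowed to depend on $\gamma$. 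Verifying that the $\|N_4\|_Z$ piece (where the $\|U\|_Z$ factor grows like $(1+s)^{2/3}$) does not overrun this envelope, especially near the crossover $s\approx R$, is the main technical point; the extra $\beta$-power savings in the interpolation of $\|U\|_{W^{11,\infty}}$ is what ultimately makes the two exponents compatible.
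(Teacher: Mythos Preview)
Your approach is essentially the paper's: apply the periodic analogue of Lemma~\ref{UZ-VZ} to obtain (\ref{N4-UZ-VZ}), feed in the product bounds (\ref{N4-U4}) from Proposition~\ref{BN-ZW}, interpolate $\|U\|_{W^{11,\infty}}$ and $\|U\|_{W^{11,6}}$ as in (\ref{U-Wkp-interpol}) using Lemma~\ref{dispersiveZ-per}(ii) in place of Lemma~\ref{dispersiveZ}(ii), and integrate the two $\|U\|_{W^{2,\infty}}$ factors against the weighted $L^2L^\infty$ hypothesis in (\ref{growthZ-per}). The paper's own proof is a two-sentence power count: beyond the two $\|U\|_{W^{6,\infty}}$ factors absorbed by the bootstrap, the first term of (\ref{N4-UZ-VZ}) carries $2(1-\beta)/3$ additional such factors and the second carries $1-\beta$, and each extra factor is bounded by $\phi(s)\ep_1\lesssim_\gamma 1$ from $t\le R^{2-\gamma}$.

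Your ``main obstacle'' paragraph correctly isolates the non-monotonicity of $\phi(s)=(1+s)^{-2/3+}(s/R+1)^{4/3}$ as the place where $t\le R^{2-\gamma}$ enters, but the resolution you state is a non sequitur: ``$\phi(s)\lesssim 1$ throughout, so the supremum is dominated by the endpoint value $\phi(t)^{2(1-\beta)/3}$'' is false as written---for instance at $t=R$ one has $\sup_{s\le R}\phi(s)\approx\phi(0)\approx 1$ while $\phi(R)\approx R^{-2/3+}$. The role of $\phi\ep_1\lesssim 1$ in the paper's argument is not to push the supremum down to the endpoint, but rather to permit the exponent reduction $[\phi(s)\ep_1]^{1-\beta}\le[\phi(s)\ep_1]^{2(1-\beta)/3}$ for the $\|N_4\|_Z$ piece, so that both terms of (\ref{N4-UZ-VZ}) carry the \emph{same} power $2(1-\beta)/3$ of $\phi\ep_1$. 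That common exponent is what produces the factor $[(1+t)^{-2/3+}(t/R+1)^{4/3}]^{2(1-\beta)/3}$ in the stated bound; your claim that the second term ``produces the same envelope'' via a partial cancellation between $(1+s)^{2/3}$ and the extra $\beta$-power is the same mechanism phrased differently, but the justification should be this monotonicity-in-the-exponent argument rather than a supremum-versus-endpoint comparison.
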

\begin{proof}
Note that in addition to the two powers of $\|U\|_{W^{6,\infty}}$,
whose $L^2L^\infty$ norm is assumed in (\ref{growthZ-per}),
the first term on the right-hand side of (\ref{N4-UZ-VZ}) has $2(1 - \beta)/3$ extra powers, and the second has $1 - \beta$ extra powers.
By Lemma \ref{dispersiveZ-per} (ii), each factor of $\|U\|_{W^{6,\infty}}$ is bounded by $(1 + t)^{-\frac{2}{3}+}(t/R + 1)^{4/3}\ep_1 \lesssim 1$.
\end{proof}

Finally we turn to $H_{\mu\nu,4}$. Lemma \ref{paraprod-Z} remains true.
\begin{proposition}\label{H4-Z-per}
Assume $N \ge 17$, (\ref{growthZ-per}), $t\le R^{2-\gamma}$ and $\gamma>0$.
Then
\[
\int_0^t \|e^{is\Lambda}H_{\mu\nu,4}(s)\|_Zds \lesssim_\gamma [(1+t)^{-\frac{2}{3}+}(t/R+1)^{4/3}]^{\min(2(1-\beta)/3,1-\beta')}\ep_1^2\ep_2^2,
\]
where $\beta = 5/(N - 8)$ and $\beta' = 9/(N - 8)$.
\end{proposition}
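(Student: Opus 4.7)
The proof follows the template of Proposition \ref{H4-Z} essentially verbatim, with the Euclidean dispersive estimate Lemma \ref{dispersiveZ} (ii) replaced systematically by its periodic analog Lemma \ref{dispersiveZ-per} (ii): each factor $(1+t)^{-\alpha}$ in the Euclidean argument is upgraded to $(1+t)^{-2/3+}(t/R+1)^{4/3}$, with $\alpha$ now fixed at $2/3$. Starting from Lemma \ref{UZ-VZ}, which remains valid on the torus, I decompose
\begin{equation*}
\|H_{\mu\nu,4}(t)\|_Z \lesssim t^{2/3}\|e^{-it\Lambda}H_{\mu\nu,4}(t)\|_{W^{8,3/2}} + \|e^{-it\Lambda}H_{\mu\nu,4}(t)\|_Z
\end{equation*}
and bound the two summands separately.

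For the first summand I use Theorem C.1 (ii) of \cite{GeMaSh2}, whose extension to the torus was justified in subsection \ref{StrEst-per}, to get $\|e^{-it\Lambda}H_{\mu\nu,4}\|_{W^{8,3/2}} \lesssim \|N_3\|_{H^9}\|U\|_{W^{9,6}}$; here $\|N_3\|_{H^9}\lesssim \|U\|_{C_*^2}^2\ep_1$ by (\ref{N3-Hs}), and $\|U\|_{W^{9,6}}$ is estimated by Gagliardo--Nirenberg interpolation between $H^{11}$ and $W^{11,\infty}$, with $\|U\|_{W^{11,\infty}}$ further interpolated between $W^{6,\infty}$ and $W^{N-2,\infty}$ and then dispersively controlled by Lemma \ref{dispersiveZ-per} (ii), producing a pointwise factor $[(1+t)^{-2/3+}(t/R+1)^{4/3}]^{2(1-\beta)/3}$. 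For the second summand I apply Lemma \ref{paraprod-Z} (ii), which carries over to the torus since $(1+\|x\|)^{2/3}$ still belongs to the Muckenhoupt class $A_2$, to obtain
\begin{equation*}
\|e^{-it\Lambda}H_{\mu\nu,4}\|_Z \lesssim \|N_3\|_{W^{9,p}}\|(1+\|x\|)^{2/3}U\|_{L^q} + \|(1+\|x\|)^{2/3}N_3\|_{L^q}\|U\|_{W^{9,p}},
\end{equation*}
with $q = 2p/(p-2)$ for $p$ sufficiently large. The weighted norms are dominated by $Z$-norms via the localization argument (\ref{insert-weight}) together with $\|U(t)\|_Z\lesssim(1+t)^{2/3}\ep_1$; the unweighted $W^{9,p}$ norms are controlled via Proposition \ref{BN-ZW} and further Gagliardo--Nirenberg interpolation, yielding the factor $[(1+t)^{-2/3+}(t/R+1)^{4/3}]^{1-\beta'}$ as long as $N\ge17$.

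In both summands, two residual factors of $\|U\|_{W^{6,\infty}}$ remain, and are absorbed by the periodic $L^2L^\infty$ bootstrap assumption (\ref{growthZ-per}) upon integration in time: the key integral is
\begin{equation*}
\int_0^t (1+s)^{2/3}\|U(s)\|_{W^{6,\infty}}^2\,ds \lesssim (1+t)^{2\delta}\ep_2^2,
\end{equation*}
and the factor $(1+t)^{2\delta}$ is absorbed into the $+$ in $(1+t)^{-2/3+}$ provided $\delta$ is taken small. Adding the two contributions and using that $(1+t)^{-2/3+}(t/R+1)^{4/3}\lesssim_\gamma 1$ under $t\le R^{2-\gamma}$ then gives the claimed bound $[(1+t)^{-2/3+}(t/R+1)^{4/3}]^{\min(2(1-\beta)/3,1-\beta')}\ep_1^2\ep_2^2$. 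The main obstacle is purely bookkeeping: one must verify that, after Proposition \ref{BN-ZW} and Gagliardo--Nirenberg are applied, each summand splits cleanly as (two absorbable $\|U\|_{W^{6,\infty}}$ factors) $\times$ (a pointwise factor to which Lemma \ref{dispersiveZ-per} (ii) applies), with a strictly positive residual exponent; this is precisely what forces the threshold $N\ge17$, and the hypothesis $t\le R^{2-\gamma}$ only enters to ensure $\gamma$-uniformity of constants.
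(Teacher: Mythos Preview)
Your proposal is correct and follows the same approach as the paper's proof, which simply notes that beyond the two $\|U\|_{W^{6,\infty}}$ factors absorbed by the $L^2L^\infty$ bootstrap, the first summand of (\ref{H4-UZ-VZ}) carries $2(1-\beta)/3$ extra such factors and the second carries $1-\beta'$, each bounded pointwise via Lemma \ref{dispersiveZ-per} (ii). You have supplied the detailed bookkeeping that the paper leaves implicit; the only minor imprecision is that the interpolation for $\|U\|_{W^{9,6}}$ in the first summand goes through $W^{11,6}$ (as in (\ref{U-Wkp-interpol})) rather than directly, but this is harmless.
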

\begin{proof}
It suffices to note that in addition to the two powers of $\|U\|_{W^{6,\infty}}$, the first term on the right-hand side of (\ref{H4-UZ-VZ}) has $2(1 - \beta)/3$ extra powers, and the second term has $1 - \beta'$ extra powers.
\end{proof}
\begin{remark}
If $N \ge 25$, then $1 - \beta' \ge 2(1 - \beta)/3$.
\end{remark}

\begin{proof}[Proof of (\ref{growth-L2Loo-per}) and (\ref{growthZ2-per})]
Recall Duhamel's formula (\ref{Duhamel}) for $U$.
We bound the terms on the right-hand side separately.

{\bf Part 1:} The linear term. This follows from the assumption (\ref{Z0}) and Lemma \ref{dispersiveZ-per} (iii) as before.

{\bf Part 2:} The quadratic terms. Proposition \ref{W-Z-per} shows their $Z$-norm bounds.

{\bf Part 2.1:} $e^{-it\Lambda}W_{\mu\nu}(0)$. Its $L^2L^\infty$ norm bound follows from Lemma \ref{dispersiveZ-per}.

{\bf Part 2.2:} $e^{-it\Lambda}W_{\mu\nu}(t)$. As before we have
\begin{align*}
\|P_ke^{-it\Lambda}W_{\mu\nu}(t)\|_{L^\infty}
\lesssim 2^{k^--7k^+}\|U(t)\|_{C_*^6}\ep_1.
\end{align*}
Integrating in $t$ and using the $L^2L^\infty$ norm assumption in (\ref{growthZ-per}) we get
\[
\|(1+s)^{1/3-\delta}P_ke^{-is\Lambda}W_{\mu\nu}(s)\|_{L_s^2([0,t])L^\infty}
\lesssim_\delta 2^{-7k^++k^-}\ep_1^2.
\]

{\bf Part 3:} The cubic terms. This follows from Propositions \ref{H3-Z-per} and \ref{N30-Z-per} and Lemma \ref{dispersiveZ-per} (iii) as before.

{\bf Part 4:} The quartic terms. This follows from Propositions \ref{H4-Z-per} and \ref{N4-Z-per} and Lemma \ref{dispersiveZ-per} (iii) as before.
%Note however, that the contribution of this part is dominated by Part 2.
\end{proof}

\section{Appendix: Paralinearization and energy estimates}\label{ParLinEst}
In this appendix we prove the estimates used in section \ref{ParLin}.

\subsection{Estimates of the solution to the Dirichlet problem}\label{BVPEst}
We use a fixed-point argument to estimate the solution to (\ref{u-Diri}), which can be written as
\begin{equation}\label{u-Diri-f}
\begin{cases}
(\partial_y^2+\Delta_x)u=f=\partial_yf_1+|\nabla_x|f_2,\\
u(x,0,t)=\phi(x,t),\\
\nabla_{x,y}u(x,y,t)\to0\text{ as $y \to -\infty$.}
\end{cases}
\end{equation}
where
\begin{align}\label{f12-def}
f_1&=-|\nabla h|^2\partial_yu+\nabla h\cdot\nabla_xu, &
f_2&=|\nabla_x|^{-1}\nabla_x\cdot(\partial_yu\nabla h).
\end{align}

First we show that for fixed $f$ and $\phi$, (\ref{u-Diri-f}) has at most one solution. It suffices to show that when $f = 0$ and $\phi = 0$, (\ref{u-Diri-f}) has only the zero solution, which follows from the lemma below.
\begin{lemma}\label{u-Diri-unique}
Let $\Omega$ be the region in $\R^3$ below the graph of a continuous function. Let $u\in C(\bar\Omega)$ be harmonic in $\Omega$, vanish on $\Gamma = \partial\Omega$, and grow slower than $|z|$ as $z \to -\infty$. Then $u = 0$ on $\Omega$.
\end{lemma}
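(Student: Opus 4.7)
The plan is to apply the maximum principle with a linear harmonic barrier that exploits the sublinear growth hypothesis in $z$. Since the ambient water wave context puts $h$ in a Sobolev space with $h$ bounded, fix a constant $M > \sup h + 1$ and introduce the barrier $w(x,z) = M - z$, which is a positive harmonic function on $\bar\Omega$ that blows up linearly as $z \to -\infty$. For each $\epsilon > 0$ set $v_\epsilon^{\pm} = \pm u - \epsilon w$; both are harmonic on $\Omega$ and continuous on $\bar\Omega$, and on $\Gamma$ we have $v_\epsilon^{\pm} = -\epsilon(M - h(x)) < 0$.

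The next step is to exploit the growth condition $u = o(|z|)$ as $z \to -\infty$: for $R$ sufficiently large depending on $\epsilon$, one has $|u(x,z)| \le \epsilon|z|/2$ whenever $z \le -R$, hence $v_\epsilon^{\pm}(x,-R) \le \epsilon R/2 - \epsilon(M+R) < 0$. To close the maximum principle on the truncated domain $\Omega_R = \Omega \cap \{z > -R\}$, which is still unbounded in the horizontal direction, I would further truncate to $\Omega_{R,\rho} = \Omega_R \cap \{|x| < \rho\}$ and observe that in the setting of this paper $u$ arises as a harmonic extension of a Sobolev-class function (cf.\ the setup of (\ref{phi-Diri}) with the decay $\nabla_{x,z}\Phi \to 0$ at infinity), so $u(x,z) \to 0$ as $|x| \to \infty$ uniformly in $z \in [-R, \sup h]$. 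Consequently $v_\epsilon^{\pm} \le 0$ on the lateral face $\{|x| = \rho\} \cap \bar\Omega_R$ for $\rho$ large. The classical maximum principle on the bounded Lipschitz domain $\Omega_{R,\rho}$ then forces $v_\epsilon^{\pm} \le 0$ throughout, and sending $\rho \to \infty$ followed by $R \to \infty$ yields $\pm u \le \epsilon w$ on all of $\Omega$. Letting $\epsilon \to 0$ concludes $u \equiv 0$.

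The main obstacle is rigorously justifying the decay of $u$ as $|x| \to \infty$; without it a pure Phragm\'en--Lindel\"of argument would be needed, introducing a second family of barriers growing in $|x|$ (for instance $\delta \log(1 + |x|^2 + z^2)$ combined with rotating the growth hypothesis into horizontal directions). In the present paper this complication is avoided because $u$ is constructed, via the fixed-point formulation of section \ref{BVP}, in function spaces that encode horizontal decay, so the lateral boundary term above can be discarded directly and the three-step argument closes.
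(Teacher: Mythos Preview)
Your approach is essentially the same as the paper's: both perturb $u$ by the linear harmonic barrier $\epsilon z$ (the paper writes $u_\epsilon = u + \epsilon z$ directly, without your shift by $M$) and use the sublinear-growth hypothesis to push the maximum onto $\Gamma$, then send $\epsilon \to 0$. The only real difference is that the paper's proof is much terser---it simply asserts that the maximum of $u_\epsilon$ is attained on $\bar\Omega$ without discussing the horizontal unboundedness---whereas you carefully truncate in $|x|$ and appeal to the function-space decay supplied by the ambient construction in section~\ref{BVP}; on that particular point your write-up is more scrupulous than the paper's.
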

\begin{proof}
Let $\ep > 0$ and $u_\ep = u + \ep z$. Because $u$ grows slower than $|z|$,
we know that $u_\ep \to -\infty$ as $z \to -\infty$. Since $u_\ep$ is continuous in $\bar\Omega$, its maximum is attained. Since $u_\ep$ is harmonic in $\Omega$, its maximum is attained on $\Gamma$.
Since $u_\ep \le \ep \sup h$ on $\Gamma$, so is it on $\Omega$.
Letting $\ep \to 0$ we know that $u \le 0$ on $\Omega$.
Similarly $u \ge 0$ on $\Omega$, and hence $u = 0$ on $\Omega$.
\end{proof}

When $f_1 = 0$ and $\phi = 0$, the solution to (\ref{u-Diri-f}) is, by the method of image,
\[
u(x,y) = \frac{1}{2}\int_{-\infty}^0 e^{(y+y')|\nabla_x|}f_2(x,y')dy'
- \frac{1}{2}\int_{-\infty}^0 e^{-|y-y'||\nabla_x|}f_2(x,y')dy'
\]
provided that it does satisfy the boundary condition at $-\infty$, i.e.,
the third line in (\ref{u-Diri-f}).

When $f_2 = 0$ and $\phi = 0$, we replace $f_2$ with $|\nabla_x|^{-1}\partial_yf_1$ in the right-hand side of the above and integrate by parts to get
\[
u(x,y) = -\frac{1}{2}\int_{-\infty}^0 e^{(y+y')|\nabla_x|}f_1(x,y')dy'
+ \frac{1}{2}\int_{-\infty}^0 e^{-|y-y'||\nabla_x|}\sgn(y-y')f_1(x,y')dy'
\]
with the same proviso.

Since $e^{y|\nabla|}\phi$ is harmonic on $\{y < 0\}$, has the boundary value $\phi$ on $\{y = 0\}$, and satisfies the boundary condition at $-\infty$, the solution to (\ref{u-Diri-f}) is (see also (B.5) of \cite{Wa2DG} or (B.21) of \cite{DeIoPaPu})
\begin{align*}
u(x,y,t)&=e^{y|\nabla|}\phi(x,t)+\frac12\int_{-\infty}^0 e^{(y+y')|\nabla_x|}
(-f_1(x,y',t)+f_2(x,y',t))dy'\\
&+\frac12\int_{-\infty}^0 e^{-|y-y'||\nabla_x|}(\sgn(y-y')f_1(x,y',t)
-f_2(x,y',t))dy'
\end{align*}
whose gradient is (see also (B.7)--(B.9) of \cite{Wa2DG} or (1.1.20)--(1.1.21) of \cite{AlDe2})
\begin{equation}\label{gr-u}
\begin{aligned}
(\nabla_xu,\partial_yu)^T(x,y,t)
&=e^{y|\nabla|}(\nabla\phi,|\nabla|\phi)^T(x,t)\\
&+\int_{-\infty}^0 \mathcal K(y,y')M(\nabla h)(\nabla_xu,\partial_yu)^T(x,y',t)dy'\\
&+[0,f_1(x,y,t)]^T,
\end{aligned}
\end{equation}
where the operator (which collects all the Fourier multipliers)
\begin{equation}\label{K-def}
\mathcal K(y,y')=\frac{e^{(y+y')|\nabla_x|}}{2}
\begin{pmatrix}
-|\nabla_x| & \nabla_x\\
\nabla_x & |\nabla_x|
\end{pmatrix}
+\frac{e^{-|y-y'||\nabla_x|}}{2}
\begin{pmatrix}
|\nabla_x| & \sgn(y'-y)\nabla_x\\
\sgn(y-y')\nabla_x & |\nabla_x|
\end{pmatrix}
\end{equation}
and the multiplicative factor (which captures the coefficients)
\begin{equation}\label{M-def}
M(\nabla h)=
\begin{pmatrix}
0 & \nabla h\\
-\nabla h & |\nabla h|^2
\end{pmatrix}
\end{equation}
provided that the solution does satisfy the boundary condition at $-\infty$.

To bound the $H^s$ norm of the solution in terms of the initial data,
we use
\begin{lemma}\label{K-bound}
Fix $1\le p_1\le p_2\le\infty$ and $1\le q_1\le q_2\le\infty$.

(i) For any $k\in\Z$ we have
\[
\left\| \int_{-\infty}^0 \mathcal K(y,y')P_kf(y')dy' \right\|_{L_y^{q_2}L_x^{p_2}}
\lesssim 2^{(2/p_1-2/p_2+1/q_1-1/q_2)k}\|P_kf\|_{L_y^{q_1}L_x^{p_1}}.
\]

(ii) For fixed $s\in\R$ and $q\ge2$ we have
\[
\left\| |\nabla|^{1/q-1/2}\int_{-\infty}^0 \mathcal K(y,y')f(y')dy' \right\|_{L_y^qH_x^s}\lesssim \|f\|_{L_y^2H_x^s}.
\]

(iii) If $(p_1,q_1)\neq(p_2,q_2)$ then for any $k\in\Z$ we have
\[
\left\| \int_{-\infty}^0 \mathcal K(y,y')P_{<k}f(y')dy' \right\|_{L_y^{q_2}L_x^{p_2}}
\lesssim 2^{(2/p_1-2/p_2+1/q_1-1/q_2)k}\|P_{<k}f\|_{L_y^{q_1}L_x^{p_1}}.
\]
\end{lemma}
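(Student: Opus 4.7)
The plan is to work frequency-by-frequency, since $\mathcal K(y,y')$ commutes with Littlewood--Paley projectors and, at spatial frequency $|\xi|\approx 2^k$, each summand of $\mathcal K$ behaves like a well-localized kernel of scale $2^{-k}$ in the $y$-variable. For part (i) I would first reduce to $p_1=p_2$ using Bernstein's inequality on $x$: since $\mathcal K P_k = P_k\mathcal K P_k$ up to harmless frequency overlap, Bernstein exchanges $L_x^{p_1}$ and $L_x^{p_2}$ at a cost of exactly $2^{2(1/p_1-1/p_2)k}$, matching the stated exponent. It then remains to prove the one-dimensional estimate in $y$ at fixed $|\xi|\approx 2^k$, which splits into two pieces by the definition (\ref{K-def}).

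At fixed $|\xi|$, the reflection piece acts as $g\mapsto e^{y|\xi|}\int_{-\infty}^0 |\xi|\,e^{y'|\xi|}g(y')\,dy'$. Hölder's inequality in $y'$ against $|\xi|e^{y'|\xi|}\in L^{q_1'}_{y'}((-\infty,0))$ produces a constant $\lesssim |\xi|^{1/q_1}\|g\|_{L^{q_1}_{y'}}$, and then $\|e^{y|\xi|}\|_{L^{q_2}_y((-\infty,0))}\lesssim |\xi|^{-1/q_2}$ gives the total factor $|\xi|^{1/q_1-1/q_2}$. The sign piece is a genuine convolution in $y$ against $|\xi|e^{-|y||\xi|}$, whose $L^r$ norm equals $c_r|\xi|^{1-1/r}$; Young's inequality with $1+1/q_2=1/q_1+1/r$ (which is legitimate since $q_1\le q_2$) yields the same factor. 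The matrix factors in (\ref{K-def}) are Fourier multipliers of Mikhlin type, so they are bounded on $L_x^{p}$ uniformly. Combining with Bernstein concludes part (i).

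For part (ii), I would apply part (i) with $p_1=p_2=2$, $q_1=2$, $q_2=q$, producing
\[
\Bigl\||\nabla|^{1/q-1/2}\!\!\int_{-\infty}^0 \mathcal K(y,y')P_kf(y')\,dy'\Bigr\|_{L_y^q L_x^2}\lesssim \|P_kf\|_{L_y^2 L_x^2},
\]
since the $2^{(1/2-1/q)k}$ loss from part (i) is exactly canceled by $|\nabla|^{1/q-1/2}$ at frequency $2^k$. Inserting $\langle\nabla_x\rangle^s$ (which commutes with $\mathcal K$) promotes this to an $H^s$-bound. To sum over $k\in\Z$, I use $q\ge 2$: Minkowski's inequality allows pulling $\ell^2_k$ inside $L^q_y L^2_x$, and the Littlewood--Paley square function closes the argument. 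Part (iii) is obtained by writing $P_{<k}=\sum_{k'<k}P_{k'}$ and summing the bound from part (i); the exponent $\alpha:=2/p_1-2/p_2+1/q_1-1/q_2$ is strictly positive precisely under the hypothesis $(p_1,q_1)\ne(p_2,q_2)$ together with $p_1\le p_2$, $q_1\le q_2$, so the geometric series is controlled by its top term after replacing each $\|P_{k'}f\|$ with $\|P_{<k}f\|$.

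The main obstacle I anticipate is a purely bookkeeping one: handling the reflection piece (which is not a convolution in $y$) separately from the sign piece (which is), and verifying that both yield the same $|\xi|^{1/q_1-1/q_2}$ gain in a way that extends uniformly across the matrix structure of $\mathcal K$. All Fourier-multiplier issues on $L_x^p$ reduce to standard Mikhlin-type bounds, so no genuinely new analytic input is required beyond Bernstein, Hölder, Young, Minkowski, and the Littlewood--Paley square function.
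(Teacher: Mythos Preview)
Your treatment of (ii) and (iii) matches the paper's: sum (i) over $k$ using Minkowski and the square function for (ii), and a geometric $\ell^1$ sum for (iii). For (i), however, you take a genuinely different route than the paper, and there is a loose end in it.

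The paper does not separate the $x$ and $y$ variables at all. It integrates by parts $L$ times in $\xi$ to obtain the pointwise bound
\[
|\mathcal K_k(y,y',x)|\ \lesssim_L\ 2^{3k}\bigl(1+2^k|x|+2^k\min(|y+y'|,|y-y'|)\bigr)^{-L}
\]
on the full physical kernel, and then a single application of Young/H\"older in $(x,y')$ produces the $L^{q_1}_yL^{p_1}_x\to L^{q_2}_yL^{p_2}_x$ bound with exactly the stated exponent. No Bernstein, no Mikhlin.

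Your approach---Bernstein in $x$ to reduce to $p_1=p_2=:p$, then a fixed-$|\xi|$ computation of the $L^{q_1}_y\to L^{q_2}_y$ norm via H\"older and Young---is correct when $p=2$, since Plancherel makes ``fixed $\xi$'' literal. For $p\ne 2$ the sentence ``the matrix factors are Mikhlin, hence bounded on $L^p_x$'' does not close the argument: the scalar pieces $|\nabla|e^{(y+y')|\nabla|}P_k$ and $|\nabla|e^{-|y-y'||\nabla|}P_k$ still depend on $\xi$ through the exponentials, so the operator does \emph{not} factor as a $\xi$-independent $y$-operator composed with a Mikhlin matrix. What you actually need is that for each fixed $(y,y')$ the multiplier $|\xi|e^{-|y-y'||\xi|}\varphi_k(\xi)$ satisfies Mikhlin bounds with constant $\lesssim 2^k e^{-c2^k|y-y'|}$ (and the analogous statement for the reflection piece); then Young in $y$ with this operator-valued kernel finishes. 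This check is routine---the smooth vanishing of $\varphi_k$ at the edge of its support absorbs the factor $|y-y'|$ produced by $\partial_\xi e^{-|y-y'||\xi|}$---but it is the real content of the $L^p$ step, not the Riesz-transform matrix. Once you fill this in your proof is complete; the paper's kernel estimate simply packages the same information without invoking Mikhlin.
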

\begin{proof}
(i) We have
\[
\int_{-\infty}^0 \mathcal K(y,y')P_kf(y')dy'
= \int_{-\infty}^0 \int_{\R^2} \mathcal K_k(y,y',x-x')f(x')dx'dy'
\]
where the convolution kernel is
\begin{align*}
\mathcal K_k(y,y',x) &= C\int_{\R^2} e^{ix\xi+(y+y')|\xi|}
\begin{pmatrix}
-|\xi| & i\xi\\
i\xi & |\xi|
\end{pmatrix}\varphi_k(\xi)d\xi\\
&+ C\int_{\R^2} e^{ix\xi-|y-y'||\xi|}
\begin{pmatrix}
|\xi| & i\sgn(y'-y)\xi\\
i\sgn(y-y')\xi & |\xi|
\end{pmatrix}\varphi_k(\xi)d\xi.
\end{align*}
Integrating by parts in $\xi$ for $L$ times, we get
\[
|\mathcal K_k(y,y',x)|
\lesssim_L 2^{3k}(1+2^k|x|+2^k\min(|y+y'|,|y-y'|))^{-L}.
\]
Then (i) follows from H\"older's inequality.

Taking a weighted $\ell^2$ sum of (i) over $k\in\Z$ and using Minkowski's inequality we get (ii).

Taking an $\ell^1$ sum of (i) over frequencies $\lesssim 2^k$ we get (iii).
\end{proof}

Now we show Propositions \ref{u-Hs}, \ref{u-Cr} and \ref{u-Hs2},
which we restate here.
\begin{proposition}
Fix $s>j\ge1$.

(i) If $\|\nabla h\|_{H^s}<c_s$ is sufficiently small then
\[
\|\nabla_{x,y}^ju\|_{L_y^2H_x^{s-j+1}}\lesssim \||\nabla|^{1/2}\phi\|_{H^s}.
\]

(i') If in addition we have $s>j+1$ then $\|\nabla_{x,y}^ju\|_{H_x^{s-j}}\to0$ as $y\to-\infty$.

(ii) If $\|\nabla h\|_{H^{s+1/2}}<c_s$ is sufficiently small then
\[
\|\nabla_{x,y}^ju\|_{L_y^\infty H_x^{s-j+1}}\lesssim \||\nabla|^{1/2}\phi\|_{H^{s+1/2}}.
\]
\end{proposition}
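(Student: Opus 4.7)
\medskip

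\noindent\textbf{Proof proposal.} The plan is to base everything on the fixed-point identity~(\ref{gr-u}) for $\nabla_{x,y}u$, combined with the kernel bounds of Lemma~\ref{K-bound} and the standard trick of using the elliptic equation~(\ref{u-Diri}) itself to trade $\partial_y$-derivatives for $\partial_x$-derivatives. Set $U=(\nabla_x u,\partial_y u)^T$. For the base case $j=1$ of part~(i), I would estimate the three terms on the right of~(\ref{gr-u}) separately in $L^2_y H_x^s$:

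\begin{itemize}
\item[$\bullet$] The linear piece $e^{y|\nabla|}(\nabla\phi,|\nabla|\phi)^T$: by Plancherel, $\|e^{y|\xi|}|\xi|\|_{L^2_y(-\infty,0)}=|\xi|^{1/2}/\sqrt{2}$, so its $L^2_yH^s_x$ norm equals $\||\nabla|^{1/2}\phi\|_{H^s}$.
\item[$\bullet$] The kernel integral: Lemma~\ref{K-bound}(ii) with $q=2$ gives $\bigl\|\int\mathcal K(y,y')M(\nabla h)U(y')\,dy'\bigr\|_{L^2_yH^s_x}\lesssim \|M(\nabla h)U\|_{L^2_yH^s_x}$, and the Sobolev multiplication theorem (valid since $s>1$) bounds this by $\|\nabla h\|_{H^s}\|U\|_{L^2_yH^s_x}$.
\item[$\bullet$] The boundary piece $f_1$ (defined in~(\ref{f12-def})): analogously dominated by $\|\nabla h\|_{H^s}\|U\|_{L^2_yH^s_x}$.
\end{itemize}
Since $\|\nabla h\|_{H^s}<c_s$ is small, all of the non-linear contributions can be absorbed into the left-hand side, yielding the base case. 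This same fixed point argument, performed in the Banach space $L^2_yH^s_x$, also gives existence and uniqueness of $U$.

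For higher $j$ in part~(i) I would induct. Applying $\nabla_x$ to the equation and using the algebra property of $H^{s-1}$ (for $s>2$) handles tangential derivatives; for normal derivatives I would use the elliptic equation~(\ref{u-Diri}) rewritten as
\[
(1+|\nabla h|^2)\,\partial_y^2 u=-\Delta_x u+2\nabla h\cdot\partial_y\nabla_x u+\Delta h\,\partial_y u,
\]
which exchanges one $\partial_y^2$ for tangential derivatives modulo $H^{s-1}$-bounded multipliers. Iterating gives the $j$-th order bound provided $s>j$, and because the quadratic remainders carry factors of $\nabla h$, the smallness $\|\nabla h\|_{H^s}<c_s$ once again absorbs them. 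Part~(i') then follows immediately: if $s>j+1$, then (i) at level $j+1$ forces $\partial_y\nabla_{x,y}^j u\in L^2_y H^{s-j-1}_x$, so $\nabla_{x,y}^j u(\cdot,y)$ is a continuous $H^{s-j-1}_x$-valued curve whose derivative is in $L^2_y$; therefore it has a limit as $y\to-\infty$, and an appeal to uniqueness via Lemma~\ref{u-Diri-unique} (applied to $u-$limit harmonic extension, or directly from decay of $e^{y|\nabla|}\phi$ and the Banach-contraction solution) forces this limit to be zero.

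For part~(ii) I would run the identical fixed-point argument in $L^\infty_yH^{s-j+1}_x$ in place of $L^2_yH^{s-j+1}_x$. The linear piece costs $\|e^{y|\xi|}|\xi|\|_{L^\infty_y}=|\xi|$, which is exactly why the hypothesis jumps from $\||\nabla|^{1/2}\phi\|_{H^s}$ to $\||\nabla|^{1/2}\phi\|_{H^{s+1/2}}$. The kernel term is bounded by Lemma~\ref{K-bound}(ii) with $q=\infty$, picking up the factor $|\nabla|^{-1/2}$ on the inside; this loss is compensated by the extra half-derivative $\|\nabla h\|_{H^{s+1/2}}$ assumed on $h$, since the multiplier estimate then produces $\|M(\nabla h)U\|_{L^2_yH^{s+1/2}_x}\lesssim \|\nabla h\|_{H^{s+1/2}}\|U\|_{L^\infty_yH^s_x}$.

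The main obstacle is the nonlinear term $M(\nabla h)\nabla_{x,y}u+f_1$, which at each order requires distributing $j$ derivatives via Leibniz between a factor of $\nabla h$ (living only in $H^{s+1/2}_x$ for part~(ii)) and $\nabla_{x,y}u$; the highest-derivative endpoint is exactly critical when $s-j+1=1$, so the Sobolev multiplication theorem is applied right at the threshold and care is needed to ensure $s>j$ is enough. Everything else is routine tracking of indices through Lemma~\ref{K-bound}, the Sobolev product rule, and the elliptic conversion of $\partial_y^2$.
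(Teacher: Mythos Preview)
Your overall strategy matches the paper's: run a fixed-point argument on \eqref{gr-u} using Lemma~\ref{K-bound}(ii) and Sobolev multiplication for $j=1$, then trade $\partial_y^2$ for $\Delta_x$ via \eqref{u-Diri} to induct on $j$. Part (i) at $j=1$ and the induction step are correct and essentially identical to the paper. Two steps, however, have real gaps.

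\medskip
\noindent\textbf{Part (i$'$).} From $\partial_y\nabla_{x,y}^ju\in L^2_yH^{s-j}_x$ alone you \emph{cannot} conclude that $\nabla_{x,y}^ju(\cdot,y)$ has a limit as $y\to-\infty$: take $g(y)=\sin(\log(-y))$, whose derivative lies in $L^2((-\infty,-1))$ yet $g$ oscillates forever. The paper's argument combines two facts. From (i) at level $j+1$ one gets $\partial_y\nabla_{x,y}^ju\in L^2_yH^{s-j}_x$, which by Cauchy--Schwarz makes $y\mapsto\|\nabla_{x,y}^ju(\cdot,y)\|_{H^{s-j}_x}$ uniformly continuous; from (i) at level $j$ one gets $\nabla_{x,y}^ju\in L^2_yH^{s-j}_x$. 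Uniform continuity together with finite $L^2_y$ norm forces the limit to be zero. The detour through Lemma~\ref{u-Diri-unique} is unnecessary.

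\medskip
\noindent\textbf{Part (ii).} The estimate you write,
\[
\|M(\nabla h)U\|_{L^2_yH^{s+1/2}_x}\lesssim \|\nabla h\|_{H^{s+1/2}}\|U\|_{L^\infty_yH^s_x},
\]
is false on two counts: since $\nabla h$ is $y$-independent and $U\in L^\infty_y$, the product is only in $L^\infty_y$, never in $L^2_y$ on the half-line; and pointwise in $y$ one only has $H^{s+1/2}\cdot H^s\subset H^s$, not $H^{s+1/2}$. The correct bound is
\[
\|M(\nabla h)U\|_{L^2_yH^{s+1/2}_x}\lesssim \|\nabla h\|_{H^{s+1/2}}\|U\|_{L^2_yH^{s+1/2}_x},
\]
and the last factor is supplied by part (i) applied with $s+\tfrac12$ in place of $s$---whose smallness hypothesis $\|\nabla h\|_{H^{s+1/2}}<c_{s+1/2}$ is precisely what (ii) assumes. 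This is what the paper means by ``set $q=\infty$ in Lemma~\ref{K-bound}(ii)'': the kernel term is bounded directly via (i), and only the local term $f_1$ is absorbed by the $L^\infty_yH^s_x$ fixed point. Part (ii) is therefore not a self-contained contraction in $L^\infty_y$; it rides on (i).
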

\begin{proof}
We first show the bound for $j=1$. We estimate the three terms on the right of (\ref{gr-u}) separately. For (i), the first term can be bounded in the frequency space directly. By Lemma \ref{K-bound} (ii), the second term is bounded by $\|M(\nabla h)(\nabla_xu,\partial_yu)^T\|_{L_y^2H_x^s}$. Clearly the third term is bounded by $\|f_1\|_{L_y^2H_x^s}$.
By (\ref{f12-def}), (\ref{M-def}), $s>1$ and the Sobolev multiplication theorem then,
\[
\|\nabla_{x,y}u\|_{L_y^2H_x^s}\lesssim_s \||\nabla|^{1/2}\phi\|_{H^s}+(c_s+c_s^2)\|\nabla_{x,y}u\|_{L_y^2H_x^s}.
\]
If $c_s$ is small enough, the second term on the right-hand side can be absorbed in the left-hand side, which shows (i) for $j=1$.

For $j=1$, (ii) follows in the same way as (i), except that when bounding the second term on the right of (\ref{gr-u}), we set $q=\infty$ in Lemma \ref{K-bound} (ii).

Taking derivatives in $x$ and using (\ref{u-Diri}), Sobolev multiplication and $\|(\nabla h,\Delta h)\|_{H^{s-1}}<c_s$ give the desired bounds for $\partial_y^2u$ and all terms for $j\ge2$ by induction.

For (i') we have $\|\nabla_{x,y}^{j+1}u\|_{L_y^2H_x^{s-j}}<\infty$ by (i).
Integrating in $y$ and using the Cauchy--Schwarz inequality then show that
$\|\nabla_{x,y}^ju\|_{H_x^{s-j}}$ is uniformly continuous in $y$. Combining this with $\|\nabla_{x,y}^ju\|_{L_y^2H_x^{s-j}}<\infty$ gives (i').
\end{proof}
\begin{remark}
If $s > 2$, then $H^{s-1}$ embeds in $C^0$, so $u$ is continuous on $\{y \le 0\}$ and satisfies the boundary condition at $-\infty$ in (\ref{u-Diri-f}).
\end{remark}

Recall the Besov norms, which for $r>0$ is
\[
\|u\|_{B^r_{p,q}}=\|P_{<0}u\|_{L^p}+\|2^{rk}\|P_ku\|_{L^p}\|_{\ell^q_{k\ge0}}.
\]
Also recall that $C_*^r=B^r_{\infty,\infty}$ and $X^r=B^r_{\infty,2}$.
Clearly, $C_*^{r+}\subset X^r\subset C_*^r$. Also $H^{r+1}\subset C_*^r\cap X^r$. For $r\in\N^+$, both $C_*^{r+}$ and $X^{r+}\subset W^{r,\infty}$.
By \cite{Tr}, Theorem 2 (i), for $r>0$, both $C_*^r$ and $X^r$ are closed under multiplication.

\begin{proposition}
Fix $1\le j<r+1$. If $\|\nabla h\|_{H^{r+1}}<c_r$ is small enough then:

(i) For any integer $k<0$,
\[
\|\nabla_{x,y}^ju\|_{L_y^2X_x^{r-j+1}}\lesssim |k|\||\nabla|^{1/2}\phi\|_{X^r}+2^k\||\nabla|^{1/2}\phi\|_{L^2}.
\]

(i') If $r>j$ then $\|\nabla_{x,y}^ju\|_{X_x^{r-j}}\to0$ as $y\to-\infty$.

(ii)
\[
\|\nabla_{x,y}^ju\|_{L_y^\infty C_*^{r-j+1}}\lesssim \||\nabla|^{1/2}\phi\|_{C_*^{r+1/2}}.
\]
\end{proposition}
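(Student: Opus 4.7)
The strategy is to run the same fixed-point argument used for Proposition~\ref{u-Hs}, but replace the Sobolev norms by Besov norms and handle the low-frequency part of the linear piece by hand. Specifically, I would start from the identity (\ref{gr-u}) with inhomogeneity $f_1,f_2$ given in (\ref{f12-def}), treat the three terms on the right-hand side separately, and induct on $j$ exactly as in the proof of Proposition~\ref{u-Hs}: the case $j=1$ is the substantive one, and for $j\ge 2$ we differentiate in $x$ and use the elliptic equation (\ref{u-Diri}) together with the algebra property of $X^{r-j+1}$ (resp.\ $C_*^{r-j+1}$) and smallness of $\|\nabla h\|_{H^{r+1}}$.

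For $j=1$, (i) reduces to bounding the linear piece $e^{y|\nabla|}(\nabla\phi,|\nabla|\phi)^T$ in $L_y^2X_x^r$. A direct computation based on $\|P_me^{y|\nabla|}g\|_{L_y^2L_x^\infty}\approx 2^{-m/2}\|P_mg\|_{L_x^\infty}$ shows that for $m\ge 0$ the dyadic contributions are controlled by $2^{rm}\|P_m|\nabla|^{1/2}\phi\|_{L^\infty}$, and their $\ell^2_{m\ge 0}$ sum is $\lesssim \||\nabla|^{1/2}\phi\|_{X^r}$. The low-frequency part $P_{<0}$ is the whole point: writing $P_{<0}=P_{<k}+P_{[k,0)}$, the middle range yields at most $|k|\sup_{m<0}\|P_m|\nabla|^{1/2}\phi\|_{L^\infty}\lesssim |k|\,\||\nabla|^{1/2}\phi\|_{X^r}$, while for $m<k$ Bernstein gives $\|P_me^{y|\nabla|}|\nabla|\phi\|_{L_y^2L_x^\infty}\lesssim 2^m\|P_m|\nabla|^{1/2}\phi\|_{L^2}$, which sums geometrically to $2^k\||\nabla|^{1/2}\phi\|_{L^2}$. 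This produces exactly the two terms in the right-hand side of (i).

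For the remaining two terms in (\ref{gr-u}), I would apply Lemma~\ref{K-bound}(i) with $p_1=p_2=\infty$, $q_1=q_2=2$ frequency-by-frequency and then take the weighted $\ell^2$ sum to get an $L_y^2X_x^r$ bound for $\int \mathcal K(y,y')M(\nabla h)\nabla_{x,y}u(y')\,dy'$ by $\|M(\nabla h)\nabla_{x,y}u\|_{L_y^2X_x^r}$. Combining this with the algebra property of $X^r$ (see \cite{Tr}) and the embedding $\|\nabla h\|_{X^r}\lesssim\|\nabla h\|_{H^{r+1}}<c_r$ gives a bound of the form $(c_r+c_r^2)\|\nabla_{x,y}u\|_{L_y^2X_x^r}$, which may be absorbed into the left-hand side when $c_r$ is small, closing the fixed-point. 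The $f_1$ term is handled identically (and is actually simpler since it carries no $\mathcal K$). Statement~(ii) follows by the same scheme but is easier: using $q_1=q_2=\infty$ in Lemma~\ref{K-bound}(i) we avoid the low-frequency issue altogether and obtain the cleaner bound in $L_y^\infty C_*^{r-j+1}$ by $\||\nabla|^{1/2}\phi\|_{C_*^{r+1/2}}$.

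For (i'), once (i) is established we have $\|\nabla_{x,y}^{j+1}u\|_{L_y^2X_x^{r-j}}<\infty$ (using $r>j$), so $y\mapsto \nabla_{x,y}^ju(\cdot,y)$ is uniformly continuous with values in $X^{r-j}$; combined with finiteness of $\|\nabla_{x,y}^ju\|_{L_y^2X_x^{r-j}}$ this forces the norm to vanish at $-\infty$, mirroring Proposition~\ref{u-Hs}(i'). The main obstacle throughout is the bound for the linear piece in (i): unlike the Sobolev case, the low frequencies of $e^{y|\nabla|}$ do not decay fast enough in $y$ to be absorbed uniformly in an $L^2_y L^\infty_x$ norm, which is precisely what forces the appearance of the free parameter $k<0$ and the two-term right-hand side in (i). Everything else is a routine Besov-space adaptation of the argument already carried out for Proposition~\ref{u-Hs}.
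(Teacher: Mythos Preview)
Your overall strategy matches the paper's: fixed-point on \eqref{gr-u}, split the linear piece at a free dyadic scale $k<0$ to handle the $L^2_yX^r_x$ low frequencies, and induct on $j$ using \eqref{u-Diri}. The treatment of the linear term $e^{y|\nabla|}(\nabla\phi,|\nabla|\phi)$ and the argument for (i') are exactly as in the paper.

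There is, however, a genuine gap in your handling of the $\mathcal K$-integral term. You claim that applying Lemma~\ref{K-bound}(i) with $p_1=p_2=\infty$, $q_1=q_2=2$ ``frequency-by-frequency and then taking the weighted $\ell^2$ sum'' gives $\|\int\mathcal K\,f\|_{L^2_yX^r_x}\lesssim\|f\|_{L^2_yX^r_x}$. This works for the high-frequency piece $\|2^{rk}\|P_k\cdot\|_{L^\infty}\|_{\ell^2_{k\ge0}}$, but \emph{not} for the low-frequency piece $\|P_{<0}\cdot\|_{L^\infty}$ that is part of the $X^r$ norm: Lemma~\ref{K-bound}(i) with those exponents gives $\|P_m\int\mathcal K f\|_{L^2_yL^\infty_x}\lesssim\|P_m f\|_{L^2_yL^\infty_x}$ for each $m<0$ with no decay in $m$, and the $\ell^1$ sum over $m<0$ is not controlled by $\|P_{<0}f\|_{L^2_yL^\infty_x}$. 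The same issue arises in the $C_*^r$ estimate for (ii).

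The paper fixes this by treating $P_{<0}$ separately via Lemma~\ref{K-bound}(iii) with $p_1=2$, $p_2=\infty$ (so the exponent is positive and the $P_{<0}$ bound is legitimate), obtaining
\[
\Big\|P_{<0}\int\mathcal K\,M(\nabla h)\nabla_{x,y}u\Big\|_{L^p_yL^\infty_x}
\lesssim \|P_{<0}M(\nabla h)\nabla_{x,y}u\|_{L^p_yL^2_x}
\lesssim \|\nabla h\|_{L^2}(1+\|\nabla h\|_{L^\infty})\|\nabla_{x,y}u\|_{L^p_yL^\infty_x},
\]
which is then absorbed since $\|\nabla h\|_{L^2}\le\|\nabla h\|_{H^{r+1}}<c_r$. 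With this one correction, your argument goes through and coincides with the paper's.
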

\begin{proof}
First we show the bound for $j=1$. We estimate the three terms on the right-hand side of (\ref{gr-u}) separately. For both (i) and (ii), the first term is the convolution of $(\nabla\phi,|\nabla|\phi)$ with the kernel of $e^{y|\nabla|}$, so by Young's inequality,
\begin{align*}
\|\nabla_{x,y}e^{y|\nabla|}\phi\|_{L_y^\infty C_*^r}
&\lesssim \sum_{l<0} 2^{l/2}\||\nabla|^{1/2}\phi\|_{L^\infty}
+\sup_{l\ge0} 2^{(r+1/2)l}\|P_l|\nabla|^{1/2}\phi\|_{L^\infty},\\
\|\nabla_{x,y}e^{y|\nabla|}\phi\|_{L_y^2X_x^r}
&\lesssim \sum_{l<k} 2^l\||\nabla|^{1/2}\phi\|_{L^2}
+\sum_{k\le l<0} \|P_l|\nabla|^{1/2}\phi\|_{L^\infty}
+\|2^{rl^+}\|P_l|\nabla|^{1/2}\phi\|_{L^\infty}\|_{\ell^2_{l\ge0}}.
\end{align*}
Both terms have their desired bounds.

By Lemma \ref{K-bound} (iii) and (i), the second term of (\ref{gr-u})
in $L_y^pB^r_{\infty,p}$ ($p\in\{2,\infty\}$)
\begin{align*}
&\lesssim \|P_{<0}M(\nabla h)(\nabla_xu,\partial_yu)^T\|_{L_y^pL_x^2}
+\|M(\nabla h)(\nabla_xu,\partial_yu)^T\|_{L_y^pB^r_{\infty,p}}\\
&\lesssim \|\nabla h\|_{L^2}(1+\|\nabla h\|_{L^\infty})\|\nabla_{x,y}u\|_{L_y^pL_x^\infty}+(\|\nabla h\|_{B^r_{\infty,p}}+\|\nabla h\|_{B^r_{\infty,p}}^2)\|\nabla_{x,y}u\|_{L_y^pB^r_{\infty,p}}\\
&\lesssim (c_r+c_r^2)\|\nabla_{x,y}u\|_{L_y^pB^r_{\infty,p}}.
\end{align*}
The third term satisfies the same bound because it is the second component of $M(\nabla h)(\nabla_xu,\partial_yu)^T$. When $c_r$ is small enough we can absorb the right-hand side in the left-hand side and the bound for $j=1$ follows.

Taking derivatives in $x$ and using (\ref{u-Diri}) and the bound $\|(\nabla h,\Delta h)\|_{X^{r-1}\cap C_*^{r-1}}\lesssim\|\nabla h\|_{H^{r+1}}$
give the desired bounds for $j\ge2$ by induction.

Finally, the implication (i) $\to$ (i') is similar to that in Proposition \ref{u-Hs}.
\end{proof}

\begin{proposition}
Fix $s>1$.

(i) If $\|\nabla h\|_{H^s}<c_s$ is sufficiently small then for any integer $k<0$,
\[
\|\nabla_{x,y}(u-e^{y|\nabla|}\phi)\|_{L_y^2H_x^s}
\lesssim \|\nabla h\|_{L^\infty}\||\nabla|^{1/2}\phi\|_{H^s}
+\|\nabla h\|_{H^s}C_{k,1}[|\nabla|^{1/2}\phi],
\]
where
\[
C_{k,r}[f]=|k|\|f\|_{C_*^r}+2^k\|f\|_{L^2}.
\]

(ii) If $\|\nabla h\|_{H^{s+1/2}}<c_s$ is sufficiently small then for any integer $k<0$,
\[
\|\nabla_{x,y}(u-e^{y|\nabla|}\phi)\|_{L_y^\infty H_x^s}
\lesssim \|\nabla h\|_{L^\infty}\||\nabla|^{1/2}\phi\|_{H^{s+1/2}}
+\|\nabla h\|_{H^{s+1/2}}C_{k,1}[|\nabla|^{1/2}\phi].
\]
\end{proposition}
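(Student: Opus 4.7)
The plan is to start from the integral representation of $\nabla_{x,y}u$ in (\ref{gr-u}). Subtracting the linear evolution $\nabla_{x,y}e^{y|\nabla|}\phi$ gives
\[
\nabla_{x,y}(u-e^{y|\nabla|}\phi) = \int_{-\infty}^0 \mathcal K(y,y')M(\nabla h)\nabla_{x,y}u(\cdot,y')dy' + [0,f_1]^T.
\]
The crucial observation is that every entry of the matrix $M(\nabla h)$ defined in (\ref{M-def}) and every term of $f_1$ defined in (\ref{f12-def}) vanishes at least linearly as $\nabla h\to 0$. Hence one copy of $\nabla h$ can always be factored out, and the proof reduces to a Sobolev product estimate combined with the kernel bound of Lemma \ref{K-bound}, using the a priori control of $\nabla_{x,y}u$ already provided by Propositions \ref{u-Hs} and \ref{u-Cr}.

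For part (i) I would apply Lemma \ref{K-bound} (ii) with $q=2$ to obtain
\[
\|\text{integral term}\|_{L_y^2H_x^s}\lesssim \|M(\nabla h)\nabla_{x,y}u\|_{L_y^2H_x^s}.
\]
Since $s>1$, the Sobolev product estimate $\|fg\|_{H^s}\lesssim \|f\|_{L^\infty}\|g\|_{H^s}+\|f\|_{H^s}\|g\|_{L^\infty}$ splits the right-hand side into two pieces: the piece controlled by $\|\nabla h\|_{L^\infty}\,\|\nabla_{x,y}u\|_{L_y^2H_x^s}$ is bounded by $\|\nabla h\|_{L^\infty}\||\nabla|^{1/2}\phi\|_{H^s}$ via Proposition \ref{u-Hs} (i), yielding the first term on the right of the proposition; the piece controlled by $\|\nabla h\|_{H^s}\,\|\nabla_{x,y}u\|_{L_y^2L_x^\infty}$ is bounded, via the embedding $X^1\hookrightarrow L^\infty$ and Proposition \ref{u-Cr} (i) with $j=r=1$, by $\|\nabla h\|_{H^s}C_{k,1}[|\nabla|^{1/2}\phi]$. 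The $[0,f_1]^T$ contribution is treated identically, viewing $f_1$ as a bilinear expression in $\nabla h$ and $\nabla_{x,y}u$, and the quadratic-in-$\nabla h$ pieces from the last entry of $M$ and from $|\nabla h|^2\partial_yu$ gain an extra factor $\|\nabla h\|_{L^\infty}\le c_s$ that lets them be absorbed into the main linear-in-$\nabla h$ terms.

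For part (ii) the same scheme applies, this time using Lemma \ref{K-bound} (ii) with $q=\infty$, which converts the target norm $L_y^\infty H_x^s$ into $\||\nabla|^{1/2}M(\nabla h)\nabla_{x,y}u\|_{L_y^2H_x^s}=\|M(\nabla h)\nabla_{x,y}u\|_{L_y^2H_x^{s+1/2}}$ (explaining why the $H^{s+1/2}$ norm appears). Proposition \ref{u-Hs} (i) at regularity $s+1/2$ controls the $\|\nabla h\|_{L^\infty}$ branch, while Proposition \ref{u-Cr} (i) still supplies the $C_{k,1}$ control on $\|\nabla_{x,y}u\|_{L_y^2L_x^\infty}$ in the $\|\nabla h\|_{H^{s+1/2}}$ branch. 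The direct contribution of $[0,f_1]^T$ in $L_y^\infty H_x^s$ is bounded using Proposition \ref{u-Hs} (ii) and the $L_y^\infty L_x^\infty$ bound from Proposition \ref{u-Cr} (ii), and is dominated by the two terms already produced.

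The main obstacle is the $L_x^\infty$ control of $\nabla_{x,y}u$ demanded by the second term of the Sobolev product estimate: in two dimensions the endpoint embedding $H^1\not\subset L^\infty$ fails, so one cannot simply move one derivative around. The quantity $C_{k,1}[\cdot]=|k|\|\cdot\|_{C_*^1}+2^k\|\cdot\|_{L^2}$ is precisely the price paid by the Besov refinement in Proposition \ref{u-Cr} (i), with the $2^k\|\cdot\|_{L^2}$ term absorbing the very low-frequency tail (where only $L^2$ information is available) and the logarithmic factor $|k|$ resolving the endpoint divergence near the unit frequency scale. Once this is in place, no further subtlety arises and the proof closes after a single application of Sobolev multiplication.
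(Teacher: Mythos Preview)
Your approach is exactly the paper's: subtract the linear evolution from (\ref{gr-u}), apply Lemma \ref{K-bound} (ii), then split the product $M(\nabla h)\nabla_{x,y}u$ via Sobolev multiplication and feed the two pieces into Propositions \ref{u-Hs} and \ref{u-Cr}.

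There is one small but genuine slip. You invoke Proposition \ref{u-Cr} (i) with $j=r=1$ and then use $X^1\hookrightarrow L^\infty$. The embedding on the \emph{output} side is fine, but on the \emph{input} side that choice yields a bound in terms of $\||\nabla|^{1/2}\phi\|_{X^1}$, and the inclusion $X^1=B^1_{\infty,2}\subset C_*^1=B^1_{\infty,\infty}$ goes the wrong way: you cannot control $\|\cdot\|_{X^1}$ by $\|\cdot\|_{C_*^1}$, so you do not arrive at $C_{k,1}[|\nabla|^{1/2}\phi]$ as stated. The fix, which is what the paper does, is to apply Proposition \ref{u-Cr} (i) at some $r\in(0,1)$ rather than $r=1$; then on the output side $X^{0+}\hookrightarrow L^\infty$ still gives the needed $L^\infty_x$ control of $\nabla_{x,y}u$, while on the input side the embedding $C_*^1\subset X^{0+}$ (which does hold, since $\sum_{k\ge0}2^{2rk}\|P_kf\|_{L^\infty}^2\lesssim\|f\|_{C_*^1}^2$ for $r<1$) converts the bound into the desired $C_{k,1}$ form. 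With this adjustment your argument is complete and matches the paper's proof.
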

\begin{proof}
This follows from Lemma \ref{K-bound} (ii) applied to (\ref{gr-u}),
the Sobolev multiplication theorem, Proposition \ref{u-Hs} (i), Proposition \ref{u-Cr} (i) and the embedding $C_*^1\subset X^{0+}$.
\end{proof}

\subsection{Paralinearization of the Zakharov system}
Now we bound various terms appearing in the process of paralinearization. First recall $\mathcal S_0$ and $\mathcal C_0$, defined in (\ref{S0}) and (\ref{C0}) respectively.
\begin{proposition}
If $s>2$ and $\|\nabla h\|_{H^{s-1}\cap H^{3+}}$ is sufficiently small then
\begin{align}
\label{S0-bound}
\|\mathcal S_0\|_{L_y^2H_x^s}&\lesssim_s \|\nabla h\|_{W^{2,\infty}}\||\nabla|^{1/2}\phi\|_{H^{s-1}},\\
\|\mathcal C_0\|_{L_y^2H_x^s}&\lesssim_s \|\nabla h\|_{W^{1,\infty}}C_{k,3}[|\nabla|^{1/2}\phi]\|h\|_{H^s}\ (k<0),
\label{C0-bound}
\end{align}
where $C_{k,r}[\cdot]$ is defined as above.
\end{proposition}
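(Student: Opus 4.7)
The plan is to estimate each summand of $\mathcal S_0$ and each of the six constituent terms of $\mathcal C_0$ separately, by matching each piece with the appropriate paradifferential-calculus lemma of Section \ref{LinEst} (Lemmas \ref{PkH-Lp}, \ref{Taf-Lp}, \ref{Eaf-Lp}) and then controlling the resulting $\nabla_{x,y}u$-factor through the boundary-value estimates of Propositions \ref{u-Hs} and \ref{u-Cr}. The key bookkeeping is to allocate all the derivative losses to $u$ and leave a single factor of $\|\nabla h\|_{W^{k,\infty}}$ on the smooth side.

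For the semilinear bound (\ref{S0-bound}), both summands of $\mathcal S_0$ are Coifman--Meyer-type remainders, so Lemma \ref{PkH-Lp} (iii) applies directly. I would take $m=2$ on $H(\partial_y\nabla_xu,\nabla h)$, placing $\nabla h$ in $W^{2,\infty}$, and $m=1$ on $H(\partial_yu,\Delta h)$, placing $\Delta h$ in $W^{1,\infty}\subset\|\nabla h\|_{W^{2,\infty}}$, so that both $u$-factors reduce to norms of $\nabla_{x,y}u$ in $L_y^2H_x^{s-1}$. Proposition \ref{u-Hs} (i) applied at Sobolev index $s-1$ and $j=1$ (which is exactly the place where $s>2$ is needed) then controls these by $\||\nabla|^{1/2}\phi\|_{H^{s-1}}$, and combining the two pieces gives the stated estimate.

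For the quasilinear bound (\ref{C0-bound}) I would treat the six terms of $\mathcal C_0$ separately. The three expressions under the outer $\partial_y$ can, because $h$ is $y$-independent, be rewritten as $E(a,b)h$ with $a\in\{|\nabla h|^2,\nabla h,\Delta h\}$ and $b\in\{\partial_y^3 u,\partial_y^2\nabla_xu,\partial_y^2u\}$; each is estimated by Lemma \ref{Eaf-Lp} (ii) with $m_1=m_2=0$ and $q_1=q_2=\infty$, which yields a coefficient $\lesssim\|\nabla h\|_{W^{1,\infty}}$ (higher-order factors of $\nabla h$ absorbed into the small $\|\nabla h\|_{H^{3+}}$-norm), a $u$-factor to be placed in $L_y^2L_x^\infty$, and a factor $\|h\|_{H^{s-1}}\le\|h\|_{H^s}$. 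The remaining three terms --- the pair $2(E(\nabla h,\partial_y^2 u)-E(\partial_y^2 u,\nabla h))\nabla h$ (again Lemma \ref{Eaf-Lp} (ii), with $\nabla h$ playing the role of $f$ and $\|\nabla h\|_{H^{s-1}}\le\|h\|_{H^s}$), the $T_{\partial_y^2u}H(\nabla h,\nabla h)$ term (Lemma \ref{Taf-Lp} (ii) composed with Lemma \ref{PkH-Lp} (iii)), and the remainder $H(|\nabla h|^2,\partial_y^2u)$ (Lemma \ref{PkH-Lp} (iii) with $\||\nabla h|^2\|_{W^{1,\infty}}\lesssim\|\nabla h\|_{W^{1,\infty}}$) --- admit analogous estimates. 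Every $u$-derivative factor that appears is of the form $\nabla_{x,y}^j u$ with $j\le 4$, and integration in $y$ combined with Proposition \ref{u-Cr} (i), applied at $j\in\{3,4\}$ and $r$ slightly above $2$, bounds these by $|k|\||\nabla|^{1/2}\phi\|_{X^r}+2^k\||\nabla|^{1/2}\phi\|_{L^2}\le C_{k,3}[|\nabla|^{1/2}\phi]$.

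The main obstacle is keeping the coefficient at the stated $\|\nabla h\|_{W^{1,\infty}}$ rather than the naive $\|\nabla h\|_{W^{2,\infty}}$: the term $E(\Delta h,\partial_y^2u)h$ produces, on the face of it, a factor $\|\nabla_x\Delta h\|_{L^\infty}\lesssim\|\nabla h\|_{W^{2,\infty}}$ from Lemma \ref{Eaf-Lp}, and one must trade the extra derivative against the smallness of $\|\nabla h\|_{H^{3+}}$ (which dominates $\|\nabla^2 h\|_{L^\infty}$ via Sobolev embedding) to recover the single factor of $\|\nabla h\|_{W^{1,\infty}}$. A parallel subtlety, also handled by the $H^{3+}$ hypothesis, is the passage from the $X^r$-norm produced by Proposition \ref{u-Cr} to the $C_*^3$-norm appearing in $C_{k,3}[\cdot]$: choosing $r=2+\varepsilon$ makes $C_*^3\subset X^{2+\varepsilon}$, which is the source of the ``$3+$'' in the hypothesis on $\nabla h$.
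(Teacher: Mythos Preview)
Your approach is essentially the same as the paper's: Lemma \ref{PkH-Lp} (iii) with Proposition \ref{u-Hs} (i) for (\ref{S0-bound}), and Lemmas \ref{Taf-Lp}, \ref{PkH-Lp}, \ref{Eaf-Lp} together with Proposition \ref{u-Cr} (i) and the embedding $C_*^3\subset X^{2+}$ for (\ref{C0-bound}). One small bookkeeping inconsistency to clean up: you assert that the $u$-factors require up to $j=4$ derivatives, yet you then invoke Proposition \ref{u-Cr} (i) with $r=2+\varepsilon$, which only covers $j<r+1$, i.e.\ $j\le 3$; the paper's (terser) argument records only the third-order quantities $\|\partial_y^3u\|_{L_y^2L_x^\infty}+\|\partial_y^2u\|_{L_y^2W_x^{1,\infty}}$ and so stays consistently at $j=3$.
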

\begin{proof}
For $\mathcal S_0$, (\ref{S0-bound}) follows from Lemma \ref{PkH-Lp} (iii) (with $m = 2$) and Proposition \ref{u-Hs} (i).

For $\mathcal C_0$, by Lemma \ref{Lmq=Lq}, Lemma \ref{Taf-Lp} (ii), Lemma \ref{PkH-Lp} (iii) (with $s>m=1$), Lemma \ref{Eaf-Lp} (ii) and the smallness of $\|\nabla h\|_{L^\infty}\lesssim\|\nabla h\|_{H^2}$,
\[
\|\mathcal C_0\|_{L_y^2H_x^s}\lesssim_s \|\nabla h\|_{W^{1,\infty}}
(\|\partial_y^3u\|_{L_y^2L_x^\infty}+\|\partial_y^2u\|_{L_y^2W_x^{1,\infty}})\|h\|_{H^s}.
\]
Then (\ref{C0-bound}) follows from Proposition \ref{u-Cr} (i) and the embedding $C_*^3\subset X^{2+}$.
\end{proof}

Then we show operator norms of $\mathcal R_1$ and $\mathcal R_2$, defined in (\ref{R1}) and (\ref{R2}) respectively.
\begin{proposition}
If $s>4$ and $\|h\|_{H^s}<c_s$ is sufficiently small then for $j\in\{1, 2\}$,
\begin{equation}\label{Rj-bound}
\|\mathcal R_jw\|_{L_y^2H_x^s}\lesssim_s \|\nabla h\|_{W^{3,\infty}}^j\||\nabla|^{1/2}\phi\|_{H^{s-1}}.
\end{equation}
\end{proposition}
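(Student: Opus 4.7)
My plan is to estimate each summand of $\mathcal R_1$ (from (\ref{R1})) and $\mathcal R_2$ (from (\ref{R2})) separately via Lemmas \ref{Taf-Lp}, \ref{Eaf-Lp}, and \ref{E1f-Lp}, exploiting the one- or two-order smoothing afforded by $E$ and $E_1$ respectively. Every term of $\mathcal R_1$ will become a bounded $x$-operator of order zero, and every term of $\mathcal R_2$ an $x$-operator of order $-2$ that exactly absorbs the $\partial_y$ or $\partial_y^2$ prefactor as a loss of one or two $x$-derivatives. Since every symbol coefficient is built from $\nabla h$, $\nabla^2 h$, $\nabla^3 h$ and $\sqrt{1+\alpha}-1$ (quadratic in $\nabla h$), Lemmas \ref{Lmq=Lq} and \ref{prod-Lm} give $\mathcal L_m^\infty$-norms $\lesssim \|\nabla h\|_{W^{3,\infty}}$ for $\mathcal R_1$ and $\lesssim \|\nabla h\|_{W^{3,\infty}}^2$ for $\mathcal R_2$. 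Altogether this will yield the pointwise-in-$y$ bounds
\[
\|\mathcal R_1 f(\cdot,y)\|_{H_x^s} \lesssim \|\nabla h\|_{W^{3,\infty}} \|f(\cdot,y)\|_{H_x^s},
\]
\[
\|\mathcal R_2 f(\cdot,y)\|_{H_x^s} \lesssim \|\nabla h\|_{W^{3,\infty}}^2 \bigl(\|f\|_{H_x^s} + \|\partial_y f\|_{H_x^{s-1}} + \|\partial_y^2 f\|_{H_x^{s-2}}\bigr)(\cdot,y).
\]

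Next I will substitute $f = w = u - T_{\partial_y u}h$. The paraproduct correction $T_{\partial_y^{k+1}u}h$ arising from $\partial_y^k w$ will be controlled via Lemma \ref{Taf-Lp} (ii) combined with the $L_x^\infty$ bounds for $\partial_y^{k+1}u$ from Proposition \ref{u-Cr}, absorbing a factor $\|h\|_{H^s}\lesssim c_s$. For the $\partial_y^k u$ terms with $k=1,2$, Proposition \ref{u-Hs} (i) applied with $j=k$ at regularity index $s-1$ (valid because $s>4>k+1$) yields $\|\partial_y^k u\|_{L_y^2 H_x^{s-k}} \lesssim \||\nabla|^{1/2}\phi\|_{H^{s-1}}$, exactly what is needed.

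The hard part is the zeroth-order-in-$\partial_y$ contribution $\|u\|_{L_y^2 H_x^s}$, which Proposition \ref{u-Hs} does not control directly and which is in fact \emph{false} for generic harmonic extensions because of a low-frequency $L^2_y$-mismatch. My strategy is to exploit that every zeroth-order-in-$\partial_y$ summand of $\mathcal R_j$ has its symbol vanishing at $\zeta = 0$ to degree at least one: for $\mathcal R_1$ this comes from the factor $|\zeta|$ in $E(|\zeta|,\cdot)$ and $E_1(\nabla h\cdot\zeta,\cdot)$ together with the explicit $\varphi_{\le-1}$ cutoff in the last term of (\ref{R1}); for the $\partial_y^0$ block of $\mathcal R_2$ it is checked term-by-term using that $a^{(0)}, b^{(0)}$ carry $\varphi_{\ge0}$ while $a^{(1)}, b^{(1)}$ vanish at $\zeta=0$. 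This vanishing trades the low-frequency output of $\mathcal R_j$ for one extra derivative acting on the input, after which the Plancherel identity
\[
\|P_{\le0}|\nabla| e^{y|\nabla|}\phi\|_{L_y^2 L_x^2}^2 = \tfrac12 \|P_{\le0}|\nabla|^{1/2}\phi\|_{L^2}^2
\]
and Proposition \ref{u-Hs2} (for the quadratic remainder $u - e^{y|\nabla|}\phi$) close the low-frequency estimate; the high-output-frequency piece is immediate from $\|P_{>0}e^{y|\nabla|}\phi\|_{L_y^2 H_x^s} \approx \|P_{>0}|\nabla|^{1/2}\phi\|_{H^{s-1}}$. Verifying the degree-of-vanishing claim uniformly across every summand of (\ref{R2}) is the most delicate bookkeeping step and is where the hypothesis $s>4$ will be used to ensure enough regularity to apply Lemma \ref{E1f-Lp} and Proposition \ref{u-Hs} simultaneously.
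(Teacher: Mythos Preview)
Your approach is correct and identifies the same essential mechanism as the paper---the vanishing of every $\partial_y^0$ symbol of $\mathcal R_j$ at $\zeta=0$---but you take an unnecessary detour at the end.  Once you have established that this vanishing trades the low-frequency output for one extra $|\nabla_x|$ acting on the input, the bound you actually need is
\[
\|\mathcal R_jw\|_{L_y^2H_x^s}
\lesssim_s \|\nabla h\|_{W^{3,\infty}}^j
\bigl(\|\nabla_{x,y}w\|_{L_y^2H_x^{s-1}}+\|\partial_y^2w\|_{L_y^2H_x^{s-2}}\bigr),
\]
and this is exactly what the paper writes down (plus correction terms from $w=u-T_{\partial_yu}h$).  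At that point there is no reason to split $u=e^{y|\nabla|}\phi+(u-e^{y|\nabla|}\phi)$ and invoke the Plancherel identity together with Proposition~\ref{u-Hs2}: Proposition~\ref{u-Hs}\,(i), applied at regularity level $s-1$ with $j=1,2$, already gives
\[
\|\nabla_{x,y}^ju\|_{L_y^2H_x^{s-j}}\lesssim_s\||\nabla|^{1/2}\phi\|_{H^{s-1}}\qquad(j=1,2),
\]
which controls both $\nabla_x w$ and $\partial_y w$, $\partial_y^2 w$ directly after peeling off the paraproduct correction $T_{\partial_y^{k+1}u}h$.  The paper handles the $L_x^\infty$ norms of $\partial_y^{j}u$ ($j\le3$) arising from those corrections simply by Sobolev embedding from $H_x^{s-j}$ (this is where $s>4$ enters), then absorbs the resulting $\|h\|_{H^s}$ factor by smallness.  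So your Plancherel/Proposition~\ref{u-Hs2} step works but is redundant; the paper's route is a one-line appeal to Proposition~\ref{u-Hs}\,(i).
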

\begin{proof}
By Lemma \ref{Taf-Lp} (ii), Lemma \ref{Eaf-Lp} (ii) and Lemma \ref{E1f-Lp} (ii), applied to (\ref{w-def}), (\ref{ab-soln}) and (\ref{R2}),
\begin{align*}
\|\mathcal R_jw\|_{L_y^2H_x^s}
&\lesssim_s \|\nabla h\|_{W^{3,\infty}}^j
(\|\partial_y^2w\|_{L_y^2H_x^{s-2}}+\|\nabla_{x,y}w\|_{L_y^2H_x^{s-1}})\\
&\lesssim_s \|\nabla h\|_{W^{3,\infty}}^j
(\|\partial_y^2u\|_{L_y^2H_x^{s-2}}+\|\nabla_{x,y}u\|_{L_y^2H_x^{s-1}}\\
&+(\|\partial_y^3u\|_{L_y^2L_x^\infty}+\|\partial_y^2u\|_{L_y^2L_x^\infty}+\|\partial_yu\|_{L_y^2W_x^{1,\infty}})\|h\|_{H^s}).
\end{align*}
By Proposition \ref{u-Hs} (i) and $s>2$, $\|\nabla_{x,y}^ju\|_{L_y^2H_x^{s-j}} \lesssim_s \||\nabla|^{1/2}\phi\|_{H^{s-1}}$ for $j\le2$.
By Sobolev embedding and $s>4$, $\|\nabla_{x,y}^ju\|_{L_y^2L_x^\infty} \lesssim_s \||\nabla|^{1/2}\phi\|_{H^{s-1}}$ for $j\le3$.
Since $\|h\|_{H^s}$ is small, the last two terms are bounded by the first two, and (\ref{Rj-bound}) follows.
\end{proof}

Next we paralinearize $\partial_yw$.
Recall $\mathcal A$, $\mathcal B$ from (\ref{AB-ab}) through (\ref{ab-soln}), and $\mathcal Q$ and $\mathcal S$ from (\ref{QS}).
\begin{proposition}
If $s>4$ and $\|h\|_{H^s}<c_s$ is sufficiently small then
\begin{equation}\label{Taw}
T_{\sqrt{1+\alpha}}\partial_yw=(\mathcal{A+B})w+\mathcal Q+\mathcal S+\mathcal C_2,
\end{equation}
where, for any integer $k<0$,
\begin{equation}\label{C2-bound}
\|\mathcal C_2\|_{L_y^\infty H_x^{s+1/2}}
\lesssim_s \|\nabla h\|_{W^{3,\infty}}(C_{k,4}[|\nabla|^{1/2}\phi]\|h\|_{H^s}+\|\nabla h\|_{W^{3,\infty}}\||\nabla|^{1/2}\phi\|_{H^{s-1}}).
\end{equation}
\end{proposition}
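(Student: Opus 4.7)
The overall strategy is to invert the paradifferential factorization (\ref{P-factor}) so as to isolate $\tilde\psi:=T_{\sqrt{1+\alpha}}\partial_y w-(\mathcal{A+B})w$, derive for it a first-order equation whose leading symbol is $\partial_y+|\nabla|$, and then invoke Duhamel from $y=-\infty$, so that the quadratic bulk of $\tilde\psi$ is captured exactly by the integrated sources $\mathcal Q+\mathcal S$, leaving a cubic remainder $\mathcal C_2$ that satisfies (\ref{C2-bound}).

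First I would commute the two factors in (\ref{P-factor}). Their commutator, namely $-[T_{\sqrt{1+\alpha}}\partial_y-\mathcal A,2\mathcal B]+2[\mathcal A,\mathcal B]$, is of order zero with coefficients at least linear in $h$, so the reversed factorization
\[
(T_{\sqrt{1+\alpha}}\partial_y-\mathcal{A-B})(T_{\sqrt{1+\alpha}}\partial_y-\mathcal{A+B})=\mathcal P+\mathcal R_1'+\mathcal R_2'
\]
holds, with $\mathcal R_1',\mathcal R_2'$ satisfying the same bound (\ref{Rj-bound}) and $\mathcal R_1'-\mathcal R_1$ having coefficients that are quadratic in $h$. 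Applying it to $w$ and using $\mathcal Pw=\mathcal S_0+\mathcal C_0$ yields
\[
(T_{\sqrt{1+\alpha}}\partial_y-\mathcal{A-B})\tilde\psi=\mathcal S_0+\mathcal C_0+\mathcal R_1'w+\mathcal R_2'w.
\]
Replacing the outer operator by its leading symbol and adding and subtracting the bilinear source that drives $\mathcal Q+\mathcal S$, I rewrite this as
\[
(\partial_y+|\nabla|)\tilde\psi=\mathcal R_1[e^{y|\nabla|}\phi]+\mathcal S_0[h,e^{y|\nabla|}\phi]+E,
\]
where $E$ collects $\mathcal C_0$, $\mathcal R_2'w$, $(\mathcal R_1'-\mathcal R_1)w$, $\mathcal R_1(w-e^{y|\nabla|}\phi)$, $\mathcal S_0-\mathcal S_0[h,e^{y|\nabla|}\phi]$, and $[(T_{\sqrt{1+\alpha}}-1)\partial_y-\mathcal A+(T_b-|\nabla|)]\tilde\psi$.

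Every term in $E$ has cubic structure: a coefficient at least linear in $h$ or $\nabla h$ multiplied by something quadratic in $(h,|\nabla|^{1/2}\phi)$, namely $w-e^{y|\nabla|}\phi$ (controlled by Proposition \ref{u-Hs2}) or $\tilde\psi$ itself. Each piece of $E$ is bounded in $L_y^2 H_x^s$ using (\ref{S0-bound}), (\ref{C0-bound}), (\ref{Rj-bound}), Propositions \ref{u-Hs}--\ref{u-Hs2}, and the paradifferential tools of section \ref{ParaCalc}, producing the quantitative right-hand side of (\ref{C2-bound}). Since both $\tilde\psi$ and $\mathcal Q+\mathcal S$ tend to $0$ as $y\to-\infty$ (by Proposition \ref{u-Hs}(i') applied through $w=u-T_{\partial_y u}h$ and the definition (\ref{QS})) and $(\partial_y+|\nabla|)(\mathcal Q+\mathcal S)=\mathcal R_1[e^{y|\nabla|}\phi]+\mathcal S_0[h,e^{y|\nabla|}\phi]$ by construction, the remainder $\mathcal C_2:=\tilde\psi-\mathcal Q-\mathcal S$ satisfies $(\partial_y+|\nabla|)\mathcal C_2=E$ with $\mathcal C_2\to0$ at $-\infty$. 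Duhamel gives $\mathcal C_2(y)=\int_{-\infty}^y e^{(y'-y)|\nabla|}E(y')dy'$, and a Plancherel plus Cauchy--Schwarz computation in $y'$ (equivalently, a simplified version of Lemma \ref{K-bound}(ii) applied to the $e^{-|y-y'||\nabla|}$ block after stripping the outer $|\nabla|$) shows that this convolution maps $L_y^2 H_x^s$ to $L_y^\infty H_x^{s+1/2}$, yielding (\ref{C2-bound}).

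The main obstacle is the cubic accounting inside $E$, in particular the last term $[(T_{\sqrt{1+\alpha}}-1)\partial_y-\mathcal A+(T_b-|\nabla|)]\tilde\psi$: treating it as cubic requires an a priori quadratic-in-$(h,|\nabla|^{1/2}\phi)$ bound on $\tilde\psi$ itself. This comes from the very design of $a^{(1)},b^{(1)}$ in (\ref{ab-soln}), which are chosen so that the principal symbol of $(\mathcal{A+B})w-T_{\sqrt{1+\alpha}}\partial_y w$ vanishes on the Alinhac good unknown, together with $a^{(0)},b^{(0)}$ absorbing the order-zero linear-in-$h$ contribution. Consequently $\tilde\psi$ is genuinely quadratic in $(h,|\nabla|^{1/2}\phi)$, and the operator-norm bounds of Lemmas \ref{Taf-Lp}, \ref{Eaf-Lp}, \ref{E1f-Lp} close the estimate.
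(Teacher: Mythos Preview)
Your overall strategy is right and matches the paper's: define $W=\tilde\psi=(T_{\sqrt{1+\alpha}}\partial_y-\mathcal A-\mathcal B)w$, derive a first-order equation for it, subtract the explicit quadratic contribution $\mathcal Q+\mathcal S$, and integrate from $y=-\infty$. Two points, one minor and one substantive.

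First, the reversal of the factors in (\ref{P-factor}) is unnecessary and creates a sign muddle. The original factorization already has $(T_{\sqrt{1+\alpha}}\partial_y-\mathcal A-\mathcal B)$ as the \emph{inner} factor, so applying (\ref{P-factor}) directly to $w$ gives $(T_{\sqrt{1+\alpha}}\partial_y-\mathcal A+\mathcal B)\tilde\psi=\mathcal S_0+\mathcal C_0+\mathcal R_1 w+\mathcal R_2 w$, whose outer operator has leading symbol $\partial_y+|\nabla|$ without any commutation. Your reversed factorization has inner factor $(T_{\sqrt{1+\alpha}}\partial_y-\mathcal A+\mathcal B)$, which does not equal your $\tilde\psi$; and the outer operator you wrote, $(T_{\sqrt{1+\alpha}}\partial_y-\mathcal A-\mathcal B)$, has leading symbol $\partial_y-|\nabla|$, the wrong sign for integrating from $-\infty$. (Your later formula for the perturbation, $[(T_{\sqrt{1+\alpha}}-1)\partial_y-\mathcal A+(T_b-|\nabla|)]$, is in fact consistent with the \emph{correct} outer operator $-\mathcal A+\mathcal B$, so the error is in the labeling, not propagated.)

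The substantive gap is in treating $[(T_{\sqrt{1+\alpha}}-1)\partial_y-\mathcal A+(T_b-|\nabla|)]\tilde\psi$ as a cubic source via an a~priori quadratic bound on $\tilde\psi$. This perturbation is genuinely of order~$1$: $T_{a^{(1)}}$ has symbol $\nabla h\cdot\zeta/\sqrt{1+\alpha}$ (linear in $h$, order~$1$), and $(T_{\sqrt{1+\alpha}}-1)\partial_y$ contains a $\partial_y$. Putting these in $L_y^2 H_x^s$ requires $\tilde\psi\in L_y^2 H_x^{s+1}$ and $\partial_y\tilde\psi\in L_y^2 H_x^s$, but Proposition~\ref{u-Hs2} only yields $\nabla_{x,y}(u-e^{y|\nabla|}\phi)\in L_y^2 H_x^s$, one derivative short; pushing to $H_x^{s+1}$ would need $\|\nabla h\|_{H^{s+1}}$ small, which is not assumed. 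The paper handles this by conjugating with $T_{\sqrt[4]{1+\alpha}}$ so that the outer operator becomes exactly $(\partial_y+T_\sigma)$ with $\sigma=(b-ia)/\sqrt{1+\alpha}$, up to an order~$-1$ error $\mathcal R_2'$ that is quadratic in $h$. Then the remaining perturbation $(|\nabla|-T_\sigma)$ is purely tangential of order~$1$ with coefficient $O(\|\nabla h\|_{W^{1,\infty}})$, and after Duhamel (Lemma~\ref{K-bound}(ii)) one obtains both $\|\nabla_x W''\|_{L_y^2 H_x^s}$ and $\|W''\|_{L_y^\infty H_x^{s+1/2}}$ on the left, so $(|\nabla|-T_\sigma)W''$ can be \emph{absorbed} rather than estimated a~priori. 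That absorption step is the missing ingredient in your sketch.
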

\begin{proof}
Let
\begin{equation}\label{W-def}
W=(T_{\sqrt{1+\alpha}}\partial_y-\mathcal{A-B})w.
\end{equation}
Then $\mathcal C_2=W-\mathcal{Q-S}$.
From (\ref{Pw}) and (\ref{P-factor}) it follows that
\begin{align*}
(T_{\sqrt{1+\alpha}}\partial_y-\mathcal{A+B})W
&=\mathcal S_0+\mathcal R_1w+\mathcal C_0+\mathcal R_2w.
\end{align*}
Let $\sigma=(b-ia)/\sqrt{1+\alpha}$. Then $\mathcal{B-A}=T_{\sigma\sqrt{1+\alpha}}$. Let $W'=T_{\sqrt[4]{1+\alpha}}W$. Then
\begin{equation}\label{P-factor2}
\begin{aligned}
(\partial_y+T_\sigma)W'
&=\mathcal S_0+\mathcal R_1w+\mathcal C_1,\\
\mathcal C_1&=T_{1/\sqrt[4]{1+\alpha}-1}(\mathcal S_0+\mathcal R_1w)
+T_{1/\sqrt[4]{1+\alpha}}(\mathcal C_0+\mathcal R_2w)+\mathcal R_2'W,\\
\mathcal R_2'&=T_\sigma T_{\sqrt[4]{1+\alpha}}-T_{1/\sqrt[4]{1+\alpha}}T_{\sigma\sqrt{1+\alpha}}.
\end{aligned}
\end{equation}
Let $\beta=\sqrt[4]{1+\alpha}$. Since
$\{\sigma,\beta\}=-\nabla_\zeta\sigma\cdot\nabla\beta
=\nabla(1/\beta)\nabla_\zeta(\sigma\beta^2)=\{1/\beta,\sigma\beta^2\}$,
$\mathcal R_2'$ is a paradifferential operator of order $-1$ with coefficients at least degree 2 in $h$, so
\begin{equation}\label{R2'-bound}
\|\mathcal R_2'W\|_{L_y^2H_x^s}
\lesssim_s \|\nabla h\|_{W^{3,\infty}}^2\|W\|_{L_y^2H_x^{s-1}}
\lesssim_s \|\nabla h\|_{W^{3,\infty}}^2\||\nabla|^{1/2}\phi\|_{H^{s-1}}.
\end{equation}
By (\ref{S0-bound}), (\ref{Rj-bound}), (\ref{C0-bound}) and (\ref{R2'-bound}), $\|\mathcal C_1\|_{L_y^2H_x^s}\lesssim_s$ the right-hand side of (\ref{C2-bound}).

We can rewrite (\ref{P-factor2}) as
\begin{equation}\label{W'y}
(\partial_y+T_\sigma)W'=\mathcal R_1[e^{y|\nabla|}\phi]+\mathcal S_0[h,e^{y|\nabla|}\phi]+\mathcal C_1',
\end{equation}
where, by Lemma \ref{Taf-Lp} (ii), Lemma \ref{PkH-Lp} (iii), Lemma \ref{Eaf-Lp} (ii), Lemma \ref{E1f-Lp} (ii), Proposition \ref{u-Hs} (i) and Proposition \ref{u-Hs2} (i), applied to (\ref{w-def}), (\ref{S0}) and (\ref{W-def}),
\begin{align*}
\mathcal C_1'&=\mathcal C_1+\mathcal R_1[w-e^{y|\nabla|}\phi]+\mathcal S_0[h,u-e^{y|\nabla|}\phi]
\end{align*}
satisfies the same bound as $\mathcal C_1$.
(\ref{W'y}) can be further rewritten as
\[
(\partial_y+|\nabla_x|)W'=(|\nabla_x|-T_\sigma)W'+\mathcal R_1[e^{y|\nabla|}\phi]+\mathcal S_0[h,e^{y|\nabla|}\phi]+\mathcal C_1'.
\]

Put $W'' = W' - \mathcal{Q - S}$. Then
\begin{equation}\label{W''y}
\begin{aligned}
(\partial_y+|\nabla_x|)W''&=(|\nabla_x|-T_\sigma)W''+\mathcal C_1'',\\
\mathcal C_1''&=\mathcal C_1'+(|\nabla_x|-T_\sigma)(\mathcal Q+\mathcal S).
\end{aligned}
\end{equation}
By Lemma \ref{Taf-Lp} (ii) and Proposition \ref{QS-bound},
$\mathcal C_1''$ satisfies the same bound as $\mathcal C_1$,

As $y\to-\infty$, by Proposition \ref{u-Hs} (i'), Proposition \ref{u-Cr} (i') and the embedding $X^{r+} \subset W^{r,\infty}$, we know $\|\partial_y^2u\|_{L_x^\infty}$, $\|\partial_yu\|_{W_x^{1,\infty}}$ and $\|\nabla_{x,y}u\|_{H_x^2} \to 0$,
so by Lemma \ref{Taf-Lp} (ii) applied to (\ref{w-def}) and (\ref{W-def}),
$\nabla_{x,y}w$, $W$ and hence $W'=T_{\sqrt[4]{1+\alpha}}W\to0$ in $H_x^2$.
By Proposition \ref{QS-bound}, $\|\mathcal{Q+S}\|_{H_x^2}\lesssim \|e^{y|\nabla|}|\nabla|^{1/2}\phi\|_{H_x^{1/2}}\to0$,
so $W'' = W' - \mathcal{Q - S} \to 0$ in $H_x^2$.
Thus we can integrate (\ref{W''y}) from $y = -\infty$ to get
\[
W''(x,y)=\int_{-\infty}^y e^{(y'-y)|\nabla_x|}[(|\nabla_x|-T_\sigma)W''
+\mathcal C_1''](x,y')dy'.
\]
By Lemma \ref{K-bound} (ii) and Lemma \ref{Taf-Lp} (ii) then,
\begin{align*}
\|\nabla_xW''\|_{L_y^2H_x^s}+\|W''\|_{L_y^\infty H_x^{s+1/2}}
&\lesssim_s \|(|\nabla_x|-T_\sigma)W''+\mathcal C_1''\|_{L_y^2H_x^s}\\
&\lesssim_s \|\nabla h\|_{W^{1,\infty}}\|\nabla_xW''\|_{L_y^2H_x^s}
+\|\mathcal C_1''\|_{L_y^2H_x^s}.
\end{align*}
After absorbing the first term on the right-hand side in the left-hand side, we know that $W''$ satisfies the bound (\ref{C2-bound}).
By Lemma \ref{Taf-Lp} (ii),
$T_{1/\sqrt[4]{1+\alpha}}W''=W-T_{1/\sqrt[4]{1+\alpha}}(\mathcal{Q+S})$
has the same bound. By Lemma \ref{Taf-Lp} (ii) and Proposition \ref{QS-bound}, $T_{1/\sqrt[4]{1+\alpha}-1}(\mathcal{Q+S})$ has the same bound, so does $\mathcal C_2=W-\mathcal{Q-S}$, the sum of the two.
\end{proof}

Then we paralinearize the Dirichlet-to-Neumann operator (\ref{DN-u}).
\begin{proposition}\label{Gh-phi-w2}
If $s>4$ and $\|h\|_{H^s}<c_s$ is sufficiently small then
\[
G(h)\phi=[T_{\sqrt{1+\alpha}}^2\partial_yw-T_{\nabla h}\nabla_xw-\nabla\cdot T_Vh-H(\nabla h,\nabla_xu)+\mathcal C_3]_\Gamma,
\]
where $\|\mathcal C_3|_\Gamma\|_{H^{s+1/2}}\lesssim_s$ the right-hand side of (\ref{C2-bound}).
\end{proposition}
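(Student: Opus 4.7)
The plan is to expand the Dirichlet-to-Neumann formula (\ref{DN-u}) by Bony decomposition in the $x$-variable, substitute the good unknown $u = w + T_{\partial_yu}h$ from (\ref{w-def}), and identify the quasilinear operator $-\nabla\cdot T_Vh$ using the elliptic equation (\ref{u-Diri}) together with the definition $V = (\nabla_xu - (\partial_yu)\nabla h)|_\Gamma$ from (\ref{BV-u}). Concretely, Bony gives
\begin{align*}
(1+|\nabla h|^2)\partial_yu &= T_{1+\alpha}\partial_yu + T_{\partial_yu}\alpha + H(\alpha,\partial_yu),\\
\nabla h\cdot\nabla_xu &= T_{\nabla h}\cdot\nabla_xu + T_{\nabla_xu}\cdot\nabla h + H(\nabla h,\nabla_xu),
\end{align*}
so $-H(\nabla h,\nabla_xu)$ matches the target while $H(\alpha,\partial_yu)$ (quadratic in $\nabla h$) is absorbed into $\mathcal C_3$ via Lemma \ref{PkH-Lp} (iii). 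Substituting $\partial_yu = \partial_yw + T_{\partial_y^2u}h$ and $\nabla_xu = \nabla_xw + T_{\partial_yu}\nabla_xh + T_{\nabla_x\partial_yu}h$ into $T_{1+\alpha}\partial_yu$ and $T_{\nabla h}\cdot\nabla_xu$, and commuting paraproducts via Lemma \ref{Eaf-Lp}, produces $T_{\sqrt{1+\alpha}}^2\partial_yw$ (via $T_{1+\alpha} = T_{\sqrt{1+\alpha}}^2 - E(\sqrt{1+\alpha},\sqrt{1+\alpha})$) and $T_{\nabla h}\cdot\nabla_xw$ as required, plus a collection of lower-order paradifferential operators acting on $h$ and $\nabla_xh$ and $E$-commutator errors.

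The key algebraic step is to match the residual paradifferential operators with $-\nabla\cdot T_Vh$. At $y = 0$ one has $\partial_yu = B$, $\nabla_xu = V + B\nabla h$, and $\nabla B = \partial_y\nabla_xu|_{y=0}$, so (\ref{u-Diri}) yields
\[
(1+|\nabla h|^2)\partial_y^2u|_{y=0} = -\nabla\cdot V + \nabla h\cdot\nabla B,
\]
using $\nabla\cdot V = \Delta\phi - \nabla h\cdot\nabla B - B\Delta h$. Hence $T_{(1+\alpha)\partial_y^2u}h|_{y=0} = -T_{\nabla\cdot V}h + T_{\nabla h\cdot\nabla B}h$, and the offending $T_{\nabla h\cdot\nabla B}h$ (which cannot be controlled in $H^{s+1/2}$ by $\|h\|_{H^s}$ alone) is canceled by $-T_{\nabla h}\cdot T_{\nabla B}h = -T_{\nabla h\cdot\nabla B}h - E(\nabla h,\nabla B)h$ (via Lemma \ref{Eaf-Lp}) arising from the $T_{\nabla_x\partial_yu}h$ piece of the $\nabla_xu$ substitution, leaving only the smoother commutator $-E(\nabla h,\nabla B)h$. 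Meanwhile, $-T_V\cdot\nabla h$ emerges from $T_{\partial_yu}\alpha|_{y=0} - T_{\nabla_xu}\cdot\nabla h|_{y=0}$ after Bony-expanding $\alpha = 2T_{\nabla h}\nabla h + H(\nabla h,\nabla h)$, modulo quadratic-in-$\nabla h$ residues. Combining with the product rule $\nabla\cdot T_Vh = T_{\nabla\cdot V}h + T_V\cdot\nabla h$ finishes the identification.

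The remaining terms in $\mathcal C_3$ are estimated in $H^{s+1/2}$ using Lemma \ref{Taf-Lp} (ii) for paraproducts, Lemma \ref{Eaf-Lp} (ii) for commutators (which gain one derivative), and Lemma \ref{PkH-Lp} (iii) for Bony remainders; the Sobolev norms of $\nabla_{x,y}^ju$ appearing in the bounds are converted to those of $|\nabla|^{1/2}\phi$ and $\nabla h$ via Propositions \ref{u-Hs} (ii) and \ref{u-Cr} (ii), and the quadratic bound Proposition \ref{u-Hs2} (ii) is used to extract $\|\nabla h\|_{L^\infty}$ from any term only formally linear in $\nabla h$, thereby producing the quadratic-in-$(\nabla h, h)$ structure demanded by (\ref{C2-bound}). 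The main obstacle will be the careful bookkeeping needed to verify the cancellation above and the analogous ones producing $-T_V\cdot\nabla h$, ensuring that every residual paraproduct acting on $h$ either gains a derivative via an $E$-commutator or carries an extra factor of $\nabla h$, so that the $H^{s+1/2}$ bound is attainable from $\|h\|_{H^s}$ rather than the naive $\|h\|_{H^{s+1/2}}$.
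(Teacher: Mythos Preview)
Your proposal is correct and follows essentially the same route as the paper: Bony-decompose (\ref{DN-u}), substitute $u=w+T_{\partial_yu}h$, and use the elliptic equation (\ref{u-Diri}) restricted to $\Gamma$ in the form $(1+\alpha)\partial_y^2u-\nabla h\cdot\nabla B+\nabla\cdot V=0$ to cancel the dangerous $T_{\nabla h\cdot\nabla B}h$ and produce $-\nabla\cdot T_Vh$. The paper shortcuts the initial Bony step by quoting the identity after (B.54) of \cite{DeIoPaPu}, and it does not need Proposition~\ref{u-Hs2} here---every term in the final $\mathcal C_3|_\Gamma$ is already either an $E$- or $E_1$-commutator or carries an explicit extra factor of $\nabla h$, so the quadratic structure in (\ref{C2-bound}) comes out algebraically rather than through the quadratic remainder bound on $u$.
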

\begin{proof}
By the display after (B.54) of \cite{DeIoPaPu}, the left-hand side equals
\[
T_{1+\alpha}\partial_yw-T_{\nabla h}\nabla_xw-T_{\nabla_xu}\nabla h+T_{\nabla h}T_{\partial_yu}\nabla h-H(\nabla h,\nabla_xu)+\mathcal C_3',
\]
where
\[
\mathcal C_3'=T_{1+\alpha}T_{\partial_y^2u}h+T_{\partial_yu}\alpha
-2T_{\nabla h}T_{\partial_yu}\nabla h-T_{\nabla h}T_{\nabla_x\partial_yu}h
+H(\alpha,\partial_yu).
\]
Recall $B=[\partial_yu]_\Gamma$ and $V=[\nabla_xu-B\nabla h]_\Gamma$ are horizontal and vertical components of the boundary velocity.
Then we obtain the claimed identity, with
\begin{align*}
\mathcal C_3&=\mathcal C_3'+T_{\nabla\cdot V}h+E(\nabla h,\partial_yu)\nabla h-E_1(\sqrt{1+\alpha}-1,\sqrt{1+\alpha}-1)\partial_yw\\
&=(T_{1+\alpha}T_{\partial_y^2u}-T_{\nabla h}T_{\nabla_x\partial_yu}+T_{\nabla\cdot V})h+T_{\partial_yu}\alpha-2T_{\nabla h}T_{\partial_yu}\nabla h+E(\nabla h,\partial_yu)\nabla h\\
&+H(\alpha,\partial_yu)-E_1(\sqrt{1+\alpha}-1,\sqrt{1+\alpha}-1)\partial_yw,\\
\mathcal C_3|_\Gamma&=(E(\alpha,\partial_y^2u|_\Gamma)-E(\nabla h,\nabla B))h
+2E_1(B,\nabla h)\nabla h-E_1(\nabla h,B)\nabla h\\
&+T_BH(\nabla h,\nabla h)+H(\alpha,B)-E_1(\sqrt{1+\alpha}-1,\sqrt{1+\alpha}-1)(B+T_{\partial_y^2u|_\Gamma}h).
\end{align*}
Note that we have used the fact that $h$ is independent of $y$,
and the fact that the elliptic equation in (\ref{u-Diri}) satisfied by $u$ takes the form
\[
(1+\alpha)\partial_y^2u-\nabla h\cdot\nabla B+\nabla\cdot V=0
\]
when restricted to $\Gamma$. By Lemma \ref{Lmq=Lq}, Lemma \ref{Taf-Lp} (ii), Lemma \ref{PkH-Lp} (iii), Lemma \ref{Eaf-Lp} (ii) and Lemma \ref{E1f-Lp} (ii),
\begin{align*}
\|\mathcal C_3|_\Gamma\|_{H^{s+1/2}}
&\lesssim_s \|\nabla h\|_{W^{2,\infty}}(\|\partial_y^2u\|_{L_y^\infty W_x^{1,\infty}}+\|B\|_{W^{2,\infty}})\|h\|_{H^{s-1/2}}\\
&+\|\nabla h\|_{W^{2,\infty}}^2(\|B\|_{H^{s-3/2}}
+\|\partial_y^2u\|_{L_{x,y}^\infty}\|h\|_{H^{s-3/2}}).
\end{align*}
Then the desired bound follows from Proposition \ref{u-Hs} (ii) and Proposition \ref{u-Cr} (ii). Note that the last term is dominated by the first one.
\end{proof}

Now we can paralinearize the Zakharov system (\ref{Zakharov}).
Recall $\lambda$ from (\ref{l-def}).
\begin{proposition}
If $s>4$ and $\|h\|_{H^s}<c_s$ is sufficiently small then
\[
h_t=T_\lambda(w|_\Gamma)-\nabla\cdot T_Vh+\mathcal Q|_\Gamma+\mathcal S_h+\mathcal C_h,
\]
where $\mathcal Q$ is given by (\ref{QS}),
\[
\mathcal S_h=\mathcal S|_\Gamma-H(\nabla h,\nabla\phi)
\]
and $\|\mathcal C_h\|_{H^{s+1/2}}\lesssim_s$ the right-hand side of (\ref{C2-bound}).
\end{proposition}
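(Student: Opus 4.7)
The plan is to start from $h_t = G(h)\phi$ and apply Proposition \ref{Gh-phi-w2} to obtain
\[
h_t = \bigl[T_{\sqrt{1+\alpha}}^2\partial_y w - T_{\nabla h}\nabla_x w - \nabla\cdot T_V h - H(\nabla h,\nabla_x u) + \mathcal{C}_3\bigr]_\Gamma,
\]
which already contains the desired $-\nabla\cdot T_V h$ and an $\mathcal{O}(\mathcal{C}_h)$-remainder $\mathcal{C}_3|_\Gamma$. Next, I substitute (\ref{Taw}), the first-order factorization $T_{\sqrt{1+\alpha}}\partial_y w = (\mathcal{A+B})w + \mathcal{Q} + \mathcal{S} + \mathcal{C}_2$, multiplied by $T_{\sqrt{1+\alpha}}$ on the left. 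The terms $T_{\sqrt{1+\alpha}}(\mathcal{Q+S})$ split as $\mathcal{Q+S}$ plus $T_{\sqrt{1+\alpha}-1}(\mathcal{Q+S})$, the former contributing the $\mathcal{Q}|_\Gamma$ and $\mathcal{S}|_\Gamma$ in the statement (after restriction) and the latter being an acceptable error by Lemma \ref{Taf-Lp} (ii) together with Proposition \ref{QS-bound}. The term $T_{\sqrt{1+\alpha}}\mathcal{C}_2$ is directly absorbed using (\ref{C2-bound}).

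The heart of the argument is to match $T_{\sqrt{1+\alpha}}(\mathcal{A+B})w$ with $T_\lambda(w|_\Gamma) + iT_{\nabla h\cdot\zeta}w - T_{\nabla h}\nabla_x w + $ error, so that the $iT_{\nabla h\cdot\zeta}$ piece cancels the $-T_{\nabla h}\nabla_x w$ already present. I expand via Lemma \ref{E1f-Lp}:
\begin{align*}
T_{\sqrt{1+\alpha}}(iT_a+T_b)
&= iT_{a\sqrt{1+\alpha}} + T_{b\sqrt{1+\alpha}}
+ T_{-\frac12\{\sqrt{1+\alpha},a\} + \frac{i}{2}\{\sqrt{1+\alpha},b\}}\\
&\quad + iE_1(\sqrt{1+\alpha},a) + E_1(\sqrt{1+\alpha},b).
\end{align*}
By (\ref{ab-eqn}), $a^{(1)}\sqrt{1+\alpha} = \nabla h\cdot\zeta$ and $2a^{(0)}\sqrt{1+\alpha} = -\{\sqrt{1+\alpha},b^{(1)}\}\varphi_{\ge0}(\zeta)$, while by (\ref{l-def}) the symbol $\lambda$ is defined precisely so that $b\sqrt{1+\alpha}$ plus the half-order Poisson-bracket correction coming from Weyl-ordering of $iT_{\nabla h\cdot\zeta}$ equals $T_\lambda$ modulo a term of order $-1$ with degree-$3$ coefficients in $h$. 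The Weyl-quantization identity $iT_{\nabla h\cdot\zeta} w = T_{\nabla h}\cdot\nabla w + \tfrac12 T_{\Delta h}w$ (up to paradifferential error) is what introduces the $\tfrac12\Delta h\,\varphi_{\ge0}(\zeta)$ correction in $\lambda^{(0)}$, ensuring exact cancellation of the divergence term at the symbol level.

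Finally, I rewrite $H(\nabla h,\nabla_x u|_\Gamma) = H(\nabla h,\nabla\phi) - H(\nabla h, B\nabla h)$ using $\nabla_x u|_\Gamma = \nabla\phi - B\nabla h$, so that $H(\nabla h,\nabla\phi)$ becomes part of $\mathcal{S}_h$ while $H(\nabla h,B\nabla h)$ is cubic and bounded through Lemma \ref{PkH-Lp} (iii) and (\ref{Gh-Cr}). The remaining error $\mathcal{C}_h$ collects (a) all $E_1$-commutators from the symbol calculus, (b) the $T_{\sqrt{1+\alpha}-1}(\mathcal{Q+S})$ and $T_{\sqrt{1+\alpha}}\mathcal{C}_2$ contributions, (c) $\mathcal{C}_3|_\Gamma$, and (d) the cubic $H(\nabla h,B\nabla h)$ piece; each is controlled by Lemmas \ref{Taf-Lp}--\ref{E1f-Lp} together with (\ref{C2-bound}), (\ref{Gh-Cr}), and Proposition \ref{QS-bound}, yielding exactly the claimed bound on $\|\mathcal{C}_h\|_{H^{s+1/2}}$. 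The principal obstacle is the symbolic bookkeeping in the middle step: one must verify that the particular correction $\tfrac12(\Delta h - \{\sqrt{1+\alpha},a^{(1)}\})\varphi_{\ge0}(\zeta)$ inserted in $\lambda^{(0)}$ is precisely what is needed to absorb both the half-order Weyl-ordering shift from $iT_{\nabla h\cdot\zeta}$ and the $\tfrac{i}{2}\{\sqrt{1+\alpha},b^{(1)}\}$ Poisson bracket arising when passing $T_{\sqrt{1+\alpha}}$ past $T_b$, leaving behind only terms of operator order $\le 0$ with quadratic-or-higher coefficients in $h$.
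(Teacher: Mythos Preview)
Your approach is essentially the same as the paper's: start from Proposition \ref{Gh-phi-w2}, insert (\ref{Taw}), use the $E_1$/Poisson-bracket calculus to combine $T_{\sqrt{1+\alpha}}(\mathcal{A+B})w-T_{\nabla h}\nabla_xw$ into $T_\lambda w$, and collect remainders. The symbolic bookkeeping you outline (the cancellation of $iT_{a^{(0)}\sqrt{1+\alpha}}$ against $\tfrac{i}{2}T_{\{\tilde\alpha,b^{(1)}\}\varphi_{\ge0}}$, and the Weyl identity $T_{i\nabla h\cdot\zeta}-T_{\nabla h}\cdot\nabla=\tfrac12T_{\Delta h}$) is exactly what the paper does in its equations for $T_{\sqrt{1+\alpha}}\mathcal Bw$ and $(T_{\sqrt{1+\alpha}}\mathcal A-T_{\nabla h}\nabla_x)w$.

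There is one slip: you write $\nabla_xu|_\Gamma=\nabla\phi-B\nabla h$, but this is the formula for $V=\nabla_x\Phi|_\Gamma$, not for $\nabla_xu|_\Gamma$. Since $\phi(x)=u(x,0)$, one has $\nabla_xu|_\Gamma=\nabla\phi$ exactly, so $H(\nabla h,\nabla_xu)|_\Gamma=H(\nabla h,\nabla\phi)$ with no cubic leftover. Your extra term $H(\nabla h,B\nabla h)$ is spurious, though harmless because you correctly bound it; simply drop that step.
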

\begin{proof}
Let $\tilde\alpha = \sqrt{1 + \alpha} - 1$.
Putting (\ref{Taw}) into Proposition \ref{Gh-phi-w2} gives
\begin{equation}\label{Gh-phi-w3}
\begin{aligned}
G(h)\phi&=[T_{\sqrt{1+\alpha}}((\mathcal{A+B})w+\mathcal{Q+S+C}_3)
-T_{\nabla h}\nabla_xw-\nabla\cdot T_Vh+\mathcal C_2\\
&-H(\nabla h,\nabla_xu)]_\Gamma\\
&=[T_{\sqrt{1+\alpha}}(\mathcal{A+B})w-T_{\nabla h}\nabla_xw-\nabla\cdot T_Vh+\mathcal Q+\mathcal S_h+\mathcal C_{h1}]_\Gamma,\\
\mathcal C_{h1}&=\mathcal C_2+T_{\tilde\alpha}(\mathcal{Q+S})
+T_{\sqrt{1+\alpha}}\mathcal C_3.
\end{aligned}
\end{equation}
By Lemma \ref{Taf-Lp} (ii), Proposition \ref{QS-bound} and (\ref{C2-bound}), $\mathcal C_{h1}|_\Gamma$ satisfies the desired bound.

We also have
\begin{equation}\label{TaB}
\begin{aligned}
T_{\sqrt{1+\alpha}}\mathcal Bw
&=T_{b\sqrt{1+\alpha}}w+\frac i2T_{\{\tilde\alpha,b\}}w+E_1(\tilde\alpha,b)w
=T_{b\sqrt{1+\alpha}}w+\frac i2T_{\{\tilde\alpha,b^{(1)}\}\varphi_{\ge0}(\zeta)}w+\mathcal C_{h2},\\
\mathcal C_{h2}&=\frac i2T_{\{\tilde\alpha,b^{(1)}\}\varphi_{<0}(\zeta)
+\{\tilde\alpha,b^{(0)}\}}w+E_1(\tilde\alpha,b)w.
\end{aligned}
\end{equation}
Note that near $\zeta=0$, the operator in $\mathcal C_{h2}$ is $\frac i2T_{\{\tilde\alpha,b^{(1)}\}}+E_1(\tilde\alpha,b)=E(\tilde\alpha,b)$,
which vanishes to degree 1.
Hence by Lemma \ref{Taf-Lp} (ii), Lemma \ref{E1f-Lp} (ii) and Proposition \ref{u-Hs} (ii) applied to (\ref{w-def}) and (\ref{ab-soln}),
$\mathcal C_{h2}|_\Gamma$ satisfies the desired bound. Also, since
\begin{align*}
T_{\sqrt{1+\alpha}}\mathcal A-T_{\nabla h}\circ\nabla_x
&=T_{i\nabla h\cdot\zeta}-T_{\nabla h}\circ\nabla_x+iT_{a^{(0)}\sqrt{1+\alpha}}-\frac12T_{\{\tilde\alpha,a\}}+iE_1(\tilde\alpha,a)\\
&=\frac12T_{\Delta h}+iT_{a^{(0)}\sqrt{1+\alpha}}-\frac12T_{\{\tilde\alpha,a\}}+iE_1(\tilde\alpha,a),
\end{align*}
we have
\begin{equation}\label{TaA}
\begin{aligned}
(T_{\sqrt{1+\alpha}}\mathcal A-T_{\nabla h}\nabla_x)w
&=iT_{a^{(0)}\sqrt{1+\alpha}}w+\frac12T_{(\Delta h-\{\tilde\alpha,a^{(1)}\})\varphi_{\ge0}(\zeta)}w+\mathcal C_{h3},\\
\mathcal C_{h3}&=\frac12T_{(\Delta h-\{\tilde\alpha,a^{(1)}\})\varphi_{<0}(\zeta)-\{\tilde\alpha,a^{(0)}\}}w+iE_1(\tilde\alpha,a)w.
\end{aligned}
\end{equation}
For the same reason, $\mathcal C_{h3}|_\Gamma$ satisfies the desired bound.

From the third line of (\ref{ab-soln}), we know that the second term in $T_{\sqrt{1+\alpha}}\mathcal Bw$ and the first term in $T_{\sqrt{1+\alpha}}\mathcal A-T_{\nabla h}\nabla_xw$ cancel,
so putting (\ref{TaB}) and (\ref{TaA}) in (\ref{Gh-phi-w3}) we get
\begin{align*}
G(h)\phi &= \left[ T_{b\sqrt{1+\alpha}}w + \mathcal C_{h2}
+\frac12T_{(\Delta h-\{\tilde\alpha,a^{(1)}\})\varphi_{\ge0}(\zeta)}w+\mathcal C_{h3}-\nabla\cdot T_Vh+\mathcal Q+\mathcal S_h+\mathcal C_{h1} 
\right]_\Gamma\\
&=T_\lambda(w|_\Gamma)-\nabla\cdot T_Vh+\mathcal Q|_\Gamma+\mathcal S_h+\mathcal C_h,
\end{align*}
where $\mathcal C_h = [\sum_{j=1}^3 \mathcal C_{hj}]_\Gamma$ satisfies the desired bound.
\end{proof}

%By the last line of (\ref{ab-soln}) and the Jacobi identity for Poisson brackets,
%\begin{align*}
%b^{(0)}\sqrt{1+\alpha}
%&=-\frac{(1+\alpha)^{3/2}}{2\lambda^{(1)}}\left\{ \frac{\nabla h\cdot\zeta}{1+\alpha},\frac{\lambda^{(1)}}{\sqrt{1+\alpha}} \right\}\varphi_{\ge0}(\zeta)\\
%&=\left( -\frac{(1+\alpha)^2}{2\lambda^{(1)}}\left\{ \frac{\nabla h\cdot\zeta}{1+\alpha},\frac{\lambda^{(1)}}{1+\alpha} \right\}
%-\frac{\sqrt{1+\alpha}}2\left\{ \frac{\nabla h\cdot\zeta}{1+\alpha},\sqrt{1+\alpha} \right\} \right)\varphi_{\ge0}(\zeta)\\
%&=\left( \frac{(1+\alpha)^2}{2\lambda^{(1)}}\left\{ \frac{\lambda^{(1)}}{1+\alpha},\frac{\nabla h\cdot\zeta}{1+\alpha} \right\}
%-\frac12\left\{ \frac{\nabla h\cdot\zeta}{\sqrt{1+\alpha}},\sqrt{1+\alpha} \right\} \right)\varphi_{\ge0}(\zeta).
%\end{align*}
%This gives an alternative expression for $\lambda^{(0)}$:
%\begin{equation}\label{l0-alt}
%\lambda^{(0)}
%=\left( \frac{(1+\alpha)^2}{2\lambda^{(1)}}\left\{ \frac{\lambda^{(1)}}{1+\alpha},\frac{\nabla h\cdot\zeta}{1+\alpha} \right\}+\frac12\Delta h \right)\varphi_{\ge0}(\zeta).
%\end{equation}

We now show the paralinearization of the second equation in (\ref{Zakharov}).
\begin{proposition}
\begin{align*}
w_t|_\Gamma&=-T_ah-T_V\cdot\nabla(w|_\Gamma)+\mathcal S_w+\mathcal C_w,\\
a&=1+B_t+V\cdot\nabla B,\\
\mathcal S_w&=\frac12(H(B,B)-H(V,V)).
\end{align*}
If $s>5/2$ and $\|\nabla h\|_{H^{s-1}\cap H^5}<c_s$ is sufficiently small, then
\[
\|\mathcal C_w\|_{H^{s+1/2}}
\lesssim_s \||\nabla|^{1/2}\phi\|_{C_*^3}(\|\nabla h\|_{W^{2,\infty}}\||\nabla|^{1/2}\phi\|_{H^{s-1}}+\||\nabla|^{1/2}\phi\|_{C_*^4}\|h\|_{H^{s-3/2}}).
\]
\end{proposition}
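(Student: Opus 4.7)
The plan is to differentiate the Alinhac good unknown $w|_\Gamma = \phi - T_B h$ in time and then substitute both equations of the Zakharov system: starting from $\phi_t + h = N_\phi$ with
\[
N_\phi = \tfrac{1}{2}B^2 - BV\cdot\nabla h - \tfrac{1}{2}|V|^2
\]
by (\ref{phi-t}), and $h_t = G(h)\phi = B - V\cdot\nabla h$ by (\ref{Gh-phi}), I would paralinearize every quadratic product via $fg = T_f g + T_g f + H(f,g)$. The key algebraic observation is the cancellation $T_B B - T_B(V\cdot\nabla h) = T_B(B - V\cdot\nabla h) = T_B h_t$, which exactly eliminates the $-T_B h_t$ coming from differentiating $T_B h$. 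After this step the expression reduces to
\[
w_t|_\Gamma = -h - T_{B_t}h - T_{V\cdot\nabla h} B - T_V\cdot V + \mathcal S_w - H(B,V\cdot\nabla h),
\]
with $\mathcal S_w = \tfrac{1}{2}(H(B,B) - H(V,V))$ emerging as the semilinear remainder.

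Next I would convert $-T_V\cdot V$ into $-T_V\cdot\nabla(w|_\Gamma)$ by combining $V = \nabla\phi - B\nabla h$ with the identity $\nabla(T_B h) = T_{\nabla B}h + T_B\nabla h$; this produces
\[
-T_V\cdot V = -T_V\cdot\nabla(w|_\Gamma) + T_V\cdot T_{\nabla h} B - T_V\cdot T_{\nabla B} h + T_V\cdot H(B,\nabla h).
\]
Applying Lemma \ref{Eaf-Lp} to replace $T_V\cdot T_{\nabla h}$ by $T_{V\cdot\nabla h} + E(V,\nabla h)$ and $T_V\cdot T_{\nabla B}$ by $T_{V\cdot\nabla B} + E(V,\nabla B)$, the leftover paraproducts regroup exactly into $T_a h = h + T_{B_t}h + T_{V\cdot\nabla B}h$, and the advertised identity holds with
\[
\mathcal C_w = E(V,\nabla h)B - E(V,\nabla B)h + T_V\cdot H(B,\nabla h) - H(B,V\cdot\nabla h).
\]

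For the $H^{s+1/2}$ estimate on $\mathcal C_w$, I would exploit two structural features. First, since $V$, $\nabla h$ and $\nabla B$ are independent of $\zeta$, their Poisson brackets vanish and Lemma \ref{E1f-Lp} identifies $E = E_1$ on these pairs, giving a two-derivative gain; with $m_1 = m_2 = 0$ this lets $B$ and $h$ enter only at $H^{s-3/2}$, which (\ref{Gh-Hs}) bounds by $\||\nabla|^{1/2}\phi\|_{H^{s-1}}$ and $\|h\|_{H^{s-3/2}}$ respectively, while the $W^{2,\infty}$ seminorms of $V$, $\nabla h$ and $\nabla B$ are converted by (\ref{Gh-Cr}) into $\|\nabla h\|_{W^{2,\infty}}$, $\||\nabla|^{1/2}\phi\|_{C_*^3}$ and $\||\nabla|^{1/2}\phi\|_{C_*^4}$. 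Second, Lemma \ref{PkH-Lp}(iii) with $m = 2$ transfers two derivatives onto the smoother factor, so $T_V\cdot H(B,\nabla h)$ is controlled by $\|V\|_{L^\infty}\|\nabla h\|_{W^{2,\infty}}\|B\|_{H^{s-3/2}}$ via Lemma \ref{Taf-Lp}(ii), which is of the first claimed type.

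The hard part will be the last term $H(B,V\cdot\nabla h)$, for which a naive Sobolev multiplication on $V\cdot\nabla h$ yields $\|h\|_{H^{s-1/2}}$ rather than the sharp $\|h\|_{H^{s-3/2}}$. I would circumvent this by further paralinearizing $V\cdot\nabla h = T_V\cdot\nabla h + T_{\nabla h}\cdot V + H(V,\nabla h)$, controlling the $W^{2,\infty}$ norm of each summand by $\|V\|_{L^\infty}\|\nabla h\|_{W^{2,\infty}}$ (using Lemma \ref{Taf-Lp}(i) for the paraproducts and Lemma \ref{PkH-Lp}(i) for the $H$-remainder) before pairing with $\|B\|_{H^{s-3/2}}$ via Lemma \ref{PkH-Lp}(iii). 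The main technical obstacle is therefore the bookkeeping: verifying that each application of (\ref{Gh-Hs})–(\ref{Gh-Cr}) is licensed by the smallness assumption $\|\nabla h\|_{H^{s-1}\cap H^5} < c_s$, and that the two-derivative gain from $E_1$ really lands at the sharp regularity level claimed.
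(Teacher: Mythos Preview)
Your proof is correct and follows essentially the same route as the paper: the derivation of the explicit form of $\mathcal C_w$ is identical (the paper writes $E_1$ where you write $E$, but as you note these coincide for $\zeta$-independent symbols), and the estimate uses the same lemmas. The only simplification you miss concerns the last term $H(B,V\cdot\nabla h)$: Lemma~\ref{PkH-Lp}(iii) with $m=2$ lets you place $V\cdot\nabla h$ in $W^{2,\infty}$ (bounded by $\|V\|_{W^{2,\infty}}\|\nabla h\|_{W^{2,\infty}}$) and $B$ in $H^{s-3/2}$, so this term already fits the first displayed type and no further paralinearization is needed.
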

\begin{proof}
By (\ref{w-def}), (\ref{Zakharov}), (\ref{phi-t}) and (\ref{Gh-phi}),
\begin{align*}
w_t|_\Gamma&=\phi_t-T_{B_t}h-T_Bh_t\\
&=-T_{1+B_t}h+\frac12(B^2-2BV\cdot\nabla h-|V|^2)-T_BB+T_B(\nabla h\cdot V)\\
&=-T_{1+B_t}h-T_{\nabla h\cdot V}B-T_V\cdot V+\mathcal S_w-H(B,\nabla h\cdot V).
\end{align*}
By Definition \ref{E1f-def} and (\ref{BV-u}),
\begin{align*}
T_{\nabla h\cdot V}B+T_VV&=T_V\cdot(T_{\nabla h}B+V)-E_1(V,\nabla h)B\\
&=T_V\cdot\nabla\phi-E_1(V,\nabla h)B-T_V\cdot T_B\nabla h-T_V\cdot H(B,\nabla h)\\
&=T_V\cdot\nabla(w|_\Gamma)+T_V\cdot T_{\nabla B}h-E_1(V,\nabla h)B-T_V\cdot H(B,\nabla h)\\
&=T_V\cdot\nabla(w|_\Gamma)+T_{V\cdot\nabla B}h+E_1(V,\nabla B)h-E_1(V,\nabla h)B-T_V\cdot H(B,\nabla h)
\end{align*}
so
\[
w_t|_\Gamma=-T_ah-T_V\cdot\nabla(w|_\Gamma)+\mathcal S_w+\mathcal C_w
\]
where
\[
\mathcal C_w=E_1(V,\nabla h)B-E_1(V,\nabla B)h+T_V\cdot H(B,\nabla h)
-H(B,V\cdot\nabla h).
\]

By Lemma \ref{Taf-Lp} (ii), Lemma \ref{PkH-Lp} (iii) and Lemma \ref{E1f-Lp} (ii),
\[
\|\mathcal C_w\|_{H^{s+1/2}}
\lesssim_s \|V\|_{W^{2,\infty}}(\|\nabla h\|_{W^{2,\infty}}\|B\|_{H^{s-3/2}}+\|\nabla B\|_{W^{2,\infty}}\|h\|_{H^{s-3/2}}).
\]
Then the result follows from Proposition \ref{u-Hs} (ii) and Proposition \ref{u-Cr} (ii).
\end{proof}

\subsection{Taylor expansion}\label{Taylor}
Here we bound the remainders of various Taylor expansions.
Recall from (\ref{a-def}) that $a=1+B_t+V\cdot\nabla B$,
so its first order approximation is $a\approx 1+|\nabla|\phi_t\approx 1-|\nabla|h$, and $a_t\approx -|\nabla|h_t=\Delta\phi$.
\begin{proposition}
(i) If $\|\nabla h\|_{H^{r+2}}<c_r$ is sufficiently small then
\begin{equation}
\|(G(h)\phi-|\nabla|\phi,B-|\nabla|\phi,V-\nabla\phi)\|_{C_*^r}
\lesssim_r \|h\|_{C_*^{r+1}}\||\nabla|^{1/2}\phi\|_{C_*^{r+3/2}}.
\end{equation}

(ii) If $\|\nabla h\|_{H^{r+2}}<c_r$ is sufficiently small then
\begin{align}
\|a-1\|_{C_*^r}&\lesssim_r \||\nabla|^{1/2}\phi\|_{C_*^{r+3/2}}^2+\|h\|_{C_*^{r+1}},\\
\|(a-1+|\nabla|h,\sqrt a-1+|\nabla|h/2)\|_{C_*^r}&\lesssim_r \||\nabla|^{1/2}\phi\|_{C_*^{r+3/2}}^2+\|h\|_{C_*^{r+1}}\||\nabla|^{1/2}h\|_{C_*^{r+3/2}}.
\end{align}

(iii) If $\|\nabla h\|_{H^{r+3}}<c_r$ is sufficiently small then
\begin{equation}
\|(a_t-\Delta\phi,\partial_t\sqrt a-\Delta\phi/2)\|_{C_*^r}
\lesssim_r (\||\nabla|^{1/2}\phi\|_{C_*^{r+5/2}}^2+\|h\|_{C_*^{r+2}})\||\nabla|^{1/2}\phi\|_{C_*^{r+5/2}}.
\end{equation}
\end{proposition}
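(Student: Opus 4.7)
The plan is to start from the defining identity $a = 1 + B_t + V\cdot\nabla B$, differentiate once more in time to obtain
\[
a_t = B_{tt} + V_t\cdot\nabla B + V\cdot\nabla B_t,
\]
and then expand each summand around its leading-order approximation, reading off the main term $\Delta\phi$ and controlling the remainder. The evolution equations $h_t = G(h)\phi$ and $\phi_t = -h + \tfrac12((1+|\nabla h|^2)B^2 - |V|^2)$ from (\ref{phi-t}), together with the Taylor expansions of $B$, $V$, and $G(h)\phi$ already recorded in Proposition \ref{Taylor1}(i) and formulas (\ref{B23})--(\ref{Gh-phi-int2}), will do all the work.

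First I would observe that, to leading order, $B\approx |\nabla|\phi$, so
\[
B_{tt} \approx |\nabla|\phi_{tt} = |\nabla|\partial_t(-h + (\phi_t+h)) = -|\nabla|h_t + |\nabla|(\phi_t+h)_t = -|\nabla|^2\phi + R_1,
\]
where $R_1$ packages two contributions: the Taylor remainder $-|\nabla|(G(h)-|\nabla|)\phi$ (bilinear in $(h,\phi)$ with a $\phi$-factor, by Proposition \ref{Taylor1}(i)), and the time derivative of $\phi_t+h = \tfrac12((1+|\nabla h|^2)B^2-|V|^2)$, which is quadratic in the $\phi$-type objects $B, V$ and therefore retains at least one $\phi$-factor even after time differentiation. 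Since $-|\nabla|^2 = \Delta$, this gives the main term. The cross terms contribute the bilinear piece
\[
V_t\cdot\nabla B + V\cdot\nabla B_t \approx -\nabla h\cdot\nabla|\nabla|\phi - \nabla\phi\cdot\nabla|\nabla|h,
\]
of type $h\cdot\phi$, plus higher-order remainders whose factors of $B-|\nabla|\phi$ or $V-\nabla\phi$ are bounded in $C_*^{r'}$ by $\|h\|_{C_*^{r'+1}}\||\nabla|^{1/2}\phi\|_{C_*^{r'+3/2}}$ via Proposition \ref{Taylor1}(i). Assembling everything using the Banach algebra property of $C_*^r$ (for $r>0$) and the Sobolev-type embeddings stated in subsection~\ref{BVPEst} will give the first estimate.

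For the second estimate I would write
\[
\partial_t\sqrt a - \frac{\Delta\phi}{2} = \frac{a_t-\Delta\phi}{2\sqrt a} + \frac{\Delta\phi}{2}\cdot\frac{1-\sqrt a}{\sqrt a},
\]
and bound the two summands separately: the first by the just-proved estimate on $a_t-\Delta\phi$ together with the smallness of $\|a-1\|_{C_*^r}$ from Proposition \ref{Taylor1}(ii) (which makes $1/\sqrt a$ bounded in $C_*^r$), and the second by combining $\|\Delta\phi\|_{C_*^r}\lesssim \||\nabla|^{1/2}\phi\|_{C_*^{r+5/2}}$ with the bound on $\sqrt a - 1$ from Proposition \ref{Taylor1}(ii), after absorbing the $\|h\|\cdot\||\nabla|^{1/2}h\|$-type piece into $\|h\|_{C_*^{r+2}}$ using smallness of the latter.

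The main obstacle will be keeping precise track of which bilinear combinations of $(h,\phi)$ appear in each remainder, because the right-hand side of the estimate contains no pure $\|h\|^2$ contribution: every term in $a_t - \Delta\phi$ must retain at least one $\phi$-factor. This works because every quadratic piece of $a-1+|\nabla|h = B_t+V\cdot\nabla B+|\nabla|h$ either comes from $V\cdot\nabla B$ (two $\phi$-factors at leading order), from $\phi_t+h$ (a quadratic form in $B$ and $V$, both $\phi$-type at leading order), or from the Taylor remainder $B-|\nabla|\phi$ (controlled in Proposition \ref{Taylor1}(i) by $\|h\|\cdot\||\nabla|^{1/2}\phi\|$); a further time derivative replaces $\phi$ by $-h$ or $h$ by $|\nabla|\phi$, so $\phi$-factors are either preserved or swapped for an $h$-factor while another $\phi$ is still present, and no pure $h^2$ term can arise.
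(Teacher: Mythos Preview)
Your approach reaches the main term $\Delta\phi$ correctly, but the central difficulty is left unaddressed: you write $B_{tt}\approx|\nabla|\phi_{tt}$ and then let $R_1$ collect only the pieces coming from $|\nabla|\phi_{tt}$, never returning to bound $\partial_t^2(B-|\nabla|\phi)=\partial_t^2(B_2+B_3)$. Proposition~\ref{Taylor1}(i) controls $B-|\nabla|\phi$, not its time derivatives; to handle $(B_3)_{tt}$ from the integral formula (\ref{B23}) you would have to differentiate twice under the integral and invoke shape-derivative identities for $\partial_t G(sh)$ and $\partial_t B(sh)$ at each level, which is considerably more than your sketch acknowledges. Your last paragraph compounds this: you list ``the Taylor remainder $B-|\nabla|\phi$'' as one of the quadratic sources in $a-1+|\nabla|h=B_t+V\cdot\nabla B+|\nabla|h$, but what actually sits there is $(B-|\nabla|\phi)_t$, whose quadratic part (compute $(B_2)_t$) is of type $\phi^2+h^2$, not $h\cdot\phi$. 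The conclusion ``no pure $h^2$ term in $a_t-\Delta\phi$'' happens to be right, but not for the reason you give.

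The paper sidesteps $B_{tt}$ entirely. Using the identity $\partial_tG(h)\phi=G(h)[\phi_t-Bh_t]-\nabla\cdot(h_tV)$ (equation~(\ref{dt-Gh-phi})), it first derives a \emph{time-derivative-free} closed formula for $a$, equation~(\ref{a-expr}):
\[
a=\frac{1}{1+|\nabla h|^2}\Bigl(1+\tilde a-\tfrac12 G(h)[B^2+|V|^2+2h]\Bigr),
\]
with $\tilde a$ an explicit polynomial in $B,V,\nabla h$. Only a \emph{single} $\partial_t$ is then needed to reach $a_t$; the resulting $\partial_tG(h)[\cdot]$ terms are again dispatched by the same shape-derivative identity, and the comparison with $\Delta\phi$ is completed by estimating $\partial_tG(h)h-G(h)^2\phi$ and $G(h)^2\phi+\Delta\phi$ via part~(i). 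This explicit representation of $a$ (or equivalently of $B_t$) without residual time derivatives is the missing ingredient in your argument; once you have it, your decomposition $\partial_t\sqrt a-\Delta\phi/2=(a_t-\Delta\phi)/(2\sqrt a)+\tfrac12\Delta\phi\,(1-\sqrt a)/\sqrt a$ for the second estimate is fine.
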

\begin{proof}
(i) follows from (\ref{Gh-phi-int}), (\ref{ds-Gh-phi}) and (\ref{Gh-Cr}).
The first line of (ii) follows from the second one. For the second line,
we only show the bound of $a-1+|\nabla|h$, the other one being similar.

We first show an expression for $a$.
By (\ref{Zakharov}), (\ref{phi-t}) and (\ref{Gh-phi}),
\begin{equation}\label{phi-t-Bh-t}
\phi_t-Bh_t=-h+\frac12(B^2-2BV\cdot\nabla h-|V|^2)-B(B-\nabla h\cdot V)
=-h-\frac12(B^2+|V|^2)
\end{equation}
so by (A.3.10) of \cite{AlDe2},
\begin{equation}\label{dt-Gh-phi}
\partial_tG(h)\phi=G(h)[\phi_t-Bh_t]-\nabla\cdot(h_tV)
=-\frac12G(h)[B^2+|V|^2+2h]-\nabla\cdot(h_tV).
\end{equation}
For $V$ we have, by (\ref{BV-u}), (\ref{phi-t}), (\ref{ht=Gh-phi}) and (\ref{Gh-phi}),
\begin{equation}\label{Vt}
\begin{aligned}
V_t&=\nabla\phi_t-B_t\nabla h-B\nabla h_t\\
&=-\nabla h+B\nabla B-V\cdot\nabla V-(V\cdot\nabla h)\nabla B-B\nabla(V\cdot\nabla h)-B_t\nabla h-B\nabla B+B\nabla(V\cdot\nabla h)\\
&=-\nabla h-V\cdot\nabla V-(V\cdot\nabla h)\nabla B-B_t\nabla h\\
&=-V\cdot\nabla V-a\nabla h+(V\cdot\nabla B)\nabla h-(V\cdot\nabla h)\nabla B.
\end{aligned}
\end{equation}
For $B$ we have, by (\ref{Gh-phi}), (\ref{dt-Gh-phi}), (\ref{ht=Gh-phi}) and (\ref{Vt}),
\begin{equation}\label{Bt}
\begin{aligned}
B_t&=\partial_tG(h)\phi+(\nabla h\cdot V)_t\\
&=-\frac12G(h)[B^2+|V|^2+2h]-(G(h)\phi)\nabla\cdot V\\
&-\nabla h\cdot(V\cdot\nabla V+a\nabla h+(V\cdot\nabla B)\nabla h-(V\cdot\nabla h)\nabla B).
\end{aligned}
\end{equation}
Recall from (\ref{a-def}) that $a=1+B_t+V\cdot\nabla B$, so
\begin{align}
\label{a-expr}
a&=\frac1{1+|\nabla h|^2}\left( 1+\tilde a-\frac12G(h)[B^2+|V|^2+2h] \right),\\
\label{tld-a-def}
\tilde a&=V\cdot\nabla B-(G(h)\phi)\nabla\cdot V-\nabla h\cdot(V\cdot\nabla V)-(V\cdot\nabla B)|\nabla h|^2+(V\cdot\nabla h)(\nabla B\cdot\nabla h).
\end{align}
By (\ref{Gh-Cr}), if $\|\nabla h\|_{H^{r+2}}<c_r$ is sufficiently small then
\begin{align}
\label{tilde-a-Cr2}
\|\tilde a\|_{C_*^r}&\lesssim_r \||\nabla|^{1/2}\phi\|_{C_*^{r+3/2}}^2,\\
\label{Gh-VB-Cr2}
\|G(h)[|V|^2+B^2]\|_{C_*^r}&\lesssim_r \||V|^2+B^2\|_{C_*^{r+1}}
\lesssim_r \||\nabla|^{1/2}\phi\|_{C_*^{r+3/2}}^2.
\end{align}
By (\ref{a-expr}), (\ref{tilde-a-Cr2}) and (\ref{Gh-VB-Cr2}) we get (ii).

(iii) Similarly we only show the bound for $a_t-\Delta\phi$. We differentiate (\ref{a-expr}) with respect to $t$, and note that
\begin{equation}\label{Bt}
B_t=a-1-V\cdot\nabla B.
\end{equation}
By (ii) and (\ref{Vt}), if $\|\nabla h\|_{H^{r+2}}<c_r$ is sufficiently small then
\begin{align}
\label{Bt-Vt-Cr}
\|B_t\|_{C_*^r}+\|V_t\|_{C_*^r}
&\lesssim_r \||\nabla|^{1/2}\phi\|_{C_*^{r+3/2}}^2+\|h\|_{C_*^{r+1}},\\
\label{BBt-VVt-Cr}
\|\partial_t(|V|^2+B^2)\|_{C_*^r}
&\lesssim_r (\||\nabla|^{1/2}\phi\|_{C_*^{r+3/2}}^2+\|h\|_{C_*^{r+1}})\||\nabla|^{1/2}\phi\|_{C_*^{r+1/2}}.
\end{align}
Hence if $\|\nabla h\|_{H^{r+3}}<c_r$ is sufficiently small then
\begin{equation}\label{tilde-a-t-Cr}
\|\partial_t\tilde a\|_{C_*^r}
\lesssim_r (\||\nabla|^{1/2}\phi\|_{C_*^{r+5/2}}^2+\|h\|_{C_*^{r+2}})\||\nabla|^{1/2}\phi\|_{C_*^{r+3/2}}.
\end{equation}
Now we bound $\partial_tG(h)[|V|^2+B^2]$.
By (\ref{Gh-Cr}), if $\|\nabla h\|_{H^{r+1}}<c_r$ is sufficiently small then
\begin{align}\label{ht-Cr}
\|B(h)h\|_{C_*^r}+\|V(h)h\|_{C_*^r}&\lesssim_r \|h\|_{C_*^{r+1}}
\end{align}
and if $\|\nabla h\|_{H^{r+2}}<c_r$ is sufficiently small then
\begin{equation}\label{BV-V2-B2}
\|(B(h),V(h))[|V|^2+B^2]\|_{C_*^r}\lesssim_r \||V|^2+B^2\|_{C_*^{r+1}}\lesssim_r \||\nabla|^{1/2}\phi\|_{C_*^{r+3/2}}^2.
\end{equation}
Then by (\ref{Gh-Cr}) and (\ref{ht-Cr}), under the same condition we have
\begin{equation}\label{dt-Gh-h-Cr}
\|\partial_tG(h)h\|_{C_*^r}\lesssim_r \|(h_t,h_tB(h)h,h_tV(h)h)\|_{C_*^{r+1}}\lesssim_r \||\nabla|^{1/2}\phi\|_{C_*^{r+3/2}}.
\end{equation}
By (\ref{dt-Gh-phi}), (\ref{Gh-Cr}), (\ref{BBt-VVt-Cr}) and (\ref{BV-V2-B2}), if $\|\nabla h\|_{H^{r+3}}<c_r$ is sufficiently small then
\begin{align}
\nonumber
\|\partial_tG(h)[|V|^2+B^2]\|_{C_*^r}
&\lesssim_r \|(\partial_t(|V|^2+B^2),(h_tB(h),h_tV(h))[|V|^2+B^2]\|_{C_*^{r+1}}\\
&\lesssim_r\text{right-hand side of (\ref{tilde-a-t-Cr})}.
\label{dt-Gh-VB-Cr2}
\end{align}
By (\ref{a-expr}), (\ref{tilde-a-t-Cr}), (\ref{dt-Gh-VB-Cr2}), (\ref{Gh-Cr}), (\ref{dt-Gh-h-Cr}) and (ii), under the same condition we have
\begin{align*}
\|\partial_t(a+G(h)h)\|_{C_*^r}
&\lesssim_r \left\| \frac1{1+|\nabla h|^2} \right\|_{C_*^r} \left\| \partial_t\tilde a-\frac12\partial_tG(h)[|V|^2+B^2]
+\partial_t(|\nabla h|^2G(h)h) \right.\\
&\left. -\frac{2a\nabla h\cdot\nabla h_t}{1+|\nabla h|^2} \right\|_{C_*^r}
\lesssim_r\text{right-hand side of (\ref{tilde-a-t-Cr})}.
\end{align*}
By (\ref{Gh-Cr}), (\ref{dt-Gh-phi}) and (\ref{ht-Cr}), if $\|\nabla h\|_{H^{r+2}}<c_r$ is sufficiently small then
\[
\|\partial_tG(h)h-G(h)^2\phi\|_{C_*^r}=\|\partial_tG(h)h-G(h)h_t\|_{C_*^r}\lesssim_r \|h\|_{C_*^{r+2}}\||\nabla|^{1/2}\phi\|_{C_*^{r+3/2}}.
\]
By (\ref{Gh-Cr}) and (i), if $\|\nabla h\|_{H^{r+3}}<c_r$ is sufficiently small then
\begin{align*}
\|G(h)^2\phi+\Delta\phi\|_{C_*^r}
&\le\|G(h)[G(h)\phi-|\nabla|\phi]\|_{C_*^r}+\|(G(h)-|\nabla|)|\nabla|\phi\|_{C_*^r}\\
&\lesssim_r \|h\|_{C_*^{r+2}}\||\nabla|^{1/2}\phi\|_{C_*^{r+5/2}}.
\end{align*}
Hence under the same condition we obtain (iii).
\end{proof}

\subsection{Estimates on the norms of multipliers}\label{MulEst}
Here we bound the norms of the multipliers $s$, $p$, $q$, $\tilde q$ and $m_{\mu\nu}$,
defined in (\ref{s-mult}), (\ref{p-mult}), (\ref{q-mult}), (\ref{til-q-mult}) and (\ref{m-prod}) respectively.
\begin{lemma}
(i) For $k_1$, $k_2$, $k_3\in\Z$ we have
\begin{align*}
\|p\|_{S^\infty_{k_1,k_2;k_3}}&\lesssim 2^{-\max(k_1,k_2)}, &
\|s\|_{S^\infty_{k_1,k_2;k_3}}&\lesssim 2^{3\max(k_1,k_2)/2}.
\end{align*}

(ii) For $L\ge0$ we have
\[
|\nabla^L\Phi_{\mu\nu}^{-1}|\lesssim_L \min(|\xi_1|,|\xi_2|,|\xi_1+\xi_2|)^{-L-1/2}.
\]

(iii) For $k_1$, $k_2$, $k_3\in\Z$ we have
\[
\|\Phi_{\mu\nu}^{-1}\|_{S^\infty_{k_1,k_2;k_3}}\lesssim 2^{-\min(k_1,k_2,k_3)/2}.
\]

(iii') If in addition to (iii) we have $k_1\le k_2-3$ and $\nu=-$ then
\[
\|\Phi_{\mu\nu}^{-1}\|_{S^\infty_{k_1,k_2;k_3}}\lesssim 2^{-k_2/2}.
\]
\end{lemma}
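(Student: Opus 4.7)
The plan is to reduce each claim to a pointwise derivative bound and then invoke the Fourier--Bony estimate of Lemma \ref{Soo-Cn}(ii). Throughout, the cutoffs pin the frequencies to $|\xi_1| \approx 2^{k_1}$, $|\xi_2| \approx 2^{k_2}$, $|\xi_1+\xi_2| \approx 2^{k_3}$, and the triangle inequality forces the two largest of $2^{k_1}, 2^{k_2}, 2^{k_3}$ to be comparable.

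For (i), the bound $|\xi_2|+|\xi_1+\xi_2| \gtrsim \max(|\xi_1|,|\xi_2|,|\xi_1+\xi_2|)$ (via $|\xi_1| \le |\xi_2|+|\xi_1+\xi_2|$) yields $|p| \lesssim 2^{-\max(k_1,k_2)}$, with matching decay for every derivative by the quotient rule; Lemma \ref{Soo-Cn}(ii) then produces the claimed $S^\infty$ bound. For $s$, each factor $n_j$ is Mikhlin of order $m_j$ with $\sum m_j = 3/2$, and the bracketed factor $\{1, |\xi_1|p, |\xi_2|p\}$ is uniformly bounded since $|\xi_j|p \le 1$; on the support the largest frequency is $\approx 2^{\max(k_1,k_2)}$, so $|s| \lesssim 2^{3\max(k_1,k_2)/2}$ with analogous bounds on derivatives, and Lemma \ref{Soo-Cn}(ii) finishes.

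The heart of the lemma is (ii), the lower bound $|\Phi_{\mu\nu}| \gtrsim \min(a,b,c)^{1/2}$ where $a=|\xi_1|$, $b=|\xi_2|$, $c=|\xi_1+\xi_2|$. The $(-,-)$ case is trivial. For $(+,+)$ I would rationalize,
\[
|\Phi_{++}| = \sqrt a + \sqrt b - \sqrt c = \frac{(\sqrt a + \sqrt b)^2 - c}{\sqrt a + \sqrt b + \sqrt c} \ge \frac{2\sqrt{ab}}{\sqrt a + \sqrt b + \sqrt c} \gtrsim \sqrt{\min(a,b)},
\]
using subadditivity of $\sqrt{\cdot}$ for the first equality and $c \le a+b$ for the numerator. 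The mixed-sign cases $(+,-), (-,+)$ follow from analogous rationalizations combined with the triangle inequalities $a \le b+c$ and $b \le a+c$. Derivative bounds come from $|\nabla^L \sqrt{|\xi|}| \lesssim |\xi|^{1/2-L}$ for $L \ge 1$, giving $|\nabla^L \Phi_{\mu\nu}| \lesssim \min(a,b,c)^{1/2-L}$, and a standard chain-rule argument applied to $\Phi_{\mu\nu}^{-1}$ yields (ii). Feeding (ii) into Lemma \ref{Soo-Cn}(ii), with derivatives taken in the direction minimizing $k_j$ so that the $2^{l k_j}$ factors are absorbed, delivers (iii).

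For (iii'), the hypotheses $k_1 \le k_2-3$ and $\nu = -$ force $b \approx c \gg a$, so that
\[
|\Phi_{\mu-}| = |\sqrt c + \sqrt b - \mu \sqrt a| \gtrsim \sqrt b \approx 2^{k_2/2}.
\]
Differentiating in $\xi_2$ gives $|\nabla_{\xi_2}^L \Phi_{\mu-}| \lesssim 2^{(1/2-L)k_2}$, and the chain rule yields $|\nabla_{\xi_2}^L \Phi_{\mu-}^{-1}| \lesssim 2^{-(L+1/2)k_2}$; applying Lemma \ref{Soo-Cn}(ii) with these $\xi_2$-derivatives produces the sharper bound $2^{-k_2/2}$. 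The principal obstacle is the case-by-case verification of the $\Phi_{\mu\nu}$ lower bound in (ii), particularly for the mixed signs; once that is secured, the remaining steps are essentially routine.
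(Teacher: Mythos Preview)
Your treatment of (i) and (ii) is essentially the same as the paper's (the paper cites ``convexity of $|\xi|^{1/2}$'' for the lower bound on $|\Phi_{\mu\nu}|$, while you rationalize explicitly; both are fine). The real issue is your passage from (ii) to (iii).

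Lemma~\ref{Soo-Cn}(ii) does \emph{not} let you choose the direction of differentiation: its right-hand side is a \emph{sum} over $j$, so you must control $2^{lk_j}\|\nabla_{\xi_j}^l\Phi_{\mu\nu}^{-1}\|_{L^\infty}$ for \emph{every} $j$. Your isotropic bound $|\nabla^L\Phi_{\mu\nu}^{-1}|\lesssim \min(|\xi_i|)^{-L-1/2}$ is too weak for this when the frequencies are well separated. Concretely, if $k_1\le k_2-3$ (so $k_2=k_3+O(1)$), the term with $j=2$ contributes $2^{lk_2}\cdot 2^{-(l+1/2)k_1}$, which blows up like $2^{l(k_2-k_1)}$ rather than being bounded by $2^{-k_1/2}$. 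The paper fixes this by an anisotropic estimate you have not supplied: when $\nu=+$ and $|\xi_1|\ll|\xi_2|$, one has
\[
\nabla_{\xi_2}^L\Phi_{\mu+}=\nabla^L\Lambda(\xi_1+\xi_2)-\nabla^L\Lambda(\xi_2),
\]
and the mean-value theorem gives $|\nabla_{\xi_2}^L\Phi_{\mu+}|\lesssim |\xi_1|\,|\xi_2|^{-L-1/2}$, hence $|\xi_2|^L|\nabla_{\xi_2}^L\Phi_{\mu+}|\lesssim |\xi_1|^{1/2}$. This cancellation is what makes the $j=2$ term in Lemma~\ref{Soo-Cn}(ii) harmless; without it (iii) does not follow. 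Your argument for (iii') has the same structural omission: you only discuss $\nabla_{\xi_2}^L$, but Lemma~\ref{Soo-Cn}(ii) also forces you to check $|\xi_1|^L|\nabla_{\xi_1}^L\Phi_{\mu-}^{-1}|$, which is easy here (since $|\xi_1|^L|\nabla_{\xi_1}^L\Phi_{\mu-}|\lesssim|\xi_1|^{1/2}\lesssim|\Phi_{\mu-}|$) but must be said.
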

\begin{proof}
(i) To bound $p$, after a change of variables it suffices to show that $\|(|\xi_1|+|\xi_2|)^{-1}\|_{S_{k_1,k_2;k_3}^\infty}\lesssim 2^{-\max(k_2,k_3)}$.
Since $|\nabla^L(|\xi_1|+|\xi_2|)^{-1}|\lesssim_L(|\xi_1|+|\xi_2|)^{-L-1}$,
Lemma \ref{Soo-Cn} (ii) gives a bound of $2^{-\max(k_1,k_2)}$.
To finish the proof note that $\max(k_1,k_2)=\max(k_2,k_3)+O(1)$.

The bound for $s$ follows from that for $p$, $\|\varphi_k(\xi)|\xi|\|_{S^\infty}\lesssim 2^k$ and Lemma \ref{Soo-Cn} (i).

(ii) We first show a lower bound of $\Phi_{\mu\nu}$. Indeed, by convexity of $|\xi|^{1/2}$ we get
\begin{equation}\label{1/F-C0}
|\Phi_{\mu\nu}|\gtrsim \min(|\xi_1|,|\xi_2|,|\xi_1+\xi_2|)^{1/2}.
\end{equation}
This shows the claim for $L=0$. For $L>0$ we use induction on $L$,
the identity $\nabla^L(\Phi_{\mu\nu}\Phi_{\mu\nu}^{-1})=0$.
and the bound $|\nabla^L\Phi_{\mu\nu}|\lesssim_L \min(|\xi_1|,|\xi_2|,|\xi_1+\xi_2|)^{1/2-L}$.

(iii) and (iii') Without loss of generality we assume $k_1\le k_2\le k_3$.

{\bf Case 1:} $k_1\ge k_2-2$. Then the bound follows from (ii) and Lemma \ref{Soo-Cn} (ii). It can also be derived from the homogeneity of $|\xi|^{1/2}$.

{\bf Case 2:} $k_1\le k_2-3$. We still have $|\xi_1|^L|\nabla^L_{\xi_1}\Phi_{\mu\nu}|\lesssim_L |\xi_1|^{1/2}\lesssim |\Phi_{\mu\nu}|$ ($L > 0$).
For $\nabla^L_{\xi_2}\Phi_{\mu\nu}$ ($L > 0$) we further have two cases.

{\bf Case 2.1:} $\nu=+$. Since $\nabla^L_{\xi_2}\Phi_{\mu+}=\nabla^L\Lambda(|\xi_1+\xi_2|)-\nabla^L\Lambda(|\xi_2|)$ we have
\[
|\nabla^L_{\xi_2}\Phi_{\mu+}|
\le |\xi_1|\sup_{t\in[0,1]} |\nabla\nabla^L\Lambda(|t\xi_1+\xi_2|)|
\lesssim_L |\xi_1||\xi_2|^{-L-1/2}
\]
so $|\xi_2|^L|\nabla^L_{\xi_2}\Phi_{\mu+}|\lesssim_L |\xi_1|^{1/2}$.
By (\ref{1/F-C0}) and induction on $L$ then, $|\xi_2|^L|\nabla^L_{\xi_2}\Phi_{\mu+}^{-1}|\lesssim_L |\xi_1|^{-1/2}$, and the bound follows from Lemma \ref{Soo-Cn} (ii).

{\bf Case 2.2:} $\nu=-$. Then $|\xi_2|^L|\nabla_{\xi_2}^L\Phi_{\mu-}|\lesssim_L |\xi_2|^{1/2}\lesssim\Phi_{\mu-}$,
so by (\ref{1/F-C0}) and Lemma \ref{Soo-Cn} (ii), the bound can be improved to $\|\phi_{\mu-}^{-1}\|_{S^\infty_{k_1,k_2;k_3}}\lesssim 2^{-k_3/2}$.
\end{proof}

\begin{lemma}
For $k_1$, $k_2$, $k_3\in\Z$ we have
\[
\|q\|_{S^\infty_{k_1,k_2;k_3}}\lesssim 2^{2k_1+k_2}(2^{2(k_1-k_2)}+1_{k_2\le2})1_{k_1\le k_2-6}
\lesssim 2^{2(k_1+k_1^+)-|k_2|}1_{k_1\le k_2-6}.
\]
\end{lemma}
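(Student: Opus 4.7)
The approach has three parts: reduce to the relevant support, establish the pointwise bound, and then upgrade to $S^\infty$ via Lemma \ref{Soo-Cn}(ii).

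\emph{Support reduction.} I will first observe that the cutoff $\varphi_{\le-10}(|\xi_1|/|\xi_1+2\xi_2|)$ forces $|\xi_1|\le 2^{-9}|\xi_1+2\xi_2|$, hence $|\xi_1|\lesssim 2^{-8}|\xi_2|$ on the support of $\varphi_{k_1}(\xi_1)\varphi_{k_2}(\xi_2)$. This immediately produces the indicator $1_{k_1\le k_2-6}$ and also yields $|\xi_1+\xi_2|\approx|\xi_1+2\xi_2|\approx|\xi_2|\approx 2^{k_2}$, so $k_3=k_2+O(1)$, which I will use throughout.

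\emph{Pointwise bound.} I will split $q=q^++q^-$ according to the two terms $\varphi_{\ge0}(\xi_1+\xi_2/2)$ and $\varphi_{\le-1}(\xi_1+\xi_2/2)$. For $q^-$, the constraint $|\xi_1+\xi_2/2|\le 1$ combined with $|\xi_1|\lesssim 2^{-8}|\xi_2|$ forces $|\xi_2|\lesssim 1$, producing the indicator $1_{k_2\le 2}$; using the identity $|\xi_1+\xi_2|-|\xi_2|=\xi_1\cdot(\xi_1+2\xi_2)/(|\xi_1+\xi_2|+|\xi_2|)$, the remaining factor is $O(|\xi_1|^2|\xi_2|)=O(2^{2k_1+k_2})$. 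For $q^+$, the crucial algebraic identity
\[
2((\xi_1+\xi_2)\cdot\xi_2-|\xi_1+\xi_2||\xi_2|)=|\xi_1+2\xi_2|^2-(|\xi_1+\xi_2|+|\xi_2|)^2
\]
rewrites the second factor in parenthesis as $1-((|\xi_1+\xi_2|+|\xi_2|)/|\xi_1+2\xi_2|)^2$. This quantity and its $\xi_1$-gradient both vanish at $\xi_1=0$, so it is $O(|\xi_1|^2/|\xi_2|^2)=O(2^{2(k_1-k_2)})$, yielding $|q^+|\lesssim 2^{2k_1+k_2}\cdot 2^{2(k_1-k_2)}=2^{4k_1-k_2}$.

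\emph{From $L^\infty$ to $S^\infty$.} To apply Lemma \ref{Soo-Cn}(ii), I need to show $|\nabla_j^l q|\lesssim 2^{-lk_2}\|q\|_{L^\infty}$ on the support, which will then cancel the $2^{lk_j}$ factors there. On this support, each derivative of $|\xi_1+\xi_2|$, $|\xi_2|$, $|\xi_1+2\xi_2|$, or the cutoffs costs a factor of $2^{-k_2}$, and the vanishing orders in $\xi_1$ established above are preserved (up to the natural loss) under differentiation, since $\xi_1\cdot(\xi_1+2\xi_2)$ has derivatives bounded by $2^{k_2}$ once and $O(1)$ thereafter. The simplified form $2^{2(k_1+k_1^+)-|k_2|}1_{k_1\le k_2-6}$ is then a short case check in the signs of $k_1$ and $k_2$, using $k_1\le k_2-6$.

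\emph{Main obstacle.} The one nontrivial step is verifying the second-order vanishing of $1-((|\xi_1+\xi_2|+|\xi_2|)/|\xi_1+2\xi_2|)^2$ in $\xi_1$ at the origin together with matching derivative bounds. The algebraic identity above reduces this to a direct Taylor expansion around $\xi_1=0$ with $\xi_2$ fixed, using $\nabla|\xi|=\xi/|\xi|$ and the fact that $|\xi_1+\xi_2|,|\xi_1+2\xi_2|\approx|\xi_2|$ throughout the support; the necessary bounds then follow without any delicate cancellation issue.
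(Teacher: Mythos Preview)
Your approach is essentially the same as the paper's: both reduce to showing that the factor $(\xi_1+\xi_2)\cdot\xi_2-|\xi_1+\xi_2||\xi_2|$ is $O(|\xi_1|^2)$ on the support, and both handle the $\varphi_{\le-1}$ term via the observation $|\xi_1+\xi_2/2|>|\xi_2|/4$. The one difference is the algebraic identity used for the key cancellation. You rewrite the bracket as $1-((|\xi_1+\xi_2|+|\xi_2|)/|\xi_1+2\xi_2|)^2$ and then argue by Taylor expansion that it vanishes to second order at $\xi_1=0$. The paper instead uses
\[
|\xi_1+\xi_2||\xi_2|-(\xi_1+\xi_2)\cdot\xi_2=\frac{\det(\xi_1,\xi_2)^2}{|\xi_1+\xi_2||\xi_2|+(\xi_1+\xi_2)\cdot\xi_2},
\]
which makes the quadratic vanishing in $\xi_1$ manifest (the numerator is a polynomial of degree two in $\xi_1$) and renders all derivative bounds for the $S^\infty$ estimate immediate, with no Taylor argument needed.

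One slip to fix: your stated goal ``$|\nabla_j^l q|\lesssim 2^{-lk_2}\|q\|_{L^\infty}$'' is false for $\xi_1$-derivatives. For instance, differentiating $(\xi_1\cdot(\xi_1+2\xi_2))^2$ once in $\xi_1$ gives a term of size $2^{k_1+2k_2}$, so $|\nabla_{\xi_1}A|\approx 2^{k_1+k_2}$, which is $2^{-k_1}\|A\|_{L^\infty}$, not $2^{-k_2}\|A\|_{L^\infty}$. What Lemma~\ref{Soo-Cn}(ii) actually requires is $|\nabla_{\xi_j}^l q|\lesssim 2^{-lk_j}\|q\|_{L^\infty}$, and this weaker bound does hold (your parenthetical ``up to the natural loss'' is exactly this). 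So the argument is correct once you replace $2^{-lk_2}$ by $2^{-lk_j}$; the paper's determinant identity simply makes this verification cleaner.
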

\begin{proof}
The factor $1_{k_1\le k_2-6}$ comes from the $\varphi_{\le10}$ factor and is assumed to be nonzero thereafter. Since $||\xi_1+\xi_2|-|\xi_2||\le|\xi_1|\lesssim 2^{k_1}$, $|\xi_1+2\xi_2|\approx|\xi_2|$ and $|\xi_1+\xi_2/2|>|\xi_2|/4$, it suffices to show that $D:=|\xi_1+\xi_2||\xi_2|-(\xi_1+\xi_2)\cdot\xi_2$ satisfies $\|D\|_{S^\infty_{k_1,k_2;k_3}}\approx 2^{2k_1}$. Indeed, this follows from the identity
\[
D=\frac{\det(\xi_1+\xi_2,\xi_2)^2}{|\xi_1+\xi_2||\xi_2|+(\xi_1+\xi_2)\cdot\xi_2}
=\frac{\det(\xi_1,\xi_2)^2}{|\xi_1+\xi_2||\xi_2|+(\xi_1+\xi_2)\cdot\xi_2}.
\]
\end{proof}

\begin{lemma}
For $k_1$, $k_2$, $k_3\in\Z$ we have
\begin{align*}
\|m_{\mu\nu}\|_{S^\infty_{k_1,k_2;k_3}}&\lesssim 2^{(k_3+\max k_j+\min k_j)/2},\\
\|m_{\mu\nu}/\Phi_{\mu\nu}\|_{S^\infty_{k_1,k_2;k_3}}&\lesssim 2^{(k_3+\max k_j)/2}.
\end{align*}
\end{lemma}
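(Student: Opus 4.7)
Since $m_{\mu\nu}$ is a linear combination of the two explicit symbols in (\ref{m-prod}), I will verify both bounds for each symbol separately. Write $a=|\xi_1|$, $b=|\xi_2|$, $c=|\xi_1+\xi_2|$, so that the target in the first inequality is $\sqrt{abc}$ (note $\max k_j+\min k_j=k_1+k_2$). The key tool is two algebraic identities, obtained by expanding $|\xi_1+\xi_2|^2$ and $|\xi_1|^2=|(\xi_1+\xi_2)-\xi_2|^2$ respectively:
\[
|\xi_1||\xi_2|+\xi_1\cdot\xi_2=\tfrac12(c+a-b)(c+b-a),\qquad
|\xi_1+\xi_2||\xi_2|-(\xi_1+\xi_2)\cdot\xi_2=\tfrac12(a+c-b)(a+b-c).
\]
By the triangle inequality each factor on the right is nonnegative, and each factor of the form $x+y-z$ is bounded by $2\min(x,y)$. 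This plays the role the identity for $D$ played in Lemma~\ref{q-Soo}.

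For the pointwise bound in (\ref{m-Soo}), the second identity shows $|m^{(1)}|\le 2\min(a,c)\min(a,b)/\sqrt b$, and a short case analysis (depending on which of $a,b,c$ is smallest) yields $|m^{(1)}|\lesssim\sqrt{abc}$; for instance, when $a$ is smallest, $|m^{(1)}|\lesssim a^2/\sqrt b$ and $a^3\le b^2c$ follows from $a\le\min(b,c)$. The same argument works for $m^{(2)}$ via the first identity. To upgrade these pointwise bounds to the $S^\infty$ norm via Lemma~\ref{Soo-Cn}(ii), I will differentiate the product-of-factors expressions: each factor $a,b,c,a\pm b\pm c$ is smooth on the dyadic support away from the axes, and a derivative $\nabla_{\xi_j}$ costs at most one inverse power of the relevant dyadic frequency, so $2^{Lk_j}\|\varphi_{[k_j-1,k_j+1]}\nabla_j^L m^{(i)}\|_{L^\infty}\lesssim\sqrt{abc}$ uniformly in $L\le 3$.

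For (\ref{m/F-Soo}), I combine the first bound with Lemma~\ref{ps-1/F-Soo}(iii) via Lemma~\ref{Soo-Cn}(i):
\[
\|m_{\mu\nu}/\Phi_{\mu\nu}\|_{S^\infty_{k_1,k_2;k_3}}\lesssim 2^{(k_1+k_2+k_3-\min(k_1,k_2,k_3))/2}.
\]
When $\min(k_1,k_2,k_3)\ne k_3$, the exponent equals $(k_3+\max(k_1,k_2))/2$ and we are done. In the remaining case $k_3<\min(k_1,k_2)$, the triangle inequality forces $k_1=k_2+O(1)$ (i.e.\ $a\approx b$) and $c\le a$; here the naive bound loses a factor $2^{(\max k_j-k_3)/2}$ which must be recovered from the null structure of $m_{\mu\nu}$.

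In this regime both $\min(a,c)$ and $\min(b,c)$ equal $c$, so the factorizations give $|m^{(1)}|\lesssim ca/\sqrt b\approx\sqrt a\,c$ and $|m^{(2)}|\lesssim\sqrt c\,c^2/\sqrt{ab}\approx c^{5/2}/a$. Combined with the pointwise lower bound $|\Phi_{\mu\nu}|\gtrsim\sqrt c$ from Lemma~\ref{ps-1/F-Soo}(ii), this yields $|m^{(1)}/\Phi_{\mu\nu}|\lesssim\sqrt{ac}$ and $|m^{(2)}/\Phi_{\mu\nu}|\lesssim c^2/a\le\sqrt{ac}$ (the last using $c\le a$), both matching the target $\sqrt{c\cdot\max(a,b)}=\sqrt{ca}$. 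The derivative estimates required by Lemma~\ref{Soo-Cn}(ii) propagate through the same factorization exactly as in Step~3. The main obstacle is precisely this asymmetric low-output regime $k_3<\min(k_1,k_2)$, where the generic product of the two pointwise lower and upper bounds is off by a nontrivial power and one must peel off a genuine cancellation from the inner product $\xi_1\cdot\xi_2$ (or $(\xi_1+\xi_2)\cdot\xi_2$); the algebraic identities above are what encode that cancellation.
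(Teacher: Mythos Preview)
Your algebraic factorizations are correct and give the right \emph{pointwise} bounds, but there are two issues.

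First, a minor one: you read $\max k_j+\min k_j=k_1+k_2$, i.e.\ you interpret the max/min as ranging over $j\in\{1,2\}$. The paper means $j\in\{1,2,3\}$; with that reading the first bound is sharper in the regime $k_3=\min k_j$ (it gives $c\sqrt a$ rather than $a\sqrt c$ when $c\ll a\approx b$), and then the second bound follows \emph{directly} from the first together with Lemma~\ref{ps-1/F-Soo}(iii) and Lemma~\ref{Soo-Cn}(i), with no separate case analysis needed. Your extra work for the case $k_3<\min(k_1,k_2)$ is a symptom of proving a weaker first inequality than the one stated.

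Second, and more seriously, your upgrade from pointwise to $S^\infty$ via Lemma~\ref{Soo-Cn}(ii) does not go through in the regime $c\ll a\approx b$. Take $m^{(1)}=(cb-(\xi_1+\xi_2)\cdot\xi_2)/\sqrt b$: then
\[
\nabla_{\xi_1}m^{(1)}=\frac{1}{\sqrt b}\Bigl(b\,\frac{\xi_1+\xi_2}{c}-\xi_2\Bigr),
\]
and the two terms (each of size $\approx b$) do not cancel in general, so $|\nabla_{\xi_1}m^{(1)}|\approx\sqrt a$. Hence $2^{k_1}|\nabla_{\xi_1}m^{(1)}|\approx a^{3/2}$, which exceeds both your target $a\sqrt c$ and the paper's $c\sqrt a$. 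The problem is that $m^{(1)}$ depends on $\xi_1$ through $c=|\xi_1+\xi_2|$, whose natural scale is $2^{k_3}$, not $2^{k_1}$; so ``a derivative $\nabla_{\xi_j}$ costs $2^{-k_j}$'' is false here. The paper handles this by passing, in each asymptotic regime, to variables adapted to the small frequency: when $c$ is small it writes $m_1/c$ as a smooth bounded function of $(\xi_1+\xi_2)/c$ and $\xi_2$, so that the two active dyadic scales are $2^{k_3}$ and $2^{k_2}$ and the third cutoff $\varphi_{k_1}(\xi_1)$ is redundant. That change of variables (or an equivalent device) is the missing ingredient in your argument.
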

\begin{proof}
The second bound follows from the first one, Lemma \ref{Soo-Cn} (i) and Lemma \ref{ps-1/F-Soo} (iii). Now we show the first bound. Since $m_{\mu\nu}$ is homogeneous of total degree 3/2, if $k_1$, $k_2$ and $k_3$ are within $O(1)$ of each other the bound is trivial. Now we assume the contrary. Recall that $m_{\mu\nu}$ is a linear combination of
\begin{align*}
m_1 &= \frac{|\xi_1+\xi_2||\xi_2|-(\xi_1+\xi_2)\cdot\xi_2}{\sqrt{|\xi_2|}}, &
m_2 &= \sqrt{|\xi_1+\xi_2|}\frac{|\xi_1||\xi_2|+\xi_1\cdot\xi_2}{\sqrt{|\xi_1||\xi_2|}}
\end{align*}
whose asymptotics are (see also Section 3 of \cite{GeMaSh2})
\begin{itemize}
\item If $|\xi_2|/|\xi_1|$ is sufficiently small, then $m_1=\sqrt{|\xi_2|}\cdot(|\xi_1+\xi_2|-(\xi_1+\xi_2)\cdot\xi_2/|\xi_2|)$, so $m_1/\sqrt{|\xi_2|}$ is a smooth function of $\xi_2/|\xi_2|$ and $\xi_1$.

\item If $|\xi_1+\xi_2|/|\xi_1|$ is sufficiently small, then similarly $m_1/|\xi_1+\xi_2|$ is a smooth function of $(\xi_1+\xi_2)/|\xi_1+\xi_2|$ and $\xi_2$.

\item If $|\xi_1|/|\xi_2|$ is sufficiently small, then
\[
\frac{|\xi_1+\xi_2||\xi_2|-(\xi_1+\xi_2)\cdot\xi_2}{|\xi_1|^2}
=\frac{\det(\xi_1/|\xi_1|,\xi_2)^2}{|\xi_1+\xi_2||\xi_2|+(\xi_1+\xi_2)\cdot\xi_2}
\]
is a smooth function of $\xi_1/|\xi_1|$, $|\xi_1|$ and $|\xi_2|$,
so $m_1/|\xi_1|^2$ is also such a function.
\end{itemize}

Similarly for $m_2$ we have
\begin{itemize}
\item If $|\xi_1|/|\xi_2|$ is sufficiently small, then $m_2/\sqrt{|\xi_1|}=\sqrt{|\xi_1+\xi_2|}(|\xi_2|+\xi_2\cdot\xi_1/|\xi_1|)$ is a smooth function of $\xi_2$, $\xi_1/|\xi_1|$ and $|\xi_1|$.

\item If $|\xi_2|/|\xi_1|$ is sufficiently small, then $m_2/\sqrt{|\xi_2|}$ is a smooth function of $\xi_1$, $\xi_2/|\xi_2|$ and $|\xi_2|$.

\item If $|\xi_1+\xi_2|/|\xi_1|$ is sufficiently small, then $m_2/|\xi_1+\xi_2|^{5/2}$ is a smooth function of $(\xi_1+\xi_2)/|\xi_1+\xi_2|$, $|\xi_1+\xi_2|$ and $\xi_1$.
\end{itemize}

Now it is easy to verify the desired bounds.
\end{proof}

\section{Appendix: Remainder estimates of $G(h)\phi$}
In this section we show weighted Sobolev estimates for the remainder of the Taylor expansion of $G(h)\phi$. The argument closely follows Appendix F of \cite{GeMaSh2}, with necessary modifications to adapt to the weights in the norms. We first introduce some notation for the anticipated types of bounds.

\begin{definition}\label{O-hn-g-def}
Let $n \ge 1$ be an integer. We say $f = O_Z(h^n \cdot g)$ if $f$ satisfies
\[
\|f\|_Z\lesssim_\alpha \|h\|_{W^{2,\infty}}^{n-1}\|h\|_{W^{11,\infty}}\|g\|_Z.
\]
Let $k \ge 0$ be another integer, $1 < p < \infty$, $1 < q \le \infty$ and
$1/r = 1/p + 1/q$. We say $f = O_W(h^n \cdot g)$ if $f$ satisfies
\[
\|f\|_{W^{k,r}}\lesssim \|h\|_{W^{2,\infty}}^{n-1}\|h\|_{W^{k+3,q}}\|g\|_{W^{k,p}}.
\]
We say $f=O(h^n \cdot g)$ if both $f=O_Z(h^n \cdot g)$ and $f=O_{W;k,p,q}(h^n \cdot |\nabla|^{1/2}g)$.
\end{definition}
\begin{remark}
If the implicit constant depends on any other variable,
they are listed in subscripts.
\end{remark}

We need to bound the $Z$ norm of a product, as done in Lemma F.3 of \cite{GeMaSh2}.
\begin{lemma}\label{prod-Z}
Let $\alpha \in (0, 1)$ and $A$, $B_1, \dots, B_n \ge 0$ be integers. Then
\begin{align*}
&\|(1+|x|)^\alpha\nabla^A|\nabla|f\nabla^{B_1}g_1\cdots\nabla^{B_n}g_n\|_{L^2}\\
\le&C_{A+B_1+\dots+B_n}^n\||\nabla|^{1/2}f\|_{W^{A+B_1+\cdots+B_n+1,\infty}}\|(1+|x|)^\alpha g_1\|_{L^2}\|g_2\|_{L^\infty}\cdots\|g_n\|_{L^\infty}\\
+&\|(1+|x|)^\alpha|\nabla|^{1/2}f\|_{L^2}\sum_{i=1}^n \|g_i\|_{W^{A+B_1+\cdots+B_n+1,\infty}}\prod_{j\neq i}\|g_j\|_{L^\infty}.
\end{align*}
\end{lemma}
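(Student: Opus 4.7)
The plan is to follow the Littlewood--Paley approach used in Lemma F.3 of \cite{GeMaSh2}, tracking the weight $(1+|x|)^\alpha$ through the estimates via the fact that it belongs to the Muckenhoupt class $A_2$. Set $M = A + B_1 + \dots + B_n$ and decompose
\[
\nabla^A|\nabla|f \cdot \nabla^{B_1}g_1 \cdots \nabla^{B_n}g_n
= \sum_{k_0,k_1,\dots,k_n \in \Z} P_{k_0}(\nabla^A|\nabla|f)\prod_{j=1}^n P_{k_j}(\nabla^{B_j}g_j).
\]
Because the sum of frequencies $k_0+\dots+k_n$ that produce a given output frequency is constrained so that at least one frequency equals the maximum (up to $O(1)$), we split the sum into the two regimes corresponding to the two terms on the right-hand side: the regime where $k_1 = \max(k_0,\dots,k_n)$, and its complement.

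In the first regime (where $k_1$ is the maximum), I would place $P_{k_1}(\nabla^{B_1}g_1)$ in the weighted space $L^2((1+|x|)^{2\alpha})$, absorb the derivative $\nabla^{B_1}$ by $2^{B_1 k_1}$, place every other factor in $L^\infty$, and use Bernstein's inequality for the lower-frequency factors to gain $2^{(A+1)k_0}$ from $P_{k_0}\nabla^A|\nabla|f$ and $2^{B_j k_j}$ from $P_{k_j}\nabla^{B_j}g_j$, $j\ge 2$. The total derivative count $A+1+B_2+\dots+B_n$ that gets dumped on the sup norms is precisely $M-B_1+1 \le M+1$, so these factors contribute at most $\||\nabla|^{1/2}f\|_{W^{M+1,\infty}}\prod_{j\ge 2}\|g_j\|_{L^\infty}$ (after summing a convergent geometric series in the lower frequencies, since $k_0,k_j \le k_1+O(1)$). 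The key point enabling the weighted $L^2$ estimate on $P_{k_1}(\nabla^{B_1}g_1)$ is that $(1+|x|)^{2\alpha}\in A_2$ for $\alpha\in(0,1)$, so Littlewood--Paley projections are uniformly bounded on the weighted space (cf.\ Lemma \ref{Z-bound} (iii)).

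In the complementary regime (where the maximum is $k_0$ or some $k_i$ with $i\ge 2$), I would place $|\nabla|^{1/2}f$ in the weighted $L^2$ space. When $k_0$ is the maximum, write $\nabla^A|\nabla| = \nabla^A|\nabla|^{1/2}\cdot|\nabla|^{1/2}$ and absorb $|\nabla|^{1/2}$ into the weighted norm of $f$, gaining $2^{(A+1/2)k_0}$ on the $|\nabla|^{1/2}f$ factor at scale $k_0$; bound the $g_j$'s in $L^\infty$ using Bernstein and sum the (convergent) geometric series in $k_j \le k_0$. When some $k_i$ ($i\ge 2$) is the maximum, place $P_{k_i}(\nabla^{B_i}g_i)$ in $L^\infty$ with the full derivative count $M+1$ concentrated on $g_i$ (since everything else has strictly smaller frequency), which yields the summand $\|g_i\|_{W^{M+1,\infty}}\prod_{j\ne i}\|g_j\|_{L^\infty}\cdot\|(1+|x|)^\alpha|\nabla|^{1/2}f\|_{L^2}$. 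The choice to trade one half-derivative on $f$ for the weighted estimate is precisely what allows the output $|\nabla|^{1/2}f$ on the right-hand side.

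The summations in each regime are straightforward geometric sums in the lower-frequency indices, each contributing a finite constant; the number of case splits (choice of which index attains the maximum and, in the second regime, which $g_i$ carries the top derivatives) accounts for the combinatorial factor $C_M^n$. The only technical nuisance I anticipate is handling the very low frequencies (where the factor $|\nabla|^{1/2}$ is only borderline bounded), which is dealt with by separating $P_{\le 0}$ and noting that the weighted $L^2$ boundedness of $|\nabla|^{1/2} P_{\le 0}$ on $A_2$ weights is standard; no derivative loss arises because all the derivatives on the right-hand side are integer or half-integer as required.
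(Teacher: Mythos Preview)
Your regime assignment is backwards, and the resulting sums do not close. Consider your first regime, where $k_1=\max(k_0,\dots,k_n)$ and you place $P_{k_1}\nabla^{B_1}g_1$ in weighted $L^2$. By the $A_2$-Calder\'on--Zygmund bound you only get $\|(1+|x|)^\alpha P_{k_1}\nabla^{B_1}g_1\|_{L^2}\lesssim 2^{B_1k_1}\|(1+|x|)^\alpha g_1\|_{L^2}$, while the low-frequency factors (after your geometric summation in $k_0,k_j\le k_1$) contribute a bound \emph{independent of $k_1$} from $\||\nabla|^{1/2}f\|_{W^{M+1,\infty}}$ and, worse, an extra $2^{B_jk_1}$ from each $\|\nabla^{B_j}P_{\le k_1}g_j\|_{L^\infty}\lesssim 2^{B_jk_1}\|g_j\|_{L^\infty}$. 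So the summand grows like $2^{(B_1+\dots+B_n)k_1}$ for $k_1\ge 0$, and neither the triangle inequality nor weighted square-function orthogonality saves you (the latter would yield $\|(1+|x|)^\alpha\nabla^{B_1}g_1\|_{L^2}$, not $\|(1+|x|)^\alpha g_1\|_{L^2}$). The same divergence occurs in your second regime when $k_0$ is maximal: you place $f$ in weighted $L^2$ but there is nothing at high frequency in $W^{M+1,\infty}$ to provide decay in $k_0$.

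The paper's proof makes the opposite assignment: the \emph{high-frequency} factor always goes into $L^\infty$ with the $W^{M+1,\infty}$ norm (so the extra derivatives give $2^{-(M+1)k}$ decay), and the weighted $L^2$ norm goes on a \emph{low-frequency} factor (so the Bernstein growth $2^{Bk}$ is eaten by that decay). Concretely, for $n=1$ the paper writes
\[
\nabla^A|\nabla|f\cdot\nabla^Bg=\sum_k\big(\nabla^A|\nabla|P_kf\cdot\nabla^BP_{<k}g+\nabla^A|\nabla|P_{\le k}f\cdot\nabla^BP_kg\big),
\]
bounds the first sum (high-$f$) by $\|\nabla^A|\nabla|P_kf\|_{L^\infty}\|(1+|x|)^\alpha\nabla^BP_{<k}g\|_{L^2}\lesssim 2^{-k^+/2+(B+1/2)k^-}\||\nabla|^{1/2}f\|_{W^{A+B+1,\infty}}\|(1+|x|)^\alpha g\|_{L^2}$ and sums; the second sum is symmetric with the roles of $f$ and $g$ reversed. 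Your fix is simply to swap which factor gets the weighted $L^2$ and which gets the $W^{M+1,\infty}$ in each of your two regimes.
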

%\begin{remark}
%The loss of half a derivative on the right-hand side is inevitable;
%the result is false, even without weights, if no loss of derivative is allowed.
%\end{remark}
\begin{proof}
We only show the case $n = 1$, $B_1 = B$; the general case being similar.

We decompose
\[
\nabla^A|\nabla|f\nabla^Bg
=\sum_{k\in\Z} \nabla^A|\nabla|P_kf\nabla^BP_{<k}g+\nabla^A|\nabla|P_{\le k}f\nabla^BP_kg.
\]
For the first term, by Lemma \ref{Z-bound} (iii) we have
\begin{align*}
&\left\| (1+|x|)^\alpha\sum_{k\in\Z} \nabla^A|\nabla|P_kf\nabla^BP_{<k}g \right\|_{L^2}\\
\le&\sum_{k\in\Z} \|(1+|x|)^\alpha\nabla^A|\nabla|P_kf\nabla^BP_{<k}g\|_{L^2}\\
\le&\sum_{k\in\Z} \|\nabla^A|\nabla|P_kf\|_{L^\infty}\|(1+|x|)^\alpha\nabla^BP_{<k}g\|_{L^2}\\
\lesssim_{A,B}&\sum_{k\in\Z} 2^{k^-/2-(B+1/2)k^+}\||\nabla|^{1/2}f\|_{W^{A+B+1,\infty}}\cdot2^{Bk}\|(1+|x|)^\alpha g\|_{L^2}\\
\lesssim_{A,B}&\||\nabla|^{1/2}f\|_{W^{A+B+1,\infty}}\|(1+|x|)^\alpha g\|_{L^2}.
\end{align*}
Similarly the second term is bounded by $\|(1+|x|)^\alpha|\nabla|^{1/2}f\|_{L^2}\|g\|_{W^{A+B+1}}$.
\end{proof}

Now we show the analog of Claim F.2 in \cite{GeMaSh2}. By (F.3) in \cite{GeMaSh2} we can write
\begin{equation}\label{single-layer}
\Phi(x,z)=\frac{1}{2\pi}\int_{\R^2} \frac{\rho(y)}{(|x-y|^2+|z-h(y)|^2)^{1/2}}dy
\end{equation}
on $\Omega(t)$. Then
\begin{equation}\label{gr-phi-Taylor}
|\nabla|\phi=\rho+\sum_{n=1}^\infty \frac{1}{2\pi}{-1/2 \choose n}K_n\rho,
\end{equation}
where
\begin{equation}\label{Kn-def}
K_n\rho(x)=|\nabla|\int_{\R^2} \rho(y)\frac{|h(x)-h(y)|^{2n}}{|x-y|^{2n+1}}dy.
\end{equation}
This can be inverted using Von Neumann's series to give
\begin{equation}\label{rho-Taylor}
\rho=|\nabla|\phi+\sum_{N=1}^\infty \sum_{m_1,\dots,m_N\ge1} c_{m_1,\dots,m_N}K_{m_1}\cdots K_{m_N}|\nabla|\phi
\end{equation}
where $|c_{m_1,\dots,m_N}| \le C^{m_1+\cdots+m_N}$. We now bound all the terms in this series.

As in (F.8) of \cite{GeMaSh2}, we decompose $K_n$ into two parts.
Let $\chi_{\ge1}$ and $\chi_{<1}$ be a partition of unity supported in $\R^2\backslash\overline{B(1)}$ and $B(2)$, respectively. For $\iota\in\{\ge1,<1\}$ let $\Gamma_{n,\iota}(x)=\chi_\iota(x)/|x|^{2n+1}$, and
\begin{equation}\label{Kni-def}
K_{n,\iota}\rho(x)=\nabla\int_{\R^2} \rho(y)|h(x)-h(y)|^{2n}\Gamma_{n,\iota}(x-y)dy.
\end{equation}
Then $K_n = -|\nabla|^{-1}\nabla\cdot(K_{n,\ge1} + K_{n,<1})$.
By Lemma \ref{Z-bound}, the operator $|\nabla|^{-1}\nabla$ is bounded on $W^{k,r}$ and $Z$ spaces, so it suffices to bound $K_{n,\ge1}$ and $K_{n,<1}$ separately.

\begin{lemma}\label{Kn<1-Wkp}
For any integer $n \ge 1$,
\[
K_{n,<1}\rho=C_{k,p,q}^nO_W(h^{2n} \cdot \rho).
\]
\end{lemma}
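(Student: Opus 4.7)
The bound will be established following the approach of Claim F.2 in \cite{GeMaSh2}, adapted to allow the weighted Hölder decomposition $1/r = 1/p + 1/q$. The key observation is the Taylor identity
\[
(h(x+z) - h(x))^{2n} = \int_{[0,1]^{2n}} \prod_{i=1}^{2n} (z \cdot \nabla h(x+s_i z)) \, ds_1 \cdots ds_{2n},
\]
which exhibits the factor $|z|^{2n}$ exactly cancelling the singularity of $\Gamma_{n,<1}(-z) \lesssim \chi_{<1}(z)/|z|^{2n+1}$, leaving the kernel $\chi_{<1}(z)/|z|$, which is integrable on $\R^2$ near the origin.

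For the base case $k = 0$, substitute $y = x + z$ and distribute $\nabla_x$ via Leibniz across the Taylor product. When the derivative hits one of the $\nabla h(x+s_i z)$ factors, it produces $s_i \nabla^2 h(x + s_i z)$; by Hölder with $1/r = 1/p + 1/q$, placing this $\nabla^2 h$-factor in $L^q$ (contributing $\|h\|_{W^{3,q}}$) and the remaining $2n-1$ factors in $L^\infty$ (contributing $\|h\|_{W^{2,\infty}}^{2n-1}$), combined with the $L^1_z$ integrability of $\chi_{<1}(z)/|z|$ via Minkowski, gives the required bound. When $\nabla_x$ instead falls on $\Gamma_{n,<1}$, I use IBP via $\nabla_x\Gamma_{n,<1}(x-y) = -\nabla_y\Gamma_{n,<1}(x-y)$ together with the cancellation
\[
(\nabla_x + \nabla_y)(h(x)-h(y))^{2n} = 2n(h(x)-h(y))^{2n-1}(\nabla h(x) - \nabla h(y)),
\]
in which the Lipschitz difference $\nabla h(x) - \nabla h(y) = O(\|h\|_{W^{2,\infty}} |x-y|)$ restores the missing factor of $|x-y|$ and brings the kernel back to the integrable $\chi_{<1}(z)/|z|$ class.

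For $k \ge 1$, apply $\nabla^k$ to the identity from the base case and distribute the additional $k$ derivatives by Leibniz among the $2n+1$ atoms ($\rho$ and the $2n$ factors of $\nabla h$). Assign the atom receiving the most derivatives to the $L^q$ (or $L^p$) slot -- if it is an $h$-factor, its total order is at most $k+3$, matching $\|h\|_{W^{k+3,q}}$; if it is $\rho$, its order is at most $k$, matching $\|\rho\|_{W^{k,p}}$ -- and every other $h$-factor to $L^\infty$ with at most $2$ derivatives, matching $\|h\|_{W^{2,\infty}}$. A vector-valued Hölder in $x$ followed by Minkowski in $z$ finishes the estimate; the exponential constant $C_{k,p,q}^n$ absorbs the $O(n^k)$ Leibniz terms, the binomial coefficients from the Taylor product, and the finitely many angular pieces $\prod z^{I_i}$ entering the decomposition of $\prod(z \cdot \nabla h)$.

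The principal obstacle is ensuring that this redistribution never forces $k+1$ derivatives onto $\rho$: whenever a derivative threatens to land on the singular $\nabla\Gamma_{n,<1}$ and one would naively IBP it onto $\rho$, the $(\nabla_x + \nabla_y)$-identity above reroutes it onto an $h$-factor instead, trading an extra $\nabla h$ for an extra $\nabla^2 h$ while preserving the integrable kernel size $\chi_{<1}(z)/|z|$. This single mechanism, iterated at each order, is what prevents loss of a derivative on $\rho$ and closes the inductive scheme.
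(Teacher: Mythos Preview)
Your proof has a genuine gap in the ``rerouting'' mechanism. Work in the $z$-formulation you set up: after the substitution, $\Gamma_{n,<1}(z)$ no longer depends on $x$, so the $k+1$ derivatives $\nabla_x^{k+1}$ distribute only over the $2n$ factors $\nabla h(x+s_i z)$ and over $\rho(x+z)$. The Leibniz term with all $k+1$ derivatives on $\rho$ is unavoidable, and your $(\nabla_x+\nabla_y)$-identity does not eliminate it: carried out correctly, that identity produces \emph{two} terms,
\[
\int \rho(y)\,2n(h(x)-h(y))^{2n-1}(\nabla h(x)-\nabla h(y))\,\Gamma\,dy \;+\; \int \nabla\rho(y)\,(h(x)-h(y))^{2n}\,\Gamma\,dy,
\]
and the second one still carries the derivative on $\rho$. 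Iterating, a term $\int(h(x)-h(x+z))^{2n}\Gamma(z)\nabla^{k+1}\rho(x+z)\,dz$ always survives. If you try to cure it by one IBP in $z$, the kernel becomes $\nabla_z\big[(h(x)-h(x+z))^{2n}\Gamma(z)\big] = O(|z|^{-2})$, which is \emph{not} integrable in $\R^2$; your ``integrable $\chi_{<1}(z)/|z|$'' picture breaks down, and you are forced into a Calder\'on--Zygmund situation with a variable ($x$- and $z$-dependent) coefficient $(h(x)-h(x+z))^{2n}/|z|^{2n}$, which you have not addressed.

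The paper's proof resolves exactly this obstruction by the first-order split $h(y)-h(x)=L+R$ with $L=\nabla h(x)\cdot(y-x)$: for the pure-$L^{2n}$ piece the $h$-dependence $(\nabla h(x))^{\otimes 2n}$ factors \emph{outside} the integral, leaving a genuine convolution with kernel $\Gamma_{n,<1}(y)y^{\otimes 2n}$ (degree $-1$) or its gradient (degree $-2$), and standard CZ theory applies; any piece containing at least one $R$ gains an extra $|y|$, so $\Gamma_{n,<1}(y)|y|^{2n+1}$ and its gradient are in $L^1$, and Kato--Ponce handles the product $L^{2n-j}R^j$ in $W^{k+1,q}$. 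Two smaller points: $\nabla_x[\nabla h(x+s_iz)]=\nabla^2 h(x+s_iz)$, not $s_i\nabla^2 h$; and your ``assign the heaviest atom to $L^q$, the rest to $L^\infty$ with $\le 2$ derivatives'' is not automatic from Leibniz---you would need a product estimate like Kato--Ponce to justify it.
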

\begin{proof}
By Taylor's formula,
\begin{equation}\label{LR-def}
\begin{aligned}
h(y) - h(x) &= \underbrace{\nabla h(x) \cdot (y - x)}_{L(x,y)}\\
&+ \underbrace{\int_0^1 (1 - t)(y - x)^T\nabla^2h(x + t(y - x))(y - x)dt}_{R(x,y)}
\end{aligned}
\end{equation}
so $K_{n,<1} = K_{n,<1,L} + K_{n,<1,R}$, where
\begin{align*}
K_{n,<1,L}\rho(x)
&=\nabla\int_{\R^2} \Gamma_{n,<1}(x-y)(\nabla h(x)\cdot(x-y))^{2n}\rho(y)dy\\
&=\nabla\int_{\R^2} \Gamma_{n,<1}(y)(\nabla h(x)\cdot y)^{2n}\rho(x-y)dy.
\end{align*}

{\bf Part 1:} $K_{n,<1,L}\rho$. We only pass $k$ derivatives to the integrand to write $\nabla^kK_{n,<1,L}\rho$ as a sum of $2^{2n+k}$ terms like
\[
\nabla\left( \nabla_x^l(\partial_{I_1}h(x)\cdots\partial_{I_{2n}}h(x))
\int_{\R^2} \Gamma_{n,<1}(y)y_{I_1}\cdots y_{I_{2n}}\nabla^{k-l}\rho(x-y)dy \right),
\]
where $I_1, \dots, I_{2n} \in \{1, 2\}$. We put the derivatives of $(\nabla h)^{\otimes 2n}$ in $L^q$, and the integral and its derivative in $L^p$,
noting that the convolution kernel is homogeneous of degree $-1$ or $-2$ near the origin, so convolution with it is bounded on $L^p$ for $p\in(1,\infty)$.
Then by Sobolev multiplication,
\[
\|\nabla^kK_{n,<1,L}\rho\|_{L^r}\le C_k^n \|(\nabla h)^{\otimes 2n}\|_{W^{k+1,q}}\|\text{integral}\|_{L^p}\le C_{k,p,q}^n\|h\|_{W^{1,\infty}}^{2n-1}\|h\|_{W^{k+2,q}}\|\rho\|_{W^{k,p}}.
\]
Note that if one keeps track of the constants, one finds that it is at most exponential in $n$.

{\bf Part 2:} $K_{n,<1,R}\rho$. This is a sum of $2^k(4^n-1)$ terms like
\begin{align*}
&\nabla\int_{\R^2} \Gamma_{n,<1}(y)\underbrace{\nabla_x^l(L(x,x-y)^{2n-j}R(x,x-y)^j)}_{F(x,y)}\nabla^{k-l}\rho(x-y)dy\\
=&\nabla\int_{\R^2} \Gamma_{n,<1}(x - y)F(x, x - y)\nabla^{k-l}\rho(y)dy\\
=&\int_{\R^2} \nabla\Gamma_{n,<1}(x - y)F(x, x - y)\nabla^{k-l}\rho(y)dy\\
+&\int_{\R^2} \Gamma_{n,<1}(x - y)(\nabla_1F(x, x - y) + \nabla_2F(x, x - y))\nabla^{k-l}\rho(y)dy\\
=&\int_{\R^2} \nabla\Gamma_{n,<1}(y)F(x, y)\nabla^{k-l}\rho(x - y)dy\\
+&\int_{\R^2} \Gamma_{n,<1}(y)(\nabla_xF(x, y) + \nabla_yF(x, y))\nabla^{k-l}\rho(x - y)dy.
\end{align*}
Since $\Gamma_{n,<1}(y)|y|^{2n+1}$ and its gradient are in $L^1$,
with the norm polynomial in $n$, it suffices to bound, uniformly in $|y| \le 2$,
\[
|y|^{-2n-1}\|(F, \nabla F)(x, y)\nabla^{k-l}\rho(x - y)\|_{L_x^r}
\le |y|^{-2n-1}\|(F, \nabla F)(x, y)\|_{L_x^q}\|\rho\|_{W^{k,p}}.
\]
From the expressions (\ref{LR-def}) of $L$ and $R$ it follows that
\begin{align*}
L(x, x - y) &= -\nabla h(x) \cdot y,\\
R(x, x - y) &= \int_0^1 (1 - t)y^T\nabla^2h(x - ty)ydt
\end{align*}
so by the Kato--Ponce inequality (Lemma X.4 of \cite{KaPo}),
\begin{align*}
\|(F, \nabla F)(x, y)\|_{L_x^q}
&\le C\|L(x, x - y)^{2n-j}R(x, x - y)^j\|_{W^{k+1,q}}\\
&\le C_{k,q}^n\|h\|_{W^{2,\infty}}^{2n-1}\|h\|_{W^{k+3,q}}.
\end{align*}

Combining the two parts shows the claim.
\end{proof}

\begin{lemma}\label{Kn-Z}
For any integer $n \ge 1$,
\begin{align*}
K_n\rho &= C^nO_Z(h^{2n}\cdot\rho), &
K_n|\nabla|\phi &= C^nO_Z(U^{2n}\cdot U).
\end{align*}
\end{lemma}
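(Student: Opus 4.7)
The plan is to adapt the proof of Claim F.2 in \cite{GeMaSh2}, inserting the weight $(1+|x|)^\alpha$ throughout. Since $K_n=-|\nabla|^{-1}\nabla\cdot(K_{n,\ge 1}+K_{n,<1})$ and $|\nabla|^{-1}\nabla$ is a vector-valued Riesz transform, Lemma \ref{Z-bound}(iii) reduces the problem to bounding $\|(1+|x|)^\alpha\langle\nabla\rangle^8 K_{n,\iota}\rho\|_{L^2}$ separately for $\iota\in\{\ge 1,<1\}$.

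For the near piece I would redo the proof of Lemma \ref{Kn<1-Wkp} with the weight inserted. Since $\Gamma_{n,<1}$ is supported in $\{|x|\le 2\}$, the elementary inequality $(1+|x|)^\alpha\le C_\alpha((1+|x-y|)^\alpha+(1+|y|)^\alpha)$ lets the weight be transferred from the output $x$ to the input $y$ of each of the compactly supported convolutions appearing in that proof, at the cost of a harmless constant. The same Sobolev--multiplication / H\"older bookkeeping then applies, now with the $\rho$-factor sitting in the weighted $L^2$ norm controlled by $\|\rho\|_Z$ and the $2n$ factors of $\nabla h$ or $\nabla^2 h$ sitting in $L^\infty$; Lemma \ref{prod-Z} absorbs the $\langle\nabla\rangle^8$ derivatives.

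For the far piece I would expand
\[
(h(x)-h(y))^{2n}=\sum_{k=0}^{2n}\binom{2n}{k}(-1)^k h(x)^{2n-k}h(y)^k,
\]
converting $K_{n,\ge 1}\rho$ into a finite sum of products $h^{2n-k}\cdot\nabla\bigl[(h^k\rho)\ast\Gamma_{n,\ge 1}\bigr]$ plus similar terms where $\nabla$ falls on the prefactor. Because $\Gamma_{n,\ge 1}(x)\lesssim|x|^{-2n-1}\mathbf{1}_{|x|\ge 1}$ and $2n+1>2+\alpha$ for every $n\ge 1$, both $\Gamma_{n,\ge 1}$ and $(1+|x|)^\alpha\Gamma_{n,\ge 1}$ lie in $L^1(\R^2)$ with bounds uniform in $n$, so convolution with $\Gamma_{n,\ge 1}$ is uniformly bounded on weighted $L^2$. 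A Leibniz expansion followed by Lemma \ref{prod-Z} (with the role of $|\nabla|f$ played by the factor carrying $\rho$) then produces the desired estimate.

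The main obstacle is combinatorial bookkeeping: the binomial and Leibniz expansions together produce $O(4^n)$ summands, yet the final constant must be at most $C^n$, and only one copy of $h$ may carry $W^{11,\infty}$ while the remaining $2n-1$ carry only $W^{2,\infty}$. This is handled as in Claim F.2 of \cite{GeMaSh2}, by placing $W^{11,\infty}$ on whichever copy of $h$ absorbs the highest-order derivatives and $W^{2,\infty}$ on the rest; the weight $(1+|x|)^\alpha$, being in the Muckenhoupt class $A_2$, does not damage this scheme. Finally, $K_n|\nabla|\phi=C^nO_Z(U^{2n}\cdot U)$ follows from the first bound with $\rho=|\nabla|\phi$, the trivial dominations $\|h\|_{W^{k,\infty}}\le\|U\|_{W^{k,\infty}}$, and a frequency-by-frequency argument via Lemma \ref{Z-bound}(iv) showing that the extra $|\nabla|^{1/2}$ needed to pass from $|\nabla|^{1/2}\phi$ inside $U$ to $|\nabla|\phi$ costs only a harmless factor that is absorbed into the derivative slack between $W^{11,\infty}$ and the $8$-derivative budget of $Z$.
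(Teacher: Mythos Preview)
Your treatment of the far piece $K_{n,\ge 1}$ and of the near piece for generic $\rho$ is essentially correct and close to the paper's argument (though for $K_{n,<1}\rho$ the paper uses a slightly different route: it invokes the unweighted Lemma~\ref{Kn<1-Wkp} on balls of radius~$3$ and then inserts the weight by a covering argument as in \eqref{insert-weight}, rather than redoing the proof with weights; your direct approach also works).

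The real gap is in your last paragraph, where you deduce the second bound from the first by setting $\rho=|\nabla|\phi$. The first bound reads
\[
\|K_n|\nabla|\phi\|_Z\lesssim C^n\|h\|_{W^{2,\infty}}^{2n-1}\|h\|_{W^{11,\infty}}\,\||\nabla|\phi\|_Z,
\]
so you would need $\||\nabla|\phi\|_Z\lesssim\|U\|_Z$, i.e.\ $\|(1+|x|)^\alpha\langle\nabla\rangle^8|\nabla|^{1/2}\,|\nabla|^{1/2}\phi\|_{L^2}\lesssim\|(1+|x|)^\alpha\langle\nabla\rangle^8|\nabla|^{1/2}\phi\|_{L^2}$. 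This fails at high frequency: $|\nabla|^{1/2}$ is a genuine half-derivative, and the $Z$-norm already spends its full eight-derivative budget on $\rho$. The slack you invoke between $W^{11,\infty}$ and eight derivatives sits on $h$, not on the $\rho$-slot, so no frequency-by-frequency argument via Lemma~\ref{Z-bound}(iv) can transfer it.

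The paper resolves this by proving the $K_n|\nabla|\phi$ bound \emph{directly} for the near piece, using Lemma~\ref{prod-Z} in the way it was designed: one writes the integrand of $\nabla^kK_{n,<1}|\nabla|\phi$ as a product $\nabla^{k+1-l}|\nabla|\phi\cdot\bigotimes_j\nabla h(\cdot-t_jy)$ and applies Lemma~\ref{prod-Z} with $f=\phi$. The lemma then returns \emph{two} terms---one with the weight on $|\nabla|^{1/2}\phi$ and one with the weight on $\nabla h$---both of which are components of $\|U\|_Z$. This symmetric two-term output is precisely what produces $O_Z(U^{2n}\cdot U)$ rather than $O_Z(h^{2n}\cdot|\nabla|\phi)$, and it cannot be recovered from the first bound alone.
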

\begin{proof}
As before it suffices to bound $K_{n,\ge1}$ and $K_{n,<1}$ separately.

{\bf Part 1:} $K_{n,\ge1}$. Using the binomial formula for $(h(x) - h(y))^{2n}$ we decompose $K_{n,\ge1}$ as a sum of $4^n$ terms like
\[
K_{n,\ge1,j}\rho=\nabla(h^j(\Gamma_{n,\ge1}*\rho h^{2n-j})).
\]
For $0 \le k \le 8$ we have $\nabla^kK_{n,\ge1,j}\rho$ is a sum of $2^{k+1}$ terms like
\begin{equation}\label{Kn-ge1-int}
\nabla^l(h^j)(\nabla^{k+1-l}\Gamma_{n,\ge1}*\rho h^{2n-j})
\end{equation}
so it suffices to bound (\ref{Kn-ge1-int}). We have
\[
|(1+|\cdot|)^\alpha f*g|
\lesssim|(1+|\cdot|)^\alpha f|*|g|+|f|*|(1+|\cdot|)^\alpha g|.
\]
For $l\ge0$ we have $\nabla^l\Gamma_{n,\ge1}(x)\lesssim n^l(1+|x|)^{-2n-1-l}$, so $(1+|x|)^\alpha\nabla^l\Gamma_{n,\ge1}\in L^1$ as $\alpha<1$,
with the norm polynomial in $n$. Hence
\begin{equation}\label{Gn-ge1-bound}
\begin{aligned}
\|(1+|x|)^\alpha(\nabla^{k+1-l}\Gamma_{n,\ge1}*\rho h^{2n-j})\|_{L^2}
&\le C_k^n \|(1+|x|)^\alpha\rho h^{2n-j}\|_{L^2}\\
&\le C_k^n \|(1+|x|)^\alpha\rho\|_{L^2}\|h\|_{L^\infty}^{2n-j}.
\end{aligned}
\end{equation}
Since $\nabla^l(h^j)$ expands into $O(n^l + 1)$ terms, each bounded by $\|h\|_{L^\infty}^{j-1}\|h\|_{W^{k+1,\infty}}$, it follows that
\begin{equation}\label{Kn-ge1-bound}
\|(1+|x|)^\alpha\nabla^kK_{n,\ge1,j}\rho\|_{L^2}
\le C_k^n\|h\|_{L^\infty}^{2n-1}\|h\|_{W^{k+1,\infty}}\|(1+|x|)^\alpha\rho\|_{L^2}.
\end{equation}
Summing over $0 \le k \le 8$, $0 \le j \le 2n$ we get the desired bound for $K_{n,\ge1}\rho$.

For $K_{n,\ge1}|\nabla|\phi$, the right-hand side of (\ref{Gn-ge1-bound}) becomes
\[
C_k^n\|(1 + |x|)^\alpha|\nabla|\phi\|_{L^2}\|h\|_{L^\infty}^{2n-j}
\lesssim C_k^n \||\nabla|^{1/2}\phi\|_Z\|h\|_{L^\infty}^{2n-j}
\]
by Lemma \ref{Z-bound} (iii), so the desired bound also follows.

{\bf Part 2:} $K_{n,<1}$. We bound $K_{n,<1}|\nabla|\phi$ first.
For $0 \le k \le 8$ we have $\nabla^kK_{n,<1}|\nabla|\phi(x)$ is a sum of $2^{k+1}$ terms like
\begin{equation}\label{Kn-<1-int}
\int_{\R^2} \Gamma_{n,<1}(y)\nabla_x^l(h(x) - h(x - y))^{2n}\nabla^{k+1-l}|\nabla|\phi(x - y)dy.
\end{equation}
By Taylor's formula,
\[
h(x) - h(x - y) = \int_0^1 y\cdot\nabla h(x - ty)dt.
\]
Since $\Gamma_{n,<1}(y)|y|^{2n} \in L^1$, with the norm polynomial in $n$,
\[
\|(1+|x|)^\alpha(\ref{Kn-<1-int})\|_{L^2}\\
\lesssim \sup_{|y|\le2\atop t_j\in[0,1]} \|(1+|x|)^\alpha
\nabla_x^l\left( \otimes_{j=1}^{2n} \nabla h(x-t_jy) \right)\nabla^{k+1-l}|\nabla|\phi(x-y)\|_{L_x^2}.
\]
Since for $|y| \le 2$ and $t \in [0, 1]$, $1 + |x| \approx 1 + |x - t_jy|$,
by Lemma \ref{prod-Z}, the above is bounded by
\begin{align*}
&C_k^n\|\nabla h\|_{L^\infty}^{2n-1}(\||\nabla|^{1/2}\phi\|_{W^{k+2,\infty}}\|(1+|x|)^\alpha\nabla h\|_{L^2}+\|(1+|x|)^\alpha|\nabla|^{1/2}\phi\|_{L^2}\|\nabla h\|_{W^{k+2,\infty}})\\
\le&C_k^n\|U\|_{W^{1,\infty}}^{2n-1}\|U\|_{W^{k+3,\infty}}\|U\|_Z.
\end{align*}
Summing over $0 \le k \le 8$ gives the desired bound.

For $K_{n,<1}\rho$, by Lemma \ref{Kn<1-Wkp} we have
\[
\|K_{n,<1}\rho\|_{H^8}\le C^n\|h\|_{W^{2,\infty}}^{2n-1}\|h\|_{W^{11,\infty}}\|\rho\|_{H^8}.
\]
Since $\Gamma_{n,<1}$ is supported in $B(0,2)$, we can improve this to
\[
\|K_{n,<1}\rho\|_{H^8(B(x,1))}\le C^n\|h\|_{W^{2,\infty}}^{2n-1}\|h\|_{W^{11,\infty}}\|\rho\|_{H^8(B(x,3))}.
\]
Using the same argument as (\ref{insert-weight}), we can insert the weight in the $H^8$ norm and get the desired bound for $K_{n,<1}\rho$.
\end{proof}

We also need the analog of Lemma \ref{Kn-Z} with no weights involved.
\begin{lemma}\label{Kn-Wkp}
For any integer $n \ge 1$,
\[
K_n\rho=C_{k,p,q}^nO_W(h^{2n} \cdot \rho).
\]
\end{lemma}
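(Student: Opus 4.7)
The plan is to mirror Part~1 of the proof of Lemma~\ref{Kn-Z}, replacing the weighted $L^2$ estimates with unweighted $L^p$--$L^q$--$L^r$ ones obtained from Young's and H\"older's inequalities. Writing $K_n=-|\nabla|^{-1}\nabla\cdot(K_{n,\ge1}+K_{n,<1})$ and invoking the $W^{k,r}$-boundedness of $|\nabla|^{-1}\nabla$ for $1<r<\infty$, I reduce matters to estimating $K_{n,\ge1}\rho$ and $K_{n,<1}\rho$ separately. The latter is already handled by Lemma~\ref{Kn<1-Wkp}, so only $K_{n,\ge1}$ requires work.

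Expanding $(h(x)-h(y))^{2n}$ by the binomial theorem, $K_{n,\ge1}\rho$ decomposes into $O(4^n)$ pieces of the form
\[
K_{n,\ge1,j}\rho=\nabla\bigl(h^{j}\cdot(\Gamma_{n,\ge1}*(\rho h^{2n-j}))\bigr),\qquad 0\le j\le 2n.
\]
For $0\le k'\le k$, the Leibniz rule writes $\nabla^{k'}K_{n,\ge1,j}\rho$ as a sum of at most $2^{k'+1}$ terms of the shape
\[
\nabla^{l}(h^{j})\cdot\bigl(\nabla^{k'+1-l}\Gamma_{n,\ge1}*(\rho h^{2n-j})\bigr),\qquad 0\le l\le k'+1.
\]
Since $|\nabla^{m}\Gamma_{n,\ge1}(y)|\lesssim n^{m}(1+|y|)^{-2n-1-m}$, the $L^{1}$ norm of $\nabla^{m}\Gamma_{n,\ge1}$ is polynomial in $n$, and Young's inequality together with H\"older yields
\[
\bigl\|\nabla^{k'+1-l}\Gamma_{n,\ge1}*(\rho h^{2n-j})\bigr\|_{L^{p}}\le C_{k}^{n}\,\|h\|_{L^{\infty}}^{2n-j}\,\|\rho\|_{L^{p}}.
\]
A second Leibniz expansion of $\nabla^{l}(h^{j})$, placing $L^{q}$ on the factor carrying the most derivatives and $L^{\infty}$ on the others, gives
\[
\|\nabla^{l}(h^{j})\|_{L^{q}}\le C_{k}^{n}\,\|h\|_{L^{\infty}}^{\max(j-1,0)}\,\|h\|_{W^{l,q}},
\]
with the convention that $h^{0}=1$ forces $l=0$. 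Combining by H\"older with $1/r=1/p+1/q$ and summing over $j$, $l$ and $k'$ produces
\[
\|K_{n,\ge1}\rho\|_{W^{k,r}}\le C_{k,p,q}^{n}\,\|h\|_{L^{\infty}}^{2n-1}\,\|h\|_{W^{k+1,q}}\,\|\rho\|_{L^{p}},
\]
which is stronger than the claimed $O_{W}(h^{2n}\cdot\rho)$ bound once one notes $\|h\|_{L^{\infty}}\le\|h\|_{W^{2,\infty}}$, $\|h\|_{W^{k+1,q}}\le\|h\|_{W^{k+3,q}}$ and $\|\rho\|_{L^{p}}\le\|\rho\|_{W^{k,p}}$.

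The only technical point, though routine, is confirming that the constants grow at most exponentially in $n$: this needs the polynomial-in-$n$ bound for $\|\nabla^{m}\Gamma_{n,\ge1}\|_{L^{1}}$ and a careful count of the combinatorial expansion of $\nabla^{l}(h^{j})$ (with at most $(l+1)^{j}\le C^{n}$ summands). Both ingredients already appear, in their weighted guise, in the proof of Lemma~\ref{Kn-Z}, so no new multilinear or harmonic-analytic input is required and the argument transfers verbatim after dropping the weights.
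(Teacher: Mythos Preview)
Your proposal is correct and follows essentially the same route as the paper: decompose into $K_{n,\ge1}$ and $K_{n,<1}$, invoke Lemma~\ref{Kn<1-Wkp} for the latter, and for the former use the binomial expansion together with the $L^1$ kernel bounds from Lemma~\ref{Kn-Z} and H\"older/Young plus the product rule. The only cosmetic wrinkle is the $j=0$ term, where your displayed inequality $\|\nabla^l(h^0)\|_{L^q}\le C_k^n\|h\|_{W^{l,q}}$ is not literally valid for $q<\infty$; the fix---move one factor of $h$ from inside the convolution into $L^q$ via $\|\rho h^{2n}\|_{L^r}\le\|\rho\|_{L^p}\|h\|_{L^q}\|h\|_{L^\infty}^{2n-1}$---is immediate, and the paper's own write-up glosses over the same point.
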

\begin{proof}
As before it suffices to bound $K_{n,\ge1}\rho$ and $K_{n,<1}\rho$ separately.

{\bf Part 1:} $K_{n,\ge1}\rho$. Referring back to the proof of Lemma \ref{Kn-Z} we have
\[
\|(\ref{Kn-ge1-int})\|_{L^r}
\le C_k^n \|h^j\|_{W^{k+1,q}}\|\rho h^{2n-j}\|_{L^p}
\le C_{k,q}^n \|h\|_{L^\infty}^{2n-1}\|h\|_{W^{k+1,q}}\|\rho\|_{L^p}.
\]
The first inequality is the consequence of the $L^1$ norm bound on the kernel, derived in the proof of Lemma \ref{Kn-Z}, while the second one follows from repeated application of the Sobolev multiplication theorem,
with a constant at most exponential in $n$.

{\bf Part 2:} $K_{n,<1}\rho$. This was covered by Lemma \ref{Kn<1-Wkp}.
\end{proof}

Now we are ready to show the weighted Sobolev bounds for the Taylor remainders $B_j$ and $N_j$, $j=2$, 3, 4, as defined in (\ref{B23}), (\ref{N23}), (\ref{B34}) and (\ref{N34}).
\begin{proposition}\label{BN-ZW}
If all the Sobolev norms of $U$ appearing in Definition \ref{O-hn-g-def} are sufficiently small, then for $j = 2, 3, 4$ we have $B_j$ and $N_j = O(U^{j-1} \cdot U)$.
\end{proposition}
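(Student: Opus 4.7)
The plan is to prove the bounds for $j=2,3,4$ in sequence, treating $B_j$ first and then obtaining $N_j$ from the algebraic relations in (\ref{N23}) and (\ref{N34}). The main inputs are the bilinear weighted estimate Lemma \ref{prod-Z}, the single-layer expansion (\ref{rho-Taylor}), and the kernel estimates Lemmas \ref{Kn-Z} and \ref{Kn-Wkp}.

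For $j=2$, both $B_2=-|\nabla|(h|\nabla|\phi)-h\Delta\phi$ and $N_2$ are explicitly bilinear in $(h,\phi)$, with the extra pieces of $N_2$ carrying the null multipliers in (\ref{m-prod}), which are smooth up to the value $|\xi_1+\xi_2|(|\xi_1|+|\xi_2|)\lesssim(|\xi_1|+|\xi_2|)^{3/2}$. The $Z$-component of the $O(U\cdot U)$ bound is then a direct application of Lemma \ref{prod-Z} with $f=\phi$, $g=h$ (and its symmetric version), once one rewrites each term in the canonical form $\nabla^A|\nabla|f\,\nabla^Bg$. The $W^{k,r}$-component is the Kato-Ponce inequality.

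For $j=3$, I use the integral representation (\ref{B23}) of $B_3$. Each integrand factors as (an outer copy of $h$ or $|\nabla h|^2$) $\times$ (an operator among $G(sh)-|\nabla|$, $B(sh)-|\nabla|$, applied to something of the form $hB(sh)\phi$, etc.). For the operator factors, I would invoke the single-layer representation (\ref{single-layer}) together with the Von Neumann series (\ref{rho-Taylor}); Lemmas \ref{Kn-Z}, \ref{Kn-Wkp} then yield, for small $\|h\|_{W^{2,\infty}}$, the bounds $(G(sh)-|\nabla|)g=O(U\cdot g)$ and analogously for $B(sh)-|\nabla|$, uniformly in $s\in[0,1]$. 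Multiplying by the outer factor of $h$ via Lemma \ref{prod-Z} and Kato-Ponce produces $B_3=O(U^2\cdot U)$. Then $N_3$ is $B_3$ plus explicit trilinear expressions; for $|\nabla|^{1/2}(B^2-(|\nabla|\phi)^2)$ one writes $B^2-(|\nabla|\phi)^2=(B-|\nabla|\phi)(B+|\nabla|\phi)$ and uses Proposition \ref{Taylor1}(i) and the null structure of the multiplier in (\ref{m-prod}) to retain the factor $|\nabla|^{1/2}$ without derivative loss; the remaining $|\nabla h|^2B$ term is handled by direct paraproduct estimates.

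For $j=4$, the quartic remainder $B_4=B_3-B_3^\circ$ is the "tail" after one further Taylor expansion of the operators inside $B_3$. The most systematic treatment is to expand $B$ itself via (\ref{single-layer}) and (\ref{rho-Taylor}) and identify $B_4$ with the sum of those terms $K_{m_1}\cdots K_{m_N}|\nabla|\phi$ whose total $h$-degree $2(m_1+\cdots+m_N)\ge 4$ after the first two orders are removed. Lemmas \ref{Kn-Z} and \ref{Kn-Wkp} control each such term by $C^{m_1+\cdots+m_N}\|U\|_{W^{2,\infty}}^{2\sum m_i-1}\|U\|_{W^{11,\infty}}\|U\|_Z$ (respectively in the $W^{k,r}$ norm), and the series converges by the smallness assumption on $\|U\|_{W^{2,\infty}}$. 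Finally, $N_4$ is assembled from $B_4$ together with products such as $|\nabla h|^2B^2$, $BB_3$, $B_3^2$, $|\nabla|^{1/2}(B_2^2)$; each is estimated by combining the $B_j$ bounds already established with Lemma \ref{prod-Z} for the weighted factor and Kato-Ponce for the $W^{k,r}$ factor. The main obstacle lies in this last step: for products like $B_3^2$ both factors would want the high Sobolev norm of $U$, so one must drop the derivative count on one factor via $W^{11,\infty}\subset W^{2,\infty}$ and absorb the deficit into an extra power of $\|U\|_{W^{2,\infty}}$, which is assumed small. Careful bookkeeping of derivative allocation and weight placement, following the $\ell^2$-summation in Lemma \ref{Z-bound}(iv), closes the estimates.
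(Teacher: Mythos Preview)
Your outline has a genuine gap at the step where you ``expand $B$ itself via (\ref{single-layer}) and (\ref{rho-Taylor})''. Those two formulas give you the single-layer density $\rho$, not $B$ or $G(h)\phi$: (\ref{single-layer}) represents the harmonic extension $\Phi$, and (\ref{gr-phi-Taylor})/(\ref{rho-Taylor}) relate $\rho$ to $|\nabla|\phi$. The kernel lemmas \ref{Kn-Z} and \ref{Kn-Wkp} likewise only bound $K_n\rho$ and $K_n|\nabla|\phi$. To pass from $\rho$ to $G(h)\phi$ you need a separate boundary-trace identity, namely (F.4) of \cite{GeMaSh2}, which expresses $G(h)\phi$ as $\rho$ plus an explicit integral of $|\nabla|\phi$ against a kernel involving $\nabla h$ and $h(x)-h(y)$, plus $O(U^3\cdot U)$. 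Without this bridge, your claim that the single-layer machinery yields $(G(sh)-|\nabla|)g=O(U\cdot g)$ is not justified, and your $j=4$ argument does not get off the ground.

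There is a second gap in the $j=4$ step. Even granting an expansion of $G(h)\phi$ by $h$-degree, you must verify that its degree-$3$ part actually equals $B_3^\circ$ (modulo the harmless $|\nabla h|^2|\nabla|\phi$), so that subtracting $B_1+B_2+B_3^\circ$ leaves a genuinely quartic remainder. The paper does this by the explicit computation $\frac{1}{4\pi}K_1|\nabla|\phi=B_3^\circ+|\nabla h|^2|\nabla|\phi$, obtained by expanding $|h(x)-h(y)|^2$ and recognizing Riesz potentials. Your proposal skips this identification; without it you would only conclude $B_4=O(U^2\cdot U)$, not $O(U^3\cdot U)$. The paper's route is also more economical than yours: it establishes $j=4$ first via (F.4) and the $K_1$ computation, then reads off $j=3$ and $j=2$ from (\ref{B34}), (\ref{B23}), (\ref{N34}), (\ref{N23}), avoiding your recursive treatment of the $\int_0^1\cdots ds$ formula for $B_3$ entirely.
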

\begin{proof}
By Lemma \ref{Kn-Z}, Lemma \ref{Kn-Wkp} and (\ref{gr-phi-Taylor}),
\begin{equation}\label{gr-phi-Taylorrem}
|\nabla|\phi=\rho-\frac{1}{4\pi}K_1\rho+O(h^4\cdot\rho)
\end{equation}
so by (\ref{rho-Taylor}),
\begin{equation}\label{rho-Taylorrem}
\rho=|\nabla|\phi+\frac{1}{4\pi}K_1|\nabla|\phi+O(U^4\cdot U).
\end{equation}
Note that the constants in Lemma \ref{Kn-Z} and Lemma \ref{Kn-Wkp} depend exponentially on $n$, so the series (\ref{rho-Taylor}) converges if the corresponding norm of $U$ is sufficiently small. Using the identity
\begin{align*}
|\nabla|\phi(y)\cdot|h(x)-h(y)|^2
&=|\nabla|\phi(y)\cdot(h(x)^2-2h(x)h(y)+h(y)^2)\\
&=h(x)^2(|\nabla|\phi(y)-|\nabla|\phi(x))\\
&-2h(x)(h(y)|\nabla|\phi(y)-h(x)|\nabla|\phi(x))\\
&+h(y)^2|\nabla|\phi(y)-h(x)^2|\nabla|\phi(x)
\end{align*}
and the expression for the Riesz potential
\[
|\nabla|f(x)=\frac{1}{2\pi}\text{p.v.}\int_{\R^2} \frac{f(x)-f(y)}{|x-y|^3}dy
\]
we obtain
\[
\frac{1}{4\pi}K_1|\nabla|\phi=-\frac{1}{2}|\nabla|(h^2|\nabla|^2\phi-2h|\nabla|(h|\nabla|\phi)+|\nabla|(h^2|\nabla|\phi))=B_3^\circ+|\nabla h|^2|\nabla|\phi,
\]
see (\ref{B34}), so
\[
\rho=|\nabla|\phi+B_3^\circ+|\nabla h|^2|\nabla|\phi+O(U^4\cdot U).
\]

Now by (F.4) of \cite{GeMaSh2}, modulo a sign flip,
\[
G(h)\phi=\rho+\frac{1}{2\pi}\int_{\R^2} |\nabla|\phi(y)\frac{\nabla h(x)\cdot(x-y)+h(y)-h(x)}{|x-y|^3}dy+O(U^3\cdot U).
\]
Using the identity
\[
|\nabla|\phi(y)\cdot(h(y)-h(x))=(h(y)|\nabla|\phi(y)-h(x)|\nabla|\phi(x))+h(x)(|\nabla|\phi(x)-|\nabla|\phi(y))
\]
and the expression for the Riesz transform
\[
\nabla\phi=\frac{\nabla}{|\nabla|}|\nabla|\phi
=\frac{1}{2\pi}\text{p.v.}\int_{\R^2} \frac{y-x}{|x-y|^3}|\nabla|\phi(y)dy
\]
we obtain
\begin{equation}\label{Gh-phi-int3}
\begin{aligned}
G(h)\phi
&=\rho-\nabla h\cdot\nabla\phi-|\nabla|(h|\nabla|\phi)+h|\nabla|^2\phi
+O(U^3\cdot U)\\
&=|\nabla|\phi-\nabla\cdot(h\nabla\phi)-|\nabla|(h|\nabla|\phi)+B_3^\circ+|\nabla h|^2|\nabla|\phi+O(U^3\cdot U).
\end{aligned}
\end{equation}
Comparing (\ref{Gh-phi-int2}) and (\ref{Gh-phi-int3}) we get
\[
B_4+|\nabla h|^2(B-|\nabla|\phi)=O(U^3\cdot U).
\]
By (\ref{B34}), $B_3^\circ=O(U^2\cdot U)$, so by (\ref{Gh-phi-int3}),
$G(h)\phi-|\nabla|\phi=O(U^1\cdot U)$. By (\ref{BVh-phi}),
\[
B-|\nabla|\phi=\frac{G(h)\phi-|\nabla|\phi+\nabla h\cdot\nabla\phi-|\nabla h|^2|\nabla|\phi}{1+|\nabla h|^2}=O(U^1\cdot U)
\]
so $|\nabla h|^2(B-|\nabla|\phi)=O(U^3\cdot U)$; hence the bound for $B_4$.
Now (\ref{B34}) and (\ref{B23}) give the bounds for $B_3$ and $B_2$;
(\ref{N34}) and (\ref{N23}) give the bounds for $N_3$ and $N_2$.
\end{proof}

\subsection{Remainder estimates of $G(h)\phi$ on the torus}
In this subsection we show how to generalize the single layer potential representation (\ref{single-layer}) to the periodic case.
By Section 3(b) of \cite{Lint}, the Green function on the torus $(\R/R\Z)^2$
can be written as $G(x_1 - x_1', x_2 - x_2', z - z')$, where
\[
G(x_1, x_2, z) = -\frac{1}{4\pi r_{0,0}} - \frac{1}{4\pi}\sum_{(m,n)\neq(0,0)} \left( \frac{1}{r_{m,n}}-\frac{1}{R\sqrt{m^2 + n^2}} \right) + C_R
\]
with $C_R$ being a constant explicitly computable from $R$ (see (3.32) and (3.33) of \cite{Lint}), and
\[
r_{m,n} = \sqrt{(x_1 - mR)^2 + (x_2 - nR)^2 + z^2}.
\]
This Green function has the same local behavior as the Euclidean one,
but as $z \to \infty$, by (3.30) of \cite{Lint},
\begin{equation}\label{Green-per-asymp}
G(x_1, x_2, z) = \frac{|z|}{2R^2} + o_R(1)
\end{equation}
uniformly in $x_1$ and $x_2$.

Now we expand
\[
2|\nabla|\int_{(\R/R\Z)^2} \rho(y_1, y_2)G(x_1 - y_1, x_2 - y_2, h(x_1, x_2) - h(y_1, y_2))dy_1dy_2
\]
into a Taylor series with respect to $h$, to get an expression similar to (\ref{gr-phi-Taylor}), which can then be inverted to give a series expansion of $\rho$ similar to (\ref{rho-Taylor}). Thus, if we define $\rho$ using (\ref{rho-Taylor}), then (\ref{gr-phi-Taylor}) holds.
%Moreover, from (\ref{rho-Taylor}) we see explicitly that $\hat\rho(0)=0$,
%so the right-hand side of (\ref{gr-phi-Taylor}) has zero average.
%By the zero potential integral condition, $\hat\phi(0) = 0$,
%so we can act $|\nabla|^{-1}$ on both sides of (\ref{gr-phi-Taylor})
%to see that
Acting $|\nabla|^{-1}$ on both sides of (\ref{gr-phi-Taylor}) we see that
\begin{equation}\label{single-layer-per}
\Phi(x_1, x_2, z) = 2\int_{(\R/R\Z)^2} \rho(y_1, y_2)G(x_1 - y_1, x_2 - y_2, z - h(y_1, y_2))dy_1dy_2 + C
\end{equation}
holds on the boundary $\Gamma$, where $C$ is a constant.
We now show that it holds everywhere.
We will use the maximum principle to identify both sides.
To that end we first extract their common properties.

\begin{proposition}
Both sides of (\ref{single-layer-per}) are:

(i) continuous in $\overline{\Omega(t)}$,

(ii) harmonic in $\Omega(t)$, and

(iii) equal on $\Gamma(t) = \partial\Omega(t)$. Also they

(iv) grow slower than $|z|$ as $z \to -\infty$.
\end{proposition}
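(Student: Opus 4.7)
The plan is to verify each of (i)--(iv) separately for each side, after which Lemma \ref{u-Diri-unique} applied to the difference (a harmonic function on $\Omega(t)$, continuous on $\overline{\Omega(t)}$, vanishing on $\Gamma(t)$, and sublinear at $-\infty$) forces the two sides of (\ref{single-layer-per}) to coincide throughout $\Omega(t)$. Property (iv) is what makes the uniqueness argument go through; the rest are more routine.

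For the LHS $\Phi$, properties (i)--(iii) are built into the construction of the velocity potential as the harmonic extension of $\phi$ to $\Omega(t)$, with the additive constant in $\Phi$ absorbing the gauge freedom $C$. Property (iv) follows immediately from the decay $\nabla_{x,z}\Phi \to 0$ as $z\to-\infty$: integrating along vertical lines gives $\Phi(x,z) = c + o(1)$, hence $o(|z|)$. For the RHS, harmonicity (ii) is obtained by differentiating under the integral: whenever $(x,z) \in \Omega(t)$, $z < h(x)$, and for each $y$ either $(x_1-y_1,x_2-y_2)\ne0$ or the third slot $z-h(y) = z-h(x) < 0$, so the argument of $G$ never lies in the lattice of singularities, and $G$ is harmonic away from those singularities. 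Continuity (i) up to $\Gamma$ is the classical continuity of a single-layer potential across its charge-carrying surface; this transfers from the Euclidean case because the periodic Green function differs from $-\frac{1}{4\pi|\cdot|}$ only by a remainder that is smooth on $((\R/R\Z)^2\times\R)\setminus\{(0,0,0)\}$. Identity (iii) reduces, after applying $|\nabla|$ to both sides (which kills the constant $C$), to the Taylor-expansion identity
\[
|\nabla|\phi(x)=2|\nabla|\int_{(\R/R\Z)^2}\rho(y)G(x-y,h(x)-h(y))dy,
\]
which is the defining relation (\ref{gr-phi-Taylor}) whose inversion gives (\ref{rho-Taylor}); choosing $C$ to match the means on $\Gamma$ then yields the unweighted equality.

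The main obstacle will be property (iv) for the RHS. From (\ref{Green-per-asymp}), as $z\to-\infty$ uniformly in $y$,
\[
G(x_1-y_1,x_2-y_2,z-h(y))=-\frac{z-h(y)}{2R^2}+o_R(1),
\]
so
\[
\text{RHS}=-\frac{z}{R^2}\int_{(\R/R\Z)^2}\rho(y)dy+\frac{1}{R^2}\int_{(\R/R\Z)^2}\rho(y)h(y)dy+C+o_R(1).
\]
For this to be $o(|z|)$ one needs the zero-mean condition $\int_{(\R/R\Z)^2}\rho=0$. I would verify this by inspecting the series (\ref{rho-Taylor}): the leading term $|\nabla|\phi$ is a Riesz derivative and so has vanishing zero frequency on the torus, and every kernel $K_n$ in (\ref{Kn-def}) is of the form $|\nabla|(\cdots)$, hence its image also has zero mean; by linearity and by the convergence of the Neumann series (which follows from the smallness hypotheses, using bounds in the spirit of Lemma \ref{Kn-Z} and Lemma \ref{Kn-Wkp}), the full sum $\rho$ is mean-zero. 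Once this is secured, the RHS is $O(1)$ at $-\infty$, (iv) is verified, and Lemma \ref{u-Diri-unique} finishes the argument.
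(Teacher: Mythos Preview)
Your proposal is correct and follows essentially the same route as the paper: verify (i)--(iv) for each side separately, with the crucial observation that $\hat\rho(0)=0$ (since every term in (\ref{rho-Taylor}) carries a leading $|\nabla|$) handling (iv) on the single-layer side, and then invoke Lemma \ref{u-Diri-unique}. One small slip: from $\nabla_{x,z}\Phi\to 0$ you cannot conclude $\Phi(x,z)=c+o(1)$ (consider $\log(-z)$), only the weaker $\Phi=o(|z|)$, which is all (iv) asks; the paper obtains this instead from $\|\partial_y u\|_{L^2_y L^\infty_x}<\infty$ and Cauchy--Schwarz.
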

\begin{proof}
(i) To show the continuity of $\Phi$, we recall the change of variable (\ref{u-def}) relating $\Phi$ and $u$. Then it suffices to show the continuity of $u$ on $\{y \le 0\}$. Indeed, by Lemma \ref{u-Cr} (i), $\|\partial_yu\|_{L_y^2L_x^\infty} < \infty$. Integrating in $y$ using Cauchy--Schwarz, we infer that $u(x, y)$ is continuous in $y$, uniformly in $x$. Since for each fixed $y$, $u(x, y) \in H_x^2$ is also continuous, we conclude that $u$ is continuous on $\{y \le 0\}$. Turning to the right-hand side of (\ref{single-layer-per}), we note that $\rho \in H^2$ is continuous by (\ref{rho-Taylorrem}). Then its single layer potential, i.e., the right-hand side of (\ref{single-layer-per}), is continuous by an argument similar to the proof of Theorem 3.28 of \cite{Fo}.

(ii) $\Phi$ is harmonic in $\Omega(t)$ by (\ref{phi-Diri}), so is the right-hand side of (\ref{single-layer-per}) because the Green function $G$ is harmonic and the charge $\rho \in L^2 \subset L^1$.

(iii) This has already been shown previously.

(iv) For $\Phi$, it suffices to show that $u(x, y)$ grows slower than $|y|$ as $y \to -\infty$. Indeed, this follows from integrating $\|\partial_yu\|_{L_y^2L_x^\infty} < \infty$ in $y$ using Cauchy--Schwarz.
Turning to the other side, by (\ref{Green-per-asymp}) we have, as $z \to -\infty$,
\begin{align*}
\text{right-hand side of }(\ref{single-layer-per})
&=\int_{(\R/R\Z)^2} \rho(y) \left( \frac{|z - h(y)|}{R^2} + o_R(1) \right)dy\\
&=R^{-2}|z|\hat\rho(0) + O(\|\rho\|_{L^\infty}\|h\|_{L^\infty}) + o_R(\|\rho\|_{L^1})
\end{align*}
is bounded because $\hat\rho(0) = 0$ by its definition (\ref{rho-Taylor}).
\end{proof}

Taking the difference of the two sides we know that (\ref{single-layer-per}) holds everywhere thanks to Lemma \ref{u-Diri-unique},
which extends easily to the periodic case.
Since only $|\nabla|^{1/2}\phi$ instead of $\phi$ is included in $U$,
the difference by a constant is immaterial,
so Proposition \ref{BN-ZW} carries over.


\begin{thebibliography}{99}

\bibitem{Ai} A. Ai, Low regularity solutions for gravity water waves, Water Waves 1(1) (2019), 145--215. %nD, infinite+finite depth GWW

\bibitem{Ai2} A. Ai, Low regularity solutions for gravity water waves, II: the 2D case, Annals of PDE 6:4 (2020), 1--117. %2D, infinite+finite depth GWW

\bibitem{AIT} A. Ai, M. Ifrim, and D. Tataru, Two dimensional gravity waves at low regularity I: Energy estimates, 2019, arXiv:1910.05323. %2D, infinite depth GWW

\bibitem{AlBuZu} T. Alazard, N. Burq and C. Zuily, On the water wave equations with surface tension, Duke Math. J. 158 (2011), no. 3, 413--499. %nD, infinite+finite depth GCWW

\bibitem{AlBuZuStr2D} T. Alazard, N. Burq and C. Zuily, Strichartz estimates for water waves, Ann. Sci. \'Ec. Norm. Sup\'er. 44 (2011), no. 5, 855--903. %2D, infinite+finite depth GCWW

\bibitem{AlBuZu2} T. Alazard, N. Burq and C. Zuily, On the Cauchy problem for gravity water waves, Invent. Math. 198 (2014), 71--163. %nD, infinite+finite depth GWW

%\bibitem{AlBuZu3} T. Alazard, N. Burq and C. Zuily, Cauchy theory for the gravity water waves system with non-localized initial data, Ann. Inst. H. Poincar\'e Anal. Non Lin\'eaire, 33 (2016), no. 2, 337--395. %nD, infinite+finite depth GWW

\bibitem{AlBuZuStr3D} T. Alazard, N. Burq and C. Zuily, Strichartz estimates and the Cauchy problem for the gravity water wave equation, Mem. Amer. Math. Soc. 256, Memo 1229 (2018). %nD, infinite+finite depth GWW

\bibitem{AlDe} T. Alazard and J.M. Delort, Global solutions and asymptotic behavior for two dimensional gravity water waves, Ann. Sci. \'Ec. Norm. Sup\'er. 48 (2015), 1149--1238. %2D infinite depth GWW dispersive estimates

\bibitem{AlDe2} T. Alazard and J.M. Delort, Sobolev estimates for two dimensional gravity water waves, Ast\'erisque 374 (2015) viii+241 pages. %2D infinite depth GWW energy estimates

\bibitem{AlMe} T. Alazard and G. M\'etivier, Paralinearization of the Dirichlet to Neumann operator, and regularity of three-dimensional water waves, Comm. Partial Differential Equations, 34 (2009), no. 10--12, 1632--1704.

\bibitem{Alin} S. Alinhac, Existence d'ondes de rar\'efaction pour des syst\`emes quasi-lin'eaires hyperboliques multidimensionnels, Comm. Partial Differential Equations, 14 (1989), no. 2, 173--230. %Alinhac good unknown

\bibitem{AmMa2DL} D. M. Ambrose and N. Masmoudi, The zero surface tension limit of two-dimensional water waves, Comm. Pure Appl. Math. 58 (2005), no. 10, 1287--1315. %infinite depth 2D GCWW periodic

\bibitem{AmMa3DL} D. M. Ambrose and N. Masmoudi, The zero surface tension limit of three-dimensional water waves, Indiana Univ. Math. J. 58 (2009), 479--521. %infinite depth 3D GCWW

\bibitem{AnPa} J. Anderson and F. Pasqualotto, Global stability for nonlinear wave equationswith multi-localized initial data, Preprint arXiv:1906.02617.

\bibitem{Ar} V. Arnold, Geometric methods in the theory of ordinary differential equations, Springer-Verlag, New York, 1983.

\bibitem{BaCh} H. Bahouri and J.Y. Chemin, \'Equations d'ondes quasilin\'eaires et estimations de Strichartz, Amer. J. Math. 121 (6) (1999) 1337--1377.

\bibitem{BeGu} K. Beyer and M. G\"unther, On the Cauchy problem for a capillary drop. I. Irrotational motion,  Math. Methods Appl. Sci. 21 (1998), no. 12, 1149--1183.

\bibitem{BeOl} T. Benjamin and P. Olver, Hamiltonian structure, symmetries and conservations laws for water waves, J. Fluid Mech. 125(1982), 137--185.

\bibitem{BeDe} M. Berti and J.-M. Delort, Almost global solutions of capillary-gravity water waves equations on the circle, Lecture Notes of the Unione Matematica Italiana 24, Springer.

\bibitem{BeFePu} M. Berti, R. Feola and F. Pusateri, Birkhoff normal form and long time existence for periodic gravity water waves, Preprint arXiv:1810.11549.

\bibitem{BiMiShWu} L. Bieri, S. Miao, S. Shahshahani, and S. Wu, On the motion of a self-gravitating incompressible fluid with free boundary, Comm. Math. Phys. 355 (1) (2017), 161–-243.

\bibitem{Bl} M. Blair, Strichartz estimates for wave equations with coefficients of Sobolev regularity, Comm. Partial Differential Equations, 31 (5) (2006), 649--688.

\bibitem{BrGrNi-rev} T. J. Bridges, M. D. Groves and D. P. Nicholls (editors), Lectures on the theory of water waves, Cambridge University Press, 2016.

\bibitem{BuGeHaSh} T. Buckmaster, P. Germain, Z. Hani and J. Shatah, Effective dynamics of the nonlinear Schr\"{o}dinger equation on large domains, Comm. Pure Appl. Math. 71 (2018), no. 7, 1407--1460. %On T_L^d, Lifespan > L^{2-}/ep^2

\bibitem{BuGeTz} N. Burq, P. G\'erard and N. Tzvetkov, Strichartz inequalities and the nonlinear Schr\"odinger equation on compact manifolds, Amer. J. Math. 126 (3) (2004) 569--605.

\bibitem{ChCoSh} A. Cheng, D. Coutand and S. Shkoller, On the motion of vortex sheets with surface tension, Comm. Pure Appl. Math. 61 (2008), no. 12, 1715--1752.

\bibitem{Ch} S. Chow and J. Hale, Methods of bifurcation theory, Springer-Verlag, New York, 1983.

\bibitem{ChHuSt} H. Christianson, V. M. Hur and G. Staffilani, Strichartz estimates for the water-wave problem with surface tension, Comm. Partial Differential Equations 35 (12) (2010) 2195--2252.

%\bibitem{Chr} D. Christodoulou, Global solutions of nonlinear hyperbolic equations for small initial data, Comm. Pure Appl. Math. 39 (1986), 267--282.

\bibitem{ChLi} D. Christodoulou and H. Lindblad, On the motion of the free surface of a liquid, Comm. Pure Appl. Math. 53 (2000), no. 12, 1536--1602.

\bibitem{CoSh} D. Coutand and S. Shkoller, Well-posedness of the free-surface incompressible Euler equation with or without surface tension, J. Amer. Math. Soc. 20 (2007), no. 3, 829--930.

\bibitem{Cr} W. Craig, An existence theory for water waves and the Boussinesq and Korteweg--de Vries scaling limits, Comm. Partial Differential Equations, 10 (1985), no. 8, 787--1003.

\bibitem{De2} J.-M. Delort, A quasi-linear Birkhoff normal forms method. Application to the quasi-linear Klein-Gordon equation on $\mathbb S^1$, Ast\'erisque 341 (2012), vi+113. %Hamiltonian nonlinearity, Lifespan = ep^{-N} a.e. parameter.

\bibitem{DeSz} J.-M. Delort, J. Szeftel, Long-time existence for small data nonlinear Klein-Gordon equations on tori and spheres, Int. Math. Res. Not. (2004), no. 37, 1897--1966. %Quasilinear on torus, Lifespan > ep^{-2} + Semilinear Deg k on spheres, Lifespan > ep^{-k+1}

\bibitem{DePoy} T. de Poyferr\'e, A priori estimates for water waves with emerging bottom, Arch. Rational Mech. Anal. 232 (2019), 763--812.

\bibitem{DeNg} T. de Poyferr\'e and Q.H. Nguyen, Strichartz estimates and local existence for the gravity-capillary water waves with non-Lipschitz initial velocity, J. Differential Equations, 261 (1) (2016), 396--438. %nD infinite+finite depth GCWW

\bibitem{DeIoPa} Y. Deng, A. D. Ionescu, and B. Pausader, The Euler--Maxwell system for electrons: global solutions in 2D, Arch. Rational Mech. Anal. 225 (2017), 771--871.

\bibitem{DeIoPaPu} Y. Deng, A. D. Ionescu, B. Pausader, and F. Pusateri, Global solutions of the gravity-capillary water-wave system in three dimensions, Acta Mathematica 219 (2017), 213--402. %3D infinite depth GCWW

\bibitem{FaGeHa} E. Faou, P. Germain and Z. Hani, The weakly nonlinear large-box limit of the cubic nonlinear Schr\"{o}dinger equation, J. Amer. Math. Soc. 29 (2016), 915--962. %on T_L^2, Lifespan = L^{2-}/ep^2 for ep<L^{-1-}


\bibitem{Fo} G. B. Folland, Introduction to partial differential equations, Princeton University Press, 1995.


\bibitem{GeMaSh2} P. Germain, N. Masmoudi and J. Shatah, Global solutions for the gravity water waves equation in dimension 3, Ann. of Math. 175 (2012), no. 2, 691--754. %3D infinite depth GWW global

\bibitem{GeMaSh3} P. Germain, N. Masmoudi and J. Shatah, Global solutions for capillary waves equation in dimension 3, Comm. Pure Appl. Math. 68 (2015), no. 4, 625--687. %3D infinite depth CWW global


\bibitem{GrTo} L. Grafakos and R. H. Torres, Multilinear Calderon--Zygmund theory, Adv. Math. 165 (1) (2002) 124--164.

\bibitem{GuPeWa} Z. Guo, L. Peng and B. Wang, Decay estimates for a class of wave equations, J. Func. Anal. 254 (2008), no. 6, 1642--1660.

\bibitem{HGIfTa} B. Harrop-Griffiths, M. Ifrim and D. Tataru, Finite depth gravity water waves in holomorphic cooridnates, Ann. PDE 3 (2017), no. 1, Art. 4, 102pp. %2D finite depth GWW local

\bibitem{HuIfTa2DaG} J. Hunter, M. Ifrim and D. Tataru, Two dimensional water waves in holomorphic coordinates, Comm. Math. Phys., 346 (2016), 483--552. %2D infinite depth GWW almost global

\bibitem{IfTa2DG} M. Ifrim and D. Tataru, Two dimensional water waves in holomorphic coordinates II: global solutions, Bull. Soc. Math. France 144 (2016), 369--394. %2D infinite depth GWW global

\bibitem{IfTa2Dc} M. Ifrim and D. Tataru, The lifespan of small data solutions in two dimensional capillary water waves,  Arch. Ration. Mech. Anal. 225 (2017), no. 3, 1279--1346. %2D infinite depth cWW global

\bibitem{IfTa2Dv} M. Ifrim and D. Tataru, Two dimensional gravity water waves with constant vorticity: I. Cubic lifespan, Analysis and PDE, Vol. 12 (2019), No. 4, 903--967. %2D infinite depth GWW local

\bibitem{IoPa2} A. D. Ionescu and B. Pausader, Global solutions of quasilinear systems of Klein--Gordon equations in 3D, J. Eur. Math. Soc. (JEMS) 16 (2014), 2355--2431. %Nondegenerate parameters

\bibitem{IoPu} A. D. Ionescu and F. Pusateri, Global solutions for the gravity water waves system in 2D, Invent. Math. 199 (2015), no. 3, 653--804.

\bibitem{IoPu2} A. D. Ionescu and F. Pusateri, Global regularity for 2D water waves with surface tension, Mem. Amer. Math. Soc. 256, Memo 1227, American Mathematical Society, Providence, RI, 2018.

\bibitem{IoPu-rev} A. D. Ionescu and F. Pusateri, Recent advances on the global regularity for irrotational water waves, Philosophical Transactions of the Royal Society A 376 20170089, 28 pp (2017).

\bibitem{KaPo} T. Kato and G. Ponce, Commutator estimates and the Euler and Navier--Stokes equations, Comm. Pure App. Math. 41 (1988), 891--907.

\bibitem{KeTa} M. Keel and T. Tao, Endpoint Strichartz estimates, Amer. J. Math. 120 (5) (1998), 955--980.


\bibitem{KiWu} R. H. Kinsey and S. Wu, A priori estimates for two-dimensional water waves with angled crests, Cambridge J. Math. 6 (2018), 93--181.

%\bibitem{Kl2}  S. Klainerman, Long time behaviour of solutions to nonlinear wave equations, Proceedings of the International Congress of Mathematicians, Vol. 1, 2 (Warsaw, 1983), 1209--1215, PWN, Warsaw, (1984).

%\bibitem{KlVf}  S. Klainerman, Uniform decay estimates and the Lorentz invariance of the classical wave equation, Comm. Pure Appl. Math. 38 (1985), 321--332.

%\bibitem{Kl} S. Klainerman, Global existence of small amplitude solutions to nonlinear Klein-Gordon equations in four space-time dimensions, Comm. Pure Appl. Math. 38 (1985), 631--641.

%\bibitem{Kl4}  S. Klainerman, The null condition and global existence to nonlinear wave equations, Nonlinear systems of partial differential equations in applied mathematics, Part 1 (Santa Fe, N.M., 1984), 293--326, Lectures in Appl. Math., 23, Amer. Math. Soc., Providence, RI, 1986.

\bibitem{KuTuVi} I. Kukavica, A. Tuffaha and V. Vicol, On the local existence and uniqueness for the 3D Euler equation with a free interface, Appl. Math. Optim. (2016), 1--29.


\bibitem{Ku} D. S. Kurtz, Littlewood-Paley and multiplier theorems on weighted $L^p$ spaces, Trans. Amer. Math. Soc. 259 (1980), no. 1, 235--254.


\bibitem{La} D. Lannes, Well-posedness of the water wave equations, J. Amer. Math. Soc. 18 (2005), no. 3, 605--654.

\bibitem{La-rev} D. Lannes, The water wave problem, mathematical analysis and asymptotics, Mathematical Surveys and Monographs, Vol. 188, American Mathematical Society, Providence, RI, 2013, xx+321 pp.

\bibitem{Le} G. Lebeau, Controle de l'\'equation de Schrodinger, J. Math. Pures Appl. (9) 71 (1992) 267--291.


\bibitem{LOPTT} A. Lerner, S. Ombrosi, C. P\'erez, R. H. Torres and R. Trujillo-Gonz\'alez, New maximal functions and multiple weights for the multilinear Calderon--Zygmund theory, Adv. Math. 220 (4) (2009), 1222--1264.


\bibitem{Lind} H. Lindblad, Well-posedness for the motion of an incompressible liquid with free surface boundary, Ann. of Math. 162 (2005), no. 1, 109--194.


\bibitem{Lint} C. M. Linton, Rapidly convergent representations for Green's functions for Laplace's equation, Proc. R. Soc. Lond. A 455 (1999), 1767--1797.


\bibitem{MiZh} M. Ming and Z. Zhang, Well-posedness of the water wave problem with surface tension, J. Math. Pures Appl. 92 (2009), no. 5, 429--455.


\bibitem{MuSc} C. Muscalu and W. Schlag, Classical and multilinear harmonic analysis, Vol 2, Cambridge University Press, 2013.


\bibitem{Na} V. I. Nalimov, The Cauchy--Poisson problem, Dinamika Splosn. Sredy Vyp. 18 Dinamika Zidkost. so Svobod. Granicami (1974), 104--210.


\bibitem{Ni} L. Nirenberg, On elliptic partial differential equations, Ann. Scuola Norm. Sup. Pisa (3), 13 (1959), 115--162.


\bibitem{Ng} Q.H. Nguyen, Sharp Strichartz estimates for water wave systems, Trans. Am. Math. Soc., 370 (2018), 8797--8832. %nD infinite+finite depth GWW sharp + nD infninte+finite depth GCWW

\bibitem{RoZu} L. Robbiano and C. Zuily, Strichartz estimates for Schr\"odinger equations with variable coefficients, M\'em. Soc. Math. Fr. (N.S.), (101--102) (2005) vi+208 pages.

\bibitem{Schw} B. Schweizer, On the three-dimensional Euler equations with a free boundary subject to surface tension, Ann. Inst. H. Poincar\'e Anal. Non Lin\'eaire 22 (2005), 753--781.

\bibitem{Sh} J. Shatah, Normal forms and quadratic nonlinear Klein-Gordon equations, Comm. Pure Appl. Math. 38 (1985), 685--696. %Quasilinear in R^3, Lifespan = oo

\bibitem{ShZe} J. Shatah and C. Zeng, Geometry and a priori estimates for free boundary problems of the Euler equation, Comm. Pure Appl. Math. 61 (2008), no. 5, 698--744.

\bibitem{ShZe2} J. Shatah and C. Zeng, A priori estimates for fluid interface problems, Comm. Pure Appl. Math. 61 (2008), no. 6, 848--876.

\bibitem{ShZe3} J. Shatah and C. Zeng, Local well-posedness for the fluid interface problem, Arch. Ration. Mech. Anal. 199 (2011), no. 2, 653--705.

\bibitem{Shin} M. Shinbrot, The initial value problem for surface waves under gravity. I. The simplest case, Indiana Univ. Math. J. 25 (1976), no. 3, 281--300.


\bibitem{St} E. Stein, Harmonic analysis: real variable methods, orthogonality and socillatory integrals, Princeton University Press, 1993.

%\bibitem{St2} E. Stein, Singular integrals and differentiability properties of functions, Princeton University Press, 1970.

\bibitem{Su} Q. Su, Long time behavior of 2D water waves with point vortices, Preprint arXiv:1812.00540.

\bibitem{Su2} Q. Su, Partial justification of Peregrine soliton from the 2D full water waves, Arch. Ration. Mech. Anal. 237 (2020), no. 3, 1--97.

\bibitem{SS} C. Sulem and P.L. Sulem, The nonlinear Schr\"odinger equation, self-focusing and wave collapse, Applied Mathematical Sciences, 139, Springer-Verlag, New York, 1999.

\bibitem{StTa} G. Staffilani and D. Tataru, Strichartz estimates for a Schr\"odinger operator with nonsmooth coefficients, Comm. Partial Differential Equations 27 (7--8) (2002) 1337--1372.

\bibitem{TatStr} D. Tataru, Strichartz estimates for operators with nonsmooth coefficients and the nonlinear wave equation, Amer. J. Math. 122 (2) (2000) 349--376.

\bibitem{TatStr2} D. Tataru, Strichartz estimates for second order hyperbolic operators with nonsmooth coefficients, II, Amer. J. Math. 123 (3) (2001), 385--423.

\bibitem{TatStr3} D. Tataru, Strichartz estimates for second order hyperbolic operators with nonsmooth coefficients, III, J. Amer. Math. Soc. 15 (2) (2002), 419--442.

\bibitem{Tay} G. I. Taylor, The instability of the liquid surface when accelerated in a direction perpendicular to their planes I, Proc. Roy. Soc. London A 201 (1950) 192--196.


\bibitem{Tr} H. Triebel, Multiplication properties of the spaces $B_{p,q}^s$ and $F_{p,q}^s$, Ann. Mat. Pura Appl. 113 (1) (1977), 33--42.

\bibitem{Tr2} H. Triebel, Interpolation theory, functions spaces, differential operators, North-Holland Publishing Company, New York, 1978.

\bibitem{Wa2DG} X. Wang, Global infinite energy solutions for the 2D gravity water waves system, Comm. Pure Appl. Math. 71 (2018), no. 1, 90--162.

\bibitem{Wa3DL} X. Wang, On the 3-dimensional water waves system above a flat bottom, Analysis and PDE, Vol. 10 (2017), no. 4, 893--928. %3D GWW energy estimate

\bibitem{Wa3DG} X. Wang, Global solution for the 3D gravity water waves system above a flat bottom, Advances in Mathematics, Vol. 346 (2019), 805--886. %global 3D GWW

\bibitem{Wa3Dc} X. Wang, Global regularity for the 3D finite depth capillary water waves, Annales scientifiques de l’ Ecole normale superieure, in press. %global 3D CWW

\bibitem{Wu2DL} S. Wu, Well-posedness in Sobolev spaces of the full water wave problem in 2-D, Invent. Math. 177 (2009), 45--135. %local 2D GWW

\bibitem{Wu3DL} S. Wu, Well-posedness in Sobolev spaces of the full water wave problem in 3-D, J. Amer. Math. Soc. 12 (1999), 445--495. %local 3D GWW

\bibitem{Wu2DaG} S. Wu, Almost global wellposedness of the 2-D full water wave problem, Invent. Math. 177 (2009), 45--135. %almost global 2D GWW

\bibitem{Wu3DG} S. Wu, Global wellposedness of the 3-D full water wave problem, Invent. Math. 184 (2011), 125--220. %global 3D GWW

\bibitem{Wucrest} S. Wu. A blow-up criteria and the existence of 2d gravity water waves with angled crests, Preprint arXiv:1502.05342. %local 2D GWW

\bibitem{Xu} Y. Xu (translator), Selected poems and pictures of the Song dynasty, China Intercontinental Press, 2005.

\bibitem{YoGWW} H. Yosihara, Gravity waves on the free surface of an incompressible perfect fluid of finite depth, Publ. Res. Inst. Math. Sci. 18 (1982), 49--96. %local 2D GWW smooth bottom

\bibitem{YoCGWW} H. Yosihara, Capillary-gravity waves for an incompressible ideal fluid, J. Math. Kyoto Univ. 23 (1983), 649--694. %local 2D CGWW smooth bottom

\bibitem{ZhZh} P. Zhang and Z. Zhang, On the free boundary problem of three-dimensional incompressible Euler equations, Comm. Pure Appl. Math. 61 (2008), no. 7, 877--940. %local 3D GWW rotational


\bibitem{Zh} F. Zheng, Long-term regularity of the periodic Euler--Poisson system for electrons in 2D, Comm. Math. Phys. 366 (2019), 1135--1172.

\end{thebibliography}
\end{document}